\documentclass[reqno, 11pt, a4paper]{amsart}

\setlength{\textwidth}{5.6in}
\setlength{\textheight}{8.2in}
\setlength{\oddsidemargin}{.2in}
\setlength{\evensidemargin}{.2in}
\setlength{\topmargin}{.1in}
\setlength{\headsep}{.3in}

\usepackage[utf8]{inputenc}
\usepackage[T1]{fontenc}

\usepackage{enumitem}
\usepackage{amsthm}
\usepackage{thmtools}
\usepackage{amssymb}
\usepackage{amsmath}
\usepackage{mathtools}
\usepackage{bm}
\usepackage{cite}

\usepackage{fnpct}
\usepackage{tikz-cd}
\usepackage{graphicx}
\usepackage[only, llbracket, rrbracket]{stmaryrd}
\usepackage{units} 
\usepackage{xcolor}
\raggedbottom

\usepackage[pdfusetitle, bookmarks, colorlinks, allcolors=blue]{hyperref}
\usepackage{doi}
\usepackage{cprotect}

\newtheorem{theorem}{Theorem}[section]

\newtheorem{conjecture}[theorem]{Conjecture}
\newtheorem{corollary}[theorem]{Corollary}
\newtheorem{lemma}[theorem]{Lemma}

\theoremstyle{definition}

\newtheorem{definition}[theorem]{Definition}
\newtheorem{example}[theorem]{Example}
\newtheorem{proposition}[theorem]{Proposition}
\newtheorem{remark}[theorem]{Remark}

\numberwithin{equation}{section}

\newcommand{\define}[1]{\textit{#1}}

\let\OldParagraph\paragraph
\renewcommand{\paragraph}[1]{\smallskip\OldParagraph{\textbf{#1}}}

\usepackage{caption,subcaption}
\usepackage{cleveref}


\newcommand{\EndComma}{,}
\newcommand{\EndFullStop}{.}

\newcommand{\ComposedWith}{\circ}
\newcommand{\DoublePrime}{^{\prime \prime}}
\newcommand{\Inverse}{^{-1}}

\newcommand{\Prime}{^\prime}
\newcommand{\RestrictedTo}[1]{|_{ #1 }}
\renewcommand{\epsilon}{\varepsilon}
\renewcommand{\phi}{\varphi}

\newcommand{\Count}{\#}

\newcommand{\DirectProduct}{\prod}
\newcommand{\DisjointUnion}{\sqcup}

\newcommand{\Intersection}{\cap}

\newcommand{\Set}[1]{\left\{ #1 \right\}}
\newcommand{\SetCondition}[2]{\left\{ #1 \ : \  #2 \right\}}
\newcommand{\SetConditionBar}[2]{\left\{ #1 \ \middle| \  #2 \right\}}
\newcommand{\Union}{\cup}



\newcommand{\Conjugate}[1]{\overline{ #1 }}
\newcommand{\Complex}{\mathbb{C}}
\newcommand{\ComplexNumbers}{\mathbb{C}}

\newcommand{\ExponentialNumber}{e}

\newcommand{\ImaginaryNumber}{i}

\newcommand{\Infinity}{\infty}
\newcommand{\Integers}{\mathbb{Z}}
\newcommand{\IntervalClosed}[2]{\left[ #1 , #2 \right]}
\newcommand{\IntervalClosedOpen}[2]{\left[ #1 , #2 \right)}
\newcommand{\IntervalOpen}[2]{\left( #1 , #2 \right)}
\newcommand{\IntervalOpenClosed}[2]{\left( #1 , #2 \right]}
\newcommand{\KroneckerDelta}{\delta}
\newcommand{\Modulus}[1]{\left| #1 \right|}

\newcommand{\PiNumber}{\pi}

\newcommand{\RationalNumbers}{\mathbb{Q}}

\newcommand{\RealNumbers}{\mathbb{R}}


\newcommand{\Norm}[1]{\left\| #1 \right\|}

\newcommand{\wrt}[1]{\, \mathrm{d} { #1 } }


\newcommand{\Lie}{\operatorname{Lie}}

\newcommand{\LieBracket}[2]{\left[ #1 , #2 \right]}


\newcommand{\Automorphisms}{\operatorname{Aut}}

\newcommand{\Dimension}{\dim}
\newcommand{\DirectSum}{\oplus}

\newcommand{\Homomorphisms}{\operatorname{Hom}}

\newcommand{\InnerProduct}[2]{\langle #1 , #2 \rangle}

\newcommand{\Identity}{\mathrm{Id}}

\newcommand{\Projective}{\mathbb{P}}


\newcommand{\ArbitraryIndex}{\ast}
\newcommand{\Argument}{\cdot}

\newcommand{\FractionField}{\operatorname{Frac}}

\newcommand{\Inclusion}{\hookrightarrow}
\newcommand{\PullBack}{^\ast}
\newcommand{\PushForward}{_\ast}
\newcommand{\Tensor}{\otimes}


\newcommand{\Boundary}{\partial}
\newcommand{\Circle}{{S^1}}

\newcommand{\ConnectedSum}{\#}
\newcommand{\ContractibleLoopSpace}{\mathcal{L}}

\newcommand{\Disc}{D}
\newcommand{\FirstChernClass}{c_1}

\newcommand{\HomotopyGroup}{\pi}

\newcommand{\InfiniteComplexProjectiveSpace}{\Complex \Projective ^\infty}
\newcommand{\InfiniteSphere}{S^\infty}


\newcommand{\Derivative}{D}

\newcommand{\EvaluateAt}[1]{|_{ #1 }}
\newcommand{\ExteriorDerivative}{\mathrm{d}}

\newcommand{\SmoothFunctions}{C^\infty}

\newcommand{\Sphere}{S^2}
\newcommand{\HighDimensionalSphere}[1]{S^{ #1 }}
\newcommand{\TangentSpace}{T}

\newcommand{\Gradient}{\nabla}
\newcommand{\StableManifold}{W^s}
\newcommand{\UnstableManifold}{W^u}



\newcommand{\Energy}{E}

\newcommand{\HamiltonianVectorField}[1]{X_{ #1 }}


\newcommand{\NovikovRing}{\Lambda}
\newcommand{\QuantumProduct}{\ast}
\newcommand{\QuantumAction}{\ast}


\newcommand{\ModuliSpace}{\mathcal{M}}

\newcommand{\WithFilling}[1]{{\widetilde{ #1 }}}


\newcommand{\VerticalTangentSpace}{T^{\text{vert}}}

\newcommand{\CanonicalMap}{c}

\newcommand{\eqnt}{\text{eq}}
\newcommand{\eqModuliSpace}{\mathcal{M}^\eqnt}

\newcommand{\uformal}{\mathbf{u}}

\newcommand{\SpaceOfDerivations}{\mathcal{X}}

\newcommand{\Connection}{\nabla}

\newcommand{\Curvature}{\mathcal{R}}
\newcommand{\ShiftOperator}{\mathbb{S}}
\newcommand{\dbyd}[1]{\frac{d}{d#1}}
\newcommand{\dbydmult}[1]{\formalAdditionalCircle \frac{d}{d#1}}
\newcommand{\AlgebraHomomorphismSpace}{\operatorname{AlgHom}}

\newcommand{\Manifold}{M}
\newcommand{\SymplecticForm}{\omega}
\newcommand{\dimManifold}{n}
\newcommand{\ConvexCoordMap}{\psi}
\newcommand{\ContactManifold}{\Sigma}
\newcommand{\ContactForm}{\alpha}
\newcommand{\ReebVectorField}{X_\ContactForm}
\newcommand{\ReebPeriods}{\mathcal{R}}
\newcommand{\RadialCoord}{R}

\newcommand{\NovVariable}{q}
\newcommand{\DegreeTwoCoclass}{\alpha}
\newcommand{\secondDegreeTwoCoclass}{\beta}
\newcommand{\MorseFunction}{f}
\newcommand{\eqMorseFunction}{f^{\eqnt}}
\newcommand{\ACS}{J}
\newcommand{\eqACS}{J^\eqnt}
\newcommand{\DegreeTwoClass}{A}
\newcommand{\pointSphere}{p}

\newcommand{\Torus}{T}
\newcommand{\dimTorus}{k}
\newcommand{\Character}{\chi}
\newcommand{\Cocharacter}{\sigma}
\newcommand{\secondCocharacter}{{\sigma\Prime}}
\newcommand{\basisCocharacter}{\tau}
\newcommand{\LatticeCharacters}{\operatorname{Char}}
\newcommand{\LatticeCocharacters}{\operatorname{Cochar}}
\newcommand{\NonnegativeLatticeCocharacters}[1]{\operatorname{Cochar}^{\ge 0}_{#1}}
\newcommand{\TorusAction}{\rho}
\newcommand{\SymmetricAlgebra}{\operatorname{Sym}}
\newcommand{\Dual}{^\vee}
\newcommand{\ExtendedTorus}{\widehat{T}}
\newcommand{\Extended}[1]{\widehat{#1}}
\newcommand{\AdditionalCircle}{S^1_0}
\newcommand{\FixedPoint}{\mu}

\newcommand{\Projection}{\pi}

\newcommand{\UniversalSpace}{E}
\newcommand{\ClassifyingSpace}{B}
\newcommand{\formalTorus}{y}
\newcommand{\formalAdditionalCircle}{\Extended{\formalTorus}_0}
\newcommand{\morseUniversalSpace}{g}
\newcommand{\CoordwiseProductAction}{\varrho}

\newcommand{\eltTorus}{\mathbf{t}}
\newcommand{\eltUniversalSpace}{e}
\newcommand{\eltManifold}{m}
\newcommand{\eltContactManifold}{y}
\newcommand{\eltAdditionalCircle}{a}
\newcommand{\eltCircle}{\theta}
\newcommand{\eltSphere}{z}
\newcommand{\eltExtendedTorus}{\Extended{\eltTorus}}

\newcommand{\critManifold}{x}
\newcommand{\flowManifold}{\gamma}
\newcommand{\flowUniversalSpace}{v}
\newcommand{\FloerSolution}{u}
\newcommand{\critUniversalSpace}{\epsilon}
\newcommand{\critClassifyingSpace}{c}
\newcommand{\CriticalPointSet}[1]{\operatorname{Crit}( #1 )}
\newcommand{\eqCriticalPointSet}[1]{\operatorname{Crit}^\eqnt( #1 )}
\newcommand{\minUniversalSpace}{\min(\morseUniversalSpace)}

\newcommand{\Hamiltonian}{H}
\newcommand{\ActionHamiltonian}{K}
\newcommand{\CocharacterSlope}[1]{\kappa^{#1}}
\newcommand{\eqHamiltonian}{H^{\eqnt}}
\newcommand{\HamiltonianOrbit}{x}
\newcommand{\Slope}{\lambda}
\newcommand{\HamiltonianOrbitSet}{\mathcal{P}}
\newcommand{\FillingBasis}{\mathcal{B}}

\newcommand{\Cohomology}{H}
\newcommand{\Homology}{H}
\newcommand{\QuantumCohomology}{QH}
\newcommand{\QuantumNotation}{\mathcal{Q}}
\newcommand{\GradedCompletedTensorProduct}{\widehat{\otimes}}
\newcommand{\QuantumSeidel}{Q\mathcal{S}}
\newcommand{\FloerSeidel}{F\mathcal{S}}
\newcommand{\WeightedQuantumSeidel}{W\!Q\mathcal{S}}
\newcommand{\FloerCochainComplex}{FC}
\newcommand{\FloerCohomology}{FH}
\newcommand{\ContinuationMap}{\phi}
\newcommand{\SymplecticCohomology}{SH}
\newcommand{\PSSisomorphism}{\operatorname{PSS}}

\newcommand{\ClutchingBundle}[1]{E(#1)}
\newcommand{\Hemisphere}{\mathbb{D}}
\newcommand{\ClutchingProjection}{\pi}
\newcommand{\ClutchingBilinearForm}{\Omega}
\newcommand{\Pole}{z}
\newcommand{\ClutchingACS}{\mathbf{J}}
\newcommand{\eqClutchingACS}{\mathbf{J}^\eqnt}
\newcommand{\SphereACS}{j}
\newcommand{\ClutchingSection}{u}
\newcommand{\ClutchingFibreInclusion}{I}

\newcommand{\MomentMap}[1]{\mu_{#1}}
\newcommand{\TorusLieAlgebra}{\mathfrak{t}}
\newcommand{\DualTorusLieAlgebra}{\TorusLieAlgebra^{\ast}}
\newcommand{\MomentPolytope}{\Delta}
\newcommand{\pointDualTorusLieAlgebra}{y}
\newcommand{\InwardNormalVector}{e}
\newcommand{\numberFacets}{N}
\newcommand{\FacetBoundaryCondition}{\lambda}
\newcommand{\Facet}{F}
\newcommand{\Face}{F}
\newcommand{\InwardNormalFan}{\Sigma}
\newcommand{\ToricGenerator}{x}
\newcommand{\StanleyReisnerIdeal}{J_{\text{SR}}}
\newcommand{\LinearRelationsIdeal}{J_{\text{lin}}}
\newcommand{\Divisor}{D}
\newcommand{\edgeMomentPolytope}{e}
\newcommand{\edgeVectorMomentPolytope}{\gamma}
\newcommand{\vertexMomentPolytope}{v}
\newcommand{\QuantumStanleyReisnerIdeal}{J_{\text{qSR}}}
\newcommand{\NovikovExponentForDivisor}{d}
\newcommand{\ToricGeneratorWithNovikovWeighting}{z}
\newcommand{\LineBundle}{\mathcal{O}}
\newcommand{\Cone}{C}
\newcommand{\NeighbouringFacets}{N}
\newcommand{\Star}{\operatorname{Star}}

\newcommand{\eltComplexProjectiveSpace}{w}

\newcommand{\ToricLineBundle}{E}
\newcommand{\BaseToricLineBundle}{B}
\newcommand{\ProjectionToricLineBundle}{\pi}
\newcommand{\IndexOfMonotonicity}{\lambda}
\newcommand{\NegativeLineBundleNumber}{k}
\newcommand{\DivisorContributionToBundle}{m}
\newcommand{\SphereBundle}{S}

\newcommand{\PerturbedStableManifold}{\StableManifold_{\text{pert}}}
\newcommand{\FillingIntersectionMap}{w}

\title[Shift operators and connections on equivariant symplectic cohomology]{Shift operators and flat connections on equivariant symplectic cohomology}
\author{Todd Liebenschutz-Jones}
\date{\today}
\thanks{\textit{Correspondence}: \href{mailto:todd.liebenschutz-jones@maths.ox.ac.uk}{todd.liebenschutz-jones@maths.ox.ac.uk}}


\begin{document}

\maketitle
\begin{abstract}
    We construct shift operators on equivariant symplectic cohomology which generalise the shift operators on equivariant quantum cohomology in algebraic geometry.
    That is, given a Hamiltonian action of the torus $T$, we assign to a cocharacter of $T$ an endomorphism of $(S^1 \times T)$-equivariant Floer cohomology based on the equivariant Floer Seidel map.
    We prove the shift operator commutes with a connection.
    This connection is a multivariate version of Seidel's $q$-connection on $S^1$-equivariant Floer cohomology and generalises the Dubrovin connection on equivariant quantum cohomology.
    We prove that the connection is flat, which was conjectured by Seidel.
    As an application, we compute these algebraic structures for toric manifolds.
\end{abstract}

\section{Introduction}
\label{sec:introduction}

\subsection{Flat connections on \texorpdfstring{$\Circle$}{S1}-equivariant symplectic cohomology}
\label{sec:flat-connections-intro-s1-only}

The $\Circle$-equiv\-ari\-ant symplectic cohomology $\SymplecticCohomology^\ArbitraryIndex _{\Circle} (\Manifold)$ of a convex symplectic manifold $\Manifold$ is a $\Integers [\uformal]$-module invariant introduced by Viterbo \cite[Section~5]{viterbo_functors_1996}, and later developed by Seidel \cite[Section~8b]{seidel_biased_2007} and Bourgeois and Oancea \cite{bourgeois_s1-equivariant_2017} among others.
It builds on symplectic cohomology by incorporating the natural $\Circle$-action on the loop space $\ContractibleLoopSpace{\Manifold} = \Set{\text{contractible $\HamiltonianOrbit : \Circle \to \Manifold$}}$ given by $(\eltCircle \cdot \HamiltonianOrbit)(t) = \HamiltonianOrbit(t - \eltCircle)$ for $\eltCircle \in \Circle$.
This $\Circle$-action readily distinguishes constant and nonconstant Hamiltonian orbits by their stabilizer groups.
The constant orbits capture topological information about $\Manifold$ via a localisation theorem \cite[Theorem~1.1]{zhao_periodic_2019}, while the nonconstant orbits give rise to the \define{positive $\Circle$-equivariant symplectic cohomology} $\SymplecticCohomology^{\ArbitraryIndex, +}_{\Circle} (\Manifold)$ of $\Manifold$, which is an effective invariant for distinguishing contact structures \cite[Theorem~1.4]{bourgeois_s1-equivariant_2017, gutt_positive_2017}.
McLean and Ritter used $\Circle$-equivariant symplectic cohomology to deduce the cohomology of a crepant resolution of an isolated singularity by analysing the nonconstant orbits, establishing a new proof of the McKay correspondance \cite{mclean_mckay_2018}.

The $\Circle$-action breaks the definition of the pair-of-pants product.
Equipped with this product, symplectic cohomology is a graded-commutative, associative and unital algebra, and in fact has a full TQFT structure \cite[Section~(8a)]{ritter_topological_2013, seidel_biased_2007}.
Without the pair-of-pants product, $\SymplecticCohomology^\ArbitraryIndex _{\Circle} (\Manifold)$ has only its module structure, a poor offering compared to the assortment of algebraic structures available on symplectic cohomology.

To remedy this, Seidel defined the \define{$q$-connection}, which is an additive endomorphism 
    \begin{equation}
        \Gamma_q : \FloerCohomology^\ArbitraryIndex _{\Circle} (\Manifold, \Slope) \to \FloerCohomology^{\ArbitraryIndex + 2} _{\Circle} (\Manifold, \Slope)
    \end{equation}
of the $\Circle$-equivariant Floer cohomology of a Hamiltonian function with slope $\Slope$ \cite[Section~(2a)]{seidel_connections_2018}.
The endomorphism satisfies the Leibniz rule
    \begin{equation}
    \label{eqn:leibniz-rule-for-seidel-q-connection}
        \Gamma_q (f x) = f \Gamma_q (x) + \uformal (\partial_q f) x
    \end{equation}
for $f \in \NovikovRing \llbracket \uformal \rrbracket$ and $x \in \FloerCohomology^\ArbitraryIndex _{\Circle} (\Manifold, \Slope)$.
Under Seidel's assumptions, the Novikov ring $\NovikovRing$ is a ring of formal power series in $q$ and $\partial_q : \NovikovRing \llbracket \uformal \rrbracket \to \NovikovRing \llbracket \uformal \rrbracket$ is the operation which differentiates with respect to $q$.

Seidel's $q$-connection is an example of a \define{differential connection}, an algebraic structure that abstracts the algebraic properties of a connection on a vector bundle (\autoref{sec:differential-connections-definition}).
The existence of a map satisfying \eqref{eqn:leibniz-rule-for-seidel-q-connection} indicates that the $\NovikovRing \llbracket \uformal \rrbracket$-module $\FloerCohomology^\ArbitraryIndex _{\Circle} (\Manifold, \Slope)$ respects the differentiation operation $\uformal \partial_q : \NovikovRing \llbracket \uformal \rrbracket \to \NovikovRing \llbracket \uformal \rrbracket$ in the natural way.
As such, the map $\Gamma_q$ upgrades the underlying $\NovikovRing \llbracket \uformal \rrbracket$-module structure on $\FloerCohomology^\ArbitraryIndex _{\Circle} (\Manifold, \Slope)$.
Differential connections are useful when working with maps which preserve the connection.
For example, when we compute shift operators in \autoref{sec:toric-manifolds-general}, the shift operators preserve the connection and we can determine each shift operator by its value on a single element.
Compare this to how a linear map is determined by its values on a basis.

The operation $\uformal \partial_q$ on the $\Circle$-equivariant Floer cochain complex is not a cochain map, so a correction term is required when defining $\Gamma_q$.
We can understand this correction term by reducing to non-equivariant cohomology.
We have the following commutative diagram.
\begin{equation}
    \begin{tikzcd}[column sep=large]
        \FloerCohomology^\ArbitraryIndex _{\Circle} (\Manifold, \Slope)
        \arrow[r, "\Gamma_q"]
        \arrow[d, "\uformal \mapsto 0"]
        & \FloerCohomology^{\ArbitraryIndex + 2} _{\Circle} (\Manifold, \Slope)
        \arrow[d, "\uformal \mapsto 0"] \\
        \FloerCohomology^\ArbitraryIndex (\Manifold, \Slope)
        \arrow[r, "{q \Inverse [\SymplecticForm] \QuantumProduct}"]
        & \FloerCohomology^{\ArbitraryIndex + 2} (\Manifold, \Slope)
    \end{tikzcd}
\end{equation}
The quantum action $[\SymplecticForm] \QuantumProduct$ by the cohomology class $[\SymplecticForm] \in \Cohomology^2 (\Manifold; \RealNumbers)$ counts Floer solutions $\FloerSolution : \RealNumbers \times \Circle \to \Manifold$ which intersect a locally-finite cycle representing $[\SymplecticForm]$ at $\FloerSolution(0, 0)$.
The map $q \Inverse [\SymplecticForm] \QuantumProduct$ is morally similar to performing $\partial_q$, but `on the Floer solutions': for the operation $\partial_q$, we first multiply by the exponent of $q$ (which records the symplectic energy $[\SymplecticForm] (\DegreeTwoClass)$ of classes $\DegreeTwoClass \in \Homology_2 (\Manifold)$), and then we divide by $q$.

In this paper, we define a generalisation of Seidel's $q$-connection for any degree-2 cohomology class.

\begin{theorem}
\label{thm-intro:differential-connection-s1-only}
    For every $\DegreeTwoCoclass \in \Cohomology^2 (\Manifold; \Integers)$, there is a $\Integers [\uformal]$-module endomorphism
        \begin{equation}
        \label{eqn-intro:floer-connection-circle-only}
            \Connection_\DegreeTwoCoclass : \FloerCohomology^\ArbitraryIndex _{\Circle} (\Manifold, \Slope) \to \FloerCohomology^{\ArbitraryIndex + 2} _{\Circle} (\Manifold, \Slope)
        \end{equation}
     which satisfies the Leibniz rule
        \begin{equation}
        \label{eqn:leibniz-rule-for-my-connection-no-large-torus}
            \Connection_\DegreeTwoCoclass (f x) = f \Connection_\DegreeTwoCoclass (x) + \uformal \left( \dbyd{\DegreeTwoCoclass} f \right) x
        \end{equation}
    and makes the following diagram commute.
\begin{equation}
    \begin{tikzcd}[column sep=large]
        \FloerCohomology^\ArbitraryIndex _{\Circle} (\Manifold, \Slope)
        \arrow[r, "\Connection_\DegreeTwoCoclass"]
        \arrow[d, "\uformal \mapsto 0"]
        & \FloerCohomology^{\ArbitraryIndex + 2} _{\Circle} (\Manifold, \Slope)
        \arrow[d, "\uformal \mapsto 0"] \\
        \FloerCohomology^\ArbitraryIndex (\Manifold, \Slope)
        \arrow[r, "{\DegreeTwoCoclass \QuantumProduct}"]
        & \FloerCohomology^{\ArbitraryIndex + 2} (\Manifold, \Slope)
    \end{tikzcd}
\end{equation}
    These maps commute with continuation maps, and hence they induce $\Integers [\uformal]$-module endomorphisms
        \begin{equation}
            \Connection_\DegreeTwoCoclass : \SymplecticCohomology^\ArbitraryIndex _{\Circle} (\Manifold) \to \SymplecticCohomology^{\ArbitraryIndex + 2} _{\Circle} (\Manifold)
        \end{equation}
    on $\Circle$-equivariant symplectic cohomology which satisfy \eqref{eqn:leibniz-rule-for-my-connection-no-large-torus}.
\end{theorem}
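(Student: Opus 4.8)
The plan is to build $\Connection_\DegreeTwoCoclass$ directly on the $\Circle$-equivariant Floer cochain complex $\FloerCochainComplex^\ArbitraryIndex_\Circle(\Manifold,\Slope)$ — modelled by the Borel construction with its usual finite-dimensional approximations and equivariant differential $\partial^{\mathrm{eq}} = \sum_{j \ge 0} \uformal^{j}\partial_{j}$ — following Seidel's construction of $\Gamma_q$. I would set
\[
  \Connection_\DegreeTwoCoclass = \uformal \, \dbyd{\DegreeTwoCoclass} + \mathcal{K}_\DegreeTwoCoclass,
\]
where $\dbyd{\DegreeTwoCoclass}$ is the degree-$0$ derivation of $\NovikovRing \llbracket \uformal \rrbracket$ that differentiates Novikov coefficients in the direction dual to $\DegreeTwoCoclass$ (it sends a Novikov monomial $q^{A}$ to $\langle \DegreeTwoCoclass, A \rangle q^{A}$ and fixes $\uformal$), and $\mathcal{K}_\DegreeTwoCoclass = \sum_{j \ge 0} \uformal^{j} \mathcal{K}_{\DegreeTwoCoclass, j}$ is a geometric correction operator. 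To define the latter I would fix a locally finite cycle $Z \subset \Manifold$ that is cylindrical near infinity and Poincaré dual to $\DegreeTwoCoclass$ — such a $Z$ exists because $\DegreeTwoCoclass$ is integral — and let $\mathcal{K}_{\DegreeTwoCoclass, j}$ count, with signs and with Novikov weight $q^{[\FloerSolution]}$, the rigid elements of the parametrized moduli spaces of $\Circle$-equivariant Floer solutions over the $j$-cells of the approximating spaces, each carrying one extra marked point on the cylinder that is constrained to lie on $Z$. The incidence condition is real codimension $2$, so $\Connection_\DegreeTwoCoclass$ raises cohomological degree by $2$, as required.

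With this definition the $\Integers[\uformal]$-linearity and the Leibniz rule \eqref{eqn:leibniz-rule-for-my-connection-no-large-torus} are formal: $\mathcal{K}_\DegreeTwoCoclass$ is $\NovikovRing \llbracket \uformal \rrbracket$-linear because gluing adds homology classes, so $\mathcal{K}_\DegreeTwoCoclass$ commutes with the $\NovikovRing \llbracket \uformal \rrbracket$-module structure, while $\uformal \, \dbyd{\DegreeTwoCoclass}$ satisfies \eqref{eqn:leibniz-rule-for-my-connection-no-large-torus} because $\dbyd{\DegreeTwoCoclass}$ is a derivation and $\uformal$ is central. The commuting square is also quick: setting $\uformal = 0$ annihilates $\uformal\,\dbyd{\DegreeTwoCoclass}$ and every correction $\uformal^{j}\mathcal{K}_{\DegreeTwoCoclass, j}$ with $j \ge 1$, leaving $\mathcal{K}_{\DegreeTwoCoclass, 0}$, which by construction is the chain-level count of Floer cylinders through $Z$ and hence represents $\DegreeTwoCoclass \QuantumProduct (\cdot)$.

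The crux — and the step I expect to be the main obstacle — is that $\Connection_\DegreeTwoCoclass$ is a cochain map, i.e.\ $[\partial^{\mathrm{eq}}, \Connection_\DegreeTwoCoclass] = 0$, even though neither summand is one on its own. The commutator $[\partial^{\mathrm{eq}}, \uformal\,\dbyd{\DegreeTwoCoclass}]$ is nonzero precisely because a rigid solution $\FloerSolution$ feeding into $\partial^{\mathrm{eq}}$, say of class $A$, gets differentiated to the scalar $\langle \DegreeTwoCoclass, A \rangle$; and $[\partial^{\mathrm{eq}}, \mathcal{K}_\DegreeTwoCoclass]$ is read off from the codimension-one boundary of the one-dimensional parametrized moduli spaces of equivariant Floer solutions carrying an incidence point on $Z$. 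I expect this boundary to split into: Floer and equivariant breaking, which reassembles into $\partial^{\mathrm{eq}} \mathcal{K}_\DegreeTwoCoclass + \mathcal{K}_\DegreeTwoCoclass \partial^{\mathrm{eq}}$ and cancels; degenerations where the family parameter in the approximating space collapses or where a sphere bubble carries the incidence point, which a generic choice of $Z$ and of almost complex structures excludes on dimensional grounds; and the "escaping point'' strata where the extra marked point runs to an end of the cylinder, which reproduce, through a divisor-type relation on the Novikov weights, exactly $-[\partial^{\mathrm{eq}}, \uformal\,\dbyd{\DegreeTwoCoclass}]$. Summing the contributions then gives $[\partial^{\mathrm{eq}}, \Connection_\DegreeTwoCoclass] = 0$. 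The two genuinely technical inputs will be (i) the compactness and gluing package for these parametrized moduli spaces in the convex setting, which is what forces $Z$ to be cylindrical near infinity and requires a maximum principle to confine Floer solutions, and (ii) the precise accounting of the power of $\uformal$ attached to the escaping-point stratum, which is what pins down the coefficient $\uformal$ — rather than $1$ — in front of $\dbyd{\DegreeTwoCoclass}$ in \eqref{eqn:leibniz-rule-for-my-connection-no-large-torus}.

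The remaining claims are routine. Independence of $Z$ within its class and of the Floer and equivariant perturbation data follows from standard chain-homotopy arguments, so $\Connection_\DegreeTwoCoclass$ descends to cohomology. Compatibility with continuation maps $\ContinuationMap$ is proved by the analogous cobordism argument on moduli spaces of continuation solutions carrying an incidence point on a cycle held fixed throughout the homotopy of Hamiltonians, giving $\ContinuationMap \circ \Connection_\DegreeTwoCoclass \simeq \Connection_\DegreeTwoCoclass \circ \ContinuationMap$ up to chain homotopy; passing to the direct limit over slopes $\Slope$ then yields the endomorphism of $\SymplecticCohomology^\ArbitraryIndex_\Circle(\Manifold)$ satisfying \eqref{eqn:leibniz-rule-for-my-connection-no-large-torus}.
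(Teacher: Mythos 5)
Your overall route --- formal Novikov differentiation plus a geometric correction defined by intersecting equivariant Floer solutions with a cycle Poincar\'e dual to $\DegreeTwoCoclass$, with the chain-map property extracted from the boundary of one-dimensional moduli spaces and the "escaping marked point" strata --- is the same as the paper's. But there is a genuine gap in the cancellation you claim, and it is exactly the point the paper's construction is built around: your two-term operator $\uformal \dbyd{\DegreeTwoCoclass} + \mathcal{K}_\DegreeTwoCoclass$ is \emph{not} a chain map. The commutator $[\partial^{\mathrm{eq}}, \uformal\,\dbyd{\DegreeTwoCoclass}]$ weights a rigid solution by $\DegreeTwoCoclass(\DegreeTwoClass)$, where $\DegreeTwoClass \in \Homology_2(\Manifold)$ is the class determined by the condition $\WithFilling{\HamiltonianOrbit}^+ = \FloerSolution \ConnectedSum \WithFilling{\HamiltonianOrbit}^-$ on the \emph{chosen fillings} of the two asymptotic orbits. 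The escaping-point strata of your moduli spaces instead weight the same solution by the geometric intersection number $\Count(\FloerSolution(\RealNumbers\times\Circle)\Intersection Z)$. These two integers differ by the intersection numbers of the two cappings with $Z$ (cap the cylinder off with the fillings: the resulting sphere represents $\DegreeTwoClass$), so the "divisor-type relation on the Novikov weights" you invoke is false at chain level. In the paper's notation, the escaping-point strata produce $\uformal\, d_\DegreeTwoCoclass$ (the basis-free weighted differential, \autoref{prop:quantum-multiplication-commutes-with-differential-with-error}) while $[\partial^{\mathrm{eq}},\uformal\dbyd{\DegreeTwoCoclass}]$ produces $\uformal\, d^{\FillingBasis}_\DegreeTwoCoclass$ (equation \eqref{eqn:chain-map-failure-for-formal-differentiation}), and $d_\DegreeTwoCoclass - d^{\FillingBasis}_\DegreeTwoCoclass = [d, \FillingIntersectionMap^{\FillingBasis}_\DegreeTwoCoclass]$ is a nonzero commutator in general. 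The fix is the paper's third term: one must also "differentiate the cappings geometrically" by subtracting $\uformal\,\FillingIntersectionMap^{\FillingBasis}_\DegreeTwoCoclass$, the operator counting intersections of the prescribed fillings with (the stable manifold of) $Z$; only the three-term sum $\uformal\dbyd{\DegreeTwoCoclass} + \DegreeTwoCoclass\QuantumAction - \uformal\FillingIntersectionMap^{\FillingBasis}_\DegreeTwoCoclass$ is a chain map. (Setting $\uformal=0$ still kills this extra term, so the commutative square survives unchanged; independence of the choice of filling basis follows from \eqref{eqn:basis-independence-for-differentiation-with-filling-map}.) In Seidel's original setting, where $\FirstChernClass=0$ and the Novikov variable records only energy, this bookkeeping can be hidden, but in the present setup --- Novikov ring over $\Homology_2(\Manifold)$, generators given by capped orbits --- the term is unavoidable.

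A secondary point you should not wave away: "one extra marked point on the cylinder constrained to lie on $Z$" is not an $\Circle$-equivariant condition if the marked point is pinned at $(0,0)$, and has the wrong dimension if it is free on $\{0\}\times\Circle$. The paper resolves this by letting $t_0\in\Circle$ be determined by the parameter in $\UniversalSpace\Circle$ through an $\arg$ function defined only on a dense invariant subset; the breaking of the auxiliary flowline $\flowUniversalSpace^{\min}$ out of that subset is precisely what produces the overall factor of $\uformal$ in front of the weighted differential. You flag "the precise accounting of the power of $\uformal$" as a technical input, which is honest, but in this model that accounting is the content of the construction rather than a detail, so it should be spelled out rather than assumed.
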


Our setup differs from Seidel's construction in two important ways.
First, instead of Seidel's formal $\Integers [\uformal]$-module structure, our $\Integers [\uformal]$-module structure arises from a Morse cup product construction in the classifying space $\ClassifyingSpace \Circle = \InfiniteComplexProjectiveSpace$, as in our previous paper \cite[Section~4.2.2]{liebenschutz-jones_intertwining_2020}.
This uses the identification $\Integers [\uformal] = \Cohomology^\ArbitraryIndex (\ClassifyingSpace \Circle)$.
Second, our Novikov ring records classes $\DegreeTwoClass \in \Homology_2 (\Manifold; \Integers)$, unlike Seidel's Novikov ring which records only the symplectic energy $[\SymplecticForm](\DegreeTwoClass)$.
The operation $\dbyd{\DegreeTwoCoclass}$ is given by
    \begin{equation}
        \dbyd{\DegreeTwoCoclass}(\NovVariable^\DegreeTwoClass) = \DegreeTwoCoclass(\DegreeTwoClass) \NovVariable^\DegreeTwoClass.
    \end{equation}
Notice this operation does not change the exponent of $\NovVariable$, unlike normal differentiation.

For any $\DegreeTwoCoclass, \secondDegreeTwoCoclass \in \Cohomology^2 (\Manifold)$, the two operations $\dbyd{\DegreeTwoCoclass}$ and $\dbyd{\secondDegreeTwoCoclass}$ commute.
We show that the corresponding maps $\Connection_{\DegreeTwoCoclass}$ and $\Connection_{\secondDegreeTwoCoclass}$ also commute.
This condition is called \define{flatness}, continuing the analogy with connections on a vector bundle (see \autoref{eg:abstract-connection-on-vector-bundle-motivation}).
The result was anticipated by Seidel \cite[Section~(2a)]{seidel_connections_2018}, but is new.

\begin{theorem}
    [Flatness]
\label{thm-intro:flatness-of-differential-connection-s1-only}
    The maps $\Connection_{\DegreeTwoCoclass}$ and $\Connection_{\secondDegreeTwoCoclass}$ commute for any $\DegreeTwoCoclass, \secondDegreeTwoCoclass \in \Cohomology^2 (\Manifold)$.
\end{theorem}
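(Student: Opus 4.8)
The plan is to exhibit the commutator $\Connection_{\DegreeTwoCoclass} \ComposedWith \Connection_{\secondDegreeTwoCoclass} - \Connection_{\secondDegreeTwoCoclass} \ComposedWith \Connection_{\DegreeTwoCoclass}$ as (up to sign) the cochain boundary operator of a moduli space of Floer cylinders carrying \emph{two} incidence marked points, and thereby to see that it is null-homotopic. First I would record the purely algebraic reduction. A short computation with the Leibniz rule \eqref{eqn:leibniz-rule-for-my-connection-no-large-torus}, using that $\dbyd{\DegreeTwoCoclass}$ and $\dbyd{\secondDegreeTwoCoclass}$ commute on $\NovikovRing \llbracket \uformal \rrbracket$ and that both annihilate $\uformal$, shows that all the $\uformal$-weighted terms in $\Connection_{\DegreeTwoCoclass}(\Connection_{\secondDegreeTwoCoclass}(fx))$ and $\Connection_{\secondDegreeTwoCoclass}(\Connection_{\DegreeTwoCoclass}(fx))$ cancel, so that the commutator is a genuine $\NovikovRing \llbracket \uformal \rrbracket$-module endomorphism — the curvature of the connection — and it is this endomorphism that must be shown to vanish. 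It therefore suffices to produce a $\Integers[\uformal]$-linear cochain null-homotopy of the two chain-level composites; compatibility with continuation maps then propagates the vanishing to $\SymplecticCohomology^\ArbitraryIndex _{\Circle}(\Manifold)$ exactly as in \autoref{thm-intro:differential-connection-s1-only}.

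For the homotopy I would work with Floer cylinders rather than higher genus-zero domains. Classically, flatness of the Dubrovin connection is a consequence of the associativity (WDVV) equations, proved with four-punctured spheres; but the pair-of-pants product is unavailable on $\Circle$-equivariant symplectic cohomology, and for the degree-two classes at hand the relevant moduli count collapses — via the divisor relation underlying $\dbyd{\DegreeTwoCoclass}(\NovVariable^\DegreeTwoClass) = \DegreeTwoCoclass(\DegreeTwoClass)\NovVariable^\DegreeTwoClass$ — to cylinders carrying incidence marked points. Concretely, fix locally finite cycles $Z_\DegreeTwoCoclass$, $Z_\secondDegreeTwoCoclass$ Poincaré dual to $\DegreeTwoCoclass$, $\secondDegreeTwoCoclass$, and consider, over the $\ClassifyingSpace\Circle = \InfiniteComplexProjectiveSpace$-Morse model used to build $\Connection$, the parametrised moduli spaces of Floer solutions $\FloerSolution : \RealNumbers \times \Circle \to \Manifold$ equipped with two marked points $\eltSphere_1, \eltSphere_2$ ranging freely over the cylinder with $\FloerSolution(\eltSphere_1) \in Z_\DegreeTwoCoclass$ and $\FloerSolution(\eltSphere_2) \in Z_\secondDegreeTwoCoclass$, together with the same equivariant Morse-flow and $\Circle$-reparametrisation correction data that enters each $\Connection$ individually. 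The cochain homotopy $K_{\DegreeTwoCoclass, \secondDegreeTwoCoclass}$ counts the rigid such configurations. Because the two marked points are unordered and live on a two-dimensional domain, there is no interior codimension-one wall where they collide (collision is codimension two), so the only new phenomenon relative to the construction of a single $\Connection$ is that a Floer breaking can distribute $\eltSphere_1$ and $\eltSphere_2$ onto the two broken components in either vertical order.

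The heart of the argument is the boundary analysis of the one-dimensional such moduli spaces. The codimension-one strata are: (i) a Floer breaking at an intermediate orbit carrying neither marked point, which reassembles $\Boundary \ComposedWith K_{\DegreeTwoCoclass, \secondDegreeTwoCoclass} + K_{\DegreeTwoCoclass, \secondDegreeTwoCoclass} \ComposedWith \Boundary$; (ii) a Floer breaking with $\eltSphere_1$ on the outgoing component and $\eltSphere_2$ on the incoming component, which glues to $\Connection_{\DegreeTwoCoclass} \ComposedWith \Connection_{\secondDegreeTwoCoclass}$, and the opposite distribution, which glues to $\Connection_{\secondDegreeTwoCoclass} \ComposedWith \Connection_{\DegreeTwoCoclass}$, these being the two ends of the separating parameter and so carrying opposite induced orientations; (iii) degenerations of the $\ClassifyingSpace\Circle$-Morse trajectories and of the $\Circle$-reparametrisation data, which by construction contribute to $\Boundary \ComposedWith K + K \ComposedWith \Boundary$ and, crucially, ensure that the broken configurations in (ii) really carry the full operators $\Connection$ (correction terms included), not merely their leading quantum-action parts; and (iv) sphere bubbling at a marked point or in the interior, excluded by the monotonicity and index hypotheses already in force when constructing $\Connection$. (The configurations where both marked points escape to the same end are codimension two, a corner of the moduli space; they produce the symmetric $\uformal^2 \, \dbyd{\DegreeTwoCoclass}\dbyd{\secondDegreeTwoCoclass}$-type contributions that matched between the two composites in the algebraic reduction above, so they do not affect the one-dimensional count.) Since the signed count of the boundary of a compact oriented one-manifold vanishes, summing these contributions yields $\Boundary \ComposedWith K_{\DegreeTwoCoclass, \secondDegreeTwoCoclass} + K_{\DegreeTwoCoclass, \secondDegreeTwoCoclass} \ComposedWith \Boundary = \pm (\Connection_{\DegreeTwoCoclass} \ComposedWith \Connection_{\secondDegreeTwoCoclass} - \Connection_{\secondDegreeTwoCoclass} \ComposedWith \Connection_{\DegreeTwoCoclass})$ on cochains, and the theorem follows on cohomology.

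I expect the main obstacle to be step (iii): the bookkeeping of the equivariant correction terms. Each $\Connection_\DegreeTwoCoclass$ is not simply ``count cylinders through $Z_\DegreeTwoCoclass$'' but that count corrected by terms built from partial $\Circle$-rotations and the $\ClassifyingSpace\Circle$-Morse data — it is precisely this that forces the $\uformal \, \dbyd{\DegreeTwoCoclass}$ term and makes $\Connection_\DegreeTwoCoclass$ a cochain map — so $K_{\DegreeTwoCoclass, \secondDegreeTwoCoclass}$ must incorporate the analogous two-point correction data, and one must verify that the codimension-one strata of the corrected two-point moduli space reassemble $\Connection_\DegreeTwoCoclass \ComposedWith \Connection_\secondDegreeTwoCoclass$ and $\Connection_\secondDegreeTwoCoclass \ComposedWith \Connection_\DegreeTwoCoclass$ on the nose rather than some truncation thereof. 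A secondary, more routine, obstacle is transversality for the parametrised two-point moduli spaces with the incidence constraints together with Gromov-type compactness; this should follow the transversality package already established for $\Connection_\DegreeTwoCoclass$, after a generic perturbation of $Z_\DegreeTwoCoclass$ and $Z_\secondDegreeTwoCoclass$ into general position. Finally, orientations and signs need care so that the two distributions in (ii) genuinely enter with opposite signs, producing the commutator rather than the anticommutator.
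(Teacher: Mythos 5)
Your overall strategy is the one the paper actually uses (the chain homotopy behind \autoref{thm:flatness-of-differential-floer-connection}, which is the result this intro theorem summarises): a one-parameter family of Floer cylinders carrying two incidence conditions, whose two ends --- the two orders in which the cylinder can break between the marked points --- produce $\Connection_{\DegreeTwoCoclass} \ComposedWith \Connection_{\secondDegreeTwoCoclass}$ and $\Connection_{\secondDegreeTwoCoclass} \ComposedWith \Connection_{\DegreeTwoCoclass}$ with opposite signs. The gap sits exactly where you flag ``the main obstacle'', namely the disposal of the $\uformal$-weighted terms, and your proposed resolution is structurally wrong. Your algebraic reduction only shows that the commutator is $\NovikovRing \llbracket \uformal \rrbracket$-linear (the curvature of any connection is module-linear); it does not kill the cross terms $\left[ \DegreeTwoCoclass \QuantumAction {} \, , \ \uformal \left( \dbyd{\secondDegreeTwoCoclass} - \FillingIntersectionMap^{\FillingBasis}_{\secondDegreeTwoCoclass} \right) \right]$ acting on the cochain complex. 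These are nonzero: the quantum action shifts the Novikov exponent by the class of the Floer cylinder, so commuting it past $\dbyd{\secondDegreeTwoCoclass}$ leaves behind $\uformal$ times the $\secondDegreeTwoCoclass$-weighted quantum action, as in \eqref{eqn:differentiation-intertwining-with-quantum-action}. After the genuine algebraic cancellations the operator to be null-homotoped is \eqref{eqn:commuting-floer-connection-simplified}: the double-quantum-action commutator \emph{plus} $\uformal$ times a difference of two weighted quantum actions.

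Consequently your stratum (iii) is misassigned. The breaking of the auxiliary half-flowline in $\UniversalSpace \Circle$ that pins down the intersection angle $t_0$ (your ``$\Circle$-reparametrisation correction data'') is a genuine codimension-one boundary of the two-point moduli space; it neither contributes to $dK + Kd$ nor ``upgrades'' the broken configurations of your stratum (ii) to full connections. The broken configurations in (ii) really do yield only $\DegreeTwoCoclass \QuantumAction (\secondDegreeTwoCoclass \QuantumAction (\Argument))$ and its reverse, not $\Connection_{\DegreeTwoCoclass} \ComposedWith \Connection_{\secondDegreeTwoCoclass}$. The Morse-breaking strata instead contribute $\uformal$ times the weighted quantum actions (only the breaking through the degree-2 critical point of $\ClassifyingSpace \Circle$ is isolated), and it is these contributions that cancel the surviving cross terms above. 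If you run the boundary count as written --- Morse degenerations absorbed into $dK + Kd$, Floer breakings producing full connections --- the identity does not close up. The fix is the paper's bookkeeping: treat the weighted operations as their own boundary terms and match them against \eqref{eqn:differentiation-intertwining-with-quantum-action}.
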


Like Seidel's $q$-connection, our connection $\Connection$ can be viewed as a Floer-theoretic analogue of a Dubrovin connection on $\Circle$-equivariant quantum cohomology $\QuantumCohomology^\ArbitraryIndex _{\Circle} (\Manifold)$ which differentiates with respect to the Novikov variable.
Here, $\Circle$ acts trivially on $\Manifold$.
The connection is given by
    \begin{equation}
    \label{eqn-intro:quantum-connection-circle-only}
        \begin{aligned}
            &\Connection_{\DegreeTwoCoclass} : \QuantumCohomology^\ArbitraryIndex _{\Circle} (\Manifold) \to \QuantumCohomology^{\ArbitraryIndex + 2} _{\Circle} (\Manifold) \\
            &\Connection_{\DegreeTwoCoclass} (f x) = \uformal \left( \dbyd{\DegreeTwoCoclass} f \right) x + \DegreeTwoCoclass \QuantumProduct (f x)
        \end{aligned}
    \end{equation}
for $f \in \NovikovRing$ and $x \in \Cohomology^\ArbitraryIndex _{\Circle} (\Manifold)$, using the isomorphism $\QuantumCohomology^\ArbitraryIndex _{\Circle} (\Manifold) \cong \NovikovRing \Tensor \Cohomology^\ArbitraryIndex _{\Circle} (\Manifold)$.
For small slopes, our connection \eqref{eqn-intro:floer-connection-circle-only} agrees with the connection \eqref{eqn-intro:quantum-connection-circle-only} under the PSS isomorphism.
$\Circle$-equivariant quantum cohomology has many different connections, and all of these connections are flat by a general argument \cite{dubrovin_integrable_1992}.
The flatness corresponds to important geometric properties of the $\Circle$-equivariant quantum product---namely its graded-commutativity and associativity.

Like the connection \eqref{eqn-intro:quantum-connection-circle-only}, our connection on $\FloerCohomology^\ArbitraryIndex_{\Circle} (\Manifold, \Slope)$ is the sum of a formal differentiation operation $\uformal \dbyd{\DegreeTwoCoclass}$ with an operation $\DegreeTwoCoclass \QuantumAction$.
The $\Circle$-equivariant quantum action $\DegreeTwoCoclass \QuantumAction$ counts Floer solutions $\FloerSolution : \RealNumbers \times \Circle \to \Manifold$ which intersect with $\DegreeTwoCoclass$ at $\FloerSolution (0, t_0)$, where the value of $t_0 \in \Circle$ is determined by an equivariant construction on the classifying space $\ClassifyingSpace \Circle$.
An additional correction term $-\uformal \FillingIntersectionMap^\FillingBasis_\DegreeTwoCoclass$ is required because $\FloerCochainComplex^\ArbitraryIndex _{\Circle} (\Manifold, \Slope)$ does not have a canonical basis unlike quantum cohomology.
Roughly speaking, $- \FillingIntersectionMap^\FillingBasis_\DegreeTwoCoclass$ acts like a geometric version of the $\dbyd{\DegreeTwoCoclass}$ operator on the choices of cappings of the contractible Hamiltonian orbits that generate the cochain complex.
The two terms in \eqref{eqn-intro:quantum-connection-circle-only} are individually chain maps, whereas in the Floer case it is only the sum $\uformal \dbyd{\DegreeTwoCoclass} + {\DegreeTwoCoclass \QuantumAction} - \uformal \FillingIntersectionMap^\FillingBasis_\DegreeTwoCoclass$ which is a chain map on $\FloerCochainComplex^\ArbitraryIndex _{\Circle} (\Manifold, \Slope)$.
The operation $\DegreeTwoCoclass \QuantumAction$ corresponds to the correction term in Seidel's $q$-connection.

\subsection{Flat connections on \texorpdfstring{$\ExtendedTorus$}{T}-equivariant symplectic cohomology}

In this paper, we work with a Hamiltonian action $\TorusAction$ of the $\dimTorus$-dimensional torus $\Torus$ on our symplectic manifold $\Manifold$.
In \autoref{sec:toric-manifolds-general}, we work with toric manifolds, for which $\TorusAction$ is an effective Hamiltonian $\Torus$-action with $\dimTorus = \frac12 \Dimension \Manifold$, but our constructions here apply for any $\dimTorus$.
In particular, the case $\dimTorus = 0$ was described in \autoref{sec:flat-connections-intro-s1-only}.

The torus $\Torus$ acts pointwise on the loops in $\ContractibleLoopSpace{\Manifold}$ via $\TorusAction$.
Combined with the $\Circle$-action from \autoref{sec:flat-connections-intro-s1-only}, this yields a natural $\Circle \times \Torus$-action on $\ContractibleLoopSpace{\Manifold}$.
Denote a copy of $\Circle$ by $\AdditionalCircle$, set $\ExtendedTorus = \AdditionalCircle \times \Torus$ and denote by $\Extended{\TorusAction}$ the $\ExtendedTorus$-action on $\Manifold$ determined by $\Extended{\TorusAction}\RestrictedTo{\Torus} = \TorusAction$ and $\Extended{\TorusAction} \RestrictedTo{\AdditionalCircle} = \Identity_\Manifold$.
We define $\ExtendedTorus$-equivariant Floer cohomology $\FloerCohomology^\ArbitraryIndex _{\ExtendedTorus} (\Manifold, \Extended{\TorusAction}, \Slope)$ similarly to $\Circle$-equivariant Floer cohomology, combining Morse theory on the classifying space $\ClassifyingSpace \ExtendedTorus$ with Floer theory on $\Manifold$.
The $\ExtendedTorus$-equivariant symplectic cohomology of $\Manifold$ is the direct limit of $\FloerCohomology^\ArbitraryIndex _{\ExtendedTorus} (\Manifold, \Extended{\TorusAction}, \Slope)$ as $\Slope \to \infty$.
These are both $\Cohomology^\ArbitraryIndex (\ClassifyingSpace \ExtendedTorus)$-modules.
Associated to the projection $\ExtendedTorus \to \AdditionalCircle$ is a special element $\formalAdditionalCircle \in \Cohomology^\ArbitraryIndex (\ClassifyingSpace \ExtendedTorus)$, which has similar properties to $\uformal$ in \autoref{sec:flat-connections-intro-s1-only}.

\begin{theorem}
    [Flat connection]
\label{thm-intro:flat-differential-connection-torus}
    Let $\FixedPoint \in \Manifold$ be a fixed point of $\TorusAction$.
    For every $\DegreeTwoCoclass \in \Cohomology^2 (\Manifold; \Integers)$, there is a $\Cohomology^\ArbitraryIndex (\ClassifyingSpace \ExtendedTorus)$-module endomorphism
        \begin{equation}
        \label{eqn-intro:floer-connection-torus}
            \Connection^\FixedPoint_\DegreeTwoCoclass : \FloerCohomology^\ArbitraryIndex _{\ExtendedTorus} (\Manifold, \Extended{\TorusAction}, \Slope) \to \FloerCohomology^{\ArbitraryIndex + 2} _{\ExtendedTorus} (\Manifold, \Extended{\TorusAction}, \Slope)
        \end{equation}
     which satisfies the Leibniz rule
        \begin{equation}
        \label{eqn:leibniz-rule-for-my-connection-large-torus}
            \Connection^\FixedPoint_\DegreeTwoCoclass (f x) = f \Connection^\FixedPoint_\DegreeTwoCoclass (x) + \formalAdditionalCircle \left( \dbyd{\DegreeTwoCoclass} f \right) x
        \end{equation}
    and makes the following diagram commute.
        \begin{equation}
        \label{eqn:commutative-diagram-for-connection-to-identify-correction-term-torus}
            \begin{tikzcd}[column sep=large]
                \FloerCohomology^\ArbitraryIndex _{\ExtendedTorus} (\Manifold, \Extended{\TorusAction}, \Slope)
                \arrow[r, "\Connection^\FixedPoint_\DegreeTwoCoclass"]
                \arrow[d]
                & \FloerCohomology^{\ArbitraryIndex + 2} _{\ExtendedTorus} (\Manifold, \Extended{\TorusAction}, \Slope)
                \arrow[d] \\
                \FloerCohomology^\ArbitraryIndex (\Manifold, \Slope)
                \arrow[r, "{\DegreeTwoCoclass \QuantumProduct}"]
                & \FloerCohomology^{\ArbitraryIndex + 2} (\Manifold, \Slope)
            \end{tikzcd}
        \end{equation}
    In \eqref{eqn:commutative-diagram-for-connection-to-identify-correction-term-torus}, the vertical maps are induced by the restriction map $\Cohomology^\ArbitraryIndex (\ClassifyingSpace \ExtendedTorus) \to \Cohomology^\ArbitraryIndex (\textup{pt})$.
    The maps $\Connection^\FixedPoint_\DegreeTwoCoclass$ commute with continuation maps, so they induce maps on $\ExtendedTorus$-equivariant symplectic cohomology which satisfy \eqref{eqn:leibniz-rule-for-my-connection-large-torus}.
    Moreover, the maps $\Connection^\FixedPoint_\DegreeTwoCoclass$ and $\Connection^\FixedPoint_\secondDegreeTwoCoclass$ commute for any $\DegreeTwoCoclass, \secondDegreeTwoCoclass \in \Cohomology^2 (\Manifold)$, so $\Connection^\FixedPoint$ is flat.
\end{theorem}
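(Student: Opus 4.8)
\emph{Proof sketch.} The plan is to realise $\Connection^\FixedPoint_\DegreeTwoCoclass$ at the cochain level as a sum of three operators on the $\ExtendedTorus$-equivariant Floer cochain complex $\FloerCochainComplex^\ArbitraryIndex_{\ExtendedTorus}(\Manifold, \Extended{\TorusAction}, \Slope)$: a formal differentiation operator $\formalAdditionalCircle\,\dbyd{\DegreeTwoCoclass}$ on Novikov coefficients; an \emph{equivariant quantum action} $\DegreeTwoCoclass\QuantumAction$ counting Floer cylinders $\FloerSolution\colon\RealNumbers\times\Circle\to\Manifold$ with one interior marked point constrained to a locally finite cycle Poincaré dual to $\DegreeTwoCoclass$, where the loop-coordinate $t_0\in\Circle$ of the marked point is prescribed by a Morse flow line in the classifying space $\ClassifyingSpace\ExtendedTorus$ (the fixed point $\FixedPoint$ supplying the basepoint that makes this prescription canonical); and a correction operator $-\formalAdditionalCircle\,\FillingIntersectionMap^\FillingBasis_\DegreeTwoCoclass$ recording the action of $\dbyd{\DegreeTwoCoclass}$ on the chosen cappings $\FillingBasis$ of the generating orbits. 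This is the $\ExtendedTorus$-equivariant, multivariate-Novikov generalisation of the $\Circle$-equivariant construction of \autoref{sec:flat-connections-intro-s1-only}.

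\textbf{Cochain map and Leibniz rule.} First I would check each operator is well-defined — the moduli spaces of constrained Floer cylinders with one marked point are cut out transversally and the rigid ones are finite — and then that the \emph{sum} is a cochain map. Neither $\formalAdditionalCircle\dbyd{\DegreeTwoCoclass}$ nor $\FillingIntersectionMap^\FillingBasis_\DegreeTwoCoclass$ commutes with the Floer differential on its own: the defect of the first is measured by the boundary stratum where the marked point of $\DegreeTwoCoclass\QuantumAction$ runs off to $\pm\infty$ and splits off a constrained continuation piece, and this is cancelled term-by-term by the defect of the correction operator, exactly as for Seidel's $q$-connection. Granting this, the Leibniz rule \eqref{eqn:leibniz-rule-for-my-connection-large-torus} is formal: $\DegreeTwoCoclass\QuantumAction$ and $\FillingIntersectionMap^\FillingBasis_\DegreeTwoCoclass$ are $\Cohomology^\ArbitraryIndex(\ClassifyingSpace\ExtendedTorus)$-linear, while $\dbyd{\DegreeTwoCoclass}$ is a derivation of the Novikov ring commuting with the module structure, so $\formalAdditionalCircle\dbyd{\DegreeTwoCoclass}(fx)=f\,\formalAdditionalCircle\dbyd{\DegreeTwoCoclass}(x)+\formalAdditionalCircle(\dbyd{\DegreeTwoCoclass}f)\,x$.

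\textbf{Commuting square and continuation maps.} Applying the restriction $\Cohomology^\ArbitraryIndex(\ClassifyingSpace\ExtendedTorus)\to\Cohomology^\ArbitraryIndex(\textup{pt})$, both $\formalAdditionalCircle\dbyd{\DegreeTwoCoclass}$ and $\formalAdditionalCircle\FillingIntersectionMap^\FillingBasis_\DegreeTwoCoclass$ carry the factor $\formalAdditionalCircle\mapsto 0$, whereas $\DegreeTwoCoclass\QuantumAction$ degenerates to the ordinary quantum action $\DegreeTwoCoclass\QuantumProduct$ — once the $\ClassifyingSpace\ExtendedTorus$-data is forgotten there is no constraint left on the marked-point coordinate — which gives \eqref{eqn:commutative-diagram-for-connection-to-identify-correction-term-torus}. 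Commutation with continuation maps is the standard count of a $1$-parameter family of Hamiltonian homotopies carrying the extra marked point; its boundary identifies $\Connection^\FixedPoint_\DegreeTwoCoclass$ as compatible with the continuation map up to cochain homotopy, and the direct limit $\Slope\to\infty$ yields the induced map on $\SymplecticCohomology^\ArbitraryIndex_{\ExtendedTorus}(\Manifold)$, still satisfying \eqref{eqn:leibniz-rule-for-my-connection-large-torus}.

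\textbf{Flatness, the main obstacle.} Expanding $\Connection^\FixedPoint_\DegreeTwoCoclass\Connection^\FixedPoint_\secondDegreeTwoCoclass-\Connection^\FixedPoint_\secondDegreeTwoCoclass\Connection^\FixedPoint_\DegreeTwoCoclass$, the purely formal part vanishes since $\dbyd{\DegreeTwoCoclass}$ and $\dbyd{\secondDegreeTwoCoclass}$ commute, and the cross terms between the formal and geometric parts cancel up to an explicit cochain homotopy. What remains is to show that $[\DegreeTwoCoclass\QuantumAction,\secondDegreeTwoCoclass\QuantumAction]$ together with the mixed $\FillingIntersectionMap^\FillingBasis$ terms is null-homotopic. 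I would obtain the homotopy from the moduli space of Floer cylinders with \emph{two} interior marked points, constrained to cycles dual to $\DegreeTwoCoclass$ and $\secondDegreeTwoCoclass$ respectively, whose loop-coordinates are governed by a two-parameter family of Morse data on $\ClassifyingSpace\ExtendedTorus$ built from the data used for the single operators. The codimension-one boundary of the $1$-dimensional components splits into: the two ways of letting the marked points escape to the two ends, reassembling into $\Connection^\FixedPoint_\DegreeTwoCoclass\Connection^\FixedPoint_\secondDegreeTwoCoclass$ and its transpose; the stratum where the two marked points collide, whose contribution is manifestly symmetric in $\DegreeTwoCoclass,\secondDegreeTwoCoclass$ and so drops out of the commutator; and degenerations of the $\ClassifyingSpace\ExtendedTorus$-family, which reproduce the $\FillingIntersectionMap^\FillingBasis$ correction terms. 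The hard part will be the rigour of this last step: choosing the two-parameter equivariant perturbation data so the marked points admit a consistent ordering (this is where $\FixedPoint$ and the cell structure of $\ClassifyingSpace\ExtendedTorus$ enter), proving transversality for the constrained moduli in the presence of the $\Circle$-symmetry of the domain, and ruling out anomalous boundary strata such as sphere bubbling at a marked point. This is the multivariate, $\ExtendedTorus$-equivariant form of the flatness Seidel anticipated for the $q$-connection.
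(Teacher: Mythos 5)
Your chain-level formula $\Connection^\FixedPoint_\DegreeTwoCoclass=\formalAdditionalCircle\dbyd{\DegreeTwoCoclass}+{}\QuantumAction{}-\formalAdditionalCircle\FillingIntersectionMap^\FillingBasis_\DegreeTwoCoclass$, the cancellation of anticommutators with the Floer differential, and the two-marked-point homotopy for flatness all coincide with the paper's construction, so the overall strategy is sound. But there is one step that fails as you have written it: the claim that ``$\DegreeTwoCoclass\QuantumAction$ and $\FillingIntersectionMap^\FillingBasis_\DegreeTwoCoclass$ are $\Cohomology^\ArbitraryIndex(\ClassifyingSpace\ExtendedTorus)$-linear'' is false at the chain level, so the module-endomorphism part of the theorem is not ``formal''. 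The geometric $\Cohomology^\ArbitraryIndex(\ClassifyingSpace\ExtendedTorus)$-action is itself defined by attaching an auxiliary half\textsuperscript{$+$} flowline in $\UniversalSpace\ExtendedTorus$, and sliding its attaching point past the marked point of the quantum action produces, via the breaking of the argument-constraining flowline $\flowUniversalSpace^{\min}$, a nonzero error term: $[\formalTorus^\critClassifyingSpace,\DegreeTwoCoclass^\FixedPoint\QuantumAction]\sim\formalAdditionalCircle\,\formalTorus^\critClassifyingSpace_\DegreeTwoCoclass$, a $\DegreeTwoCoclass$-weighted module action. The operator $-\FillingIntersectionMap_\DegreeTwoCoclass$ fails to commute with $\formalTorus^\critClassifyingSpace$ by exactly the same weighted term with the opposite sign, and only the sum is $\Cohomology^\ArbitraryIndex(\ClassifyingSpace\ExtendedTorus)$-linear up to chain homotopy. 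This requires its own intertwining-style homotopy (\autoref{prop:floer-connection-compatible-with-equivariant-action}); your write-up must include it or the first sentence of the theorem is unproved.

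A second, related imprecision: the fixed point $\FixedPoint$ does not enter through the prescription of the marked-point coordinate $t_0$ (that is governed by the $\AdditionalCircle$-equivariant map $\arg$ on the minimal locus of $\UniversalSpace\ExtendedTorus$, independently of $\FixedPoint$). Its actual role is to split the short exact sequence $0\to\Cohomology^2(\ClassifyingSpace\ExtendedTorus)\to\Cohomology^2_{\ExtendedTorus}(\Manifold,\Extended{\TorusAction})\to\Cohomology^2(\Manifold)\to 0$ via $\FixedPoint\PullBack$ and thereby lift $\DegreeTwoCoclass$ to an equivariant class $\DegreeTwoCoclass^\FixedPoint$ (\autoref{sec:lifting-ordinary-to-equivariant-degree-two-coclasses}). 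For a nontrivial $\Torus$-action you cannot constrain the marked point by a locally finite cycle dual to $\DegreeTwoCoclass$ in $\Manifold$ alone; the incidence condition must be with the perturbed stable manifold of $\DegreeTwoCoclass^\FixedPoint$ inside $\UniversalSpace\ExtendedTorus\times\Manifold$, or the operation is not well defined on the equivariant complex. Once these two points are repaired, your argument matches the paper's.
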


The connection is given by the sum 
\begin{equation}
    \Connection^\FixedPoint _\DegreeTwoCoclass = \formalAdditionalCircle \dbyd{\DegreeTwoCoclass} + {\DegreeTwoCoclass^\FixedPoint \QuantumAction} - \formalAdditionalCircle \FillingIntersectionMap^\FillingBasis_\DegreeTwoCoclass.
\end{equation}
The fixed point $\FixedPoint$ is used to lift the class $\DegreeTwoCoclass$ to a class $\DegreeTwoCoclass^\FixedPoint \in \Cohomology^\ArbitraryIndex _{\ExtendedTorus} (\Manifold, \TorusAction)$.
Note that \autoref{thm-intro:differential-connection-s1-only} and \autoref{thm-intro:flatness-of-differential-connection-s1-only} are the $\dimTorus=0$ case of \autoref{thm-intro:flat-differential-connection-torus}, with $\FixedPoint$ any point of $\Manifold$.

\subsection{Equivariant Seidel maps and shift operators}

A \define{cocharacter} of the torus $\Torus$ is a group homomorphism $\Cocharacter : \Circle \to \Torus$.
The composition $\TorusAction \ComposedWith \Cocharacter$ is a Hamiltonian $\Circle$-action on $\Manifold$.
This $\Circle$-action naturally induces an automorphism of the loop space $\ContractibleLoopSpace{\Manifold} = \Set{x : \Circle \to \Manifold}$ given by $((\TorusAction \ComposedWith \Cocharacter)(\HamiltonianOrbit))(t) = \TorusAction_{\Cocharacter(t)} (\HamiltonianOrbit(t))$.
Seidel defined an isomorphism 
    \begin{equation}
    \label{eqn-intro:floer-seidel-map-non-equivariant-definition}
        \FloerSeidel (\Cocharacter, \FixedPoint) : \FloerCohomology^\ArbitraryIndex (\Manifold; \Hamiltonian) \to \FloerCohomology^{\ArbitraryIndex + |\Cocharacter, \FixedPoint|} (\Manifold; \Cocharacter \PullBack \Hamiltonian)
    \end{equation}
which corresponds to the pullback by this automorphism $(\TorusAction \ComposedWith \Cocharacter) : \ContractibleLoopSpace{\Manifold} \to \ContractibleLoopSpace{\Manifold}$ \cite{seidel_$_1997}.
We call this isomorphism the \define{Floer Seidel map}.
Seidel's construction applies to Hamiltonian $\Circle$-actions on closed symplectic manifolds.
Ritter extended the definition to \emph{linear} Hamiltonian $\Circle$-actions on convex symplectic manifolds \cite{ritter_floer_2014}.
The Floer Seidel map is an isomorphism of the underlying Floer cochain complexes, it preserves continuation maps, and it intertwines the pair-of-pants product.
Moreover, the Floer Seidel map satisfies
    \begin{equation}
    \label{eqn:non-equivariant-floer-seidel-map-composition-of-actions}
        \FloerSeidel (\Cocharacter, \FixedPoint) \ComposedWith \FloerSeidel (\secondCocharacter, \FixedPoint) = \FloerSeidel (\Cocharacter + \secondCocharacter, \FixedPoint)
    \end{equation}
for any two cocharacters $\Cocharacter, \secondCocharacter$.

We introduced the \define{$\Circle$-equivariant Floer Seidel map $\FloerSeidel_{\Circle} (\Cocharacter, \FixedPoint)$} in our previous work \cite{liebenschutz-jones_intertwining_2020}.
This map combines the identity map on $\ClassifyingSpace \Circle$ with the Floer Seidel map on $\Manifold$.
Like the non-equivariant Floer Seidel map, $\FloerSeidel_{\Circle} (\Cocharacter, \FixedPoint)$ is an isomorphism of the underlying equivariant Floer cochain complexes, it preserves continuation maps, and it satisfies \eqref{eqn:non-equivariant-floer-seidel-map-composition-of-actions}.
The $\Circle$-action on $\Manifold$ changes under the $\Circle$-equivariant Floer Seidel map, in a similar way to how the Hamiltonian $\Hamiltonian$ changes to the pullback Hamiltonian $\Cocharacter \PullBack \Hamiltonian$.

The $\Circle$-equivariant construction readily extends to the $\ExtendedTorus$-equivariant setup.
We assume the $\Circle$-action $\TorusAction \ComposedWith \Cocharacter : \Circle \times \Manifold \to \Manifold$ is linear for all cocharacters $\Cocharacter \in \LatticeCocharacters{\Torus}$.
We define the \define{$\ExtendedTorus$-equivariant Floer Seidel map}
    \begin{equation}
    \label{eqn-intro:torus-equivariant-floer-seidel-map-definition}
        \FloerSeidel_{\ExtendedTorus} (\Cocharacter, \FixedPoint) : \FloerCohomology^\ArbitraryIndex_{\ExtendedTorus} (\Manifold, \Extended{\TorusAction}, \Slope; \Hamiltonian) \to \FloerCohomology^{\ArbitraryIndex + |\Cocharacter, \FixedPoint|}_{\ExtendedTorus} (\Manifold, \Cocharacter \cdot \Extended{\TorusAction}, \Slope - \CocharacterSlope{\Cocharacter}; \Cocharacter \PullBack \Hamiltonian)
    \end{equation}
analogously to the $\Circle$-equivariant Floer Seidel map.
The $\ExtendedTorus$-action $\Cocharacter \cdot \Extended{\TorusAction}$ is determined by $(\Cocharacter \cdot \Extended{\TorusAction}) \RestrictedTo{\Torus} = \TorusAction$ and $(\Cocharacter \cdot \Extended{\TorusAction}) \RestrictedTo{\AdditionalCircle} = (\TorusAction \ComposedWith \Cocharacter) \Inverse$.
Let $\Extended{\Cocharacter} : \ExtendedTorus \to \ExtendedTorus$ be the automorphism $(\eltAdditionalCircle, \eltTorus) \mapsto (\eltAdditionalCircle, \eltTorus + \Cocharacter(\eltAdditionalCircle))$, so that $\Cocharacter \cdot \Extended{\TorusAction}$ is given by\footnote{
    The map $\LatticeCocharacters{\Torus} \to \Automorphisms(\ExtendedTorus)$ given by $\Cocharacter \mapsto \Extended{\Cocharacter}$ is a $(\LatticeCocharacters{\Torus})$-action on $\ExtendedTorus$.
    The induced $(\LatticeCocharacters{\Torus})$-action on maps $\Extended{\TorusAction} : \ExtendedTorus \to \Automorphisms(\Manifold)$ naturally precomposes with the \emph{inverse} of the $(\LatticeCocharacters{\Torus})$-action on $\ExtendedTorus$.
} $\Extended{\TorusAction} \ComposedWith \Extended{\Cocharacter} \Inverse$.
By the functorial properties of equivariant cohomology, associated to $\Extended{\Cocharacter}$ is an automorphism $(\ClassifyingSpace \Extended{\Cocharacter}) \PullBack : \Cohomology^\ArbitraryIndex (\ClassifyingSpace \ExtendedTorus) \to \Cohomology^\ArbitraryIndex (\ClassifyingSpace \ExtendedTorus)$ and an isomorphism
    \begin{equation}
    \label{eqn-intro:equivariant-cohomology-pullback-map-for-group-homomorphism}
        (\ClassifyingSpace \Extended{\Cocharacter}) \PullBack : \Cohomology^\ArbitraryIndex_{\ExtendedTorus} (\Manifold, \Cocharacter \cdot \Extended{\TorusAction}) \to \Cohomology^\ArbitraryIndex_{\ExtendedTorus} (\Manifold, \Extended{\TorusAction}).
    \end{equation}
The map \eqref{eqn-intro:equivariant-cohomology-pullback-map-for-group-homomorphism} is not an $\Cohomology^\ArbitraryIndex (\ClassifyingSpace \ExtendedTorus)$-module homomorphism and instead satisfies 
    \begin{equation}
    \label{eqn-intro:twisted-equation-for-equivariant-cohomology-pullback-map}
        (\ClassifyingSpace \Extended{\Cocharacter}) \PullBack (f x) = (\ClassifyingSpace \Extended{\Cocharacter}) \PullBack (f) \  (\ClassifyingSpace \Extended{\Cocharacter}) \PullBack (x)
    \end{equation}
for $f \in \Cohomology^\ArbitraryIndex (\ClassifyingSpace \ExtendedTorus)$ and $x \in \Cohomology^\ArbitraryIndex_{\ExtendedTorus} (\Manifold, \Cocharacter \cdot \Extended{\TorusAction})$.
There is a similar map $(\ClassifyingSpace \Extended{\Cocharacter}) \PullBack$ on $\ExtendedTorus$-equivariant Floer cohomology which satisfies \eqref{eqn-intro:twisted-equation-for-equivariant-cohomology-pullback-map}, and it `undoes' the change of $\ExtendedTorus$-action in \eqref{eqn-intro:torus-equivariant-floer-seidel-map-definition}.

If the slope $\CocharacterSlope{\Cocharacter} \ge 0$ of the linear action $\TorusAction \ComposedWith \Cocharacter$ is nonnegative, then the slope of the pullback Hamiltonian $\Cocharacter \PullBack \Hamiltonian$, given by $\Slope - \CocharacterSlope{\Cocharacter}$, is smaller than the slope $\Slope$ of $\Hamiltonian$.
Denote by $\NonnegativeLatticeCocharacters{\TorusAction}{\Torus}$ the set of cocharacters $\Cocharacter$ for which $\TorusAction \ComposedWith \Cocharacter$ is linear of nonnegative slope.
It is a commutative monoid inside the lattice of cocharacters $\LatticeCocharacters{\Torus}$.
For $\Cocharacter \in \NonnegativeLatticeCocharacters{\TorusAction}{\Torus}$, the continuation map $\ContinuationMap^{\CocharacterSlope{\Cocharacter}}$ is well-defined and `undoes' the change of slope.

Putting this together, the composition
    \begin{equation}
    \label{eqn-intro:shift-operator-definition-on-floer}
        \ShiftOperator^\FixedPoint_{\Cocharacter} = \ContinuationMap^{\CocharacterSlope{\Cocharacter}} \ComposedWith (\ClassifyingSpace \Extended{\Cocharacter}) \PullBack \ComposedWith \FloerSeidel_{\ExtendedTorus} (\Cocharacter, \FixedPoint) :
        \FloerCohomology^\ArbitraryIndex_{\ExtendedTorus} (\Manifold, \Extended{\TorusAction}, \Slope) \to 
        \FloerCohomology^{\ArbitraryIndex + |\Cocharacter, \FixedPoint|}_{\ExtendedTorus} (\Manifold, \Extended{\TorusAction}, \Slope)
    \end{equation}
is an endomorphism of $\ExtendedTorus$-equivariant Floer cohomology for all $\Cocharacter \in \NonnegativeLatticeCocharacters{\TorusAction}{\Torus}$.
The maps $\FloerSeidel_{\ExtendedTorus} (\Cocharacter, \FixedPoint)$ and $\ContinuationMap^{\CocharacterSlope{\Cocharacter}}$ are $\Cohomology^\ArbitraryIndex (\ClassifyingSpace \ExtendedTorus)$-module homomorphisms, but $(\ClassifyingSpace \Extended{\Cocharacter}) \PullBack$ satisfies \eqref{eqn-intro:twisted-equation-for-equivariant-cohomology-pullback-map}.
Therefore $\ShiftOperator^\FixedPoint _\Cocharacter$ is not a $\Cohomology^\ArbitraryIndex (\ClassifyingSpace \ExtendedTorus)$-module homomorphism, and instead satisfies
    \begin{equation}
    \label{eqn-intro:twisting-of-shift-operator-floer}
        \ShiftOperator^\FixedPoint _{\Cocharacter} (f x) = (\ClassifyingSpace \Extended{\Cocharacter}) \PullBack (f) \ \ShiftOperator^\FixedPoint _{\Cocharacter} (x)
    \end{equation}
for $f \in \Cohomology^\ArbitraryIndex (\ClassifyingSpace \ExtendedTorus)$.

\begin{theorem}
    [Flat shift operator]
    For every cocharacter $\Cocharacter$ in the commutative monoid $\NonnegativeLatticeCocharacters{\TorusAction}{\Torus}$, there is a $\NovikovRing$-module endomorphism
        \begin{equation}
        \label{eqn-intro:shift-operator-on-equivariant-floer-cohomology}
            \ShiftOperator^\FixedPoint_{\Cocharacter} :
            \FloerCohomology^\ArbitraryIndex_{\ExtendedTorus} (\Manifold, \Extended{\TorusAction}, \Slope) \to 
            \FloerCohomology^{\ArbitraryIndex + |\Cocharacter, \FixedPoint|}_{\ExtendedTorus} (\Manifold, \Extended{\TorusAction}, \Slope)
        \end{equation}
    which satisfies \eqref{eqn-intro:twisting-of-shift-operator-floer} and makes the following diagram commute.
        \begin{equation}
        \label{eqn:commutative-diagram-for-shift-operator-relating-to-nonequivariant}
            \begin{tikzcd}[column sep=huge]
                \FloerCohomology^\ArbitraryIndex _{\ExtendedTorus} (\Manifold, \Extended{\TorusAction}, \Slope)
                \arrow[r, "\ShiftOperator^\FixedPoint_{\Cocharacter}"]
                \arrow[d]
                & \FloerCohomology^{\ArbitraryIndex + |\Cocharacter, \FixedPoint|} _{\ExtendedTorus} (\Manifold, \Extended{\TorusAction}, \Slope)
                \arrow[d] \\
                \FloerCohomology^\ArbitraryIndex (\Manifold, \Slope)
                \arrow[r, "{\ContinuationMap^{\CocharacterSlope{\Cocharacter}} \ComposedWith \FloerSeidel(\Cocharacter, \FixedPoint)}"]
                & \FloerCohomology^{\ArbitraryIndex + |\Cocharacter, \FixedPoint|} (\Manifold, \Slope)
            \end{tikzcd}
        \end{equation}
    In \eqref{eqn:commutative-diagram-for-shift-operator-relating-to-nonequivariant}, the vertical maps are induced by the restriction map $\Cohomology^\ArbitraryIndex (\ClassifyingSpace \ExtendedTorus) \to \Cohomology^\ArbitraryIndex (\textup{pt})$ and the bottom map is the composition of a (non-equivariant) continuation map with \eqref{eqn-intro:floer-seidel-map-non-equivariant-definition}.
    Moreover, we have $\ShiftOperator^\FixedPoint _{\Cocharacter} \ShiftOperator^\FixedPoint _{\secondCocharacter} = \ShiftOperator^\FixedPoint _{\Cocharacter + \secondCocharacter}$ for any two cocharacters $\Cocharacter, \secondCocharacter \in \NonnegativeLatticeCocharacters{\TorusAction}{\Torus}$.
\end{theorem}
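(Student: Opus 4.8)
By \eqref{eqn-intro:shift-operator-definition-on-floer} the endomorphism $\ShiftOperator^\FixedPoint_\Cocharacter$ is the composite of the three maps $\FloerSeidel_{\ExtendedTorus}(\Cocharacter, \FixedPoint)$, $(\ClassifyingSpace \Extended\Cocharacter)\PullBack$ and $\ContinuationMap^{\CocharacterSlope\Cocharacter}$, all defined whenever $\Cocharacter \in \NonnegativeLatticeCocharacters{\TorusAction}{\Torus}$; each of these is $\NovikovRing$-linear, so $\ShiftOperator^\FixedPoint_\Cocharacter$ is a $\NovikovRing$-module endomorphism of degree $|\Cocharacter, \FixedPoint|$, and the twisting relation \eqref{eqn-intro:twisting-of-shift-operator-floer} was already deduced from the $\Cohomology^\ArbitraryIndex(\ClassifyingSpace \ExtendedTorus)$-linearity of the first and third maps together with property \eqref{eqn-intro:twisted-equation-for-equivariant-cohomology-pullback-map} of the second. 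The plan is therefore to prove the two remaining assertions: the commutativity of \eqref{eqn:commutative-diagram-for-shift-operator-relating-to-nonequivariant} and the composition law $\ShiftOperator^\FixedPoint_\Cocharacter \ShiftOperator^\FixedPoint_\secondCocharacter = \ShiftOperator^\FixedPoint_{\Cocharacter + \secondCocharacter}$.

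\emph{The commutative square.} Under the restriction $\Cohomology^\ArbitraryIndex(\ClassifyingSpace \ExtendedTorus) \to \Cohomology^\ArbitraryIndex(\textup{pt})$, each of the three constituents reduces to its non-equivariant avatar: $\FloerSeidel_{\ExtendedTorus}(\Cocharacter, \FixedPoint)$ to the Floer Seidel map $\FloerSeidel(\Cocharacter, \FixedPoint)$ of \eqref{eqn-intro:floer-seidel-map-non-equivariant-definition}, the equivariant continuation map $\ContinuationMap^{\CocharacterSlope\Cocharacter}$ to a non-equivariant continuation map, and $(\ClassifyingSpace \Extended\Cocharacter)\PullBack$ to the identity, since the restriction destroys all equivariant information and $\FloerCohomology^\ArbitraryIndex(\Manifold, \Slope)$ does not depend on the torus action. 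These three reduction statements are part of the respective constructions, and composing the reduced maps gives precisely the bottom arrow $\ContinuationMap^{\CocharacterSlope\Cocharacter} \ComposedWith \FloerSeidel(\Cocharacter, \FixedPoint)$ of \eqref{eqn:commutative-diagram-for-shift-operator-relating-to-nonequivariant}, so the square commutes.

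\emph{The composition law.} Fix $\Cocharacter, \secondCocharacter \in \NonnegativeLatticeCocharacters{\TorusAction}{\Torus}$ and expand
    \begin{align*}
        \ShiftOperator^\FixedPoint_\Cocharacter \ComposedWith \ShiftOperator^\FixedPoint_\secondCocharacter = {} & \ContinuationMap^{\CocharacterSlope\Cocharacter} \ComposedWith (\ClassifyingSpace \Extended\Cocharacter)\PullBack \ComposedWith \FloerSeidel_{\ExtendedTorus}(\Cocharacter, \FixedPoint) \\
        & {} \ComposedWith \ContinuationMap^{\CocharacterSlope\secondCocharacter} \ComposedWith (\ClassifyingSpace \Extended\secondCocharacter)\PullBack \ComposedWith \FloerSeidel_{\ExtendedTorus}(\secondCocharacter, \FixedPoint).
    \end{align*}
I would then slide $\FloerSeidel_{\ExtendedTorus}(\Cocharacter, \FixedPoint)$ to the right: past $\ContinuationMap^{\CocharacterSlope\secondCocharacter}$, because the equivariant Floer Seidel map preserves continuation maps; and past $(\ClassifyingSpace \Extended\secondCocharacter)\PullBack$, by the naturality statement highlighted below. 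Next, merge $\FloerSeidel_{\ExtendedTorus}(\Cocharacter, \FixedPoint) \ComposedWith \FloerSeidel_{\ExtendedTorus}(\secondCocharacter, \FixedPoint) = \FloerSeidel_{\ExtendedTorus}(\Cocharacter + \secondCocharacter, \FixedPoint)$ by \eqref{eqn:non-equivariant-floer-seidel-map-composition-of-actions}, which holds equally for $\FloerSeidel_{\ExtendedTorus}$. Then slide $(\ClassifyingSpace \Extended\Cocharacter)\PullBack$ past $\ContinuationMap^{\CocharacterSlope\secondCocharacter}$, using that continuation maps are natural with respect to the pullbacks $(\ClassifyingSpace \Extended{\cdot})\PullBack$, and merge $(\ClassifyingSpace \Extended\Cocharacter)\PullBack \ComposedWith (\ClassifyingSpace \Extended\secondCocharacter)\PullBack = (\ClassifyingSpace \Extended{(\Cocharacter + \secondCocharacter)})\PullBack$ by functoriality of $\ClassifyingSpace(\cdot)\PullBack$ and the identity $\Extended\Cocharacter \ComposedWith \Extended\secondCocharacter = \Extended{(\Cocharacter + \secondCocharacter)}$, valid because $\Torus$ is abelian. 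Finally merge $\ContinuationMap^{\CocharacterSlope\Cocharacter} \ComposedWith \ContinuationMap^{\CocharacterSlope\secondCocharacter} = \ContinuationMap^{\CocharacterSlope{\Cocharacter + \secondCocharacter}}$, using that continuation maps compose and $\CocharacterSlope{\Cocharacter + \secondCocharacter} = \CocharacterSlope\Cocharacter + \CocharacterSlope\secondCocharacter$; the corresponding additivity $|\Cocharacter, \FixedPoint| + |\secondCocharacter, \FixedPoint| = |\Cocharacter + \secondCocharacter, \FixedPoint|$ accounts for the degree shifts. Collecting the pieces leaves $\ContinuationMap^{\CocharacterSlope{\Cocharacter + \secondCocharacter}} \ComposedWith (\ClassifyingSpace \Extended{(\Cocharacter + \secondCocharacter)})\PullBack \ComposedWith \FloerSeidel_{\ExtendedTorus}(\Cocharacter + \secondCocharacter, \FixedPoint) = \ShiftOperator^\FixedPoint_{\Cocharacter + \secondCocharacter}$, and $\Cocharacter + \secondCocharacter \in \NonnegativeLatticeCocharacters{\TorusAction}{\Torus}$ since this set is a monoid.

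\emph{The main obstacle.} Every rearrangement above is formal once the standard properties of the constituents are in hand, \emph{except} the commutation of $\FloerSeidel_{\ExtendedTorus}(\Cocharacter, \FixedPoint)$ with $(\ClassifyingSpace \Extended\secondCocharacter)\PullBack$. This is the delicate step: the two maps operate on $\ExtendedTorus$-equivariant Floer cohomology taken with respect to the \emph{different} $\ExtendedTorus$-actions $\Extended\TorusAction$ and $\secondCocharacter \cdot \Extended\TorusAction$ on $\Manifold$, so one has to carry the change-of-action automorphism $\Extended\secondCocharacter$ through the Borel-type moduli construction defining the equivariant Floer Seidel map. I expect this to go through because $\FloerSeidel_{\ExtendedTorus}(\Cocharacter, \FixedPoint)$ is built by combining the \emph{identity} map on $\ClassifyingSpace \ExtendedTorus$ with the non-equivariant Floer Seidel map on $\Manifold$, and so does not interfere with the reparametrisation encoded by $\Extended\secondCocharacter$; the verification should run parallel to the naturality arguments already developed for the $\Circle$-equivariant Floer Seidel map in \cite{liebenschutz-jones_intertwining_2020}.
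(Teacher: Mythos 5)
Your proposal is correct and follows essentially the same route as the paper: the paper also defines $\ShiftOperator^\FixedPoint_\Cocharacter$ as the composite of the three maps, derives the twisting relation from the $\Cohomology^\ArbitraryIndex(\ClassifyingSpace\ExtendedTorus)$-linearity of the Floer Seidel and continuation maps together with \eqref{eqn-intro:twisted-equation-for-equivariant-cohomology-pullback-map}, and proves the composition law (in the proof of \autoref{thm:flatness-of-difference-differential-connection-on-floer-cohomology}) exactly by observing that the three constituents pairwise commute and individually compose. The step you flag as the main obstacle is handled in the paper by the observation that $(\ClassifyingSpace \Extended{\Cocharacter})\PullBack$ is realised by the identity map on the underlying Borel moduli spaces (merely a change of basis for $\UniversalSpace\ExtendedTorus$), which makes its commutation with $\FloerSeidel_{\ExtendedTorus}$ essentially tautological, as you anticipated.
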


The map $\ShiftOperator^\FixedPoint$ is called a \define{shift operator}.
The automorphism $\Extended{\Cocharacter} : \ExtendedTorus \to \ExtendedTorus$ induces a `shift' in the lattice of characters of $\ExtendedTorus$.
This `shift' is represented in the algebra $\Cohomology^\ArbitraryIndex (\ClassifyingSpace \ExtendedTorus)$ by the isomorphism $(\ClassifyingSpace \Extended{\Cocharacter}) \PullBack$.
The operator $\ShiftOperator^\FixedPoint _\Cocharacter$ captures this `shift' in the $\Cohomology^\ArbitraryIndex (\ClassifyingSpace \ExtendedTorus)$-module $\FloerCohomology^\ArbitraryIndex_{\ExtendedTorus} (\Manifold, \Extended{\TorusAction}, \Slope)$ because it satisfies \eqref{eqn-intro:twisting-of-shift-operator-floer}.

The map \eqref{eqn-intro:shift-operator-on-equivariant-floer-cohomology} commutes with continuation maps, and hence induces a map $\ShiftOperator^\FixedPoint_{\Cocharacter}$ on $\ExtendedTorus$-equivariant symplectic cohomology $\SymplecticCohomology^\ArbitraryIndex_{\ExtendedTorus} (\Manifold, \Extended{\TorusAction})$ that satisfies \eqref{eqn-intro:twisting-of-shift-operator-floer}.
The continuation map in \eqref{eqn-intro:shift-operator-definition-on-floer} is absorbed by the continuation maps in the direct limit which defines $\SymplecticCohomology^\ArbitraryIndex_{\ExtendedTorus} (\Manifold, \Extended{\TorusAction})$.
This has two consequences for the shift operators on $\SymplecticCohomology^\ArbitraryIndex_{\ExtendedTorus} (\Manifold, \Extended{\TorusAction})$: they are isomorphisms and they are well-defined even for cocharacters $\Cocharacter$ of negative slope $\CocharacterSlope{\Cocharacter} < 0$.
The shift operator on $\FloerCohomology^\ArbitraryIndex_{\ExtendedTorus} (\Manifold, \Extended{\TorusAction}, \Slope)$ does not have these two properties in general.

\begin{corollary}
    [Shift operator representation]
    For every cocharacter $\Cocharacter \in \LatticeCocharacters{\Torus}$, we have a shift operator $\ShiftOperator^\FixedPoint_{\Cocharacter} : \SymplecticCohomology^\ArbitraryIndex_{\ExtendedTorus} (\Manifold, \Extended{\TorusAction}) \to \SymplecticCohomology^{\ArbitraryIndex + |\Cocharacter, \FixedPoint|}_{\ExtendedTorus} (\Manifold, \Extended{\TorusAction})$ which is a $\NovikovRing$-module automorphism satisfying \eqref{eqn-intro:twisting-of-shift-operator-floer}.
    This yields a representation
        \begin{equation}
            \ShiftOperator^\FixedPoint : \LatticeCocharacters{\Torus} \to \Automorphisms_\NovikovRing (\SymplecticCohomology^\ArbitraryIndex_{\ExtendedTorus} (\Manifold, \Extended{\TorusAction}))
        \end{equation}
    of the commutative group of cocharacters of $\Torus$.
\end{corollary}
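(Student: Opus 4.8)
The plan is to obtain the shift operators on $\SymplecticCohomology$ by pushing the maps of the previous theorem to the direct limit $\Slope \to \Infinity$, and then to read off each asserted property from the corresponding property at finite slope. The key preliminary observation is that for \emph{every} $\Cocharacter \in \LatticeCocharacters{\Torus}$ — not only for those of nonnegative slope — the composition $(\ClassifyingSpace \Extended{\Cocharacter}) \PullBack \ComposedWith \FloerSeidel_{\ExtendedTorus} (\Cocharacter, \FixedPoint)$ is an isomorphism of equivariant Floer cochain complexes $\FloerCohomology^\ArbitraryIndex_{\ExtendedTorus} (\Manifold, \Extended{\TorusAction}, \Slope) \to \FloerCohomology^{\ArbitraryIndex + |\Cocharacter, \FixedPoint|}_{\ExtendedTorus} (\Manifold, \Extended{\TorusAction}, \Slope - \CocharacterSlope{\Cocharacter})$, because both factors are; it was only the continuation map $\ContinuationMap^{\CocharacterSlope{\Cocharacter}}$ in \eqref{eqn-intro:shift-operator-definition-on-floer} that required $\CocharacterSlope{\Cocharacter} \ge 0$. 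These isomorphisms commute with the continuation maps in the slope variable, hence define a morphism from the direct system $\{ \FloerCohomology^\ArbitraryIndex_{\ExtendedTorus} (\Manifold, \Extended{\TorusAction}, \Slope) \}_\Slope$ to the system $\{ \FloerCohomology^{\ArbitraryIndex + |\Cocharacter, \FixedPoint|}_{\ExtendedTorus} (\Manifold, \Extended{\TorusAction}, \Slope - \CocharacterSlope{\Cocharacter}) \}_\Slope$, the latter being a cofinal reindexing of the system whose limit is $\SymplecticCohomology^{\ArbitraryIndex + |\Cocharacter, \FixedPoint|}_{\ExtendedTorus} (\Manifold, \Extended{\TorusAction})$. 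Taking direct limits gives the map $\ShiftOperator^\FixedPoint_\Cocharacter$. When $\Cocharacter \in \NonnegativeLatticeCocharacters{\TorusAction}{\Torus}$ this coincides with the map induced by the finite-slope shift operator \eqref{eqn-intro:shift-operator-on-equivariant-floer-cohomology}, since $\ContinuationMap^{\CocharacterSlope{\Cocharacter}}$ is a structure map of the direct system and so is absorbed in the limit.

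Next I would check that $\ShiftOperator^\FixedPoint_\Cocharacter$ is an automorphism: the cochain-level inverses $\bigl( \FloerSeidel_{\ExtendedTorus} (\Cocharacter, \FixedPoint) \bigr) \Inverse \ComposedWith \bigl( (\ClassifyingSpace \Extended{\Cocharacter}) \PullBack \bigr) \Inverse$ also commute with continuation maps and pass to the limit to give a two-sided inverse. The twisting relation \eqref{eqn-intro:twisting-of-shift-operator-floer} holds at each slope and hence in the limit; and since $(\ClassifyingSpace \Extended{\Cocharacter}) \PullBack$ only shifts the equivariant parameters in $\Cohomology^\ArbitraryIndex (\ClassifyingSpace \ExtendedTorus)$ and restricts to the identity on the Novikov ring $\NovikovRing$, the relation \eqref{eqn-intro:twisting-of-shift-operator-floer} shows $\ShiftOperator^\FixedPoint_\Cocharacter$ is $\NovikovRing$-linear. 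Thus $\ShiftOperator^\FixedPoint_\Cocharacter \in \Automorphisms_\NovikovRing \bigl( \SymplecticCohomology^\ArbitraryIndex_{\ExtendedTorus} (\Manifold, \Extended{\TorusAction}) \bigr)$ for every $\Cocharacter \in \LatticeCocharacters{\Torus}$.

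For the representation property I would establish $\ShiftOperator^\FixedPoint_\Cocharacter \ComposedWith \ShiftOperator^\FixedPoint_\secondCocharacter = \ShiftOperator^\FixedPoint_{\Cocharacter + \secondCocharacter}$ for all $\Cocharacter, \secondCocharacter \in \LatticeCocharacters{\Torus}$. It suffices to prove the analogous identity between the cochain-level isomorphisms $(\ClassifyingSpace \Extended{\Cocharacter}) \PullBack \ComposedWith \FloerSeidel_{\ExtendedTorus} (\Cocharacter, \FixedPoint)$, which then descends to the limit. This follows from the composition law \eqref{eqn:non-equivariant-floer-seidel-map-composition-of-actions} for equivariant Floer Seidel maps together with the identity $\Extended{\secondCocharacter} \ComposedWith \Extended{\Cocharacter} = \Extended{\Cocharacter + \secondCocharacter}$ for the torus automorphisms, which gives $(\ClassifyingSpace \Extended{\Cocharacter}) \PullBack \ComposedWith (\ClassifyingSpace \Extended{\secondCocharacter}) \PullBack = (\ClassifyingSpace \Extended{\Cocharacter + \secondCocharacter}) \PullBack$; one interleaves the two compositions and tracks how the $\ExtendedTorus$-action is first modified to $\Cocharacter \cdot \Extended{\TorusAction}$ and then restored by $(\ClassifyingSpace \Extended{\Cocharacter}) \PullBack$ at each stage. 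Since $\ShiftOperator^\FixedPoint_0 = \Identity$ (the zero cocharacter gives the identity Seidel map and trivial pullback), taking $\secondCocharacter = -\Cocharacter$ shows $\ShiftOperator^\FixedPoint_\Cocharacter$ is invertible with inverse $\ShiftOperator^\FixedPoint_{-\Cocharacter}$, so $\Cocharacter \mapsto \ShiftOperator^\FixedPoint_\Cocharacter$ is a group homomorphism $\LatticeCocharacters{\Torus} \to \Automorphisms_\NovikovRing \bigl( \SymplecticCohomology^\ArbitraryIndex_{\ExtendedTorus} (\Manifold, \Extended{\TorusAction}) \bigr)$.

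I expect the main obstacle to be purely bookkeeping in the direct-limit passage: verifying that the slope shift $\Slope \mapsto \Slope - \CocharacterSlope{\Cocharacter}$ is indeed a cofinal reindexing (and hence invisible in the limit, even when $\CocharacterSlope{\Cocharacter} < 0$), and, in the composition law, carefully tracking the successive changes of $\ExtendedTorus$-action and the fact that the maps $(\ClassifyingSpace \Extended{\Cocharacter}) \PullBack$ do not commute naively — by \eqref{eqn-intro:twisted-equation-for-equivariant-cohomology-pullback-map} they are only ring homomorphisms twisted by themselves, so the order in which the pullbacks and Seidel maps are composed must be matched precisely to the order in which the $\ExtendedTorus$-actions are altered.
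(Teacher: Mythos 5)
Your proposal is correct and follows essentially the same route as the paper: the composition $(\ClassifyingSpace \Extended{\Cocharacter}) \PullBack \ComposedWith \FloerSeidel_{\ExtendedTorus} (\Cocharacter, \FixedPoint)$ is defined for all cocharacters, commutes with continuation maps, and passes to the direct limit where the slope shift is absorbed, with the composition law coming from \eqref{eqn:non-equivariant-floer-seidel-map-composition-of-actions} and $(\ClassifyingSpace \Extended{\Cocharacter}) \PullBack (\ClassifyingSpace \Extended{\secondCocharacter}) \PullBack = (\ClassifyingSpace \Extended{(\Cocharacter + \secondCocharacter)}) \PullBack$, and invertibility from $\ShiftOperator^\FixedPoint_{-\Cocharacter}$. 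This matches the paper's treatment in the introduction and in the remark on negative cocharacters.
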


We have two collections of endomorphisms of $\ExtendedTorus$-equivariant Floer cohomology which naturally augment the module structures: the connection $\Connection^\FixedPoint$ augments the $\NovikovRing$-module structure and the shift operator $\ShiftOperator^\FixedPoint$ augments the $\Cohomology^\ArbitraryIndex (\ClassifyingSpace \ExtendedTorus)$-module structure.
The two module structures commute, inducing a $\NovikovRing \Tensor \Cohomology^\ArbitraryIndex (\ClassifyingSpace \ExtendedTorus)$-module structure on $\FloerCohomology^\ArbitraryIndex _{\ExtendedTorus} (\Manifold, \Extended{\TorusAction}, \Slope)$.
Our main theorem states that the augmentations of these module structures also commute.

\begin{theorem}
    [Flatness]
\label{thm-intro:connection-and-shift-operators-commute-floer}
    The maps $\ShiftOperator^\FixedPoint_\Cocharacter$ and $\Connection^\FixedPoint_\DegreeTwoCoclass$ commute on $\FloerCohomology^\ArbitraryIndex_{\ExtendedTorus} (\Manifold, \Extended{\TorusAction}, \Slope)$ for any $\DegreeTwoCoclass \in \Cohomology^2 (\Manifold)$ and any $\Cocharacter \in \NonnegativeLatticeCocharacters{\TorusAction}{\Torus}$.
    This also holds on $\SymplecticCohomology^\ArbitraryIndex_{\ExtendedTorus} (\Manifold, \Extended{\TorusAction})$ for any $\Cocharacter \in \LatticeCocharacters{\Torus}$.
\end{theorem}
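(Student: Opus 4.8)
The plan is to establish the commutation relation on cohomology by decomposing the shift operator into the three factors of its definition \eqref{eqn-intro:shift-operator-definition-on-floer} — first $\FloerSeidel_{\ExtendedTorus}(\Cocharacter, \FixedPoint)$, then $(\ClassifyingSpace \Extended{\Cocharacter}) \PullBack$, then $\ContinuationMap^{\CocharacterSlope{\Cocharacter}}$ — and showing that each factor intertwines the connection $\Connection^\FixedPoint_\DegreeTwoCoclass = \formalAdditionalCircle \dbyd{\DegreeTwoCoclass} + {\DegreeTwoCoclass^\FixedPoint \QuantumAction} - \formalAdditionalCircle \FillingIntersectionMap^\FillingBasis_\DegreeTwoCoclass$ between the various source and target Floer cohomology groups, with their respective $\ExtendedTorus$-actions and slopes as in \eqref{eqn-intro:torus-equivariant-floer-seidel-map-definition}. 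Composing the three intertwining statements then yields $\ShiftOperator^\FixedPoint_\Cocharacter \ComposedWith \Connection^\FixedPoint_\DegreeTwoCoclass = \Connection^\FixedPoint_\DegreeTwoCoclass \ComposedWith \ShiftOperator^\FixedPoint_\Cocharacter$; here the Leibniz rule \eqref{eqn:leibniz-rule-for-my-connection-large-torus} and the twisted-module property \eqref{eqn-intro:twisting-of-shift-operator-floer} are mutually consistent precisely because $(\ClassifyingSpace \Extended{\Cocharacter}) \PullBack$ fixes $\formalAdditionalCircle$.

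Two of the three factors are comparatively routine. The continuation map $\ContinuationMap^{\CocharacterSlope{\Cocharacter}}$ is a $\Cohomology^\ArbitraryIndex(\ClassifyingSpace \ExtendedTorus)$-module map, and by \autoref{thm-intro:flat-differential-connection-torus} the connection commutes with continuation maps; the only point to verify is that the filling data underlying $\FillingIntersectionMap^\FillingBasis$ is carried along compatibly, which is the same bookkeeping that makes $\Connection^\FixedPoint$ descend to symplectic cohomology. For $(\ClassifyingSpace \Extended{\Cocharacter}) \PullBack$: since $\Extended{\Cocharacter}(\eltAdditionalCircle, \eltTorus) = (\eltAdditionalCircle, \eltTorus + \Cocharacter(\eltAdditionalCircle))$ covers the identity on $\AdditionalCircle$, the induced map on $\Cohomology^\ArbitraryIndex(\ClassifyingSpace \ExtendedTorus)$ fixes $\formalAdditionalCircle$ and leaves the Novikov coefficients untouched, so it commutes with the formal term $\formalAdditionalCircle \dbyd{\DegreeTwoCoclass}$; and since it merely relabels the $\ClassifyingSpace \ExtendedTorus$-directions of the equivariant complex while leaving the $\Torus$-action on $\Manifold$, the Floer moduli spaces, and the incidence conditions with $\DegreeTwoCoclass$ untouched, it commutes with ${\DegreeTwoCoclass^\FixedPoint \QuantumAction}$ and with $\FillingIntersectionMap^\FillingBasis_\DegreeTwoCoclass$ once one checks that the equivariant lift $\DegreeTwoCoclass^\FixedPoint$ and the filling class $\FillingIntersectionMap^\FillingBasis$ for the action $\Cocharacter \cdot \Extended{\TorusAction}$ are carried to those for $\Extended{\TorusAction}$ — which follows from naturality of the lifting and filling-basis constructions under $\Extended{\Cocharacter}$.

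The crux is the Floer Seidel map $\FloerSeidel_{\ExtendedTorus}(\Cocharacter, \FixedPoint)$, which I would handle by upgrading the known fact that it is a chain isomorphism preserving continuation maps and intertwining the pair-of-pants product. Write $\Connection^\FixedPoint_\DegreeTwoCoclass = \bigl(\formalAdditionalCircle \dbyd{\DegreeTwoCoclass} - \formalAdditionalCircle \FillingIntersectionMap^\FillingBasis_\DegreeTwoCoclass\bigr) + {\DegreeTwoCoclass^\FixedPoint \QuantumAction}$. For the quantum-action term I would rerun the gluing-and-degeneration argument that proves the Seidel map intertwines the pair-of-pants product, now with the Seidel-twisted half-cylinder glued onto the cylinder that carries the marked-point incidence with $\DegreeTwoCoclass^\FixedPoint$, taking care that $\FixedPoint$ is used consistently to lift $\DegreeTwoCoclass$ on both sides. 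For the bracketed part, the key observation is that $\FloerSeidel_{\ExtendedTorus}(\Cocharacter, \FixedPoint)$ glues a fixed capping — built from $\Cocharacter$ and $\FixedPoint$ — onto the capping of each generating orbit, thereby shifting its Novikov weight; the formal term $\formalAdditionalCircle \dbyd{\DegreeTwoCoclass}$ and the geometric correction $-\formalAdditionalCircle \FillingIntersectionMap^\FillingBasis_\DegreeTwoCoclass$ each respond to this shift, and the two responses cancel, so that the bracketed combination is intertwined by the Seidel map up to a chain homotopy recording the change of the auxiliary filling basis $\FillingBasis$. That chain homotopy is the main obstacle: because $\FillingBasis$ is non-canonical and the Seidel map alters cappings, $\ShiftOperator^\FixedPoint_\Cocharacter \Connection^\FixedPoint_\DegreeTwoCoclass$ and $\Connection^\FixedPoint_\DegreeTwoCoclass \ShiftOperator^\FixedPoint_\Cocharacter$ will agree only after it is exhibited, and I expect to control it using the choice-independence implicit in the flatness of $\Connection^\FixedPoint$ (\autoref{thm-intro:flat-differential-connection-torus}; compare \autoref{thm-intro:flatness-of-differential-connection-s1-only}) together with the semigroup law $\ShiftOperator^\FixedPoint_\Cocharacter \ShiftOperator^\FixedPoint_\secondCocharacter = \ShiftOperator^\FixedPoint_{\Cocharacter + \secondCocharacter}$, which reduces the verification to a generating set of the monoid $\NonnegativeLatticeCocharacters{\TorusAction}{\Torus}$. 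A more uniform alternative would be to package $\Connection^\FixedPoint_\DegreeTwoCoclass$ and $\ShiftOperator^\FixedPoint_\Cocharacter$ into a single moduli problem — Floer cylinders carrying both a Seidel-type twisting near one end and a marked point constrained to $\DegreeTwoCoclass^\FixedPoint$ — and to read off $[\ShiftOperator^\FixedPoint_\Cocharacter, \Connection^\FixedPoint_\DegreeTwoCoclass] = 0$ from its codimension-one boundary.

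Finally, the statement on $\SymplecticCohomology^\ArbitraryIndex_{\ExtendedTorus}(\Manifold, \Extended{\TorusAction})$ follows by passing to the direct limit $\Slope \to \infty$: the shift operator and the connection each commute with the continuation maps of the direct system, so the relation $\ShiftOperator^\FixedPoint_\Cocharacter \Connection^\FixedPoint_\DegreeTwoCoclass = \Connection^\FixedPoint_\DegreeTwoCoclass \ShiftOperator^\FixedPoint_\Cocharacter$ persists in the limit, where moreover the slope-change continuation map $\ContinuationMap^{\CocharacterSlope{\Cocharacter}}$ is absorbed, so that $\ShiftOperator^\FixedPoint_\Cocharacter$ — and hence the commutation relation — extends to all $\Cocharacter \in \LatticeCocharacters{\Torus}$.
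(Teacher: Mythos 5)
Your overall architecture is the paper's: you factor $\ShiftOperator^\FixedPoint_\Cocharacter$ into $(\ClassifyingSpace \Extended{\Cocharacter}) \PullBack \ComposedWith \ContinuationMap^{\CocharacterSlope{\Cocharacter}} \ComposedWith \FloerSeidel_{\ExtendedTorus} (\Cocharacter, \FixedPoint)$ and check that $\Connection^\FixedPoint_\DegreeTwoCoclass$ commutes with each factor, and your treatment of the continuation map and of $(\ClassifyingSpace \Extended{\Cocharacter}) \PullBack$ matches \autoref{prop:connection-compatible-with-continuation-maps} and \autoref{prop:connection-commutes-with-pullback-operator}. The passage to symplectic cohomology and to all of $\LatticeCocharacters{\Torus}$ via the direct limit is also as in the paper.

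The gap is in your handling of the Floer Seidel factor. You split the connection as $\bigl(\formalAdditionalCircle \dbyd{\DegreeTwoCoclass} - \formalAdditionalCircle \FillingIntersectionMap^\FillingBasis_\DegreeTwoCoclass\bigr) + {\DegreeTwoCoclass^\FixedPoint \QuantumAction}$ and assert that each piece is intertwined by $\FloerSeidel_{\ExtendedTorus}(\Cocharacter, \FixedPoint)$ separately --- the quantum-action term by a pair-of-pants-style gluing, the bracketed term by an internal cancellation of Novikov-weight shifts. Neither piece is intertwined on its own: each commutator is nonzero of order $\formalAdditionalCircle$, and this is precisely the phenomenon isolated by the intertwining relation (\autoref{thm:interwining-relation-introduction}), where moving a degree-$2$ class across the Seidel map costs a weighted-Seidel correction $\formalAdditionalCircle \, \WeightedQuantumSeidel$. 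Concretely, the Seidel map replaces the action $\Extended{\TorusAction}$ by $\Cocharacter \cdot \Extended{\TorusAction}$ and hence replaces the lift $\DegreeTwoCoclass^\FixedPoint_{\Extended{\TorusAction}}$ by $\DegreeTwoCoclass^\FixedPoint_{\Cocharacter \cdot \Extended{\TorusAction}}$, and it replaces each filling $f$ by $\Cocharacter \PullBack f$; the intersection counts of the old filling with $\PerturbedStableManifold(\DegreeTwoCoclass^\FixedPoint_{\Extended{\TorusAction}})$ and of the new filling with $\PerturbedStableManifold(\DegreeTwoCoclass^\FixedPoint_{\Cocharacter \cdot \Extended{\TorusAction}})$ genuinely differ. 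The proof works only for the combination ${\DegreeTwoCoclass^\FixedPoint \QuantumAction} - \formalAdditionalCircle \FillingIntersectionMap^\FillingBasis_\DegreeTwoCoclass$ (with $\formalAdditionalCircle\dbyd{\DegreeTwoCoclass}$ handled exactly by choosing the pullback filling basis): one lifts both $\DegreeTwoCoclass^\FixedPoint_{\Extended{\TorusAction}}$ and $\DegreeTwoCoclass^\FixedPoint_{\Cocharacter \cdot \Extended{\TorusAction}}$ to a single class $\beta \in \Cohomology^2_{\ExtendedTorus}(\ClutchingBundle{\Cocharacter})$ on the clutching bundle (\autoref{lem:class-existence-for-quantum-flatness-from-intertwining}), builds homotopies $K^\pm$ that slide the marked-point incidence along the sphere direction between the two pole fibres, and identifies the $\formalAdditionalCircle$-error from the breaking of the auxiliary flowline $\flowUniversalSpace^{\min}$ with the \emph{difference} of the two filling-intersection terms via a capped-cylinder (null-homology) computation (\autoref{lemma:homotopy-equivalence-for-floer-flatness-holds-on-clutching-bundle}). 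Your ``more uniform alternative'' of a single moduli problem is in fact the right mechanism, but as stated it is a one-line sketch and does not identify the boundary stratum that produces the cancellation; and the semigroup reduction to generators of $\NonnegativeLatticeCocharacters{\TorusAction}{\Torus}$ does not help, since the missing cancellation must be established for each generator anyway.
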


A key step in our proof of \autoref{thm-intro:connection-and-shift-operators-commute-floer} is a Morse homotopy argument which describes how $\FloerSeidel_{\ExtendedTorus} (\Cocharacter, \FixedPoint)$ and $\DegreeTwoCoclass^\FixedPoint \QuantumAction$ intertwine as operations.
This argument builds on our proof of the analogous \define{intertwining relation} in equivariant quantum cohomology, \autoref{thm:interwining-relation-introduction}, which describes how the equivariant quantum Seidel map and the equivariant quantum product $\DegreeTwoCoclass \QuantumAction$ intertwine \cite[Section~7.4]{liebenschutz-jones_intertwining_2020}.

\subsection{\texorpdfstring{$\ExtendedTorus$}{T}-equivariant quantum cohomology}

The quantum Seidel map $\QuantumSeidel(\Cocharacter, \FixedPoint) : \QuantumCohomology^\ArbitraryIndex (\Manifold) \to \QuantumCohomology^{\ArbitraryIndex + |\Cocharacter, \FixedPoint|} (\Manifold)$ was developed by Seidel \cite{seidel_$_1997}.
It counts pseudoholomorphic sections of a clutching bundle over $\Projective^1$ with fibre $\Manifold$.
It is compatible with the Floer Seidel map under PSS isomorphisms for closed $\Manifold$ \cite[Section~8]{seidel_$_1997}.
The analogous compatibility result for convex $\Manifold$ involves a continuation map to undo the change in slope \cite[page~1039]{ritter_floer_2014}, and as such $\QuantumSeidel(\Cocharacter, \FixedPoint)$ is not an isomorphism in general.
The quantum Seidel map intertwines the quantum product and satisfies
    \begin{equation}
    \label{eqn:non-equivariant-quantum-seidel-map-composition-of-actions}
        \QuantumSeidel (\Cocharacter, \FixedPoint) \ComposedWith \QuantumSeidel (\secondCocharacter, \FixedPoint) = \QuantumSeidel (\Cocharacter + \secondCocharacter, \FixedPoint)
    \end{equation}
for any two cocharacters $\Cocharacter, \secondCocharacter$.

Equivariant quantum Seidel maps were initially developed by Maulik and Okounkov in their study of quiver varieties \cite[Section~8]{maulik_quantum_2019}.
In the construction of the $\ExtendedTorus$-equivariant quantum Seidel map $\QuantumSeidel_{\ExtendedTorus} (\Cocharacter, \FixedPoint)$, the group $\AdditionalCircle$ acts by rotating $\Projective^1$ while $\Torus$ acts fibrewise on the clutching bundle.
With a technique called \emph{virtual localisation}, they proved the \define{intertwining relation}, \autoref{thm:interwining-relation-introduction}, which describes how the equivariant quantum Seidel map interacts with the equivariant quantum product.
They interpreted this relation as the flatness of a difference-differential connection \cite[Section~1.4]{maulik_quantum_2019}.
Braverman, Maulik and Okounkov used equivariant quantum Seidel maps to derive the equivariant quantum product on the Springer resolution \cite{braverman_quantum_2011}.
Iritani used equivariant quantum Seidel maps to describe shift operators on the big equivariant quantum cohomology of toric varieties and proved flatness in this setting \cite{iritani_shift_2017}.

\begin{theorem}
    [Intertwining relation]
\label{thm:interwining-relation-introduction}
    The equation
    \begin{equation}
        \label{eqn:quantum-intertwining-seidel-equivariant-introduction}
        \QuantumSeidel_{\ExtendedTorus}(\Cocharacter, \FixedPoint) (x \QuantumProduct \alpha^+)
        -
        \QuantumSeidel_{\ExtendedTorus}(\Cocharacter, \FixedPoint) (x) \QuantumProduct \alpha^-
        =
        \uformal \  \WeightedQuantumSeidel_{\ExtendedTorus}(\Cocharacter, \FixedPoint, \alpha)(x)
    \end{equation}
    holds for all $x \in \QuantumCohomology^\ArbitraryIndex_{\ExtendedTorus}(\Manifold, \Extended{\TorusAction})$.
    Here, $\alpha^+ \in \Cohomology^2_{\ExtendedTorus}(\Manifold, \Extended{\TorusAction})$ and $\alpha^- \in \Cohomology^2_{\ExtendedTorus} (\Manifold, \Cocharacter \cdot \Extended{\TorusAction})$ are two equivariant cohomology classes that are related via the clutching bundle and 
    \begin{equation}
        \label{eqn:error-term-description-introduction}
        \WeightedQuantumSeidel_{\ExtendedTorus}(\Cocharacter, \FixedPoint, \alpha) : \QuantumCohomology^\ArbitraryIndex_{\ExtendedTorus}(\Manifold, \Extended{\TorusAction}) \to \QuantumCohomology^{\ArbitraryIndex + |\Cocharacter, \FixedPoint|}_{\ExtendedTorus}(\Manifold, \Cocharacter \cdot \Extended{\TorusAction})
    \end{equation}
    is a weighted version of the $\ExtendedTorus$-equivariant quantum Seidel map defined in \eqref{eqn:weighted-equivariant-quantum-seidel-map-definition}.
\end{theorem}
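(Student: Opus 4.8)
The argument, due to \cite[Section~7.4]{liebenschutz-jones_intertwining_2020} (with a virtual-localisation alternative in \cite[Section~8]{maulik_quantum_2019}), runs as follows. The plan is to build a one-parameter family of moduli spaces of sections of the clutching bundle whose two ends compute the two terms on the left of \eqref{eqn:quantum-intertwining-seidel-equivariant-introduction} and whose interior produces the right-hand side. First recall the players: the clutching bundle $\ClutchingBundle{\Cocharacter} \to \Projective^1$ with fibre $\Manifold$, on which $\AdditionalCircle$ rotates the base fixing the two points $0$ and $\infty$ while $\Torus$ acts fibrewise; the count of $\ClutchingACS$-holomorphic sections, together with gradient-flow data on a finite approximation of the Borel construction for $\ExtendedTorus$, which defines $\QuantumSeidel_{\ExtendedTorus}(\Cocharacter, \FixedPoint)$; and the two classes $\alpha^+ \in \Cohomology^2_{\ExtendedTorus}(\Manifold, \Extended{\TorusAction})$ and $\alpha^- \in \Cohomology^2_{\ExtendedTorus}(\Manifold, \Cocharacter \cdot \Extended{\TorusAction})$, which are the restrictions of a single $\ExtendedTorus$-equivariant lift of $\alpha$ to the total space along the fibres over $0$ and over $\infty$ respectively.

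The family carries one extra interior marked point $z$ on the section, constrained to meet a generic cycle representing $\alpha$ and carrying a gradient trajectory --- for an auxiliary Morse function on the Borel construction of $\Projective^1$ (the fibration over $\ClassifyingSpace\AdditionalCircle$ with fibre $\Projective^1$) --- running from the image of $z$ in $\Projective^1$ to a critical point lying over $0$ or over $\infty$; the homotopy parameter interpolates between these two choices. After transversality and Gromov-compactness, carried out inside the equivariant Morse model so the $\AdditionalCircle \times \Torus$-symmetry is preserved, the family is an oriented one-manifold with boundary, so its signed boundary count vanishes. The boundary strata are: (a) the end whose trajectory terminates over $0$; the degeneration formula identifies its contribution with $\QuantumSeidel_{\ExtendedTorus}(\Cocharacter, \FixedPoint)(x \QuantumProduct \alpha^+)$, the quantum product with $\alpha^+$ taking place in the source fibre over $0$; (b) the end terminating over $\infty$, contributing $\QuantumSeidel_{\ExtendedTorus}(\Cocharacter, \FixedPoint)(x) \QuantumProduct \alpha^-$ with the product now in the target fibre over $\infty$; (c) fibrewise sphere bubbling and Morse-breaking in the $\ExtendedTorus$-directions, which cancel in pairs or vanish for index reasons; and (d) the interior degeneration in which the auxiliary trajectory passes through a critical point one index higher, which is precisely the operation of cupping with the degree-two generator $\uformal$ of $\Cohomology^\ArbitraryIndex(\ClassifyingSpace\AdditionalCircle)$. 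With the $\alpha$-constraint still imposed at $z$, stratum (d) equals $\uformal$ times a count of sections of the clutching bundle carrying one $\alpha$-constrained marked point with unconstrained base position; comparison with \eqref{eqn:weighted-equivariant-quantum-seidel-map-definition} identifies this count as $\WeightedQuantumSeidel_{\ExtendedTorus}(\Cocharacter, \FixedPoint, \alpha)(x)$. Adding (a)--(d) with signs yields \eqref{eqn:quantum-intertwining-seidel-equivariant-introduction}.

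The main obstacle is the equivariant bookkeeping rather than any individual geometric input. One must orient every boundary face so the three terms of \eqref{eqn:quantum-intertwining-seidel-equivariant-introduction} appear with the stated signs; verify that $\alpha^+$ and $\alpha^-$ --- which a priori live in the equivariant cohomologies of $\Manifold$ for the two different $\ExtendedTorus$-actions $\Extended{\TorusAction}$ and $\Cocharacter \cdot \Extended{\TorusAction}$ --- genuinely arise as the two fixed-fibre restrictions of one class on the total space of the clutching bundle (this is where the clutching data, hence the cocharacter $\Cocharacter$ and the fixed point $\FixedPoint$, enter); and arrange the auxiliary Morse function so that stratum (d) contributes $\uformal$ as an honest element of $\Cohomology^\ArbitraryIndex(\ClassifyingSpace\AdditionalCircle)$, with no passage to a localisation, so that $\WeightedQuantumSeidel_{\ExtendedTorus}$ is well-defined. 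Once these are in place, the argument is a transcription of \cite[Section~7.4]{liebenschutz-jones_intertwining_2020}.
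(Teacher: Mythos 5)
Your proposal is correct and follows essentially the same route as the paper: a one-parameter family of $\alpha$-constrained spiked sections whose ends at the two poles give the left-hand side and whose equivariant Morse-breaking stratum produces $\uformal\,\WeightedQuantumSeidel_{\ExtendedTorus}(\Cocharacter,\FixedPoint,\alpha)$, with $\alpha^{\pm}$ realised as the pole-fibre restrictions of a single class on $\ClutchingBundle{\Cocharacter}$. The only cosmetic difference is that the paper encodes the auxiliary data as a half\textsuperscript{$+$} flowline $\flowUniversalSpace^{\min}$ in $\UniversalSpace\ExtendedTorus$ together with an argument map on its minimal locus (constraining the longitude of the marked point), rather than as a Morse function on the Borel construction of $\Projective^1$, but this is the same mechanism and the breaking at the $\formalAdditionalCircle$-critical point plays the identical role.
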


The algebrogeometric technique of virtual localisation does not work in equivariant Floer cohomology.
In \cite{liebenschutz-jones_intertwining_2020}, we gave a new Morse-theoretic proof of the intertwining relation which does not use virtual localisation.
This proof gives the relation as the boundary of a 1-dimensional moduli space which is made up of a configuration on $\UniversalSpace \Circle = \InfiniteSphere$ and a configuration on $\Manifold$ which depends on the configuration on $\UniversalSpace \Circle$.
We wanted the configuration on $\Manifold$ to depend on an element of $\Circle$, but there is no globally-defined equivariant map $\UniversalSpace \Circle \to \Circle$.
A key insight in our proof was that a map $\UniversalSpace \Circle \to \Circle$ defined on an open dense $\Circle$-invariant \emph{proper} subset of $\UniversalSpace \Circle$ was sufficient.
This approach gives rise to an additional boundary component in which the configuration on $\UniversalSpace \Circle$ leaves the proper subset.
We use this insight in many of the moduli spaces in this paper (with $\UniversalSpace \ExtendedTorus$ replacing $\UniversalSpace \Circle$).
For example, in our construction of the operation $\DegreeTwoCoclass^\FixedPoint \QuantumAction$ on $\FloerCohomology^\ArbitraryIndex_{\ExtendedTorus} (\Manifold, \Extended{\TorusAction}, \Slope)$, the intersection of  $\DegreeTwoCoclass^\FixedPoint$ and the Floer solution $\FloerSolution : \RealNumbers \times \Circle \to \Manifold$ occurs at $\FloerSolution (0, t_0)$ for an element $t_0 \in \Circle$ determined by the configuration in $\UniversalSpace \ExtendedTorus$.

The $\ExtendedTorus$-equivariant quantum cohomology $\QuantumCohomology^\ArbitraryIndex _{\ExtendedTorus} (\Manifold, \Extended{\TorusAction})$ of $\Manifold$ is equipped with a connection $\Connection^\FixedPoint$ and a shift operator $\ShiftOperator^\FixedPoint$, given by the composition $\ShiftOperator^\FixedPoint _\Cocharacter = (\ClassifyingSpace \Extended{\Cocharacter}) \PullBack \ComposedWith \QuantumSeidel_{\ExtendedTorus} (\Cocharacter, \FixedPoint)$.
The flatness result that $\Connection^\FixedPoint_\DegreeTwoCoclass$ and $\ShiftOperator^\FixedPoint_\Cocharacter$ commute on $\QuantumCohomology^\ArbitraryIndex _{\ExtendedTorus} (\Manifold, \Extended{\TorusAction})$ is equivalent to the intertwining relation (see the proof of \autoref{thm:overall-flatness-of-difference-differential-connection-in-equivariant-quantum-cohomology}).
We prove the intertwining relation in $\ExtendedTorus$-equivariant quantum cohomology using a $\ExtendedTorus$-equivariant version of our Morse-theoretic proof.
Under $\ExtendedTorus$-equivariant PSS isomorphisms, which identify $\QuantumCohomology^\ArbitraryIndex _{\ExtendedTorus} (\Manifold, \Extended{\TorusAction})$ with $\FloerCohomology^\ArbitraryIndex _{\ExtendedTorus} (\Manifold, \Extended{\TorusAction}, \Slope)$ for small slopes $\Slope > 0$, the connections and shift operators on these different modules are identified.

Seidel conjectured that his $q$-connection corresponds to the equivariant quantum connection under the equivariant Floer Seidel map and the PSS isomorphism \cite[Remark~5.6]{seidel_connections_2018}.
We confirm the analogous result holds in our setup.
Let $\Cocharacter$ be a cocharacter which induces a $\Circle$-action of strictly positive slope $\CocharacterSlope{\Cocharacter} > 0$.
The $\ExtendedTorus$-equivariant Floer Seidel map is an isomorphism
    \begin{equation}
        \FloerSeidel_{\ExtendedTorus} (\Cocharacter, \FixedPoint) :
        \FloerCohomology^{\ArbitraryIndex}_{\ExtendedTorus} (\Manifold, \Extended{\TorusAction}, \CocharacterSlope{\Cocharacter} + \epsilon) \to
        \FloerCohomology^{\ArbitraryIndex + |\Cocharacter, \FixedPoint|}_{\ExtendedTorus} (\Manifold, \Cocharacter \cdot \Extended{\TorusAction}, \epsilon)
    \end{equation}
which preserves the connection $\Connection^\FixedPoint$.
For small $\epsilon > 0$, the right-hand side is isomorphic to $\QuantumCohomology^\ArbitraryIndex_{\ExtendedTorus} (\Manifold, \Cocharacter \cdot \Extended{\TorusAction})$ via the PSS isomorphism, and the PSS isomorphism preserves the connection also.
Therefore, moving to the $\Circle$-equivariant setup, the isomorphism
    \begin{equation}
        \PSSisomorphism \ComposedWith\FloerSeidel_{\Circle} (\Cocharacter, \FixedPoint) :
        \FloerCohomology^{\ArbitraryIndex}_{\Circle} (\Manifold, \CocharacterSlope{\Cocharacter} + \epsilon) \to
        \QuantumCohomology^{\ArbitraryIndex + |\Cocharacter, \FixedPoint|}_{\Circle} (\Manifold, \TorusAction \ComposedWith (-\Cocharacter))
    \end{equation}
preserves the connection.
The $\Circle$-action on $\Circle$-equivariant Floer cohomology rotates loops with no pointwise action on $\Manifold$, meanwhile the $\Circle$-action on $\Circle$-equivariant quantum cohomology is\footnote{
    For the Borel quotient in the construction of equivariant cohomology, we use the antidiagonal action \eqref{eqn:relation-on-borel-space} whereas Seidel used the diagonal action.
    The minus sign in $\TorusAction \ComposedWith (-\Cocharacter)$ is not present in Seidel's conjecture, but it arises merely as a consequence of this convention.
} $\TorusAction \ComposedWith (-\Cocharacter)$.

\subsection{Toric manifolds}

A \define{toric manifold} $\Manifold$ is a symplectic manifold equipped with an effective Hamiltonian $\Torus$-action with $\Dimension \Torus = \frac12 \Dimension \Manifold$.
It is determined up to equivariant symplectomorphism by its \define{moment polytope}, a convex polytope in $\RealNumbers^{\Dimension \Torus}$.
The combinatorial data describing the moment polytope can be used to compute many invariants of the corresponding toric manifold, including its quantum cohomology \cite[Proposition~5.2]{mcduff_topological_2006} and its $\Torus$-equivariant cohomology \cite[Theorem~12.4.14]{cox_toric_2011}.
We combine these two results to find a presentation for the $\ExtendedTorus$-equivariant quantum cohomology (\autoref{thm:equivariant-quantum-cohomology-presentation-closed-toric-manifold}).
We use the combinatorial data to compute the connection $\Connection^\FixedPoint$ and the shift operator $\ShiftOperator^\FixedPoint$ in this presentation (\autoref{sec:connection-computation-on-closed-toric-manifold}).

\begin{theorem}
    For toric manifolds, the shift operator $\ShiftOperator_{\Cocharacter}^\FixedPoint$ on $\QuantumCohomology^\ArbitraryIndex_{\ExtendedTorus}(\Manifold, \Extended{\TorusAction})$ is determined by its value at 1 via the analogues to \eqref{eqn-intro:twisting-of-shift-operator-floer} and \autoref{thm-intro:connection-and-shift-operators-commute-floer}.
    Both of these conditions may be expressed combinatorially using the moment polytope.
    Moreover, we provide a simple combinatorial recipe to compute $\ShiftOperator_{\Cocharacter}^\FixedPoint (1)$ from the polytope data.
\end{theorem}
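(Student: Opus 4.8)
The plan is to reduce the computation of $\ShiftOperator^\FixedPoint_\Cocharacter$ on $\QuantumCohomology^\ArbitraryIndex_{\ExtendedTorus}(\Manifold, \Extended{\TorusAction})$ to the single value $\ShiftOperator^\FixedPoint_\Cocharacter(1)$ using the twisting relation and flatness, and then to pin that value down from the moment polytope. The starting point is the presentation of $\QuantumCohomology^\ArbitraryIndex_{\ExtendedTorus}(\Manifold, \Extended{\TorusAction})$ established in \autoref{thm:equivariant-quantum-cohomology-presentation-closed-toric-manifold}, which feeds the quantum deformation of the Stanley--Reisner relations \cite[Proposition~5.2]{mcduff_topological_2006} into the description of $\Cohomology^\ArbitraryIndex_{\ExtendedTorus}(\Manifold)$ as a polynomial algebra over $\Cohomology^\ArbitraryIndex(\ClassifyingSpace \ExtendedTorus)$ on the equivariant toric divisor classes $\Divisor_1, \dots, \Divisor_\numberFacets$ \cite[Theorem~12.4.14]{cox_toric_2011}. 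In particular $\Cohomology^2(\Manifold; \Real)$ is spanned by the $\Divisor_i$, so among the connection operators only the $\Connection^\FixedPoint_{\Divisor_i}$ are needed.

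Two structural facts about $\ShiftOperator^\FixedPoint_\Cocharacter$ do the work. Writing $R = \NovikovRing \Tensor \Cohomology^\ArbitraryIndex(\ClassifyingSpace \ExtendedTorus)$, the map $\ShiftOperator^\FixedPoint_\Cocharacter$ is $R$-semilinear: by $\NovikovRing$-linearity together with the $\ExtendedTorus$-equivariant analogue of the twisting relation \eqref{eqn-intro:twisting-of-shift-operator-floer}, one gets $\ShiftOperator^\FixedPoint_\Cocharacter(f x) = (\ClassifyingSpace \Extended{\Cocharacter}) \PullBack(f)\, \ShiftOperator^\FixedPoint_\Cocharacter(x)$ for $f \in R$, where $(\ClassifyingSpace \Extended{\Cocharacter}) \PullBack$ is the explicit automorphism of $R$ fixing $\NovikovRing$ and $\formalAdditionalCircle$ and acting on $\Cohomology^\ArbitraryIndex(\ClassifyingSpace \ExtendedTorus)$ by the unipotent substitution dual to $(\eltAdditionalCircle, \eltTorus) \mapsto (\eltAdditionalCircle, \eltTorus + \Cocharacter(\eltAdditionalCircle))$. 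Secondly, by flatness --- the quantum counterpart of \autoref{thm-intro:connection-and-shift-operators-commute-floer}, equivalently the intertwining relation \autoref{thm:interwining-relation-introduction} --- the map $\ShiftOperator^\FixedPoint_\Cocharacter$ commutes with every $\Connection^\FixedPoint_{\Divisor_i}$, and on $\QuantumCohomology^\ArbitraryIndex_{\ExtendedTorus}(\Manifold, \Extended{\TorusAction})$ this operator has the explicit form $\Connection^\FixedPoint_{\Divisor_i} = \formalAdditionalCircle \dbyd{\Divisor_i} + \Divisor_i^\FixedPoint \QuantumAction$ (the correction term $\formalAdditionalCircle \FillingIntersectionMap^\FillingBasis_{\Divisor_i}$ drops out here because quantum cohomology already carries its canonical basis), where $\dbyd{\Divisor_i}$ scales each Novikov monomial $\NovVariable^\DegreeTwoClass$ by $\Divisor_i(\DegreeTwoClass)$ and $\Divisor_i^\FixedPoint \QuantumAction$ is multiplication modulo the quantum Stanley--Reisner ideal and the linear relations. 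Both of these are manifestly combinatorial in the presentation, which gives the second assertion.

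To see that these data determine $\ShiftOperator^\FixedPoint_\Cocharacter$, I would check that $1$ generates $\QuantumCohomology^\ArbitraryIndex_{\ExtendedTorus}(\Manifold, \Extended{\TorusAction})$ as an $R$-submodule closed under the operators $\Connection^\FixedPoint_{\Divisor_i}$. Modulo the maximal ideal $\mathfrak{m} \subset \NovikovRing$ spanned by the positive-energy Novikov monomials, $\Connection^\FixedPoint_{\Divisor_i}$ descends to classical multiplication by $\Divisor_i^\FixedPoint$ --- because $\formalAdditionalCircle \dbyd{\Divisor_i}$ maps into $\mathfrak{m}\, \QuantumCohomology^\ArbitraryIndex_{\ExtendedTorus}$ and the quantum product reduces to the cup product --- and $\Cohomology^\ArbitraryIndex_{\ExtendedTorus}(\Manifold)$ is generated as a $\Cohomology^\ArbitraryIndex(\ClassifyingSpace \ExtendedTorus)$-algebra by the $\Divisor_i^\FixedPoint$; a Nakayama argument over the Novikov completion (or, when $\Manifold$ is Fano, a direct induction on cohomological degree) then upgrades this to the required generation. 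Hence any $R$-semilinear endomorphism commuting with all $\Connection^\FixedPoint_{\Divisor_i}$ is determined by its value at $1$; no consistency check is needed, since $\ShiftOperator^\FixedPoint_\Cocharacter$ exists a priori. Together with the previous paragraph this proves the first two assertions.

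It remains to compute the seed $\ShiftOperator^\FixedPoint_\Cocharacter(1) = (\ClassifyingSpace \Extended{\Cocharacter}) \PullBack\bigl( \QuantumSeidel_{\ExtendedTorus}(\Cocharacter, \FixedPoint)(1) \bigr)$ explicitly, as carried out in \autoref{sec:connection-computation-on-closed-toric-manifold}. The Hamiltonian $\Circle$-action $\TorusAction \ComposedWith \Cocharacter$ has a combinatorial moment map --- the $\Torus$-moment map postcomposed with the linear form dual to $\Cocharacter$ --- so its fixed loci are the faces of the polytope on which that form is maximal, with weights read off from the inward facet normals; using the representation property $\ShiftOperator^\FixedPoint_{\Cocharacter + \secondCocharacter} = \ShiftOperator^\FixedPoint_\Cocharacter \ShiftOperator^\FixedPoint_{\secondCocharacter}$ one may first reduce to cocharacters in a convenient generating set, for which the equivariant quantum Seidel element is a single Novikov-weighted toric generator $\ToricGeneratorWithNovikovWeighting_i$. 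The equivariant Seidel element is then pinned down by its cohomological degree $|\Cocharacter, \FixedPoint|$, its non-equivariant reduction (a toric Seidel element of McDuff--Tolman type, with Novikov exponent recording how $\Cocharacter$ pairs against the polytope data), and its restrictions to the torus fixed points via equivariant localisation, which in the toric setting is entirely combinatorial; applying the shift $(\ClassifyingSpace \Extended{\Cocharacter}) \PullBack$ yields the closed recipe. I expect producing and verifying this closed combinatorial form for the equivariant quantum Seidel element to be the main obstacle, whereas the statement that $\ShiftOperator^\FixedPoint_\Cocharacter$ is \emph{determined} by $\ShiftOperator^\FixedPoint_\Cocharacter(1)$ is essentially formal once the presentation, the twisting relation, and flatness are in hand.
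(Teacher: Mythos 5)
Your overall strategy coincides with the paper's: the seed value $\ShiftOperator^\FixedPoint_\Cocharacter(1)$, the twisting relation through $(\ClassifyingSpace \Extended{\Cocharacter})\PullBack$, and commutation with the connection together determine the operator, and all three ingredients are combinatorial in the presentation of \autoref{thm:equivariant-quantum-cohomology-presentation-closed-toric-manifold}. Two sub-steps are executed differently. For the determination step, the paper runs an explicit induction on the degree of monomials in the $\ToricGenerator_i$, splitting into the cases $j \notin \NeighbouringFacets(\vertexMomentPolytope)$ (where the flatness equation is rearranged to solve for $\ShiftOperator^\vertexMomentPolytope_i(\ToricGenerator_j p)$) and $j \in \NeighbouringFacets(\vertexMomentPolytope)$ (where the linear relations rewrite $\ToricGenerator_j$ in terms of the non-neighbouring generators plus an element of $\Cohomology^2(\ClassifyingSpace\ExtendedTorus)$, the latter handled by the twisting relation); your Nakayama-type argument over the Novikov filtration is the same idea in abstract form, and it is sound, but it hides the point that only the $\Connection^\vertexMomentPolytope_j$ with $j \notin \NeighbouringFacets(\vertexMomentPolytope)$ have the simple form $\formalAdditionalCircle\,\dbyd{\ToricGenerator_j} + \ToricGenerator_j$ — for neighbouring facets the lift $\DegreeTwoCoclass \mapsto \DegreeTwoCoclass^{\FixedPoint_\vertexMomentPolytope}$ is not $\ToricGenerator_j \mapsto \ToricGenerator_j$, which is exactly why the paper singles out the non-neighbouring generators (they form a basis of $\Cohomology^2(\Manifold)$ by the linear relations) and why the resulting recipe is genuinely algorithmic. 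For the seed, the paper does not need localisation: the $\ExtendedTorus$-equivariant section count for the cocharacter $\Cocharacter_i$ of a facet normal is supported on the fixed sections through $\Divisor_i$, giving $\QuantumSeidel_{\ExtendedTorus}(\Cocharacter_i,\FixedPoint_\vertexMomentPolytope)(1) = \NovVariable^{\NovikovExponentForDivisor_i}[\Divisor_i]_{\ExtendedTorus}$ directly from the McDuff--Tolman computation, with the Novikov exponent $\NovikovExponentForDivisor_i$ pinned down by walking along edges of the polytope (\autoref{lem:change-fixed-point-over-edge-of-polytope}); your localisation route would also work but is heavier than necessary, and you correctly flag it as the only step you have not carried out. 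General cocharacters are then reached through $\ShiftOperator^\FixedPoint_{\Cocharacter+\secondCocharacter} = \ShiftOperator^\FixedPoint_\Cocharacter \ShiftOperator^\FixedPoint_{\secondCocharacter}$ exactly as you say.
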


Ritter showed that the moment polytope of a convex toric manifold $\Manifold$ gives rise to a neat presentation for $\SymplecticCohomology^\ArbitraryIndex (\Manifold)$ \cite[Theorem~1.5(2)]{ritter_circle-actions_2016}.
The $\ExtendedTorus$-equivariant version of Ritter's argument does not produce a presentation for $\ExtendedTorus$-equivariant symplectic cohomology, but we can nonetheless use it to deduce properties of $\SymplecticCohomology^\ArbitraryIndex _{\ExtendedTorus} (\Manifold, \Extended{\TorusAction})$.
In particular, for $\Manifold = \LineBundle_{\Projective^1}(-1)$, we deduce that there are elements $d_i \in \Cohomology^2 (\ClassifyingSpace \ExtendedTorus)$ for $i \in \Integers_{> 0}$ such that every element of $\SymplecticCohomology^\ArbitraryIndex _{\ExtendedTorus} (\Manifold, \Extended{\TorusAction})$ can be written in the form
    \begin{equation}
        \frac{x}{d_1 \cdots d_r}
    \end{equation}
for $x \in \QuantumCohomology^\ArbitraryIndex_{\ExtendedTorus} (\Manifold, \Extended{\TorusAction})$ and $r > 0$.
The elements $d_i$ arise as the determinants of the shift operators, and they are determined by the Reeb dynamics of the sphere bundle $\SphereBundle \LineBundle_{\Projective^1}(-1)$.
While it is beyond the scope of this paper to verify, we conjecture that this relationship between the determinants of the shift operators and the Reeb dynamics holds in general (\autoref{conjecture:significance-of-denominators}).

\subsection{Notation}

In \cite{liebenschutz-jones_intertwining_2020}, we only looked at $\Circle$-equivariant constructions, denoting $\Circle$-equivariant invariants with an $E$ ($E\QuantumSeidel$, $E\QuantumCohomology^\ArbitraryIndex$, etc.).
In this paper, we consider $\Circle$-equivariant, $\Torus$-equivariant and $\ExtendedTorus$-equivariant constructions, so we specify the group in this paper's notation ($\QuantumSeidel_{\Circle}$, $\QuantumCohomology^\ArbitraryIndex_{\Torus}$, etc.).

The Novikov ring in this paper records classes $\DegreeTwoClass \in \Homology_2 (\Manifold)$ (definition in \autoref{sec:novikov-ring}), whereas the Novikov ring in our previous paper recorded only $\FirstChernClass(\TangentSpace \Manifold, \SymplecticForm)(\DegreeTwoClass)$ and $[\SymplecticForm](\DegreeTwoClass)$ \cite[Section~3.2.6]{liebenschutz-jones_intertwining_2020}.
We use a fixed point $\FixedPoint$ to construct a `reference section' of the clutching bundle, replacing the \emph{lifted $\Circle$-actions} $\widetilde{\sigma}$ from our previous paper.
For the $q$-connection, Seidel assumed $\FirstChernClass(\TangentSpace \Manifold, \SymplecticForm) = 0$ and his Novikov ring recorded only $[\SymplecticForm](\DegreeTwoClass)$ \cite[Section~2a]{seidel_connections_2018}.

We introduce the manifold $\Manifold$ in \autoref{sec:assumptions-on-symplectic-manifold}, the torus $\Torus$ and the action $\TorusAction$ in \autoref{sec:torus-and-torus-action-on-manifold}, and the extended torus $\ExtendedTorus$ in \autoref{sec:extended-torus}.

\subsection{Outline}

\begin{description}
    \item[\autoref{sec:abstract-connections}]
    We introduce connections and shift operators as algebraic structures. 
    We discuss how the terminology is inspired by bundles.
    
    \item[\autoref{sec:equivariant-quantum-cohomology}]
    We list the assumptions on $\Manifold$ and $\TorusAction$.
    We construct the connection and shift operator on $\ExtendedTorus$-equivariant quantum cohomology.
    We prove flatness in \autoref{thm:overall-flatness-of-difference-differential-connection-in-equivariant-quantum-cohomology}.
    
    \item[\autoref{sec:equivariant-floer-cohomology-all-content}]
    We construct $\ExtendedTorus$-equivariant Floer cohomology together with its connection and shift operator.
    We prove flatness in \autoref{thm:flatness-of-difference-differential-connection-on-floer-cohomology}.
    
    \item[\autoref{sec:toric-manifolds-general}]
    We compute the $\ExtendedTorus$-equivariant quantum cohomology of a toric manifold, together with its connection and shift operator.
    We demonstrate the methods for $\Projective^2$ and $\LineBundle_{\Projective^1}(-1)$.
    
    \item[\autoref{app:topological-proofs}] We postpone a few topological proofs to the appendix.
\end{description}

\subsection{Acknowledgements}

    The author would like to thank his supervisor Alexander Ritter for his guidance, support and ideas. 
    The author wishes to thank Paul Seidel for his suggestions.
    The author thanks Nicholas Wilkins and Filip Živanović for many constructive discussions and their feedback. 
    The author thanks Jack Smith for useful conversations.
    The author gratefully acknowledges support from the EPSRC grant EP/N509711/1. 
    This work will form part of his DPhil thesis.
    
\section{Connections as an algebraic structure}
\label{sec:abstract-connections}

    \subsection{Differential connections}
    
        \label{sec:differential-connections-definition}

        This viewpoint on connections is taken from \cite[Chapter~1]{koszul_lectures_1986}.
        It abstracts the definition of connections on vector bundles (see \autoref{eg:abstract-connection-on-vector-bundle-motivation}).
    
        Let $k$ be an integral domain.
        Let $A$ be a unital, commutative and associative algebra over $k$.
        A \define{derivation} or \define{vector field} on $A$ is a $k$-linear map $X : A \to A$ which satisfies the Leibniz rule
            \begin{equation}
                \label{eqn:Leibniz-rule-for-derivations}
                X(ab) = (Xa)b + a(Xb) \EndFullStop
            \end{equation}
        Denote by $\SpaceOfDerivations(A)$ the set of all derivations on $A$.
        It has the structure of an $A$-module with post-multiplication and the structure of a $k$-Lie algebra with the commutator $\LieBracket{X}{Y} = XY - YX$.
        
        Let $P$ be a unitary\footnote{
            In a \define{unitary} $A$-module $P$, the relation $1_A \cdot p = p$ holds for all $p \in P$.
        } $A$-module.
        A \define{(differential) connection on $P$} or a \define{derivation law in $P$} is an $A$-module map $\Connection : \SpaceOfDerivations(A) \to \Homomorphisms_k(P, P)$ which satisfies the Leibniz rule
            \begin{equation}
                \label{eqn:Leibniz-rule-for-abstract-connections}
                \Connection_X(ap) = (Xa)p + a\Connection_X(p) \EndFullStop
            \end{equation}
        The \define{curvature} of $\Connection$ is the $k$-linear map $\Curvature^\Connection : \SpaceOfDerivations(A) \times \SpaceOfDerivations(A) \to \Homomorphisms_A(P, P)$ given by
            \begin{equation}
                \label{eqn:curvature-definition}
                \Curvature^\Connection_{X, Y} = \Connection_X \Connection_Y - \Connection_Y \Connection_X - \Connection_{\LieBracket{X}{Y}} \EndFullStop
            \end{equation}
        A connection is \define{flat} if its curvature vanishes.
        
        \begin{example}
            [Canonical connections]
            The $A$-module $A$ has a \define{canonical connection} given by $\Connection_{X}(a) = Xa$.
            The free $A$-module $P$ with specified basis $\SetCondition{p_i}{i \in I}$ has a \define{canonical connection} given by $\Connection_{X}(\sum_i a_i p_i) = \sum_i (Xa_i) p_i$.
        \end{example}
        
        \begin{example}
            [Polynomial rings]
        \label{eg:differential-connection-on-polynomial-ring}
            Let $A = k [t]$.
            The standard differentiation operation $\partial_t : t^n \mapsto n t^{n-1}$ generates $\SpaceOfDerivations (k [t])$.
        \end{example}
        
        \begin{example}
            [Differentiable manifolds]
            \label{eg:abstract-connection-on-vector-bundle-motivation}
            Let $E \to B$ be a smooth vector bundle over a smooth manifold $B$.
            Let $k = \RealNumbers$, let $A = \SmoothFunctions(B)$ be the smooth $\RealNumbers$-valued functions on $B$, and let $P = \SmoothFunctions(B, E)$ be the smooth sections of the vector bundle $E$.
            The \emph{vector fields on $A$} are precisely the smooth vector fields on $B$, so $\SpaceOfDerivations (A) = \SmoothFunctions(B, \TangentSpace B)$.
            Any connection on $E \to B$, in the conventional sense, is a \emph{connection in $P$}.
        \end{example}
        
        \begin{example}
            [Non-existence]
            Let $A = k [t]$ and $P = \nicefrac{k [t]}{(t)}$.
            The module $P$ has no connection, since \eqref{eqn:Leibniz-rule-for-abstract-connections} yields
                \begin{equation}
                    0 + (t) = \Connection_{\partial_t}(t\cdot(1 + (t))) = (\partial_t t)(1 + (t)) + t \Connection_{\partial_t}((1 + (t))) = 1 + (t) \ne 0 + (t)
                \end{equation}
            for any connection $\Connection$.
        \end{example}
        
        \begin{lemma}
            [Flatness sufficient on a generating set]
            \label{lem:flatness-sufficient-on-generating-set-differential-connection}
            Suppose the set of vector fields $\Set{X_\alpha}_{\alpha \in I}$ generates $\SpaceOfDerivations (A)$ as an $A$-module.
            If $\Curvature^\Connection_{X_\alpha, X_\beta} = 0$ for all $\alpha, \beta \in I$, then the connection $\Connection$ is flat.
        \end{lemma}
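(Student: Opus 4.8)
The plan is to reduce everything to the observation that, although $\Connection_X$ is not $A$-linear as a map $P \to P$, the curvature $\Curvature^\Connection_{X,Y}$ is \emph{$A$-bilinear} in the pair $(X,Y)$. Granting this, the lemma follows at once: an arbitrary pair of derivations can be written as finite $A$-linear combinations $X = \sum_{\alpha} a_\alpha X_\alpha$ and $Y = \sum_{\beta} b_\beta X_\beta$ of the generators, and bilinearity gives $\Curvature^\Connection_{X,Y} = \sum_{\alpha,\beta} a_\alpha b_\beta \Curvature^\Connection_{X_\alpha, X_\beta} = 0$, since each $\Curvature^\Connection_{X_\alpha,X_\beta}$ vanishes by hypothesis. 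Hence $\Connection$ is flat.

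So the work is in establishing $A$-bilinearity of the curvature. Additivity in each argument is immediate from \eqref{eqn:curvature-definition} together with the $k$-linearity of $\Connection$ in the vector-field slot. By the antisymmetry $\Curvature^\Connection_{X,Y} = -\Curvature^\Connection_{Y,X}$ --- which follows from \eqref{eqn:curvature-definition} and $\LieBracket{Y}{X} = -\LieBracket{X}{Y}$ --- it then suffices to prove $\Curvature^\Connection_{aX,Y} = a\,\Curvature^\Connection_{X,Y}$ for all $a \in A$. I would assemble this from three facts: (i) $\Connection$ is an $A$-module map in the vector-field argument, so $\Connection_{aX} = a\Connection_X$; (ii) the Leibniz rule \eqref{eqn:Leibniz-rule-for-abstract-connections} gives $\Connection_Y(a\Connection_X p) = (Ya)\,\Connection_X p + a\,\Connection_Y(\Connection_X p)$; and (iii) the identity $\LieBracket{aX}{Y} = a\LieBracket{X}{Y} - (Ya)X$ in $\SpaceOfDerivations(A)$, verified by evaluating both sides on an element of $A$ and applying \eqref{eqn:Leibniz-rule-for-derivations}. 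Substituting into \eqref{eqn:curvature-definition} and using $\Connection_{a\LieBracket{X}{Y} - (Ya)X} = a\Connection_{\LieBracket{X}{Y}} - (Ya)\Connection_X$ yields
\[
    \Curvature^\Connection_{aX,Y}(p) = a\,\Connection_X \Connection_Y p - (Ya)\,\Connection_X p - a\,\Connection_Y \Connection_X p - a\,\Connection_{\LieBracket{X}{Y}} p + (Ya)\,\Connection_X p = a\,\Curvature^\Connection_{X,Y}(p),
\]
the two $(Ya)\,\Connection_X p$ terms cancelling.

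The only subtle point is precisely this cancellation, and it is the heart of the matter: the failure of $\Connection_Y$ to be $A$-linear contributes the term $(Ya)\,\Connection_X p$, which is exactly offset by the non-$A$-linear part of $X \mapsto \LieBracket{X}{Y}$. Everything else is formal bookkeeping. (An essentially identical computation, applying \eqref{eqn:Leibniz-rule-for-abstract-connections} once more, also confirms that each $\Curvature^\Connection_{X,Y}$ is genuinely an element of $\Homomorphisms_A(P,P)$ as asserted after \eqref{eqn:curvature-definition}, but this is not needed for the present lemma.) I do not anticipate any real obstacle here: the statement is a standard tensoriality argument, and the generating-set reduction in the first paragraph is the whole content.
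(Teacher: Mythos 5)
Your proof is correct, and it is exactly the argument the paper compresses into ``this follows immediately from the Leibniz rules'': the curvature is $A$-bilinear (tensorial) in $(X,Y)$, with the non-$A$-linear term $(Ya)\,\Connection_X$ coming from the Leibniz rule cancelling against the corresponding term in $\Connection_{\LieBracket{aX}{Y}}$, after which the generating-set hypothesis finishes the job. No gaps.
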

        
        \begin{proof}
            This follows immediately from the Leibniz rules.
        \end{proof}
        
        \begin{definition}
            [Partial connection]
        \label{def:partial-differential-connection}
            In some cases, it is undesirable to define a connection $\Connection_X$ for all $X \in \SpaceOfDerivations (A)$.
            Instead, we might define the connection $\Connection^\SpaceOfDerivations_X$ for all $X$ in a given $k$-submodule $\SpaceOfDerivations \subseteq \SpaceOfDerivations (A)$.
            The connection $\Connection^\SpaceOfDerivations$ must still be $A$-linear, whenever this condition makes sense.
            That is, for all $a \in A$ and $X \in \SpaceOfDerivations$, the relation $\Connection^\SpaceOfDerivations _{a X} = a\Connection^\SpaceOfDerivations _{X}$ holds whenever $a X \in \SpaceOfDerivations$.
            Equivalently, $\Connection^\SpaceOfDerivations$ is the restriction of an $A$-module map $A \cdot \SpaceOfDerivations \to \Homomorphisms_k(P, P)$ which satisfies \eqref{eqn:Leibniz-rule-for-abstract-connections}.
            The \define{curvature} of $\Connection^\SpaceOfDerivations$, defined exactly as in \eqref{eqn:curvature-definition}, is well-defined if $\SpaceOfDerivations$ is a $k$-Lie subalgebra of $\SpaceOfDerivations(A)$.
        \end{definition}
    
    \subsection{Difference connections}
    
        \label{sec:difference-connections-definition}
        
        Let $S$ be any set.
        We will call the elements of $S$ the \define{fundamental directions}.
        
        A \define{shift operator} on $A$ is an assignment to each fundamental direction $\sigma \in S$ an algebra homomorphism $E_\sigma : A \to A$ such that $E_\sigma$ and $E_\tau$ commute for any $\sigma, \tau \in S$.
        The corresponding \define{difference operator} is the map $X^E : S \to \Homomorphisms_k (A, A)$ which assigns to each fundamental direction $\sigma$ the $k$-linear map $X^E_\sigma = E_\sigma - \Identity_A$.
        It satisfies a \define{deformed Leibniz rule}
            \begin{equation}
                \label{eqn:deformed-Leibniz-rule-for-difference-operators}
                X^E_\sigma(ab) = (X^E_\sigma a) b + (E_\sigma a) (X^E_\sigma b) \EndFullStop
            \end{equation}
        Note this rule is symmetric in $a$ and $b$ because $A$ is commutative.
        
        A \define{difference connection} or \define{discrete connection} on $P$ is a map $\Connection : S \to \Homomorphisms_k (P, P)$ which satisfies the \define{deformed Leibniz rule}
            \begin{equation}
                \label{eqn:deformed-Leibniz-rule-for-difference-connection}
                \Connection_\sigma (ap) = (X^E_\sigma a) p + (E_\sigma a)\Connection_\sigma (p) \EndFullStop
            \end{equation}
        The \define{curvature $\Curvature^\Connection : S \times S \to \Homomorphisms_k(P,P)$} is given by $\Curvature^\Connection_{\sigma, \tau} = \Connection_\sigma \Connection_\tau - \Connection_\tau \Connection_\sigma$, and $\Connection$ is \define{flat} if the curvature vanishes.
        Unlike the curvature of a differential connection, the codomain of $\Curvature^\Connection$ is not the $A$-module endomorphisms of $P$.
        Instead, we have the relation
            \begin{equation}
                \label{eqn:curvature-relation-for-difference-connection}
                \Curvature^\Connection_{\sigma, \tau} (ap) = (E_\sigma  E_\tau a) \  \Curvature^\Connection_{\sigma, \tau} (p)
            \end{equation}
        for all fundamental directions $\sigma$ and $\tau$.
        
        An equivalent formulation of difference connections is via shift operators.
        A \define{shift operator} $\ShiftOperator$ on $P$ is an assignment to each fundamental direction $\sigma \in S$ a $k$-linear map $\ShiftOperator_\sigma : P \to P$ which satisfies $\ShiftOperator_\sigma(ap) = (E_\sigma a) \ \ShiftOperator_\sigma(p)$.
        The map $\Connection^\ShiftOperator = \ShiftOperator - \Identity_P$ is a difference connection.
        The flatness of $\Connection^\ShiftOperator$ is equivalent to the commutativity of the maps $\ShiftOperator_\sigma$ and $\ShiftOperator_{\sigma\Prime}$ for any $\sigma, \sigma\Prime \in S$.
        
        Flat shift operators may also be defined on commutative monoids\footnote{
            A \define{commutative monoid} $M$ is a set with a commutative associative binary operation which has a two-sided identity element.
            It does not have an inverse operation, unlike a group.
            $(\Integers^{\ge 0}, +, 0)$ is a commutative monoid.
        } $M$.
        A \define{shift operator} on $A$ for $M$ is a monoid map $E : M \to \AlgebraHomomorphismSpace_k (A, A)$.
        A \define{flat shift operator} $\ShiftOperator$ on $P$ for $M$ is an assignment to each $\sigma \in M$ a $k$-linear map $\ShiftOperator_\sigma : P \to P$ which satisfies $\ShiftOperator_\sigma(a p) = (E_\sigma a) \ \ShiftOperator_\sigma(p)$ and $\ShiftOperator_{\sigma \sigma\Prime} = \ShiftOperator_\sigma \ShiftOperator_{\sigma\Prime}$ for all $\sigma, \sigma\Prime \in M$.
        A flat shift operator $\ShiftOperator$ defined for a set $S$ may be extended to the \emph{commutative monoid} $\DirectSum_{\sigma \in S} \Integers^{\ge 0} \langle \sigma \rangle$ generated by $S$.
        Explicitly, the shift operator assigned to the product $\sigma_1 \cdots \sigma_m$ is $\ShiftOperator_{\sigma_1} \cdots \ShiftOperator_{\sigma_m}$.
        
        \begin{example}
            [Lattice bundles]
        \label{eg:difference-connection-on-lattice-vector-bundle-motivation}
            Let $S$ be a finite set of linearly independent vectors in $\RealNumbers^n$ and let $L = \bigoplus_{\sigma \in S} \Integers \sigma$ be the lattice generated by $S$.
            Let $V$ be any $k$-module.
            Set $A = \operatorname{Map}(L, k)$, the $k$-algebra of all $k$-valued functions on $L$, and $P = \operatorname{Map}(L, V)$.
            Define $E_\sigma (a) (l) = a(l + \sigma)$, so the function $E_\sigma (a)$ is the composition of $a$ with a shift in $L$ by $\sigma$.
            Similarly, define $\ShiftOperator_\sigma (p) (l) = p(l + \sigma)$.
            The resulting difference connection is flat.
        \end{example}
        
        \begin{remark}
            [Comparison of \autoref{eg:abstract-connection-on-vector-bundle-motivation} and \autoref{eg:difference-connection-on-lattice-vector-bundle-motivation}]
            Take the setup of \autoref{eg:difference-connection-on-lattice-vector-bundle-motivation}.
            The directional derivative of $a : \RealNumbers^n \to \RealNumbers$ at 0 in the direction $\sigma$ is the limit of $\frac{1}{t} (a(t \sigma) - a(0))$ as $t \to 0$.
            The difference operator $X^E_\sigma$ should be considered as pre-limit version of the directional derivative, where without loss of generality we have scaled so that $t = 1$.
        \end{remark}
        
    \subsection{Difference-differential connections}
    
        \label{sec:difference-differential-connections-definition}
    
        Let $k$, $A$, $P$ and $S$ be as before.
        Let $\SpaceOfDerivations$ be a
        $k$-module of vector fields and let $E$ be a shift operator on $A$ with respect to $S$.
        A \define{difference-differential connection} on $P$ is a map $\Connection : S \DisjointUnion \SpaceOfDerivations \to \Homomorphisms_k (P, P)$ which is a difference connection on $S$ and a partial differential connection on $\SpaceOfDerivations$.
        
        Suppose $\SpaceOfDerivations$ is a $k$-Lie algebra and $\LieBracket{X}{X^E_\sigma} = 0$ for all $X \in \SpaceOfDerivations$ and all $\sigma \in S$.
        The \define{curvature} of $\Connection$ is the map $\Curvature^\Connection : (S \DisjointUnion \SpaceOfDerivations)^2 \to \Homomorphisms_k(P, P)$ given by \eqref{eqn:curvature-definition}, interpreting $\sigma$ as $X^E_\sigma$ in the formula where appropriate.
        Since shift operators in different fundamental directions commute, the commutator $\LieBracket{X^E_\sigma}{X^E_\tau}$ is zero, so that $\Curvature^\Connection$ coincides with the curvature of a difference connection when restricted to $S \times S$.
        The connection $\Connection$ is \define{flat} if $\Curvature^\Connection$ vanishes.
        
        Difference-differential connections may be described using shift operators.
        In this context, a \define{difference-differential connection} is a pair $(\ShiftOperator, \Connection)$ consisting of a shift operator $\ShiftOperator$ and a differential connection $\Connection$.
        
        \begin{lemma}
            [Flatness sufficient on a generating set]
            \label{lem:flatness-sufficient-on-generating-set-difference-differential-connection}
            Suppose the shift operators on $A$ are monomorphisms and that the set of vector fields $\mathcal{B} = \Set{X_\alpha}_{\alpha \in I} \subseteq \SpaceOfDerivations$ generates $A \cdot \SpaceOfDerivations$ as an $A$-module.
            If $\Curvature^\Connection = 0$ on $(S \DisjointUnion \mathcal{B})^2$, then the connection $\Connection$ is flat.
        \end{lemma}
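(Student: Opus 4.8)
The plan is to check that the curvature $\Curvature^\Connection$ vanishes on all of $(S \DisjointUnion \SpaceOfDerivations)^2$ by treating the three kinds of pairs separately. Throughout I pass to the shift-operator description, writing $\ShiftOperator_\sigma = \Identity_P + \Connection_\sigma$ for $\sigma \in S$, so that on the relevant blocks the curvature \eqref{eqn:curvature-definition} reads $\Curvature^\Connection_{\sigma, \tau} = \LieBracket{\ShiftOperator_\sigma}{\ShiftOperator_\tau}$, $\Curvature^\Connection_{\sigma, X} = \LieBracket{\ShiftOperator_\sigma}{\Connection_X}$ (using $\LieBracket{X^E_\sigma}{X} = 0$), and $\Curvature^\Connection_{X, Y} = \LieBracket{\Connection_X}{\Connection_Y} - \Connection_{\LieBracket{X}{Y}}$. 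On $S \times S$ there is nothing to do, since $S \subseteq S \DisjointUnion \mathcal{B}$ and $\Curvature^\Connection$ is assumed to vanish there. On $\SpaceOfDerivations \times \SpaceOfDerivations$ the curvature is that of the underlying partial differential connection; the usual computation from the Leibniz rule \eqref{eqn:Leibniz-rule-for-abstract-connections} and the identity $\LieBracket{aX}{bY} = ab\LieBracket{X}{Y} + a(Xb)Y - b(Ya)X$ shows $\Curvature^\Connection_{aX,bY} = ab\,\Curvature^\Connection_{X,Y}$, i.e. $\Curvature^\Connection$ is $\Homomorphisms_A(P,P)$-valued and $A$-bilinear there, exactly as in \autoref{lem:flatness-sufficient-on-generating-set-differential-connection}. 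Since $\mathcal{B}$ generates $A \cdot \SpaceOfDerivations \supseteq \SpaceOfDerivations$ as an $A$-module, every $X, Y \in \SpaceOfDerivations$ is an $A$-combination of the $X_\alpha$, and then $\Curvature^\Connection_{X, Y}$ is the corresponding $A$-combination of the terms $\Curvature^\Connection_{X_\alpha, X_\beta} = 0$ (here $\LieBracket{X_\alpha}{X_\beta} \in \SpaceOfDerivations$, so $\Connection_{\LieBracket{X_\alpha}{X_\beta}}$ is defined).

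The mixed block $S \times \SpaceOfDerivations$ is the crux. Fix $\sigma \in S$ and $X \in \SpaceOfDerivations$ and write $X = \sum_\alpha a_\alpha X_\alpha$ with $a_\alpha \in A$ and $X_\alpha \in \mathcal{B}$. Expanding $\Connection_X = \sum_\alpha a_\alpha \Connection_{X_\alpha}$ and applying the deformed Leibniz rule $\ShiftOperator_\sigma(ap) = (E_\sigma a)\,\ShiftOperator_\sigma(p)$ together with $\LieBracket{\ShiftOperator_\sigma}{\Connection_{X_\alpha}} = \Curvature^\Connection_{\sigma, X_\alpha} = 0$, a short computation collapses the commutator to
    \begin{equation*}
        \Curvature^\Connection_{\sigma, X} \ =\ \LieBracket{\ShiftOperator_\sigma}{\Connection_X} \ =\ \Connection_Y \ComposedWith \ShiftOperator_\sigma,
        \qquad
        Y \ :=\ \sum_\alpha (X^E_\sigma a_\alpha)\,X_\alpha \ \in\ A \cdot \SpaceOfDerivations ,
    \end{equation*}
where, since $X^E_\sigma a = E_\sigma(a) - a$, the auxiliary vector field is the defect $Y = \bigl(\sum_\alpha E_\sigma(a_\alpha)\,X_\alpha\bigr) - X$ between $X$ and its ``$E_\sigma$-shifted'' representative. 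On the other hand, expanding $\LieBracket{X^E_\sigma}{a_\alpha X_\alpha}$ with the deformed Leibniz rule and $\LieBracket{X_\alpha}{X^E_\sigma} = 0$ gives $\LieBracket{X^E_\sigma}{a_\alpha X_\alpha} = (X^E_\sigma a_\alpha)\,(X_\alpha \ComposedWith E_\sigma)$, so the standing hypothesis $\LieBracket{X^E_\sigma}{X} = 0$ (valid for all $X \in \SpaceOfDerivations$, and what makes $\Curvature^\Connection_{\sigma,X}$ well-defined) becomes the identity $Y \ComposedWith E_\sigma = 0$ of $k$-linear endomorphisms of $A$.

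It remains to deduce $\Connection_Y \ComposedWith \ShiftOperator_\sigma = 0$, and this is where the monomorphism hypothesis is used. The identity $Y \ComposedWith E_\sigma = 0$ says the derivation $Y$ vanishes on the subalgebra $E_\sigma(A) \subseteq A$; since $E_\sigma$ is a monomorphism, $E_\sigma \colon A \xrightarrow{\ \sim\ } E_\sigma(A)$ is an algebra isomorphism, and one argues — exploiting that $Y$ is the defect $\bigl(\sum_\alpha E_\sigma(a_\alpha)X_\alpha\bigr) - X$ above, not a generic derivation killing $E_\sigma(A)$ — that $Y = 0$ in $A \cdot \SpaceOfDerivations$; equivalently, $X$ admits a representation over $\mathcal{B}$ whose coefficients are fixed by $E_\sigma$, so the defect vanishes and $\Curvature^\Connection_{\sigma,X}$ (being independent of the representation) is zero. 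Then $\Connection_Y = 0$, hence $\Curvature^\Connection_{\sigma, X} = 0$, and $\Curvature^\Connection_{X, \sigma} = 0$ by antisymmetry. I expect this last deduction — upgrading ``$Y$ kills $E_\sigma(A)$'' to ``$Y = 0$'' via injectivity of $E_\sigma$ and the structure of $Y$ — to be the only genuinely delicate point; everything else is bookkeeping, chiefly checking that all brackets appearing in \eqref{eqn:curvature-definition} with $\sigma$ read as $X^E_\sigma$ land in the domain of $\Connection$ (they vanish on $\SpaceOfDerivations$ and on the $X_\alpha$, and $\LieBracket{X^E_\sigma}{X^E_\tau} = 0$ on $S \times S$), so that $\Curvature^\Connection$ is defined precisely under the hypotheses of \autoref{def:partial-differential-connection} and the difference-differential setup, and that all expansions over $\mathcal{B}$ are finite sums so the deformed Leibniz rules apply termwise.
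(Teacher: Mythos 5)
Most of your argument is sound and runs parallel to the paper's: the block $\SpaceOfDerivations \times \SpaceOfDerivations$ is handled by \autoref{lem:flatness-sufficient-on-generating-set-differential-connection} exactly as in the paper, your identity $\Curvature^\Connection_{\sigma, X} = \Connection_Y \ComposedWith \ShiftOperator_\sigma$ with $Y = \sum_\alpha (X^E_\sigma a_\alpha) X_\alpha$ is a correct consequence of the deformed Leibniz rule and $\Curvature^\Connection_{\sigma, X_\alpha} = 0$, and the relation $Y \ComposedWith E_\sigma = 0$ does follow from applying the standing hypothesis $\LieBracket{\Argument}{X^E_\sigma} = 0$ to both $X_\alpha$ and $a_\alpha X_\alpha$. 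But the step you defer — upgrading $Y \ComposedWith E_\sigma = 0$ to $Y = 0$ — is precisely the crux, and the route you sketch does not close. Injectivity of $E_\sigma$ lets you cancel it when it is applied \emph{last}: from $E_\sigma \ComposedWith Z = 0$ one gets $Z = 0$. You have it applied \emph{first}: $Y \ComposedWith E_\sigma = 0$ only says the derivation $Y$ vanishes on the subalgebra $E_\sigma(A)$, which may be proper, and a derivation vanishing on a subalgebra need not vanish. Your fallback — that $X$ admits a representation over $\mathcal{B}$ with $E_\sigma$-fixed coefficients, so that "the defect vanishes" — is asserted without justification, and there is no reason such a representation should exist. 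So as written this is a genuine gap, not mere bookkeeping.

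The paper avoids the problem by arranging for $E_\sigma$ to sit on the outside. Working one generator at a time (by additivity and antisymmetry of $\Curvature^\Connection$ it suffices to treat $aX_\alpha \in \SpaceOfDerivations$), it computes
\begin{equation*}
    0 \;=\; a \LieBracket{X_\alpha}{X^E_\sigma} - \LieBracket{a X_\alpha}{X^E_\sigma} \;=\; E_\sigma\bigl( (X^E_\sigma a)\, X_\alpha \bigr),
\end{equation*}
and then injectivity of $E_\sigma$ gives directly $(X^E_\sigma a)\, X_\alpha = 0$ in $A \cdot \SpaceOfDerivations$. This is exactly the statement $Y = 0$ that you need (for a single generator); once it is available, $\Connection_{a X_\alpha} = \Connection_{(E_\sigma a) X_\alpha}$ and $\Connection_{(X^E_\sigma a) X_\alpha} = 0$, and the curvature computation collapses as in the paper's final display (equivalently, your $\Connection_Y \ComposedWith \ShiftOperator_\sigma$ vanishes). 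To repair your write-up you should either reproduce this rearrangement with $E_\sigma$ post-composed, or strengthen the hypothesis you actually use (surjectivity of $E_\sigma$ would also suffice, since then $Y \ComposedWith E_\sigma = 0$ does force $Y = 0$); what you cannot do is cancel an injective map from the right.
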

        
        \begin{proof}
            \autoref{lem:flatness-sufficient-on-generating-set-differential-connection} shows that $\Curvature^\Connection$ vanishes on $\SpaceOfDerivations^2$.
            By the anti-symmetry and additivity of $\Curvature^\Connection$, it is sufficient to show $\Curvature^\Connection_{a X_\alpha, \sigma} = 0$ for any $a \in A$ such that $a X_\alpha \in \SpaceOfDerivations$.
            We have
                \begin{equation}
                    0 = a \LieBracket{X_\alpha}{X^E_\sigma} - \LieBracket{a X_\alpha}{X^E_\sigma} = X^E_\sigma(a) \ (E_\sigma X_\alpha) = E_\sigma ((X^E_\sigma(a)) X_\alpha) \EndComma
                \end{equation}
            from which we deduce $(X^E_\sigma(a)) X_\alpha = 0$ holds because $E_\sigma$ is injective.
            Hence we derive
                \begin{equation}
                    \begin{aligned}
                        \Curvature^\Connection_{a X_\alpha, \sigma} 
                            &= \Connection_{a X_\alpha} \Connection_\sigma - \Connection_\sigma (a \Connection_{X_\alpha}) \\
                            &= \Connection_{(E_\sigma a) X_\alpha} \Connection_\sigma - X^E_\sigma(a)  \Connection_{X_\alpha} - (E_\sigma a) \Connection_\sigma \Connection_{X_\alpha} \\
                            &= (E_\sigma a) \LieBracket{\Connection_{X_\alpha}}{\Connection_\sigma} -  \Connection_{X^E_\sigma(a) X_\alpha} \\
                            &= 0 \EndFullStop
                    \end{aligned}
                \end{equation}
        \end{proof}
        
        \begin{remark}
            [Grading]
            Suppose $A$ and $P$ are $\Integers$-graded.
            The $A$-module of derivations is naturally graded by the degree of the (homogeneous) derivations.
            A \define{degree-$d$} differential connection is thus a differential connection which is degree-$d$ as an $A$-module map $\SpaceOfDerivations(A) \to \Homomorphisms_k(P, P)$.
            Note that shift operators on $A$ are degree-0 maps since they preserve 1, and hence difference operators are degree-0 also.
            A difference connection is \define{degree-$d_S$} if $\Connection_\sigma$ is a degree-$d_S(\sigma)$ map for all $\sigma \in S$, where $d_S : S \to \Integers$ is a given map.
            These definitions may be combined to get a \define{degree-$(d_S, d)$} difference-differential connection.
        \end{remark}

\section{Equivariant quantum cohomology}
\label{sec:equivariant-quantum-cohomology}

\subsection{Assumptions on symplectic manifold}
\label{sec:assumptions-on-symplectic-manifold}

    Let $(\Manifold, \SymplecticForm)$ be a $2\dimManifold$-dimensional symplectic manifold.
    Assume $\Manifold$ is nonempty and connected for convenience.
    
    We constrain the possible bubbling configurations of pseudoholomorphic spheres by imposing that $\Manifold$ is \define{nonnegatively monotone}.
    This means that the implication
        \begin{equation}
            \FirstChernClass(\TangentSpace \Manifold, \SymplecticForm)(\DegreeTwoClass) < 0 \implies \SymplecticForm(\DegreeTwoClass) \le 0
        \end{equation}
    holds for all $\DegreeTwoClass \in \HomotopyGroup_2(\Manifold)$.
    It is sufficient that $\Manifold$ is monotone or that $\FirstChernClass(\TangentSpace \Manifold, \SymplecticForm)$ is zero.
    
    A \define{convex structure} on $\Manifold$ is a pair $(\ConvexCoordMap, \ContactManifold)$, where $\ContactManifold$ is a closed contact manifold with contact form $\ContactForm$ and $\ConvexCoordMap : \IntervalClosedOpen{1}{\Infinity} \times \ContactManifold \to \Manifold$ is a diffeomorphism onto its image, such that $\Manifold \setminus \ConvexCoordMap(\IntervalClosedOpen{1}{\Infinity} \times \ContactManifold)$ is a relatively compact subset of $\Manifold$ and the equation
        \begin{equation}
            \label{eqn:symplectic-form-exact-on-convex-end}
            \ConvexCoordMap \PullBack \SymplecticForm = \ExteriorDerivative (\RadialCoord \ContactForm) 
        \end{equation}
    holds on $\IntervalClosedOpen{1}{\Infinity} \times \ContactManifold$.
    The \define{convex end} (the image of $\ConvexCoordMap$) is captured by the notation $\Set{\RadialCoord \ge 1}$ using the \define{radial coordinate} $R \in \IntervalClosedOpen{1}{\Infinity}$.
    Similarly, its complement is given by the notation $\Set{R<1}$.
    
    Equation \eqref{eqn:symplectic-form-exact-on-convex-end} states that the symplectic form $\SymplecticForm$ is exact on the convex end, however this does not imply that $\SymplecticForm$ is exact on all of $\Manifold$.
    We allow $\ContactManifold = \emptyset$ so that closed symplectic manifolds have a convex structure and are included in our analysis.
    
    The Reeb periods of the contact manifold $(\ContactManifold, \ContactForm)$ are the periods of the closed paths along the Reeb vector field $\ReebVectorField$. ($\ReebVectorField$ is determined by $\ExteriorDerivative \alpha(\ReebVectorField, \Argument) = 0$ and $\alpha(\ReebVectorField)=1$.)
    Denote by $\ReebPeriods$ the set of all Reeb periods (including 0 if $\ContactManifold$ is nonempty).
    We assume that $\Manifold$ has a convex structure $(\ConvexCoordMap, \ContactManifold)$ for which $\RealNumbers \setminus \ReebPeriods$ is unbounded.
    This unboundedness assumption ensures symplectic cohomology can be defined.
    
    Finally, to facilitate lifting cohomology classes to equivariant cohomology, we further assume that $\Manifold$ is \emph{simply connected}.
    Our work involves a torus $\Torus$ of dimension $\dimTorus$ acting on $\Manifold$ (the action is introduced in \autoref{sec:torus-and-torus-action-on-manifold}).
    For results with a trivial $\Torus$-action (or indeed with $\dimTorus = 0$), we do not need to make the assumption that $\Manifold$ is simply connected.
    In this case, the lifting is straightforward, and our results hold as stated without the simply connectedness assumption.
    
    Henceforth, $\Manifold$ will be a nonnegatively monotone and simply connected symplectic manifold with a convex structure $(\ConvexCoordMap, \ContactManifold)$ for which $\RealNumbers \setminus \ReebPeriods$ is unbounded.
    
\subsection{Novikov ring}
\label{sec:novikov-ring}

    The Novikov ring is an algebraic tool for recording the homology class of pseudoholomorphic spheres.
    We use the following variation.
    
    A degree-$l$ element of the Novikov ring $\NovikovRing$ is a formal sum of monomials $\NovVariable^\DegreeTwoClass$ with exponents $\DegreeTwoClass \in \Homology_2(\Manifold)$ and coefficients in $\Integers$, subject to the grading requirement that $2 \FirstChernClass(\TangentSpace\Manifold, \SymplecticForm)(\DegreeTwoClass) = l$ and the symplectic energy requirement that, for any $c \in \RealNumbers$, there are only finitely-many supported $\DegreeTwoClass$ with $\SymplecticForm(\DegreeTwoClass) \le c$.
    We use sum notation, even though elements can be infinite sums.
    The \define{Novikov ring} is the direct sum of its homogeneous parts.
    
\subsection{Morse cohomology}
\label{sec:morse-cohomology}

    Fix a Riemannian metric on $\Manifold$.
    Let $\MorseFunction : \Manifold \to \RealNumbers$ be a Morse-Smale function which increases in the radial direction, so that $\partial_\RadialCoord \MorseFunction(\ConvexCoordMap(\eltContactManifold, \RadialCoord)) > 0$ holds at infinity.
    Such Morse-Smale functions are \define{convex}.
    Let $\CriticalPointSet{\MorseFunction, \Manifold}$ denote the set of critical points of $\MorseFunction$.
    The \define{Morse index} $|\critManifold|$ of a critical point $\critManifold \in \CriticalPointSet{\MorseFunction, \Manifold}$ is the dimension of the maximal subspace of the tangent space at $\critManifold$ on which the Hessian of $\MorseFunction$ is negative definite.
    
    The \define{Morse cochain complex} is $\Integers \langle \CriticalPointSet{\MorseFunction, \Manifold} \rangle$, and the \define{Morse differential} counts the negative gradient trajectories between critical points.
    Explicitly, given two critical points $\critManifold^\pm \in \CriticalPointSet{\MorseFunction, \Manifold}$, we consider the moduli space
        \begin{equation}
            \ModuliSpace(\critManifold^-, \critManifold^+) = \SetConditionBar{\flowManifold : \RealNumbers \to \Manifold}{\begin{aligned}
                \partial_s(\flowManifold(s)) &= (- \Gradient \MorseFunction)_{\flowManifold(s)} \\
                \flowManifold(\pm \Infinity) &= \critManifold^\pm
            \end{aligned}}.
        \end{equation}
    It has dimension $|\critManifold^-| - |\critManifold^+|$ and an $\RealNumbers$-action given by $s$-translation.
    The differential is given by
        \begin{equation}
        \label{eqn:morse-differential}
            d \critManifold^+ =         \sum_{\substack{
                \critManifold^- \in \CriticalPointSet{\MorseFunction, \Manifold}
                \\ |\critManifold^-| = |\critManifold^+| + 1
            }} \Count \left( \frac{\ModuliSpace(\critManifold^-, \critManifold^+)}{\RealNumbers} \right) \ \critManifold^-,
        \end{equation}
    where $\Count (\Argument)$ denotes the signed count of a closed 0-dimensional manifold.
    We denote the \define{Morse cohomology} of $\Manifold$ by $\Cohomology^\ArbitraryIndex(\Manifold; \MorseFunction)$ to emphasise the fact that Morse cohomology recovers ordinary cohomology and does not depend on $\MorseFunction$.
    
    We suppress orientation information from our notation.
    A choice of orientation for each unstable manifold $\UnstableManifold(\critManifold)$ is nonetheless required to ensure the moduli spaces of trajectories may be consistently oriented.
    
    For convenience, a \define{flowline} is a map $\flowManifold : \RealNumbers \to \Manifold$ satisfying
        \begin{equation}
        \label{eqn:morse-flowline-gradient}
            \partial_s(\flowManifold(s)) = (- \Gradient \MorseFunction)_{\flowManifold(s)}
        \end{equation}
    with finite energy $\int \Norm{ \partial_s \flowManifold }^2 $.
    The finite energy requirement ensures that a flowline converges to critical points of $\MorseFunction$ as $s \to \pm \Infinity$.
    A map $\flowManifold^- : \IntervalOpenClosed{-\Infinity}{0} \to \RealNumbers$ satisfying \eqref{eqn:morse-flowline-gradient} with finite energy is a \define{half\,\textsuperscript{$-$} flowline} and a map $\flowManifold^+ : \IntervalClosedOpen{0}{\Infinity} \to \Manifold$ satisfying \eqref{eqn:morse-flowline-gradient} with finite energy is a \define{half\,\textsuperscript{$+$} flowline}.
    
\subsection{Quantum product}

    Let $\ACS$ be a regular $\SymplecticForm$-compatible almost complex structure on $\Manifold$, so that the moduli space $\ModuliSpace(\DegreeTwoClass; \ACS)$ of $\ACS$-holomorphic maps $u : \Projective^1 \to \Manifold$ with $u\PushForward [\Projective^1] = \DegreeTwoClass$ has dimension $2 \dimManifold + 2 \FirstChernClass(\TangentSpace\Manifold, \SymplecticForm)(\DegreeTwoClass)$, for any $\DegreeTwoClass \in \Homology_2(\Manifold)$.
    We moreover assume that $\ACS$ is \define{convex}, which means that $- \ExteriorDerivative \RadialCoord \ComposedWith \ACS = \RadialCoord \ContactForm$ holds at infinity, using the coordinates provided by $\ConvexCoordMap$.
    This compatibility with the convex structure allows us to prove compactness results for the moduli spaces $\ModuliSpace(\DegreeTwoClass; \ACS)$ using the maximum principle \cite[Appendix~D]{ritter_topological_2013}.
    
    The Morse cup product counts `Y'-shaped graphs, and the (Morse) quantum product is a deformation of the Morse cup product.
    We define it as follows.
    
    Let $\pointSphere^-_0, \pointSphere^+_1, \pointSphere^+_2 \in \Projective^1$ be fixed distinct points.
    A \define{deformed `Y'-shaped graph} is a quadruple $(\flowManifold^-_0, \flowManifold^+_1, \flowManifold^+_2, u)$ where $\flowManifold^\pm_i$ are half\textsuperscript{$\pm$} flowlines and $u : \Projective^1 \to \Manifold$ is a $\ACS$-holomorphic map that satisfies $u(\pointSphere^\pm_i) = \flowManifold^\pm_i (0)$.
    Given the critical points $\critManifold^-_0, \critManifold^+_1, \critManifold^+_2 \in \CriticalPointSet{\MorseFunction, \Manifold}$ and the class $\DegreeTwoClass \in \Homology_2(\Manifold)$, the moduli space  $\ModuliSpace(\critManifold^-_0, \critManifold^+_1, \critManifold^+_2, A)$ of deformed `Y'-shaped graphs that satisfy $\flowManifold^\pm_i(\pm \Infinity) = \critManifold^\pm_i$ and $u \in \ModuliSpace(\DegreeTwoClass; \ACS)$ is a smooth manifold of dimension
        \begin{equation}
        \label{eqn:dimension-of-deformed-y-shaped-flowline-moduli-space}
            |\critManifold^-_0| - |\critManifold^+_1| - |\critManifold^+_2| + 2 \FirstChernClass(\TangentSpace\Manifold, \SymplecticForm)(\DegreeTwoClass).
        \end{equation}
    
    Technically, we must use three $s$-dependent perturbations of the Morse function, one for each half flowline, to guarantee the moduli space is a smooth manifold.
    Such $s$-dependent perturbations may only depend on $s \in \RealNumbers$ in a compact interval.
    To keep our notation simple, we refer to flowlines that use a Morse function perturbed in this way as \define{perturbed} flowlines (though we keep the perturbations themselves implicit).
    
    \define{Quantum cohomology} $\QuantumCohomology^\ArbitraryIndex(\Manifold; \MorseFunction)$ is the cohomology of $\NovikovRing \Tensor \Integers \langle \CriticalPointSet{\MorseFunction, \Manifold} \rangle$ with the Morse differential \eqref{eqn:morse-differential}, so the $\NovikovRing$-module isomorphism $\QuantumCohomology^\ArbitraryIndex(\Manifold; \MorseFunction) \cong \NovikovRing \Tensor \Cohomology^\ArbitraryIndex(\Manifold; \MorseFunction)$ holds.
    Quantum cohomology is equipped with the \define{quantum product} $\QuantumProduct$, which is given by
        \begin{equation}
            \critManifold^+_1 \QuantumProduct \critManifold^+_2 = \sum_{\substack{
                \DegreeTwoClass \in \Homology_2(\Manifold) \\
                \critManifold^-_0 \in \CriticalPointSet{\MorseFunction, \Manifold} \\
                \eqref{eqn:dimension-of-deformed-y-shaped-flowline-moduli-space} = 0
            }} \Count \ModuliSpace(\critManifold^-_0, \critManifold^+_1, \critManifold^+_2, A) \ \NovVariable^\DegreeTwoClass \critManifold^-_0.
        \end{equation}
    The quantum product is graded-commutative, associative and unital.
    These facts are proved by standard homotopy arguments.
    
\subsection{Equivariant cohomology: the general case}
\label{sec:equivariant-cohomology-general}

    Let $G$ be a closed connected smooth Abelian Lie group ($G$ is always a torus in this paper).
    Let $\TorusAction : G \times X \to X$ be a continuous $G$-action on a topological space $X$.
    $G$-equivariant cohomology is a functorial invariant of the pair $(X, \TorusAction)$.
    It is constructed as follows.
    
    Let $\UniversalSpace G$ be a contractible space with a free $G$-action.
    The \define{classifying space} of $G$ is the quotient $\ClassifyingSpace G = \UniversalSpace G / G$ and the \define{universal bundle} of $G$ is the bundle $\UniversalSpace G \to \ClassifyingSpace G$.
    The cohomology $\Cohomology^\ArbitraryIndex(\ClassifyingSpace G)$ of the classifying space depends only on $G$.
    The \define{Borel homotopy quotient} $\UniversalSpace G \times_G X$ is the quotient of $\UniversalSpace G \times X$ by the relation
        \begin{equation}
        \label{eqn:relation-on-borel-space}
            (\eltUniversalSpace, g \cdot x) \sim (g \cdot \eltUniversalSpace, x).
        \end{equation}
    Equivalently, the Borel homotopy quotient can be obtained by taking the quotient of $\UniversalSpace G \times X$ by the antidiagonal action $g \cdot (\eltUniversalSpace, x) = (g \Inverse \cdot \eltUniversalSpace, g \cdot x)$.
    The \define{$G$-equivariant cohomology} of $(X, \rho)$ is the cohomology ring $\Cohomology^\ArbitraryIndex(\UniversalSpace G \times_G X)$, and it is denoted $\Cohomology^\ArbitraryIndex_G (X, \rho)$.
    The projection map $\UniversalSpace G \times_G X \to \ClassifyingSpace G$ induces a $\Cohomology^\ArbitraryIndex(\ClassifyingSpace G)$-algebra structure on $\Cohomology^\ArbitraryIndex_G (X, \rho)$.
    
    $G$-equivariant cohomology respects $G$-equivariant continuous maps between topological spaces with $G$-actions.
    Moreover, it is functorial with respect to group homomorphisms:
    let $\phi : G \to H$ be a Lie group homomorpism and let $\UniversalSpace \phi : \UniversalSpace G \to \UniversalSpace H$ be a continuous map\footnote{
        A map satisfying \eqref{eqn:equivariant-functoriality-map-relation} may be constructed as follows.
        Let $(\UniversalSpace G)\Prime$ and $\UniversalSpace H$ be arbitrary.
        Set $\UniversalSpace G = (\UniversalSpace G)\Prime \times \UniversalSpace H$, with the action $g \cdot (\eltUniversalSpace_G\Prime, \eltUniversalSpace_H) = (g \cdot \eltUniversalSpace_G\Prime, \phi(g) \cdot \eltUniversalSpace_H)$.
        The projection map $\UniversalSpace G \to \UniversalSpace H$ satisfies \eqref{eqn:equivariant-functoriality-map-relation}.
    } which satisfies 
        \begin{equation}
        \label{eqn:equivariant-functoriality-map-relation}
            (\UniversalSpace \phi) ( g \cdot \eltUniversalSpace) = \phi(g) \cdot (\UniversalSpace\phi) (\eltUniversalSpace).
        \end{equation}
    The induced map on the classifying spaces, $\ClassifyingSpace \phi : \ClassifyingSpace G \to \ClassifyingSpace H$, is well-defined and continuous, so it induces a map $(\ClassifyingSpace \phi)\PullBack : \Cohomology^\ArbitraryIndex (\ClassifyingSpace H) \to \Cohomology^\ArbitraryIndex (\ClassifyingSpace G)$ on cohomology.
    Suppose that $f : X \to Y$ is a continuous map which satisfies 
        \begin{equation}
        \label{eqn:equivariant-function-definition}
            f(g \cdot x) = \phi(g) \cdot f(x)
        \end{equation}
    with respect to a $G$-action $\TorusAction_X$ on $X$ and an $H$-action $\TorusAction_Y$ on $Y$.
    The map $(\UniversalSpace \phi, f) : \UniversalSpace G \times X \to \UniversalSpace H \times Y$ descends to the Borel homotopy quotients, and hence induces a map 
        \begin{equation}
        \label{eqn:functorial-map-on-equivariant-cohomology-general}
            (\UniversalSpace \phi, f)\PullBack : \Cohomology^\ArbitraryIndex_H (Y, \TorusAction_Y) \to \Cohomology^\ArbitraryIndex_G (X, \TorusAction_X).
        \end{equation}
    When $f$ is the identity map on $X = Y$, the map \eqref{eqn:functorial-map-on-equivariant-cohomology-general} is denoted $(\ClassifyingSpace \phi)\PullBack$, and when $\phi$ is the identity map on $G = H$, the map \eqref{eqn:functorial-map-on-equivariant-cohomology-general} is denoted $f \PullBack$.
    
    Associated to the fibre bundle $\UniversalSpace G \times_G X \to \ClassifyingSpace G$ with fibre $X$, there is a sequence
        \begin{equation}
        \label{eqn:sequence-on-cohomology-associated-to-borel-quotient}
            \Cohomology^\ArbitraryIndex (\ClassifyingSpace G) \to \Cohomology^\ArbitraryIndex_G (X, \TorusAction) \to \Cohomology^\ArbitraryIndex (X).
        \end{equation}
    The second map is induced by fibre inclusion, and it is well-defined on cohomology because $G$ and $\UniversalSpace G$ are path-connected.
    
\subsection{The torus}
\label{sec:torus-and-torus-action-on-manifold}

    Let $\Torus$ denote the $\dimTorus$-dimensional (real) torus, with $0 \le \dimTorus \le \dimManifold$.
    We always use the parameterisation $\Circle = \RealNumbers / \Integers$ for the circle.
    The \define{characters} of the torus are the homomorphisms $\Character : \Torus \to \Circle$, and the \define{cocharacters} are the homomorphisms $\Cocharacter : \Circle \to \Torus$.
    We denote the spaces of characters and cocharacters by $\LatticeCharacters{\Torus}$ and $\LatticeCocharacters{\Torus}$ respectively.
    Each space is a lattice isomorphic to $\Integers^\dimTorus$.
    There is a natural pairing $\LatticeCocharacters{\Torus} \times \LatticeCharacters{\Torus} \to \Integers$ which records the multiplicity of the composition $\Cocharacter \ComposedWith \Character : \Circle \to \Circle$; we denote it by $(\Cocharacter, \Character)$.
    
    A \define{basis} for $\Torus$ is a $\dimTorus$-tuple of cocharacters $\basisCocharacter = (\basisCocharacter_1, \ldots, \basisCocharacter_\dimTorus)$ such that $\basisCocharacter : (\Circle)^\dimTorus \to \Torus$ is an isomorphism.
    
    Let $\TorusAction : \Torus \times \Manifold \to \Manifold$ be a Hamiltonian $\Torus$-action on $\Manifold$, such that $\TorusAction \ComposedWith \Cocharacter$ is a linear Hamiltonian $\Circle$-action for all $\Cocharacter \in \LatticeCocharacters{\Torus}$.
    Recall that a Hamiltonian $\Circle$-action is \define{linear} if the corresponding Hamiltonian function is a linear function of the radial coordinate $\RadialCoord$ at infinity (i.e. for sufficiently large $\RadialCoord$).
    
    We assume the action $\TorusAction$ has a fixed point $\mu \in \Manifold$.
    The existence of a fixed point is guaranteed for $\dimTorus \le 1$ by \cite[Lemma~3.13 and Remark~3.14]{liebenschutz-jones_intertwining_2020}.
    The toric manifolds in \autoref{sec:toric-manifolds-general} have fixed points.
    
\subsection{Classifying space for the torus}
\label{sec:classifying-space-for-torus}

    A standard choice of $\UniversalSpace\Circle$ is the infinite sphere $\InfiniteSphere$, which is defined to be the limit of the odd-dimensional spheres $\Circle \Inclusion \HighDimensionalSphere{3} \Inclusion \cdots$ using embeddings as the unit spheres $\HighDimensionalSphere{2r-1} \subset \ComplexNumbers^r$.
    The standard rotation action on complex space restricts to the spheres, and induces a $\Circle$-action on $\InfiniteSphere$.
    The corresponding quotient $\InfiniteSphere / \Circle$ is the infinite complex projective space $\InfiniteComplexProjectiveSpace$.
    It follows that $\Cohomology^\ArbitraryIndex(\ClassifyingSpace \Circle) = \Cohomology^\ArbitraryIndex(\InfiniteComplexProjectiveSpace) = \Integers [\formalTorus]$ is a polynomial ring with $\formalTorus$ in degree 2.
    
    Given a character $\Character : \Torus \to \Circle$, we get a map $(\ClassifyingSpace \Character) \PullBack : \Cohomology^\ArbitraryIndex (\ClassifyingSpace \Circle) \to \Cohomology^\ArbitraryIndex (\ClassifyingSpace \Torus)$.
    Define $[\Character] = (\ClassifyingSpace \Character) \PullBack (y) \in \Cohomology^\ArbitraryIndex(\ClassifyingSpace\Torus)$.
    The map $\Character \mapsto [\Character]$ naturally extends to the symmetric algebra\footnote{
        The \define{symmetric algebra} $\SymmetricAlgebra(V)$ associated to the abelian group $V$ is $\Integers \DirectSum V \DirectSum \SymmetricAlgebra^2(V) \DirectSum \cdots$, where $\SymmetricAlgebra^2(V) = V \Tensor V / v\Tensor v\Prime \sim v\Prime \Tensor v$ corresponds to commutative products of two elements of $V$.
        Essentially, it is a way to write down a polynomial ring without choosing a basis for $V$.
    } $\SymmetricAlgebra (\LatticeCharacters{\Torus})$ and is an isomorphism of rings $\SymmetricAlgebra (\LatticeCharacters{\Torus}) \cong \Cohomology^\ArbitraryIndex (\ClassifyingSpace \Torus)$.
    
    More concretely, we can use a basis $\basisCocharacter$ for $\Torus$ to construct a universal bundle explicitly.
    Write $\Torus$ as a product $(\Circle)^\dimTorus$, using the basis $\basisCocharacter$.
    Set $\UniversalSpace \Torus = (\InfiniteSphere)^\dimTorus$, and equip it with the natural action coming from this product decomposition.
    If $\basisCocharacter\Dual_i$ is the character dual to $\basisCocharacter_i$ in the basis $\basisCocharacter$, and we set $\formalTorus_i = [\basisCocharacter\Dual_i]$, then we can write $\Cohomology^\ArbitraryIndex(\ClassifyingSpace \Torus) = \Integers [\formalTorus_1, \formalTorus_2, \ldots, \formalTorus_\dimTorus]$.
    The cohomology ring does not depend on the choice of $\basisCocharacter$ (\autoref{remark:morse-bott-construction-to-show-independence-of-torus-basis}).
    
    \begin{proposition}
    \label{prop:sequence-relating-cohomology-and-equivariant-cohomology-is-short-exact-sequence}
        The sequence \eqref{eqn:sequence-on-cohomology-associated-to-borel-quotient} is a short exact sequence in degree 2 for $(\Manifold, \TorusAction)$.
    \end{proposition}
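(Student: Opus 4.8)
The plan is to run the cohomology Serre spectral sequence of the fibre bundle $\Manifold \Inclusion \UniversalSpace \Torus \times_\Torus \Manifold \to \ClassifyingSpace \Torus$ and read off what happens in total degree $2$. Because $\ClassifyingSpace \Torus$ is simply connected, the local system on the fibre is trivial, so $E_2^{p,q} = \Cohomology^p(\ClassifyingSpace \Torus; \Cohomology^q(\Manifold))$. Two vanishing inputs make the computation collapse in low degrees: since $\Cohomology^\ArbitraryIndex(\ClassifyingSpace \Torus) = \Integers[\formalTorus_1, \ldots, \formalTorus_\dimTorus]$ with every $\formalTorus_i$ in degree $2$, all odd cohomology of $\ClassifyingSpace \Torus$ vanishes; and since $\Manifold$ is simply connected (one of our standing assumptions), $\Cohomology^1(\Manifold) = 0$.

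First I would list the entries of total degree $\le 3$ that matter: $E_2^{1,0} = \Cohomology^1(\ClassifyingSpace \Torus) = 0$, $E_2^{0,1} = \Cohomology^1(\Manifold) = 0$, $E_2^{1,1} = 0$, $E_2^{3,0} = \Cohomology^3(\ClassifyingSpace \Torus) = 0$ and $E_2^{2,1} = \Cohomology^2(\ClassifyingSpace \Torus; \Cohomology^1(\Manifold)) = 0$, while $E_2^{2,0} = \Cohomology^2(\ClassifyingSpace \Torus)$ and $E_2^{0,2} = \Cohomology^2(\Manifold)$. Every differential into or out of $E_r^{2,0}$ has a vanishing source or target for degree reasons: the outgoing ones land in $E_r^{2+r,1-r} = 0$, and the only possibly nonzero incoming one is $d_2 : E_2^{0,1} \to E_2^{2,0}$, whose source is $0$. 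Likewise for $E_r^{0,2}$ the only relevant differentials are $d_2 : E_2^{0,2} \to E_2^{2,1} = 0$ and $d_3 : E_3^{0,2} \to E_3^{3,0} = 0$. Hence $E_\infty^{2,0} = \Cohomology^2(\ClassifyingSpace \Torus)$, $E_\infty^{1,1} = 0$ and $E_\infty^{0,2} = \Cohomology^2(\Manifold)$.

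The filtration on $\Cohomology^2(\UniversalSpace \Torus \times_\Torus \Manifold) = \Cohomology^2_\Torus(\Manifold, \TorusAction)$ then has $\Cohomology^2(\ClassifyingSpace \Torus)$ as its bottom submodule, zero middle quotient, and $\Cohomology^2(\Manifold)$ as its top quotient, yielding a short exact sequence $0 \to \Cohomology^2(\ClassifyingSpace \Torus) \to \Cohomology^2_\Torus(\Manifold, \TorusAction) \to \Cohomology^2(\Manifold) \to 0$. The final point to nail down — and the one place where a little care is needed — is that the two maps in this sequence are exactly those of \eqref{eqn:sequence-on-cohomology-associated-to-borel-quotient}. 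This is the standard identification of the two edge homomorphisms of the Serre spectral sequence: $\Cohomology^2(\ClassifyingSpace \Torus) = E_2^{2,0} = E_\infty^{2,0} \Inclusion \Cohomology^2_\Torus(\Manifold, \TorusAction)$ is pullback along the projection to $\ClassifyingSpace \Torus$, and $\Cohomology^2_\Torus(\Manifold, \TorusAction) \twoheadrightarrow E_\infty^{0,2} = E_2^{0,2} = \Cohomology^2(\Manifold)$ is restriction to a fibre.

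As a cross-check on injectivity of the first map, one may instead invoke the fixed point $\FixedPoint \in \Manifold$: the $\Torus$-equivariant inclusion $\{\FixedPoint\} \Inclusion \Manifold$ induces a retraction $\Cohomology^\ArbitraryIndex_\Torus(\Manifold, \TorusAction) \to \Cohomology^\ArbitraryIndex_\Torus(\{\FixedPoint\}) = \Cohomology^\ArbitraryIndex(\ClassifyingSpace \Torus)$ which splits the map $\Cohomology^\ArbitraryIndex(\ClassifyingSpace \Torus) \to \Cohomology^\ArbitraryIndex_\Torus(\Manifold, \TorusAction)$ in every degree. I do not anticipate a real obstacle: this is a routine low-degree spectral sequence argument, and the only slightly delicate step is matching the edge maps with the maps in \eqref{eqn:sequence-on-cohomology-associated-to-borel-quotient}.
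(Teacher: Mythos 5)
Your proof is correct, but it takes a genuinely different route from the paper's. You run the Serre spectral sequence of $\Manifold \to \UniversalSpace\Torus \times_{\Torus} \Manifold \to \ClassifyingSpace\Torus$ and observe that the vanishing of $\Cohomology^1(\Manifold)$, $\Cohomology^{\text{odd}}(\ClassifyingSpace\Torus)$ and $E_2^{2,1}$ forces $E_\infty^{2,0} = \Cohomology^2(\ClassifyingSpace\Torus)$, $E_\infty^{1,1}=0$ and $E_\infty^{0,2} = \Cohomology^2(\Manifold)$, so the filtration of $\Cohomology^2_\Torus(\Manifold,\TorusAction)$ collapses to the desired short exact sequence, with the two maps identified as the edge homomorphisms (pullback from the base, restriction to the fibre) --- which matches how the paper defines the maps in \eqref{eqn:sequence-on-cohomology-associated-to-borel-quotient}. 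The paper instead works on the homotopy/homology side: it takes the homotopy long exact sequence of the same fibre bundle, uses $\HomotopyGroup_3(\ClassifyingSpace\Torus)=0$ and $\HomotopyGroup_1(\Manifold)=0$ to extract a short exact sequence on $\HomotopyGroup_2$, converts it to $\Homology_2$ via Hurewicz, and then dualises with the universal coefficient theorem (using freeness of $\Homology_2(\ClassifyingSpace\Torus)$). Your argument is more self-contained on the cohomology side and makes the identification of the two maps essentially automatic, at the cost of invoking spectral sequence machinery; the paper's argument is more elementary and produces the companion exact sequences on $\HomotopyGroup_2$ and $\Homology_2$ along the way. Both hinge on the standing simple connectivity assumption on $\Manifold$, and your fixed-point retraction cross-check for injectivity is legitimate given the paper's assumption that $\TorusAction$ has a fixed point.
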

    
    \begin{proof}
        If the action $\TorusAction$ is trivial, or $\dimTorus = 0$ holds, the result is immediate.
        Otherwise, we use the assumption that $\Manifold$ is simply connected.
        Consider the following commutative diagram.
        The top row is the homotopy long exact sequence for the fibre bundle $\Manifold \to \UniversalSpace \Torus \times_{\Torus} \Manifold \to \ClassifyingSpace \Torus$, the bottom row is the homology version of \eqref{eqn:sequence-on-cohomology-associated-to-borel-quotient}, and the vertical maps are the Hurewicz homomorphisms.
            \begin{equation}
            \label{eqn:homotopy-exact-sequence-to-derive-homology-exact-sequence-on-borel-quotient}
                \begin{tikzcd}
                    \HomotopyGroup_3(\ClassifyingSpace \Torus) \arrow[r]
                    & \HomotopyGroup_2 (\Manifold) \arrow[r] \arrow[d]
                    & \HomotopyGroup_2 (\UniversalSpace \Torus \times_{\Torus} \arrow[r] \arrow[d] \Manifold)
                    & \HomotopyGroup_2 (\ClassifyingSpace \Torus) \arrow[r] \arrow[d]
                    & \HomotopyGroup_1 (\Manifold)
                    \\
                    & \Homology_2 (\Manifold) \arrow[r]
                    & \Homology_2 (\UniversalSpace \Torus \times_{\Torus} \Manifold) \arrow[r]
                    & \Homology_2 (\ClassifyingSpace \Torus)
                    &
                \end{tikzcd}
            \end{equation}
        The group $\HomotopyGroup_3(\ClassifyingSpace \Torus) = 0$ vanishes (this may be derived using the homotopy long exact sequence for the fibre bundle $\Circle \to \InfiniteSphere \to \InfiniteComplexProjectiveSpace$).
        The group $\HomotopyGroup_1 (\Manifold) = 0$ vanishes since $\Manifold$ is simply connected.
        Therefore the top row of \eqref{eqn:homotopy-exact-sequence-to-derive-homology-exact-sequence-on-borel-quotient} is a short exact sequence.
        Since $\HomotopyGroup_1 (\Manifold) = 0$ vanishes, the vertical Hurewicz homomorphisms are isomorphisms by the Hurewicz theorem.
        The proposition follows by applying the universal coefficient theorem to the short exact sequence in homology (the exactness of the sequence is preserved because $\Homology_2 (\ClassifyingSpace \Torus)$ is free).
    \end{proof}
    
\subsection{Equivariant Morse theory}
\label{sec:equivariant-morse-theory}

    Let us start by defining Morse data for the standard universal bundle for $\Circle$, namely $\InfiniteSphere \to \InfiniteComplexProjectiveSpace$.
    Elements of $\InfiniteSphere$ are vectors $(\eltUniversalSpace_0, \eltUniversalSpace_1, \ldots) \in \ComplexNumbers^\Infinity$ with unit norm (and only finitely-many nonzero entries).
    The function $\morseUniversalSpace_{\Circle} : \InfiniteSphere \to \RealNumbers$ given by $\morseUniversalSpace_{\Circle}(\eltUniversalSpace) = \sum_{r \ge 0} r |\eltUniversalSpace_r|^2$ descends to a Morse function on $\InfiniteComplexProjectiveSpace$.
    It has exactly one critical point in each even degree, in such a way that the degree-$2r$ critical point is the $r$-th standard basis vector.
    We use the round metric on $\InfiniteSphere$.
    It immediately follows that $\Cohomology^\ArbitraryIndex(\InfiniteComplexProjectiveSpace) = \Integers [\formalTorus]$, wherein $\formalTorus^r$ corresponds to the degree-$2r$ critical point.
    
    For the torus $\Torus$ with basis $\basisCocharacter$, we use the product decomposition to define Morse data for $(\InfiniteSphere)^\dimTorus \to (\InfiniteComplexProjectiveSpace)^\dimTorus$.
    Give $(\InfiniteSphere)^\dimTorus$ the product round metric and set the function $\morseUniversalSpace : (\InfiniteSphere)^\dimTorus \to \RealNumbers$ to be the sum of the functions $\morseUniversalSpace_{\Circle}$ on each copy of $\InfiniteSphere$.
    The function $\morseUniversalSpace$ descends to a Morse function on $(\InfiniteComplexProjectiveSpace)^\dimTorus$.
    
    The critical points in $(\InfiniteComplexProjectiveSpace)^\dimTorus$ are in bijection with $(\Integers_{\ge 0})^\dimTorus$, where the vector $\critClassifyingSpace = (\critClassifyingSpace_1, \ldots, \critClassifyingSpace_\dimTorus) \in (\Integers_{\ge 0})^\dimTorus$ corresponds to the critical point comprising of the $\critClassifyingSpace_i$-th unit vector in the $i$-th copy of $\InfiniteComplexProjectiveSpace$.
    We will use this notation to describe $\CriticalPointSet{\morseUniversalSpace, (\InfiniteComplexProjectiveSpace)^\dimTorus}$, and the notation $\formalTorus^\critClassifyingSpace$ for the corresponding class in $\Cohomology^\ArbitraryIndex ((\InfiniteComplexProjectiveSpace)^\dimTorus)$.
    For each critical point $\critClassifyingSpace \in \CriticalPointSet{\morseUniversalSpace, (\InfiniteComplexProjectiveSpace)^\dimTorus}$, there is a $\Torus$-orbit of points in $(\InfiniteSphere)^\dimTorus$, all of which are critical points of $\morseUniversalSpace : (\InfiniteSphere)^\dimTorus \to \RealNumbers$.
    Such critical points $\critUniversalSpace \in \CriticalPointSet{\morseUniversalSpace, (\InfiniteSphere)^\dimTorus}$ are identified by the notation $[\critUniversalSpace] = \critClassifyingSpace$.
    
    From now on, the notation $\UniversalSpace \Torus$ will mean $(\InfiniteSphere)^\dimTorus$, taken with respect to the basis $\basisCocharacter$, and $\ClassifyingSpace \Torus$ will mean $(\InfiniteComplexProjectiveSpace)^\dimTorus$.
    
    Fix a $\TorusAction$-invariant Riemannian metric on $\Manifold$.
    A \define{$\Torus$-equivariant Morse function} on $\Manifold$ is a function $\eqMorseFunction : \UniversalSpace \Torus \times \Manifold \to \RealNumbers$ which is $\Torus$-invariant, so $\eqMorseFunction(\eltTorus \Inverse \cdot \eltUniversalSpace, \TorusAction_{\eltTorus}(\eltManifold)) = \eqMorseFunction (\eltUniversalSpace, \eltManifold)$ holds for all $\eltTorus \in \Torus$; which is convex for all $\eltUniversalSpace \in \UniversalSpace \Torus$; and which satisfies the following condition about critical points.
    For every critical point $\critUniversalSpace \in \CriticalPointSet{\morseUniversalSpace, \UniversalSpace \Torus}$, the function $\eqMorseFunction(\critUniversalSpace, \Argument)$ is a convex Morse-Smale function on $\Manifold$, and moreover $\eqMorseFunction(\critUniversalSpace, \Argument) = \eqMorseFunction(\eltUniversalSpace, \Argument)$ for all $\eltUniversalSpace$ in a neighbourhood of $\critUniversalSpace$ in $\UnstableManifold(\critUniversalSpace) \Union \StableManifold(\critUniversalSpace)$.
    
    A \define{$\Torus$-equivariant critical point} of $\eqMorseFunction$ is a $\Torus$-equivalence class of points $(\critUniversalSpace, \critManifold) \in \UniversalSpace \Torus \times \Manifold$ where $\critUniversalSpace \in \CriticalPointSet{\morseUniversalSpace, \UniversalSpace \Torus}$ and $\critManifold \in \CriticalPointSet{\eqMorseFunction(\critUniversalSpace, \Argument), \Manifold}$ are critical points.
    Such $\Torus$-equivariant points are denoted $[\critUniversalSpace, \critManifold]$, and the set of $\Torus$-equivariant points is denoted $\eqCriticalPointSet{\eqMorseFunction, \Manifold}$.
    The \define{degree} $|\critUniversalSpace, \critManifold|$ of $[\critUniversalSpace, \critManifold]$ is simply the sum of the degrees of $\critUniversalSpace$ and $\critManifold$.
    
    A \define{$\Torus$-equivariant flowline} of $\eqMorseFunction$ is a $\Torus$-equivalence class of pairs $(\flowUniversalSpace, \flowManifold)$ such that $\flowUniversalSpace : \RealNumbers \to \UniversalSpace \Torus$ is a flowline of $\morseUniversalSpace$ and $\flowManifold : \RealNumbers \to \Manifold$ is a flowline of $\eqMorseFunction (\flowUniversalSpace(s), \Argument)$.
    Explicitly, $\flowManifold$ satisfies $\partial_s(\flowManifold(s)) = - (\Gradient \eqMorseFunction (\flowUniversalSpace(s), \Argument))_{\flowManifold(s)}$ and has finite energy.
    The flowline $\flowManifold$ depends on $s$ only on a compact interval of $\RealNumbers$ because $\eqMorseFunction(\flowUniversalSpace(\pm \infty), \Argument) = \eqMorseFunction(\flowUniversalSpace(s), \Argument)$ holds for all sufficiently large $\pm s \gg 0$.
    As such, the flowline $\flowManifold$ behaves analytically like a continuation map.
    The torus $\Torus$ acts on pairs $(\flowUniversalSpace, \flowManifold)$ by simultaneously acting on the codomains of the two maps, though it acts on $\UniversalSpace \Torus$ by the inverse of the usual action.
    This mimics the natural $\Torus$-action on maps $(\flowUniversalSpace, \flowManifold) : \RealNumbers \to \UniversalSpace \Torus \times \Manifold$ induced by the antidiagonal action on $\UniversalSpace \Torus \times \Manifold$.
    The space of $\Torus$-equivariant flowlines from $[\critUniversalSpace^-, \critManifold^-]$ to $[\critUniversalSpace^+, \critManifold^+]$ has dimension $|\critUniversalSpace^-, \critManifold^-| - |\critUniversalSpace^+, \critManifold^+|$.
    
    Exactly like the non-equivariant Morse cohomology definition in \autoref{sec:morse-cohomology}, the \define{$\Torus$-equivariant Morse cochain complex} is $\Integers \langle \eqCriticalPointSet{\eqMorseFunction, \Manifold} \rangle$ and the \define{$\Torus$-equivariant Morse differential} counts $\RealNumbers$-equivalence classes of $\Torus$-equivariant flowlines (see \autoref{fig:equivariant-morse-differential}).
    We denote the resulting \define{$\Torus$-equivariant Morse cohomology} by $\Cohomology^\ArbitraryIndex_{\Torus; \basisCocharacter} (\Manifold; \eqMorseFunction)$.
    
    The \define{(geometric\footnote{
        An alternative \define{algebraic module structure} is given by shifting coordinates in $\ClassifyingSpace \Torus$ (see \cite[Section~4.4.1]{liebenschutz-jones_intertwining_2020}).
    }) $\Cohomology^\ArbitraryIndex (\ClassifyingSpace \Torus)$-module structure} is given by counting `Y'-shaped graphs in $\UniversalSpace \Torus$ paired with flowlines in $\Manifold$ (see \autoref{fig:geometric-module-structure-on-equivariant-morse-cohomology}).
    Such a configuration is a $\Torus$-equivalence class of triples $(\flowUniversalSpace, \flowUniversalSpace_0, \flowManifold)$ where $[\flowUniversalSpace, \flowManifold]$ is a $\Torus$-equivariant flowline, and $\flowUniversalSpace_0$ is a perturbed half\textsuperscript{$+$} flowline of $\morseUniversalSpace$ satisfying $\flowUniversalSpace_0(0) = \flowUniversalSpace(0)$.
    For a critical point $\critClassifyingSpace \in \CriticalPointSet{\morseUniversalSpace, \ClassifyingSpace \Torus}$ and two $\Torus$-equivariant critical points $[\critUniversalSpace^\pm, \critManifold^\pm] \in \eqCriticalPointSet{\eqMorseFunction, \Manifold}$, let $\ModuliSpace([\critUniversalSpace^\pm, \critManifold^\pm]; \critClassifyingSpace)$ be the moduli space of such $[(\flowUniversalSpace, \flowUniversalSpace_0, \flowManifold)]$ which satisfy $[\flowUniversalSpace(\pm \Infinity), \flowManifold(\pm \Infinity)] = [\critUniversalSpace^\pm, \critManifold^\pm]$ and $[\flowUniversalSpace_0(+ \Infinity)] = \critClassifyingSpace$.
    The module structure is given by
        \begin{equation}
            \formalTorus^\critClassifyingSpace \cdot [\critUniversalSpace^+, \critManifold^+] = \sum_{\substack{[\critUniversalSpace^-, \critManifold^-] \\ \Dimension \ModuliSpace = 0}} \Count \ModuliSpace([\critUniversalSpace^\pm, \critManifold^\pm]; \critClassifyingSpace) \  [\critUniversalSpace^-, \critManifold^-].
        \end{equation}
        
    \begin{remark}
        [Dependence on the basis $\basisCocharacter$]
    \label{remark:morse-bott-construction-to-show-independence-of-torus-basis}
        Our construction of $\UniversalSpace \Torus$ uses the basis $\basisCocharacter$ of $\Torus$, but $\Torus$-equivariant cohomology does not depend on $\basisCocharacter$.
        While there are many ways to show independence of $\basisCocharacter$, not all approaches also allow us to show that the $\Torus$-equivariant quantum product \eqref{eqn:equivariant-quantum-product-definition} is independent of $\basisCocharacter$.
        Here we describe a Morse-theoretic construction which uses carefully-chosen Morse data to simplify the moduli spaces.
        In this remark, we are not concerned with independence of the Morse data on $\UniversalSpace \Torus$ because this can be demonstrated using standard homotopy arguments.
        
        Let $\basisCocharacter$ and $\basisCocharacter \Prime$ be two bases of $\Torus$.
        Let $E$ and $E\Prime$ denote the corresponding constructions of $\UniversalSpace \Torus$ (both spaces are $(\InfiniteSphere)^\dimTorus$, but they have different $\Torus$-actions).
        Let $\morseUniversalSpace$ and $\morseUniversalSpace\Prime$ denote the corresponding Morse functions.
        The product $E \times E\Prime$ with the product $\Torus$-action satisfies the criteria for $\UniversalSpace \Torus$: it is a contractible space with a free $\Torus$-action.
        It is sufficient to show $E$ and $E \times E\Prime$ yield isomorphic constructions of $\Torus$-equivariant cohomology.
        Set $\Projection_E : E \times E\Prime \to E$.
        The induced projection $(E \times E\Prime)/\Torus \to E/\Torus$ is a fibration with contractible fibre $E \Prime$.
        The function $\morseUniversalSpace + \morseUniversalSpace\Prime : (E \times E\Prime)/\Torus \to \RealNumbers$ is Morse-Bott with critical submanifolds isomorphic to $\Torus$.
        Pick a perfect Morse function on each critical submanifold.
        Given a critical point $\critUniversalSpace \in E$ of $\morseUniversalSpace$, denote by $\min(\critUniversalSpace) \in (\morseUniversalSpace \Prime)\Inverse (0) \subset E\Prime$ the unique point such that $[\critUniversalSpace, \min(\critUniversalSpace)]$ is the minimum on the critical submanifold $((\Torus \cdot \critUniversalSpace) \times (\morseUniversalSpace \Prime)\Inverse (0))/\Torus \subset (E \times E\Prime)/\Torus$.
        
        Fix a $\Torus$-equivariant Morse function $\eqMorseFunction_E : E \times \Manifold \to \RealNumbers$ from the construction of $\Torus$-equivariant cohomology which uses $E$.
        We can use the functions $\eqMorseFunction_E$ and $\morseUniversalSpace + \morseUniversalSpace\Prime$, together with the Morse functions on the critical submanifolds, to define a Morse-Bott construction of $\Torus$-equivariant cohomology which uses $E \times E\Prime$.
        Consider the map $\Projection\PullBack_E : \CriticalPointSet{E \times_{\Torus} \Manifold} \to \CriticalPointSet{(E \times E\Prime) \times_{\Torus} \Manifold}$ given by $\Projection\PullBack_E ([(\critUniversalSpace, \critManifold)]) = [((\critUniversalSpace, \min(\critUniversalSpace)), \critManifold)]$.
        The map $\Projection\PullBack_E$ induces a map on the Morse cochain complex, and we prove that $\Projection\PullBack_E$ is a chain map by computing the differential $d ([((\critUniversalSpace^+, \min(\critUniversalSpace^+)), \critManifold^+)])$.
        This differential counts isolated equivariant flowlines, and these are either flowlines that flow from $[((\critUniversalSpace^-, \min(\critUniversalSpace^-)), \critManifold^-)]$, which correspond to flowlines on $E \times_{\Torus} \Manifold$, or flowlines that are constant in  $\UniversalSpace \times \Manifold$, which correspond to the flowlines of the perfect Morse function on $\Torus \cdot \min(\critUniversalSpace^-)$.
        These latter flowlines cancel with each other because the Morse differential of a perfect Morse function on $\Torus$ is zero.
        Additionally, $\Projection\PullBack_E : \Cohomology^\ArbitraryIndex (E \times_{\Torus} \Manifold) \to \Cohomology^\ArbitraryIndex ((E \times E\Prime) \times_{\Torus} \Manifold)$ is an isomorphism.
        It is easiest to prove this by appealing to topological properties of pullback maps, using the functorial flowline construction below to assert $\Projection\PullBack_E$ is indeed a pullback map, however it is also possible to prove this directly by computing the differential on $(E \times E\Prime)/\Torus$.
        In any case, we have explicitly constructed a pullback map $\Projection\PullBack_E : \Cohomology^\ArbitraryIndex (E \times_{\Torus} \Manifold) \to \Cohomology^\ArbitraryIndex ((E \times E\Prime) \times_{\Torus} \Manifold)$ and found that it is an isomorphism, demonstrating the required independence.
        
        In general, pullback maps like $\Projection_E \PullBack$ count functorial flowlines\footnote{
            \label{footnote:functorial-flowline}%
            For the map $\phi : \Manifold^- \to \Manifold^+$, a \define{functorial flowline} is a pair of perturbed half\textsuperscript{$\pm$} flowlines $\flowManifold^\pm$ in $\Manifold^\pm$ which satisfy $\phi(\flowManifold^- (0)) = \flowManifold^+ (0)$.
            See \cite[Footnote~34]{liebenschutz-jones_intertwining_2020} or \cite[Section~1.3]{rot_functoriality_2014}.
        }, however in our setup the only isolated functorial flowlines for $\Projection_E \PullBack$ are constant in $E$ and $\Manifold$, and flow from the minimum in $E\Prime$, so $\Projection_E \PullBack$ is given by the above formula.
        Since the $\Torus$-equivariant Morse function does not depend on $E\Prime$, the two half flowlines on $\Manifold$ join to create a single flowline on $\Manifold$.
        It is this property that allows us to deduce that the $\Torus$-equivariant quantum product is independent of the basis $\basisCocharacter$.
    \end{remark}

\begin{figure}
\centering
\begin{subfigure}{0.3\textwidth}
    \centering
	\begin{center}
		\begin{tikzpicture}
			\node[inner sep=0] at (2,0) {\includegraphics[width=2.5 cm]{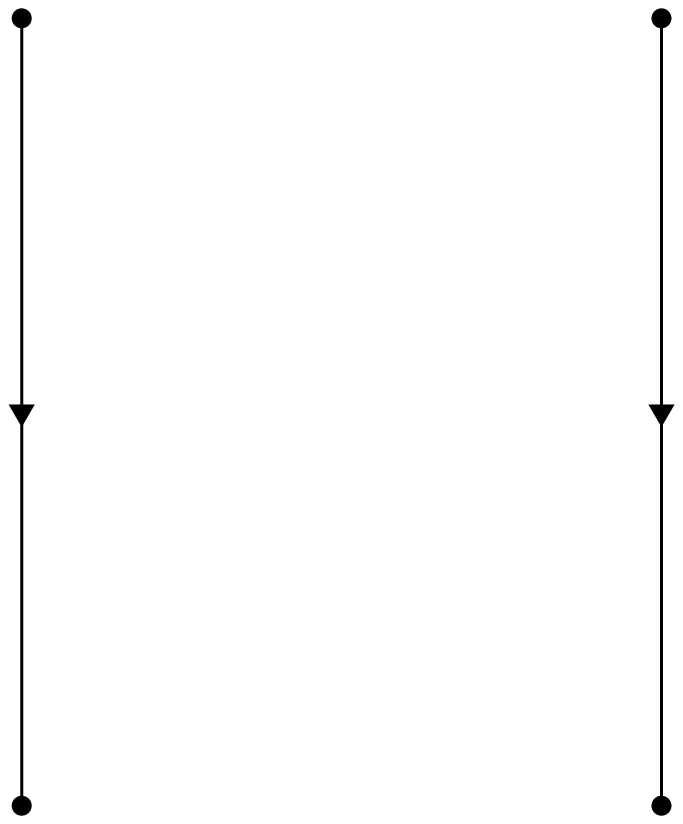}};
			
			\node at (0.7, 0.8){\tiny $\flowUniversalSpace$};
			\node at (0.9, 1.7){\tiny $\critUniversalSpace^-$};
			\node at (0.9, -1.6){\tiny $\critUniversalSpace^+$};
			\node at (3.2, 1.7){\tiny $\critManifold^-$};
			\node at (3.2, -1.6){\tiny $\critManifold^+$};
			\node at (3.0, 0.8){\tiny $\flowManifold$};
		\end{tikzpicture}
	\end{center}
    \caption{Equivariant differential} 
\label{fig:equivariant-morse-differential}
\end{subfigure}
\hspace{1em} 
\begin{subfigure}{0.3\textwidth}
  \centering
	\begin{center}
		\begin{tikzpicture}
			\node[inner sep=0] at (2,0) {\includegraphics[width=2.5 cm]{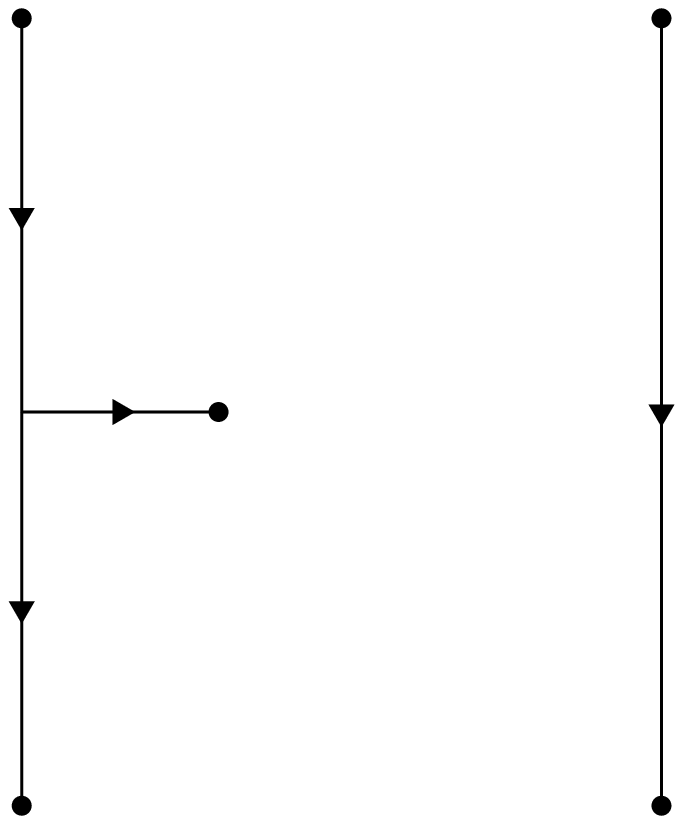}};
			
			\node at (0.6, 0.8){\tiny $\flowUniversalSpace$};
			\node at (1.3, 0.1){\tiny $\flowUniversalSpace_0$};
			\node at (0.9, 1.7){\tiny $\critUniversalSpace^-$};
			\node at (0.9, -1.6){\tiny $\critUniversalSpace^+$};
			\node at (3.2, 1.7){\tiny $\critManifold^-$};
			\node at (3.2, -1.6){\tiny $\critManifold^+$};
			\node at (3.0, 0.8){\tiny $\flowManifold$};
		\end{tikzpicture}
	\end{center}
    \caption{Geometric $\Cohomology^\ArbitraryIndex (\ClassifyingSpace \Torus)$-mod\-ule structure} 
    \label{fig:geometric-module-structure-on-equivariant-morse-cohomology}
\end{subfigure}
\hspace{1em}
\begin{subfigure}{0.3\textwidth}
    \centering
	\begin{center}
		\begin{tikzpicture}
			\node[inner sep=0] at (2,0) {\includegraphics[width=3 cm]{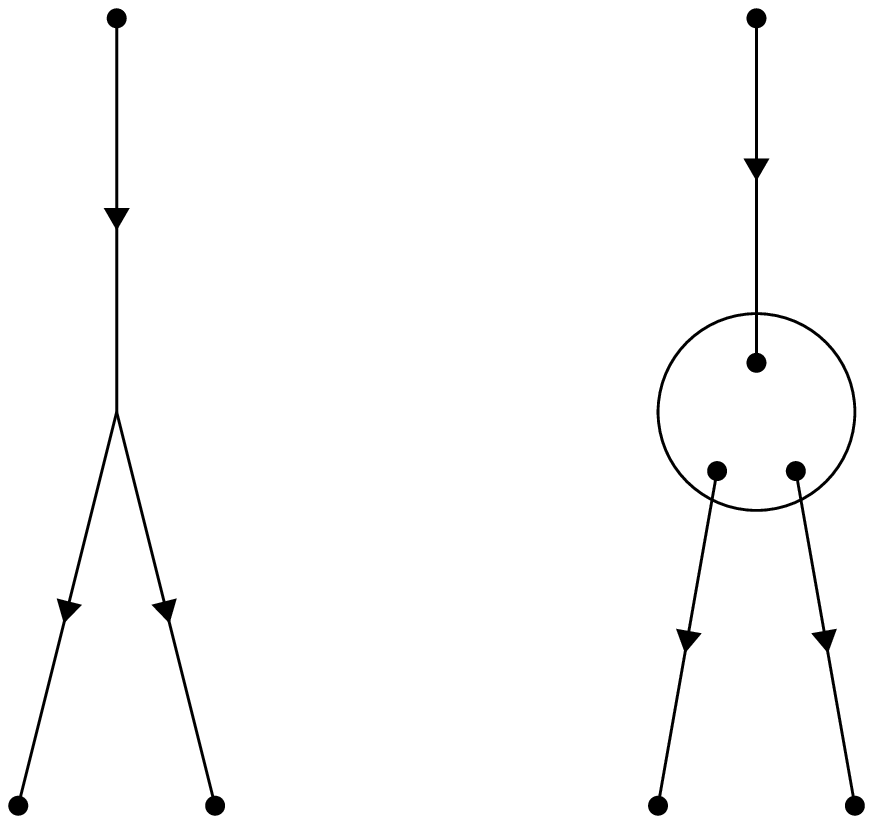}};
			
			\node at (0.7, 0.8){\tiny $\flowUniversalSpace^-_0$};
			\node at (0.5, -0.6){\tiny $\flowUniversalSpace^+_1$};
			\node at (1.3, -0.6){\tiny $\flowUniversalSpace^+_2$};
			\node at (0.9, 1.6){\tiny $\critUniversalSpace^-_0$};
			\node at (0.6, -1.5){\tiny $\critUniversalSpace^+_1$};
			\node at (1.3, -1.5){\tiny $\critUniversalSpace^+_2$};
			\node at (3.2, 1.6){\tiny $\critManifold^-_0$};
			\node at (2.8, -1.5){\tiny $\critManifold^+_1$};
			\node at (3.5, -1.5){\tiny $\critManifold^+_2$};
			\node at (2.9, 0.8){\tiny $\flowManifold^-_0$};
			\node at (2.7, -0.7){\tiny $\flowManifold^+_1$};
			\node at (3.6, -0.7){\tiny $\flowManifold^+_2$};
			\node at (2.7, 0.2){\tiny $u$};
		\end{tikzpicture}
	\end{center}
    \caption{Equivariant quantum product}
    \label{fig:equivariant-quantum-product}
\end{subfigure}
\caption{
    Our diagrams for equivariant configurations have two components.
    The left component describes the part that is mapped to $\UniversalSpace \Torus$ while the right component describes the part that is mapped to $\Manifold$.
    In general, our Morse flowlines flow downwards on the page, in the direction that the Morse function decreases.
    As such, the `Y'-shaped graphs are upside-down when compared to the letter `Y'.
} \label{fig:basic-equivariant-morse-theoretic-constructions}
\end{figure}
        
\subsection{Equivariant almost complex structures}
\label{sec:equivariant-almost-complex-structures}

    A \define{$\Torus$-equivariant almost complex structure} $\ACS^\eqnt$ is a choice of almost complex structure $\ACS^\eqnt_\eltUniversalSpace$ for each $\eltUniversalSpace \in \UniversalSpace \Torus$ such that the diagram
        \begin{equation}
        \label{eqn:equivariant-almost-complex-structure-commutative-diagram}
        \begin{tikzcd}[column sep=huge]
            \TangentSpace _\eltManifold \Manifold
            \arrow[r, "\ACS^\eqnt_{\eltUniversalSpace, \eltManifold}"]
            \arrow[d, "\Derivative \TorusAction_\eltTorus"]
            & \TangentSpace _\eltManifold \Manifold
            \arrow[d, "\Derivative \TorusAction_\eltTorus"] \\
            \TangentSpace _{\eltTorus \cdot \eltManifold} \Manifold
            \arrow[r, "\ACS^\eqnt_{\eltTorus\Inverse \cdot \eltUniversalSpace, \eltTorus \cdot \eltManifold}"]
            & \TangentSpace _{\eltTorus \cdot \eltManifold} \Manifold
        \end{tikzcd}
        \end{equation}
    commutes.
    This condition implies that the space of pairs $(\eltUniversalSpace, u)$ of elements $\eltUniversalSpace \in \UniversalSpace \Torus$ and $\ACS^\eqnt_\eltUniversalSpace$-holomorphic maps $u : \Projective^1 \to \Manifold$ inherits a natural $\Torus$-action given by $\eltTorus \cdot (\eltUniversalSpace, u) = (\eltTorus \Inverse \cdot \eltUniversalSpace, \TorusAction_\eltTorus \ComposedWith u)$.
    We will assume that our $\Torus$-equivariant almost complex structures are everywhere $\SymplecticForm$-compatible, regular and convex.
    Here, \define{convex} means there exists $\RadialCoord_0 \in \IntervalClosedOpen{1}{\Infinity}$ such that $- \ExteriorDerivative \RadialCoord \ComposedWith \ACS^\eqnt _{\eltUniversalSpace, \eltManifold} = \RadialCoord \ContactForm$ holds for all $\eltUniversalSpace \in \UniversalSpace \Torus$ and all $\eltManifold \in \Set{\RadialCoord \ge \RadialCoord_0}$.
    
    The regularity ensures that the space $\ModuliSpace(\DegreeTwoClass; \ACS^\eqnt, K)$ of pairs $(\eltUniversalSpace, u)$ with $\eltUniversalSpace \in K$ and $u \PushForward [\Projective^1] = \DegreeTwoClass$ is a manifold of dimension $2 \dimManifold + 2 \FirstChernClass(\TangentSpace \Manifold, \SymplecticForm)(\DegreeTwoClass) + \Dimension K$ for the $\Torus$-invariant compact subsets $K = (\HighDimensionalSphere{2r-1})^\dimTorus \subset \UniversalSpace \Torus$.

\subsection{Equivariant quantum cohomology}

    Analogously to non-equivariant quantum cohomology, the \define{$\Torus$-equivariant quantum cohomology} $\QuantumCohomology^\ArbitraryIndex_{\Torus; \basisCocharacter} (\Manifold, \TorusAction; \eqMorseFunction)$ of $\Manifold$ is the cohomology of $\NovikovRing \GradedCompletedTensorProduct \Integers \langle \eqCriticalPointSet{\eqMorseFunction, \Manifold} \rangle$ with the $\Torus$-equivariant Morse differential.
    Unlike the non-equivariant case, the set $\eqCriticalPointSet{\eqMorseFunction, \Manifold}$ is typically infinite, so we use the graded completed tensor product\footnote{
        The \define{graded completed tensor product} of two $\Integers$-graded $\Integers$-modules $A$ and $B$ is $A \GradedCompletedTensorProduct B = \DirectSum_{l \in \Integers} (A \GradedCompletedTensorProduct B)^l$, where the degree-$l$ summand is $(A \GradedCompletedTensorProduct B)^l = \DirectProduct_{l\Prime \in \Integers} A^{l\Prime} \Tensor B^{l - l\Prime}$.
        For us, any degree-$l$ element in the $\Integers$-module $\NovikovRing \GradedCompletedTensorProduct \Integers \langle \eqCriticalPointSet{\eqMorseFunction, \Manifold} \rangle$ has the property that, given $c \in \RealNumbers$ and $l\Prime, l\DoublePrime \in \Integers$, only finitely-many of the terms $\NovVariable^A \Tensor [\critUniversalSpace, \critManifold]$ with $|\NovVariable^A| = l\Prime$, $\SymplecticForm(A) \le c$ and $|[\critUniversalSpace]| \le l\DoublePrime$ are supported.
        This is an equivariant extension of the corresponding condition which defines the Novikov ring (see \autoref{sec:novikov-ring}).
    } $\GradedCompletedTensorProduct$ in place of the standard tensor product $\Tensor$.
    The $\NovikovRing$-module $\QuantumCohomology^\ArbitraryIndex_{\Torus; \basisCocharacter} (\Manifold, \TorusAction; \eqMorseFunction)$ is immediately a $\NovikovRing \GradedCompletedTensorProduct \Cohomology^\ArbitraryIndex (\ClassifyingSpace \Torus)$-module by combining the formal Novikov ring action with the geometric $\Cohomology^\ArbitraryIndex (\ClassifyingSpace \Torus)$-module structure.
    
    The $\Torus$-equivariant quantum product $\QuantumProduct$ gives $\QuantumCohomology^\ArbitraryIndex_{\Torus; \basisCocharacter} (\Manifold, \TorusAction; \eqMorseFunction)$ the structure of a $\NovikovRing \GradedCompletedTensorProduct \Cohomology^\ArbitraryIndex (\ClassifyingSpace \Torus)$-algebra.
    It is defined by counting `Y'-shaped graphs in $\UniversalSpace \Torus$ paired with deformed `Y'-shaped graphs in $\Manifold$ (see \autoref{fig:equivariant-quantum-product}).
    Explicitly, a \define{$\Torus$-equivariant deformed `Y'-shaped graph} is a $\Torus$-equivalence class of septuples $(\flowUniversalSpace^-_0, \flowUniversalSpace^+_1, \flowUniversalSpace^+_2, \flowManifold^-_0, \flowManifold^+_1, \flowManifold^+_2, u)$ where $[\flowUniversalSpace^\pm_i, \flowManifold^\pm_i]$ are perturbed $\Torus$-equivariant half\textsuperscript{$\pm$} flowlines and $u : \Projective^1 \to \Manifold$ is a $\eqACS_{\flowUniversalSpace^-_0(0)}$-holomorphic map subject to the conditions $\flowUniversalSpace^-_0(0) = \flowUniversalSpace^+_1(0) = \flowUniversalSpace^+_2(0)$ and $u(\pointSphere^\pm_i) = \flowManifold^\pm_i (0)$.
    Let $\ModuliSpace([\critUniversalSpace^\pm_i, \critManifold^\pm_i], \DegreeTwoClass)$ be the moduli space of such $\Torus$-equivariant deformed `Y'-shaped graphs with the obvious critical point conditions and $[\flowUniversalSpace^-_0(0), u] \in \eqModuliSpace(\DegreeTwoClass; \eqACS)$.
    The \define{$\Torus$-equivariant quantum product} is given by
        \begin{equation}
        \label{eqn:equivariant-quantum-product-definition}
            [\critUniversalSpace^+_1, \critManifold^+_1] \QuantumProduct [\critUniversalSpace^+_2, \critManifold^+_2] = \sum_{\substack{
                \DegreeTwoClass \in \Homology_2(\Manifold) \\
                [\critUniversalSpace^-_0, \critManifold^-_0] \in \eqCriticalPointSet{\eqMorseFunction, \Manifold} \\
                \Dimension \ModuliSpace = 0
            }} \Count \ModuliSpace([\critUniversalSpace^\pm_i, \critManifold^\pm_i], \DegreeTwoClass) \ \NovVariable^\DegreeTwoClass [\critUniversalSpace^-_0, \critManifold^-_0].
        \end{equation}
    Like the non-equivariant quantum product, the $\Torus$-equivariant quantum product is unital, graded-commutative and associative by standard homotopy arguments.
    Moreover, it is independent of the choice of $\Torus$-equivariant Morse function $\eqMorseFunction$ and the choice of $\Torus$-equivariant almost complex structure $\eqACS$.

\subsection{The extended torus}
\label{sec:extended-torus}

    Denote a copy of the circle $\Circle$ by $\AdditionalCircle$.
    Set $\ExtendedTorus = \AdditionalCircle \times \Torus$.
    The character $\ExtendedTorus \to \Circle$ which is a projection to $\AdditionalCircle$ is denoted $\Extended{\Character}_0$, and the corresponding variable in $\Cohomology^\ArbitraryIndex (\ClassifyingSpace \ExtendedTorus)$ is denoted $\formalAdditionalCircle$.
    We will use the notation $\Extended{\Argument}$ to denote actions, bases, (co)characters, etc. which correspond to $\ExtendedTorus$.
    
    We are interested in the $\ExtendedTorus$-equivariant quantum cohomology $\QuantumCohomology^\ArbitraryIndex _{\ExtendedTorus; \Extended{\basisCocharacter}} (\Manifold, \Extended{\TorusAction})$.
    Here, the $\ExtendedTorus$-action $\Extended{\TorusAction}$ on $\Manifold$ induced by $\TorusAction$ is uniquely determined by $\Extended{\TorusAction}\RestrictedTo{\Torus} = \TorusAction$ and $\Extended{\TorusAction}\RestrictedTo{\AdditionalCircle} = \Identity_\Manifold$.
    We use bases $\Extended{\basisCocharacter}$ of $\ExtendedTorus$ which are given by
        \begin{equation}
        \label{eqn:basis-rules-for-extended-torus}
            \begin{cases}
                \Extended{\basisCocharacter}_0 (s) = (s, \basisCocharacter_0(s)), \\
                \Extended{\basisCocharacter}_i (s) = (0, \basisCocharacter_i(s)),
            \end{cases}
        \end{equation}
    where $\basisCocharacter$ is a basis of $\Torus$ and $\basisCocharacter_0 : \Circle \to \Torus$ is a further cocharacter of $\Torus$.
    Such bases satisfy $(\Extended{\basisCocharacter}_i, \Extended{\Character}_0) = \KroneckerDelta_{i, 0}$, where $\KroneckerDelta$ is the Kronecker delta.
    Certain constructions use the basis $\Extended{\basisCocharacter}$, but the cohomological invariants are independent of the choice by \autoref{remark:morse-bott-construction-to-show-independence-of-torus-basis}.
    
\subsection{Lifting to equivariant quantum cohomology}
\label{sec:lifting-ordinary-to-equivariant-degree-two-coclasses}

    Let $\FixedPoint \in \Manifold$ be a fixed point.
    The inclusion map $\Set{\FixedPoint} \to \Manifold$ is $\ExtendedTorus$-equivariant, so there is an induced map $\FixedPoint \PullBack : \Cohomology^\ArbitraryIndex_{\ExtendedTorus} (\Manifold) \to \Cohomology^\ArbitraryIndex_{\ExtendedTorus}(\Set{\FixedPoint}) \cong \Cohomology^\ArbitraryIndex (\ClassifyingSpace \ExtendedTorus)$ as in \eqref{eqn:functorial-map-on-equivariant-cohomology-general}.
    The map $\FixedPoint \PullBack$ induces a splitting of the short exact sequence
        \begin{equation}
        \label{eqn:split-short-exact-sequence-borel-quotient}
            \begin{tikzcd}
                0
                \arrow[r]
                & \Cohomology^2 (\ClassifyingSpace \ExtendedTorus)
                \arrow[r]
                & \Cohomology^2_{\ExtendedTorus} (\Manifold, \Extended{\TorusAction})
                \arrow[l, bend right, "\FixedPoint \PullBack"']
                \arrow[r]
                & \Cohomology^2 (\Manifold)
                \arrow[l, bend right, dashed]
                \arrow[r]
                & 0.
            \end{tikzcd}
        \end{equation}
    The short exact sequence \eqref{eqn:split-short-exact-sequence-borel-quotient} is the degree-2 part of \eqref{eqn:sequence-on-cohomology-associated-to-borel-quotient} for $(\Manifold, \Extended{\TorusAction})$, and it is exact by \autoref{prop:sequence-relating-cohomology-and-equivariant-cohomology-is-short-exact-sequence}.
    The corresponding dashed map is 
        \begin{equation}
            \Cohomology^2 (\Manifold) \to \Cohomology^2_{\ExtendedTorus} (\Manifold, \Extended{\TorusAction}), \quad \DegreeTwoCoclass \mapsto \DegreeTwoCoclass^\FixedPoint \text{ for } \DegreeTwoCoclass \in \Cohomology^2 (\Manifold).
        \end{equation}
    
\subsection{Differential connection}

    There is a differential connection on $\QuantumCohomology^\ArbitraryIndex _{\ExtendedTorus} (\Manifold, \Extended{\TorusAction})$, in the sense of \autoref{def:partial-differential-connection}.
    To define this connection, we must describe the data $(k, A, \SpaceOfDerivations, P, \Connection)$, as in \autoref{sec:differential-connections-definition}.
    
    The integral domain $k$ is $\Cohomology^\ArbitraryIndex (\ClassifyingSpace \AdditionalCircle) \cong \Integers [\formalAdditionalCircle]$.
    The $k$-algebra $A$ is $\NovikovRing \GradedCompletedTensorProduct \Cohomology^\ArbitraryIndex (\ClassifyingSpace \ExtendedTorus)$.
    The $A$-module $P$ is $\QuantumCohomology^\ArbitraryIndex _{\ExtendedTorus} (\Manifold, \Extended{\TorusAction})$, which we will abbreviate by $\QuantumNotation$ for convenience.
    
    Associated to a cohomology class $\DegreeTwoCoclass \in \Cohomology^2 (\Manifold)$, we have a derivation $\dbydmult{\DegreeTwoCoclass}$ on $\NovikovRing \GradedCompletedTensorProduct \Cohomology^\ArbitraryIndex (\ClassifyingSpace \ExtendedTorus)$ given by
        \begin{equation}
        \label{eqn:basic-derivation-on-novikov-ring}
            \left( \dbydmult{\DegreeTwoCoclass} \right) (\NovVariable^\DegreeTwoClass \formalTorus^{\critClassifyingSpace}) = \DegreeTwoCoclass(\DegreeTwoClass) \NovVariable^\DegreeTwoClass \formalAdditionalCircle \formalTorus^{\critClassifyingSpace}.
        \end{equation}
    Note that the operation $\dbyd{\DegreeTwoCoclass}$ does not change the exponent of $\NovVariable$, so it actually behaves more like the derivation $t \dbyd{t}$ than pure differentiation $\dbyd{t}$ in a polynomial ring $k [t]$ (see \autoref{eg:differential-connection-on-polynomial-ring}).
    Nonetheless, the map \eqref{eqn:basic-derivation-on-novikov-ring} satisfies the Leibniz rule \eqref{eqn:Leibniz-rule-for-derivations}.
    The space of derivations is
        \begin{equation}
            \SpaceOfDerivations = \NovikovRing[\formalAdditionalCircle] \Tensor \Cohomology^2 (\Manifold),
        \end{equation}
    where $a \Tensor \DegreeTwoCoclass \in \SpaceOfDerivations$ corresponds to the derivation $a \cdot \dbydmult{\DegreeTwoCoclass}$ for $a \in \NovikovRing[\formalAdditionalCircle]$.

    Let $\FixedPoint \in \Manifold$ be a fixed point.
    Given $\DegreeTwoCoclass \in \Cohomology^2 (\Manifold)$, the map
        \begin{equation}
        \label{eqn:connection-definition-on-equivariant-quantum-cohomology}
            \Connection ^\FixedPoint _\DegreeTwoCoclass = \dbydmult{\DegreeTwoCoclass} + \DegreeTwoCoclass^\FixedPoint \QuantumProduct : \QuantumNotation \to \QuantumNotation
        \end{equation}
        given by
        \begin{equation}
        \label{eqn:connection-definition-on-equivariant-quantum-cohomology-explicit}
            \Connection^\FixedPoint_\DegreeTwoCoclass (\NovVariable^\DegreeTwoClass [\critUniversalSpace, \critManifold]) = \DegreeTwoCoclass(\DegreeTwoClass)   \formalAdditionalCircle \cdot \NovVariable^\DegreeTwoClass [\critUniversalSpace, \critManifold] + \DegreeTwoCoclass^\FixedPoint \QuantumProduct (\NovVariable^\DegreeTwoClass [\critUniversalSpace, \critManifold])
        \end{equation}
    satisfies the Leibniz rule \eqref{eqn:Leibniz-rule-for-abstract-connections}.
    It naturally extends to a connection
        \begin{equation}
        \label{eqn:definition-quantum-connection-final-form}
            \Connection^\FixedPoint : \SpaceOfDerivations \to \Homomorphisms_{\Integers [\formalAdditionalCircle]} (\QuantumNotation, \QuantumNotation).
        \end{equation}

    \begin{theorem}
    \label{thm:differential-connection-flatness-quantum}
        The connection $\Connection^\FixedPoint$ is flat.
    \end{theorem}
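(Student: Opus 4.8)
The plan is to reduce the vanishing of the curvature to a single commutation identity between the operators $\Connection^{\FixedPoint}_{\DegreeTwoCoclass}$ and $\Connection^{\FixedPoint}_{\secondDegreeTwoCoclass}$, and then to establish that identity by a divisor-equation argument for the $\ExtendedTorus$-equivariant quantum product.

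First I would dispose of the formal part of the argument. Because the curvature of a partial connection is antisymmetric and linear over $\NovikovRing[\formalAdditionalCircle]$ in each of its two arguments (a direct consequence of the Leibniz rule \eqref{eqn:Leibniz-rule-for-abstract-connections}; compare \autoref{lem:flatness-sufficient-on-generating-set-differential-connection}), and $\SpaceOfDerivations = \NovikovRing[\formalAdditionalCircle] \Tensor \Cohomology^2(\Manifold)$ is generated over $\NovikovRing[\formalAdditionalCircle]$ by the derivations $\dbydmult{\DegreeTwoCoclass}$, $\DegreeTwoCoclass \in \Cohomology^2(\Manifold)$, and because $\LieBracket{\dbydmult{\DegreeTwoCoclass}}{\dbydmult{\secondDegreeTwoCoclass}} = 0$ (the operations $\dbyd{\DegreeTwoCoclass}$ commute and $\formalAdditionalCircle$ is central), flatness of $\Connection^{\FixedPoint}$ is equivalent to: $\Connection^{\FixedPoint}_{\DegreeTwoCoclass}$ and $\Connection^{\FixedPoint}_{\secondDegreeTwoCoclass}$ commute on $\QuantumNotation$ for every $\DegreeTwoCoclass, \secondDegreeTwoCoclass \in \Cohomology^2(\Manifold)$. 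Next I would expand $\LieBracket{\Connection^{\FixedPoint}_{\DegreeTwoCoclass}}{\Connection^{\FixedPoint}_{\secondDegreeTwoCoclass}}$ using $\Connection^{\FixedPoint}_{\DegreeTwoCoclass} = \dbydmult{\DegreeTwoCoclass} + \DegreeTwoCoclass^{\FixedPoint} \QuantumProduct$: the derivation--derivation bracket vanishes as above, and the bracket $\LieBracket{\DegreeTwoCoclass^{\FixedPoint} \QuantumProduct}{\secondDegreeTwoCoclass^{\FixedPoint} \QuantumProduct}$ vanishes because the $\ExtendedTorus$-equivariant quantum product is associative and graded-commutative. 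Since $\DegreeTwoCoclass^{\FixedPoint}$ and $\secondDegreeTwoCoclass^{\FixedPoint}$ have Novikov degree $0$, so that $\dbydmult{\DegreeTwoCoclass}(\secondDegreeTwoCoclass^{\FixedPoint}) = \dbydmult{\secondDegreeTwoCoclass}(\DegreeTwoCoclass^{\FixedPoint}) = 0$, the remaining two mixed brackets collapse, via the Leibniz rule for $\dbydmult{\DegreeTwoCoclass}$, to
\[
    \LieBracket{\Connection^{\FixedPoint}_{\DegreeTwoCoclass}}{\Connection^{\FixedPoint}_{\secondDegreeTwoCoclass}} = Q_{\DegreeTwoCoclass}(\secondDegreeTwoCoclass^{\FixedPoint}, \Argument) - Q_{\secondDegreeTwoCoclass}(\DegreeTwoCoclass^{\FixedPoint}, \Argument),
\]
where $Q_{\DegreeTwoCoclass}(a, b) = \dbydmult{\DegreeTwoCoclass}(a \QuantumProduct b) - (\dbydmult{\DegreeTwoCoclass} a) \QuantumProduct b - a \QuantumProduct (\dbydmult{\DegreeTwoCoclass} b)$ measures the failure of $\dbydmult{\DegreeTwoCoclass}$ to be a derivation of $\QuantumProduct$. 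One checks that $Q_{\DegreeTwoCoclass}$ is linear in each slot and that, on the part of $a \QuantumProduct b$ weighted by $\NovVariable^{\DegreeTwoClass}$ coming from sphere configurations in a class $\DegreeTwoClass \in \Homology_2(\Manifold)$, it acts as multiplication by $\DegreeTwoCoclass(\DegreeTwoClass) \formalAdditionalCircle$.

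It therefore remains to prove the ``divisor equation'' $Q_{\DegreeTwoCoclass}(\secondDegreeTwoCoclass^{\FixedPoint}, x) = Q_{\secondDegreeTwoCoclass}(\DegreeTwoCoclass^{\FixedPoint}, x)$, which by linearity need only be checked for $x$ an equivariant critical point. I would realise the $\NovVariable^{\DegreeTwoClass}$-term of $Q_{\DegreeTwoCoclass}(\secondDegreeTwoCoclass^{\FixedPoint}, x)$ as the signed count of $\ExtendedTorus$-equivariant deformed ``Y''-shaped configurations carrying the usual inputs $\secondDegreeTwoCoclass^{\FixedPoint}$ and $x$ together with one additional marked point on the $\Projective^1$-component constrained to lie on an $\ExtendedTorus$-equivariant cycle representing $\DegreeTwoCoclass^{\FixedPoint}$. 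A Morse/Gromov--Witten homotopy argument then matches this count with the term computed above: the extra marked point imposes a degree-$\DegreeTwoCoclass(\DegreeTwoClass)$ constraint on the sphere in class $\DegreeTwoClass$, the $\AdditionalCircle$-equivariant bookkeeping of the constraint produces the factor $\formalAdditionalCircle$, and the fixed point $\FixedPoint$ enters precisely through the lift $\DegreeTwoCoclass \mapsto \DegreeTwoCoclass^{\FixedPoint}$ of \autoref{sec:lifting-ordinary-to-equivariant-degree-two-coclasses}. Since all marked points on $\Projective^1$ are interchangeable, the resulting moduli count is symmetric in the insertions $\DegreeTwoCoclass^{\FixedPoint}$ and $\secondDegreeTwoCoclass^{\FixedPoint}$, hence under $\DegreeTwoCoclass \leftrightarrow \secondDegreeTwoCoclass$; computing it the other way gives $Q_{\secondDegreeTwoCoclass}(\DegreeTwoCoclass^{\FixedPoint}, x)$, and the theorem follows.

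I expect the last step to be the main obstacle. Unlike in algebraic Gromov--Witten theory, no off-the-shelf divisor equation is available in this Morse-theoretic model, so one must construct the auxiliary moduli space with the floating marked point, prove transversality and compactness for it --- accounting for sphere bubbling and for collisions of the extra marked point with the other three, where the nonnegative monotonicity of $\Manifold$ and the convexity of the almost complex structures are needed --- and track carefully how the Borel construction on $\UniversalSpace \ExtendedTorus$, the factor $\formalAdditionalCircle$, and the fixed-point lift interact, so that the count is genuinely $\ExtendedTorus$-equivariant and symmetric. The reductions in the first two paragraphs are routine once associativity and graded-commutativity of the $\ExtendedTorus$-equivariant quantum product are granted.
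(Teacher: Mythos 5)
Your proposal is correct and follows essentially the same route as the paper: after the same formal reduction, your operator $Q_{\DegreeTwoCoclass}(\secondDegreeTwoCoclass^{\FixedPoint},\Argument)$ is exactly the paper's map $\Psi(\Argument,\secondDegreeTwoCoclass^{\FixedPoint},\DegreeTwoCoclass^{\FixedPoint})$ counting $\ExtendedTorus$-equivariant deformed `$\Psi$'-shaped graphs with a floating marked point constrained to $\DegreeTwoCoclass^{\FixedPoint}$, and your symmetry-of-marked-points step is the paper's graded-commutativity of $\Psi$, proved by permuting the half\textsuperscript{$+$} flowlines via an automorphism of $\Projective^1$.
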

    
    \begin{proof}
        By \autoref{lem:flatness-sufficient-on-generating-set-differential-connection}, it is sufficient to check that the curvature \eqref{eqn:curvature-definition} vanishes for any two classes $\DegreeTwoCoclass, \secondDegreeTwoCoclass \in \Cohomology^2 (\Manifold)$.
        This further reduces to verifying $[\Connection^\FixedPoint_\DegreeTwoCoclass, \Connection^\FixedPoint_\secondDegreeTwoCoclass] = 0$ because the derivations $\dbydmult{\DegreeTwoCoclass}$ and $\dbydmult{\secondDegreeTwoCoclass}$ commute.
        
        Let us write out the terms of $\Connection^\FixedPoint_\DegreeTwoCoclass \Connection^\FixedPoint_\secondDegreeTwoCoclass (\NovVariable^\DegreeTwoClass [\critUniversalSpace, \critManifold])$:
            \begin{align}
                \Connection^\FixedPoint_\DegreeTwoCoclass \Connection^\FixedPoint_\secondDegreeTwoCoclass (\NovVariable^\DegreeTwoClass [\critUniversalSpace, \critManifold])
                &= \Connection^\FixedPoint_\DegreeTwoCoclass \left( \secondDegreeTwoCoclass (\DegreeTwoClass) \  \NovVariable^\DegreeTwoClass \formalAdditionalCircle [\critUniversalSpace, \critManifold] +  \secondDegreeTwoCoclass^\FixedPoint \QuantumProduct (\NovVariable^\DegreeTwoClass [\critUniversalSpace, \critManifold])\right) \nonumber
                \\ \label{eqnpt:double-diff}
                &= \DegreeTwoCoclass(\DegreeTwoClass) \secondDegreeTwoCoclass (\DegreeTwoClass) \  \NovVariable^\DegreeTwoClass \formalAdditionalCircle^2 [\critUniversalSpace, \critManifold]
                \\ \label{eqnpt:diff-pre-mult} & \qquad
                + \secondDegreeTwoCoclass (\DegreeTwoClass) \  \DegreeTwoCoclass^\FixedPoint \QuantumProduct \NovVariable^\DegreeTwoClass \formalAdditionalCircle [\critUniversalSpace, \critManifold]
                \\ \label{eqnpt:diff-post-mult} & \qquad
                + \left( \dbydmult{\DegreeTwoCoclass} \right) \big(
                \secondDegreeTwoCoclass^\FixedPoint \QuantumProduct \NovVariable^\DegreeTwoClass [\critUniversalSpace, \critManifold] \big)
                \\ \label{eqnpt:double-mult} & \qquad
                + \DegreeTwoCoclass^\FixedPoint \QuantumProduct \big( \secondDegreeTwoCoclass^\FixedPoint \QuantumProduct \NovVariable^\DegreeTwoClass [\critUniversalSpace, \critManifold] \big).
            \end{align}
        The first term \eqref{eqnpt:double-diff} is clearly symmetric in $\DegreeTwoCoclass$ and $\secondDegreeTwoCoclass$.
        The last term \eqref{eqnpt:double-mult} is also symmetric in $\DegreeTwoCoclass$ and $\secondDegreeTwoCoclass$ because the $\ExtendedTorus$-equivariant quantum product is graded-commutative and each of  $\DegreeTwoCoclass^\FixedPoint$ and $\secondDegreeTwoCoclass^\FixedPoint$ has even degree.
        
        We will show that the remaining two terms \eqref{eqnpt:diff-pre-mult} and \eqref{eqnpt:diff-post-mult} cancel with their counterparts in $- \Connection^\FixedPoint_\secondDegreeTwoCoclass \Connection^\FixedPoint_\DegreeTwoCoclass$.
        It is these four terms together that cancel (they do not pair off).

        Analogously to how the $\ExtendedTorus$-equivariant quantum product $\QuantumProduct : \QuantumNotation^{\Tensor 2} \to \QuantumNotation$ counts $\ExtendedTorus$-equivariant deformed `Y'-shaped graphs, the map $\Psi : \QuantumNotation^{\Tensor 3} \to \QuantumNotation$ counts \define{$\ExtendedTorus$-equivariant deformed `$\Psi$\!'-shaped graphs} (see \autoref{fig:equivariant-psi-map}). 
        These graphs are $\ExtendedTorus$-equivalence classes of 10-tuples $(\flowUniversalSpace^-_0, \flowUniversalSpace^+_1, \flowUniversalSpace^+_2, \flowUniversalSpace^+_3, \flowManifold^-_0, \flowManifold^+_1, \flowManifold^+_2, \flowManifold^+_3, u, \pointSphere^+_3)$ where $[\flowUniversalSpace^\pm_i, \flowManifold^\pm_i]$ are $\ExtendedTorus$-equivariant perturbed half\textsuperscript{$\pm$} flowlines, $u : \Projective^1 \to \Manifold$ is a $\eqACS_{\flowUniversalSpace^-_0 (0)}$-holomorphic map and $\pointSphere^+_3$ is a point in $\Projective^1 \setminus \Set{\pointSphere^-_0, \pointSphere^+_1, \pointSphere^+_2}$, subject to the conditions $\flowUniversalSpace^-_0 (0) = \flowUniversalSpace^+_1 (0) = \flowUniversalSpace^+_2 (0) = \flowUniversalSpace^+_3 (0)$ and $u (\pointSphere^\pm_i) = \flowManifold^\pm_i (0)$.
        
        $\Psi$ is graded-commutative.
        We show this by permuting the $\ExtendedTorus$-equivariant half\textsuperscript{+} flowlines.
        Suppose the 10-tuple $(\flowUniversalSpace^-_0, \flowUniversalSpace^+_1, \flowUniversalSpace^+_2, \flowUniversalSpace^+_3, \flowManifold^-_0, \flowManifold^+_1, \flowManifold^+_2, \flowManifold^+_3, u, \pointSphere^+_3)$ is counted by $\Psi (a, b, c)$.
        Let $\phi : \Projective^1 \to \Projective^1$ be the unique automorphism which fixes $\pointSphere^-_0$ and $\pointSphere^+_1$ and satisfies $\phi (\pointSphere^+_2) = \pointSphere^+_3$.
        The 10-tuple $(\flowUniversalSpace^-_0, \flowUniversalSpace^+_1, \flowUniversalSpace^+_3, \flowUniversalSpace^+_2, \flowManifold^-_0, \flowManifold^+_1, \flowManifold^+_3, \flowManifold^+_2, u \ComposedWith \phi, \phi \Inverse (\pointSphere^+_2))$ is counted by the map $\Psi(a, c, b)$.
        Here, we have switched $[\flowUniversalSpace^+_2, \flowManifold^+_2]$ and $[\flowUniversalSpace^+_3, \flowManifold^+_3]$ and modified $\Projective^1$ to accommodate the switch.
        Implicitly, we are appealing to the fact that $\Psi$ is independent of the perturbations of the $\ExtendedTorus$-equivariant Morse functions.
        Altogether, this yields $\Psi (a, b, c) = (-1)^{|b||c|} \Psi(a, c, b)$.
        The same argument demonstrates $\Psi (a, b, c) = (-1)^{|a||b|} \Psi(b, a, c)$.
        
        In the counts for $\Psi(\NovVariable^\DegreeTwoClass [\critUniversalSpace, \critManifold], \DegreeTwoCoclass^\FixedPoint, \secondDegreeTwoCoclass^\FixedPoint)$, the 2-dimensional freedom of the point $\pointSphere_3^+$ is matched by the 2-dimensional intersection condition $[\flowUniversalSpace^+_3, \flowManifold^+_3] (\Infinity) = \secondDegreeTwoCoclass^\FixedPoint$.
        As such, these are the same counts as for $(\NovVariable^\DegreeTwoClass [\critUniversalSpace, \critManifold]) \QuantumProduct \DegreeTwoCoclass^\FixedPoint$, except where the $\ExtendedTorus$-equivariant deformed `Y'-shaped graphs have weights $\secondDegreeTwoCoclass(u \PushForward[\Projective^1])$ (see \autoref{fig:eqnt-weighted-quantum-product}).
        This gives the relation
            \begin{equation}
            \label{eqn:remainder-of-differentiation-commuting-with-quantum-product}
                \Psi (\NovVariable^\DegreeTwoClass [\critUniversalSpace, \critManifold], \DegreeTwoCoclass^\FixedPoint, \secondDegreeTwoCoclass^\FixedPoint)
                = \left( \dbydmult{\secondDegreeTwoCoclass} \right) \big(
                \DegreeTwoCoclass^\FixedPoint \QuantumProduct \NovVariable^\DegreeTwoClass [\critUniversalSpace, \critManifold] \big)
                - \DegreeTwoCoclass^\FixedPoint \QuantumProduct \left( \left( \dbydmult{\secondDegreeTwoCoclass} \right) \left( \NovVariable^\DegreeTwoClass [\critUniversalSpace, \critManifold] \right) \right).
            \end{equation}
        This relation, together with the same relation with $\DegreeTwoCoclass$ and $\secondDegreeTwoCoclass$ switched and the graded-commutativity statement $\Psi (\NovVariable^\DegreeTwoClass [\critUniversalSpace, \critManifold], \DegreeTwoCoclass^\FixedPoint, \secondDegreeTwoCoclass^\FixedPoint) = \Psi (\NovVariable^\DegreeTwoClass [\critUniversalSpace, \critManifold], \secondDegreeTwoCoclass^\FixedPoint, \DegreeTwoCoclass^\FixedPoint)$, completes the proof of flatness.
    \end{proof}

\begin{figure}
\centering
\begin{subfigure}{0.4\textwidth}
    \centering
	\begin{center}
		\begin{tikzpicture}
			\node[inner sep=0] at (2.5,0) {\includegraphics[width=4 cm]{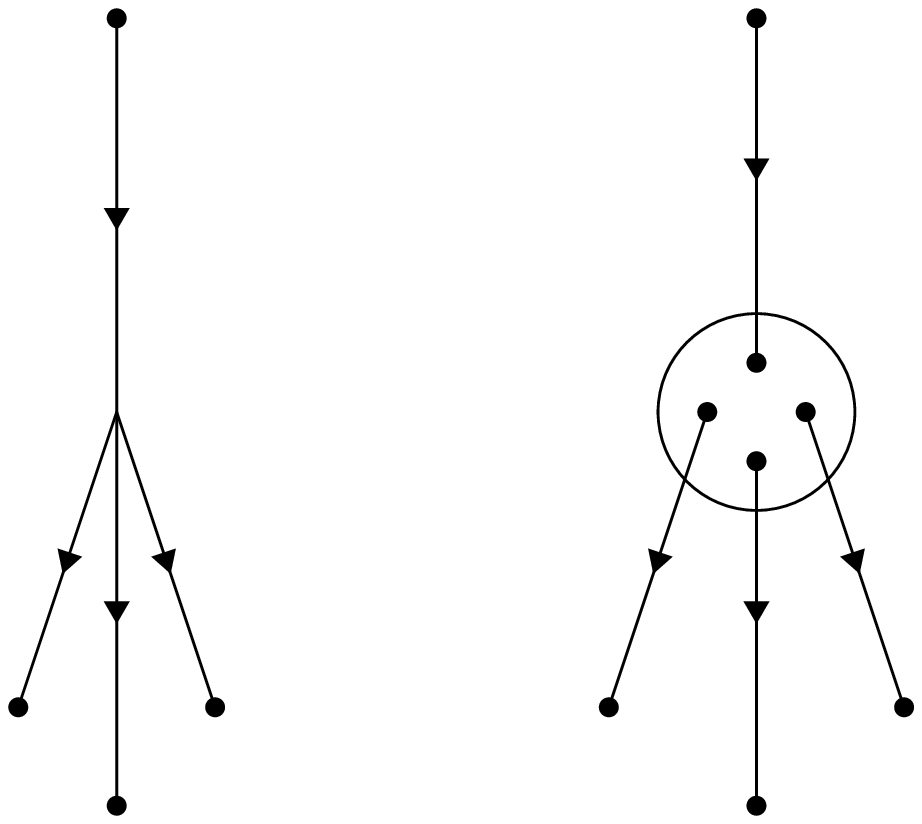}};
			
			\node at (0.7, 0.9){\tiny $\flowUniversalSpace^-_0$};
			\node at (0.6, -0.5){\tiny $\flowUniversalSpace^+_1$};
			\node at (1.2, -1.2){\tiny $\flowUniversalSpace^+_2$};
			\node at (1.4, -0.5){\tiny $\flowUniversalSpace^+_3$};
			\node at (1.0, 2.0){\tiny $\critUniversalSpace^-_0$};
			\node at (0.6, -1.5){\tiny $\critUniversalSpace^+_1$};
			\node at (3.8, 2.0){\tiny $\critManifold^-_0$};
			\node at (3.2, -1.5){\tiny $\critManifold^+_1$};
			\node at (3.8, -1.9){\tiny $\DegreeTwoCoclass^\FixedPoint$};
			\node at (4.5, -1.5){\tiny $\secondDegreeTwoCoclass^\FixedPoint$};
			\node at (3.6, 1.1){\tiny $\flowManifold^-_0$};
			\node at (3.1, -0.7){\tiny $\flowManifold^+_1$};
			\node at (4.0, -0.9){\tiny $\flowManifold^+_2$};
			\node at (4.5, -0.7){\tiny $\flowManifold^+_3$};
			\node at (3.4, 0.4){\tiny $u$};
		\end{tikzpicture}
	\end{center}
    \caption{$\ExtendedTorus$-equivariant deformed `$\Psi$'-shaped graph} 
\label{fig:equivariant-psi-map}
\end{subfigure}
\hspace{1em} 
\begin{subfigure}{0.4\textwidth}
  \centering
	\begin{center}
		\begin{tikzpicture}
			\node[inner sep=0] at (2.5,0) {\includegraphics[width=4 cm]{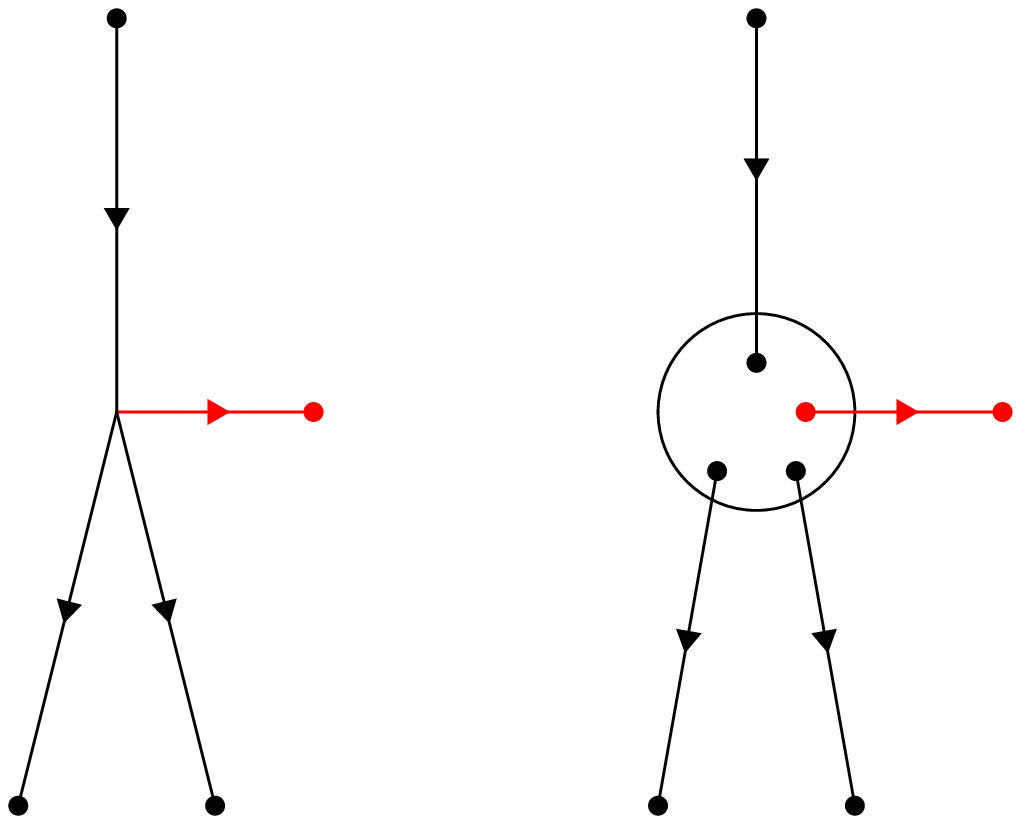}};
			
			\node at (0.7, 0.9){\tiny $\flowUniversalSpace^-_0$};
			\node at (0.6, -0.6){\tiny $\flowUniversalSpace^+_1$};
			\node at (1.3, -0.6){\tiny $\flowUniversalSpace^+_2$};
			\node at (1.4, 0.2){\tiny $\flowUniversalSpace^+_3$};
			\node at (1.0, 1.8){\tiny $\critUniversalSpace^-_0$};
			\node at (0.6, -1.7){\tiny $\critUniversalSpace^+_1$};
			\node at (3.5, 1.8){\tiny $\critManifold^-_0$};
			\node at (3.2, -1.7){\tiny $\critManifold^+_1$};
			\node at (3.9, -1.7){\tiny $\DegreeTwoCoclass^\FixedPoint$};
			\node at (4.7, 0){\tiny $\secondDegreeTwoCoclass^\FixedPoint$};
			\node at (3.3, 1.1){\tiny $\flowManifold^-_0$};
			\node at (3.1, -0.7){\tiny $\flowManifold^+_1$};
			\node at (3.9, -0.7){\tiny $\flowManifold^+_2$};
			\node at (4.2, 0.2){\tiny $\flowManifold^+_3$};
			\node at (3.1, 0.4){\tiny $u$};
		\end{tikzpicture}
	\end{center}
    \caption{$\secondDegreeTwoCoclass^\FixedPoint$-weighted equivariant product by $\DegreeTwoCoclass^\FixedPoint$}
    \label{fig:eqnt-weighted-quantum-product}
\end{subfigure}
\caption{
    The $\ExtendedTorus$-equivariant deformed `$\Psi$'-shaped graphs that $\Psi$ counts are the same configurations that the \textcolor{red}{$\secondDegreeTwoCoclass^\FixedPoint$-weighted} equivariant product by $\DegreeTwoCoclass^\FixedPoint$ counts.
    Only the interpretation of the flowline $[\flowUniversalSpace_3^+, \flowManifold^+_3]$ changes between the two maps.
} \label{fig:psi-map-diagrams}
\end{figure}

\subsection{Clutching bundle}
\label{sec:clutching-bundle-construction}

    Fix a cocharacter $\Cocharacter \in \LatticeCocharacters{\Torus}$ for which the action $\TorusAction \ComposedWith \Cocharacter$ is linear of nonnegative slope.
    We denote the set of such \define{$\TorusAction$-nonnegative cocharacters} by $\NonnegativeLatticeCocharacters{\TorusAction} {\Torus}$.
    The \define{clutching bundle} $\ClutchingBundle{\Cocharacter}$ is a symplectic $\Manifold$-bundle over the sphere $\Sphere$ which is associated to $\Cocharacter$.
    We outline the construction here, but further details for closed $\Manifold$ are found in \cite{seidel_$_1997} and for convex $\Manifold$ in \cite{ritter_floer_2014}.
    
    Write the sphere $\Sphere$ as a union of its two hemispheres $\Hemisphere^-$ and $\Hemisphere^+$.
    Each hemisphere is a closed disc, and they are glued along the equator.
    Use coordinates for $\Hemisphere^\pm$ from the closed unit disc in $\ComplexNumbers$ and for the equator $\Circle = \RealNumbers / \Integers$, so the gluing identifications are
        \begin{equation}
            \eltCircle \in \Circle \leftrightarrow \ExponentialNumber ^ {2 \PiNumber \ImaginaryNumber \eltCircle} \in \Boundary \Hemisphere^- \leftrightarrow \ExponentialNumber ^ {- 2 \PiNumber \ImaginaryNumber \eltCircle} \in \Boundary \Hemisphere^+. 
        \end{equation}
    The bundle $\ClutchingBundle{\Cocharacter}$ is the union of the trivial bundles $\Hemisphere^\pm \times \Manifold$ over the hemispheres, glued over the equator via
        \begin{equation}
            \Boundary \Hemisphere^- \times \Manifold \ni (\ExponentialNumber ^ {2 \PiNumber \ImaginaryNumber \eltCircle}, \eltManifold) \leftrightarrow (\ExponentialNumber ^ {- 2 \PiNumber \ImaginaryNumber \eltCircle}, \TorusAction_{\Cocharacter(\eltCircle)}(\eltManifold)) \in \Boundary \Hemisphere^+ \times \Manifold.
        \end{equation}
    Denote the projection map by $\ClutchingProjection : \ClutchingBundle{\Cocharacter} \to \Sphere$.
    The vertical tangent bundle $\VerticalTangentSpace \ClutchingBundle{\Cocharacter} = \ker \Derivative \ClutchingProjection$ is naturally equipped with the symplectic bilinear form $\ClutchingBilinearForm_{\eltSphere, \eltManifold} = \SymplecticForm_\eltManifold$ for all $\eltSphere \in \Hemisphere^\pm$.
    The action $\TorusAction \ComposedWith \Cocharacter$ is symplectic, so $\ClutchingBilinearForm$ is well-defined.
    
    The circle $\AdditionalCircle$ acts on $\Sphere$ by rotation.
    The fixed points of the $\AdditionalCircle$-action are the poles $\Pole^\pm = 0 \in \Hemisphere^\pm$.
    The complement of the poles $\Sphere \setminus \Set{\Pole^\pm}$ is isomorphic to an open cylinder via
        \begin{equation}
        \label{eqn:cylinder-coordinates-for-sphere}
            \RealNumbers \times \Circle \ni (s, \eltCircle) \mapsto \left\{ \begin{array}{lll}
            \ExponentialNumber ^ {2 \PiNumber (s + \ImaginaryNumber \eltCircle)} &\in \Hemisphere^- & \text{if $s \le 0$} \\
            \ExponentialNumber ^ {- 2 \PiNumber (s + \ImaginaryNumber \eltCircle)} &\in \Hemisphere^+ & \text{if $s \ge 0$,}
            \end{array} \right.
        \end{equation}
    and the circle $\AdditionalCircle$ acts via $\eltAdditionalCircle \cdot (s, \eltCircle) = (s, \eltCircle + \eltAdditionalCircle)$.
    This rotation action on the sphere $\Sphere$ lifts to a $\ExtendedTorus$-action on $\ClutchingBundle{\Cocharacter}$.
    The element $(\eltAdditionalCircle, \eltTorus) \in \ExtendedTorus = \AdditionalCircle \times \Torus$ acts via
        \begin{equation}
        \label{eqn:extended-torus-action-on-clutching-bundle}
        \begin{array}{lllllll}
            \Hemisphere^- \times \Manifold & \ni & (\eltSphere, \eltManifold) & \mapsto & (\ExponentialNumber ^ {2 \PiNumber \ImaginaryNumber \eltAdditionalCircle} \eltSphere, \TorusAction_{\eltTorus - \Cocharacter(\eltAdditionalCircle)} (\eltManifold)) & \in & \Hemisphere^- \times \Manifold \\
            \Hemisphere^+ \times \Manifold & \ni & (\eltSphere, \eltManifold) & \mapsto & (\ExponentialNumber ^ {-2 \PiNumber \ImaginaryNumber \eltAdditionalCircle} \eltSphere, \TorusAction_\eltTorus (\eltManifold)) & \in & \Hemisphere^+ \times \Manifold
        \end{array}
        \end{equation}
    The fibres above the poles $\Pole^\pm$ are $\ExtendedTorus$-invariant under this action.
    Identifying each fibre with $\Manifold$ using the local trivialisation $\Hemisphere^\pm \times \Manifold$, the $\ExtendedTorus$-action over the south pole $\Pole^+$ is $\Extended{\TorusAction}$ and the $\ExtendedTorus$-action over the north pole $\Pole^-$ is $\Cocharacter \cdot \Extended{\TorusAction} = \Extended{\TorusAction} \ComposedWith \Extended{\Cocharacter} \Inverse$, where $\Extended{\Cocharacter}$ is the automorphism
        \begin{equation}
            \Extended{\Cocharacter} : \ExtendedTorus \to \ExtendedTorus, \quad (\eltAdditionalCircle, \eltTorus) \mapsto (\eltAdditionalCircle, \eltTorus + \Cocharacter(\eltAdditionalCircle)).
        \end{equation}
    
\subsection{Sections of the clutching bundle}
\label{sec:sections-of-clutching-bundle}

    We are concerned with pseudoholomorphic sections of $\ClutchingProjection : \ClutchingBundle{\Cocharacter} \to \Sphere$.
    With this in mind, let $\SphereACS$ be the complex structure on $\Sphere$ induced by \eqref{eqn:cylinder-coordinates-for-sphere}.
    Let $\ClutchingACS$ be an almost complex structure on $\ClutchingBundle{\Cocharacter}$ for which $\ClutchingProjection$ is a $(\ClutchingACS, \SphereACS)$-holomorphic map, which restricts to a convex $\ClutchingBilinearForm$-compatible almost complex structure on $\VerticalTangentSpace \ClutchingBundle{\Cocharacter}$ and which satisfies a Floer-type convexity condition outside of a compact subset (see \cite[Definition~26]{ritter_floer_2014}).
    Such an almost complex structure is \define{admissible}.
    For a regular admissible $\ClutchingACS$, the space of $(\SphereACS, \ClutchingACS)$-holomorphic sections $\ClutchingSection : \Sphere \to \ClutchingBundle{\Cocharacter}$ with $\ClutchingSection \PushForward [\Sphere] = \DegreeTwoClass \in \Homology_2 (\ClutchingBundle{\Cocharacter})$ is a manifold of dimension $2 \dimManifold + 2 \FirstChernClass(\VerticalTangentSpace \ClutchingBundle{\Cocharacter}, \ClutchingBilinearForm)(\DegreeTwoClass)$.
    
    Let $\FixedPoint \in \Manifold$ be a fixed point.
    There is a \define{fixed section} $\ClutchingSection^\FixedPoint : \Sphere \to \ClutchingBundle{\Cocharacter}$ associated to $\FixedPoint$ given by $\ClutchingSection^\FixedPoint (\eltSphere) = (\eltSphere, \FixedPoint)$ for $\eltSphere \in \Sphere$.
    Set $|\Cocharacter, \FixedPoint| = - 2 \FirstChernClass(\VerticalTangentSpace \ClutchingBundle{\Cocharacter}, \ClutchingBilinearForm)(\ClutchingSection^\FixedPoint \PushForward [\Sphere])$.
    This quantity $|\Cocharacter, \FixedPoint|$ is equal to twice the sum of the weights of $\TorusAction \ComposedWith \Cocharacter$ around $\FixedPoint$ \cite[Lemma~2.2]{mcduff_topological_2006}.
    The section $\ClutchingSection^\FixedPoint$ splits the short exact sequence
        \begin{equation}
        \label{eqn:short-exact-sequence-on-homology-of-clutching-bundle}
            \begin{tikzcd}
                0 \arrow[r] & \arrow[r, "\Pole^- \PushForward"] \Homology_2 (\Manifold) & \arrow[r] \Homology_2 (\ClutchingBundle{\Cocharacter}) & \arrow[r] \Homology_2 (\Sphere) \cong \Integers \arrow[l, bend right, "\ClutchingSection^\FixedPoint \PushForward"'] & 0.
            \end{tikzcd}
        \end{equation}
    from \autoref{lem:short-exact-sequence-on-degree-2-homology-of-clutching-bundle}.
    In \eqref{eqn:short-exact-sequence-on-homology-of-clutching-bundle}, the map $\Pole^- : \Manifold \to \ClutchingBundle{\Cocharacter}$ is the inclusion map of the fibre above the north pole $\Pole^-$.
    Given a class $\DegreeTwoClass \in \Homology_2 (\Manifold)$, set $\DegreeTwoClass^\FixedPoint = \Pole^- \PushForward (\DegreeTwoClass) + \ClutchingSection^\FixedPoint \PushForward [\Sphere]$.
    Note $2 \FirstChernClass(\VerticalTangentSpace \ClutchingBundle{\Cocharacter}, \ClutchingBilinearForm)(\DegreeTwoClass^\FixedPoint) = 2 \FirstChernClass(\TangentSpace \Manifold, \SymplecticForm)(\DegreeTwoClass) - |\Cocharacter, \FixedPoint|$.
    The assignment $\DegreeTwoClass \mapsto \DegreeTwoClass^\FixedPoint$ is a bijection between classes $\DegreeTwoClass \in \Homology_2 (\Manifold)$, which are recorded by the Novikov ring, and those classes in $\Homology_2 (\ClutchingBundle{\Cocharacter})$ which can represent sections of $\ClutchingBundle{\Cocharacter}$.
    
    A \define{spiked section} is a triple $(\flowManifold^-, \ClutchingSection, \flowManifold^+)$ of perturbed half\textsuperscript{$\pm$} flowlines $\flowManifold^\pm$ in $\Manifold$ together with a $(\SphereACS, \ClutchingACS)$-holomorphic section $\ClutchingSection$ which satisfies $\ClutchingSection (\Pole^\pm) = \flowManifold^\pm (0)$.
    Let $\ModuliSpace (\critManifold^-, \critManifold^+, \DegreeTwoClass^\FixedPoint)$ denote the space of spiked sections $(\flowManifold^-, \ClutchingSection, \flowManifold^+)$ which satisfy $\flowManifold^\pm (\pm \Infinity) = \critManifold^\pm$ and $\ClutchingSection \PushForward [\Sphere] = \DegreeTwoClass^\FixedPoint$.
    The \define{quantum Seidel map} 
        \begin{equation}
        \label{eqn:quantum-seidel-map-definition}
            \QuantumSeidel(\Cocharacter, \FixedPoint) : \QuantumCohomology^\ArbitraryIndex (\Manifold; \MorseFunction) \to \QuantumCohomology^{\ArbitraryIndex + |\Cocharacter, \FixedPoint|} (\Manifold; \MorseFunction)
        \end{equation}
    counts spiked sections; it is given by
        \begin{equation}
            \QuantumSeidel (\Cocharacter, \FixedPoint) \ (\critManifold^+) = \sum _{\substack{
                \critManifold^-, \DegreeTwoClass \\
                \Dimension \ModuliSpace = 0
            }} \Count \ModuliSpace (\critManifold^-, \critManifold^+, \DegreeTwoClass^\FixedPoint) \ \NovVariable^\DegreeTwoClass \critManifold^-.
        \end{equation}
    The quantum Seidel map commutes with the quantum product.
    
\subsection{The equivariant quantum Seidel map}
    
    Analogously to \autoref{sec:equivariant-almost-complex-structures}, a \define{$\ExtendedTorus$-equiv\-ari\-ant admissible almost complex structure} $\eqClutchingACS$ is a choice of admissible almost complex structure on $\ClutchingBundle{\Cocharacter}$ for each $\eltUniversalSpace \in \UniversalSpace \ExtendedTorus$ which makes an analogous diagram to \eqref{eqn:equivariant-almost-complex-structure-commutative-diagram} commute.
    The convexity of $\eqClutchingACS\RestrictedTo{\VerticalTangentSpace \ClutchingBundle{\Cocharacter}}$ and the Floer-type convexity condition must hold in a common region at infinity for all $\eltUniversalSpace \in \UniversalSpace \ExtendedTorus$.
    
    The \define{$\ExtendedTorus$-equivariant quantum Seidel map}
        \begin{equation}
        \label{eqn:equivariant-quantum-seidel-map-definition}
            \QuantumSeidel_{\ExtendedTorus}(\Cocharacter, \FixedPoint) : \QuantumCohomology^\ArbitraryIndex _{\ExtendedTorus} (\Manifold, \Extended{\TorusAction}; \eqMorseFunction_+) \to \QuantumCohomology^{\ArbitraryIndex + |\Cocharacter, \FixedPoint|}  _{\ExtendedTorus} (\Manifold, \Cocharacter \cdot \Extended{\TorusAction}; \eqMorseFunction_-)
        \end{equation}
    counts $\ExtendedTorus$-equivariant spiked sections, analogously to \eqref{eqn:quantum-seidel-map-definition}.
    These \define{$\ExtendedTorus$-equiv\-ari\-ant spiked sections} are $\ExtendedTorus$-equivalence classes of quadruples $(\flowUniversalSpace, \flowManifold^-, \ClutchingSection, \flowManifold^+)$ where $\flowUniversalSpace$ is a flowline in $\UniversalSpace \ExtendedTorus$, $\flowManifold^\pm$ are perturbed half\textsuperscript{$\pm$} flowlines of $\eqMorseFunction_{\pm}(\flowUniversalSpace(s), \Argument)$ and $\ClutchingSection$ is a $(\SphereACS, \eqClutchingACS_{\flowUniversalSpace(0)})$-holo\-mor\-phic section which satisfies $\ClutchingSection (\Pole^\pm) = \flowManifold^\pm (0)$.
    See \autoref{fig:equivariant-quantum-seidel-map}.
    The two $\ExtendedTorus$-equivariant Morse functions $\eqMorseFunction_{\pm} : \UniversalSpace \ExtendedTorus \times \Manifold \to \RealNumbers$ satisfy different relations because the domain and codomain have different $\ExtendedTorus$-actions.
    
    A homotopy argument shows that $\QuantumSeidel_{\ExtendedTorus}(\Cocharacter, \FixedPoint)$ commutes with the geometric $\Cohomology^\ArbitraryIndex (\ClassifyingSpace \ExtendedTorus)$-action.
    
\subsection{Intertwining relation}

    Let $\DegreeTwoCoclass \in \Cohomology^2_{\ExtendedTorus} (\ClutchingBundle{\Cocharacter})$ be a $\ExtendedTorus$-equivariant degree-2 coclass for the $\ExtendedTorus$-action \eqref{eqn:extended-torus-action-on-clutching-bundle} on the total space $\ClutchingBundle{\Cocharacter}$.
    Denote by $\DegreeTwoCoclass^\pm$ the restrictions of $\DegreeTwoCoclass$ to the fibres above the poles $\Pole^\pm$.
    This gives two elements $\DegreeTwoCoclass^- \in \Cohomology^2_{\ExtendedTorus} (\Manifold, \Cocharacter \cdot \Extended{\TorusAction})$ and $\DegreeTwoCoclass^+ \in \Cohomology^2_{\ExtendedTorus} (\Manifold, \Extended{\TorusAction})$.
    
    The \define{$\DegreeTwoCoclass$-weighted $\ExtendedTorus$-equiv\-ari\-ant quantum Seidel map} 
        \begin{equation}
        \label{eqn:weighted-equivariant-quantum-seidel-map-definition}
            \WeightedQuantumSeidel_{\ExtendedTorus} (\Cocharacter, \FixedPoint, \DegreeTwoCoclass) : \QuantumCohomology^\ArbitraryIndex _{\ExtendedTorus} (\Manifold, \Extended{\TorusAction}; \eqMorseFunction_+) \to \QuantumCohomology^{\ArbitraryIndex + |\Cocharacter, \FixedPoint|}  _{\ExtendedTorus} (\Manifold, \Cocharacter \cdot \Extended{\TorusAction}; \eqMorseFunction_-)
        \end{equation}
    counts $\ExtendedTorus$-equiv\-ari\-ant spiked sections $[\flowUniversalSpace, \flowManifold^-, \ClutchingSection, \flowManifold^+]$, but weighted by $\DegreeTwoCoclass([\flowUniversalSpace(0), \ClutchingSection([\Sphere])])$.
    We construct $\WeightedQuantumSeidel_{\ExtendedTorus} (\Cocharacter, \FixedPoint, \DegreeTwoCoclass)$ in the proof of \autoref{thm:intertwining-relation-quantum-extended-torus} using a $\ExtendedTorus$-equivariant half\textsuperscript{$+$} flowline to $\DegreeTwoCoclass$ to record the weight.
    
    \begin{theorem}
        [Intertwining relation]
        \label{thm:intertwining-relation-quantum-extended-torus}
        The relation
            \begin{equation}
            \label{eqn:intertwining-relation-extended-quantum}
                 \QuantumSeidel_{\ExtendedTorus} (\Cocharacter, \FixedPoint) (x \QuantumProduct \DegreeTwoCoclass^+) - \QuantumSeidel_{\ExtendedTorus} (\Cocharacter, \FixedPoint) (x) \QuantumProduct \DegreeTwoCoclass^- = \formalAdditionalCircle \  \WeightedQuantumSeidel_{\ExtendedTorus} (\Cocharacter, \FixedPoint, \DegreeTwoCoclass) (x)
            \end{equation}
        holds for all $x \in \QuantumCohomology^\ArbitraryIndex _{\ExtendedTorus} (\Manifold, \Extended{\TorusAction}; \eqMorseFunction_+)$.
    \end{theorem}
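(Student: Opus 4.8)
The plan is to adapt the Morse-theoretic proof of the intertwining relation from \cite{liebenschutz-jones_intertwining_2020} to the $\ExtendedTorus$-equivariant setting, with $\UniversalSpace \ExtendedTorus$ in place of $\UniversalSpace \Circle$ and the fixed section $\ClutchingSection^\FixedPoint$ of \autoref{sec:sections-of-clutching-bundle} in place of the lifted $\Circle$-actions of that paper. All three terms of \eqref{eqn:intertwining-relation-extended-quantum} will appear as boundary contributions of a single $1$-dimensional $\ExtendedTorus$-equivariant moduli space $\ModuliSpace$. As the first step I would construct $\WeightedQuantumSeidel_{\ExtendedTorus}(\Cocharacter, \FixedPoint, \DegreeTwoCoclass)$: fix an $\ExtendedTorus$-equivariant Morse cocycle on the Borel homotopy quotient $\UniversalSpace \ExtendedTorus \times_{\ExtendedTorus} \ClutchingBundle{\Cocharacter}$ representing $\DegreeTwoCoclass$ as a sum of degree-$2$ critical points, and count $\ExtendedTorus$-equivariant spiked sections $(\flowUniversalSpace, \flowManifold^-, \ClutchingSection, \flowManifold^+)$ decorated with a free marked point $\eltSphere_1 \in \Sphere$ and a perturbed $\ExtendedTorus$-equivariant half\textsuperscript{$+$} flowline $\flowManifold_{\DegreeTwoCoclass}$ in the Borel quotient running from $[\flowUniversalSpace(0), \ClutchingSection(\eltSphere_1)]$ to the chosen critical point. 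The $2$-dimensional freedom of $\eltSphere_1$ is matched by the codimension-$2$ convergence condition, and summing over the finitely many admissible $\eltSphere_1$ recovers the weight $\DegreeTwoCoclass([\flowUniversalSpace(0), \ClutchingSection([\Sphere])])$; the usual compactness and transversality arguments, using the nonnegative monotonicity of $\Manifold$ and the convexity of admissible $\eqClutchingACS$, show this map is well-defined and independent of the auxiliary data.

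For the master moduli space I would use the cylinder coordinates $\Sphere \setminus \Set{\Pole^\pm} \cong \RealNumbers \times \Circle$ of \eqref{eqn:cylinder-coordinates-for-sphere}, on which $\AdditionalCircle$ acts by translating the $\Circle$-factor. Let $\ModuliSpace$ be the moduli space of the same decorated spiked sections, but with the $\Circle$-coordinate of the marked point $\eltSphere_1$ slaved to the position $\flowUniversalSpace(0)$ by a partially-defined $\ExtendedTorus$-equivariant map $\xi \colon U \to \Circle$. Here $U$ is a proper open dense $\ExtendedTorus$-invariant subset --- concretely the complement of the codimension-$2$ stable manifold of the degree-$2$ critical point of $\morseUniversalSpace$ on the $\AdditionalCircle$-factor (the ``$\formalAdditionalCircle$-locus''), with $\xi$ the argument of the first $\AdditionalCircle$-coordinate. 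A dimension count shows $\ModuliSpace$ is $1$-dimensional.

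The boundary analysis of the compactification $\overline{\ModuliSpace}$ then produces the relation. As $\eltSphere_1 \to \Pole^+$, Gromov compactness yields a spiked section of lower energy together with a fibre sphere bubble over the south pole $\Pole^+$ carrying the $\DegreeTwoCoclass$-marked point; since the restriction of $\DegreeTwoCoclass$ to the fibre over $\Pole^+$ (with $\ExtendedTorus$-action $\Extended{\TorusAction}$) is $\DegreeTwoCoclass^+$, this stratum counts $\QuantumSeidel_{\ExtendedTorus}(\Cocharacter, \FixedPoint)(x \QuantumProduct \DegreeTwoCoclass^+)$. Symmetrically, as $\eltSphere_1 \to \Pole^-$ the bubble forms over the north pole (with action $\Cocharacter \cdot \Extended{\TorusAction}$ and restriction $\DegreeTwoCoclass^-$), contributing $\QuantumSeidel_{\ExtendedTorus}(\Cocharacter, \FixedPoint)(x) \QuantumProduct \DegreeTwoCoclass^-$. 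At the parameter values where $\flowUniversalSpace(0)$ leaves $U$, the map $\xi$ is undefined: the $\Circle$-coordinate of $\eltSphere_1$ decompactifies to a free circle while the configuration on $\UniversalSpace \ExtendedTorus$ is pinned to the codimension-$2$ $\formalAdditionalCircle$-locus, and together these reproduce exactly the decorated spiked sections of $\WeightedQuantumSeidel_{\ExtendedTorus}(\Cocharacter, \FixedPoint, \DegreeTwoCoclass)$ acted on by $\formalAdditionalCircle$ via the geometric module structure. The remaining Morse breakings of $\flowUniversalSpace$, $\flowManifold^\pm$ and $\flowManifold_{\DegreeTwoCoclass}$ and the Floer/section breakings assemble into $\eqMorseFunction_\pm$-differential terms, so the relation holds at the chain level up to homotopy and hence on $\QuantumCohomology^\ArbitraryIndex_{\ExtendedTorus}(\Manifold, \Extended{\TorusAction}; \eqMorseFunction_+)$; the signs are fixed by the standard orientation conventions for the spiked-section and Morse moduli spaces.

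I expect the main obstacle to be the stratum where $\flowUniversalSpace(0) \in \UniversalSpace \ExtendedTorus \setminus U$: one must show it is transversally cut out, that its count equals $\formalAdditionalCircle \WeightedQuantumSeidel_{\ExtendedTorus}(\Cocharacter, \FixedPoint, \DegreeTwoCoclass)(x)$ with the correct sign and no spurious multiplicity, and that no further degeneration is concealed there --- this is the $\ExtendedTorus$-equivariant analogue of the key point of \cite{liebenschutz-jones_intertwining_2020}. A secondary difficulty is ruling out section and sphere bubbling away from the poles, and bubbling that absorbs the marked point, which requires the nonnegative monotonicity hypothesis together with careful index bookkeeping in the refined Novikov ring of \autoref{sec:novikov-ring}, as well as establishing $\ExtendedTorus$-equivariant transversality along the locus where $\xi$ degenerates.
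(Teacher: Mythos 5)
Your proposal follows essentially the same route as the paper's (sketch) proof: a one-dimensional moduli space of decorated spiked sections whose marked point is constrained by a partially-defined $\AdditionalCircle$-equivariant argument map on a dense invariant subset of $\UniversalSpace\ExtendedTorus$, with the two pole limits giving the left-hand side and the locus where the configuration leaves that subset giving $\formalAdditionalCircle\,\WeightedQuantumSeidel_{\ExtendedTorus}(\Cocharacter,\FixedPoint,\DegreeTwoCoclass)$. The only cosmetic difference is that you define the argument map directly on the complement of the stable manifold of the degree-$2$ critical point, whereas the paper realises the same map via an auxiliary half\textsuperscript{$+$} flowline $\flowUniversalSpace^{\min}$ to the minimal locus composed with $\arg$, so that the relevant boundary stratum appears as the breaking of $\flowUniversalSpace^{\min}$ at the critical point representing $\formalAdditionalCircle$.
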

    
    \autoref{thm:intertwining-relation-quantum-extended-torus} is the $\ExtendedTorus$-equivariant version of the $\Circle$-equivariant intertwining relation that we showed in \cite{liebenschutz-jones_intertwining_2020}, and the proof is almost identical.
    The new proofs in \autoref{sec:equivariant-floer-cohomology-all-content} use similar ideas.
    
    \begin{proof}
        [Sketch proof]
        Fix a $\ExtendedTorus$-equivariant identification  $\minUniversalSpace \to \ExtendedTorus$, where $\minUniversalSpace$ is the minimal locus of $\morseUniversalSpace : \UniversalSpace \ExtendedTorus \to \RealNumbers$.
        Composing with the character $\Extended{\Character}_0 : \ExtendedTorus \to \AdditionalCircle$, this gives a $\AdditionalCircle$-equivariant map $\arg : \minUniversalSpace \to \AdditionalCircle$.
        
        Likewise, define the $\AdditionalCircle$-equivariant map $\arg : \Sphere \setminus \Set{\Pole^\pm} \to \AdditionalCircle$ via $(s, \eltCircle) \mapsto \eltCircle$, using the cylindrical coordinates from \eqref{eqn:cylinder-coordinates-for-sphere}.
        
        The homotopy which gives \eqref{eqn:intertwining-relation-extended-quantum} on cohomology is given by counting $\ExtendedTorus$-equivalence classes of octuples $(\flowUniversalSpace, \flowManifold^-, \ClutchingSection, \flowManifold^+, \flowUniversalSpace^{\min}, \flowUniversalSpace_0, \flowManifold_0, \Pole_0)$ where $[\flowUniversalSpace, \flowManifold^-, \ClutchingSection, \flowManifold^+]$ is a $\ExtendedTorus$-equivariant spiked section, $\flowUniversalSpace^{\min}$ is a perturbed half\textsuperscript{$+$} flowline in $\UniversalSpace \ExtendedTorus$, $[\flowUniversalSpace_0, \flowManifold_0]$ is a $\ExtendedTorus$-equivariant half\textsuperscript{$+$} flowline that converges to $\DegreeTwoCoclass$ and $\Pole_0 \in \Sphere \setminus \Set{\Pole^\pm}$ is a point which together satisfy the conditions $\flowUniversalSpace(0) = \flowUniversalSpace^{\min} (0) = \flowUniversalSpace_0 (0)$, $\flowManifold_0 (0) = \ClutchingSection (\Pole_0)$ and $\arg(\Pole_0) + \arg( \flowUniversalSpace^{\min}(+\Infinity)) = 0$.
        See \autoref{fig:equivariant-quantum-seidel-map-homotopy}.
        The last condition leaves a 1-dimensional freedom in the point $\Pole_0$ along the line of longitude between the poles and with fixed argument.
        
        The map $\WeightedQuantumSeidel_{\ExtendedTorus} (\Cocharacter, \FixedPoint, \DegreeTwoCoclass)$ is given by the same count, but with $\flowUniversalSpace^{\min}$ omitted.
        Each section $\ClutchingSection$ is counted as many times as there are intersections of $\ClutchingSection (\Sphere)$ and $\StableManifold(\DegreeTwoCoclass)$.
        The 2-dimensional freedom of $\Pole_0 \in \Sphere \setminus \Set{\Pole^\pm}$ is cancelled by the 2-dimensional intersection condition with $\DegreeTwoCoclass$.
        
        The two terms on the left-hand side of \eqref{eqn:intertwining-relation-extended-quantum} arise as the point $\Pole_0$ converges to one of the poles $\Pole^\pm$.
        The term on the right-hand side of \eqref{eqn:intertwining-relation-extended-quantum} arises as $\flowUniversalSpace^{\min}$ breaks.
        The half\textsuperscript{$+$} flowline $\flowUniversalSpace^{\min}$ breaks into a pair of flowlines, the first of which is a half\textsuperscript{$+$} flowline from $\flowUniversalSpace(0)$ to a critical point $\critUniversalSpace$, where the critical point $\critClassifyingSpace = [\critUniversalSpace]$ corresponds to $\formalAdditionalCircle$ (the critical point $\critClassifyingSpace$ exists and is unique because of our choice of basis satisfying \eqref{eqn:basis-rules-for-extended-torus}).
        The second flowline is from $\critUniversalSpace$ to a point $\critUniversalSpace^{\min} \in \min (\morseUniversalSpace)$ which satisfies $\arg (\critUniversalSpace^{\min}) = \arg (\eltSphere_0)$.
        It turns out that the point $\critUniversalSpace^{\min}$ and the entire second flowline are uniquely determined by $\critUniversalSpace$ and $\eltSphere_0$, so we discard this second flowline from the moduli space without losing any information.
        Using a homotopy, we decouple the flowline to $\critClassifyingSpace$ from the rest of the configuration, and the resulting moduli spaces yield the maps $\formalAdditionalCircle$ and $\WeightedQuantumSeidel_{\ExtendedTorus} (\Cocharacter, \FixedPoint, \DegreeTwoCoclass)$ respectively
        (more details in \cite{liebenschutz-jones_intertwining_2020}).

    \end{proof}

\begin{figure}
\centering
\begin{subfigure}{0.4\textwidth}
    \centering
	\begin{center}
		\begin{tikzpicture}
			\node[inner sep=0] at (2.5,0) {\includegraphics[width=4 cm]{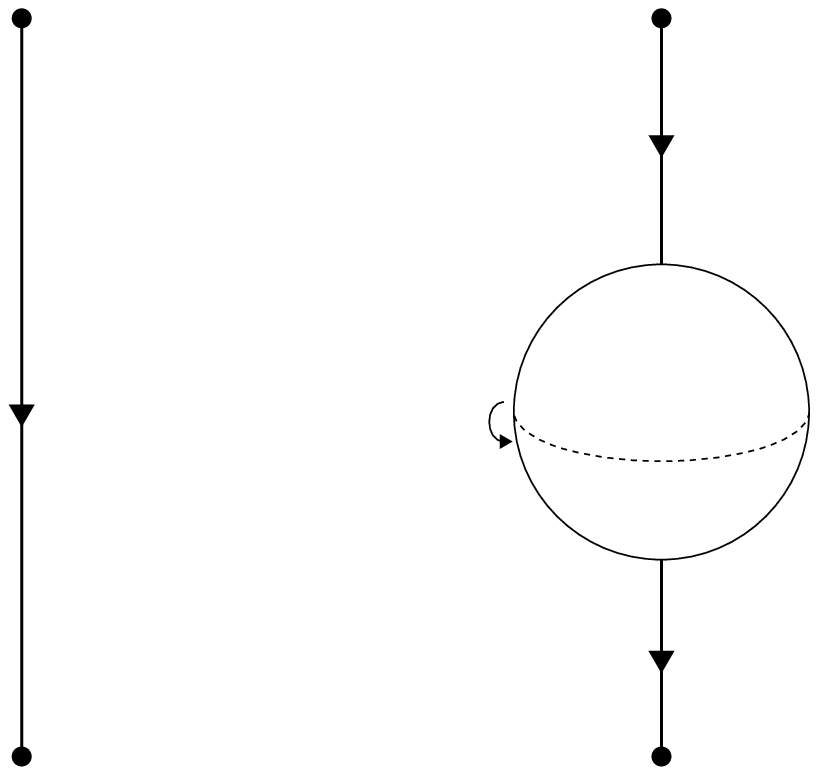}};
			
			\node at (0.4, 0.8){\tiny $\flowUniversalSpace$};
			\node at (3.1, 0.5){\tiny $\ClutchingSection$};
			\node at (0.7, 2.1){\tiny $\critUniversalSpace^-$};
			\node at (0.7, -2.0){\tiny $\critUniversalSpace^+$};
			\node at (3.9, 2.1){\tiny $\critManifold^-$};
			\node at (3.9, -2.0){\tiny $\critManifold^+$};
			\node at (2.8, -0.1){\tiny $\Cocharacter$};
			\node at (4.0, 0){\tiny $\DegreeTwoClass^\FixedPoint$};
			\node at (4.0, 1.3){\tiny $\flowManifold^-$};
			\node at (4.0, -1.3){\tiny $\flowManifold^+$};
		\end{tikzpicture}
	\end{center}
    \caption{$\ExtendedTorus$-equivariant quantum Seidel map} 
\label{fig:equivariant-quantum-seidel-map}
\end{subfigure}
\hspace{2em} 
\begin{subfigure}{0.4\textwidth}
  \centering
	\begin{center}
		\begin{tikzpicture}
			\node[inner sep=0] at (2.5,0) {\includegraphics[width=5.0 cm]{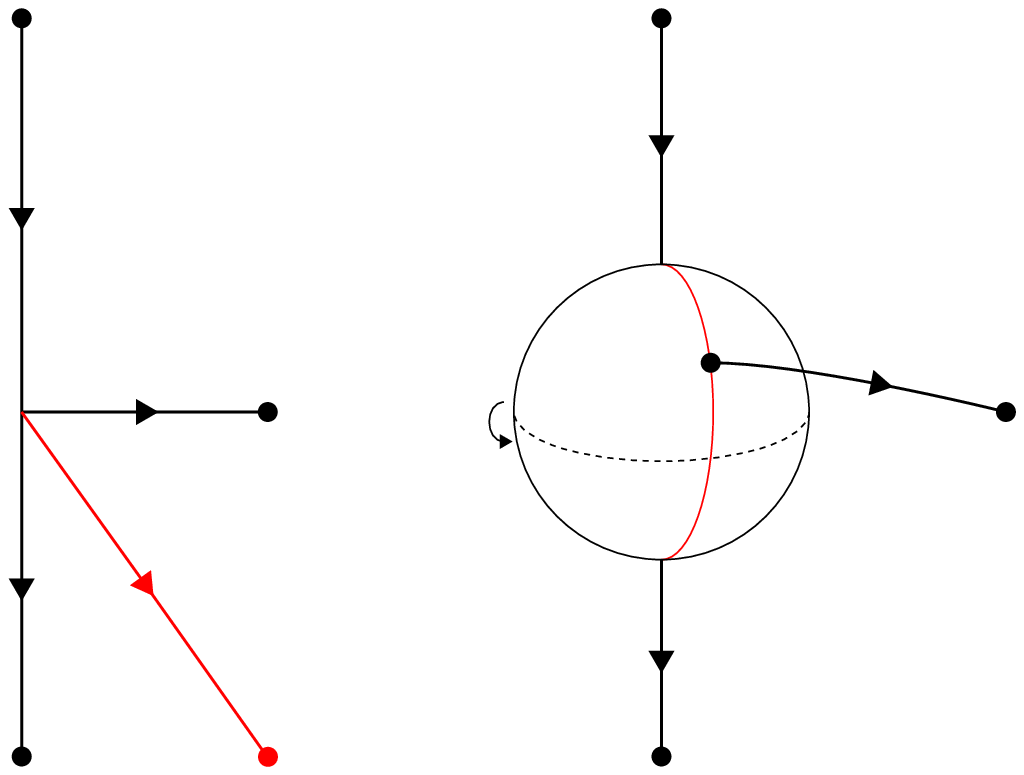}};
			
			\node at (-0.1, 0.8){\tiny $\flowUniversalSpace$};
			\node at (2.6, 0.5){\tiny $\ClutchingSection$};
			\node at (0.1, 2.0){\tiny $\critUniversalSpace^-$};
			\node at (0.1, -2.0){\tiny $\critUniversalSpace^+$};
			\node at (3.3, 2.0){\tiny $\critManifold^-$};
			\node at (3.3, -2.0){\tiny $\critManifold^+$};
			\node at (2.3, -0.1){\tiny $\Cocharacter$};
			\node at (3.5, 1.3){\tiny $\flowManifold^-$};
			\node at (3.5, -1.3){\tiny $\flowManifold^+$};
			\node at (0.7, 0){\tiny $\flowUniversalSpace_0$};
			\node at (4.3, 0.2){\tiny $\flowManifold_0$};
			\node at (1, -0.9){\tiny $\flowUniversalSpace^{\min}$};
			\node at (3.3, 0.1){\tiny $\eltSphere_0$};
			\node at (5.1, -0.1){\tiny $\DegreeTwoCoclass$};
		\end{tikzpicture}
	\end{center}
    \caption{Homotopy from proof of intertwining relation}
    \label{fig:equivariant-quantum-seidel-map-homotopy}
\end{subfigure}
\caption{
    An additional flowline $\flowUniversalSpace^{\min}$ in $\UniversalSpace \ExtendedTorus$ is used to describe a \textcolor{red}{path} between the poles.
    It is from this path that the half\textsuperscript{+} flowline $\flowManifold_0$ flows.
} \label{fig:equivariant-quantum-seidel-map-with-homotopy}
\end{figure}
    
    The fact there is no global $\AdditionalCircle$-equivariant function $\UniversalSpace \ExtendedTorus \to \AdditionalCircle$ is why the right-hand side of \eqref{eqn:intertwining-relation-extended-quantum} is nonzero.
    The best we can do is define such a function on $\StableManifold (\minUniversalSpace)$, which is a dense subset.
    In the proof, the function $\StableManifold (\minUniversalSpace) \to \AdditionalCircle$ arises as the composition of flowing along the flowline $\flowUniversalSpace^{\min}$ with the map $\arg : \minUniversalSpace \to \AdditionalCircle$.
    
\subsection{Pullback group isomorphisms}
\label{sec:pullback-group-isomorphisms-on-cohomology}

    The group isomorphism $\Extended{\Cocharacter} : \ExtendedTorus \to \ExtendedTorus$ associated to a cocharacter $\Cocharacter : \Circle \to \Torus$ is given by $(\eltAdditionalCircle, \eltTorus) \mapsto (\eltAdditionalCircle, \eltTorus + \Cocharacter (\eltAdditionalCircle))$.
    The induced map $(\ClassifyingSpace \Extended{\Cocharacter}) \PullBack : \Cohomology^\ArbitraryIndex (\ClassifyingSpace \ExtendedTorus) \to \Cohomology^\ArbitraryIndex (\ClassifyingSpace \ExtendedTorus)$ is given by $[\Extended{\Character}] \mapsto [\Extended{\Character} \ComposedWith \Extended{\Cocharacter}]$ for characters $\Extended{\Character} : \ExtendedTorus \to \Circle$.
    Note that we have $\Extended{\Character}_0 =  \Extended{\Character}_0 \ComposedWith \Extended{\Cocharacter}$, giving $(\ClassifyingSpace \Extended{\Cocharacter}) \PullBack (\formalAdditionalCircle) = \formalAdditionalCircle$.
    
    In this paper, we are using an explicit classifying bundle $(\InfiniteSphere)^{\dimTorus + 1} \to (\InfiniteComplexProjectiveSpace)^{\dimTorus + 1}$ for $\UniversalSpace \ExtendedTorus \to \ClassifyingSpace \ExtendedTorus$ which is associated to a basis $\Extended{\basisCocharacter}$.
    Denote by $\CoordwiseProductAction$ the coordinate-wise action of $(\Circle)^{\dimTorus + 1}$ on $(\InfiniteSphere)^{\dimTorus + 1}$.
    The action of $\ExtendedTorus$ on $(\InfiniteSphere)^{\dimTorus + 1}$ is $\CoordwiseProductAction \ComposedWith \Extended{\basisCocharacter}\Inverse$, where $\Extended{\basisCocharacter} : (\Circle)^{\dimTorus + 1} \to \ExtendedTorus$ is the isomorphism associated to the basis $\Extended{\basisCocharacter}$.
    The identity map $\Identity : (\InfiniteSphere)^{\dimTorus + 1} \to (\InfiniteSphere)^{\dimTorus + 1}$ satisfies
        \begin{equation}
        \label{eqn:functorial-relation-satisfied-for-identity-map-when-changing-basis}
            \Identity \big( \CoordwiseProductAction ( \ \Extended{\basisCocharacter}\Inverse (\eltExtendedTorus), \ \Extended{\eltUniversalSpace} \ ) \big) = \CoordwiseProductAction \big( \  (\Extended{\Cocharacter} \ComposedWith \Extended{\basisCocharacter}) \Inverse ( \Extended{\Cocharacter} (\eltExtendedTorus)), \ \Identity (\Extended{\eltUniversalSpace}) \ \big)
        \end{equation}
    for all $\Extended{\eltUniversalSpace} \in (\InfiniteSphere)^{\dimTorus + 1}$ and all $\eltExtendedTorus \in \ExtendedTorus$.
    This relation \eqref{eqn:functorial-relation-satisfied-for-identity-map-when-changing-basis} is the relation \eqref{eqn:equivariant-functoriality-map-relation} applied to our setting.
    Therefore the identity map $\Identity : (\InfiniteSphere)^{\dimTorus + 1} \to (\InfiniteSphere)^{\dimTorus + 1}$ is $\UniversalSpace \Extended{\Cocharacter}$, where the domain has associated basis $\Extended{\basisCocharacter}$ and the codomain has the associated basis $\Extended{\Cocharacter} \ComposedWith \Extended{\basisCocharacter}$.
    
    This explicit construction of $\UniversalSpace \Extended{\Cocharacter}$ as the identity map using different bases extends to the Borel homotopy quotients.
    The identity map on the Borel homotopy quotients induces the isomorphism
        \begin{equation}
        \label{eqn:pull-back-group-isom-on-equivariant-cohomology}
            (\ClassifyingSpace \Extended{\Cocharacter}) \PullBack : \Cohomology^\ArbitraryIndex _{\ExtendedTorus, \Extended{\basisCocharacter}} (\Manifold, \Extended{\TorusAction} \ComposedWith \Extended{\Cocharacter} \Inverse; \eqMorseFunction) \to  \Cohomology^\ArbitraryIndex _{\ExtendedTorus, \Extended{\Cocharacter} \ComposedWith \Extended{\basisCocharacter}} (\Manifold, \Extended{\TorusAction}; \eqMorseFunction).
        \end{equation}
    Denote the basis $\Extended{\Cocharacter} \ComposedWith \Extended{\basisCocharacter}$ by $\Cocharacter \cdot  \Extended{\basisCocharacter}$.
    
    The map \eqref{eqn:pull-back-group-isom-on-equivariant-cohomology} is compatible with the $\ExtendedTorus$-equivariant quantum product $\QuantumProduct$ because it is essentially the identity map.
    Thus $(\ClassifyingSpace \Extended{\Cocharacter}) \PullBack (x \QuantumProduct y) = (\ClassifyingSpace \Extended{\Cocharacter}) \PullBack (x) \QuantumProduct (\ClassifyingSpace \Extended{\Cocharacter}) \PullBack (y)$ holds.
        
\subsection{Shift operator}

    The \define{shift operator} on quantum cohomology is the composition
        \begin{equation}
            \ShiftOperator_\Cocharacter^\FixedPoint = (\ClassifyingSpace \Extended{\Cocharacter}) \PullBack \ComposedWith \QuantumSeidel _{\ExtendedTorus} (\Cocharacter, \FixedPoint) : \QuantumCohomology^\ArbitraryIndex _{\ExtendedTorus; \Extended{\basisCocharacter}} (\Manifold, \Extended{\TorusAction}) \to \QuantumCohomology^{\ArbitraryIndex + |\Cocharacter, \FixedPoint|} _{\ExtendedTorus; \Cocharacter \cdot \Extended{\basisCocharacter}} (\Manifold, \Extended{\TorusAction}).
        \end{equation}
    The shift operator is defined on the commutative monoid $\NonnegativeLatticeCocharacters{\TorusAction}{\Torus}$ of $\TorusAction$-nonnegative cocharacters $\Cocharacter : \Circle \to \Torus$.
    
    \begin{theorem}
    \label{thm:shift-operator-flat-on-quantum-individually}
        The shift operator $\ShiftOperator^\FixedPoint$ is flat (and hence well-defined on $\NonnegativeLatticeCocharacters{\TorusAction}{\Torus}$).
    \end{theorem}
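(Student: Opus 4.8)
The plan is to verify directly that the operators $\ShiftOperator^\FixedPoint_\Cocharacter$, for $\Cocharacter \in \NonnegativeLatticeCocharacters{\TorusAction}{\Torus}$, assemble into a flat shift operator on $\QuantumNotation = \QuantumCohomology^\ArbitraryIndex _{\ExtendedTorus} (\Manifold, \Extended{\TorusAction})$ for the commutative monoid $\NonnegativeLatticeCocharacters{\TorusAction}{\Torus}$, in the sense of \autoref{sec:difference-connections-definition}. The underlying shift operator on $A = \NovikovRing \GradedCompletedTensorProduct \Cohomology^\ArbitraryIndex (\ClassifyingSpace \ExtendedTorus)$ is $\Cocharacter \mapsto (\ClassifyingSpace \Extended{\Cocharacter}) \PullBack$ (acting trivially on the $\NovikovRing$ factor); this is a monoid map because $\ClassifyingSpace (\Argument) \PullBack$ is contravariantly functorial and $\Extended{\secondCocharacter} \ComposedWith \Extended{\Cocharacter} = \Extended{\Cocharacter} \ComposedWith \Extended{\secondCocharacter} = \Extended{\Cocharacter + \secondCocharacter}$ as automorphisms of $\ExtendedTorus$. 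It then remains to prove (i) the deformed Leibniz rule $\ShiftOperator^\FixedPoint_\Cocharacter (a x) = \big( (\ClassifyingSpace \Extended{\Cocharacter}) \PullBack a \big) \, \ShiftOperator^\FixedPoint_\Cocharacter (x)$ and (ii) the multiplicativity $\ShiftOperator^\FixedPoint_{\Cocharacter + \secondCocharacter} = \ShiftOperator^\FixedPoint_\Cocharacter \ComposedWith \ShiftOperator^\FixedPoint_{\secondCocharacter}$; once (ii) holds, the collection is automatically compatible with the monoid operation, which is what the parenthetical ``well-defined on $\NonnegativeLatticeCocharacters{\TorusAction}{\Torus}$'' refers to. Claim (i) is quick: $\QuantumSeidel_{\ExtendedTorus} (\Cocharacter, \FixedPoint)$ is $\NovikovRing$-linear and commutes with the geometric $\Cohomology^\ArbitraryIndex (\ClassifyingSpace \ExtendedTorus)$-module structure by the homotopy argument recorded in the construction of \eqref{eqn:equivariant-quantum-seidel-map-definition}, while $(\ClassifyingSpace \Extended{\Cocharacter}) \PullBack$ is $\NovikovRing$-linear and satisfies the twisted relation \eqref{eqn-intro:twisted-equation-for-equivariant-cohomology-pullback-map} on the $\Cohomology^\ArbitraryIndex (\ClassifyingSpace \ExtendedTorus)$ factor; composing the two gives (i).

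Claim (ii) is the heart of the matter, and I would obtain it from an $\ExtendedTorus$-equivariant composition law for the quantum Seidel map itself: the map $\QuantumSeidel_{\ExtendedTorus} (\Cocharacter, \FixedPoint)$ built over the $\ExtendedTorus$-action $\secondCocharacter \cdot \Extended{\TorusAction}$, post-composed with $\QuantumSeidel_{\ExtendedTorus} (\secondCocharacter, \FixedPoint)$, equals $\QuantumSeidel_{\ExtendedTorus} (\Cocharacter + \secondCocharacter, \FixedPoint)$ --- the use of $\secondCocharacter \cdot \Extended{\TorusAction}$ being forced, since $\QuantumSeidel_{\ExtendedTorus} (\secondCocharacter, \FixedPoint)$ lands in $\QuantumCohomology^\ArbitraryIndex _{\ExtendedTorus} (\Manifold, \secondCocharacter \cdot \Extended{\TorusAction})$. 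This is the $\ExtendedTorus$-equivariant analogue of \eqref{eqn:non-equivariant-quantum-seidel-map-composition-of-actions}, and I would prove it by the same concatenation argument used by Seidel in the non-equivariant case and in its $\Circle$-equivariant form in \cite{liebenschutz-jones_intertwining_2020}: degenerate the clutching bundle $\ClutchingBundle{\Cocharacter + \secondCocharacter}$ to the fibre-connected sum $\ClutchingBundle{\secondCocharacter} \Glue \ClutchingBundle{\Cocharacter}$ over a nodal base, so that a pseudoholomorphic section of the broken configuration consists of a section of $\ClutchingBundle{\secondCocharacter}$ and a section of $\ClutchingBundle{\Cocharacter}$ matched along the common intermediate fibre, and read off the identity from the boundary of the resulting $1$-dimensional moduli space of $\ExtendedTorus$-equivariant spiked sections. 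Three bookkeeping points need checking en route: the $\ExtendedTorus$-actions glue along the intermediate fibre, because $\secondCocharacter \cdot (\Cocharacter \cdot \Extended{\TorusAction}) = (\Cocharacter + \secondCocharacter) \cdot \Extended{\TorusAction}$; the index shifts add, $|\Cocharacter, \FixedPoint| + |\secondCocharacter, \FixedPoint| = |\Cocharacter + \secondCocharacter, \FixedPoint|$, since $|\Cocharacter, \FixedPoint|$ is twice the sum of the weights of $\TorusAction \ComposedWith \Cocharacter$ at $\FixedPoint$ \cite[Lemma~2.2]{mcduff_topological_2006} and these weights are additive in the cocharacter; and the fixed sections glue, since $\ClutchingSection^\FixedPoint$ is assembled from the single fixed point $\FixedPoint$, so the bijection $\DegreeTwoClass \mapsto \DegreeTwoClass^\FixedPoint$ of \eqref{eqn:short-exact-sequence-on-homology-of-clutching-bundle} is compatible with concatenation and the Novikov exponents add correctly.

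Finally, since $(\ClassifyingSpace \Extended{\secondCocharacter}) \PullBack$ is induced by (essentially) the identity map of Borel homotopy quotients --- this is the point of \autoref{sec:pullback-group-isomorphisms-on-cohomology} and \eqref{eqn:pull-back-group-isom-on-equivariant-cohomology} --- it commutes with the geometrically defined quantum Seidel maps, which lets me move $(\ClassifyingSpace \Extended{\secondCocharacter}) \PullBack$ leftward past $\QuantumSeidel_{\ExtendedTorus} (\Cocharacter, \FixedPoint)$ inside the composite $\ShiftOperator^\FixedPoint_\Cocharacter \ComposedWith \ShiftOperator^\FixedPoint_{\secondCocharacter} = (\ClassifyingSpace \Extended{\Cocharacter}) \PullBack \ComposedWith \QuantumSeidel_{\ExtendedTorus} (\Cocharacter, \FixedPoint) \ComposedWith (\ClassifyingSpace \Extended{\secondCocharacter}) \PullBack \ComposedWith \QuantumSeidel_{\ExtendedTorus} (\secondCocharacter, \FixedPoint)$, turning the $\Cocharacter$-Seidel map into the one built over $\secondCocharacter \cdot \Extended{\TorusAction}$; combining with the composition law and the contravariant functoriality $(\ClassifyingSpace \Extended{\Cocharacter}) \PullBack \ComposedWith (\ClassifyingSpace \Extended{\secondCocharacter}) \PullBack = (\ClassifyingSpace \Extended{\Cocharacter + \secondCocharacter}) \PullBack$ then yields $\ShiftOperator^\FixedPoint_\Cocharacter \ComposedWith \ShiftOperator^\FixedPoint_{\secondCocharacter} = \ShiftOperator^\FixedPoint_{\Cocharacter + \secondCocharacter}$, which is (ii). The main obstacle is the equivariant gluing and compactness analysis for the broken spiked sections --- handling the holomorphic sections over the nodal base and the Morse configuration over $\UniversalSpace \ExtendedTorus$ simultaneously, and the usual transversality for the glued-up $1$-dimensional moduli space --- but this is essentially routine once the $\Circle$-equivariant version of the same gluing from \cite{liebenschutz-jones_intertwining_2020} is in hand, and the parallel bookkeeping for the bases, $\secondCocharacter \cdot (\Cocharacter \cdot \Extended{\basisCocharacter}) = (\Cocharacter + \secondCocharacter) \cdot \Extended{\basisCocharacter}$, follows formally from the same identity $\Extended{\secondCocharacter} \ComposedWith \Extended{\Cocharacter} = \Extended{\Cocharacter + \secondCocharacter}$.
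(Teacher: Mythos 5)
Your argument is correct in outline, but it is not the route the paper takes. The paper's proof of \autoref{thm:shift-operator-flat-on-quantum-individually} is a two-step reduction: it observes that the identity $\ShiftOperator^\FixedPoint_\Cocharacter \ShiftOperator^\FixedPoint_{\secondCocharacter} = \ShiftOperator^\FixedPoint_{\Cocharacter + \secondCocharacter}$ is essentially tautological for the shift operators on $\ExtendedTorus$-equivariant \emph{Floer} cohomology --- there the Floer Seidel map is a chain-level pullback bijection of capped orbits, so $\FloerSeidel_{\ExtendedTorus}(\Cocharacter, \FixedPoint) \ComposedWith \FloerSeidel_{\ExtendedTorus}(\secondCocharacter, \FixedPoint) = \FloerSeidel_{\ExtendedTorus}(\Cocharacter + \secondCocharacter, \FixedPoint)$ with no moduli spaces involved, and the continuation maps and $(\ClassifyingSpace \Extended{\Cocharacter})\PullBack$ compose formally --- and then transports the statement to quantum cohomology through the $\ExtendedTorus$-equivariant PSS isomorphisms using \autoref{prop:equivariant-gluing-for-seidel-map}. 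Your proposal instead carries out the direct gluing of holomorphic sections, degenerating $\ClutchingBundle{\Cocharacter + \secondCocharacter}$ to the fibre-connected sum $\ClutchingBundle{\secondCocharacter} \Glue \ClutchingBundle{\Cocharacter}$; the paper explicitly acknowledges this route ("exactly as in the non-equivariant case") but declines it as harder. The trade-off is clear: your approach needs an equivariant version of the Seidel composition gluing theorem (the analogue of \cite[Theorem~11.4.3]{mcduff_j-holomorphic_2004}) with its attendant compactness and transversality analysis for the nodal degeneration, whereas the paper's route concentrates all the gluing analysis into the single, already-required statement that PSS, Floer Seidel and continuation maps assemble into the quantum Seidel map. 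Your bookkeeping (additivity of $|\Cocharacter, \FixedPoint|$, compatibility of the actions $\secondCocharacter \cdot (\Cocharacter \cdot \Extended{\TorusAction}) = (\Cocharacter + \secondCocharacter) \cdot \Extended{\TorusAction}$, gluing of the fixed sections and Novikov exponents, and the commutation of $(\ClassifyingSpace \Extended{\secondCocharacter})\PullBack$ past the geometrically defined Seidel map) is all correct, and your reading of the parenthetical "well-defined on $\NonnegativeLatticeCocharacters{\TorusAction}{\Torus}$" as monoid compatibility matches the paper's definition of a flat shift operator on a commutative monoid.
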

    
    \begin{proof}
        We must show $\ShiftOperator^\FixedPoint _\Cocharacter \ShiftOperator^\FixedPoint _{\Cocharacter \Prime} = \ShiftOperator^\FixedPoint _{\Cocharacter + \Cocharacter \Prime}$ for any two $\Cocharacter, \Cocharacter \Prime \in \NonnegativeLatticeCocharacters{\TorusAction}{\Torus}$.
        This statement can be proved directly by gluing sections exactly as in the non-equivariant case \cite[Theorem~11.4.3]{mcduff_j-holomorphic_2004}.
        On the other hand, it is much easier to show $\ShiftOperator^\FixedPoint _\Cocharacter \ShiftOperator^\FixedPoint _{\Cocharacter \Prime} = \ShiftOperator^\FixedPoint _{\Cocharacter + \Cocharacter \Prime}$ directly for the shift operators $\ShiftOperator^\FixedPoint$ on $\ExtendedTorus$-equivariant Floer cohomology.
        We can deduce the quantum statement from the Floer statement,  \autoref{thm:flatness-of-difference-differential-connection-on-floer-cohomology}, using $\ExtendedTorus$-equivariant PSS isomorphisms (see \autoref{prop:equivariant-gluing-for-seidel-map}).
        This second approach is analogous to that taken in \cite{seidel_$_1997}.
    \end{proof}
    
    \begin{theorem}
    \label{thm:overall-flatness-of-difference-differential-connection-in-equivariant-quantum-cohomology}
        The difference-differential connection $(\ShiftOperator^\FixedPoint, \Connection^\FixedPoint)$ is flat.
    \end{theorem}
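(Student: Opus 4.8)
The plan is to recognise $(\ShiftOperator^\FixedPoint, \Connection^\FixedPoint)$ as a flat difference-differential connection in the sense of \autoref{sec:difference-differential-connections-definition}, with $S = \NonnegativeLatticeCocharacters{\TorusAction}{\Torus}$, with shift operator $E_\Cocharacter = \Identity_\NovikovRing \GradedCompletedTensorProduct (\ClassifyingSpace \Extended{\Cocharacter}) \PullBack$ on $A = \NovikovRing \GradedCompletedTensorProduct \Cohomology^\ArbitraryIndex (\ClassifyingSpace \ExtendedTorus)$, and with $\SpaceOfDerivations = \NovikovRing[\formalAdditionalCircle] \Tensor \Cohomology^2 (\Manifold)$. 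First I would check the hypotheses of \autoref{lem:flatness-sufficient-on-generating-set-difference-differential-connection}: each $E_\Cocharacter$ is an isomorphism, hence a monomorphism; $\Cohomology^2 (\Manifold)$ generates $A \cdot \SpaceOfDerivations$ over $A$; and $\Commutator{\dbydmult{\DegreeTwoCoclass}}{X^E_\Cocharacter} = 0$ because $(\ClassifyingSpace \Extended{\Cocharacter}) \PullBack$ is a ring homomorphism fixing $\formalAdditionalCircle$ (\autoref{sec:pullback-group-isomorphisms-on-cohomology}). That lemma then reduces the theorem to checking that the curvature vanishes on pairs taken from $S$ and from $\Cohomology^2 (\Manifold)$. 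The $\SpaceOfDerivations^2$-part is exactly \autoref{thm:differential-connection-flatness-quantum}, and the $S^2$-part is immediate from \autoref{thm:shift-operator-flat-on-quantum-individually}, which gives $\ShiftOperator^\FixedPoint_\Cocharacter \ShiftOperator^\FixedPoint_{\secondCocharacter} = \ShiftOperator^\FixedPoint_{\Cocharacter + \secondCocharacter} = \ShiftOperator^\FixedPoint_{\secondCocharacter} \ShiftOperator^\FixedPoint_\Cocharacter$. So everything comes down to the mixed case: for $\Cocharacter \in S$ and $\DegreeTwoCoclass \in \Cohomology^2 (\Manifold)$, show that $\ShiftOperator^\FixedPoint_\Cocharacter$ and $\Connection^\FixedPoint_\DegreeTwoCoclass$ commute (the $\Identity_P$ in $X^{\ShiftOperator^\FixedPoint}_\Cocharacter = \ShiftOperator^\FixedPoint_\Cocharacter - \Identity_P$ drops out of the mixed curvature). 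This is the statement that the introduction describes as equivalent to the intertwining relation, and the argument I would give is precisely that equivalence.

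Next I would expand $\Connection^\FixedPoint_\DegreeTwoCoclass = \dbydmult{\DegreeTwoCoclass} + \DegreeTwoCoclass^\FixedPoint \QuantumProduct$ and $\ShiftOperator^\FixedPoint_\Cocharacter = (\ClassifyingSpace \Extended{\Cocharacter}) \PullBack \ComposedWith \QuantumSeidel_{\ExtendedTorus} (\Cocharacter, \FixedPoint)$, writing $\Phi = \QuantumSeidel_{\ExtendedTorus} (\Cocharacter, \FixedPoint)$ and $g \PullBack = (\ClassifyingSpace \Extended{\Cocharacter}) \PullBack$. Both $\Phi$ and $g \PullBack$ are $\NovikovRing[\formalAdditionalCircle]$-linear ($\Phi$ is $\NovikovRing$-linear and commutes with the geometric $\Cohomology^\ArbitraryIndex (\ClassifyingSpace \ExtendedTorus)$-module structure, while $g \PullBack$ fixes $\NovVariable^\DegreeTwoClass$ and $\formalAdditionalCircle$), and $g \PullBack$ intertwines $\QuantumProduct$. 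Then $\ShiftOperator^\FixedPoint_\Cocharacter \Connection^\FixedPoint_\DegreeTwoCoclass - \Connection^\FixedPoint_\DegreeTwoCoclass \ShiftOperator^\FixedPoint_\Cocharacter$ splits as the sum of (i) the piece $g \PullBack \Phi \dbydmult{\DegreeTwoCoclass} - \dbydmult{\DegreeTwoCoclass} g \PullBack \Phi$ and (ii) the piece $g \PullBack \Phi(\DegreeTwoCoclass^\FixedPoint \QuantumProduct \Argument) - \DegreeTwoCoclass^\FixedPoint \QuantumProduct g \PullBack \Phi$. Choosing an equivariant coclass $\WithFilling{\DegreeTwoCoclass} \in \Cohomology^2_{\ExtendedTorus} (\ClutchingBundle{\Cocharacter})$ as below, with polar restrictions $\DegreeTwoCoclass^+ = \DegreeTwoCoclass^\FixedPoint$ and $\DegreeTwoCoclass^-$ satisfying $g \PullBack (\DegreeTwoCoclass^-) = \DegreeTwoCoclass^\FixedPoint$, and setting $W = \WeightedQuantumSeidel_{\ExtendedTorus} (\Cocharacter, \FixedPoint, \WithFilling{\DegreeTwoCoclass})$, the intertwining relation \autoref{thm:intertwining-relation-quantum-extended-torus}, together with $g \PullBack \formalAdditionalCircle = \formalAdditionalCircle$ and the graded-commutativity of $\QuantumProduct$ (which kills the residual term $g \PullBack \Phi(\Argument) \QuantumProduct \DegreeTwoCoclass^\FixedPoint - \DegreeTwoCoclass^\FixedPoint \QuantumProduct g \PullBack \Phi$, since $\DegreeTwoCoclass^\FixedPoint$ has even degree), turns (ii) into $\formalAdditionalCircle\, g \PullBack W$. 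For (i) I would instead compute directly on a monomial $\NovVariable^\DegreeTwoClass m$: a spiked section contributing $\NovVariable^{\DegreeTwoClass \Prime}$ to $\Phi(m)$ is weighted in $W(m)$ by the pairing of $\WithFilling{\DegreeTwoCoclass}$ with its section class, which I arrange to equal $\DegreeTwoCoclass(\DegreeTwoClass \Prime)$, so the telescoping $\DegreeTwoCoclass(\DegreeTwoClass + \DegreeTwoClass \Prime) - \DegreeTwoCoclass(\DegreeTwoClass) = \DegreeTwoCoclass(\DegreeTwoClass \Prime)$ gives $\dbydmult{\DegreeTwoCoclass} g \PullBack \Phi - g \PullBack \Phi \dbydmult{\DegreeTwoCoclass} = \formalAdditionalCircle\, g \PullBack W$, i.e. (i) contributes $- \formalAdditionalCircle\, g \PullBack W$. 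Adding (i) and (ii), the two copies of $\formalAdditionalCircle\, g \PullBack W$ cancel and the commutator vanishes.

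The only genuinely geometric ingredient is the construction of $\WithFilling{\DegreeTwoCoclass}$ and the verification of its three required properties (restriction $\DegreeTwoCoclass^\FixedPoint$ over the south pole; restriction $\DegreeTwoCoclass^-$ over the north pole with $g \PullBack \DegreeTwoCoclass^- = \DegreeTwoCoclass^\FixedPoint$; and pairing with a section class $\DegreeTwoClass^\FixedPoint$ equal to $\DegreeTwoCoclass(\DegreeTwoClass)$). I would obtain $\WithFilling{\DegreeTwoCoclass}$ as the lift of $\DegreeTwoCoclass$ along the $\ExtendedTorus$-equivariant fixed section $\ClutchingSection^\FixedPoint$, mirroring the fixed-point lift of \autoref{sec:lifting-ordinary-to-equivariant-degree-two-coclasses}: the total space $\ClutchingBundle{\Cocharacter}$ fibres over $\Sphere$ with fibre $\Manifold$ and so is simply connected, the analogue of \autoref{prop:sequence-relating-cohomology-and-equivariant-cohomology-is-short-exact-sequence} holds, $\ClutchingSection^\FixedPoint$ splits the degree-$2$ sequence, and $\WithFilling{\DegreeTwoCoclass}$ is the unique class restricting to $\DegreeTwoCoclass$ on fibres with $(\ClutchingSection^\FixedPoint) \PullBack \WithFilling{\DegreeTwoCoclass} = 0$. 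Its polar restrictions are then the fixed-point lifts of $\DegreeTwoCoclass$ for the actions $\Extended{\TorusAction}$ and $\Cocharacter \cdot \Extended{\TorusAction}$; the identity $\FixedPoint \PullBack \ComposedWith g \PullBack = g \PullBack \ComposedWith \FixedPoint \PullBack$ combined with the triviality of $g \PullBack$ non-equivariantly gives $g \PullBack \DegreeTwoCoclass^- = \DegreeTwoCoclass^\FixedPoint$; and the weight identity follows by writing $\DegreeTwoClass^\FixedPoint = \Pole^- \PushForward \DegreeTwoClass + \ClutchingSection^\FixedPoint \PushForward [\Sphere]$ as in \autoref{sec:sections-of-clutching-bundle} and pairing against the two defining properties of $\WithFilling{\DegreeTwoCoclass}$.

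I expect the main obstacle to be matching up these normalisations: the whole argument hinges on $\formalAdditionalCircle\, g \PullBack W$ occurring in (i) and (ii) with opposite signs, so the lift $\WithFilling{\DegreeTwoCoclass}$ and the sign conventions for the Chern-number dimension counts must be calibrated so that the per-section weight is genuinely $\DegreeTwoCoclass(\DegreeTwoClass \Prime)$. Granting that, the remainder is a citation of \autoref{thm:differential-connection-flatness-quantum}, \autoref{thm:shift-operator-flat-on-quantum-individually}, \autoref{thm:intertwining-relation-quantum-extended-torus} and \autoref{lem:flatness-sufficient-on-generating-set-difference-differential-connection}, together with formal bookkeeping of $\NovikovRing[\formalAdditionalCircle]$-linear maps.
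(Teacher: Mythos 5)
Your proposal is correct and follows essentially the same route as the paper: the two non-mixed cases are exactly \autoref{thm:differential-connection-flatness-quantum} and \autoref{thm:shift-operator-flat-on-quantum-individually}, and the mixed commutator $[\Connection^\FixedPoint_\DegreeTwoCoclass, \ShiftOperator^\FixedPoint_\Cocharacter]$ is killed by applying \autoref{thm:intertwining-relation-quantum-extended-torus} to a class $\beta \in \Cohomology^2_{\ExtendedTorus}(\ClutchingBundle{\Cocharacter})$ with the same three properties you list (the paper supplies $\beta$ via \autoref{lem:class-existence-for-quantum-flatness-from-intertwining}, whose Mayer--Vietoris construction is equivalent to your characterisation by fibre restriction and $(\ClutchingSection^\FixedPoint)\PullBack \beta = 0$), with the two copies of $\formalAdditionalCircle\, (\ClassifyingSpace \Extended{\Cocharacter})\PullBack \WeightedQuantumSeidel_{\ExtendedTorus}(\Cocharacter, \FixedPoint, \beta)$ cancelling exactly as you describe.
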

    
    \begin{proof}
        The differential connection $\Connection^\FixedPoint$ is flat by \autoref{thm:differential-connection-flatness-quantum}.
        The shift operator $\ShiftOperator^\FixedPoint$ is flat by \autoref{thm:shift-operator-flat-on-quantum-individually}.
        
        It remains to show $[\Connection^\FixedPoint _\DegreeTwoCoclass, \ShiftOperator^\FixedPoint _\Cocharacter] = 0$ for any $\DegreeTwoCoclass \in \Cohomology^2 (\Manifold)$ and any $\Cocharacter \in \NonnegativeLatticeCocharacters{\TorusAction}{\Torus}$.
        By \autoref{lem:class-existence-for-quantum-flatness-from-intertwining}, there is a class $\beta \in \Cohomology^2 _{\ExtendedTorus} (\ClutchingBundle{\Cocharacter})$ which satisfies $\beta^+ = \DegreeTwoCoclass^\FixedPoint$, $(\ClassifyingSpace \Extended{\Cocharacter}) \PullBack (\beta^-) = \DegreeTwoCoclass^\FixedPoint$ and $\beta (\DegreeTwoClass^\FixedPoint) = \DegreeTwoCoclass (\DegreeTwoClass)$ for $\DegreeTwoClass \in \Homology_2 (\Manifold)$.
        The first two conditions imply that the intertwining relation \autoref{thm:intertwining-relation-quantum-extended-torus} applied to $\beta$, post-composed with $(\ClassifyingSpace \Extended{\Cocharacter}) \PullBack$, gives
            \begin{equation}
            \label{eqn:intertwining-preparing-for-flatness}
                 \ShiftOperator _\Cocharacter ^\FixedPoint (x \QuantumProduct \DegreeTwoCoclass^\FixedPoint) - \ShiftOperator _\Cocharacter ^\FixedPoint (x) \QuantumProduct \DegreeTwoCoclass^\FixedPoint = (\ClassifyingSpace \Extended{\Cocharacter}) \PullBack \ \formalAdditionalCircle \  \WeightedQuantumSeidel_{\ExtendedTorus} (\Cocharacter, \FixedPoint, \beta) (x)
            \end{equation}
        for $x \in \QuantumCohomology^\ArbitraryIndex _{\ExtendedTorus; \Extended{\basisCocharacter}} (\Manifold, \Extended{\TorusAction})$.
        The third condition $\beta (\DegreeTwoClass^\FixedPoint) = \DegreeTwoCoclass (\DegreeTwoClass)$ gives
            \begin{equation}
            \label{eqn:weighted-seidel-map-recorded-by-differentiation}
                 \left( \dbydmult{\DegreeTwoCoclass} \right) \ShiftOperator_\Cocharacter^\FixedPoint (x) -  \ShiftOperator_\Cocharacter^\FixedPoint \left( \left( \dbydmult{\DegreeTwoCoclass} \right) x \right)  = (\ClassifyingSpace \Extended{\Cocharacter}) \PullBack \  \formalAdditionalCircle \  \WeightedQuantumSeidel_{\ExtendedTorus} (\Cocharacter, \FixedPoint, \beta) (x),
            \end{equation}
        analogously to \eqref{eqn:remainder-of-differentiation-commuting-with-quantum-product}.
        The difference $\eqref{eqn:weighted-seidel-map-recorded-by-differentiation} - \eqref{eqn:intertwining-preparing-for-flatness}$ is $[\Connection^\FixedPoint _\DegreeTwoCoclass, \ShiftOperator^\FixedPoint _\Cocharacter] (x) = 0$.
    \end{proof}
    
\section{Equivariant Floer cohomology}
\label{sec:equivariant-floer-cohomology-all-content}

    The symplectic manifold $(\Manifold, \SymplecticForm)$ has the same assumptions as \autoref{sec:assumptions-on-symplectic-manifold}.
    
\subsection{Hamiltonian dynamics}
\label{sec:hamiltonian-dynamics}

    The time-dependent Hamiltonian function $\Hamiltonian : \Circle \times \Manifold \to \RealNumbers$ is \define{linear of slope} $\Slope \in \RealNumbers \setminus \ReebPeriods$ if the equation $\Hamiltonian_t (\ConvexCoordMap (\RadialCoord, \eltContactManifold)) = \Slope \RadialCoord + \text{constant}$ holds outside a compact set.
    The (1-periodic Hamiltonian) orbits $\HamiltonianOrbit : \Circle \to \Manifold$ of such a Hamiltonian $\Hamiltonian$ are the solutions of $\partial_t  \HamiltonianOrbit(t) = \HamiltonianVectorField{\Hamiltonian, t}$, where the Hamiltonian vector field $\HamiltonianVectorField{\Hamiltonian, t}$ is determined by $\SymplecticForm( \Argument, \HamiltonianVectorField{\Hamiltonian, t}) = \ExteriorDerivative \Hamiltonian_t (\Argument)$.
    At infinity, the Hamiltonian vector field $\HamiltonianVectorField{\Hamiltonian, t} = \Slope \ReebVectorField$ is a multiple of the Reeb vector field, but since $\Slope \in \RealNumbers \setminus \ReebPeriods$ is not a Reeb period, there are no 1-periodic Hamiltonian orbits in this region.
    It follows that a regular linear Hamiltonian of slope $\Slope$ has only finitely-many orbits.
    Denote the set of orbits of $\Hamiltonian$ by $\HamiltonianOrbitSet(\Hamiltonian)$.
    
    A \define{filling} of an orbit $\HamiltonianOrbit : \Circle \to \Manifold$ is a continuous map $f : \Disc \to \Manifold$ which satisfies $f( \ExponentialNumber^{2 \PiNumber \ImaginaryNumber t} ) = \HamiltonianOrbit(t)$, where $\Disc = \Set{ \Modulus{z} \le 1 } \subset \ComplexNumbers$ is a 2-disc.
    Two fillings $f, f\Prime$ of the same orbit $\HamiltonianOrbit$ combine to give a map $(f \DisjointUnion_x f\Prime) : \Sphere = \Disc \DisjointUnion_{\Circle} \Disc \to \Manifold$ (orient $\Disc \DisjointUnion_{\Circle} \Disc$ with the standard orientation on the first copy of $\Disc$ and the opposite orientation on the second copy of $\Disc$).
    These fillings are \define{equivalent} if $(f \DisjointUnion_x f\Prime) \PushForward [\Sphere] = 0 \in \Homology_2 (\Manifold)$ holds.
    If $(f \DisjointUnion_x f\Prime) \PushForward [\Sphere] = \DegreeTwoClass \in \Homology_2 (\Manifold)$ holds, then we write $[f] = \DegreeTwoClass \ConnectedSum [f\Prime]$ for the corresponding equivalence classes.
    This coincides with the connected sum of $\DegreeTwoClass$ with $f\Prime$ (see \cite{hofer_floer_1995}).
    We denote an orbit $\HamiltonianOrbit$ together with an equivalence class of fillings by $\WithFilling{\HamiltonianOrbit}$.
    Denote the set of such pairs by $\WithFilling{\HamiltonianOrbitSet} (\Hamiltonian)$.
    
    The Conley-Zehnder index $|\WithFilling{\HamiltonianOrbit}|$ of $\WithFilling{\HamiltonianOrbit}$ is well-defined, for a filling of $\HamiltonianOrbit$ induces a trivialisation of $\HamiltonianOrbit \PullBack \TangentSpace \Manifold$.
    We use the convention for the Conley-Zehnder index for which the Conley-Zehnder index of a critical point of a small Hamiltonian recovers the Morse index, as in \cite{ritter_floer_2014}.
    For this convention, we have $|\DegreeTwoClass \ConnectedSum \WithFilling{\HamiltonianOrbit}| = |\WithFilling{\HamiltonianOrbit}| - 2 \FirstChernClass (\TangentSpace \Manifold, \SymplecticForm) (\DegreeTwoClass)$.
    
    For each orbit $\HamiltonianOrbit_i \in \HamiltonianOrbitSet(\Hamiltonian)$, fix a filling $\WithFilling{\HamiltonianOrbit}_i \in \WithFilling{\HamiltonianOrbitSet}(\Hamiltonian)$ for the orbit.
    The \define{Floer cochain complex} $\FloerCochainComplex^\ArbitraryIndex (\Manifold, \Slope; \Hamiltonian)$ is the graded $\NovikovRing$-module $\NovikovRing \langle \WithFilling{\HamiltonianOrbit}_i \rangle _i$.
    It is independent of the choices of fillings because the alternative filling $\WithFilling{\HamiltonianOrbit}\Prime _i = (-\DegreeTwoClass) \ConnectedSum \WithFilling{\HamiltonianOrbit}_i$ corresponds to the element $\NovVariable^\DegreeTwoClass \WithFilling{\HamiltonianOrbit}_i$.
    Equivalently, we have $\FloerCochainComplex^\ArbitraryIndex (\Manifold, \Slope; \Hamiltonian) = \NovikovRing \Tensor \Integers \langle \WithFilling{\HamiltonianOrbitSet} (\Hamiltonian) \rangle / {\sim}$, where the relation $\sim$ is generated by 
        \begin{equation}
        \label{eqn:cohomological-convention-for-fillings}
            \NovVariable^\DegreeTwoClass \WithFilling{\HamiltonianOrbit} \sim (-\DegreeTwoClass) \ConnectedSum \WithFilling{\HamiltonianOrbit}.
        \end{equation}
    
    \begin{remark}
        [Cohomological conventions for fillings]
        Poincar\'{e} duality can be established in Morse theory by reversing the flowlines \cite[Section~4.3]{audin_morse_2014}.
        The \define{homology Morse differential} for the Morse function $\MorseFunction$ is $d_\ArbitraryIndex (\critManifold^-; \MorseFunction) = \critManifold^+ + \cdots$, and it counts ($\RealNumbers$-equivalence classes of) flowlines $\flowManifold (s)$ from the critical point $\critManifold^-$ to the critical point $\critManifold^+$.
        The \define{cohomology Morse differential} for $\MorseFunction$, in contrast, is $d^\ArbitraryIndex(\critManifold^+; \MorseFunction) = \critManifold^- + \cdots$.
        If we reverse the flowline $\flowManifold(s)$, we get the flowline $\flowManifold (-s)$ of the Morse function $- \MorseFunction$ which goes from $\critManifold^+$ to $\critManifold^-$.
        The cohomology Morse differential for $-\MorseFunction$ is $d^\ArbitraryIndex(\critManifold^-; -\MorseFunction) = \critManifold^+ + \cdots$, so it coincides with $d_\ArbitraryIndex (\critManifold^-; \MorseFunction)$.
        Therefore the Morse homology of $\MorseFunction$ is isomorphic to the Morse cohomology of $-\MorseFunction$, as desired.
        
        Similarly, in Floer theory, we reverse the Floer solutions to recover Poincar\'{e} duality.
        If $\FloerSolution(s, t)$ is a Floer solution for $\Hamiltonian_t$, then $\FloerSolution(-s, -t)$ is a Floer solution for $- \Hamiltonian_{-t}$.
        The homological convention $\NovVariable^\DegreeTwoClass \WithFilling{\HamiltonianOrbit} \sim \DegreeTwoClass \ConnectedSum \WithFilling{\HamiltonianOrbit}$ for fillings was described in \cite[Section~5]{hofer_floer_1995}.
        Under Poincar\'{e} duality, the filling $f : \Disc \to \Manifold$ for $\HamiltonianOrbit(t)$ is mapped to the filling $\Conjugate{f} : \Disc \to \Manifold$ for $\HamiltonianOrbit(-t)$, where $\Conjugate{f}$ denotes the precomposition of $f$ with the complex conjugate map.
        Therefore, the filling $\DegreeTwoClass \ConnectedSum f$ is mapped to $(-\DegreeTwoClass) \ConnectedSum \Conjugate{f}$ under Poincar\'{e} duality.
        With our Novikov ring $\NovikovRing$ from \autoref{sec:novikov-ring}, the relation \eqref{eqn:cohomological-convention-for-fillings} is the correct convention for the $\NovikovRing$-ring action on Floer cohomology.
        This remark corrects the convention in \cite[Section~2.7]{ritter_floer_2014} and an earlier draft of \cite{liebenschutz-jones_intertwining_2020}.
    \end{remark}

\subsection{Floer solutions}

    A \define{Floer datum} is a pair $(\Hamiltonian, \ACS)$ consisting of a time-dependent linear Hamiltonian $\Hamiltonian$ and a time-dependent convex $\SymplecticForm$-compatible almost complex structure $\ACS$.
    A \define{Floer solution} is a smooth map $\FloerSolution : \RealNumbers \times \Circle \to \Manifold$ which satisfies
        \begin{equation}
            \partial_s \FloerSolution - \ACS_t \left( \partial_t \FloerSolution - \HamiltonianVectorField{\Hamiltonian, t} \right) = 0
        \end{equation}
    for all $(s, t) \in \RealNumbers \times \Circle$ and whose energy
        \begin{equation}
            \Energy(\FloerSolution) = \int_{\RealNumbers \times \Circle} \Norm{\partial_s \FloerSolution}^2 _{\SymplecticForm(\Argument, \ACS_t \Argument)} \wrt{s} \wrt{t}
        \end{equation}
    is finite.
    Here, the norm is taken with respect to the time-dependent metric $\SymplecticForm(\Argument, \ACS_t \Argument)$.
    
    For a regular Floer datum $(\Hamiltonian, \ACS)$, we consider the moduli space $\ModuliSpace (\WithFilling{\HamiltonianOrbit}^-, \WithFilling{\HamiltonianOrbit}^+)$ of Floer solutions $\FloerSolution$ which satisfy the uniform limits $\FloerSolution (s, \Argument) \to \HamiltonianOrbit^\pm (\Argument)$ and $\partial_s \FloerSolution (s, \Argument) \to 0$ as $s \to \pm \Infinity$ and the condition $\WithFilling{\HamiltonianOrbit}^+ = \FloerSolution \ConnectedSum \WithFilling{\HamiltonianOrbit}^-$ on the fillings\footnote{
        Here, the notation $\FloerSolution \ConnectedSum \WithFilling{\HamiltonianOrbit}^-$ denotes the cylinder $u : \RealNumbers \times \Circle \to \Manifold$ and the closed disc $\WithFilling{\HamiltonianOrbit}^- : \Disc \to \Manifold$ glued along their common boundary $\HamiltonianOrbit^- : \Circle \to \Manifold$, which is a filling of the orbit $\HamiltonianOrbit^+$.
    }.
    This moduli space is a smooth manifold of dimension $|\WithFilling{\HamiltonianOrbit}^-| - |\WithFilling{\HamiltonianOrbit}^+|$ and has a free $\RealNumbers$-action given by $s$-translation (except for constant $\FloerSolution$).
    Moreover, the Floer solutions are restricted to a compact subset of $\Manifold$ by a maximum principle.
    The moduli space may be compactified by broken Floer solutions with bubbling via standard compactification and gluing theorems, and bubbling for $\dim \ModuliSpace (\WithFilling{\HamiltonianOrbit}^-, \WithFilling{\HamiltonianOrbit}^+) \le 2$ is avoided by regularity (see \cite{salamon_lectures_1997}).
    
    The \define{Floer differential} counts Floer solutions mod $s$-translation.
    It is given by
        \begin{equation}
            d \WithFilling{\HamiltonianOrbit}^+ = \sum_{\substack{
                \WithFilling{\HamiltonianOrbit}^- \in \WithFilling{\HamiltonianOrbitSet}(\Hamiltonian) \\
                |\WithFilling{\HamiltonianOrbit}^-| - |\WithFilling{\HamiltonianOrbit}^+| = 1
            }}
            \Count \left(            \frac{\ModuliSpace(\WithFilling{\HamiltonianOrbit}^-, \WithFilling{\HamiltonianOrbit}^+)}{\RealNumbers} \right) \ \WithFilling{\HamiltonianOrbit}^-.
        \end{equation}
    The orientation of $\ModuliSpace(\WithFilling{\HamiltonianOrbit}^-, \WithFilling{\HamiltonianOrbit}^+)$ comes from a choice of \emph{coherent orientation}, which shall be implicit throughout (see \cite[Appendix~B]{ritter_topological_2013} for details).
    
    The \define{Floer cohomology} $\FloerCohomology^\ArbitraryIndex (\Manifold, \Slope; \Hamiltonian, \ACS)$ is the cohomology of the cochain complex $(\FloerCochainComplex^\ArbitraryIndex (\Manifold, \Slope; \Hamiltonian), d)$.
    
\subsection{Continuation maps}

    Given two Floer data $(\Hamiltonian^-, \ACS^-)$ and $(\Hamiltonian^+, \ACS^+)$, a \define{homotopy} between them is an $\RealNumbers$-dependent family of Floer data $(\Hamiltonian_s, \ACS_s)$ which depends on $s \in \RealNumbers$ only on a bounded interval and equals $(\Hamiltonian^\pm, \ACS^\pm)$ as $s \to \pm \infty$.
    The homotopy is \define{monotone} if the slope of $\Hamiltonian_s$ is nonincreasing as $s$ increases.
    Notice this means the slopes $\Slope^\pm$ of $\Hamiltonian^\pm$ satisfy $\Slope^- \ge \Slope^+$.
    Denote by $\delta$ the difference $\Slope^- - \Slope^+ \ge 0$ of the slopes $\Slope^\pm$ of $\Hamiltonian^\pm$.
    
    A \define{continuation solution} is a smooth map $\FloerSolution : \RealNumbers \times \Circle \to \Manifold$ which satisfies
        \begin{equation}
            \partial_s \FloerSolution - \ACS_{s, t} \left( \partial_t \FloerSolution - \HamiltonianVectorField{\Hamiltonian, s, t} \right) = 0
        \end{equation}
    and whose energy
        \begin{equation}
            \Energy(\FloerSolution) = \int_{\RealNumbers \times \Circle} \Norm{\partial_s \FloerSolution}^2 _{\SymplecticForm(\Argument, \ACS_{s, t} \Argument)} \wrt{s} \wrt{t}
        \end{equation}
    is finite.
    That is, a continuation solution is a Floer solution, but with $s$-dependent Floer data.
    The moduli space $\ModuliSpace_s (\WithFilling{\HamiltonianOrbit}^-, \WithFilling{\HamiltonianOrbit}^+)$ of continuation solutions between $\WithFilling{\HamiltonianOrbit}^\pm \in \WithFilling{\HamiltonianOrbitSet}(\Hamiltonian^\pm)$ is defined as for Floer solutions.
    For regular Floer data and a regular monotone homotopy, the moduli spaces $\ModuliSpace_s (\WithFilling{\HamiltonianOrbit}^-, \WithFilling{\HamiltonianOrbit}^+)$ are smooth manifolds of dimension $|\WithFilling{\HamiltonianOrbit}^-| - |\WithFilling{\HamiltonianOrbit}^+|$.
    The map which counts continuation solutions is a chain map.
    The \define{continuation map} between the Floer data is the induced map on Floer cohomology
        \begin{equation}
            \ContinuationMap^\delta : \FloerCohomology^\ArbitraryIndex (\Manifold, \Slope^+; \Hamiltonian^+, \ACS^+) \to \FloerCohomology^\ArbitraryIndex (\Manifold, \Slope^-; \Hamiltonian^-, \ACS^-)
        \end{equation}
    and it is independent of the choice of monotone homotopy.
    The composition of two continuation maps is itself a continuation map.
    If $\Slope^-$ and $\Slope^+$ belong to the same subinterval of $\RealNumbers \setminus \ReebPeriods$, then the continuation map between Floer data of those slopes is an isomorphism \cite[Lemma~11]{ritter_floer_2014}.
    In particular, Floer cohomology only depends on the slope of the Hamiltonian, and is otherwise independent of the choice of Floer datum.
    
    The \define{symplectic cohomology} $\SymplecticCohomology^\ArbitraryIndex(\Manifold)$ of $\Manifold$ is the direct limit of the direct system $\FloerCohomology^\ArbitraryIndex (\Manifold, \Slope)$ for $\Slope \in \RealNumbers \setminus \ReebPeriods$.
    The maps of the direct system are the continuation maps, and the underlying poset is $(\RealNumbers \setminus \ReebPeriods, \le)$.
    
\subsection{Equivariant Floer cohomology}

    Let $\Torus$ and $\TorusAction : \Torus \times \Manifold \to \Manifold$ be as in \autoref{sec:torus-and-torus-action-on-manifold}.
    The extended torus $\ExtendedTorus = \AdditionalCircle \times \Torus$ acts on the (contractible) loop space $\ContractibleLoopSpace{\Manifold} = \Set{\text{contractible $\HamiltonianOrbit : \Circle \to \Manifold$}}$ via
        \begin{equation}
        \label{eqn:action-on-loop-space}
            ((\eltAdditionalCircle, \eltTorus) \cdot \HamiltonianOrbit) (t) = \Extended{\TorusAction}_{(\eltAdditionalCircle, \eltTorus)} (\HamiltonianOrbit(t - \eltAdditionalCircle)) =  \TorusAction_{\eltTorus} (\HamiltonianOrbit(t - \eltAdditionalCircle)).
        \end{equation}
    Recall that $\AdditionalCircle$ acts trivially under $\Extended{\TorusAction}$ as in \autoref{sec:extended-torus}.
    Our model of $\ExtendedTorus$-equivariant Floer cohomology is a Borel-style model which captures the action \eqref{eqn:action-on-loop-space} on the Hamiltonian orbits.
    
    Fix a basis $\Extended{\basisCocharacter}$ for $\ExtendedTorus$.
    Let $\UniversalSpace \ExtendedTorus$ be the space $(\InfiniteSphere)^{\dimTorus + 1}$ defined using the basis and let $\morseUniversalSpace : \UniversalSpace \ExtendedTorus \to \RealNumbers$ be the corresponding Morse function, as in \autoref{sec:equivariant-morse-theory}.
    A \define{$\ExtendedTorus$-equivariant Hamiltonian function} is a function $\eqHamiltonian : \UniversalSpace \ExtendedTorus \times \Circle \times \Manifold \to \RealNumbers$ which is $\ExtendedTorus$-invariant, so it satisfies $\eqHamiltonian_{\eltUniversalSpace, t} (\eltManifold) = \eqHamiltonian_{(\eltAdditionalCircle, \eltTorus) \Inverse \cdot \eltUniversalSpace, \eltAdditionalCircle + t} (\Extended{\TorusAction}_{(\eltAdditionalCircle, \eltTorus)} (\eltManifold))$, and which satisfies the following condition on the critical points in $\UniversalSpace \ExtendedTorus$.
    For every critical point $\critUniversalSpace \in \CriticalPointSet{\morseUniversalSpace, \UniversalSpace \ExtendedTorus}$, the function $\eqHamiltonian_{\critUniversalSpace, \Argument} (\Argument)$ is a regular Hamiltonian on $\Circle \times \Manifold$, and moreover $\eqHamiltonian_{\critUniversalSpace, \Argument} (\Argument) = \eqHamiltonian_{\eltUniversalSpace, \Argument} (\Argument)$ for all $\eltUniversalSpace$ in a neighbourhood of $\critUniversalSpace$ in $\UnstableManifold(\critUniversalSpace) \Union \StableManifold(\critUniversalSpace)$.
    Moreover, we ask that $\eqHamiltonian$ is \define{linear} of some slope $\Slope \in \RealNumbers \setminus \ReebPeriods$, which means that there is $\RadialCoord_0 > 1$ such that $\eqHamiltonian_{\Argument, \Argument} (\ConvexCoordMap (\RadialCoord, \eltContactManifold)) = \Slope \RadialCoord + \text{constant}$ for all $\RadialCoord \ge \RadialCoord_0$.
    
    A \define{$\ExtendedTorus$-equivariant Hamiltonian orbit} is a $\ExtendedTorus$-equivalence class $[\critUniversalSpace, \HamiltonianOrbit] \in \UniversalSpace \ExtendedTorus \times_{\ExtendedTorus} \ContractibleLoopSpace{\Manifold}$ with $\critUniversalSpace \in \CriticalPointSet{\morseUniversalSpace, \UniversalSpace \ExtendedTorus}$ and $\HamiltonianOrbit \in \HamiltonianOrbitSet (\eqHamiltonian_{\critUniversalSpace, \Argument})$.
    The set of such equivariant orbits is denoted $\HamiltonianOrbitSet^{\eqnt} (\eqHamiltonian)$.
    We can attach equivalence classes of fillings as per \autoref{sec:hamiltonian-dynamics}.
    The corresponding set is denoted $\WithFilling{\HamiltonianOrbitSet}^{\eqnt} (\eqHamiltonian)$ and the index of $[\critUniversalSpace, \WithFilling{\HamiltonianOrbit}] \in \WithFilling{\HamiltonianOrbitSet}^{\eqnt} (\eqHamiltonian)$ is $|\critUniversalSpace, \WithFilling{\HamiltonianOrbit}| = |\critUniversalSpace| + |\WithFilling{\HamiltonianOrbit}|$.
    
    For each equivariant orbit $[\critUniversalSpace, \HamiltonianOrbit] \in \HamiltonianOrbitSet^{\eqnt} (\eqHamiltonian)$, fix a filling $\WithFilling{\HamiltonianOrbit}_{\FillingBasis}$ of $\HamiltonianOrbit$.
    Let $\FillingBasis \subseteq \WithFilling{\HamiltonianOrbitSet}^{\eqnt} (\eqHamiltonian)$ be the set of such equivariant orbits with fillings.
    Such a set is a \define{basis of fillings}.
    The \define{$\ExtendedTorus$-equivariant Floer cochain complex} $\FloerCochainComplex^\ArbitraryIndex_{\ExtendedTorus} (\Manifold, \Extended{\TorusAction}, \Slope; \eqHamiltonian)$ is the $\Integers$-graded $\NovikovRing$-module with degree-$l$ summand
        \begin{equation}
        \label{eqn:equivariant-floer-cochain-complex-definition}
            \FloerCochainComplex^l_{\ExtendedTorus} (\Manifold, \Extended{\TorusAction}, \Slope; \eqHamiltonian) = 
            \DirectProduct_{[\critUniversalSpace, \WithFilling{\HamiltonianOrbit}_{\FillingBasis}] \in \FillingBasis}
            \left( \NovikovRing \langle [\critUniversalSpace, \WithFilling{\HamiltonianOrbit}_{\FillingBasis}] \rangle \right)^l.
        \end{equation}
    Let us unpack this definition.
    First, this is independent of the choices in $\FillingBasis$ just as in the non-equivariant definition: the equivariant orbit $[\critUniversalSpace, (-\DegreeTwoClass) \ConnectedSum \WithFilling{\HamiltonianOrbit}_{\FillingBasis}] \in \WithFilling{\HamiltonianOrbitSet}^{\eqnt} (\eqHamiltonian)$ corresponds to the element $\NovVariable^\DegreeTwoClass [\critUniversalSpace, \WithFilling{\HamiltonianOrbit}_{\FillingBasis}]$.
    Second, for each critical point $\critClassifyingSpace \in \CriticalPointSet{\morseUniversalSpace, \ClassifyingSpace \ExtendedTorus}$, there are finitely-many equivariant orbits $[\critUniversalSpace, \HamiltonianOrbit] \in \HamiltonianOrbitSet^{\eqnt} (\eqHamiltonian)$ with $[\critUniversalSpace] = \critClassifyingSpace$.
    Thus the part of the complex \eqref{eqn:equivariant-floer-cochain-complex-definition} made up of such orbits is isomorphic to $\FloerCochainComplex^{l - |\critClassifyingSpace|} (\Manifold, \Slope; \eqHamiltonian_{\critUniversalSpace, \Argument})$ for any $\critUniversalSpace$ with $[\critUniversalSpace] = \critClassifyingSpace$.
    
    A \define{$\ExtendedTorus$-equivariant time-dependent almost complex structure} $\eqACS$ is a choice of almost complex structure $\eqACS_{\eltUniversalSpace, t}$ for all $\eltExtendedTorus \in \UniversalSpace \ExtendedTorus$ and all $t \in \Circle$ such that the diagram
        \begin{equation}
        \label{eqn:equivariant-time-dependent-almost-complex-structure-commutative-diagram}
        \begin{tikzcd}[column sep=huge, row sep=large]
            \TangentSpace _\eltManifold \Manifold
            \arrow[r, "\ACS^\eqnt_{\eltUniversalSpace, t}"]
            \arrow[d, "\Derivative \Extended{\TorusAction}_{(\eltAdditionalCircle, \eltTorus)}"]
            & \TangentSpace _\eltManifold \Manifold
            \arrow[d, "\Derivative \Extended{\TorusAction}_{(\eltAdditionalCircle, \eltTorus)}"] \\
            \TangentSpace _{\Extended{\TorusAction}_{(\eltAdditionalCircle, \eltTorus)} (\eltManifold)} \Manifold
            \arrow[r, "\ACS^\eqnt_{(\eltAdditionalCircle, \eltTorus) \Inverse \cdot \eltUniversalSpace, t + \eltAdditionalCircle}"]
            & \TangentSpace _{\Extended{\TorusAction}_{(\eltAdditionalCircle, \eltTorus)} (\eltManifold)} \Manifold
        \end{tikzcd}
        \end{equation}
    commutes, and which satisfies the following condition on the critical points in $\UniversalSpace \ExtendedTorus$.
    For every critical point $\critUniversalSpace \in \CriticalPointSet{\morseUniversalSpace, \UniversalSpace \ExtendedTorus}$, the time-dependent almost complex structure $\eqACS_{\critUniversalSpace, \Argument}$ is regular, and moreover $\eqACS_{\critUniversalSpace, \Argument} = \eqACS_{\eltUniversalSpace, \Argument}$ for all $\eltUniversalSpace$ in a neighbourhood of $\critUniversalSpace$ in $\UnstableManifold(\critUniversalSpace) \Union \StableManifold(\critUniversalSpace)$.
    A \define{$\ExtendedTorus$-equivariant Floer datum} is a pair $(\eqHamiltonian, \eqACS)$ comprising a linear  $\ExtendedTorus$-equivariant time-dependent Hamiltonian function of slope $\Slope$ and a convex $\SymplecticForm$-compatible $\ExtendedTorus$-equivariant time-dependent almost complex structure $\eqACS$.

\begin{figure}
\centering
\begin{subfigure}{0.4\textwidth}
    \centering
	\begin{center}
		\begin{tikzpicture}
			\node[inner sep=0] at (2.5,0) {\includegraphics[width=4 cm]{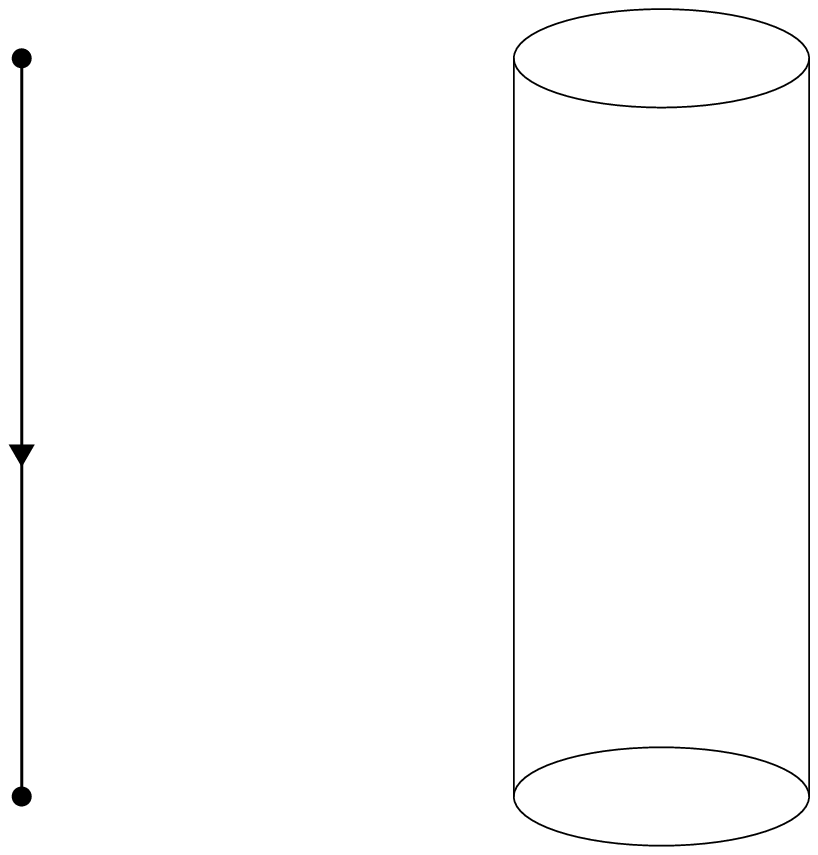}};
			
			\node at (0.4, 0.8){\tiny $\flowUniversalSpace$};
			\node at (2.8, 0.8){\tiny $\FloerSolution$};
			\node at (0.7, 2.1){\tiny $\critUniversalSpace^-$};
			\node at (0.7, -2.1){\tiny $\critUniversalSpace^+$};
			\node at (4.6, 2.1){\tiny $\WithFilling{\HamiltonianOrbit}^-$};
			\node at (4.6, -2.1){\tiny $\WithFilling{\HamiltonianOrbit}^+$};
		\end{tikzpicture}
	\end{center}
    \caption{Equivariant differential} 
\label{fig:equivariant-floer-differential}
\end{subfigure}
\hspace{1em} 
\begin{subfigure}{0.4\textwidth}
  \centering
	\begin{center}
		\begin{tikzpicture}
			\node[inner sep=0] at (2.5,0) {\includegraphics[width=4 cm]{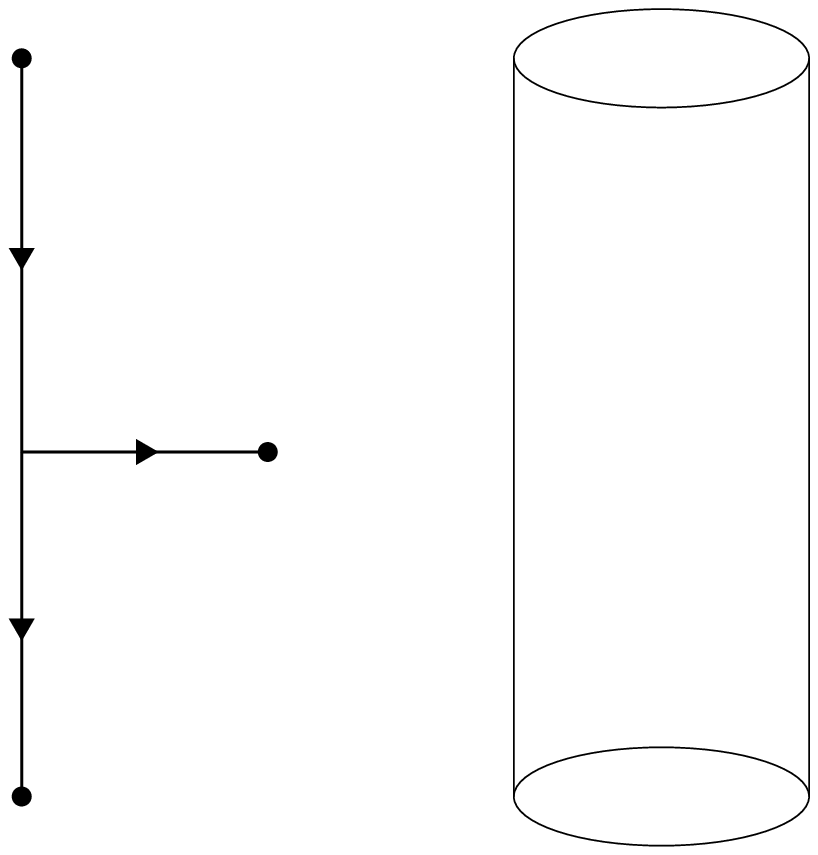}};
			
			\node at (0.4, 0.8){\tiny $\flowUniversalSpace$};
			\node at (2.8, 0.8){\tiny $\FloerSolution$};
			\node at (0.7, 2.1){\tiny $\critUniversalSpace^-$};
			\node at (0.7, -2.1){\tiny $\critUniversalSpace^+$};
			\node at (4.6, 2.1){\tiny $\WithFilling{\HamiltonianOrbit}^-$};
			\node at (4.6, -2.1){\tiny $\WithFilling{\HamiltonianOrbit}^+$};
			\node at (1.2, 0.1){\tiny $\flowUniversalSpace_0$};
		\end{tikzpicture}
	\end{center}
    \caption{Geometric $\Cohomology^\ArbitraryIndex (\ClassifyingSpace \Torus)$-mod\-ule structure} 
    \label{fig:geometric-module-structure-on-equivariant-floer-cohomology}
\end{subfigure}
\caption{
    The equivariant Floer differential and geometric module action acutely resemble those of equivariant Morse cohomology (\autoref{fig:basic-equivariant-morse-theoretic-constructions}).
    The cylinders on the right-hand side of the diagrams denote a Floer cylinder $\RealNumbers \times \Circle \to \Manifold$.
} \label{fig:basic-equivariant-floer-theoretic-constructions}
\end{figure}
    
    A \define{$\ExtendedTorus$-equivariant Floer solution} is a $\ExtendedTorus$-equivalence class of pairs $(\flowUniversalSpace, \FloerSolution)$ where $\flowUniversalSpace : \RealNumbers \to \UniversalSpace \ExtendedTorus$ is a flowline of $\morseUniversalSpace$ and $\FloerSolution : \RealNumbers \times \Circle \to \Manifold$ is a continuation solution for the $s$-dependent Floer data $(\eqHamiltonian_{\flowUniversalSpace (s), \Argument}, \eqACS_{\flowUniversalSpace (s), \Argument})$.
    See \autoref{fig:equivariant-floer-differential}.
    Notice how the condition on the $\ExtendedTorus$-equivariant Floer datum near the critical points in $\UniversalSpace \ExtendedTorus$ means this $s$-dependent Floer data depends on $s$ only on a bounded interval.
    The $\ExtendedTorus$-action on such pairs is given by $(\eltAdditionalCircle, \eltTorus) \cdot (\flowUniversalSpace, \FloerSolution) = ((\eltAdditionalCircle, \eltTorus) \Inverse \cdot \flowUniversalSpace, \FloerSolution\Prime)$ with $\FloerSolution\Prime (s, t) = \Extended{\TorusAction}_{(\eltAdditionalCircle, \eltTorus)} (\FloerSolution (s, t - \eltAdditionalCircle))$.
    
    The $\ExtendedTorus$-equivariant Floer solution $[\flowUniversalSpace, \FloerSolution]$ \define{converges} to the $\ExtendedTorus$-equivariant orbits $[\critUniversalSpace^\pm, \WithFilling{\HamiltonianOrbit}^\pm]$ as $s \to \pm \infty$ if $\flowUniversalSpace \in \ModuliSpace(\critUniversalSpace^-, \critUniversalSpace^+)$ and $\FloerSolution \in \ModuliSpace_s (\WithFilling{\HamiltonianOrbit}^-, \WithFilling{\HamiltonianOrbit}^+)$ hold for some choices of representatives.
    For a regular $\ExtendedTorus$-equivariant Floer datum, the moduli space $\ModuliSpace([\critUniversalSpace^-, \WithFilling{\HamiltonianOrbit}^-], [\critUniversalSpace^+, \WithFilling{\HamiltonianOrbit}^+])$ of such $\ExtendedTorus$-equivariant Floer solutions is a smooth manifold of dimension $|\critUniversalSpace^-, \WithFilling{\HamiltonianOrbit}^-| - |\critUniversalSpace^+, \WithFilling{\HamiltonianOrbit}^+|$.
    The \define{$\ExtendedTorus$-equivariant Floer differential} counts equivariant Floer solutions mod $s$-translation.
    It is given by
        \begin{equation}
            d [\critUniversalSpace^+, \WithFilling{\HamiltonianOrbit}^+] = 
            \sum_{\substack{
                [\critUniversalSpace^-, \WithFilling{\HamiltonianOrbit}^-] \in \WithFilling{\HamiltonianOrbitSet}^\eqnt(\eqHamiltonian) \\
                |\critUniversalSpace^-, \WithFilling{\HamiltonianOrbit}^-| - |\critUniversalSpace^+, \WithFilling{\HamiltonianOrbit}^+| = 1
            }}
            \Count \left(            \frac{\ModuliSpace([\critUniversalSpace^-, \WithFilling{\HamiltonianOrbit}^-], [\critUniversalSpace^+, \WithFilling{\HamiltonianOrbit}^+])}{\RealNumbers} \right) \ [\critUniversalSpace^-, \WithFilling{\HamiltonianOrbit}^-].
        \end{equation}
    
    The \define{$\ExtendedTorus$-equivariant Floer cohomology} $\FloerCohomology^\ArbitraryIndex_{\ExtendedTorus, \Extended{\basisCocharacter}} (\Manifold, \Extended{\TorusAction}, \Slope; \eqHamiltonian, \eqACS)$ is the cohomology of the $\ExtendedTorus$-equivariant Floer cochain complex.
    It is independent of the $\ExtendedTorus$-equivariant Floer datum by standard homotopy arguments and it is independent of $\Extended{\basisCocharacter}$ by \autoref{remark:morse-bott-construction-to-show-independence-of-torus-basis}.
    
    As with the equivariant Morse cohomology in \autoref{sec:equivariant-morse-theory}, $\ExtendedTorus$-equivariant Floer cohomology has a (geometric) $\Cohomology^\ArbitraryIndex (\ClassifyingSpace \ExtendedTorus)$-module structure.
    It is given by counting $\ExtendedTorus$-equivariant Floer solutions together with a perturbed  half\textsuperscript{$+$} flowline in $\UniversalSpace \ExtendedTorus$ (see \autoref{fig:geometric-module-structure-on-equivariant-floer-cohomology}).
    Combined with the formal $\NovikovRing$-multiplication action on the Floer cochain complex, this yields a $\NovikovRing \GradedCompletedTensorProduct \Cohomology^\ArbitraryIndex (\ClassifyingSpace \ExtendedTorus)$-module structure on $\ExtendedTorus$-equivariant Floer cohomology.
    
    As with (non-equivariant) Floer cohomology, we can construct $\ExtendedTorus$-equivariant continuation maps corresponding to $\ExtendedTorus$-equivariant monotone homotopies between different $\ExtendedTorus$-equivariant Floer data.
    These $\ExtendedTorus$-equivariant continuation maps have the same properties: they compose and they are isomorphisms if the slopes are in the same subinterval of $\RealNumbers \setminus \ReebPeriods$.
    Therefore $\ExtendedTorus$-equivariant Floer cohomology depends only on the slope of the $\ExtendedTorus$-equivariant Hamiltonian.
    
    \define{$\ExtendedTorus$-equivariant symplectic cohomology} $\SymplecticCohomology^\ArbitraryIndex _{\ExtendedTorus} (\Manifold, \Extended{\TorusAction})$ is the direct limit of the direct system $\FloerCohomology^\ArbitraryIndex _{\ExtendedTorus} (\Manifold, \Extended{\TorusAction}, \Slope)$ for $\Slope \in \RealNumbers \setminus \ReebPeriods$.
    
\subsection{Differentiation}

    Fix a regular $\ExtendedTorus$-equivariant Floer datum $(\eqHamiltonian, \eqACS)$ and a basis of fillings $\FillingBasis$.
    For $\DegreeTwoCoclass \in \Cohomology^2 (\Manifold)$, the differentiation map 
        \begin{equation}
             \dbyd{\DegreeTwoCoclass} : \FloerCochainComplex^\ArbitraryIndex_{\ExtendedTorus} (\Manifold, \Extended{\TorusAction}, \Slope; \eqHamiltonian, \eqACS) \to \FloerCochainComplex^\ArbitraryIndex_{\ExtendedTorus} (\Manifold, \Extended{\TorusAction}, \Slope; \eqHamiltonian, \eqACS)
        \end{equation}
    is given by $\NovVariable^\DegreeTwoClass [\critUniversalSpace, \WithFilling{\HamiltonianOrbit}] \mapsto \DegreeTwoCoclass (\DegreeTwoClass) \NovVariable^\DegreeTwoClass [\critUniversalSpace, \WithFilling{\HamiltonianOrbit}]$ for all $[\critUniversalSpace, \WithFilling{\HamiltonianOrbit}] \in \FillingBasis$.
    This map depends on the choice of basis $\FillingBasis$.
    The differenatiation map $\dbyd{\DegreeTwoCoclass}$ is \emph{not} a chain map.
    Instead, we have
        \begin{equation}
        \label{eqn:chain-map-failure-for-formal-differentiation}
            \dbyd{\DegreeTwoCoclass} \ComposedWith d - d \ComposedWith \dbyd{\DegreeTwoCoclass} = d_\DegreeTwoCoclass^\FillingBasis,
        \end{equation}
    where $d_\DegreeTwoCoclass^\FillingBasis$ is the $\DegreeTwoCoclass$-weighted differential \eqref{eqn:weighted-equivariant-floer-differential} for the basis $\FillingBasis$.
    
    An alternative expression for the Floer differential using the filling basis $\FillingBasis$ is
        \begin{equation}
        \label{eqn:equivariant-floer-differential-using-basis}
            d [\critUniversalSpace^+, \WithFilling{\HamiltonianOrbit}^+_{\FillingBasis}] = \hspace{-3em}
            \sum_{\substack{
                \DegreeTwoClass \in \Homology_2 (\Manifold) \\
                [\critUniversalSpace^-, \WithFilling{\HamiltonianOrbit}^-_{\FillingBasis}] \in \FillingBasis \\
                |\critUniversalSpace^-, (-\DegreeTwoClass) \ConnectedSum \WithFilling{\HamiltonianOrbit}^-_{\FillingBasis}| - |\critUniversalSpace^+, \WithFilling{\HamiltonianOrbit}^+_{\FillingBasis}| = 1
            }} \hspace{-1em}
            \sum_{
                \RealNumbers \cdot [\flowUniversalSpace, \FloerSolution] \in \frac{\ModuliSpace([\critUniversalSpace^-, (-\DegreeTwoClass) \ConnectedSum \WithFilling{\HamiltonianOrbit}^-_{\FillingBasis}], [\critUniversalSpace^+, \WithFilling{\HamiltonianOrbit}^+_{\FillingBasis}])}{\RealNumbers}
            } \hspace{-2em}
            \Count \left( \RealNumbers \cdot [\flowUniversalSpace, \FloerSolution] \right)
            \NovVariable^\DegreeTwoClass
            [\critUniversalSpace^-, \WithFilling{\HamiltonianOrbit}^-_{\FillingBasis}].
        \end{equation}
    In this expression, the count operation $\Count \left( \RealNumbers \cdot [\flowUniversalSpace, \FloerSolution] \right)$ simply captures the sign associated to this point of the quotient moduli space.
    Note that $\FloerSolution \ConnectedSum ( (-\DegreeTwoClass) \ConnectedSum \WithFilling{\HamiltonianOrbit}^-_{\FillingBasis}) = \WithFilling{\HamiltonianOrbit}^+_{\FillingBasis}$ holds, so $\DegreeTwoClass$ is determined by the Floer solution $\FloerSolution$ and the fillings $\WithFilling{\HamiltonianOrbit}^\pm_{\FillingBasis}$.
    The \define{$\DegreeTwoCoclass$-weighted differential} $d_\DegreeTwoCoclass^\FillingBasis$ for $\FillingBasis$ is given by
        \begin{equation}
        \label{eqn:weighted-equivariant-floer-differential}
            d_\DegreeTwoCoclass^\FillingBasis [\critUniversalSpace^+, \WithFilling{\HamiltonianOrbit}^+_{\FillingBasis}] = \hspace{-3em}
            \sum_{\substack{
                \DegreeTwoClass \in \Homology_2 (\Manifold) \\
                [\critUniversalSpace^-, \WithFilling{\HamiltonianOrbit}^-_{\FillingBasis}] \in \FillingBasis \\
                |\critUniversalSpace^-, (-\DegreeTwoClass) \ConnectedSum \WithFilling{\HamiltonianOrbit}^-_{\FillingBasis}| - |\critUniversalSpace^+, \WithFilling{\HamiltonianOrbit}^+_{\FillingBasis}| = 1
            }} \hspace{-1em}
            \sum_{
                \RealNumbers \cdot [\flowUniversalSpace, \FloerSolution] \in \frac{\ModuliSpace([\critUniversalSpace^-, (-\DegreeTwoClass) \ConnectedSum \WithFilling{\HamiltonianOrbit}^-_{\FillingBasis}], [\critUniversalSpace^+, \WithFilling{\HamiltonianOrbit}^+_{\FillingBasis}])}{\RealNumbers}
            } \hspace{-2em}
            \Count \left( \RealNumbers \cdot [\flowUniversalSpace, \FloerSolution] \right)
            \DegreeTwoCoclass (\DegreeTwoClass)
            \NovVariable^\DegreeTwoClass
            [\critUniversalSpace^-, \WithFilling{\HamiltonianOrbit}^-_{\FillingBasis}].
        \end{equation}
    The $\DegreeTwoCoclass$-weighted differential $d_\DegreeTwoCoclass^\FillingBasis$ has an additional \define{weight} $\DegreeTwoCoclass(\DegreeTwoClass)$, but otherwise resembles \eqref{eqn:equivariant-floer-differential-using-basis}.
    
\subsection{Equivariant quantum action}

    The quantum cohomology action on Floer cohomology is the map $\QuantumAction : \QuantumCohomology^\ArbitraryIndex (\Manifold) \Tensor \FloerCohomology^\ArbitraryIndex (\Manifold, \Slope) \to \FloerCohomology^\ArbitraryIndex (\Manifold, \Slope)$ which counts Floer solutions $\FloerSolution$ together with a perturbed half\textsuperscript{$+$} flowline $\flowManifold_0$ from $\FloerSolution(0, 0) = \flowManifold_0 (0)$ (see \autoref{fig:quantum-action-on-floer-cohomology-definition}).
    The map satisfies $\DegreeTwoCoclass \QuantumAction (\secondDegreeTwoCoclass \QuantumAction \WithFilling{\HamiltonianOrbit}) = (\DegreeTwoCoclass \QuantumProduct \secondDegreeTwoCoclass) \QuantumAction \WithFilling{\HamiltonianOrbit}$, where the \emph{quantum} product $(\DegreeTwoCoclass \QuantumProduct \secondDegreeTwoCoclass)$ arises as the Floer solution may bubble when the two perturbed half\textsuperscript{$+$} flowlines come together (see \autoref{fig:quantum-action-on-floer-cohomology-bubbling}).

\begin{figure}
\centering
\begin{subfigure}{0.3\textwidth}
    \centering
	\begin{center}
		\begin{tikzpicture}
			\node[inner sep=0] at (2,0) {\includegraphics[width=3 cm]{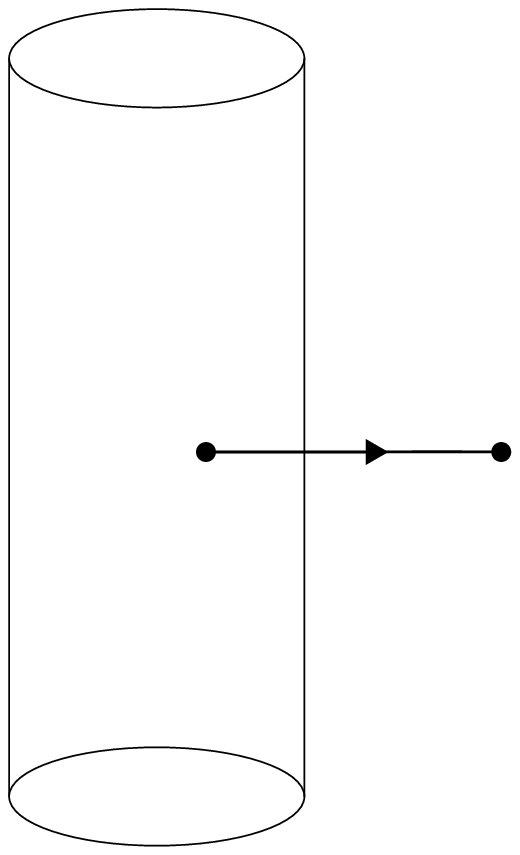}};
			
			\node at (1.3,-0.15){\tiny $(0, 0)$};
			\node at (0.4,1.3){\tiny $\FloerSolution$};
			\node at (3.6,-0.15){\tiny $\DegreeTwoCoclass$};
		\end{tikzpicture}
	\end{center}
    \caption{Quantum action on Floer cohomology} 
\label{fig:quantum-action-on-floer-cohomology-definition}
\end{subfigure}
\hspace{1em} 
\begin{subfigure}{0.3\textwidth}
  \centering
	\begin{center}
		\begin{tikzpicture}
			\node[inner sep=0] at (2,0) {\includegraphics[width=3 cm]{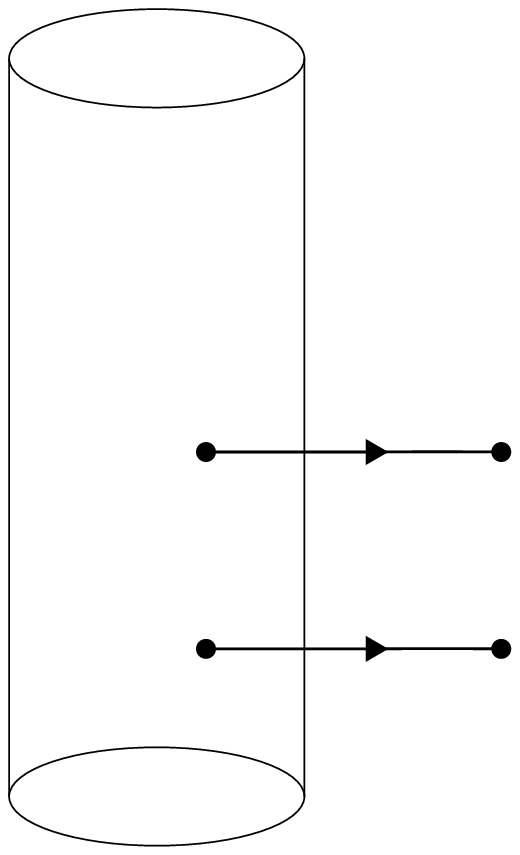}};
			
			\node at (1.3,-0.15){\tiny $(0, 0)$};
			\node at (1.3,-1.3){\tiny $(s, 0)$};
			\node at (0.4,1.3){\tiny $\FloerSolution$};
			\node at (3.6,-0.15){\tiny $\DegreeTwoCoclass$};
			\node at (3.6,-1.3){\tiny $\secondDegreeTwoCoclass$};
		\end{tikzpicture}
	\end{center}
    \caption{Homotopy with two outgoing half\textsuperscript{+} flowlines} 
    \label{fig:quantum-action-on-floer-cohomology-homotopy}
\end{subfigure}
\hspace{1em}
\begin{subfigure}{0.3\textwidth}
    \centering
	\begin{center}
		\begin{tikzpicture}
			\node[inner sep=0] at (2,0) {\includegraphics[width=3 cm]{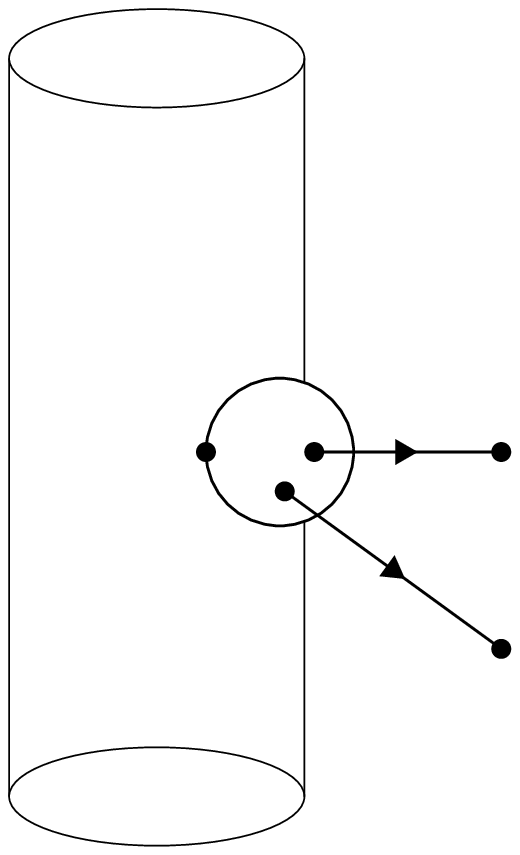}};
			
			\node at (1.3,-0.15){\tiny $(0, 0)$};
			\node at (0.4,1.3){\tiny $\FloerSolution$};
			\node at (3.6,-0.15){\tiny $\DegreeTwoCoclass$};
			\node at (3.6,-1.3){\tiny $\secondDegreeTwoCoclass$};
		\end{tikzpicture}
	\end{center}
    \caption{Bubbled configuration as the half\textsuperscript{+} flowlines come together}
    \label{fig:quantum-action-on-floer-cohomology-bubbling}
\end{subfigure}
\caption{
    To show the quantum action on Floer cohomology (\textsc{a}) satisfies $\DegreeTwoCoclass \QuantumAction (\secondDegreeTwoCoclass \QuantumAction \WithFilling{\HamiltonianOrbit}) = (\DegreeTwoCoclass \QuantumProduct \secondDegreeTwoCoclass) \QuantumAction \WithFilling{\HamiltonianOrbit}$, construct a homotopy with two half\textsuperscript{+} flowlines as in (\textsc{b}).
    The limit $s \to \infty$ gives the left-hand side $\DegreeTwoCoclass \QuantumAction (\secondDegreeTwoCoclass \QuantumAction \WithFilling{\HamiltonianOrbit})$ as the Floer solution breaks, while the limit $s \to 0^+$ gives the bubbled configuration (\textsc{c}) which is homotopic to the right-hand side $(\DegreeTwoCoclass \QuantumProduct \secondDegreeTwoCoclass) \QuantumAction \WithFilling{\HamiltonianOrbit}$.
} \label{fig:quantum-action-on-floer-cohomology}
\end{figure}
    
    To extend this action to the $\ExtendedTorus$-equivariant setting, we must change the $\FloerSolution(0, 0) = \flowManifold_0 (0)$ condition so that it respects the rotation action \eqref{eqn:action-on-loop-space}.
    Our strategy is just like the homotopy we constructed to prove the intertwining relation, so we use an additional perturbed half\textsuperscript{$+$} flowline in $\UniversalSpace \ExtendedTorus$ together with the argument function $\arg : \minUniversalSpace \to \AdditionalCircle$ to control the input $t_0$ in the new condition $\FloerSolution(0, t_0) = \flowManifold_0 (0)$.
    
    Explicitly, the new map 
        \begin{equation}
        \label{eqn:equivariant-quantum-action-on-equivariant-floer-cohomology}
            \QuantumAction : \Cohomology^2 _{\ExtendedTorus} (\Manifold, \Extended{\TorusAction}) \Tensor \FloerCochainComplex^\ArbitraryIndex _{\ExtendedTorus} (\Manifold, \Extended{\TorusAction}, \Slope; \eqHamiltonian, \eqACS) \to \FloerCochainComplex^\ArbitraryIndex _{\ExtendedTorus} (\Manifold, \Extended{\TorusAction}, \Slope; \eqHamiltonian, \eqACS)
        \end{equation}
    counts $\ExtendedTorus$-equivalence classes of sextuples $(\flowUniversalSpace, \FloerSolution, \flowUniversalSpace_0, \flowManifold_0, \flowUniversalSpace^{\min}, t_0)$ where $[\flowUniversalSpace, \FloerSolution]$ is a $\ExtendedTorus$-equivariant Floer solution, $[\flowUniversalSpace_0, \flowManifold_0]$ is a $\ExtendedTorus$-equivariant perturbed half\textsuperscript{$+$} flowline, $\flowUniversalSpace^{\min}$ is a perturbed half\textsuperscript{$+$} flowline in $\UniversalSpace \ExtendedTorus$ and $t_0 \in \Circle$ is an element of the circle which together satisfy $\flowUniversalSpace(0) = \flowUniversalSpace_0 (0) = \flowUniversalSpace^{\min} (0)$, $\FloerSolution (0, t_0) = \flowManifold_0 (0)$ and $\arg (\flowUniversalSpace^{\min} (+\infty)) + t_0 = 0$.
    See \autoref{fig:quantum-action-on-equivariant-floer-cohomology-definition}.
    We are interested in applying the action $\QuantumAction$ to the classes $\DegreeTwoCoclass^\FixedPoint \in \Cohomology^2 _{\ExtendedTorus} (\Manifold, \Extended{\TorusAction})$.
    
    Using the same Morse data as for the flowline $[\flowUniversalSpace_0, \flowManifold_0]$ in the construction of \eqref{eqn:equivariant-quantum-action-on-equivariant-floer-cohomology}, let $f : \Disc \to \Manifold$ be a filling for $[\critUniversalSpace, \WithFilling{\HamiltonianOrbit}]$ for which $(\critUniversalSpace, f (\Disc)) \Intersection \PerturbedStableManifold ( \DegreeTwoCoclass^\FixedPoint)$ is a transverse intersection.
    Here, the \define{(perturbed) stable manifold} $\PerturbedStableManifold (\DegreeTwoCoclass^\FixedPoint)$ of $\ExtendedTorus$-equivariant critical point $\DegreeTwoCoclass^\FixedPoint$ is the set of points $(\eltUniversalSpace, \eltManifold) \in \UniversalSpace \ExtendedTorus \times \Manifold$ for which there is a perturbed half\textsuperscript{$+$} flowline $[\flowUniversalSpace_0, \flowManifold_0]$ to $\DegreeTwoCoclass^\FixedPoint$ with $(\flowUniversalSpace_0, \flowManifold_0) (0) = (\eltUniversalSpace, \eltManifold)$.
    By regularity, no intersections occur on the Hamiltonian orbit $[\critUniversalSpace, \HamiltonianOrbit]$ itself.
    Set 
        \begin{equation}
            \FillingIntersectionMap_\DegreeTwoCoclass ([\critUniversalSpace, \WithFilling{\HamiltonianOrbit}]) = \Count ((\critUniversalSpace, f (\Disc)) \Intersection \PerturbedStableManifold ( \DegreeTwoCoclass^\FixedPoint)) \  [\critUniversalSpace, \WithFilling{\HamiltonianOrbit}],
        \end{equation}
    a degree-0 map on the $\ExtendedTorus$-equivariant Floer cochain complex.
    Similarly, let $\FillingIntersectionMap_\DegreeTwoCoclass^\FillingBasis$ be the $\NovikovRing$-linear map given by
        \begin{equation}
            \FillingIntersectionMap_\DegreeTwoCoclass^\FillingBasis (\NovVariable^\DegreeTwoClass [\critUniversalSpace, \WithFilling{\HamiltonianOrbit}_{\FillingBasis}]) = \Count ((\critUniversalSpace, f_\FillingBasis (\Disc)) \Intersection \PerturbedStableManifold ( \DegreeTwoCoclass^\FixedPoint)) \  \NovVariable^\DegreeTwoClass [\critUniversalSpace, \WithFilling{\HamiltonianOrbit}_{\FillingBasis}],
        \end{equation}
    where $f_\FillingBasis$ is a filling for $[\critUniversalSpace, \WithFilling{\HamiltonianOrbit}_{\FillingBasis}]$.
    The convention \eqref{eqn:cohomological-convention-for-fillings} yields
        \begin{equation}
        \label{eqn:basis-independence-for-differentiation-with-filling-map}
            \dbyd{\DegreeTwoCoclass} - \FillingIntersectionMap^\FillingBasis_\DegreeTwoCoclass = - \FillingIntersectionMap_\DegreeTwoCoclass.
        \end{equation}
    Both $\FillingIntersectionMap_\DegreeTwoCoclass$ and $\FillingIntersectionMap^\FillingBasis_\DegreeTwoCoclass$ are independent of the actual fillings $f$ and $f_\FillingBasis$ used, but they do depend on the choice of Morse data for $\PerturbedStableManifold ( \DegreeTwoCoclass^\FixedPoint))$.
        
    Let the (basis-free) \define{$\DegreeTwoCoclass^\FixedPoint$-weighted differential $d_\DegreeTwoCoclass$} be given by
        \begin{equation}
        \label{eqn:weighted-floer-differential-intersections-in-floer-solution-only}
            d_\DegreeTwoCoclass [\critUniversalSpace^+, \WithFilling{\HamiltonianOrbit}^+] = 
            \sum_{\substack{
                [\critUniversalSpace^-, \WithFilling{\HamiltonianOrbit}^-] \in \WithFilling{\HamiltonianOrbitSet}^\eqnt(\eqHamiltonian) \\
                |\critUniversalSpace^-, \WithFilling{\HamiltonianOrbit}^-| - |\critUniversalSpace^+, \WithFilling{\HamiltonianOrbit}^+| = 1 \\
                \RealNumbers \cdot [(\flowUniversalSpace, \FloerSolution)] \in \frac{\ModuliSpace([\critUniversalSpace^-, \WithFilling{\HamiltonianOrbit}^-], [\critUniversalSpace^+, \WithFilling{\HamiltonianOrbit}^+])}{\RealNumbers}
            }}
            \ \Count ((\flowUniversalSpace, \FloerSolution) (\RealNumbers \times \Circle) \Intersection \PerturbedStableManifold ( \DegreeTwoCoclass^\FixedPoint)) \  [\critUniversalSpace^-, \WithFilling{\HamiltonianOrbit}^-].
        \end{equation}
    This map counts Floer solutions weighted by their intersections with $\PerturbedStableManifold ( \DegreeTwoCoclass^\FixedPoint)$, and, unlike $d_\DegreeTwoCoclass^\FillingBasis$, it does not incorporate any intersections with the fillings.
    Combining the definitions yields
        \begin{equation}
            d_\DegreeTwoCoclass = d_\DegreeTwoCoclass^\FillingBasis - \FillingIntersectionMap^\FillingBasis_\DegreeTwoCoclass d + d \FillingIntersectionMap^\FillingBasis_\DegreeTwoCoclass.
        \end{equation}
    
    \begin{proposition}
    \label{prop:quantum-multiplication-commutes-with-differential-with-error}
        For the class $\DegreeTwoCoclass^\FixedPoint \in \Cohomology^2 _{\ExtendedTorus} (\Manifold, \Extended{\TorusAction})$, we have
            \begin{equation}
            \label{eqn:chain-map-failure-for-equivariant-quantum-action}
                 d (\DegreeTwoCoclass^\FixedPoint \QuantumAction \Argument) - \DegreeTwoCoclass^\FixedPoint \QuantumAction d (\Argument) = \formalAdditionalCircle d_\DegreeTwoCoclass.
            \end{equation}
    \end{proposition}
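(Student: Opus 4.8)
The plan is to prove \eqref{eqn:chain-map-failure-for-equivariant-quantum-action} as the algebraic boundary identity of a one-dimensional moduli space, using the device from the proof of \autoref{thm:intertwining-relation-quantum-extended-torus}. Fix a regular $\ExtendedTorus$-equivariant Floer datum $(\eqHamiltonian, \eqACS)$ and a basis of fillings $\FillingBasis$, and let $\ModuliSpace^{(1)}$ denote the moduli space of sextuples $(\flowUniversalSpace, \FloerSolution, \flowUniversalSpace_0, \flowManifold_0, \flowUniversalSpace^{\min}, t_0)$ of exactly the type defining the operation $\DegreeTwoCoclass^\FixedPoint \QuantumAction \Argument$ in \eqref{eqn:equivariant-quantum-action-on-equivariant-floer-cohomology}, but running between $\ExtendedTorus$-equivariant orbits $[\critUniversalSpace^\pm, \WithFilling{\HamiltonianOrbit}^\pm]$ whose indices differ by $3$, so that $\ModuliSpace^{(1)}$ has expected dimension $1$. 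As usual, the maximum principle supplied by the convex structure confines every configuration to a compact part of $\Manifold$, regularity rules out sphere bubbling in moduli spaces of dimension at most $2$, and the standard Floer and Morse compactness and gluing theorems then show that $\ModuliSpace^{(1)}$ is a compact $1$-manifold with boundary. The signed count of $\partial \ModuliSpace^{(1)}$ vanishes, so the task reduces to identifying its boundary strata with the three operators in \eqref{eqn:chain-map-failure-for-equivariant-quantum-action}.

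There are three kinds of codimension-one degeneration. First, the $\ExtendedTorus$-equivariant Floer solution $[\flowUniversalSpace, \FloerSolution]$ can break off a rigid equivariant Floer solution at one of its two ends; according to whether the marked point $(0, t_0)$ and the auxiliary flowlines $\flowUniversalSpace_0, \flowManifold_0, \flowUniversalSpace^{\min}$ remain on the incoming or the outgoing component, this produces the compositions $\DegreeTwoCoclass^\FixedPoint \QuantumAction d(\Argument)$ and $d(\DegreeTwoCoclass^\FixedPoint \QuantumAction \Argument)$ respectively. Second, the $\ExtendedTorus$-equivariant half$^+$ flowline $[\flowUniversalSpace_0, \flowManifold_0]$ to $\DegreeTwoCoclass^\FixedPoint$ can degenerate; choosing $\DegreeTwoCoclass^\FixedPoint$ to be represented by a Morse cocycle, so that $\PerturbedStableManifold(\DegreeTwoCoclass^\FixedPoint)$ is a closed chain, the degenerations that reach the frontier $\overline{\PerturbedStableManifold(\DegreeTwoCoclass^\FixedPoint)} \setminus \PerturbedStableManifold(\DegreeTwoCoclass^\FixedPoint)$ have codimension at least $2$ and contribute nothing, exactly as in \cite{liebenschutz-jones_intertwining_2020}. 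Third --- the term producing the right-hand side of \eqref{eqn:chain-map-failure-for-equivariant-quantum-action} --- the auxiliary flowline $\flowUniversalSpace^{\min}$ in $\UniversalSpace \ExtendedTorus$ can break. As in the proof of \autoref{thm:intertwining-relation-quantum-extended-torus}, its first piece is then a half$^+$ flowline from $\flowUniversalSpace(0)$ to a critical point $\critUniversalSpace$ with $[\critUniversalSpace] = \critClassifyingSpace$ the unique degree-two critical point of $\morseUniversalSpace$ on $\ClassifyingSpace \ExtendedTorus$ corresponding to $\formalAdditionalCircle$ (existence and uniqueness here is where the basis condition \eqref{eqn:basis-rules-for-extended-torus} is used), while its second piece is a flowline from $\critUniversalSpace$ into $\minUniversalSpace$ ending at the point whose $\arg$ is $-t_0$; the latter is uniquely determined by $(\critUniversalSpace, t_0)$ and may be discarded, so the condition $\arg(\flowUniversalSpace^{\min}(+\infty)) + t_0 = 0$ becomes vacuous.

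To finish, I would run the decoupling homotopy of \cite{liebenschutz-jones_intertwining_2020}, letting the attaching parameter of the rigid flowline to $\critClassifyingSpace$ slide along the Floer cylinder so as to separate it from the rest of the configuration. This exhibits the third boundary stratum as the geometric module action of $\formalAdditionalCircle$ applied to the operator which counts equivariant Floer solutions (of index difference $1$, modulo translation) together with a point of the cylinder at which the configuration, through the half$^+$ flowline $[\flowUniversalSpace_0, \flowManifold_0]$, meets $\PerturbedStableManifold(\DegreeTwoCoclass^\FixedPoint)$; since, by regularity, no such intersection occurs on the limiting orbits and the intersection locus is transversally cut out and confined to a compact part of the cylinder, a direct comparison of moduli problems identifies this operator with the weighted differential $d_\DegreeTwoCoclass$ of \eqref{eqn:weighted-floer-differential-intersections-in-floer-solution-only}. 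Thus the third stratum contributes $\formalAdditionalCircle d_\DegreeTwoCoclass$, and after fixing the relative signs from the coherent orientations the boundary identity is exactly \eqref{eqn:chain-map-failure-for-equivariant-quantum-action}. I expect the genuine obstacle to be this final step: controlling the $\flowUniversalSpace^{\min}$-breaking stratum and carrying out the decoupling homotopy cleanly in the Borel model, so that the parameter freed from the $\arg$-constraint really does reconstitute the free point-on-the-cylinder intersection count that defines $d_\DegreeTwoCoclass$ rather than one pinned to the slice $s = 0$, together with the attendant orientation bookkeeping. The two Floer-breaking strata are routine.
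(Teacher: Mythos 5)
Your proposal is correct and follows essentially the same route as the paper: the same one-dimensional moduli space of sextuples, the same three boundary strata, with the Floer-breaking strata giving the left-hand side and the breaking of $\flowUniversalSpace^{\min}$ at the degree-two critical point $\formalAdditionalCircle$ — after discarding the uniquely determined second flowline and freeing $t_0$ — giving $\formalAdditionalCircle d_\DegreeTwoCoclass$. The concern you flag at the end resolves itself exactly as you suspect: the $s=0$ pinning with free $t_0$ matches the definition of $d_\DegreeTwoCoclass$ because that operator counts Floer solutions modulo $\RealNumbers$-translation weighted by intersections over the whole cylinder.
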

    
    \begin{proof}
        The proof is analogous to the proof of the intertwining relation \autoref{thm:intertwining-relation-quantum-extended-torus}.
        The 1-dimensional moduli space of $\ExtendedTorus$-equivalence classes $[\flowUniversalSpace, \FloerSolution, \flowUniversalSpace_0, \flowManifold_0, \flowUniversalSpace^{\min}, t_0]$ has a boundary given by breaking either the $\ExtendedTorus$-equivariant Floer solution $[\flowUniversalSpace, \FloerSolution]$, breaking the $\ExtendedTorus$-equivariant perturbed half\textsuperscript{$+$} flowline to $\DegreeTwoCoclass^\FixedPoint$ or breaking the perturbed half\textsuperscript{$+$} flowline $\flowUniversalSpace^{\min}$.
        
        The terms on the left-hand side of \eqref{eqn:chain-map-failure-for-equivariant-quantum-action} come from the $\ExtendedTorus$-equivariant Floer solution breaking, and the term on the right-hand side comes from the perturbed half\textsuperscript{$+$} flowline $\flowUniversalSpace^{\min}$ breaking.
        When the perturbed half\textsuperscript{$+$} flowline $\flowUniversalSpace^{\min}$ breaks, it breaks into a perturbed half\textsuperscript{$+$} flowline to $\critClassifyingSpace \in \CriticalPointSet{\morseUniversalSpace, \ClassifyingSpace \ExtendedTorus}$ and a flowline (considered modulo $s$-translation) between $\critClassifyingSpace$ and $\minUniversalSpace$.
        Only the case $\formalTorus^c = \formalAdditionalCircle$ has a 0-dimensional moduli space by regularity.
        The flowline between $\critClassifyingSpace$ and $\minUniversalSpace$ is uniquely determined by the rest of the configuration, for its endpoint in $\minUniversalSpace$ must satisfy $\arg ( \Argument) + t_0 = 0$.
        Therefore the moduli space is unchanged if we omit the flowline between $\critClassifyingSpace$ and $\minUniversalSpace$ (we omit the argument relation too, so now $t_0 \in \Circle$ is arbitrary).
        The perturbed half\textsuperscript{$+$} flowline to $\critClassifyingSpace$ contributes the multiplication by $\formalAdditionalCircle$.
        The $\ExtendedTorus$-equivariant half\textsuperscript{$+$} flowline to $\DegreeTwoCoclass^\FixedPoint$ means ($\RealNumbers$-equivalence classes of) $\ExtendedTorus$-equivariant Floer solutions $[\flowUniversalSpace, \FloerSolution]$ are counted as many times as they intersect the (perturbed) stable manifold of $\DegreeTwoCoclass^\FixedPoint$.
    \end{proof}

\begin{figure}
\centering
\begin{subfigure}{0.4\textwidth}
    \centering
	\begin{center}
		\begin{tikzpicture}
			\node[inner sep=0] at (2.5,0) {\includegraphics[width=4 cm]{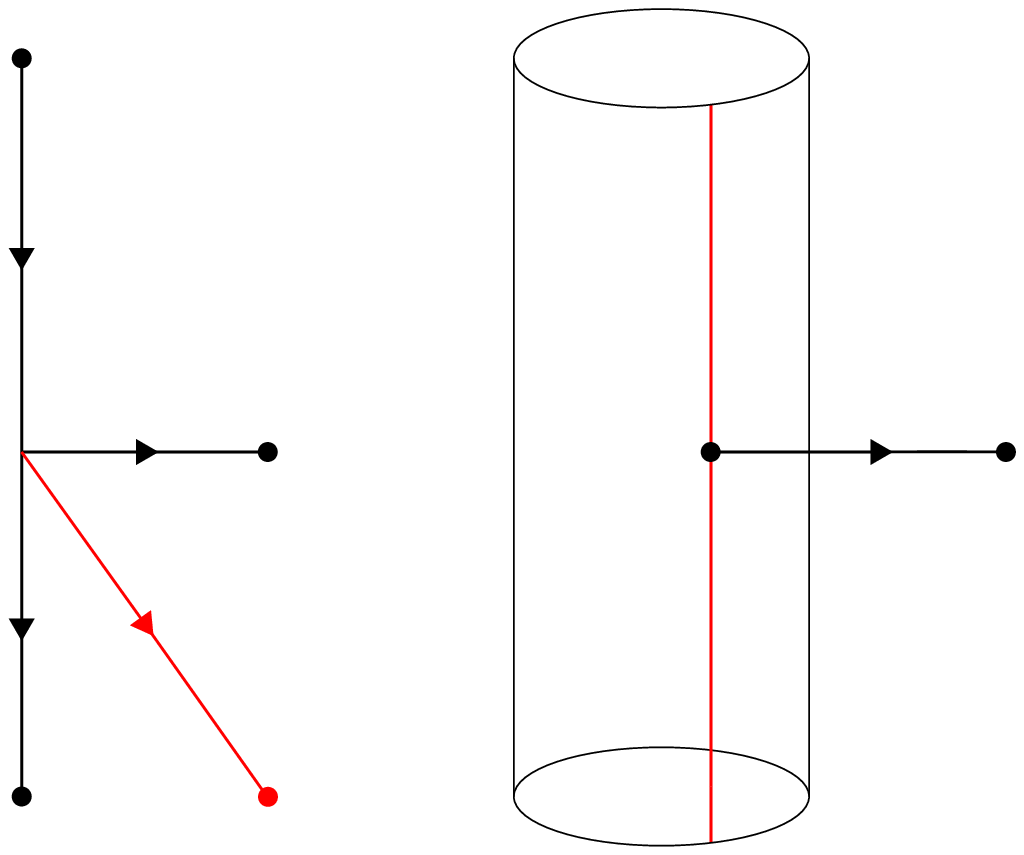}};
			
			\node at (2.9,-0.1){\tiny $(0, \textcolor{red}{t_0})$};
			\node at (4.0,0.1){\tiny $\flowManifold_0$};
			\node at (0.4, 0.8){\tiny $\flowUniversalSpace$};
			\node at (1.0, 0.1){\tiny $\flowUniversalSpace_0$};
			\node at (1.3, -0.6){\tiny $\flowUniversalSpace^{\min}$};
			\node at (0.4, -0.1){\tiny $0$};
			\node at (2.4, 0.8){\tiny $\FloerSolution$};
			\node at (4.7, -0.1){\tiny $\DegreeTwoCoclass^\FixedPoint$};
		\end{tikzpicture}
	\end{center}
    \caption{Equivariant action $\DegreeTwoCoclass^\FixedPoint \QuantumAction$ on equivariant Floer cohomology} 
\label{fig:quantum-action-on-equivariant-floer-cohomology-definition}
\end{subfigure}
\hspace{1em} 
\begin{subfigure}{0.4\textwidth}
  \centering
	\begin{center}
		\begin{tikzpicture}
			\node[inner sep=0] at (2.5,0) {\includegraphics[width=4.5 cm]{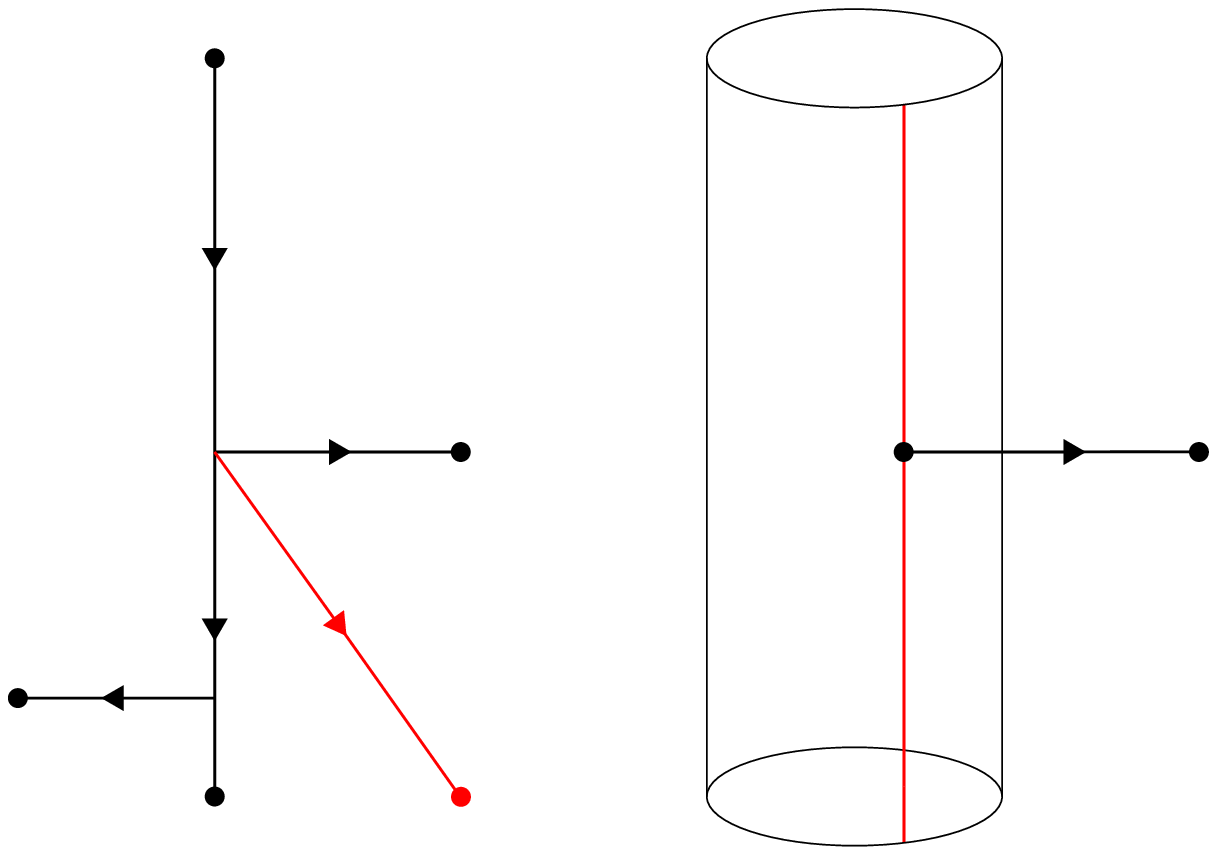}};
			
			\node at (3.2,-0.1){\tiny $(0, \textcolor{red}{t_0})$};
			\node at (4.2,0.1){\tiny $\flowManifold_0$};
			\node at (0.9, 0.8){\tiny $\flowUniversalSpace$};
			\node at (1.5, 0.1){\tiny $\flowUniversalSpace_0$};
			\node at (1.8, -0.6){\tiny $\flowUniversalSpace^{\min}$};
			\node at (0.9, -0.1){\tiny $0$};
			\node at (1.2, -1){\tiny $s_1$};
			\node at (0.7, -0.9){\tiny $\flowUniversalSpace_1$};
			\node at (0.2, -1.0){\tiny $\formalTorus^\critClassifyingSpace$};
			\node at (2.7, 0.8){\tiny $\FloerSolution$};
			\node at (4.7, -0.3){\tiny $\DegreeTwoCoclass^\FixedPoint$};
		\end{tikzpicture}
	\end{center}
    \caption{Homotopy for geometric module structure and connection} 
    \label{fig:geometric-action-and-connection-homotopy}
\end{subfigure}

\vspace{1em}

\begin{subfigure}{0.4\textwidth}
    \centering
	\begin{center}
		\begin{tikzpicture}
			\node[inner sep=0] at (2.5,0) {\includegraphics[width=4 cm]{figures/equivariant-action-on-floer.eps}};
			
			\node at (2.8,-0.1){\tiny $(s_0, \textcolor{red}{t_0})$};
			\node at (4.0,0.1){\tiny $\flowManifold_0$};
			\node at (0.4, 0.8){\tiny $\flowUniversalSpace$};
			\node at (1.0, 0.1){\tiny $\flowUniversalSpace_0$};
			\node at (1.3, -0.6){\tiny $\flowUniversalSpace^{\min}$};
			\node at (0.4, -0.1){\tiny $s_0$};
			\node at (2.4, 0.8){\tiny $\FloerSolution$};
			\node at (4.7, -0.1){\tiny $\DegreeTwoCoclass^\FixedPoint$};
		\end{tikzpicture}
	\end{center}
    \caption{Homotopy for continuation maps and connection}
    \label{fig:continuation-maps-and-connection-homotopy}
\end{subfigure}
\hspace{1em}
\begin{subfigure}{0.4\textwidth}
    \centering
	\begin{center}
		\begin{tikzpicture}
			\node[inner sep=0] at (2.5,0) {\includegraphics[width=4.5 cm]{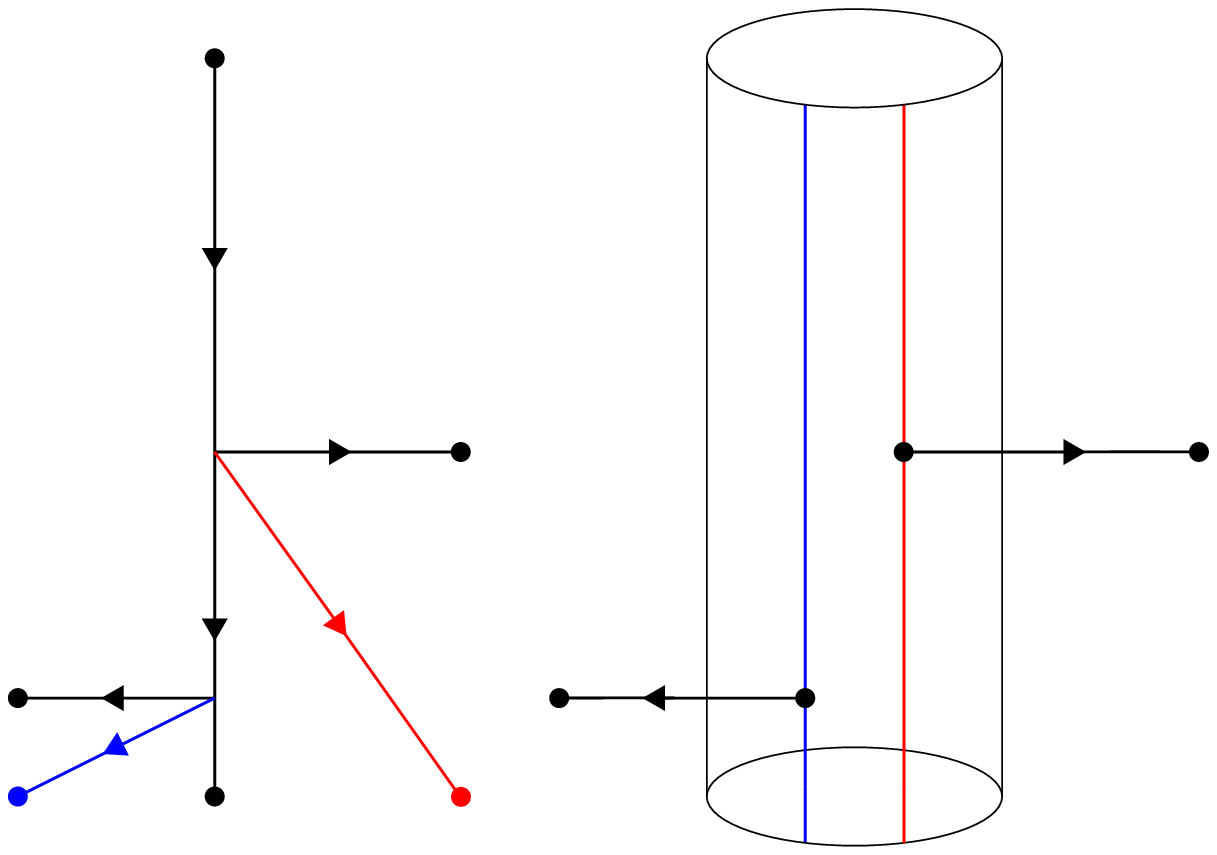}};
			
			\node at (3.2,-0.1){\tiny $(0, \textcolor{red}{t_0})$};
			\node at (3.7,-1.0){\tiny $(s_1, \textcolor{blue}{t_1})$};
			\node at (4.2,0.1){\tiny $\flowManifold_0$};
			\node at (2.6,-0.9){\tiny $\flowManifold_1$};
			\node at (0.9, 0.8){\tiny $\flowUniversalSpace$};
			\node at (1.5, 0.1){\tiny $\flowUniversalSpace_0$};
			\node at (1.8, -0.6){\tiny $\flowUniversalSpace^{\min}_0$};
			\node at (0.5, -1.5){\tiny $\flowUniversalSpace^{\min}_1$};
			\node at (0.9, -0.1){\tiny $0$};
			\node at (1.2, -1){\tiny $s_1$};
			\node at (0.7, -0.9){\tiny $\flowUniversalSpace_1$};
			\node at (2.7, 0.8){\tiny $\FloerSolution$};
			\node at (4.7, -0.3){\tiny $\DegreeTwoCoclass^\FixedPoint$};
			\node at (2.3, -1.2){\tiny $\secondDegreeTwoCoclass^\FixedPoint$};
		\end{tikzpicture}
	\end{center}
    \caption{Homotopy for flatness}
    \label{fig:floer-connection-is-flat-homotopy}
\end{subfigure}
\caption{
    The connection comprises a formal differentiation operation and the equivariant action $\DegreeTwoCoclass^\FixedPoint \QuantumAction$ in (\textsc{a}).
    The action is modified into three different homotopies which are used to show the connection is compatible with the geometric $\Cohomology^\ArbitraryIndex (\ClassifyingSpace \ExtendedTorus)$-module structure (\textsc{b}), compatible with continuation maps  (\textsc{c}) and flat (\textsc{d}).
}
\end{figure}
        
\subsection{Floer connection}

    The \define{Floer connection} $\Connection^{\FixedPoint, \FillingBasis}$ on $\FloerCohomology^\ArbitraryIndex_{\ExtendedTorus} (\Manifold, \Extended{\TorusAction}, \Slope; \eqHamiltonian, \eqACS)$ is the collection of maps
        \begin{equation}
            \begin{gathered}
                \Connection^{\FixedPoint, \FillingBasis} : \FloerCohomology^\ArbitraryIndex_{\ExtendedTorus} (\Manifold, \Extended{\TorusAction}, \Slope; \eqHamiltonian, \eqACS) \to \FloerCohomology^{\ArbitraryIndex + 2} _{\ExtendedTorus} (\Manifold, \Extended{\TorusAction}, \Slope; \eqHamiltonian, \eqACS) \\
                \Connection^{\FixedPoint, \FillingBasis} _{\DegreeTwoCoclass} (\Argument) = \formalAdditionalCircle \dbyd{\DegreeTwoCoclass} (\Argument) + \DegreeTwoCoclass^\FixedPoint \QuantumAction (\Argument) - \formalAdditionalCircle \FillingIntersectionMap^\FillingBasis_\DegreeTwoCoclass (\Argument)
            \end{gathered}
        \end{equation}
    for each $\DegreeTwoCoclass \in \Cohomology^2 (\Manifold)$.
    The map $\Connection^{\FixedPoint, \FillingBasis} _{\DegreeTwoCoclass}$ is a chain map because the error terms in \eqref{eqn:chain-map-failure-for-formal-differentiation} and \eqref{eqn:chain-map-failure-for-equivariant-quantum-action} cancel with each other.
    The Leibniz rule is satisfied for multiplication with $\NovikovRing$ because the differentiation operation $\formalAdditionalCircle \dbyd{\DegreeTwoCoclass}$ satisfies the Leibniz rule on the cochain complex while the other two maps are $\NovikovRing$-linear on the cochain complex.
    The difference $\dbyd{\DegreeTwoCoclass} - \FillingIntersectionMap^\FillingBasis_\DegreeTwoCoclass$ is independent of $\FillingBasis$ by \eqref{eqn:basis-independence-for-differentiation-with-filling-map}, and hence $\Connection^{\FixedPoint, \FillingBasis}$ is independent of $\FillingBasis$.
    This independence can also be shown using a homotopy construction based on continuation maps (\autoref{prop:connection-compatible-with-continuation-maps}).
    
    \begin{proposition}
    \label{prop:floer-connection-compatible-with-equivariant-action}
        $\Connection^{\FixedPoint, \FillingBasis}_{\DegreeTwoCoclass}$ is a $\Cohomology^\ArbitraryIndex (\ClassifyingSpace \ExtendedTorus)$-linear map.
    \end{proposition}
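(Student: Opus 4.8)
The plan is to check $\Cohomology^\ArbitraryIndex(\ClassifyingSpace\ExtendedTorus)$-linearity one generator at a time. Since $\Cohomology^\ArbitraryIndex(\ClassifyingSpace\ExtendedTorus) = \Integers[\formalAdditionalCircle,\formalTorus_1,\dots,\formalTorus_\dimTorus]$ is generated as a ring in degree $2$ and the geometric module structure is an algebra action, it is enough to show that $\Connection^{\FixedPoint, \FillingBasis}_\DegreeTwoCoclass$ commutes, on cohomology, with the geometric multiplication operator $m_\critClassifyingSpace = \formalTorus^\critClassifyingSpace \cdot (\Argument)$ for $\critClassifyingSpace$ a degree-$2$ critical point of $\morseUniversalSpace$ on $\ClassifyingSpace\ExtendedTorus$ (the argument is unchanged for general $\critClassifyingSpace$). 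Using the basis-independence identity \eqref{eqn:basis-independence-for-differentiation-with-filling-map}, I will first rewrite the connection in the basis-free form $\Connection^{\FixedPoint, \FillingBasis}_\DegreeTwoCoclass = \DegreeTwoCoclass^\FixedPoint \QuantumAction - \formalAdditionalCircle \FillingIntersectionMap_\DegreeTwoCoclass$, which separates the "Floer-solution" contribution $\DegreeTwoCoclass^\FixedPoint \QuantumAction$ from the "filling" contribution $\formalAdditionalCircle \FillingIntersectionMap_\DegreeTwoCoclass$. The claim then becomes that $\DegreeTwoCoclass^\FixedPoint\QuantumAction \circ m_\critClassifyingSpace$ and $m_\critClassifyingSpace \circ \DegreeTwoCoclass^\FixedPoint\QuantumAction$ differ, modulo a chain homotopy, by precisely $\formalAdditionalCircle$ times the commutator of $\FillingIntersectionMap_\DegreeTwoCoclass$ with $m_\critClassifyingSpace$.

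The main construction is the parametrised moduli space drawn in \autoref{fig:geometric-action-and-connection-homotopy}: to the sextuples $(\flowUniversalSpace,\FloerSolution,\flowUniversalSpace_0,\flowManifold_0,\flowUniversalSpace^{\min},t_0)$ defining $\DegreeTwoCoclass^\FixedPoint\QuantumAction$ I adjoin a further perturbed half\textsuperscript{$+$} flowline $\flowUniversalSpace_1$ in $\UniversalSpace\ExtendedTorus$, attached to the flowline component $\flowUniversalSpace$ at a free parameter $s_1 \in \RealNumbers$ and converging to $\critClassifyingSpace$. Counting the rigid elements of this one-parameter family and reading off the codimension-$1$ boundary produces a relation on the equivariant Floer cochain complex. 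I expect the strata to be: $s_1 \to +\infty$ and $s_1 \to -\infty$, where the equivariant Floer solution breaks so that $\flowUniversalSpace_1$ lies on one of the two pieces, contributing the two composites $\DegreeTwoCoclass^\FixedPoint\QuantumAction \circ m_\critClassifyingSpace$ and $m_\critClassifyingSpace \circ \DegreeTwoCoclass^\FixedPoint\QuantumAction$; ordinary breakings of $\flowUniversalSpace$, $\FloerSolution$, $\flowUniversalSpace_0/\flowManifold_0$ and $\flowUniversalSpace_1$, contributing a chain homotopy $dH + Hd$; the breaking of $\flowUniversalSpace^{\min}$, which as in the proof of \autoref{prop:quantum-multiplication-commutes-with-differential-with-error} contributes an $\formalAdditionalCircle$-weighted term counting the equivariant Floer solution with its intersections with $\PerturbedStableManifold(\DegreeTwoCoclass^\FixedPoint)$; and the collision $s_1 = 0$, where $\flowUniversalSpace_1$ runs into the common attaching point of $\flowUniversalSpace_0$ and $\flowUniversalSpace^{\min}$. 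For the last stratum a secondary homotopy decouples $\flowUniversalSpace_1$ from $\flowUniversalSpace_0$ — the two half\textsuperscript{$+$} flowlines in $\UniversalSpace\ExtendedTorus$ bubbling off a `Y'-shaped graph, which carries the $\formalAdditionalCircle$-factor — identifying this stratum as a second $\formalAdditionalCircle$-weighted module-type operation.

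The crux is to show that these two $\formalAdditionalCircle$-weighted boundary terms sum to $\formalAdditionalCircle$ times the commutator $[\FillingIntersectionMap_\DegreeTwoCoclass, m_\critClassifyingSpace]$, up to a further $d$-exact correction. The underlying mechanism is that $m_\critClassifyingSpace$ inserts a Floer cylinder carrying a relative homology class $\DegreeTwoClass\Prime \in \Homology_2(\Manifold)$: this is exactly what shifts the intersection number of the filling of the intermediate orbit with $\PerturbedStableManifold(\DegreeTwoCoclass^\FixedPoint)$ — morally by $\DegreeTwoCoclass(\DegreeTwoClass\Prime)$, which is precisely the failure of $\FillingIntersectionMap_\DegreeTwoCoclass$ to be $\NovikovRing$-linear and hence to commute with $m_\critClassifyingSpace$ — and which forces $\flowUniversalSpace_1$ to meet the connection data at $s_1 = 0$. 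I would make the match precise by establishing the Floer analogue of the quantum relation \eqref{eqn:remainder-of-differentiation-commuting-with-quantum-product}: a single auxiliary moduli space, counting geometric module-structure configurations weighted by the intersection number of the inserted Floer cylinder with $\PerturbedStableManifold(\DegreeTwoCoclass^\FixedPoint)$, should simultaneously compute $[\formalAdditionalCircle\FillingIntersectionMap_\DegreeTwoCoclass, m_\critClassifyingSpace]$ and the sum of the two $\formalAdditionalCircle$-weighted strata, so that the difference of the two commutator expressions is $dH + Hd$ and vanishes on cohomology. I expect the principal difficulty to be exactly this bookkeeping — signs and orientations of the collision stratum, and the careful accounting of how the inserted cylinder's class and the decoration $\critClassifyingSpace$ enter the intersection counts with $\PerturbedStableManifold(\DegreeTwoCoclass^\FixedPoint)$; the analytic input (transversality, the maximum principle for compactness, gluing) is standard and is already in place for each of the component moduli spaces.
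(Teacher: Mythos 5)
Your overall strategy is the paper's: the same parametrised moduli space (adjoining a half\textsuperscript{$+$} flowline $\flowUniversalSpace_1$ to $\critClassifyingSpace$ attached at a free parameter $s_1$), the same basis-free rewriting $\Connection^{\FixedPoint,\FillingBasis}_\DegreeTwoCoclass = \DegreeTwoCoclass^\FixedPoint \QuantumAction - \formalAdditionalCircle \FillingIntersectionMap_\DegreeTwoCoclass$ via \eqref{eqn:basis-independence-for-differentiation-with-filling-map}, and the same target identity matching the commutator of $\DegreeTwoCoclass^\FixedPoint\QuantumAction$ with $m_\critClassifyingSpace$ against $\formalAdditionalCircle$ times the commutator of $\FillingIntersectionMap_\DegreeTwoCoclass$ with $m_\critClassifyingSpace$. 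However, your boundary analysis contains a genuine error. The stratum $s_1 = 0$ does \emph{not} contribute a bubbled `Y'-shaped graph or ``a second $\formalAdditionalCircle$-weighted module-type operation''. The flowline $\flowUniversalSpace_1$ lives purely in $\UniversalSpace\ExtendedTorus$; when its attaching point $\flowUniversalSpace(s_1)$ collides with the common attaching point $\flowUniversalSpace(0)$ of $\flowUniversalSpace_0$ and $\flowUniversalSpace^{\min}$, nothing degenerates — the half-flowlines carry independent generic perturbations, so the configuration at $s_1=0$ is transversally cut out and the wall $\Set{s_1=0}$ is interior to the compactified moduli space. The two one-sided limits $s_1 \to 0^\pm$ therefore cancel with opposite orientations (this is exactly what happens in the paper's proof, and again in the proof of \autoref{thm:flatness-of-differential-floer-connection}). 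Bubbling at colliding marked points is a phenomenon of the holomorphic/Floer part of the configuration, not of the Morse part on $\UniversalSpace\ExtendedTorus$.

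Consequently your final bookkeeping does not close: you need the sum of two $\formalAdditionalCircle$-weighted boundary terms to equal $\formalAdditionalCircle[\FillingIntersectionMap_\DegreeTwoCoclass, m_\critClassifyingSpace]$, but one of those terms is zero and the other, as you describe it, is incomplete. The breaking of $\flowUniversalSpace^{\min}$ alone must account for everything: after decoupling the flowline to $\formalAdditionalCircle$ and performing an $s$-translation by $s_1$ (so that $\flowUniversalSpace_1$ is attached at $0$ and the intersection with $\PerturbedStableManifold(\DegreeTwoCoclass^\FixedPoint)$ occurs at an unconstrained point $(-s_1, t_0)$ of the cylinder), this stratum is identified with $\formalAdditionalCircle$ times the $\DegreeTwoCoclass$-weighted geometric action $\formalTorus^\critClassifyingSpace_\DegreeTwoCoclass$ — i.e.\ the module-structure configurations weighted by the intersections of the Floer solution with $\PerturbedStableManifold(\DegreeTwoCoclass^\FixedPoint)$ — not merely a weighted differential. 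The identification of $\formalTorus^\critClassifyingSpace_\DegreeTwoCoclass$ with $(-\FillingIntersectionMap_\DegreeTwoCoclass)\ComposedWith m_\critClassifyingSpace - m_\critClassifyingSpace \ComposedWith (-\FillingIntersectionMap_\DegreeTwoCoclass)$ is then a chain-level counting identity (the inserted cylinder changes the filling's intersection number, as you correctly observe), not something requiring a further auxiliary homotopy. With the $s_1=0$ term removed and the $\flowUniversalSpace^{\min}$ term upgraded as above, your argument becomes the paper's.
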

    
    \begin{proof}
        We construct another homotopy in the intertwining relation style.
        Fix $\formalTorus^\critClassifyingSpace \in \Cohomology^\ArbitraryIndex (\ClassifyingSpace \ExtendedTorus)$ and $\DegreeTwoCoclass \in \Cohomology^2 (\Manifold)$.
        We have 
            \begin{equation}
            \label{eqn:differentiation-relation-for-equivariant-action-on-equivariant-floer-cohomology}
                (-\FillingIntersectionMap_\DegreeTwoCoclass) (\formalTorus^\critClassifyingSpace \ \Argument) - \formalTorus^\critClassifyingSpace ( -\FillingIntersectionMap_\DegreeTwoCoclass) (\Argument) = \formalTorus^\critClassifyingSpace_\DegreeTwoCoclass (\Argument)
            \end{equation}
        where $\formalTorus^\critClassifyingSpace_\DegreeTwoCoclass$ denotes the $\DegreeTwoCoclass$-weighted geometric action of $\formalTorus^\critClassifyingSpace$, defined analogously to \eqref{eqn:weighted-floer-differential-intersections-in-floer-solution-only}.
        The homotopy counts $\ExtendedTorus$-equivalence classes of septuples $(\flowUniversalSpace, \FloerSolution, \flowUniversalSpace_0, \flowManifold_0, \flowUniversalSpace^{\min}, t_0, s_1, \flowUniversalSpace_1)$ where $(\flowUniversalSpace, \FloerSolution, \flowUniversalSpace_0, \flowManifold_0, \flowUniversalSpace^{\min}, t_0)$ is as for \eqref{eqn:equivariant-quantum-action-on-equivariant-floer-cohomology} and $\flowUniversalSpace_1$ is a perturbed half\textsuperscript{$+$} flowline in $\UniversalSpace \ExtendedTorus$ which satisfies $\flowUniversalSpace (s_1) = \flowUniversalSpace_1 (0)$ and $s_1 \in \RealNumbers \setminus \Set{0}$.
        See \autoref{fig:geometric-action-and-connection-homotopy}.
        The components of the 1-dimensional moduli space boundary which survive to cohomology are the limits $s_1 \to \pm \infty$ and the breaking of the perturbed half\textsuperscript{$+$} flowline $\flowUniversalSpace^{\min}$.
        The left and right limits $s_1 \to 0^\pm$ cancel with each other.
        This yields
            \begin{equation}
            \label{eqn:intertwining-relation-for-equivariant-action-on-equivariant-floer-cohomology}
                 \formalTorus^\critClassifyingSpace (\DegreeTwoCoclass^\FixedPoint \QuantumAction (\Argument)) - \DegreeTwoCoclass^\FixedPoint \QuantumAction (\formalTorus^\critClassifyingSpace \ \Argument) \sim \formalAdditionalCircle \formalTorus^\critClassifyingSpace_{\DegreeTwoCoclass} (\Argument).
            \end{equation}
        For simplicity, we have omitted the terms which arise as the $\ExtendedTorus$-equivariant Floer solution breaks.
        These are the terms $d \ComposedWith h$ and $h \ComposedWith d$, where $h$ denotes the homotopy which counts the $\ExtendedTorus$-equivalence classes of sextuples described above.
        The symbol $\sim$ denotes that each side is homotopic to the other.
        This is a slight abuse of notation because the two sides are not chain maps, however when combined with \eqref{eqn:differentiation-relation-for-equivariant-action-on-equivariant-floer-cohomology} in \eqref{eqn:connection-commutes-with-geometric-action}, both sides are indeed chain maps.
        
        While the derivation of \eqref{eqn:intertwining-relation-for-equivariant-action-on-equivariant-floer-cohomology} is mostly like the proof of \autoref{prop:quantum-multiplication-commutes-with-differential-with-error}, there is an additional step of performing an $s$-translation when interpreting the breaking of $\flowUniversalSpace^{\min}$ as $\formalAdditionalCircle \formalTorus^\critClassifyingSpace_{\DegreeTwoCoclass} (\Argument)$.
        More precisely, precomposing $[\flowUniversalSpace, \FloerSolution]$ with the translation $s \mapsto s + s_1$ changes the intersection condition $\flowUniversalSpace (s_1) = \flowUniversalSpace_1 (0)$ to $\flowUniversalSpace (0) = \flowUniversalSpace_1 (0)$.
        After the translation, the intersection between $[\flowUniversalSpace, \FloerSolution]$ and $[\flowUniversalSpace_0, \flowManifold_0]$ occurs at an unconstrained point $(-s_1, t_0) \in \RealNumbers \times \Circle$.
        Therefore, we recover an $\DegreeTwoCoclass$-weighted count of the configurations which determine the `geometric action of $\formalTorus^\critClassifyingSpace$' map.
        
        Together, equations \eqref{eqn:differentiation-relation-for-equivariant-action-on-equivariant-floer-cohomology} and \eqref{eqn:intertwining-relation-for-equivariant-action-on-equivariant-floer-cohomology} yield
            \begin{equation}
            \label{eqn:connection-commutes-with-geometric-action}
                [\Connection^{\FixedPoint, \FillingBasis}, \formalTorus^\critClassifyingSpace] (\Argument) = \formalAdditionalCircle \left( (-\FillingIntersectionMap_\DegreeTwoCoclass) (\formalTorus^\critClassifyingSpace \ \Argument) - \formalTorus^\critClassifyingSpace (-\FillingIntersectionMap_\DegreeTwoCoclass) (\Argument) \right) - \left( \formalTorus^\critClassifyingSpace (\DegreeTwoCoclass^\FixedPoint \QuantumAction (\Argument)) - \DegreeTwoCoclass^\FixedPoint \QuantumAction (\formalTorus^\critClassifyingSpace \ \Argument) \right) \sim 0
            \end{equation}
        as desired.
    \end{proof}
    
\subsection{Connection and continuation maps}

    Let $(\Hamiltonian^{\eqnt,\pm}, \ACS^{\eqnt,\pm})$ be two $\ExtendedTorus$-equivariant Floer data, and let $\FillingBasis^\pm$ be filling bases for the data.
    Suppose the slopes $\Slope^\pm$ of $\Hamiltonian^{\eqnt,\pm}$ satisfy $\delta = \Slope^- - \Slope^+ \ge 0$, and let $(\eqHamiltonian_s, \eqACS_s)$ be a regular $\ExtendedTorus$-equivariant monotone homotopy between the data.
    The corresponding $\ExtendedTorus$-equivariant continuation map $\ContinuationMap^\delta$ is given for $[\critUniversalSpace^+, \WithFilling{\HamiltonianOrbit}^+_{\FillingBasis^+}] \in \FillingBasis^+$ by
        \begin{equation}
        \label{eqn:equivariant-floer-continuation-map}
            \ContinuationMap^\delta [\critUniversalSpace^+, \WithFilling{\HamiltonianOrbit}^+_{\FillingBasis^+}] = \hspace{-3em}
            \sum_{\substack{
                \DegreeTwoClass \in \Homology_2 (\Manifold) \\
                [\critUniversalSpace^-, \WithFilling{\HamiltonianOrbit}^-_{\FillingBasis^-}] \in \FillingBasis^- \\
                |\critUniversalSpace^-, (-\DegreeTwoClass) \ConnectedSum \WithFilling{\HamiltonianOrbit}^-_{\FillingBasis^-}| - |\critUniversalSpace^+, \WithFilling{\HamiltonianOrbit}^+_{\FillingBasis^+}| = 0
            }} \hspace{-2em}
            \sum_{
                [\flowUniversalSpace, \FloerSolution] \in \ModuliSpace_s([\critUniversalSpace^-, (-\DegreeTwoClass) \ConnectedSum \WithFilling{\HamiltonianOrbit}^-_{\FillingBasis^-}], [\critUniversalSpace^+, \WithFilling{\HamiltonianOrbit}^+_{\FillingBasis^+}])
            } \hspace{-1em}
            \Count \left( [\flowUniversalSpace, \FloerSolution] \right)
            \NovVariable^\DegreeTwoClass
            [\critUniversalSpace^-, \WithFilling{\HamiltonianOrbit}^-_{\FillingBasis^-}].
        \end{equation}
    
    \begin{proposition}
    \label{prop:connection-compatible-with-continuation-maps}
        The connections $\Connection^{\FixedPoint, \FillingBasis^\pm}$ satisfy
            \begin{equation}
                \Connection^{\FixedPoint, \FillingBasis^-} \ComposedWith \ContinuationMap^\delta = \ContinuationMap^\delta \ComposedWith \Connection^{\FixedPoint, \FillingBasis^+}.
            \end{equation}
    \end{proposition}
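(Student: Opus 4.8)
The plan is to follow the ``intertwining relation style'' already used in the proofs of \autoref{prop:quantum-multiplication-commutes-with-differential-with-error} and \autoref{prop:floer-connection-compatible-with-equivariant-action}, building a one-parameter family of moduli spaces whose boundary yields the relation. The first observation is that on the cochain level the connection is basis-free: combining the definition of $\Connection^{\FixedPoint, \FillingBasis}$ with \eqref{eqn:basis-independence-for-differentiation-with-filling-map} gives $\Connection^{\FixedPoint, \FillingBasis}_\DegreeTwoCoclass = \DegreeTwoCoclass^\FixedPoint \QuantumAction - \formalAdditionalCircle \FillingIntersectionMap_\DegreeTwoCoclass$, so it is enough to show that $\DegreeTwoCoclass^\FixedPoint \QuantumAction$ and $\FillingIntersectionMap_\DegreeTwoCoclass$ each intertwine with the $\ExtendedTorus$-equivariant continuation map $\ContinuationMap^\delta$ in complementary ways. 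To phrase this I would introduce the \emph{$\DegreeTwoCoclass$-weighted continuation map} $\ContinuationMap^\delta_\DegreeTwoCoclass$, defined exactly like the weighted differential \eqref{eqn:weighted-floer-differential-intersections-in-floer-solution-only} but counting the index-$0$ $\ExtendedTorus$-equivariant continuation solutions (for the chosen regular monotone homotopy) weighted by their intersection number with the perturbed stable manifold $\PerturbedStableManifold(\DegreeTwoCoclass^\FixedPoint)$.

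The easy half is the filling bookkeeping, and it is a direct cochain-level computation requiring no homotopy. Gluing a continuation solution $\FloerSolution$ onto the chosen basis filling of its negative orbit produces (the equivalence class of) the basis filling of its positive orbit, and intersection numbers with $\PerturbedStableManifold(\DegreeTwoCoclass^\FixedPoint)$ add under this gluing, the sphere-class contribution accounting precisely for the semilinearity of $\FillingIntersectionMap_\DegreeTwoCoclass$. Just as the convention \eqref{eqn:cohomological-convention-for-fillings} produced \eqref{eqn:basis-independence-for-differentiation-with-filling-map}, this yields $\FillingIntersectionMap_\DegreeTwoCoclass \ComposedWith \ContinuationMap^\delta - \ContinuationMap^\delta \ComposedWith \FillingIntersectionMap_\DegreeTwoCoclass = -\ContinuationMap^\delta_\DegreeTwoCoclass$, the continuation-map analogue of the identity $\FillingIntersectionMap_\DegreeTwoCoclass d - d \FillingIntersectionMap_\DegreeTwoCoclass = -d_\DegreeTwoCoclass$ implicit in the derivation of \eqref{eqn:chain-map-failure-for-equivariant-quantum-action} from \eqref{eqn:chain-map-failure-for-formal-differentiation}.

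The substantive half is a homotopy establishing $\DegreeTwoCoclass^\FixedPoint \QuantumAction \ComposedWith \ContinuationMap^\delta - \ContinuationMap^\delta \ComposedWith \DegreeTwoCoclass^\FixedPoint \QuantumAction \simeq -\formalAdditionalCircle \ContinuationMap^\delta_\DegreeTwoCoclass$, that is, chain-homotopic through an operator $H$. I would take the moduli space of $\ExtendedTorus$-equivalence classes of tuples $(\flowUniversalSpace, \FloerSolution, \flowUniversalSpace_0, \flowManifold_0, \flowUniversalSpace^{\min}, s_0, t_0)$ where $[\flowUniversalSpace, \FloerSolution]$ is a $\ExtendedTorus$-equivariant continuation solution for $(\eqHamiltonian_s, \eqACS_s)$, the perturbed half\textsuperscript{$+$} flowlines $\flowUniversalSpace_0, \flowUniversalSpace^{\min}$ in $\UniversalSpace \ExtendedTorus$ emanate from $\flowUniversalSpace(s_0)$, the $\ExtendedTorus$-equivariant half\textsuperscript{$+$} flowline $[\flowUniversalSpace_0, \flowManifold_0]$ converges to $\DegreeTwoCoclass^\FixedPoint$, and the constraints are $\flowUniversalSpace(s_0) = \flowUniversalSpace_0(0) = \flowUniversalSpace^{\min}(0)$, $\FloerSolution(s_0, t_0) = \flowManifold_0(0)$ and $\arg(\flowUniversalSpace^{\min}(+\infty)) + t_0 = 0$ (with $\arg : \minUniversalSpace \to \AdditionalCircle$ as in \autoref{thm:intertwining-relation-quantum-extended-torus}). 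The only new feature relative to \eqref{eqn:equivariant-quantum-action-on-equivariant-floer-cohomology} is that the marked parameter $s_0 \in \RealNumbers$ is now free, which is natural since continuation solutions carry no $\RealNumbers$-translation symmetry; see \autoref{fig:continuation-maps-and-connection-homotopy}. On the $1$-dimensional part, the limit $s_0 \to +\infty$ breaks a genuine $\Slope^+$-Floer solution (carrying the $\DegreeTwoCoclass^\FixedPoint \QuantumAction$ data, after $s$-translation) off the input end and leaves a continuation solution, producing $\ContinuationMap^\delta \ComposedWith \DegreeTwoCoclass^\FixedPoint \QuantumAction$; the limit $s_0 \to -\infty$ symmetrically produces $\DegreeTwoCoclass^\FixedPoint \QuantumAction \ComposedWith \ContinuationMap^\delta$; interior breaking of $[\flowUniversalSpace, \FloerSolution]$ with $s_0$ finite on one factor gives the chain-homotopy terms $dH + Hd$; and breaking of $\flowUniversalSpace^{\min}$, after discarding the uniquely determined flowline to $\minUniversalSpace$ exactly as in \autoref{thm:intertwining-relation-quantum-extended-torus}, forces the critical point $\critClassifyingSpace$ with $\formalTorus^\critClassifyingSpace = \formalAdditionalCircle$ and leaves a count of continuation solutions weighted by intersections with $\PerturbedStableManifold(\DegreeTwoCoclass^\FixedPoint)$, namely $\pm \formalAdditionalCircle \ContinuationMap^\delta_\DegreeTwoCoclass$. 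Setting the boundary to zero, adding the filling identity of the previous paragraph, and substituting $\Connection^{\FixedPoint, \FillingBasis}_\DegreeTwoCoclass = \DegreeTwoCoclass^\FixedPoint \QuantumAction - \formalAdditionalCircle \FillingIntersectionMap_\DegreeTwoCoclass$, the two $\formalAdditionalCircle \ContinuationMap^\delta_\DegreeTwoCoclass$ contributions cancel (with signs arranged by the coherent orientations, as in the $d$-case), so $\Connection^{\FixedPoint, \FillingBasis^-} \ComposedWith \ContinuationMap^\delta$ and $\ContinuationMap^\delta \ComposedWith \Connection^{\FixedPoint, \FillingBasis^+}$ are chain-homotopic and hence agree on cohomology.

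The main obstacle I anticipate is the analytic and combinatorial bookkeeping rather than any new idea: establishing transversality, compactness and gluing for the augmented moduli space with the free marked parameter $s_0$, and carefully matching the $s_0 \to \pm\infty$ degenerations (including the $s$-translation needed to recognise the broken Floer piece as precisely what $\DegreeTwoCoclass^\FixedPoint \QuantumAction$ counts) together with the two different filling bases $\FillingBasis^\pm$ on the two ends, so that all signs line up and the two weighted-continuation-map terms genuinely cancel instead of doubling. Once this is in place, setting $\Hamiltonian^{\eqnt,+} = \Hamiltonian^{\eqnt,-}$ with $\FillingBasis^+ \ne \FillingBasis^-$ and the constant homotopy recovers, as a bonus, the independence of $\Connection^{\FixedPoint,\FillingBasis}$ from $\FillingBasis$.
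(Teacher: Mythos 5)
Your proposal is correct and follows essentially the same route as the paper: the same splitting of $\Connection^{\FixedPoint,\FillingBasis}_{\DegreeTwoCoclass}$ into $\DegreeTwoCoclass^\FixedPoint \QuantumAction - \formalAdditionalCircle \FillingIntersectionMap_{\DegreeTwoCoclass}$ via \eqref{eqn:basis-independence-for-differentiation-with-filling-map}, the same $\DegreeTwoCoclass$-weighted continuation map appearing as the common error term, and the same homotopy with a free marked parameter $s_0$ whose $s_0 \to \pm\infty$ limits and $\flowUniversalSpace^{\min}$-breaking produce the cancelling contributions. The only cosmetic difference is that the paper phrases the weighted continuation map via an $s$-dependent homotopy of the Morse data interpolating between the data used for $\QuantumAction_{\FillingBasis^\pm}$ at the two ends, which is the precise version of the bookkeeping you flag in your final paragraph.
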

    
    \begin{proof}
        This is another variation of the intertwining relation proof.
        Fix data for the $\ExtendedTorus$-equivariant quantum actions $\QuantumAction_{\FillingBasis^\pm}$.
        Choose an $s$-dependent homotopy between this Morse data.
        We have 
            \begin{equation}
            \label{eqn:differentiation-intertwining-with-continuation-map}
                 (-\FillingIntersectionMap_\DegreeTwoCoclass) (\ContinuationMap^\delta) - \ContinuationMap^\delta  (-\FillingIntersectionMap_\DegreeTwoCoclass) = \ContinuationMap^\delta_{\DegreeTwoCoclass},
            \end{equation}
        where $\ContinuationMap^\delta_{\DegreeTwoCoclass}$ is the $\DegreeTwoCoclass$-weighted continuation map which counts $\ExtendedTorus$-equivariant continuation solutions weighted by their intersections with $s$-dependent half\textsuperscript{$+$} flowlines for the $s$-dependent homotopy.
        Thus $\ContinuationMap^\delta_{\DegreeTwoCoclass}$ is the $s$-dependent version of \eqref{eqn:weighted-floer-differential-intersections-in-floer-solution-only}.
        
        Define a map which counts $\ExtendedTorus$-equivalence classes of septuples $(\flowUniversalSpace, \FloerSolution, \flowUniversalSpace_0, \flowManifold_0, \flowUniversalSpace^{\min}, s_0, t_0)$ where $(\flowUniversalSpace, \FloerSolution, \flowUniversalSpace_0, \flowManifold_0, \flowUniversalSpace^{\min}, t_0)$ differs from \eqref{eqn:equivariant-quantum-action-on-equivariant-floer-cohomology} only in its intersection conditions: instead we have $\flowUniversalSpace(s_0) = \flowUniversalSpace_0(0) = \flowUniversalSpace^{\min}(0)$ and $\FloerSolution (s_0, t_0) = \flowManifold_0 (0)$.
        See \autoref{fig:continuation-maps-and-connection-homotopy}.
        Thus for $ \pm s_0 \gg 0$, the half\textsuperscript{$+$} flowlines use the data corresponding to  $\QuantumAction_{\FillingBasis^\pm}$.
        The limits $s_0 \to \pm \infty$ and the breaking of the half\textsuperscript{$+$} flowline $\flowUniversalSpace^{\min}$ form the boundary of the 1-dimensional moduli space, and yield
            \begin{equation}
                 \ContinuationMap^\delta (\DegreeTwoCoclass^\FixedPoint \QuantumAction_{\FillingBasis^+} (\Argument)) - \DegreeTwoCoclass^\FixedPoint \QuantumAction_{\FillingBasis^-} \ContinuationMap^\delta (\Argument) \sim \formalAdditionalCircle \ContinuationMap^\delta_{\DegreeTwoCoclass} (\Argument),
            \end{equation}
        which together with \eqref{eqn:differentiation-intertwining-with-continuation-map} gives the desired compatibility.
    \end{proof}
    
    In particular, the connection $\Connection^\FixedPoint$ on $\FloerCohomology^\ArbitraryIndex_{\ExtendedTorus} (\Manifold, \Extended{\TorusAction}, \Slope)$ is well-defined and independent of the choice of $\ExtendedTorus$-equivariant Floer datum and filling basis.
    Moreover, the connection is well-defined in the direct limit as $\Slope \to \infty$, implying the following corollary.
    
    \begin{corollary}
        There is an induced connection $\Connection^\FixedPoint$ on $\ExtendedTorus$-equivariant symplectic cohomology $\SymplecticCohomology^\ArbitraryIndex_{\ExtendedTorus} (\Manifold, \Extended{\TorusAction})$.
    \end{corollary}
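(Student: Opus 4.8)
The plan is to invoke the universal property of the direct limit, with all of the substantive input supplied by \autoref{prop:connection-compatible-with-continuation-maps}. Recall that $\SymplecticCohomology^\ArbitraryIndex_{\ExtendedTorus} (\Manifold, \Extended{\TorusAction})$ is by definition the colimit of the direct system $\bigl( \FloerCohomology^\ArbitraryIndex_{\ExtendedTorus} (\Manifold, \Extended{\TorusAction}, \Slope) \bigr)_{\Slope \in \RealNumbers \setminus \ReebPeriods}$, whose structure maps are the $\ExtendedTorus$-equivariant continuation maps $\ContinuationMap^\delta$. First I would fix, for each slope $\Slope$, a regular $\ExtendedTorus$-equivariant Floer datum and a filling basis $\FillingBasis$, obtaining for every $\DegreeTwoCoclass \in \Cohomology^2 (\Manifold)$ the Floer connection $\Connection^{\FixedPoint, \FillingBasis}_\DegreeTwoCoclass$. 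By \autoref{prop:connection-compatible-with-continuation-maps}, for any two slopes $\Slope^- \ge \Slope^+$ the square relating $\Connection^{\FixedPoint, \FillingBasis^\pm}_\DegreeTwoCoclass$ and $\ContinuationMap^\delta$ commutes on cohomology, i.e. the family $\{\Connection^\FixedPoint_\DegreeTwoCoclass\}_\Slope$ is a morphism of the direct system. Hence it descends to an endomorphism $\Connection^\FixedPoint_\DegreeTwoCoclass : \SymplecticCohomology^\ArbitraryIndex_{\ExtendedTorus} (\Manifold, \Extended{\TorusAction}) \to \SymplecticCohomology^{\ArbitraryIndex + 2}_{\ExtendedTorus} (\Manifold, \Extended{\TorusAction})$ on the colimit (the degree shift being inherited termwise).

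Next I would check that the algebraic structure survives the passage to the limit. The $\NovikovRing \GradedCompletedTensorProduct \Cohomology^\ArbitraryIndex (\ClassifyingSpace \ExtendedTorus)$-module structure on $\SymplecticCohomology^\ArbitraryIndex_{\ExtendedTorus} (\Manifold, \Extended{\TorusAction})$ is itself the colimit of the module structures at finite slope, because the continuation maps are $\NovikovRing \GradedCompletedTensorProduct \Cohomology^\ArbitraryIndex (\ClassifyingSpace \ExtendedTorus)$-module homomorphisms. Since the Leibniz rule $\Connection^\FixedPoint_\DegreeTwoCoclass (f x) = f \Connection^\FixedPoint_\DegreeTwoCoclass (x) + \formalAdditionalCircle \bigl( \dbyd{\DegreeTwoCoclass} f \bigr) x$ and the $\Cohomology^\ArbitraryIndex (\ClassifyingSpace \ExtendedTorus)$-linearity from \autoref{prop:floer-connection-compatible-with-equivariant-action} hold at each finite stage and are compatible with the structure maps, they both pass to the colimit. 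Additivity of $\DegreeTwoCoclass \mapsto \Connection^\FixedPoint_\DegreeTwoCoclass$ is likewise inherited, so the assembled maps form a partial differential connection $\Connection^\FixedPoint$ on $\SymplecticCohomology^\ArbitraryIndex_{\ExtendedTorus} (\Manifold, \Extended{\TorusAction})$ in the sense of \autoref{def:partial-differential-connection}.

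Finally I would record independence of auxiliary choices. At each slope $\Connection^{\FixedPoint, \FillingBasis}$ is independent of the filling basis $\FillingBasis$ (via \eqref{eqn:basis-independence-for-differentiation-with-filling-map}, or the homotopy construction in \autoref{prop:connection-compatible-with-continuation-maps}) and of the Floer datum, while the continuation maps are independent of the chosen monotone homotopy; so the morphism of direct systems, and hence the induced map on $\SymplecticCohomology^\ArbitraryIndex_{\ExtendedTorus} (\Manifold, \Extended{\TorusAction})$, is canonical. There is no genuine obstacle in this corollary: all the geometry lives in \autoref{prop:connection-compatible-with-continuation-maps}. The only points needing a moment's care are that the relevant squares commute already on cohomology (not merely up to homotopy) — which is precisely the content of that proposition — and that the colimit of the finite-slope module structures really is the module structure on $\SymplecticCohomology^\ArbitraryIndex_{\ExtendedTorus} (\Manifold, \Extended{\TorusAction})$ used to phrase the Leibniz rule.
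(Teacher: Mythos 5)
Your proposal is correct and follows essentially the same route as the paper: the paper deduces the corollary directly from \autoref{prop:connection-compatible-with-continuation-maps}, noting that compatibility with continuation maps makes $\Connection^\FixedPoint$ well-defined, independent of the Floer datum and filling basis, and well-defined in the direct limit as $\Slope \to \infty$. Your additional checks (that the module structure, Leibniz rule, and $\Cohomology^\ArbitraryIndex(\ClassifyingSpace \ExtendedTorus)$-linearity pass to the colimit) are exactly the routine verifications the paper leaves implicit.
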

    
    The proof of \autoref{prop:connection-compatible-with-continuation-maps} may be adapted to prove that the connection $\Connection^\FixedPoint$ on $\FloerCohomology^\ArbitraryIndex_{\ExtendedTorus} (\Manifold, \Extended{\TorusAction}, 0)$ is isomorphic to the connection \eqref{eqn:definition-quantum-connection-final-form}, and the isomorphisms are the $\ExtendedTorus$-equivariant PSS maps.
    Here, a Hamiltonian of slope 0 and the corresponding PSS maps are defined as in \cite[Theorem~37]{ritter_floer_2014}.
    
\subsection{Flatness of Floer connection}
        
    The Floer connection is flat.
    The proof is a Floer-theoretic version of the proof of \autoref{thm:differential-connection-flatness-quantum}, and it is more involved because $\DegreeTwoCoclass^\FixedPoint \QuantumAction (\secondDegreeTwoCoclass^\FixedPoint \QuantumAction \Argument)$ and $\secondDegreeTwoCoclass^\FixedPoint \QuantumAction (\DegreeTwoCoclass^\FixedPoint \QuantumAction \Argument)$ are not chain maps, let alone equal.
    
    \begin{theorem}
        \label{thm:flatness-of-differential-floer-connection}
        The connection $\Connection^\FixedPoint$ is flat.
    \end{theorem}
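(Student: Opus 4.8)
The plan is to mimic, in Floer theory, the proof of \autoref{thm:differential-connection-flatness-quantum}. By \autoref{lem:flatness-sufficient-on-generating-set-differential-connection} it suffices to show $\Curvature^{\Connection^\FixedPoint}_{X, Y} = 0$ when $X = \formalAdditionalCircle \dbyd{\DegreeTwoCoclass}$ and $Y = \formalAdditionalCircle \dbyd{\secondDegreeTwoCoclass}$ for $\DegreeTwoCoclass, \secondDegreeTwoCoclass \in \Cohomology^2(\Manifold)$; since $\dbyd{\DegreeTwoCoclass}$ and $\dbyd{\secondDegreeTwoCoclass}$ commute the $\Connection^\FixedPoint_{\LieBracket{X}{Y}}$ term drops out, so we must prove $\LieBracket{\Connection^\FixedPoint_\DegreeTwoCoclass}{\Connection^\FixedPoint_\secondDegreeTwoCoclass} = 0$ on $\FloerCohomology^\ArbitraryIndex_{\ExtendedTorus}(\Manifold, \Extended{\TorusAction}, \Slope)$. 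Working with the basis-free expression $\Connection^{\FixedPoint, \FillingBasis}_\DegreeTwoCoclass = \DegreeTwoCoclass^\FixedPoint \QuantumAction - \formalAdditionalCircle \FillingIntersectionMap_\DegreeTwoCoclass$ (legitimate by \eqref{eqn:basis-independence-for-differentiation-with-filling-map}) and the centrality of $\formalAdditionalCircle$, the cochain-level commutator of these two chain maps expands as
\begin{equation}
    \LieBracket{\Connection^\FixedPoint_\DegreeTwoCoclass}{\Connection^\FixedPoint_\secondDegreeTwoCoclass} = \LieBracket{\DegreeTwoCoclass^\FixedPoint \QuantumAction}{\secondDegreeTwoCoclass^\FixedPoint \QuantumAction} - \formalAdditionalCircle \LieBracket{\DegreeTwoCoclass^\FixedPoint \QuantumAction}{\FillingIntersectionMap_\secondDegreeTwoCoclass} - \formalAdditionalCircle \LieBracket{\FillingIntersectionMap_\DegreeTwoCoclass}{\secondDegreeTwoCoclass^\FixedPoint \QuantumAction},
\end{equation}
since $\FillingIntersectionMap_\DegreeTwoCoclass$ and $\FillingIntersectionMap_\secondDegreeTwoCoclass$ are diagonal scalar operators on the cochain complex and hence commute. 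It is then enough to exhibit a cochain homotopy annihilating the right-hand side.

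The homotopy I would use is the intertwining-relation-style construction of \autoref{fig:floer-connection-is-flat-homotopy}, built just as in \autoref{prop:quantum-multiplication-commutes-with-differential-with-error} and \autoref{prop:floer-connection-compatible-with-equivariant-action}. Namely, count $\ExtendedTorus$-equivalence classes of tuples $(\flowUniversalSpace, \FloerSolution, \flowUniversalSpace_0, \flowManifold_0, \flowUniversalSpace^{\min}_0, \flowUniversalSpace_1, \flowManifold_1, \flowUniversalSpace^{\min}_1, t_0, t_1, s_1)$ consisting of a $\ExtendedTorus$-equivariant Floer solution $[\flowUniversalSpace, \FloerSolution]$, two $\ExtendedTorus$-equivariant perturbed half\textsuperscript{$+$} flowlines $[\flowUniversalSpace_0, \flowManifold_0]$ and $[\flowUniversalSpace_1, \flowManifold_1]$ converging to $\DegreeTwoCoclass^\FixedPoint$ and $\secondDegreeTwoCoclass^\FixedPoint$ respectively, two auxiliary perturbed half\textsuperscript{$+$} flowlines $\flowUniversalSpace^{\min}_0, \flowUniversalSpace^{\min}_1$ in $\UniversalSpace \ExtendedTorus$, insertion times $t_0, t_1 \in \Circle$, and a parameter $s_1 \in \RealNumbers \setminus \Set{0}$, subject to $\flowUniversalSpace(0) = \flowUniversalSpace_0(0) = \flowUniversalSpace^{\min}_0(0)$, $\flowUniversalSpace(s_1) = \flowUniversalSpace_1(0) = \flowUniversalSpace^{\min}_1(0)$, $\FloerSolution(0, t_0) = \flowManifold_0(0)$, $\FloerSolution(s_1, t_1) = \flowManifold_1(0)$, and $\arg(\flowUniversalSpace^{\min}_i(+\Infinity)) + t_i = 0$ for $i = 0, 1$. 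This defines a degree-$3$ map $h$, and the boundary of the associated $1$-dimensional moduli space contributes as follows: breaking of $[\flowUniversalSpace, \FloerSolution]$ at either end gives $d \ComposedWith h + h \ComposedWith d$; the limits $s_1 \to \pm\Infinity$, in which $\FloerSolution$ breaks and the two insertions (each carrying its own $\flowUniversalSpace^{\min}_i$, so each piece is a genuine equivariant quantum-action configuration) separate onto the two pieces in opposite order, give $\pm\LieBracket{\DegreeTwoCoclass^\FixedPoint \QuantumAction}{\secondDegreeTwoCoclass^\FixedPoint \QuantumAction}$; the limits $s_1 \to 0^\pm$ cancel with one another, exactly as in \autoref{prop:floer-connection-compatible-with-equivariant-action}; and breaking of $\flowUniversalSpace^{\min}_0$ (resp.\ $\flowUniversalSpace^{\min}_1$) peels off, by regularity, a perturbed half\textsuperscript{$+$} flowline to the unique $\critClassifyingSpace$ with $\formalTorus^\critClassifyingSpace = \formalAdditionalCircle$, leaving after the usual $s$-translation a configuration whose remaining insertion is recorded by a free intersection of the Floer cylinder with $\PerturbedStableManifold(\DegreeTwoCoclass^\FixedPoint)$ (resp.\ $\PerturbedStableManifold(\secondDegreeTwoCoclass^\FixedPoint)$), contributing $\formalAdditionalCircle$ times a cylinder-intersection-weighted version of $\secondDegreeTwoCoclass^\FixedPoint \QuantumAction$ (resp.\ $\DegreeTwoCoclass^\FixedPoint \QuantumAction$). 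Equating the signed sum of these contributions to zero, and rewriting the cylinder-intersection-weighted actions as $\LieBracket{\FillingIntersectionMap_\DegreeTwoCoclass}{\secondDegreeTwoCoclass^\FixedPoint \QuantumAction}$ and $\LieBracket{\DegreeTwoCoclass^\FixedPoint \QuantumAction}{\FillingIntersectionMap_\secondDegreeTwoCoclass}$ modulo $d$-exact maps — the action analogue of the identity $d_\DegreeTwoCoclass = \LieBracket{d}{\FillingIntersectionMap_\DegreeTwoCoclass}$ extracted from \eqref{eqn:chain-map-failure-for-formal-differentiation}, \eqref{eqn:basis-independence-for-differentiation-with-filling-map} and \autoref{prop:quantum-multiplication-commutes-with-differential-with-error} — yields exactly $\LieBracket{\Connection^\FixedPoint_\DegreeTwoCoclass}{\Connection^\FixedPoint_\secondDegreeTwoCoclass} \sim 0$, hence $= 0$ on cohomology.

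The hard part will be the step that was the one subtle point already in the quantum proof: graded-commutativity of the two orders of composition. In the quantum setting this was immediate from an automorphism of $\Projective^1$ swapping two marked points, but the domain of a Floer solution is the noncompact cylinder $\RealNumbers \times \Circle$, which admits no such automorphism, so the symmetry has to be recovered from the $1$-parameter family in $s_1$. The cost is the two extra boundary strata where $\flowUniversalSpace^{\min}_0$ or $\flowUniversalSpace^{\min}_1$ breaks, and the genuine work is: (i) performing the $s$-translations carefully enough to re-read these strata as cylinder-intersection-weighted actions (as in \autoref{prop:floer-connection-compatible-with-equivariant-action}); (ii) proving the action-level version of $d_\DegreeTwoCoclass = \LieBracket{d}{\FillingIntersectionMap_\DegreeTwoCoclass}$ that converts a cylinder-intersection weighting into a filling-intersection commutator up to a further homotopy; and (iii) keeping track of the not-a-chain-map error terms of $\dbyd{\DegreeTwoCoclass}$, $\DegreeTwoCoclass^\FixedPoint \QuantumAction$ and $\FillingIntersectionMap_\DegreeTwoCoclass$ (from \eqref{eqn:chain-map-failure-for-formal-differentiation}–\eqref{eqn:chain-map-failure-for-equivariant-quantum-action}) throughout, since none of these maps is individually a chain map and all the identities above hold only modulo $d$-exact. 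Once flatness holds on $\FloerCohomology^\ArbitraryIndex_{\ExtendedTorus}(\Manifold, \Extended{\TorusAction}, \Slope)$, it descends to $\SymplecticCohomology^\ArbitraryIndex_{\ExtendedTorus}(\Manifold, \Extended{\TorusAction})$ by the continuation-map compatibility of \autoref{prop:connection-compatible-with-continuation-maps}.
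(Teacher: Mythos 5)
Your proposal is correct and follows essentially the same route as the paper: the same reduction via \autoref{lem:flatness-sufficient-on-generating-set-differential-connection} to the commutator of $\Connection^\FixedPoint_\DegreeTwoCoclass$ and $\Connection^\FixedPoint_\secondDegreeTwoCoclass$, the same rewriting in terms of $\DegreeTwoCoclass^\FixedPoint \QuantumAction - \formalAdditionalCircle \FillingIntersectionMap_\DegreeTwoCoclass$, the same two-insertion homotopy of \autoref{fig:floer-connection-is-flat-homotopy}, and the same boundary analysis (with $s_1 \to \pm\Infinity$ giving the commutator of the quantum actions, $s_1 \to 0^\pm$ cancelling, and the breakings of $\flowUniversalSpace^{\min}_0, \flowUniversalSpace^{\min}_1$ giving the weighted actions after $s$-translation). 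The conversion of the weighted actions into commutators with $\FillingIntersectionMap$, which you flag as remaining work in item (ii), is exactly the paper's identity \eqref{eqn:differentiation-intertwining-with-quantum-action}, so nothing essential is missing.
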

    
    \begin{proof}
        As per the proof of \autoref{thm:differential-connection-flatness-quantum}, we need only check the commutativity of $\Connection^\FixedPoint_\DegreeTwoCoclass$ and $\Connection^\FixedPoint _\secondDegreeTwoCoclass$ for classes $\DegreeTwoCoclass, \secondDegreeTwoCoclass \in \Cohomology^2 (\Manifold)$.
        The maps $(-\FillingIntersectionMap_\DegreeTwoCoclass)$ and $(-\FillingIntersectionMap_\secondDegreeTwoCoclass)$ clearly commute.
        We have
            \begin{equation}
            \label{eqn:differentiation-intertwining-with-quantum-action}
                (-\FillingIntersectionMap_\secondDegreeTwoCoclass) (\DegreeTwoCoclass^\FixedPoint \QuantumAction \Argument) - \DegreeTwoCoclass^\FixedPoint \QuantumAction (-\FillingIntersectionMap_\secondDegreeTwoCoclass) (\Argument) = (\DegreeTwoCoclass^\FixedPoint \QuantumAction {})_{\secondDegreeTwoCoclass} (\Argument),
            \end{equation}
        where $(\DegreeTwoCoclass^\FixedPoint \QuantumAction {})_{\secondDegreeTwoCoclass}$ denotes the $\secondDegreeTwoCoclass$-weighted $\ExtendedTorus$-equivariant quantum action of $\DegreeTwoCoclass^\FixedPoint$, analogously to \eqref{eqn:weighted-floer-differential-intersections-in-floer-solution-only}.
        An analogous equation holds for $\secondDegreeTwoCoclass^\FixedPoint$ and $\DegreeTwoCoclass$.
        Below, we apply \autoref{prop:floer-connection-compatible-with-equivariant-action} to get from \eqref{eqn:commuting-floer-connection-expand} to \eqref{eqn:commuting-floer-connection-pass-through-formal-additional-circle}, and we apply \eqref{eqn:differentiation-intertwining-with-quantum-action} to get from \eqref{eqn:commuting-floer-connection-fully-expand} to \eqref{eqn:commuting-floer-connection-simplified}.
        This gives us
            \begin{align}
                [\Connection^\FixedPoint _\DegreeTwoCoclass, \Connection^\FixedPoint _\secondDegreeTwoCoclass] (\Argument) &=
                \Connection^\FixedPoint _\DegreeTwoCoclass \left(\formalAdditionalCircle (-\FillingIntersectionMap_\secondDegreeTwoCoclass) (\Argument) + \secondDegreeTwoCoclass^\FixedPoint \QuantumAction (\Argument) \right)
                - \Connection^\FixedPoint _\secondDegreeTwoCoclass \left(\formalAdditionalCircle (-\FillingIntersectionMap_\DegreeTwoCoclass) (\Argument) + \DegreeTwoCoclass^\FixedPoint \QuantumAction (\Argument) \right) 
                \label{eqn:commuting-floer-connection-expand}
                \\
                &= \formalAdditionalCircle \Connection^\FixedPoint _\DegreeTwoCoclass \left( (-\FillingIntersectionMap_\secondDegreeTwoCoclass) (\Argument) \right) + \Connection^\FixedPoint _\DegreeTwoCoclass \left( \secondDegreeTwoCoclass^\FixedPoint \QuantumAction (\Argument) \right) \nonumber
                \\ & \qquad
                - \formalAdditionalCircle \Connection^\FixedPoint _\secondDegreeTwoCoclass \left( (-\FillingIntersectionMap_\DegreeTwoCoclass) (\Argument) \right) - \Connection^\FixedPoint _\secondDegreeTwoCoclass \left(\DegreeTwoCoclass^\FixedPoint \QuantumAction (\Argument) \right) 
                \label{eqn:commuting-floer-connection-pass-through-formal-additional-circle}
                \\
                &= \formalAdditionalCircle^2 (-\FillingIntersectionMap_\DegreeTwoCoclass) \left( (-\FillingIntersectionMap_\secondDegreeTwoCoclass) (\Argument) \right) + \formalAdditionalCircle \DegreeTwoCoclass^\FixedPoint \QuantumAction \left( (-\FillingIntersectionMap_\secondDegreeTwoCoclass) (\Argument) \right) + \formalAdditionalCircle (-\FillingIntersectionMap_\DegreeTwoCoclass) \left( \secondDegreeTwoCoclass^\FixedPoint \QuantumAction (\Argument) \right) \nonumber
                \\ & \hspace{20em} + \DegreeTwoCoclass^\FixedPoint \QuantumAction \left( \secondDegreeTwoCoclass^\FixedPoint \QuantumAction (\Argument) \right) \nonumber
                \\ & \qquad
                - \formalAdditionalCircle^2 (-\FillingIntersectionMap_\secondDegreeTwoCoclass) \left( (-\FillingIntersectionMap_\DegreeTwoCoclass) (\Argument) \right) - \formalAdditionalCircle \secondDegreeTwoCoclass^\FixedPoint \QuantumAction \left( (-\FillingIntersectionMap_\DegreeTwoCoclass) (\Argument) \right) - \formalAdditionalCircle (-\FillingIntersectionMap_\secondDegreeTwoCoclass) \left(\DegreeTwoCoclass^\FixedPoint \QuantumAction (\Argument) \right) \nonumber
                \\ & \hspace{20em} - \secondDegreeTwoCoclass^\FixedPoint \QuantumAction \left(\DegreeTwoCoclass^\FixedPoint \QuantumAction (\Argument) \right) 
                \label{eqn:commuting-floer-connection-fully-expand}
                \\
                &= - \formalAdditionalCircle (\DegreeTwoCoclass^\FixedPoint \QuantumAction {})_\secondDegreeTwoCoclass (\Argument) + \formalAdditionalCircle (\secondDegreeTwoCoclass^\FixedPoint \QuantumAction {})_\DegreeTwoCoclass (\Argument) + \DegreeTwoCoclass^\FixedPoint \QuantumAction \left( \secondDegreeTwoCoclass^\FixedPoint \QuantumAction (\Argument) \right) - \secondDegreeTwoCoclass^\FixedPoint \QuantumAction \left(\DegreeTwoCoclass^\FixedPoint \QuantumAction (\Argument) \right).
                \label{eqn:commuting-floer-connection-simplified}
            \end{align}
        
        We will find a homotopy which shows that \eqref{eqn:commuting-floer-connection-simplified} is chain homotopic to zero.
        The homotopy is drawn in \autoref{fig:floer-connection-is-flat-homotopy}.
        It counts $\ExtendedTorus$-equivalence classes of 11-tuples $(\flowUniversalSpace, \FloerSolution, \flowUniversalSpace_0, \flowManifold_0, \flowUniversalSpace^{\min}_0, t_0, \flowUniversalSpace_1, \flowManifold_1, \flowUniversalSpace^{\min}_1, s_1, t_1)$, where $[\flowUniversalSpace, \FloerSolution]$ is a $\ExtendedTorus$-equivariant Floer solution, $[\flowUniversalSpace_i, \flowManifold_i]$ are $\ExtendedTorus$-equivariant perturbed half\textsuperscript{$+$} flowlines, $\flowUniversalSpace^{\min}_i$ are perturbed half\textsuperscript{$+$} flowlines in $\UniversalSpace \ExtendedTorus$, $t_i \in \Circle$ are elements of the circle and $s_1 \in \RealNumbers \setminus \Set{0}$ is a real number.
        The 11-tuples satisfy the conditions
            \begin{equation}
                \begin{gathered}
                    \flowUniversalSpace (0) = \flowUniversalSpace_0 (0) = \flowUniversalSpace^{\min}_0 (0), \\
                    \flowUniversalSpace (s_1) = \flowUniversalSpace_1 (0) = \flowUniversalSpace^{\min}_1 (0), \\
                    \FloerSolution (0, t_0) = \flowManifold_0 (0), \\
                    \FloerSolution (s_1, t_1) = \flowManifold_1 (0)
                \end{gathered}
                \qquad \qquad
                \begin{gathered}
                    \arg (\flowUniversalSpace^{\min}_i (+\infty)) + t_i = 0, \\
                    [\flowUniversalSpace_0, \flowManifold_0] (+\infty) = \DegreeTwoCoclass^\FixedPoint, \\
                    [\flowUniversalSpace_1, \flowManifold_1] (+\infty) = \secondDegreeTwoCoclass^\FixedPoint.
                \end{gathered}
            \end{equation}
        
        The boundary components of the 1-dimensional moduli space that contribute to \eqref{eqn:commuting-floer-connection-simplified} are the limits $s_1 \to \pm \infty$ and the breaking of each of the flowlines $\flowUniversalSpace^{\min}_0$ and $\flowUniversalSpace^{\min}_1$.
        The limit $s_1 \to +\infty$ contributes the term $\DegreeTwoCoclass^\FixedPoint \QuantumAction \left( \secondDegreeTwoCoclass^\FixedPoint \QuantumAction (\Argument) \right)$ and the limit $s_1 \to -\infty$ contributes $- \secondDegreeTwoCoclass^\FixedPoint \QuantumAction \left(\DegreeTwoCoclass^\FixedPoint \QuantumAction (\Argument) \right)$.
        When $\flowUniversalSpace^{\min}_1$ breaks, we get $\formalAdditionalCircle (\secondDegreeTwoCoclass^\FixedPoint \QuantumAction {})_\DegreeTwoCoclass (\Argument)$ as in the proof of \autoref{prop:quantum-multiplication-commutes-with-differential-with-error}.
        Using $s$-translation as in the proof of \autoref{prop:floer-connection-compatible-with-equivariant-action}, we recover the term $- \formalAdditionalCircle (\DegreeTwoCoclass^\FixedPoint \QuantumAction {})_\secondDegreeTwoCoclass (\Argument)$ when $\flowUniversalSpace^{\min}_0$ breaks.
        
        The boundary components for the left and right limits $s_1 \to 0^\pm$ cancel with each other.
        The breaking of $[\flowUniversalSpace, \FloerSolution]$ contributes the $hd + dh$, where $h$ is the chain homotopy map which counts isolated $\ExtendedTorus$-equivalence classes of such 11-tuples.
        Therefore \eqref{eqn:commuting-floer-connection-simplified} is chain homotopic to 0, and this completes the proof.
    \end{proof}
    
\subsection{Floer Seidel map}
\label{sec:floer-seidel-map}

    Let $\Cocharacter \in \NonnegativeLatticeCocharacters{\TorusAction}{\Torus}$ be a $\TorusAction$-nonnegative cocharacter, that is a cocharacter for which $\TorusAction \ComposedWith \Cocharacter$ is a linear action of nonnegative slope on $\Manifold$.
    The \define{pullback Hamiltonian} $\Cocharacter \PullBack \Hamiltonian$ of the Hamiltonian $\Hamiltonian$ is given by 
        \begin{equation}
        \label{eqn:pullback-hamiltonian-definition}
            (\Cocharacter \PullBack \Hamiltonian)_t (\eltManifold) = \Hamiltonian_t ( \TorusAction_{\Cocharacter(t)} (\eltManifold)) - \ActionHamiltonian ^{\Cocharacter} ( \TorusAction_{\Cocharacter(t)} (\eltManifold)),
        \end{equation}
    where $\ActionHamiltonian^\Cocharacter$ is the Hamiltonian of the action $\TorusAction \ComposedWith \Cocharacter$ (without loss of generality, impose $\min \ActionHamiltonian^\Cocharacter = 0$ to remove the freedom of choosing a constant).
    Let $\CocharacterSlope{\Cocharacter}$ be the slope of $\ActionHamiltonian^\Cocharacter$.
    The assignment $\HamiltonianOrbit \mapsto \Cocharacter \PullBack \HamiltonianOrbit$, where $\Cocharacter \PullBack \HamiltonianOrbit$ is given by $t \mapsto \TorusAction_{\Cocharacter (- t)} \HamiltonianOrbit(t)$, is a bijection $\HamiltonianOrbitSet (\Hamiltonian) \to \HamiltonianOrbitSet (\Cocharacter \PullBack \Hamiltonian)$.
    
    The \define{pullback almost complex structure} $\Cocharacter \PullBack \ACS$ is given by $(\Cocharacter \PullBack \ACS)_t = (\Derivative \TorusAction_{\Cocharacter(t)}) \Inverse \ComposedWith \ACS_t \ComposedWith \Derivative \TorusAction_{\Cocharacter(t)}$.
    The assignment $\FloerSolution \mapsto \Cocharacter \PullBack \FloerSolution$, where $\Cocharacter \PullBack \HamiltonianOrbit$ is given by $(s, t) \mapsto \TorusAction_{\Cocharacter (- t)} \FloerSolution(s, t)$, is a bijection between the Floer solutions of Floer datum $(\Hamiltonian, \ACS)$ and those of $(\Cocharacter \PullBack \Hamiltonian, \Cocharacter \PullBack \ACS)$ \cite[Lemma~4.3]{seidel_$_1997}.
    
    Let $\FixedPoint \in \Manifold$ be a fixed point.
    Every orbit $\WithFilling{\HamiltonianOrbit}$ has a choice of filling $f$ for which $f(0) = \FixedPoint$.
    For such a filling $f$, the \define{pullback filling} $\Cocharacter \PullBack f$ is well-defined.
    Explicitly, this is given by $(\Cocharacter \PullBack f) (\ExponentialNumber^{2 \PiNumber (s + \ImaginaryNumber t)}) = \TorusAction_{\Cocharacter(-t)} f (\ExponentialNumber^{2 \PiNumber (s + \ImaginaryNumber t)})$.
    Define $(\Cocharacter, \FixedPoint) \PullBack \WithFilling{ \HamiltonianOrbit }$ to be the Hamiltonian orbit $\Cocharacter \PullBack \HamiltonianOrbit$ with the equivalence class of fillings $[\Cocharacter \PullBack f]$, with $f$ as above.
    
    The \define{Floer Seidel map} is the map
        \begin{equation}
            \FloerSeidel (\Cocharacter, \FixedPoint) : \FloerCohomology^\ArbitraryIndex (\Manifold, \Slope; \Hamiltonian, \ACS) \to \FloerCohomology^{\ArbitraryIndex + |\Cocharacter, \FixedPoint|} (\Manifold, \Slope - \CocharacterSlope{\Cocharacter}; \Cocharacter \PullBack \Hamiltonian, \Cocharacter \PullBack \ACS)
        \end{equation}
    given by $\WithFilling{\HamiltonianOrbit} \mapsto (\Cocharacter, \FixedPoint) \PullBack \WithFilling{\HamiltonianOrbit}$, and it is an isomorphism of cochain complexes.
    The Floer Seidel map commutes with continuation maps \cite[Corollary~4.8]{seidel_$_1997}, and we can show this using $s$-dependent pullback constructions analogous to the above.
    As such, $\FloerSeidel (\Cocharacter, \FixedPoint)$ is independent of the Floer datum.
    
\subsection{Equivariant Floer Seidel map}

    In \cite{liebenschutz-jones_intertwining_2020}, we defined an $\Circle$-equivariant Floer Seidel map.
    The only modification required to upgrade the non-equivariant construction in \autoref{sec:floer-seidel-map} to the equivariant setup was to incorporate a pullback of the action on the contractible loop space $\ContractibleLoopSpace{\Manifold}$.
    The same is true for the $\ExtendedTorus$-equivariant Floer Seidel map.
    The action on $\ContractibleLoopSpace{\Manifold}$ changes from
        \begin{equation*}
            ((\eltAdditionalCircle, \eltTorus) \cdot \HamiltonianOrbit) (t) = \Extended{\TorusAction}_{(\eltAdditionalCircle, \eltTorus)} (\HamiltonianOrbit(t - \eltAdditionalCircle)) =  \TorusAction_{\eltTorus} (\HamiltonianOrbit(t - \eltAdditionalCircle)),
        \end{equation*}
    as in \eqref{eqn:action-on-loop-space},
    to
        \begin{equation}
        \label{eqn:pullback-action-on-loop-space}
            ((\eltAdditionalCircle, \eltTorus) \cdot \HamiltonianOrbit) (t) =  (\Cocharacter \cdot \Extended{\TorusAction})_{(\eltAdditionalCircle, \eltTorus)} (\HamiltonianOrbit(t - \eltAdditionalCircle)) = \TorusAction_{\eltTorus - \Cocharacter(\eltAdditionalCircle)} (\HamiltonianOrbit(t - \eltAdditionalCircle)).
        \end{equation}
    This change in action from $\Extended{\TorusAction}$ to $\Cocharacter \cdot \Extended{\TorusAction}$ is compatible with the pullback data, so the $\ExtendedTorus$-equivariant pullback Hamiltonian $\Cocharacter \PullBack \eqHamiltonian$, defined as in \eqref{eqn:pullback-hamiltonian-definition}, is $\ExtendedTorus$-equivariant with respect to the action $\Cocharacter \cdot \Extended{\TorusAction}$; it satisfies 
        \begin{equation}
            (\Cocharacter \PullBack \eqHamiltonian)_{\eltUniversalSpace, t} (\eltManifold) = (\Cocharacter \PullBack \eqHamiltonian)_{(\eltAdditionalCircle, \eltTorus) \Inverse \cdot \eltUniversalSpace, \eltAdditionalCircle + t} ((\Cocharacter \cdot \Extended{\TorusAction})_{(\eltAdditionalCircle, \eltTorus)} (\eltManifold)).
        \end{equation}
        
    The \define{$\ExtendedTorus$-equivariant Floer Seidel map} is the map
        \begin{equation}
        \label{eqn:equivariant-floer-seidel-map-definition}
            \FloerSeidel_{\ExtendedTorus} (\Cocharacter, \FixedPoint) : \FloerCohomology^\ArbitraryIndex_{\ExtendedTorus} (\Manifold, \Extended{\TorusAction}, \Slope; \eqHamiltonian, \eqACS) \to \FloerCohomology^{\ArbitraryIndex + |\Cocharacter, \FixedPoint|}_{\ExtendedTorus} (\Manifold, \Cocharacter \cdot \Extended{\TorusAction}, \Slope - \CocharacterSlope{\Cocharacter}; \Cocharacter \PullBack \eqHamiltonian, \Cocharacter \PullBack \eqACS)
        \end{equation}
    given by $[\critUniversalSpace, \WithFilling{\HamiltonianOrbit}] \mapsto [\critUniversalSpace, (\Cocharacter, \FixedPoint) \PullBack \WithFilling{\HamiltonianOrbit}]$.
    Like its non-equivariant counterpart, the $\ExtendedTorus$-equivariant Floer Seidel map is an isomorphism of the cochain complexes, and it commutes with $\ExtendedTorus$-equivariant continuation maps.
    It is compatible with the $\NovikovRing \GradedCompletedTensorProduct \Cohomology^\ArbitraryIndex (\ClassifyingSpace \ExtendedTorus)$-module structure.
    
    \begin{proposition}
    \label{prop:equivariant-gluing-for-seidel-map}
        There is a commutative diagram
            \begin{equation}
                \label{eqn:diagram-relating-quantum-and-floer-seidel-maps-equivariant-case}
                \begin{tikzcd}[row sep=3.5em, column sep=-2em]
                    \QuantumCohomology^\ArbitraryIndex_{\ExtendedTorus} (\Manifold, \Extended{\TorusAction})
                        \arrow[rr, "{\QuantumSeidel_{\ExtendedTorus} (\Cocharacter, \FixedPoint)}"]
                        \arrow[d, "\substack{\text{$\ExtendedTorus$-equivariant}\\\text{PSS map}}"', "\cong"] 
                        &
                        & \QuantumCohomology^{\ArbitraryIndex + |\Cocharacter, \FixedPoint|}_{\ExtendedTorus}
                        (\Manifold, \Cocharacter \cdot \Extended{\TorusAction})
                    \\
                    \FloerCohomology^\ArbitraryIndex_{\ExtendedTorus} (\Manifold, \Extended{\TorusAction}, 0)
                        \arrow[rd, "{\FloerSeidel_{\ExtendedTorus} (\Cocharacter, \FixedPoint)}"', "\cong"]
                        &
                        & \FloerCohomology ^{\ArbitraryIndex + |\Cocharacter, \FixedPoint|} _{\ExtendedTorus} (\Manifold, \Cocharacter \cdot \Extended{\TorusAction}, 0)
                        \arrow[u, "\substack{\text{$\ExtendedTorus$-equivariant}\\\text{PSS map}}"', "\cong"]
                    \\
                    & \FloerCohomology ^{\ArbitraryIndex + |\Cocharacter, \FixedPoint|} _{\ExtendedTorus} (\Manifold, \Cocharacter \cdot \Extended{\TorusAction}, - \CocharacterSlope{\Cocharacter})
                        \arrow[ru, "\substack{\text{$\ExtendedTorus$-equivariant}\\\text{continuation map}}"', pos=0.6]
                        &                  
                \end{tikzcd}
            \end{equation}
        where slope 0 Hamiltonians and PSS maps are defined as in \cite[Theorem~37]{ritter_floer_2014}.
    \end{proposition}
    
    \begin{proof}
        This is an equivariant version of \cite[Section~8]{seidel_$_1997}, or more precisely its extension from closed manifolds to convex manifolds in \cite[Section~5.7]{ritter_floer_2014}.
        The proof otherwise extends to our setup (see \autoref{fig:equivariant-gluing-argument}).
    \end{proof}

\begin{figure}
\centering
\begin{subfigure}{0.4\textwidth}
    \centering
	\begin{center}
		\begin{tikzpicture}
			\node[inner sep=0] at (2.5,0) {\includegraphics[width=4 cm]{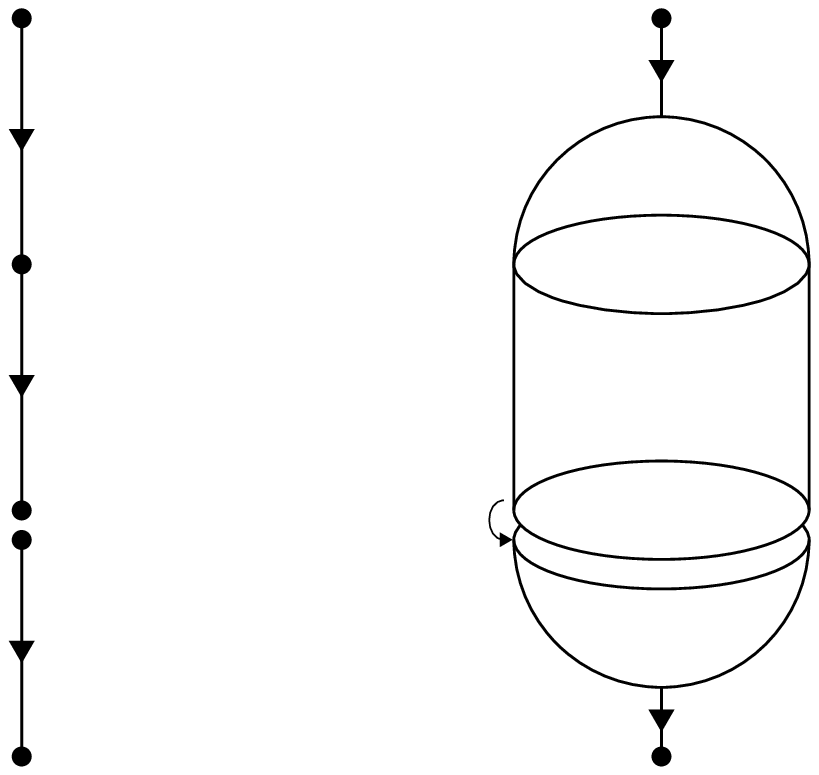}};
			
			\node at (2.9, 0.2){\tiny $\FloerSolution$};
			\node at (0.7, 2.1){\tiny $\critUniversalSpace^-$};
			\node at (0.7, -2.0){\tiny $\critUniversalSpace^+$};
			\node at (3.9, 2.1){\tiny $\critManifold^-$};
			\node at (3.9, -2.0){\tiny $\critManifold^+$};
			\node at (2.8, -0.6){\tiny $\Cocharacter$};
			\node at (4.0, 1.7){\tiny $\flowManifold^-$};
			\node at (4.0, -1.6){\tiny $\flowManifold^+$};
			\node at (0.4, -0.7){\tiny $=$};
		\end{tikzpicture}
	\end{center}
    \caption{Equivariant PSS maps together with Floer Seidel map and continuation map} 
\label{fig:quantum-action-on-eadsfquivariant-floer-cohomology-definition}
\end{subfigure}
\hspace{1em} 
\begin{subfigure}{0.4\textwidth}
  \centering
	\begin{center}
		\begin{tikzpicture}
			\node[inner sep=0] at (2.5,0) {\includegraphics[width=4 cm]{figures/equivariant-seidel-map.eps}};
			
			\node at (0.4, 0.8){\tiny $\flowUniversalSpace$};
			\node at (3.1, 0.5){\tiny $\ClutchingSection$};
			\node at (0.7, 2.1){\tiny $\critUniversalSpace^-$};
			\node at (0.7, -2.0){\tiny $\critUniversalSpace^+$};
			\node at (3.9, 2.1){\tiny $\critManifold^-$};
			\node at (3.9, -2.0){\tiny $\critManifold^+$};
			\node at (2.8, -0.1){\tiny $\Cocharacter$};
			\node at (4.0, 1.3){\tiny $\flowManifold^-$};
			\node at (4.0, -1.3){\tiny $\flowManifold^+$};
		\end{tikzpicture}
	\end{center}
    \caption{Equivariant quantum Seidel map from \autoref{fig:equivariant-quantum-seidel-map}} 
    \label{fig:geometric-action-anasdfd-connection-homotopy}
\end{subfigure}
\caption{
    Gluing together PSS maps, the Floer Seidel map and a continuation map (\textsc{a}) yields the quantum Seidel map (\textsc{b}).
}
\label{fig:equivariant-gluing-argument}
\end{figure}
    
\subsection{Shift operator}

    As in \autoref{sec:pullback-group-isomorphisms-on-cohomology}, there is a pullback isomorphism
        \begin{equation}
        \label{eqn:pullback-group-isom-on-equivariant-floer-cohomology}
            (\ClassifyingSpace \Extended{\Cocharacter}) \PullBack : \FloerCohomology^\ArbitraryIndex _{\ExtendedTorus, \Extended{\basisCocharacter}} (\Manifold, \Cocharacter \cdot \Extended{\TorusAction}, \Slope) \to  \FloerCohomology^\ArbitraryIndex _{\ExtendedTorus, \Cocharacter \cdot \Extended{\basisCocharacter}} (\Manifold, \Extended{\TorusAction}, \Slope).
        \end{equation}
    This map translates between different notation which ultimately describe the same moduli spaces.
    Recall $(\ClassifyingSpace \Extended{\Cocharacter}) \PullBack$ satisfies $(\ClassifyingSpace \Extended{\Cocharacter}) \PullBack ([\Extended{\Character}] \ \Argument) = [\Extended{\Character} \ComposedWith \Extended{\Cocharacter}] \  (\ClassifyingSpace \Extended{\Cocharacter}) \PullBack (\Argument)$ for characters $\Extended{\Character} : \ExtendedTorus \to \Circle$.
    
    The \define{shift operator} on $\ExtendedTorus$-equivariant Floer cohomology is the map 
        \begin{equation}
            \ShiftOperator^\FixedPoint _\Cocharacter : \FloerCohomology^\ArbitraryIndex _{\ExtendedTorus, \Extended{\basisCocharacter}} (\Manifold, \Extended{\TorusAction}, \Slope) \to  \FloerCohomology^{\ArbitraryIndex + |\Cocharacter, \FixedPoint|} _{\ExtendedTorus, \Cocharacter \cdot \Extended{\basisCocharacter}} (\Manifold, \Extended{\TorusAction}, \Slope)
        \end{equation}
    given by the composition
        \begin{equation}
        \label{eqn:shift-operator-floer-cohomology-definition}
            \ShiftOperator^\FixedPoint _\Cocharacter = (\ClassifyingSpace \Extended{\Cocharacter}) \PullBack \ComposedWith \ContinuationMap^{\CocharacterSlope{\Cocharacter}} \ComposedWith \FloerSeidel_{\ExtendedTorus} (\Cocharacter, \FixedPoint),
        \end{equation}
    where $\ContinuationMap^{\CocharacterSlope{\Cocharacter}}$ is the continuation map which increases the slope by $\CocharacterSlope{\Cocharacter}$.
    Note that the three maps in \eqref{eqn:shift-operator-floer-cohomology-definition} which make up $\ShiftOperator^\FixedPoint _\Cocharacter$ commute, so the order of these maps is unimportant.
    
    \begin{theorem}
    \label{thm:flatness-of-difference-differential-connection-on-floer-cohomology}
        The difference-differential connection $(\ShiftOperator^\FixedPoint, \Connection^\FixedPoint)$ on $\ExtendedTorus$-equivariant Floer cohomology is flat.
    \end{theorem}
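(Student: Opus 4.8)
The plan is to adapt the proof of \autoref{thm:overall-flatness-of-difference-differential-connection-in-equivariant-quantum-cohomology} to the Floer setting. Viewing the difference-differential connection as the pair $(\ShiftOperator^\FixedPoint, \Connection^\FixedPoint)$, flatness reduces to three statements: that $\Connection^\FixedPoint$ is flat, that $\ShiftOperator^\FixedPoint$ is flat, and that $\Connection^\FixedPoint_\DegreeTwoCoclass$ commutes with $\ShiftOperator^\FixedPoint_\Cocharacter$ for every $\DegreeTwoCoclass \in \Cohomology^2(\Manifold)$ and every $\Cocharacter \in \NonnegativeLatticeCocharacters{\TorusAction}{\Torus}$. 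Given these, \autoref{lem:flatness-sufficient-on-generating-set-difference-differential-connection} promotes commutation on a generating set to the vanishing of the whole curvature: its hypothesis holds because the shift operators $(\ClassifyingSpace\Extended{\Cocharacter})\PullBack$ on $\NovikovRing \GradedCompletedTensorProduct \Cohomology^\ArbitraryIndex(\ClassifyingSpace\ExtendedTorus)$ are isomorphisms, and a generating set of $\Cohomology^2(\Manifold)$ spans the space of derivations $\SpaceOfDerivations$ over that ring. The first statement is \autoref{thm:flatness-of-differential-floer-connection}.

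For flatness of $\ShiftOperator^\FixedPoint$, namely $\ShiftOperator^\FixedPoint_\Cocharacter\ShiftOperator^\FixedPoint_{\secondCocharacter} = \ShiftOperator^\FixedPoint_{\Cocharacter+\secondCocharacter}$, I would observe that each of the three maps composing $\ShiftOperator^\FixedPoint_\Cocharacter$ in \eqref{eqn:shift-operator-floer-cohomology-definition} is multiplicative in $\Cocharacter$: the $\ExtendedTorus$-equivariant Floer Seidel map satisfies $\FloerSeidel_{\ExtendedTorus}(\Cocharacter,\FixedPoint) \ComposedWith \FloerSeidel_{\ExtendedTorus}(\secondCocharacter,\FixedPoint) = \FloerSeidel_{\ExtendedTorus}(\Cocharacter+\secondCocharacter,\FixedPoint)$ at the cochain level because the pullback constructions of \autoref{sec:floer-seidel-map} compose; continuation maps compose and $\CocharacterSlope{\Cocharacter}+\CocharacterSlope{\secondCocharacter} = \CocharacterSlope{\Cocharacter+\secondCocharacter}$; and $\Extended{\Cocharacter}\ComposedWith\Extended{\secondCocharacter} = \Extended{\Cocharacter+\secondCocharacter}$ gives $(\ClassifyingSpace\Extended{\Cocharacter})\PullBack \ComposedWith (\ClassifyingSpace\Extended{\secondCocharacter})\PullBack = (\ClassifyingSpace\Extended{\Cocharacter+\secondCocharacter})\PullBack$. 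Since the three factors pairwise commute, rearranging the composition gives the claim, the only remaining point being the routine bookkeeping of the basis changes $\Extended{\basisCocharacter} \rightsquigarrow \Cocharacter\cdot\Extended{\basisCocharacter}$ handled as in \autoref{sec:pullback-group-isomorphisms-on-cohomology}.

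For $[\Connection^\FixedPoint_\DegreeTwoCoclass, \ShiftOperator^\FixedPoint_\Cocharacter] = 0$, I would reproduce the combinatorial structure of the last part of the proof of \autoref{thm:overall-flatness-of-difference-differential-connection-in-equivariant-quantum-cohomology}. Take the class $\beta \in \Cohomology^2_{\ExtendedTorus}(\ClutchingBundle{\Cocharacter})$ supplied by \autoref{lem:class-existence-for-quantum-flatness-from-intertwining}, with $\beta^+ = \DegreeTwoCoclass^\FixedPoint$, $(\ClassifyingSpace\Extended{\Cocharacter})\PullBack\beta^- = \DegreeTwoCoclass^\FixedPoint$ and $\beta(\DegreeTwoClass^\FixedPoint) = \DegreeTwoCoclass(\DegreeTwoClass)$. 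A \emph{Floer intertwining relation} for $\FloerSeidel_{\ExtendedTorus}(\Cocharacter,\FixedPoint)$ applied to $\beta$, post-composed with $\ContinuationMap^{\CocharacterSlope{\Cocharacter}}$ and with $(\ClassifyingSpace\Extended{\Cocharacter})\PullBack$ — using the compatibility of these two maps with the equivariant quantum action together with $(\ClassifyingSpace\Extended{\Cocharacter})\PullBack\beta^- = \DegreeTwoCoclass^\FixedPoint$ and $(\ClassifyingSpace\Extended{\Cocharacter})\PullBack\formalAdditionalCircle = \formalAdditionalCircle$ — should yield the analogue of \eqref{eqn:intertwining-preparing-for-flatness}, expressing $[\ShiftOperator^\FixedPoint_\Cocharacter, \DegreeTwoCoclass^\FixedPoint \QuantumAction{}]$ on cohomology as $\formalAdditionalCircle$ times a weighted shift operator. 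The property $\beta(\DegreeTwoClass^\FixedPoint) = \DegreeTwoCoclass(\DegreeTwoClass)$ then produces, exactly as \eqref{eqn:weighted-seidel-map-recorded-by-differentiation} does in the quantum case, the matching relation for the formal part $\formalAdditionalCircle(\dbyd{\DegreeTwoCoclass} - \FillingIntersectionMap^\FillingBasis_\DegreeTwoCoclass)$ of the connection, carrying the \emph{same} $\formalAdditionalCircle$-weighted error; here one uses that the Floer Seidel map carries $\FixedPoint$-based fillings to $\FixedPoint$-based fillings, so the filling bases may be matched. Subtracting, the errors cancel and $[\Connection^\FixedPoint_\DegreeTwoCoclass, \ShiftOperator^\FixedPoint_\Cocharacter]$ vanishes on cohomology.

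The main obstacle, and the step carrying the bulk of the work, is the Floer intertwining relation itself: that $\FloerSeidel_{\ExtendedTorus}(\Cocharacter,\FixedPoint)(\beta^+ \QuantumAction x) - \beta^- \QuantumAction \FloerSeidel_{\ExtendedTorus}(\Cocharacter,\FixedPoint)(x)$ equals $\formalAdditionalCircle$ times a weighted $\ExtendedTorus$-equivariant Floer Seidel map on cohomology. I would prove it by a Morse homotopy patterned on the sketch proof of \autoref{thm:intertwining-relation-quantum-extended-torus} and the proof of \autoref{prop:quantum-multiplication-commutes-with-differential-with-error}: count $\ExtendedTorus$-equivalence classes of configurations consisting of an $\ExtendedTorus$-equivariant Floer solution, a perturbed half\textsuperscript{$+$} flowline converging to $\beta$, an auxiliary perturbed half\textsuperscript{$+$} flowline $\flowUniversalSpace^{\min}$ in $\UniversalSpace\ExtendedTorus$ whose endpoint in $\minUniversalSpace$ constrains, via $\arg$, the intersection parameter of the half\textsuperscript{$+$} flowline with the Floer solution as in \eqref{eqn:equivariant-quantum-action-on-equivariant-floer-cohomology}, but now allowing that parameter to move past the location of the Seidel twist. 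The two terms on the left arise as the parameter degenerates past the twist in either direction; the $\formalAdditionalCircle$-weighted term on the right arises when $\flowUniversalSpace^{\min}$ breaks through the critical point of $\morseUniversalSpace$ representing $\formalAdditionalCircle$ — the familiar consequence of there being no global $\AdditionalCircle$-equivariant map $\UniversalSpace\ExtendedTorus \to \AdditionalCircle$, only one on the dense subset $\StableManifold(\minUniversalSpace)$. Transversality, the maximum principle confining the Floer solutions, the avoidance of sphere bubbling in low dimensions, and coherent orientations are handled exactly as elsewhere in \autoref{sec:equivariant-floer-cohomology-all-content}.
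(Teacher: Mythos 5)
Your proposal is correct and follows essentially the same route as the paper: the same three-part decomposition (flatness of $\Connection^\FixedPoint$ from \autoref{thm:flatness-of-differential-floer-connection}, flatness of $\ShiftOperator^\FixedPoint$ from the composability and pairwise commutativity of its three factors, and the commutator $[\Connection^\FixedPoint_\DegreeTwoCoclass,\ShiftOperator^\FixedPoint_\Cocharacter]=0$ via the class $\beta$ on the clutching bundle and the $\flowUniversalSpace^{\min}$-breaking mechanism). The only organizational difference is that the paper establishes the key commutator by commuting the connection past each of the three factors of \eqref{eqn:shift-operator-floer-cohomology-definition} separately (\autoref{prop:connection-commutes-with-equivariant-floer-seidel-map}, \autoref{prop:connection-compatible-with-continuation-maps}, \autoref{prop:connection-commutes-with-pullback-operator}), choosing the pullback filling basis so that $\dbyd{\DegreeTwoCoclass}$ is exactly preserved and absorbing the would-be weighted-Seidel error into the filling-intersection terms via the contractibility of the capped cylinder, rather than first stating a standalone Floer intertwining relation for the full composite and cancelling errors at the end.
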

    
    We prove \autoref{thm:flatness-of-difference-differential-connection-on-floer-cohomology} in \autoref{sec:proof-of-flatness-of-difference-connection-on-floer-cohomology}.
    
    This difference-differential connection on the $\ExtendedTorus$-equivariant Floer cohomology of slope 0 is isomorphic to the difference-differential connection on $\ExtendedTorus$-equivariant quantum cohomology.
    That the PSS isomorphism preserves the connection $\Connection^\FixedPoint$ follows from an adaptation of \autoref{prop:connection-compatible-with-continuation-maps}, and that it preserves the shift operator $\ShiftOperator^\FixedPoint$ follows from \autoref{prop:equivariant-gluing-for-seidel-map}.
    
    The difference-differential connection also commutes with $\ExtendedTorus$-equivariant continuation maps.
    Therefore there is an induced difference-differential connection on $\ExtendedTorus$-equivariant symplectic cohomology.
    This difference-differential connection will be flat, since \autoref{thm:flatness-of-difference-differential-connection-on-floer-cohomology} holds for all slopes in the direct limit.
        
    \begin{remark}
        [Negative cocharacters]
        Unlike the quantum Seidel map, the \emph{Floer} Seidel map itself is well-defined for cocharacters $\Cocharacter$ which induce a $\Circle$-action $\TorusAction \ComposedWith \Cocharacter$ of negative slope \cite[Remark,~page~1046]{ritter_floer_2014}.
        Similarly, the $\ExtendedTorus$-equivariant Floer Seidel map $\FloerSeidel_{\ExtendedTorus} (\Cocharacter, \FixedPoint)$ is well-defined for such cocharacters and is an isomorphism of the cochain complexes.
        In \eqref{eqn:equivariant-floer-seidel-map-definition}, the slope $\Slope$ in the domain changes to $\Slope - \CocharacterSlope{\Cocharacter}$.
        For $\TorusAction$-negative cocharacters, these slopes satisfy $\Slope - \CocharacterSlope{\Cocharacter} > \Slope$, and hence the change in slope cannot be undone with a continuation map.
        Therefore we cannot construct a shift operator on $\FloerCohomology^\ArbitraryIndex_{\ExtendedTorus} (\Manifold, \Extended{\TorusAction}, \Slope)$ corresponding to $\Cocharacter$.
        
        On the other hand, the composition $(\ClassifyingSpace \Extended{\Cocharacter}) \PullBack \ComposedWith \FloerSeidel_{\ExtendedTorus} (\Cocharacter, \FixedPoint)$ commutes with continuation maps, and hence induces a shift operator $\ShiftOperator^\FixedPoint_\Cocharacter$ on $\ExtendedTorus$-equivariant \emph{symplectic} cohomology $\SymplecticCohomology^\ArbitraryIndex_{\ExtendedTorus} (\Manifold, \Extended{\TorusAction})$.
        This shift operator commutes with the connection $\Connection^\FixedPoint$ by \autoref{prop:connection-commutes-with-equivariant-floer-seidel-map} and \autoref{prop:connection-commutes-with-pullback-operator}.
        Therefore $\SymplecticCohomology^\ArbitraryIndex_{\ExtendedTorus} (\Manifold, \Extended{\TorusAction})$ has a flat difference-differential connection defined on the group of all cocharacters, not just on the monoid $\NonnegativeLatticeCocharacters{\TorusAction}{\Torus}$.
    \end{remark}
    
\subsection{Proof of flatness of connection}
\label{sec:proof-of-flatness-of-difference-connection-on-floer-cohomology}

    To show $\Connection^\FixedPoint_\DegreeTwoCoclass$ commutes with $\ShiftOperator^\FixedPoint _\Cocharacter$, we will show that $\Connection^\FixedPoint_\DegreeTwoCoclass$ commutes each of the three maps in the composition \eqref{eqn:shift-operator-floer-cohomology-definition} separately.
    
    \begin{proposition}
    \label{prop:connection-commutes-with-equivariant-floer-seidel-map}
        The map $\Connection^\FixedPoint_\DegreeTwoCoclass$ commutes with the $\ExtendedTorus$-equivariant Floer Seidel map $\FloerSeidel_{\ExtendedTorus} (\Cocharacter, \FixedPoint)$.
    \end{proposition}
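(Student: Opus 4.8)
The plan is to exploit that the $\ExtendedTorus$-equivariant Floer Seidel map is, by construction, the identity on the $\UniversalSpace \ExtendedTorus$-component and, on the $\Manifold$-component, pullback by the ($t$-dependent) loop-space automorphism $\TorusAction \ComposedWith \Cocharacter$ of \eqref{eqn:pullback-action-on-loop-space}, and then to check that each constituent of the connection is natural under this pullback. Throughout I would use the expression
\begin{equation}
    \Connection^{\FixedPoint, \FillingBasis}_\DegreeTwoCoclass = \formalAdditionalCircle \left( - \FillingIntersectionMap_\DegreeTwoCoclass \right) + \DegreeTwoCoclass^\FixedPoint \QuantumAction,
\end{equation}
which follows from the definition of $\Connection^{\FixedPoint, \FillingBasis}_\DegreeTwoCoclass$ together with \eqref{eqn:basis-independence-for-differentiation-with-filling-map}; here $\DegreeTwoCoclass^\FixedPoint$ denotes the $\FixedPoint$-lift of $\DegreeTwoCoclass \in \Cohomology^2 (\Manifold)$, which upstairs lies in $\Cohomology^2_{\ExtendedTorus} (\Manifold, \Extended{\TorusAction})$ and downstairs in $\Cohomology^2_{\ExtendedTorus} (\Manifold, \Cocharacter \cdot \Extended{\TorusAction})$. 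The assertion to prove is that $\FloerSeidel_{\ExtendedTorus} (\Cocharacter, \FixedPoint) \ComposedWith \Connection^\FixedPoint_\DegreeTwoCoclass = \Connection^\FixedPoint_\DegreeTwoCoclass \ComposedWith \FloerSeidel_{\ExtendedTorus} (\Cocharacter, \FixedPoint)$, where the left-hand $\Connection^\FixedPoint_\DegreeTwoCoclass$ acts on $\FloerCohomology^\ArbitraryIndex_{\ExtendedTorus} (\Manifold, \Extended{\TorusAction}, \Slope)$ and the right-hand one on $\FloerCohomology^\ArbitraryIndex_{\ExtendedTorus} (\Manifold, \Cocharacter \cdot \Extended{\TorusAction}, \Slope - \CocharacterSlope{\Cocharacter})$. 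The essential input is that $\FixedPoint$ is a fixed point of $\TorusAction$, hence of both $\Extended{\TorusAction}$ and $\Cocharacter \cdot \Extended{\TorusAction}$, so that the pullback defining the Floer Seidel map is compatible with the construction of the lift $\DegreeTwoCoclass^\FixedPoint$ and with fillings chosen through $\FixedPoint$; in particular $(\Cocharacter \PullBack f)(0) = \FixedPoint$ whenever $f(0) = \FixedPoint$.

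First I would treat the term $\formalAdditionalCircle (- \FillingIntersectionMap_\DegreeTwoCoclass)$. The Floer Seidel map is $\NovikovRing$-linear and compatible with the $\NovikovRing \GradedCompletedTensorProduct \Cohomology^\ArbitraryIndex (\ClassifyingSpace \ExtendedTorus)$-module structure, and the pullback-filling construction respects the relation \eqref{eqn:cohomological-convention-for-fillings}, because $\TorusAction \ComposedWith \Cocharacter$ is homotopic to the identity ($\Torus$ is connected) and so acts trivially on $\Homology_2 (\Manifold; \Integers)$; this is also why the disc class swept out by $\Cocharacter \PullBack f$ is homologous to that of $f$. Choosing the $\ExtendedTorus$-equivariant Morse data defining $\PerturbedStableManifold (\DegreeTwoCoclass^\FixedPoint)$ downstairs to be the pullback, under $\TorusAction \ComposedWith \Cocharacter$ paired with $\Identity_{\UniversalSpace \ExtendedTorus}$, of the data used upstairs --- legitimate precisely because the two lifts $\DegreeTwoCoclass^\FixedPoint$ correspond under this pullback --- the transverse intersection $(\critUniversalSpace, (\Cocharacter \PullBack f)(\Disc)) \Intersection \PerturbedStableManifold (\DegreeTwoCoclass^\FixedPoint)$ downstairs is identified, with matching signs, with $(\critUniversalSpace, f(\Disc)) \Intersection \PerturbedStableManifold (\DegreeTwoCoclass^\FixedPoint)$ upstairs. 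Hence $\FloerSeidel_{\ExtendedTorus} (\Cocharacter, \FixedPoint)$ intertwines $\FillingIntersectionMap_\DegreeTwoCoclass$ on cochains, and multiplication by $\formalAdditionalCircle$ --- a module operation built entirely from the $\UniversalSpace \ExtendedTorus$-component, which the Seidel map leaves untouched --- is likewise intertwined.

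Next I would treat the equivariant quantum action $\DegreeTwoCoclass^\FixedPoint \QuantumAction$, which counts $\ExtendedTorus$-equivalence classes of sextuples $(\flowUniversalSpace, \FloerSolution, \flowUniversalSpace_0, \flowManifold_0, \flowUniversalSpace^{\min}, t_0)$. The assignment that pulls back $[\flowUniversalSpace, \FloerSolution]$ and the asymptotic orbits-with-fillings by $\FloerSeidel_{\ExtendedTorus} (\Cocharacter, \FixedPoint)$, replaces $\flowManifold_0$ by $\TorusAction_{\Cocharacter(-t_0)} \flowManifold_0$, and leaves the $\UniversalSpace \ExtendedTorus$-data $\flowUniversalSpace, \flowUniversalSpace_0, \flowUniversalSpace^{\min}$ and the element $t_0 \in \Circle$ unchanged is a signed bijection between the sextuples counted upstairs and those counted downstairs: the condition $\FloerSolution (0, t_0) = \flowManifold_0 (0)$ becomes $(\Cocharacter \PullBack \FloerSolution)(0, t_0) = (\TorusAction_{\Cocharacter(-t_0)} \flowManifold_0)(0)$, the argument condition $\arg (\flowUniversalSpace^{\min} (+\infty)) + t_0 = 0$ and the asymptotic conditions are unaffected, and $\TorusAction_{\Cocharacter(-t_0)} \flowManifold_0$ is again an $\ExtendedTorus$-equivariant perturbed half\textsuperscript{$+$} flowline converging to $\DegreeTwoCoclass^\FixedPoint$ for the pullback data, since under $\Cocharacter \cdot \Extended{\TorusAction}$ the element $t_0 \in \AdditionalCircle$ acts on $\Manifold$ by $\TorusAction_{\Cocharacter(-t_0)}$. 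This gives $\FloerSeidel_{\ExtendedTorus} (\Cocharacter, \FixedPoint) \ComposedWith (\DegreeTwoCoclass^\FixedPoint \QuantumAction) = (\DegreeTwoCoclass^\FixedPoint \QuantumAction) \ComposedWith \FloerSeidel_{\ExtendedTorus} (\Cocharacter, \FixedPoint)$ on cochains, and summing the two intertwining statements proves the proposition on cochain complexes, hence on cohomology; as a consistency check, restricting along $\Cohomology^\ArbitraryIndex (\ClassifyingSpace \ExtendedTorus) \to \Cohomology^\ArbitraryIndex (\textup{pt})$ recovers that the non-equivariant Floer Seidel map intertwines $\DegreeTwoCoclass \QuantumProduct$, matching \eqref{eqn:commutative-diagram-for-connection-to-identify-correction-term-torus}.

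The main obstacle is the bookkeeping in the previous paragraph: the relabeling of the auxiliary half\textsuperscript{$+$} flowline $[\flowUniversalSpace_0, \flowManifold_0]$ is by a torus element $\Cocharacter(-t_0)$ that varies with the free parameter $t_0$, and one must arrange the $\ExtendedTorus$-equivariant Morse data defining $\PerturbedStableManifold (\DegreeTwoCoclass^\FixedPoint)$ upstairs and downstairs to be simultaneously regular, transverse to the relevant fillings, and exchanged by the pullback --- the $\ExtendedTorus$-equivariance of this data with respect to $\Cocharacter \cdot \Extended{\TorusAction}$ being exactly what absorbs the $t_0$-twist. Should this direct relabeling prove too delicate to present cleanly, the same conclusion can be obtained instead by an intertwining-relation-style homotopy in $\UniversalSpace \ExtendedTorus$, built as in the proofs of \autoref{prop:connection-compatible-with-continuation-maps} and \autoref{prop:floer-connection-compatible-with-equivariant-action}, interpolating between $\FloerSeidel_{\ExtendedTorus} (\Cocharacter, \FixedPoint) \ComposedWith (\DegreeTwoCoclass^\FixedPoint \QuantumAction)$ and $(\DegreeTwoCoclass^\FixedPoint \QuantumAction) \ComposedWith \FloerSeidel_{\ExtendedTorus} (\Cocharacter, \FixedPoint)$, whose residual boundary terms are matched against the $\FillingIntersectionMap_\DegreeTwoCoclass$-terms using the naturality of $\FillingIntersectionMap_\DegreeTwoCoclass$ under the Seidel map established above.
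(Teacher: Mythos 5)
Your decomposition of $\Connection^{\FixedPoint, \FillingBasis}_\DegreeTwoCoclass$ via \eqref{eqn:basis-independence-for-differentiation-with-filling-map} is fine, and the observation that multiplication by $\formalAdditionalCircle$ is untouched by the Seidel map is correct. The gap is in the central claim that $\DegreeTwoCoclass^\FixedPoint \QuantumAction$ and $\FillingIntersectionMap_\DegreeTwoCoclass$ are each intertwined on the nose by a relabeling of moduli spaces. The domain of $\FloerSeidel_{\ExtendedTorus} (\Cocharacter, \FixedPoint)$ carries the action $\Extended{\TorusAction}$ on $\Manifold$ and the codomain carries $\Cocharacter \cdot \Extended{\TorusAction}$, so the two occurrences of ``$\DegreeTwoCoclass^\FixedPoint$'' are classes in different equivariant cohomology rings, represented by critical points of $\ExtendedTorus$-equivariant Morse functions obeying different invariance conditions. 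There is no ``pullback under $\TorusAction \ComposedWith \Cocharacter$ paired with $\Identity_{\UniversalSpace \ExtendedTorus}$'' of this Morse data: the twist $\TorusAction_{\Cocharacter(-t_0)}$ you apply to $\flowManifold_0$ is a constant torus element depending on the free parameter $t_0$, applied along the entire half\textsuperscript{$+$} flowline with no compensating motion of $\flowUniversalSpace_0$ in $\UniversalSpace \ExtendedTorus$. For $[\flowUniversalSpace_0, \TorusAction_{\Cocharacter(-t_0)} \ComposedWith \flowManifold_0]$ to be a flowline of a single choice of downstairs data for \emph{every} value of $t_0$ occurring in the moduli space, the upstairs Morse function would have to be invariant under the circle $\TorusAction \ComposedWith \Cocharacter$ in the $\Manifold$-direction, which is impossible for Morse data; the $\ExtendedTorus$-invariance of the downstairs function only absorbs simultaneous twists of the $\UniversalSpace \ExtendedTorus$- and $\Manifold$-components, and twisting $\flowUniversalSpace_0$ as well would destroy the incidence condition $\flowUniversalSpace_0 (0) = \flowUniversalSpace (0)$. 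The same objection applies to your identification of the intersection counts defining $\FillingIntersectionMap_\DegreeTwoCoclass$.

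The paper's proof confronts exactly this point: it notes that the same equivariant Morse data cannot be used on both sides, and it proves only that the \emph{combination} $\DegreeTwoCoclass^\FixedPoint_{\Extended{\TorusAction}} \QuantumAction {} - \formalAdditionalCircle \FillingIntersectionMap^{\FillingBasis}_{\DegreeTwoCoclass, \Extended{\TorusAction}}$ commutes with the Seidel map, and only up to chain homotopy --- the two terms fail to commute individually, and their failures cancel. The essential ingredient you are missing is the clutching bundle: one needs the class $\beta \in \Cohomology^2_{\ExtendedTorus} (\ClutchingBundle{\Cocharacter})$ of \autoref{lem:class-existence-for-quantum-flatness-from-intertwining}, which restricts to the two lifts of $\DegreeTwoCoclass$ over the two poles, a homotopy $K$ sliding the intersection fibre from $\Pole^+$ to $\Pole^-$ along a longitude parameter $s_0$, and the computation identifying the $\flowUniversalSpace^{\min}$-breaking contribution with a difference of cap-intersection numbers that vanishes because the capped orbit-cylinder is contractible in $\ClutchingBundle{\Cocharacter}$. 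Your fallback of an ``intertwining-style homotopy in $\UniversalSpace \ExtendedTorus$'' as in \autoref{prop:connection-compatible-with-continuation-maps} does not supply this, since in that proposition the $\ExtendedTorus$-action on $\Manifold$ is the same on both sides, whereas here the entire difficulty is to connect operations defined by classes for two different actions.
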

    
    \begin{proof}
        We use the $\ExtendedTorus$-equiv\-ari\-ant Floer data $(\eqHamiltonian, \eqACS)$ and the filling basis $\FillingBasis$ on the domain $\FloerCohomology^\ArbitraryIndex_{\ExtendedTorus} (\Manifold, \Extended{\TorusAction}, \Slope)$.
        We use the pullback data $(\Cocharacter \PullBack \eqHamiltonian, \Cocharacter \PullBack \eqACS)$ and the pullback filling basis $(\Cocharacter, \FixedPoint) \PullBack \FillingBasis$ on the codomain.
        With these choices, $\FloerSeidel_{\ExtendedTorus} (\Cocharacter, \FixedPoint)$ is an isomorphism between the filling bases, and therefore preserves the differentiation operation $\dbyd{\DegreeTwoCoclass}$.
        The map $\FloerSeidel_{\ExtendedTorus} (\Cocharacter, \FixedPoint)$ does not modify flowlines in $\UniversalSpace \ExtendedTorus$, so it induces a natural isomorphism on the moduli spaces used to define the map $\formalAdditionalCircle \cdot$.
        Thus, with the data we have chosen, $\FloerSeidel_{\ExtendedTorus} (\Cocharacter, \FixedPoint)$ commutes with $\formalAdditionalCircle \dbyd{\DegreeTwoCoclass}$ on the cochain complexes, giving
            \begin{equation}
            \label{eqn:equivariant-floer-seidel-map-commutes-with-differentiation-connection-on-chains}
                \FloerSeidel_{\ExtendedTorus} (\Cocharacter, \FixedPoint) \ComposedWith \left( \formalAdditionalCircle \dbyd{\DegreeTwoCoclass} \right) = \left( \formalAdditionalCircle \dbyd{\DegreeTwoCoclass} \right) \ComposedWith \FloerSeidel_{\ExtendedTorus} (\Cocharacter, \FixedPoint).
            \end{equation}
            
        The two other terms in $\Connection^\FixedPoint_\DegreeTwoCoclass$, namely $\DegreeTwoCoclass^\FixedPoint \QuantumAction$ and $\FillingIntersectionMap^\FillingBasis_\DegreeTwoCoclass$, both depend on the choice of $\ExtendedTorus$-equivariant Morse data on $\Manifold$.
        The $\ExtendedTorus$-action on $\Manifold$ in the domain of $\FloerSeidel_{\ExtendedTorus} (\Cocharacter, \FixedPoint)$ is $\Extended{\TorusAction}$, while on the codomain it is $\Cocharacter \cdot \Extended{\TorusAction}$.
        The fact that these two actions are different means that we cannot use the same $\ExtendedTorus$-equivariant Morse data in defining these terms of $\Connection^\FixedPoint_\DegreeTwoCoclass$.
        To emphasise the different underlying $\ExtendedTorus$-action, we will incorporate a subscript $\Extended{\TorusAction}$ or $\Cocharacter \cdot \Extended{\TorusAction}$ in our notation.
        Note that each of the classes $\DegreeTwoCoclass^\FixedPoint _{\Extended{\TorusAction}} \in \Cohomology^2_{\ExtendedTorus} (\Manifold, \Extended{\TorusAction})$ and $\DegreeTwoCoclass^\FixedPoint _{\Cocharacter \cdot \Extended{\TorusAction}} \in \Cohomology^2_{\ExtendedTorus} (\Manifold, \Cocharacter \cdot \Extended{\TorusAction})$ is independently defined using the split short exact sequence \eqref{eqn:split-short-exact-sequence-borel-quotient}, however the relation $(\ClassifyingSpace \Extended{\Cocharacter}) \PullBack \DegreeTwoCoclass^\FixedPoint _{\Cocharacter \cdot \Extended{\TorusAction}} = \DegreeTwoCoclass^\FixedPoint _{\Extended{\TorusAction}}$ readily follows by a diagram chasing argument.
        
        We will show
            \begin{equation}
            \label{eqn:chain-homotopy-equivalence-for-floer-seidel-commuting-with-quantum-action}
                \FloerSeidel_{\ExtendedTorus} (\Cocharacter, \FixedPoint) \ComposedWith \left( \DegreeTwoCoclass^\FixedPoint _{\Extended{\TorusAction}} \QuantumAction {} - \formalAdditionalCircle \FillingIntersectionMap^\FillingBasis_{\DegreeTwoCoclass, \Extended{\TorusAction}} \right) \sim \left( \DegreeTwoCoclass^\FixedPoint _{\Cocharacter \cdot \Extended{\TorusAction}} \QuantumAction {} - \formalAdditionalCircle \FillingIntersectionMap^{(\Cocharacter, \FixedPoint) \PullBack \FillingBasis}_{\DegreeTwoCoclass, \Cocharacter \cdot \Extended{\TorusAction}} \right) \ComposedWith \FloerSeidel_{\ExtendedTorus} (\Cocharacter, \FixedPoint),
            \end{equation}
        where $\sim$ denotes a homotopy equivalence.
        This means there is a map $K$ such that $dK + Kd$ is equal to the difference of the two sides of \eqref{eqn:chain-homotopy-equivalence-for-floer-seidel-commuting-with-quantum-action}, and this makes sense even though neither side is a chain map.
        Combining \eqref{eqn:equivariant-floer-seidel-map-commutes-with-differentiation-connection-on-chains} and \eqref{eqn:chain-homotopy-equivalence-for-floer-seidel-commuting-with-quantum-action} yields a chain homotopy between the chain maps $\FloerSeidel_{\ExtendedTorus} (\Cocharacter, \FixedPoint) \ComposedWith \Connection^\FixedPoint _\DegreeTwoCoclass$ and $\Connection^\FixedPoint _\DegreeTwoCoclass \ComposedWith \FloerSeidel_{\ExtendedTorus} (\Cocharacter, \FixedPoint)$, completing the proof.
        
        To construct a homotopy between the two sides of \eqref{eqn:chain-homotopy-equivalence-for-floer-seidel-commuting-with-quantum-action}, we will need to find a homotopy between $\DegreeTwoCoclass^\FixedPoint _{\Extended{\TorusAction}} \QuantumAction$ and $\DegreeTwoCoclass^\FixedPoint _{\Cocharacter \cdot \Extended{\TorusAction}} \QuantumAction$.
        These are $\ExtendedTorus$-equivariant cohomology classes for different actions, so we will use the clutching bundle $\ClutchingBundle{\Cocharacter}$ from \autoref{sec:clutching-bundle-construction} to create the homotopy.
        By \autoref{lem:class-existence-for-quantum-flatness-from-intertwining}, there is a class $\beta \in \Cohomology^2 _{\ExtendedTorus} (\ClutchingBundle{\Cocharacter})$ which satisfies
            \begin{equation}
                (\ClutchingFibreInclusion^+_{\Pole^+}) \PullBack \beta = \DegreeTwoCoclass^\FixedPoint _{\Extended{\TorusAction}}, \quad (\ClutchingFibreInclusion^-_{\Pole^-}) \PullBack \beta = \DegreeTwoCoclass^\FixedPoint _{\Cocharacter \cdot \Extended{\TorusAction}}, \quad \beta (\DegreeTwoClass^\FixedPoint) = \DegreeTwoCoclass (\DegreeTwoClass) \text{ for }\DegreeTwoClass \in \Homology_2 (\Manifold).
            \end{equation}
        Here, we have denoted the fibre inclusions over the poles $\Pole^\pm \in \Sphere$ by $\ClutchingFibreInclusion^\pm_{\Pole^\pm} : \Manifold \to \ClutchingBundle{\Cocharacter}$.
        Thus, informally, the two classes $\DegreeTwoCoclass^\FixedPoint _{\Extended{\TorusAction}}$ and $\DegreeTwoCoclass^\FixedPoint _{\Cocharacter \cdot \Extended{\TorusAction}}$ coincide in the clutching bundle, and we have reduced the problem to finding a homotopy which moves the intersection with the class $\beta$ from the fibre over one pole to the fibre over the other pole.
        We define this homotopy below, however note that we do require further homotopies in our proof of \eqref{eqn:chain-homotopy-equivalence-for-floer-seidel-commuting-with-quantum-action}.
        
        \begin{definition}
            [The homotopy $K$]
            \label{def:homotopy-k-in-flatness-proof}
            The map $K^+$, which we have drawn in \autoref{fig:homotopy-for-flatness-of-shift-operator-floer-positive-end}, counts $\ExtendedTorus$-equivalence classes of 7-tuples $(\flowUniversalSpace, \FloerSolution, \flowUniversalSpace^{\min}, \flowUniversalSpace_0, \flowManifold_0, s_0, t_0)$, where $[\flowUniversalSpace, \FloerSolution]$ is a $\ExtendedTorus$-equivariant Floer solution, $\flowUniversalSpace^{\min}$ is a perturbed half\textsuperscript{+} flowline in $\UniversalSpace \ExtendedTorus$, $[\flowUniversalSpace_0, \flowManifold_0]$ is a perturbed $\ExtendedTorus$-equivariant half\textsuperscript{+} flowline in $\ClutchingBundle{\Cocharacter}$ which converges to $\beta$ at $+ \infty$, $s_0 \in \IntervalOpen{0}{\infty}$ is a positive real number and $t_0 \in \Circle$ is an element of the circle.
            Together, they must satisfy $\flowUniversalSpace(0) = \flowUniversalSpace^{\min} (0) = \flowUniversalSpace_0 (0)$, $\arg (\flowUniversalSpace^{\min} (+ \infty)) + t_0 = 0$ and $\ClutchingFibreInclusion ^+ _{(s_0, t_0)} (\FloerSolution (0, t_0)) = \flowManifold_0 (0)$.
            Here, $\ClutchingFibreInclusion^+ _{(s_0, t_0)} : \Manifold \to \ClutchingBundle{\Cocharacter}$ is the fibre inclusion map over the point $(s_0, t_0) \in \Sphere$, using coordinates from \eqref{eqn:cylinder-coordinates-for-sphere}, which uses the trivialisation over the pole $\Pole^+$ to identify the fibre with $\Manifold$.
            The map $K^+$ is an endomorphism of the $\ExtendedTorus$-equivariant Floer cochain complex for the data $(\eqHamiltonian, \eqACS)$.

\begin{figure}
\centering
	\begin{center}
		\begin{tikzpicture}
			\node[inner sep=0] at (0,0) {\includegraphics[width=8 cm]{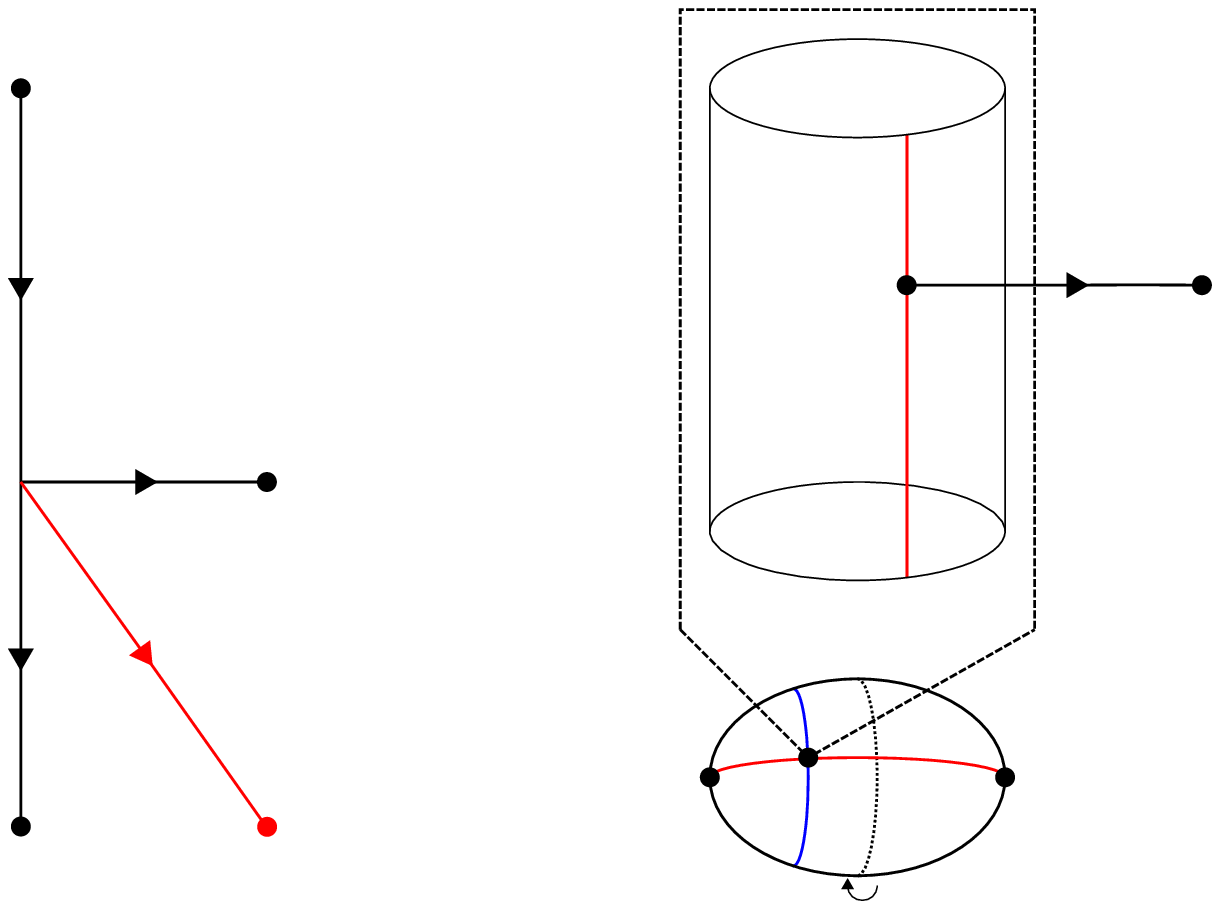}};
			
			\node at (1.5, 1.1){\tiny $(0, \textcolor{red}{t_0})$};
			\node at (3.3, 1.3){\tiny $\flowManifold_0$};
			\node at (-4.1, 1.0){\tiny $\flowUniversalSpace$};
			\node at (-3.0, 0.0){\tiny $\flowUniversalSpace_0$};
			\node at (-2.7, -1.3){\tiny $\flowUniversalSpace^{\min}$};
			\node at (-4.0, -0.2){\tiny $0$};
			\node at (0.8, 1.8){\tiny $\FloerSolution$};
			\node at (4.1, 1.1){\tiny $\beta$};
			\node at (1.3, -2.2){\tiny $(\textcolor{blue}{s_0}, \textcolor{red}{t_0})$};
			\node at (0.4, -2.1){\tiny $\Pole^+$};
			\node at (2.9, -2.1){\tiny $\Pole^-$};
			\node at (1.7, -3.1){\tiny $\Cocharacter$};
			\node at (1.7, 3.2){\tiny fibre $\ClutchingFibreInclusion^+ _{(\textcolor{blue}{s_0}, \textcolor{red}{t_0})}(\Manifold)$};
			\node at (2.9, -2.6){\tiny base $\Sphere$};
		\end{tikzpicture}
	\end{center}
\caption{
    For the map $K^+$, we embed the Floer cylinder in the bundle $\ClutchingBundle{\Cocharacter}$, and the half\textsuperscript{+} flowline $\flowManifold_0$ to $\beta$ lives in the bundle.
}
\label{fig:homotopy-for-flatness-of-shift-operator-floer-positive-end}
\end{figure}

            Similarly, we define a map $K^-$ which counts similar $\ExtendedTorus$-equivalence classes of 7-tuples, but these instead satisfy $\ClutchingFibreInclusion ^- _{(s_0, t_0)} (\FloerSolution (0, t_0)) = \flowManifold_0 (0)$ and $s_0 \in \IntervalOpen{-\infty}{0}$.
            The map $\ClutchingFibreInclusion ^- _{(s_0, t_0)} : \Manifold \to \ClutchingBundle{\Cocharacter}$ is the fibre inclusion map using the trivialisation over the pole $\Pole^-$ to identify the fibres with $\Manifold$.
            The map $K^-$ is an endomorphism of the $\ExtendedTorus$-equivariant Floer cochain complex for the pullback data $(\Cocharacter \PullBack \eqHamiltonian, \Cocharacter \PullBack \eqACS)$.
        
            Finally, define 
            \begin{equation}
                K = (\FloerSeidel_{\ExtendedTorus} (\Cocharacter, \FixedPoint) \ComposedWith K^+) + (K^- \ComposedWith \FloerSeidel_{\ExtendedTorus} (\Cocharacter, \FixedPoint)),
            \end{equation}
            which is a map $K : \FloerCohomology^\ArbitraryIndex_{\ExtendedTorus} (\Manifold, \Extended{\TorusAction}, \Slope) \to \FloerCohomology^{\ArbitraryIndex + |\Cocharacter, \FixedPoint| + 1} _{\ExtendedTorus} (\Manifold, \Cocharacter \cdot \Extended{\TorusAction}, \Slope - \CocharacterSlope{\Cocharacter})$.
        \end{definition}
        
        Two of the boundary components of the 1-dimensional moduli spaces for $K$ will be the limits $s_0 \to \pm \infty$.
        To characterise these boundary components, let $K^\pm _{\pm \infty}$ be the maps defined as per $K^\pm$ in \autoref{def:homotopy-k-in-flatness-proof}, except with the intersection condition over the fibre at $(s_0, t_0)$ replaced by the condition $\ClutchingFibreInclusion ^\pm _{\Pole^\pm} (\FloerSolution (0, t_0)) = \flowManifold_0 (0)$.
        The moduli spaces for $K^\pm _{\pm \infty}$ do not record the parameter $s_0$ since this intersection condition does not depend on $s_0$.
        
        Denote by $\FillingIntersectionMap^{\FillingBasis, +}_\beta$ the endomorphism of $\FloerCohomology^\ArbitraryIndex_{\ExtendedTorus} (\Manifold, \Extended{\TorusAction}, \Slope)$ which counts the intersections of $(\critUniversalSpace, \ClutchingFibreInclusion^+_{\Pole^+} (f(\Disc)))$ with $\PerturbedStableManifold(\beta)$ for the $\ExtendedTorus$-equivariant Hamiltonian orbit $[\critUniversalSpace, \WithFilling{\HamiltonianOrbit}]$  with filling $f$.
        This is the natural analogue of $\FillingIntersectionMap^{\FillingBasis}_{\DegreeTwoCoclass, \Extended{\TorusAction}}$ which uses $\beta$ instead of $\DegreeTwoCoclass^\FixedPoint$.
        Similarly define the endomorphism $\FillingIntersectionMap^{(\Cocharacter, \FixedPoint) \PullBack \FillingBasis, -}_\beta$ of $\FloerCohomology^{\ArbitraryIndex} _{\ExtendedTorus} (\Manifold, \Cocharacter \cdot \Extended{\TorusAction}, \Slope - \CocharacterSlope{\Cocharacter})$, which is the analogue of $\FillingIntersectionMap^{(\Cocharacter, \FixedPoint) \PullBack \FillingBasis}_{\DegreeTwoCoclass, \Cocharacter \cdot \Extended{\TorusAction}}$ that uses $\beta$.
        
        We now have the ingredients to prove \eqref{eqn:chain-homotopy-equivalence-for-floer-seidel-commuting-with-quantum-action}, and therefore complete the proof.
        \autoref{lemma:homotopy-between-maps-at-poles-and-maps-on-manifold} states that we can indeed change to maps using $\beta$ over the poles, implying that \eqref{eqn:chain-homotopy-equivalence-for-floer-seidel-commuting-with-quantum-action} is equivalent to
            \begin{equation}
            \label{eqn:chain-homotopy-equivalence-for-floer-seidel-commuting-with-homotopy-extremes}
                \FloerSeidel_{\ExtendedTorus} (\Cocharacter, \FixedPoint) \ComposedWith \left( K^+ _{+ \infty} - \formalAdditionalCircle \FillingIntersectionMap^{\FillingBasis, +}_\beta \right) \sim \left( K^- _{- \infty} - \formalAdditionalCircle \FillingIntersectionMap^{(\Cocharacter, \FixedPoint) \PullBack \FillingBasis, -}_\beta \right) \ComposedWith \FloerSeidel_{\ExtendedTorus} (\Cocharacter, \FixedPoint).
            \end{equation}
        We show \eqref{eqn:chain-homotopy-equivalence-for-floer-seidel-commuting-with-homotopy-extremes} holds in \autoref{lemma:homotopy-equivalence-for-floer-flatness-holds-on-clutching-bundle} using the homotopy $K$.
        Therefore \eqref{eqn:chain-homotopy-equivalence-for-floer-seidel-commuting-with-quantum-action} holds as required.
    \end{proof}
        
    \begin{lemma}
    \label{lemma:homotopy-between-maps-at-poles-and-maps-on-manifold}
        We have homotopy equivalences
            \begin{align}
                K^+ _{+ \infty} - \formalAdditionalCircle \FillingIntersectionMap^{\FillingBasis, +}_\beta &\sim \DegreeTwoCoclass^\FixedPoint _{\Extended{\TorusAction}} \QuantumAction {} - \formalAdditionalCircle \FillingIntersectionMap^\FillingBasis_{\DegreeTwoCoclass, \Extended{\TorusAction}} \label{eqn:homotopy-equivalence-for-flatness-transfer-to-clutching-bundle-+} \\
                K^- _{- \infty} - \formalAdditionalCircle \FillingIntersectionMap^{(\Cocharacter, \FixedPoint) \PullBack \FillingBasis, -}_\beta &\sim \DegreeTwoCoclass^\FixedPoint _{\Cocharacter \cdot \Extended{\TorusAction}} \QuantumAction {} - \formalAdditionalCircle \FillingIntersectionMap^{(\Cocharacter, \FixedPoint) \PullBack \FillingBasis}_{\DegreeTwoCoclass, \Cocharacter \cdot \Extended{\TorusAction}}. \label{eqn:homotopy-equivalence-for-flatness-transfer-to-clutching-bundle--}
            \end{align}
    \end{lemma}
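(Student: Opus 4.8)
The plan is to establish \eqref{eqn:homotopy-equivalence-for-flatness-transfer-to-clutching-bundle-+}; the second equivalence \eqref{eqn:homotopy-equivalence-for-flatness-transfer-to-clutching-bundle--} is the mirror image, obtained by working over the pole $\Pole^-$, where the induced $\ExtendedTorus$-action on the fibre of $\ClutchingBundle{\Cocharacter}$ is $\Cocharacter \cdot \Extended{\TorusAction}$, the restriction of $\beta$ is $\DegreeTwoCoclass^\FixedPoint_{\Cocharacter \cdot \Extended{\TorusAction}}$, and the relevant filling basis is $(\Cocharacter, \FixedPoint) \PullBack \FillingBasis$. Recall from \autoref{sec:clutching-bundle-construction} that, using the local trivialisation $\Hemisphere^+ \times \Manifold$, the fibre of $\ClutchingBundle{\Cocharacter}$ over $\Pole^+$ together with its induced $\ExtendedTorus$-action is identified with $(\Manifold, \Extended{\TorusAction})$, and that under this identification $(\ClutchingFibreInclusion^+_{\Pole^+}) \PullBack \beta = \DegreeTwoCoclass^\FixedPoint_{\Extended{\TorusAction}}$ by the choice of $\beta$ from \autoref{lem:class-existence-for-quantum-flatness-from-intertwining}. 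The idea is to choose the $\ExtendedTorus$-equivariant Morse data on $\ClutchingBundle{\Cocharacter}$ that defines $K^+$ and $\FillingIntersectionMap^{\FillingBasis, +}_\beta$ so that, on the fibre over $\Pole^+$, it is exactly the data on $\Manifold$ that defines $\DegreeTwoCoclass^\FixedPoint_{\Extended{\TorusAction}} \QuantumAction$ and $\FillingIntersectionMap^\FillingBasis_{\DegreeTwoCoclass, \Extended{\TorusAction}}$.

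Concretely, I would pick the $\ExtendedTorus$-equivariant Morse function representing $\beta$ on $\ClutchingBundle{\Cocharacter}$ which, in a neighbourhood of the fibre over $\Pole^+$ and in the trivialisation $\Hemisphere^+ \times \Manifold$, is a sum of an $\ExtendedTorus$-equivariant Morse function on $\Manifold$ (for the action $\Extended{\TorusAction}$) representing $\DegreeTwoCoclass^\FixedPoint_{\Extended{\TorusAction}}$ --- the same one used to define $\DegreeTwoCoclass^\FixedPoint_{\Extended{\TorusAction}} \QuantumAction$ and $\FillingIntersectionMap^\FillingBasis_{\DegreeTwoCoclass, \Extended{\TorusAction}}$ --- and a Morse function on the base $\Sphere$ having $\Pole^+$ as a nondegenerate critical point. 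For such data every perturbed half\textsuperscript{$+$} flowline in $\ClutchingBundle{\Cocharacter}$ that issues from a point of the fibre over $\Pole^+$ remains in that fibre, since its base component starts at the critical point $\Pole^+$ and is therefore constant; consequently $\PerturbedStableManifold(\beta) \Intersection \ClutchingFibreInclusion^+_{\Pole^+}(\Manifold) = \ClutchingFibreInclusion^+_{\Pole^+}(\PerturbedStableManifold(\DegreeTwoCoclass^\FixedPoint_{\Extended{\TorusAction}}))$. Under the fibre identification this carries the moduli spaces defining $K^+_{+\infty}$ bijectively, and compatibly with signs and Novikov weights, onto the moduli spaces defining $\DegreeTwoCoclass^\FixedPoint_{\Extended{\TorusAction}} \QuantumAction$ --- the half\textsuperscript{$+$} flowlines $[\flowUniversalSpace_0, \flowManifold_0]$ to $\beta$ become half\textsuperscript{$+$} flowlines in $\Manifold$ to $\DegreeTwoCoclass^\FixedPoint_{\Extended{\TorusAction}}$ --- while $\ClutchingFibreInclusion^+_{\Pole^+}(f(\Disc))$ meets $\PerturbedStableManifold(\beta)$ exactly where the filling disc $f(\Disc)$ meets $\PerturbedStableManifold(\DegreeTwoCoclass^\FixedPoint_{\Extended{\TorusAction}})$, so $\FillingIntersectionMap^{\FillingBasis, +}_\beta = \FillingIntersectionMap^\FillingBasis_{\DegreeTwoCoclass, \Extended{\TorusAction}}$. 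Thus \eqref{eqn:homotopy-equivalence-for-flatness-transfer-to-clutching-bundle-+} holds with $\sim$ an equality for this choice.

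Since the lemma is applied with data that need not be of this special form --- the data for $K^+$ in \autoref{prop:connection-commutes-with-equivariant-floer-seidel-map} and the data for $\DegreeTwoCoclass^\FixedPoint_{\Extended{\TorusAction}} \QuantumAction$ and $\FillingIntersectionMap^\FillingBasis_{\DegreeTwoCoclass, \Extended{\TorusAction}}$ in \eqref{eqn:chain-homotopy-equivalence-for-floer-seidel-commuting-with-quantum-action} are chosen independently --- I would then remove the dependence on the choices in the standard way. An $s$-dependent interpolation of the $\ExtendedTorus$-equivariant data between the product data above and any other admissible data yields chain homotopies showing each of $K^+_{+\infty}$, $\DegreeTwoCoclass^\FixedPoint_{\Extended{\TorusAction}} \QuantumAction$, $\FillingIntersectionMap^{\FillingBasis, +}_\beta$ and $\FillingIntersectionMap^\FillingBasis_{\DegreeTwoCoclass, \Extended{\TorusAction}}$ is homotopic to its product-data version, exactly in the pattern of the decomposition $d_\DegreeTwoCoclass = d_\DegreeTwoCoclass^\FillingBasis - \FillingIntersectionMap^\FillingBasis_\DegreeTwoCoclass d + d \FillingIntersectionMap^\FillingBasis_\DegreeTwoCoclass$ and the basis-independence relation \eqref{eqn:basis-independence-for-differentiation-with-filling-map}; combining these with the equality of the previous paragraph gives \eqref{eqn:homotopy-equivalence-for-flatness-transfer-to-clutching-bundle-+}.

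The part I expect to require the most care is analytic rather than conceptual: the product Morse function on $\ClutchingBundle{\Cocharacter}$ near the fibre over $\Pole^+$ must simultaneously be admissible in the sense of \autoref{sec:sections-of-clutching-bundle} and regular for all the moduli spaces involved, and one must check that no spurious boundary appears in the limit $s_0 \to +\infty$ --- no sphere bubbling concentrating in the fibre, no half\textsuperscript{$+$} flowline escaping the product neighbourhood, and breaking of $\flowUniversalSpace^{\min}$ feeding only into the $\formalAdditionalCircle$-weighted maps that occur symmetrically on both sides, as in \autoref{prop:quantum-multiplication-commutes-with-differential-with-error}. Matching the coherent orientations under the identification of the moduli spaces over $\Pole^+$ with moduli spaces in the fibre $\Manifold$ is the remaining bookkeeping.
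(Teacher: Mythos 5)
Your reduction to product-type Morse data near the fibre over $\Pole^+$ is the same geometric idea the paper uses (the function $\MorseFunction^+$ extending $\MorseFunction^\Manifold + \MorseFunction^{\Disc^+}$, which has no flowlines leaving the fibre), but the step you dismiss as routine --- ``remove the dependence on the choices in the standard way'' --- is in fact the entire content of the lemma, and your description of it contains an error. None of $K^+_{+\infty}$, $\DegreeTwoCoclass^\FixedPoint_{\Extended{\TorusAction}} \QuantumAction$, $\FillingIntersectionMap^{\FillingBasis,+}_\beta$, $\FillingIntersectionMap^\FillingBasis_{\DegreeTwoCoclass,\Extended{\TorusAction}}$ is a chain map on its own, and the two filling-intersection terms genuinely depend on the Morse data for the stable manifold (the paper says so explicitly after \eqref{eqn:basis-independence-for-differentiation-with-filling-map}; that relation governs the change of filling \emph{basis}, not of Morse data, so it does not do the job you assign it). When you interpolate the Morse data from $\MorseFunction^{\ClutchingBundle{\Cocharacter}}$ to the product data, the boundary of the parametrised moduli space has, besides $dh+hd$ and the two endpoints, a component where $\flowUniversalSpace^{\min}$ breaks at $\formalAdditionalCircle$. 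You claim these contributions ``occur symmetrically on both sides'' and hence cause no trouble; they do not cancel --- they are precisely the asymmetry that converts $\FillingIntersectionMap^{\FillingBasis,+}_\beta$ into $\FillingIntersectionMap^\FillingBasis_{\DegreeTwoCoclass,\Extended{\TorusAction}}$. The paper's proof computes this boundary term $C_\beta$ by observing that, as the interpolation parameter varies, intersections of $\PerturbedStableManifold(\beta)$ with the filling disc enter and leave only through its boundary Hamiltonian orbit, yielding $C_\beta = \FillingIntersectionMap^\FillingBasis_{\DegreeTwoCoclass,\Extended{\TorusAction}} - \FillingIntersectionMap^{\FillingBasis,+}_\beta$; without this identification the two sides of \eqref{eqn:homotopy-equivalence-for-flatness-transfer-to-clutching-bundle-+} differ by an uncontrolled $\formalAdditionalCircle$-multiple.

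A secondary gap: even with product data, the cocycle representing $\beta$ need only restrict to a representative of $\DegreeTwoCoclass^\FixedPoint_{\Extended{\TorusAction}}$ up to coboundary, and the limit $\lambda \to \infty$ of the flowline to $\beta$ breaks into a flowline inside the fibre followed by a continuation flowline in $\ClutchingBundle{\Cocharacter}$; identifying the latter count with the pullback $(\ClutchingFibreInclusion^+_{\Pole^+})\PullBack\beta = \DegreeTwoCoclass^\FixedPoint_{\Extended{\TorusAction}}$ is the content of \eqref{eqn:homotopy-between-morse-functions-on-clutching-bundle-recovers-degree-two-coclass} and cannot be replaced by an on-the-nose equality of moduli spaces. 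So your first paragraph's ``$\sim$ is an equality for this choice'' overstates what the product data buys you. The skeleton of your argument is salvageable, but you must run the interpolation as a single parametrised moduli problem and account for the $\flowUniversalSpace^{\min}$-breaking and the orbit-boundary crossings explicitly.
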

    
    \begin{proof}
        We will show \eqref{eqn:homotopy-equivalence-for-flatness-transfer-to-clutching-bundle-+} holds; the same argument proves \eqref{eqn:homotopy-equivalence-for-flatness-transfer-to-clutching-bundle--} as well.
        Denote by $\MorseFunction^{\ClutchingBundle{\Cocharacter}} : \UniversalSpace \ExtendedTorus \times \ClutchingBundle{\Cocharacter} \to \RealNumbers$ the equivariant Morse function on $\ClutchingBundle{\Cocharacter}$ and by $\MorseFunction^{\Manifold} : \UniversalSpace \ExtendedTorus \times \Manifold \to \RealNumbers$ the equivariant Morse function on $\Manifold$ for the action $\Extended{\TorusAction}$.
        We have $\beta \in \Cohomology^2_{\ExtendedTorus} (\ClutchingBundle{\Cocharacter}; \MorseFunction^{\ClutchingBundle{\Cocharacter}})$ and $\DegreeTwoCoclass^\FixedPoint_{\Extended{\TorusAction}} \in \Cohomology^2_{\ExtendedTorus} (\Manifold, \Extended{\TorusAction}; \MorseFunction^\Manifold)$.
        Let $\MorseFunction^{\Disc^+} : \Disc^+ \to \RealNumbers$ be the function $\MorseFunction^{\Disc^+} (\eltSphere) = |\eltSphere|^2$.
        Let $\MorseFunction^+ : \UniversalSpace \ExtendedTorus \times \ClutchingBundle{\Cocharacter} \to \RealNumbers$ be a Morse function which extends the sum $\MorseFunction^\Manifold + \MorseFunction^{\Disc^+} : \UniversalSpace \ExtendedTorus \times \ClutchingProjection\Inverse (\Disc^+) \to \RealNumbers$.
        The Morse function $\MorseFunction^+$ has no flowlines which flow away from the fibre $\ClutchingFibreInclusion^+_{\Pole^+} (\Manifold)$.
        
        We will construct a homotopy which modifies the flowline $[\flowUniversalSpace_0, \flowManifold_0]$ in the definition of $K^+_{+\infty}$ so that it uses $\MorseFunction^+$ instead of $\MorseFunction^{\ClutchingBundle{\Cocharacter}}$.
        Since $[\flowUniversalSpace_0, \flowManifold_0]$ is, in fact, an \emph{$s$-dependent} flowline, we need $s$-dependent perturbations of the above Morse functions.
        The perturbation of $\MorseFunction^{\ClutchingBundle{\Cocharacter}}$ is unrestricted, however for $\MorseFunction^+$ we must use a perturbation for which no flowline flows away from the fibre $\ClutchingFibreInclusion^+_{\Pole^+} (\Manifold)$.
        For this, take a perturbation $\MorseFunction^\Manifold_s$ of $\MorseFunction^\Manifold$, and impose $\MorseFunction^+_s = \MorseFunction^\Manifold_s + \MorseFunction^{\Disc^+}$ on $\RealNumbers \times \UniversalSpace \ExtendedTorus \times \ClutchingProjection\Inverse (\Disc^+)$.
        
        Let $\MorseFunction^{+, \mathrm{hty}}_{\lambda, s} : \IntervalOpen{0}{\infty} \times \RealNumbers \times \UniversalSpace \ExtendedTorus \times \ClutchingBundle{\Cocharacter} \to \RealNumbers$ be a function which also depends on a real parameter $\lambda \in \IntervalOpen{0}{\infty}$ and which satifies $\MorseFunction^{+, \mathrm{hty}}_{\lambda, s} = \MorseFunction^+_s$ on $s \le \lambda - 1$ and $\MorseFunction^{+, \mathrm{hty}}_{\lambda, s} = \MorseFunction^{\ClutchingBundle{\Cocharacter}}_s$ on $s \ge \lambda$.
        Let $h$ be the map which counts $\ExtendedTorus$-equivalence classes of tuples $(\flowUniversalSpace, \FloerSolution, \flowUniversalSpace^{\min}, \flowUniversalSpace_0, \flowManifold_0, s_0, t_0, \lambda)$, where $\lambda \in \IntervalOpen{0}{\infty}$ is a new parameter and $[\flowUniversalSpace, \FloerSolution, \flowUniversalSpace^{\min}, \flowUniversalSpace_0, \flowManifold_0, s_0, t_0]$ is as per the definition of $K^+_{+\infty}$ in \autoref{def:homotopy-k-in-flatness-proof}, except that $[\flowUniversalSpace_0, \flowManifold_0]$ is an $s$-dependent flowline for the Morse function $\MorseFunction^{+, \mathrm{hty}}_{\lambda, s}$.
        We have drawn this configuration in \autoref{fig:homotopy-for-flatness-of-shift-operator-floer-positive-pole-fibre}.

\begin{figure}
\centering
	\begin{center}
		\begin{tikzpicture}
			\node[inner sep=0] at (0,0) {\includegraphics[width=8 cm]{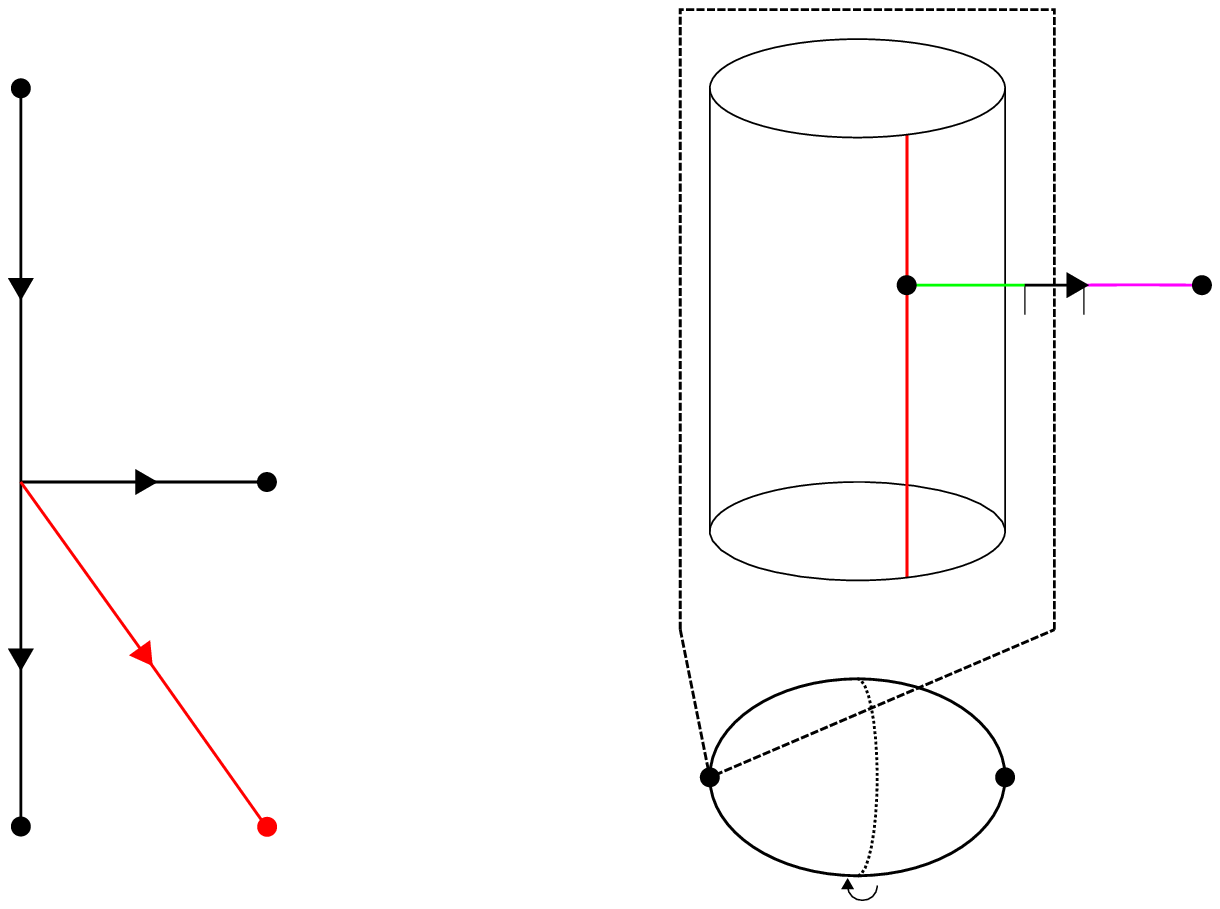}};
			
			\node at (1.5, 1.1){\tiny $(0, \textcolor{red}{t_0})$};
			\node at (2.4, 1.25){\tiny $\flowManifold_0$};
			\node at (2.65, 0.83){\tiny ${\lambda {-} 1}$};
			\node at (3.14, 0.85){\tiny $\lambda$};
			\node at (-4.1, 1.0){\tiny $\flowUniversalSpace$};
			\node at (-3.0, 0.0){\tiny $\flowUniversalSpace_0$};
			\node at (-2.7, -1.3){\tiny $\flowUniversalSpace^{\min}$};
			\node at (-4.0, -0.2){\tiny $0$};
			\node at (0.8, 1.8){\tiny $\FloerSolution$};
			\node at (4.1, 1.1){\tiny $\beta$};
			\node at (0.4, -2.1){\tiny $\Pole^+$};
			\node at (2.9, -2.1){\tiny $\Pole^-$};
			\node at (1.7, -3.1){\tiny $\Cocharacter$};
			\node at (1.7, 3.2){\tiny fibre $\ClutchingFibreInclusion^+ _{\Pole^+}(\Manifold)$};
			\node at (2.9, -2.6){\tiny base $\Sphere$};
		\end{tikzpicture}
	\end{center}
\caption{
    For the map $h$, we embed the Floer cylinder in the bundle $\ClutchingBundle{\Cocharacter}$ in the fibre above $\Pole^+$, and the half\textsuperscript{+} flowline $\flowManifold_0$ to $\beta$ lives in the bundle.
    The flowline $\flowManifold_0$ uses the data $\textcolor{green}{\MorseFunction^+_s}$ on $\IntervalClosed{0}{\lambda - 1}$ and it uses the data $\textcolor{magenta}{\MorseFunction^{\ClutchingBundle{\Cocharacter}}_s}$ on $\IntervalClosedOpen{\lambda}{\infty}$.
}
\label{fig:homotopy-for-flatness-of-shift-operator-floer-positive-pole-fibre}
\end{figure}
        
        We will show the boundary of the 1-dimensional moduli spaces yields \eqref{eqn:homotopy-equivalence-for-flatness-transfer-to-clutching-bundle-+}.
        The contributions from the equivariant Floer solution breaking and from the flowline $[\flowUniversalSpace_0, \flowManifold_0]$ breaking produce the terms $d h + h d$.
        Under the limit $\lambda \to 0$, $[\flowUniversalSpace_0, \flowManifold_0]$ is a flowline of $\MorseFunction^{\ClutchingBundle{\Cocharacter}}_s$, so this boundary component yields the term $K^+_{+\infty}$.
        We will show that the limit $\lambda \to \infty$ yields the term $\DegreeTwoCoclass^\FixedPoint_{\Extended{\TorusAction}} \QuantumAction$ and that the breaking of $\flowUniversalSpace^{\min}$ yields the remaining two terms in \eqref{eqn:homotopy-equivalence-for-flatness-transfer-to-clutching-bundle-+}, up to a further homotopy equivalence.
        
        Under the limit $\lambda \to \infty$, the flowline $[\flowUniversalSpace_0, \flowManifold_0]$ breaks into a pair of flowlines: a half\textsuperscript{$+$} flowline $[\flowUniversalSpace_1, \flowManifold_1]$ from $[\flowUniversalSpace(0), \ClutchingFibreInclusion^+_{\Pole^+} (\FloerSolution(0, t_0))]$ to a $\ExtendedTorus$-equivariant critical point $[\critUniversalSpace_1, \critManifold_1]$ and a further flowline $[\flowUniversalSpace_2, \flowManifold_2]$ from $[\critUniversalSpace_1, \critManifold_1]$ to $\beta$.
        The flowline $[\flowUniversalSpace_1, \flowManifold_1]$ uses the Morse function $\MorseFunction^+_s$, so it lies completely within the fibre $\ClutchingFibreInclusion^+_{\Pole^+} (\Manifold)$.
        Identifying this fibre with $\Manifold$, we see that $[\flowUniversalSpace_1, \flowManifold_1]$ is a half\textsuperscript{$+$} flowline of $\MorseFunction^\Manifold_s$.
        The other flowline $[\flowUniversalSpace_2, \flowManifold_2]$ uses $\MorseFunction^+$ on $s \le - 1$ and uses $\MorseFunction^{\ClutchingBundle{\Cocharacter}}$ on $s \ge 0$.
        To see this, consider a translation of the original flowline $[\flowUniversalSpace_0, \flowManifold_0]$ by $- \lambda$ so that it is a flowline with domain $\IntervalClosedOpen{- \lambda}{\infty}$ which uses $\MorseFunction^+_{s - \lambda}$ for $s \in\IntervalClosed{-\lambda}{-1}$ and $\MorseFunction^{\ClutchingBundle{\Cocharacter}}_{s - \lambda}$ for $s \in\IntervalClosedOpen{0}{\infty}$.
        Under the limit $\lambda \to \infty$, the flowline $[\flowUniversalSpace_2, \flowManifold_2]$ is the component of the broken flowline according to these coordinates on $[\flowUniversalSpace_0, \flowManifold_0]$.
        In particular, the standard Morse continuation map between the $\ExtendedTorus$-equivariant Morse functions $\MorseFunction^{\ClutchingBundle{\Cocharacter}}$ and $\MorseFunction^+$ counts the flowlines $[\flowUniversalSpace_2, \flowManifold_2]$ by definition.
        We restrict to those flowlines $[\flowUniversalSpace_2, \flowManifold_2]$ which flow from the fibre $\ClutchingFibreInclusion^+_{\Pole^+}(\Manifold)$ to get
            \begin{equation}
            \label{eqn:homotopy-between-morse-functions-on-clutching-bundle-recovers-degree-two-coclass}
                \DegreeTwoCoclass^\FixedPoint_{\Extended{\TorusAction}} = (\ClutchingFibreInclusion^+_{\Pole^+}) \PullBack \beta = \sum_{\substack{
                    [\critUniversalSpace_1, \critManifold_1] \in \eqCriticalPointSet{\MorseFunction^+}
                    \\
                    \text{with $\critManifold_1 \in \ClutchingFibreInclusion^+_{\Pole^+}(\Manifold)$}
                }} \ 
                \sum_{[\flowUniversalSpace_2, \flowManifold_2] \in \ModuliSpace ([\critUniversalSpace_1, \critManifold_1], \beta)} \Count([\flowUniversalSpace_2, \flowManifold_2])
                \ [\critUniversalSpace_1, \critManifold_1].
            \end{equation}
        We can also derive \eqref{eqn:homotopy-between-morse-functions-on-clutching-bundle-recovers-degree-two-coclass} using functorial flowlines to model the pullback map $(\ClutchingFibreInclusion^+_{\Pole^+}) \PullBack$ (see \autoref{footnote:functorial-flowline}).
        In summary, $[\flowUniversalSpace_1, \flowManifold_1]$ is really a half\textsuperscript{$+$} flowline in $\Manifold$ which uses the Morse function $\MorseFunction^+$ and which flows to $\DegreeTwoCoclass^\FixedPoint_{\Extended{\TorusAction}}$.
        Therefore, the $\lambda \to \infty$ limit exactly equals $\DegreeTwoCoclass^\FixedPoint_{\Extended{\TorusAction}} \QuantumAction$.
        
        As per our previous proofs, the breaking of $\flowUniversalSpace^{\min}$ is isolated exactly when it breaks with the intermediate critical point $\formalAdditionalCircle$, and moreover we can decouple the contribution of the flowline to $\formalAdditionalCircle$ from the rest of the configuration.
        Therefore the contribution from $\flowUniversalSpace^{\min}$ breaking is homotopy equivalent to $\formalAdditionalCircle \ComposedWith C_\beta$.
        Here, $C_\beta$ counts $\ExtendedTorus$-equivalence classes of tuples $(\flowUniversalSpace, \FloerSolution, \flowUniversalSpace_0, \flowManifold_0, t_0, \lambda)$ where $[\flowUniversalSpace, \FloerSolution]$ is a $\ExtendedTorus$-equivariant Floer solution, $[\flowUniversalSpace_0, \flowManifold_0]$ is a half\textsuperscript{$+$} flowline to $\beta$, $t_0 \in \Circle$ is an unconstrained point on the circle and $\lambda \in \IntervalOpen{0}{\infty}$ is a positive number.
        The tuple satisfies the conditions $\flowUniversalSpace(0) = \flowUniversalSpace_0 (0)$ and $\ClutchingFibreInclusion^+_{+\infty} (\FloerSolution(0, t_0)) = \flowManifold_0(0)$.
        The moduli space has dimension $|\critUniversalSpace^-, \WithFilling{\HamiltonianOrbit}^-| - |\critUniversalSpace^+, \WithFilling{\HamiltonianOrbit}^+| - |\beta| + 2$, where the $+2$ comes from the freedom in the parameters $t_0$ and $\lambda$.
        Since $|\beta| = 2$, these moduli spaces are isolated only when $[\flowUniversalSpace, \FloerSolution]$ is constant in $s$.
        Therefore $C_\beta$ counts the isolated flowlines $[\flowUniversalSpace_0, \flowManifold_0]$ of $\MorseFunction^{+, \text{hty}}_{\lambda, s}$ which satisfy $\flowUniversalSpace_0 (0) = \critUniversalSpace$ and $\flowManifold_0 (0) = \HamiltonianOrbit(t_0)$ for the given $\ExtendedTorus$-equivariant Hamiltonian orbit $[\critUniversalSpace, \WithFilling{\HamiltonianOrbit}]$, where $t_0$ and $\lambda$ can vary.
        
        We will show $C_\beta = \FillingIntersectionMap^\FillingBasis_{\DegreeTwoCoclass, \Extended{\TorusAction}} - \FillingIntersectionMap^{\FillingBasis, +}_\beta$.
        The maps $\FillingIntersectionMap^\FillingBasis_{\DegreeTwoCoclass, \Extended{\TorusAction}}$ and $\FillingIntersectionMap^{\FillingBasis, +}_\beta$ count the intersections of the prescribed filling of $[\critUniversalSpace, \WithFilling{\HamiltonianOrbit}]$ with the perturbed stable manifolds of $\DegreeTwoCoclass^\FixedPoint_{\Extended{\TorusAction}}$ and $\beta$ respectively.
        These perturbed stable manifolds use the Morse functions $\MorseFunction^\Manifold_s$ and $\MorseFunction^{\ClutchingBundle{\Cocharacter}}_s$ respectively.
        Consider now the intersections of the filling with the perturbed stable manifold of $\beta$ for the function $\MorseFunction^{+, \mathrm{hty}}_{\lambda, s}$, for fixed $\lambda \in \IntervalOpen{0}{\infty}$.
        The limit as $\lambda \to \infty$ recovers $\FillingIntersectionMap^\FillingBasis_{\DegreeTwoCoclass, \Extended{\TorusAction}}$ (using \eqref{eqn:homotopy-between-morse-functions-on-clutching-bundle-recovers-degree-two-coclass} as above) and the limit $\lambda \to 0$ recovers $\FillingIntersectionMap^{\FillingBasis, +}_\beta$.
        As $\lambda$ varies, the intersections will vary smoothly within the filling.
        At isolated values of $\lambda$, however, the intersections may enter or leave the filling.
        When this happens, we will have an intersection which lies on the boundary of the filling, and this boundary is the Hamiltonian orbit $\HamiltonianOrbit$.
        Thus, $C_\beta$ exactly records the new and lost intersections, giving $C_\beta = \FillingIntersectionMap^\FillingBasis_{\DegreeTwoCoclass, \Extended{\TorusAction}} - \FillingIntersectionMap^{\FillingBasis, +}_\beta$ as desired.
    \end{proof}
    
    \begin{lemma}
    \label{lemma:homotopy-equivalence-for-floer-flatness-holds-on-clutching-bundle}
        The homotopy equivalence \eqref{eqn:chain-homotopy-equivalence-for-floer-seidel-commuting-with-homotopy-extremes} holds.
    \end{lemma}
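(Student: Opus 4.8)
The plan is to obtain \eqref{eqn:chain-homotopy-equivalence-for-floer-seidel-commuting-with-homotopy-extremes} by analysing the boundary of the $1$-dimensional moduli spaces underlying the homotopy $K = (\FloerSeidel_{\ExtendedTorus}(\Cocharacter, \FixedPoint) \ComposedWith K^+) + (K^- \ComposedWith \FloerSeidel_{\ExtendedTorus}(\Cocharacter, \FixedPoint))$ from \autoref{def:homotopy-k-in-flatness-proof}. As in all the earlier constructions, $K$ is a chain-level map, and the combination $dK + Kd$ with the Floer differentials is read off from the codimension-$1$ strata of these moduli spaces, namely: breaking of the $\ExtendedTorus$-equivariant Floer solution $[\flowUniversalSpace, \FloerSolution]$; breaking of the $\ExtendedTorus$-equivariant half\textsuperscript{$+$} flowline $[\flowUniversalSpace_0, \flowManifold_0]$ into $\ClutchingBundle{\Cocharacter}$ towards $\beta$; breaking of the perturbed half\textsuperscript{$+$} flowline $\flowUniversalSpace^{\min}$ in $\UniversalSpace \ExtendedTorus$; and the three limits $s_0 \to +\infty$, $s_0 \to -\infty$ and $s_0 \to 0$ of the section-point parameter.

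First I would dispose of the routine strata. Breaking of $[\flowUniversalSpace, \FloerSolution]$ produces precisely the terms $dK + Kd$. Breaking of $[\flowUniversalSpace_0, \flowManifold_0]$ makes no contribution: the detached component is a full flowline out of an intermediate $\ExtendedTorus$-equivariant critical point weighted by the coefficients of the chosen cocycle representing $\beta$, and these cancel because $\beta$ is closed (bubbling is excluded in this dimension by regularity). The limit $s_0 \to +\infty$, in which the fibre point retreats to the pole $\Pole^+$ in the trivialisation $\Hemisphere^+ \times \Manifold$, contributes only from the $K^+$ summand and yields $\FloerSeidel_{\ExtendedTorus}(\Cocharacter, \FixedPoint) \ComposedWith K^+_{+\infty}$; symmetrically $s_0 \to -\infty$ contributes only from the $K^-$ summand and yields, up to sign, $K^-_{-\infty} \ComposedWith \FloerSeidel_{\ExtendedTorus}(\Cocharacter, \FixedPoint)$. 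Breaking of $\flowUniversalSpace^{\min}$ is treated exactly as in \autoref{prop:quantum-multiplication-commutes-with-differential-with-error}, \autoref{prop:floer-connection-compatible-with-equivariant-action} and \autoref{lemma:homotopy-between-maps-at-poles-and-maps-on-manifold}: it is isolated only when the intermediate critical point is the one representing $\formalAdditionalCircle$, the tail flowline to $\minUniversalSpace$ is uniquely determined and may be dropped, and after decoupling the factor $\formalAdditionalCircle$ via a further homotopy and performing the customary $s$-translation one is left with $\formalAdditionalCircle$ times a weighted count in which $[\flowUniversalSpace, \FloerSolution]$ must be constant; by the argument already used to prove \autoref{lemma:homotopy-between-maps-at-poles-and-maps-on-manifold} this collapses to intersection numbers with $\PerturbedStableManifold(\beta)$, so the $K^+$ and $K^-$ pieces contribute $-\formalAdditionalCircle \FloerSeidel_{\ExtendedTorus}(\Cocharacter, \FixedPoint) \ComposedWith \FillingIntersectionMap^{\FillingBasis, +}_\beta$ and $+\formalAdditionalCircle \FillingIntersectionMap^{(\Cocharacter, \FixedPoint) \PullBack \FillingBasis, -}_\beta \ComposedWith \FloerSeidel_{\ExtendedTorus}(\Cocharacter, \FixedPoint)$ respectively.

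The crux is the limit $s_0 \to 0$, where the fibre point $(s_0, t_0)$ approaches the equator of $\Sphere$. Here the $K^+$ piece (with $s_0 \searrow 0$, fibre identified through $\Hemisphere^+ \times \Manifold$) and the $K^-$ piece (with $s_0 \nearrow 0$, fibre identified through $\Hemisphere^- \times \Manifold$) abut the same equatorial configurations, but written in the two local trivialisations of $\ClutchingBundle{\Cocharacter}$, which differ over the equator exactly by the gluing map $\TorusAction_{\Cocharacter(t_0)}$. This gluing is precisely the change of coordinates implemented by $\FloerSeidel_{\ExtendedTorus}(\Cocharacter, \FixedPoint)$ on the Floer datum, the orbits, the fillings and the $\ExtendedTorus$-action (the pullback $(s,t) \mapsto \TorusAction_{\Cocharacter(-t)}\FloerSolution(s,t)$ together with the passage from $\Extended{\TorusAction}$ to $\Cocharacter \cdot \Extended{\TorusAction}$); moreover the $\arg$-constraint linking $t_0$ and $\flowUniversalSpace^{\min}(+\infty)$ survives unchanged because $\Extended{\Cocharacter}$ acts trivially on the $\AdditionalCircle$-factor. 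Hence $\FloerSeidel_{\ExtendedTorus}(\Cocharacter, \FixedPoint)$ genuinely identifies the two near-equator moduli spaces, and since they appear as boundary from opposite sides of $s_0 = 0$ their contributions cancel. Collecting all strata, $dK + Kd$ equals the difference of the two sides of \eqref{eqn:chain-homotopy-equivalence-for-floer-seidel-commuting-with-homotopy-extremes}, which is the assertion.

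The step I expect to be the main obstacle is exactly this equatorial book-keeping: one must arrange the admissible almost complex structures, the Hamiltonian, the fillings and the Morse data for the half\textsuperscript{$+$} flowlines to $\beta$ on the $s_0 > 0$ and $s_0 < 0$ sides so that they are genuinely interchanged, with matching orientations, by the pullback constructions of \autoref{sec:floer-seidel-map} and the equator gluing of $\ClutchingBundle{\Cocharacter}$, rather than merely abstractly isomorphic. This is the fibred analogue of the compatibility checks already needed to see that the ($\ExtendedTorus$-equivariant) Floer Seidel map commutes with continuation maps, and I would expect it to go through by the same $s$-dependent pullback argument.
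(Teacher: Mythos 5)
Your identification of the boundary strata, the $s_0 \to \pm\infty$ limits, and the $s_0 \to 0^\pm$ cancellation all match the paper's proof: in particular you correctly locate the crux of that cancellation in the fact that the equatorial gluing map of $\ClutchingBundle{\Cocharacter}$ is exactly the pullback $\FloerSolution \mapsto \Cocharacter \PullBack \FloerSolution$ implemented by $\FloerSeidel_{\ExtendedTorus}(\Cocharacter, \FixedPoint)$, with opposite boundary orientations.

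There is, however, a genuine gap in your treatment of the $\flowUniversalSpace^{\min}$-breaking stratum. You assert that, after decoupling $\formalAdditionalCircle$ and restricting to $s$-constant Floer solutions, the $K^+$ piece \emph{by itself} produces $\mp\formalAdditionalCircle\,\FloerSeidel_{\ExtendedTorus}(\Cocharacter,\FixedPoint) \ComposedWith \FillingIntersectionMap^{\FillingBasis,+}_\beta$ and the $K^-$ piece \emph{by itself} produces the corresponding $\FillingIntersectionMap^{(\Cocharacter,\FixedPoint)\PullBack\FillingBasis,-}_\beta$ term. That per-piece identification is false in general: what the $K^+_{\formalAdditionalCircle}$ count actually computes is the signed number of intersections of the \emph{half-open} cylinder $\SetCondition{(s_0,t_0)}{s_0>0}$, embedded via $(s_0,t_0)\mapsto \ClutchingFibreInclusion^+_{(s_0,t_0)}(\HamiltonianOrbit(t_0))$, with $\PerturbedStableManifold(\beta)$, and a half-open cylinder's intersection number with a codimension-2 cycle is not a homotopy invariant and is not determined by the cap at its single closed end. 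The argument of \autoref{lemma:homotopy-between-maps-at-poles-and-maps-on-manifold} that you invoke (tracking intersections entering and leaving a fixed filling as $\lambda$ varies) does not apply here. The correct mechanism is to add the $K^+_{\formalAdditionalCircle}$ and $K^-_{\formalAdditionalCircle}$ counts into the intersection number $C_{[\flowUniversalSpace,\FloerSolution]}$ of the \emph{full} cylinder over $\Sphere\setminus\Set{\Pole^\pm}$ with $\PerturbedStableManifold(\beta)$ (using $\ClutchingFibreInclusion^+_{(s_0,t_0)}(\HamiltonianOrbit(t_0)) = \ClutchingFibreInclusion^-_{(s_0,t_0)}(\Cocharacter\PullBack\HamiltonianOrbit(t_0))$), then cap this cylinder at $\Pole^+$ with the prescribed filling of $\WithFilling{\HamiltonianOrbit}_{\FillingBasis}$ and at $\Pole^-$ with the pullback filling of $(\Cocharacter,\FixedPoint)\PullBack\WithFilling{\HamiltonianOrbit}_{\FillingBasis}$, and observe that the resulting closed surface is contractible in $\ClutchingBundle{\Cocharacter}$ — this is essentially the definition of the pullback filling. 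Its total intersection number with $\PerturbedStableManifold(\beta)$ therefore vanishes, which is what converts the cylinder count into the \emph{difference} $\FillingIntersectionMap^{\FillingBasis,+}_\beta - \FillingIntersectionMap^{(\Cocharacter,\FixedPoint)\PullBack\FillingBasis,-}_\beta$ (conjugated by $\FloerSeidel_{\ExtendedTorus}(\Cocharacter,\FixedPoint)$ as appropriate). Without this null-homotopy input your identity has no justification, even though the sum of your two claimed contributions happens to agree with the correct total.
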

    
    \begin{proof}
        We will show that the map $K = (\FloerSeidel_{\ExtendedTorus} (\Cocharacter, \FixedPoint) \ComposedWith K^+) + (K^- \ComposedWith \FloerSeidel_{\ExtendedTorus} (\Cocharacter, \FixedPoint))$ from \autoref{def:homotopy-k-in-flatness-proof} is a homotopy\footnote{
            Technically, we add a further homotopy to $K$ to achieve this, but $K$ is the important part of the homotopy.
        } between the two sides of \eqref{eqn:chain-homotopy-equivalence-for-floer-seidel-commuting-with-homotopy-extremes}.
        To show this, we must consider each boundary component of the 1-dimensional moduli spaces of $K$.
        The limits as $s_0 \to \pm \infty$ recover the $K^\pm_{\pm \infty}$ terms.
        We will show that the boundary components corresponding to the limits $s_0 \to 0^\pm$ cancel with each other and that the breaking of the flowline $\flowUniversalSpace^{\min}$ contributes the remaining terms $\FloerSeidel_{\ExtendedTorus} (\Cocharacter, \FixedPoint) \ComposedWith - \formalAdditionalCircle \FillingIntersectionMap^{\FillingBasis, +} _\beta$ and $- \formalAdditionalCircle \FillingIntersectionMap^{(\Cocharacter, \FixedPoint) \PullBack \FillingBasis, -}_\beta \ComposedWith \FloerSeidel_{\ExtendedTorus} (\Cocharacter, \FixedPoint)$.
        
        First, we will show that the limits $s_0 \to 0^\pm$ cancel with each other.
        Let $K^\pm _0$ denote the versions of the maps $K^\pm$ with the parameter $s_0 = 0$ fixed, so that $K^\pm _0$ is the $s_0 \to 0^\pm$ boundary component of $K^\pm$.
        If $[\flowUniversalSpace, \FloerSolution, \flowUniversalSpace^{\min}, \flowUniversalSpace_0, \flowManifold_0, 0, t_0]$ is a $\ExtendedTorus$-equivalence class counted by $K^+ _0$, then $[\flowUniversalSpace, \Cocharacter \PullBack \FloerSolution, \flowUniversalSpace^{\min}, \flowUniversalSpace_0, \flowManifold_0, 0, t_0]$ is counted by $K^- _0$.
        Indeed, the clutching bundle is essentially constructed so that this holds.
        This identification extends to an isomorphism between the moduli spaces for $K^+_0$ and $K^-_0$.
        The isomorphism is sign-reversing because the limits $s_0 \to 0^\pm$ approach in opposite directions.
        Therefore the contributions of $\FloerSeidel_{\ExtendedTorus} (\Cocharacter, \FixedPoint) \ComposedWith K^+ _{0}$ and $K^- _{0} \ComposedWith \FloerSeidel_{\ExtendedTorus} (\Cocharacter, \FixedPoint)$ cancel with each other, as desired.
        
        Next, we consider the contribution of the boundary component in which the half\textsuperscript{+} flowline $\flowUniversalSpace^{\min}$ breaks.
        As is standard in our intertwining-style proofs, only the breaking to $\formalAdditionalCircle$ is isolated and we can use a homotopy to decouple the flowline to $\formalAdditionalCircle$ from the rest of the configuration.
        Therefore, the contribution of this boundary component is chain homotopic to $\FloerSeidel_{\ExtendedTorus} (\Cocharacter, \FixedPoint) \ComposedWith \formalAdditionalCircle \ComposedWith K^+ _{\formalAdditionalCircle} + \formalAdditionalCircle \ComposedWith K^- _{\formalAdditionalCircle} \ComposedWith \FloerSeidel_{\ExtendedTorus} (\Cocharacter, \FixedPoint)$, where the maps $K^\pm _{\formalAdditionalCircle}$ count $\ExtendedTorus$-equivalence classes of 6-tuples $(\flowUniversalSpace, \FloerSolution, \flowUniversalSpace_0, \flowManifold_0, s_0, t_0)$ just as for the maps $K^\pm$, but without the flowline $\flowUniversalSpace^{\min}$.
        The dimension of these moduli spaces is $|\critUniversalSpace^-, \WithFilling{\HamiltonianOrbit}^-| - |\critUniversalSpace^+, \WithFilling{\HamiltonianOrbit}^+| - |\beta| + 2$.
        The $+2$ comes from the freedom in the parameters $s_0$ and $t_0$.
        Recall $|\beta| = 2$.
        Thus the moduli spaces are isolated only when $|\critUniversalSpace^-, \WithFilling{\HamiltonianOrbit}^-| = |\critUniversalSpace^+, \WithFilling{\HamiltonianOrbit}^+|$ holds, and this equation holds exactly for the Floer solutions which are constant in $s$.

        Let $[\flowUniversalSpace, \FloerSolution] \equiv [\critUniversalSpace, \HamiltonianOrbit]$ be a $\ExtendedTorus$-equivariant Floer solution which is constant in $s \in \RealNumbers$.
        Let $C_{[\flowUniversalSpace, \FloerSolution]}$ be the total count of the moduli spaces of $K^+ _{\formalAdditionalCircle}$ which use the Floer solution $[\flowUniversalSpace, \FloerSolution]$ and the moduli spaces of $K^- _{\formalAdditionalCircle}$ which use $[\flowUniversalSpace, \Cocharacter \PullBack \FloerSolution]$.
        The map $\FloerSeidel_{\ExtendedTorus} (\Cocharacter, \FixedPoint)  \ComposedWith K^+ _{\formalAdditionalCircle} + K^- _{\formalAdditionalCircle} \ComposedWith \FloerSeidel_{\ExtendedTorus} (\Cocharacter, \FixedPoint)$ satisfies 
            \begin{equation}
            \label{eqn:writing-homotopy-as-counting-intersections-only}
                [\critUniversalSpace, \WithFilling{\HamiltonianOrbit}_{\FillingBasis}] \mapsto C_{[\flowUniversalSpace, \FloerSolution]} \cdot [\critUniversalSpace, (\Cocharacter, \FixedPoint) \PullBack \WithFilling{\HamiltonianOrbit}_{\FillingBasis}].
            \end{equation}
        Since $\ClutchingFibreInclusion^+ _{(s_0, t_0)} (\HamiltonianOrbit(t_0)) = \ClutchingFibreInclusion^- _{(s_0, t_0)} (\Cocharacter \PullBack \HamiltonianOrbit(t_0))$ holds, the count $C_{[\flowUniversalSpace, \FloerSolution]}$ is the same as the count of the points $(s_0, t_0) \in \RealNumbers \times \Circle$ for which $(\critUniversalSpace, \ClutchingFibreInclusion^+ _{(s_0, t_0)} (\HamiltonianOrbit(t_0))) \in \PerturbedStableManifold(\beta)$ is satisfied.
        The map $(s, t) \mapsto \ClutchingFibreInclusion^+ _{(s, t)} (\HamiltonianOrbit(t))$ describes a cylinder in $\ClutchingBundle{\Cocharacter}$ which lifts the cylinder $\Sphere \setminus \Set{\Pole^\pm}$, and $C_{[\flowUniversalSpace, \FloerSolution]}$ is the number of intersections of this cylinder with $\PerturbedStableManifold(\beta)$.
        
        We use the prescribed fillings of $\WithFilling{\HamiltonianOrbit}_{\FillingBasis}$ and $(\Cocharacter, \FixedPoint) \PullBack \WithFilling{\HamiltonianOrbit}_{\FillingBasis}$ to cap either end of this cylinder.
        Recall that the expressions $\FillingIntersectionMap^{\FillingBasis, +}_\beta ([\critUniversalSpace, \WithFilling{\HamiltonianOrbit}_\FillingBasis])$ and $\FillingIntersectionMap^{(\Cocharacter, \FixedPoint) \PullBack \FillingBasis, -}_\beta ([\critUniversalSpace, (\Cocharacter, \FixedPoint) \PullBack \WithFilling{\HamiltonianOrbit}_{\FillingBasis}])$ capture the intersections of these fillings with $\PerturbedStableManifold(\beta)$.
        The capped cylinder is contractible by the definition of the pullback filling basis, and hence the signed sum of its intersections with $\PerturbedStableManifold(\beta)$ is 0.
        This yields
            \begin{equation}
            \label{eqn:vmin-breaking-recovers-difference-of-intersections-with-caps}
                \FloerSeidel_{\ExtendedTorus} (\Cocharacter, \FixedPoint)  \ComposedWith K^+ _{\formalAdditionalCircle} + K^- _{\formalAdditionalCircle} \ComposedWith \FloerSeidel_{\ExtendedTorus} (\Cocharacter, \FixedPoint) = \FloerSeidel_{\ExtendedTorus} (\Cocharacter, \FixedPoint)  \ComposedWith \FillingIntersectionMap^{\FillingBasis, +}_\beta - \FillingIntersectionMap^{(\Cocharacter, \FixedPoint) \PullBack \FillingBasis, -}_\beta \ComposedWith \FloerSeidel_{\ExtendedTorus} (\Cocharacter, \FixedPoint),
            \end{equation}
        since the left-hand side is \eqref{eqn:writing-homotopy-as-counting-intersections-only}.
        
        In summary, the contribution of the breaking of the half\textsuperscript{+} flowline $\flowUniversalSpace^{\min}$ is homotopic to $\FloerSeidel_{\ExtendedTorus} (\Cocharacter, \FixedPoint) \ComposedWith \formalAdditionalCircle \ComposedWith K^+ _{\formalAdditionalCircle} + \formalAdditionalCircle \ComposedWith K^- _{\formalAdditionalCircle} \ComposedWith \FloerSeidel_{\ExtendedTorus} (\Cocharacter, \FixedPoint)$.
        This map is equal to $\formalAdditionalCircle (\FloerSeidel_{\ExtendedTorus} (\Cocharacter, \FixedPoint)  \ComposedWith K^+ _{\formalAdditionalCircle} + K^- _{\formalAdditionalCircle} \ComposedWith \FloerSeidel_{\ExtendedTorus} (\Cocharacter, \FixedPoint) )$ because $\FloerSeidel_{\ExtendedTorus} (\Cocharacter, \FixedPoint)$ commutes with $\formalAdditionalCircle$.
        This final map recovers the required terms by \eqref{eqn:vmin-breaking-recovers-difference-of-intersections-with-caps}.
    \end{proof}
    
    \begin{proposition}
    \label{prop:connection-commutes-with-pullback-operator}
        The map $\Connection^\FixedPoint_\DegreeTwoCoclass$ commutes with the map $(\ClassifyingSpace \Extended{\Cocharacter}) \PullBack$.
    \end{proposition}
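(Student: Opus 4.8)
The plan is to exploit the description of $(\ClassifyingSpace \Extended{\Cocharacter}) \PullBack$ from \autoref{sec:pullback-group-isomorphisms-on-cohomology}: it is the map induced by the \emph{identity} map on the relevant Borel homotopy quotients, where the domain carries the basis $\Extended{\basisCocharacter}$ and the codomain carries the basis $\Cocharacter \cdot \Extended{\basisCocharacter}$. Thus, at the level of $\ExtendedTorus$-equivariant Floer cochain complexes, $(\ClassifyingSpace \Extended{\Cocharacter}) \PullBack$ is merely a relabelling: it leaves the underlying moduli spaces of $\ExtendedTorus$-equivariant Floer solutions and perturbed half\textsuperscript{$+$} flowlines unchanged, identifying them via the functorial relation for $\Extended{\Cocharacter}$. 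I would fix the $\ExtendedTorus$-equivariant Floer datum $(\eqHamiltonian, \eqACS)$ and a filling basis $\FillingBasis$ on the domain, and use the relabelled datum together with the relabelled filling basis on the codomain. It is then enough to check that $(\ClassifyingSpace \Extended{\Cocharacter}) \PullBack$ intertwines each of the three summands of $\Connection^{\FixedPoint, \FillingBasis}_\DegreeTwoCoclass = \formalAdditionalCircle \dbyd{\DegreeTwoCoclass} + \DegreeTwoCoclass^\FixedPoint \QuantumAction - \formalAdditionalCircle \FillingIntersectionMap^\FillingBasis_\DegreeTwoCoclass$.

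First I would treat the term $\formalAdditionalCircle \dbyd{\DegreeTwoCoclass}$. The operator $\dbyd{\DegreeTwoCoclass}$ only rescales the Novikov monomial attached to a filling-basis generator, and $(\ClassifyingSpace \Extended{\Cocharacter}) \PullBack$ neither touches the Novikov factor nor alters Novikov exponents, so the two commute once the filling bases are matched. Since $\Extended{\Character}_0 = \Extended{\Character}_0 \ComposedWith \Extended{\Cocharacter}$, we have $(\ClassifyingSpace \Extended{\Cocharacter}) \PullBack (\formalAdditionalCircle) = \formalAdditionalCircle$, so by the twisted Leibniz rule \eqref{eqn-intro:twisted-equation-for-equivariant-cohomology-pullback-map} multiplication by $\formalAdditionalCircle$ commutes with $(\ClassifyingSpace \Extended{\Cocharacter}) \PullBack$; hence so does $\formalAdditionalCircle \dbyd{\DegreeTwoCoclass}$.

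Next I would handle $\FillingIntersectionMap^\FillingBasis_\DegreeTwoCoclass$ and $\DegreeTwoCoclass^\FixedPoint \QuantumAction$ together, since both are built from the perturbed stable manifold $\PerturbedStableManifold(\DegreeTwoCoclass^\FixedPoint)$. The relevant classes are the independently-defined lifts $\DegreeTwoCoclass^\FixedPoint _{\Cocharacter \cdot \Extended{\TorusAction}}$ on the domain and $\DegreeTwoCoclass^\FixedPoint _{\Extended{\TorusAction}}$ on the codomain, and the diagram-chasing identity $(\ClassifyingSpace \Extended{\Cocharacter}) \PullBack \DegreeTwoCoclass^\FixedPoint _{\Cocharacter \cdot \Extended{\TorusAction}} = \DegreeTwoCoclass^\FixedPoint _{\Extended{\TorusAction}}$ already invoked in the proof of \autoref{prop:connection-commutes-with-equivariant-floer-seidel-map} identifies them under the relabelling; with compatible choices of equivariant Morse data this identifies their perturbed stable manifolds, so $\FillingIntersectionMap^\FillingBasis_\DegreeTwoCoclass$ transports correctly. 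For $\DegreeTwoCoclass^\FixedPoint \QuantumAction$ I would additionally observe that the auxiliary function $\arg : \minUniversalSpace \to \AdditionalCircle$ used to construct \eqref{eqn:equivariant-quantum-action-on-equivariant-floer-cohomology} is unaffected by the basis change, being the composite of a $\ExtendedTorus$-equivariant identification $\minUniversalSpace \to \ExtendedTorus$ with $\Extended{\Character}_0$ and using again $\Extended{\Character}_0 = \Extended{\Character}_0 \ComposedWith \Extended{\Cocharacter}$; consequently the defining moduli spaces of $\DegreeTwoCoclass^\FixedPoint \QuantumAction$ on the two sides are literally identified by the relabelling.

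Combining the three intertwining statements gives $(\ClassifyingSpace \Extended{\Cocharacter}) \PullBack \ComposedWith \Connection^\FixedPoint_\DegreeTwoCoclass = \Connection^\FixedPoint_\DegreeTwoCoclass \ComposedWith (\ClassifyingSpace \Extended{\Cocharacter}) \PullBack$ already at the cochain level; passing to cohomology and using the basis-independence of $\Connection^\FixedPoint$ finishes the argument. I expect the only genuinely delicate point to be the bookkeeping of which $\ExtendedTorus$-action and which basis each object lives over — in particular confirming that the two separately-defined lifts $\DegreeTwoCoclass^\FixedPoint _{\Extended{\TorusAction}}$ and $\DegreeTwoCoclass^\FixedPoint _{\Cocharacter \cdot \Extended{\TorusAction}}$ really do correspond under the relabelling — rather than any new geometric input, since no new moduli space needs to be constructed here.
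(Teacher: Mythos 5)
Your proposal is correct and follows essentially the same route as the paper: the paper likewise observes that $(\ClassifyingSpace \Extended{\Cocharacter}) \PullBack$ is a relabelling which leaves the filling basis (hence $\dbyd{\DegreeTwoCoclass}$) untouched and exactly preserves the moduli spaces defining $\formalAdditionalCircle \cdot{}$, $\FillingIntersectionMap^\FillingBasis_\DegreeTwoCoclass$ and $\DegreeTwoCoclass^\FixedPoint \QuantumAction$, so that commutation holds already on cochains. You simply spell out more of the bookkeeping (the identity $(\ClassifyingSpace \Extended{\Cocharacter}) \PullBack \DegreeTwoCoclass^\FixedPoint_{\Cocharacter \cdot \Extended{\TorusAction}} = \DegreeTwoCoclass^\FixedPoint_{\Extended{\TorusAction}}$ and the invariance of $\arg$ via $\Extended{\Character}_0 = \Extended{\Character}_0 \ComposedWith \Extended{\Cocharacter}$) that the paper leaves implicit.
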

    
    \begin{proof}
        The pullback isomorphism $(\ClassifyingSpace \Extended{\Cocharacter}) \PullBack$ does not modify the filling basis, so it preserves the differentiation operation $\dbyd{\DegreeTwoCoclass}$.
        Moreover, it exactly preserves the moduli spaces which are used by the maps $\formalAdditionalCircle \cdot$, $\FillingIntersectionMap^\FillingBasis_\DegreeTwoCoclass$ and $\DegreeTwoCoclass^\FixedPoint \QuantumAction$.
        Therefore the pullback isomorphism $(\ClassifyingSpace \Extended{\Cocharacter}) \PullBack$ commutes with the connection $\Connection^\FixedPoint _\DegreeTwoCoclass = \formalAdditionalCircle \dbyd{\DegreeTwoCoclass} + {\DegreeTwoCoclass^\FixedPoint \QuantumAction} - \formalAdditionalCircle \FillingIntersectionMap^\FillingBasis_\DegreeTwoCoclass$ on the cochain complexes.
        Thus, we have
            \begin{equation}
                (\ClassifyingSpace \Extended{\Cocharacter}) \PullBack \ComposedWith \left( \formalAdditionalCircle \dbyd{\DegreeTwoCoclass} + {\left( \DegreeTwoCoclass^\FixedPoint _{\Cocharacter \cdot \Extended{\TorusAction}} \QuantumAction \right)} - \formalAdditionalCircle \FillingIntersectionMap^\FillingBasis_\DegreeTwoCoclass \right) = \left( \formalAdditionalCircle \dbyd{\DegreeTwoCoclass} + \left( \DegreeTwoCoclass^\FixedPoint _{\Extended{\TorusAction}} \QuantumAction \right) - \formalAdditionalCircle \FillingIntersectionMap^\FillingBasis_\DegreeTwoCoclass \right) \ComposedWith (\ClassifyingSpace \Extended{\Cocharacter}) \PullBack
            \end{equation}
        as desired.
    \end{proof}
    
    \begin{proposition}
    \label{prop:connection-commutes-with-floer-shift-operator}
        The map $\Connection^\FixedPoint_\DegreeTwoCoclass$ commutes with the shift operator $\ShiftOperator^\FixedPoint _\Cocharacter$.
    \end{proposition}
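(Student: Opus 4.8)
The plan is to read the result straight off the factorisation \eqref{eqn:shift-operator-floer-cohomology-definition} of the shift operator,
\[
    \ShiftOperator^\FixedPoint _\Cocharacter = (\ClassifyingSpace \Extended{\Cocharacter}) \PullBack \ComposedWith \ContinuationMap^{\CocharacterSlope{\Cocharacter}} \ComposedWith \FloerSeidel_{\ExtendedTorus} (\Cocharacter, \FixedPoint),
\]
using that $\Connection^\FixedPoint_\DegreeTwoCoclass$ commutes with each of the three constituent maps. First I would invoke \autoref{prop:connection-commutes-with-equivariant-floer-seidel-map} to move $\Connection^\FixedPoint_\DegreeTwoCoclass$ past $\FloerSeidel_{\ExtendedTorus}(\Cocharacter, \FixedPoint)$; then \autoref{prop:connection-compatible-with-continuation-maps} to move it past the continuation map $\ContinuationMap^{\CocharacterSlope{\Cocharacter}}$; and finally \autoref{prop:connection-commutes-with-pullback-operator} to move it past $(\ClassifyingSpace \Extended{\Cocharacter}) \PullBack$. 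Composing these three identities in order yields $\Connection^\FixedPoint_\DegreeTwoCoclass \ComposedWith \ShiftOperator^\FixedPoint _\Cocharacter = \ShiftOperator^\FixedPoint _\Cocharacter \ComposedWith \Connection^\FixedPoint_\DegreeTwoCoclass$, which is the claim.

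The one bookkeeping point I would flag is that $\Connection^\FixedPoint_\DegreeTwoCoclass$ here denotes the Floer connection appropriate to whichever $\ExtendedTorus$-equivariant Floer cohomology occurs at each stage of the composition: the connection for $(\Manifold, \Cocharacter \cdot \Extended{\TorusAction}, \Slope - \CocharacterSlope{\Cocharacter})$ when applied after $\FloerSeidel_{\ExtendedTorus}(\Cocharacter, \FixedPoint)$, the connection for $(\Manifold, \Cocharacter \cdot \Extended{\TorusAction}, \Slope)$ after the continuation map, and the connection for $(\Manifold, \Extended{\TorusAction}, \Slope)$ after the pullback isomorphism. Each of the three cited propositions is already phrased at exactly this level of generality (\autoref{prop:connection-compatible-with-continuation-maps} for arbitrary monotone homotopies, \autoref{prop:connection-commutes-with-pullback-operator} for the change of basis and action), so the three identities line up without any further work.

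I do not expect a genuine obstacle at this stage: the substance of the commutation has been absorbed into \autoref{prop:connection-commutes-with-equivariant-floer-seidel-map}, whose proof runs through the clutching bundle $\ClutchingBundle{\Cocharacter}$ and the chain of moduli-space homotopies in \autoref{def:homotopy-k-in-flatness-proof}, \autoref{lemma:homotopy-between-maps-at-poles-and-maps-on-manifold} and \autoref{lemma:homotopy-equivalence-for-floer-flatness-holds-on-clutching-bundle}. Given those results, the present proposition is a purely formal consequence, and it moreover descends to $\ExtendedTorus$-equivariant symplectic cohomology in the direct limit $\Slope \to \infty$ — where, by the same three propositions applied without the continuation map, one even gets flatness of the difference-differential connection defined on the full group $\LatticeCocharacters{\Torus}$ rather than only on the monoid $\NonnegativeLatticeCocharacters{\TorusAction}{\Torus}$.
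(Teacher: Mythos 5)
Your proposal is correct and is exactly the paper's own argument: the shift operator is factored as $(\ClassifyingSpace \Extended{\Cocharacter}) \PullBack \ComposedWith \ContinuationMap^{\CocharacterSlope{\Cocharacter}} \ComposedWith \FloerSeidel_{\ExtendedTorus} (\Cocharacter, \FixedPoint)$ and the commutation is deduced by citing \autoref{prop:connection-commutes-with-equivariant-floer-seidel-map}, \autoref{prop:connection-compatible-with-continuation-maps} and \autoref{prop:connection-commutes-with-pullback-operator} in turn. Your bookkeeping remark about which equivariant Floer cohomology (and hence which connection) appears at each stage is a sensible clarification but does not change the argument.
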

    
    \begin{proof}
        The shift operator is the composition $(\ClassifyingSpace \Extended{\Cocharacter}) \PullBack \ComposedWith \ContinuationMap^{\CocharacterSlope{\Cocharacter}} \ComposedWith \FloerSeidel_{\ExtendedTorus} (\Cocharacter, \FixedPoint)$.
        We showed that the connection $\Connection^\FixedPoint _\DegreeTwoCoclass$ commutes with the map $(\ClassifyingSpace \Extended{\Cocharacter}) \PullBack$ in \autoref{prop:connection-commutes-with-pullback-operator}, with the continuation map $\ContinuationMap^{\CocharacterSlope{\Cocharacter}}$ in \autoref{prop:connection-compatible-with-continuation-maps}, and with the Floer Seidel map $\FloerSeidel_{\ExtendedTorus} (\Cocharacter, \FixedPoint)$ in \autoref{prop:connection-commutes-with-equivariant-floer-seidel-map}.
        Therefore the connection also commutes with the composition as desired.
    \end{proof}
    
    \begin{proof}
        [Proof of \autoref{thm:flatness-of-difference-differential-connection-on-floer-cohomology}]
        The flatness theorem \autoref{thm:flatness-of-difference-differential-connection-on-floer-cohomology} comprises three separate claims.
        The first claim is that $\Connection^\FixedPoint_\DegreeTwoCoclass$ commutes with $\Connection^\FixedPoint _\secondDegreeTwoCoclass$ for two classes $\DegreeTwoCoclass, \secondDegreeTwoCoclass \in \Cohomology^2 (\Manifold)$, which was shown in \autoref{thm:flatness-of-differential-floer-connection}.
        The second claim is that $\Connection^\FixedPoint_\DegreeTwoCoclass$ commutes with $\ShiftOperator^\FixedPoint _\Cocharacter$, which was shown in \autoref{prop:connection-commutes-with-floer-shift-operator}.
        The third and final claim is $\ShiftOperator^\FixedPoint _\Cocharacter \ShiftOperator^\FixedPoint _\secondCocharacter = \ShiftOperator^\FixedPoint _{\Cocharacter + \Cocharacter\Prime}$ for two $\TorusAction$-nonnegative cocharacters $\Cocharacter, \secondCocharacter \in \NonnegativeLatticeCocharacters{\TorusAction}{\Torus}$.
        This final claim follows readily from the fact that the maps which make up \eqref{eqn:shift-operator-floer-cohomology-definition} commute with each other, and individually compose: we have $(\ClassifyingSpace \Extended{\Cocharacter}) \PullBack (\ClassifyingSpace \Extended{\Cocharacter \Prime}) \PullBack = (\ClassifyingSpace \Extended{(\Cocharacter {+} \Cocharacter\Prime)}) \PullBack$, $ \ContinuationMap^{\CocharacterSlope{\Cocharacter}} \ContinuationMap^{\CocharacterSlope{\Cocharacter\Prime}} = \ContinuationMap^{\CocharacterSlope{\Cocharacter} + \CocharacterSlope{\Cocharacter\Prime}}$ and $\FloerSeidel_{\ExtendedTorus} (\Cocharacter, \FixedPoint) \FloerSeidel_{\ExtendedTorus} (\Cocharacter \Prime, \FixedPoint) = \FloerSeidel_{\ExtendedTorus} (\Cocharacter + \Cocharacter \Prime, \FixedPoint)$.
    \end{proof}
    
\section{Toric manifolds}
\label{sec:toric-manifolds-general}

While we discuss toric manifolds in Sections~\ref{sec:closed-toric-manifolds-non-equivariant-statements}--\ref{sec:connection-computation-on-closed-toric-manifold}, the reader may wish to consult \autoref{sec:projective-plane-computations} for a running example (the projective plane $\Projective^2$).
    
\subsection{Closed toric manifold}
\label{sec:closed-toric-manifolds-non-equivariant-statements}
    
    Let $\Manifold$ be a closed monotone $2 \dimManifold$-dimensional symplectic manifold with an effective Hamiltonian action $\TorusAction : \Torus \times \Manifold \to \Manifold$ of the $\dimManifold$-dimensional torus $\Torus = (\Circle)^\dimManifold$.
    Such a pair $(\Manifold, \TorusAction)$ is a monotone \define{closed toric manifold}.
    It satisfies all of the hypotheses of Sections~\ref{sec:assumptions-on-symplectic-manifold} and \ref{sec:torus-and-torus-action-on-manifold} because $\Manifold$ is simply connected \cite[Theorem~12.1.10]{cox_toric_2011}.
    
    With the decomposition $\Torus = (\Circle)^\dimManifold$, we identify the Lie algebra $\TorusLieAlgebra = \Lie(\Torus)$ with $\RealNumbers^\dimManifold$.
    The lattice of cocharacters $\LatticeCocharacters{\Torus}$ is naturally isomorphic to the lattice $\Integers^\dimManifold \subset \RealNumbers^\dimManifold = \TorusLieAlgebra$ under the map $\Cocharacter \mapsto \partial_{\eltCircle} \Cocharacter \EvaluateAt{\eltCircle = 0} \in \TorusLieAlgebra$.
    Similarly, we identify $\DualTorusLieAlgebra$ with $\RealNumbers^\dimManifold$, such that the lattice of characters $\LatticeCharacters{\Torus}$ is identified with $\Integers^\dimManifold \subset \RealNumbers^\dimManifold = \DualTorusLieAlgebra$.
    
    The moment map $\MomentMap{\TorusAction} : \Manifold \to \DualTorusLieAlgebra$ of $\TorusAction$ is uniquely determined up to an additive constant.
    The \define{moment polytope $\MomentPolytope$} is the image of $\MomentMap{\TorusAction}$; it is a convex polytope in $\DualTorusLieAlgebra = \RealNumbers^\dimManifold$.
    The \define{facets} $\Facet_i$ of $\MomentPolytope$ are its codimension-1 faces.
    There exist uniquely-determined real numbers $\FacetBoundaryCondition_i \in \RealNumbers$ such that the moment polytope is given by
        \begin{equation}
        \label{eqn:moment-polytope-equation}
            \MomentPolytope = \SetCondition{\pointDualTorusLieAlgebra \in \RealNumbers^\dimManifold}{\text{$\InnerProduct{\InwardNormalVector_i}{\pointDualTorusLieAlgebra} \ge \FacetBoundaryCondition_i$ for $i = 1, \ldots, \numberFacets$}},
        \end{equation}
    where $\InwardNormalVector_i \in \Integers^\dimManifold$ is the \define{primitive inward-pointing normal vector} to the facet $\Facet_i$ for each $i = 1, \ldots, \numberFacets$.
    The moment polytope determines the pair $(\Manifold, \TorusAction)$ up to equivariant symplectomorphism by Delzant's theorem.
    
    The faces of $\MomentPolytope$ are the nonempty subsets of the form $\Face_I = \Set{\text{$\InnerProduct{\pointDualTorusLieAlgebra}{\InwardNormalVector_i} = \FacetBoundaryCondition_i$ for $i \in I$}} \subseteq \MomentPolytope$ for a subset of indices $I = \Set{i_1, \ldots, i_p} \subseteq \Set{1, \ldots, \numberFacets}$.
    Associate to each face $\Face_I$ the cone $\Cone_I \subset \RealNumbers^\dimManifold = \TorusLieAlgebra$ given by the $\RealNumbers^{\ge 0}$-span of the vectors $\SetCondition{\InwardNormalVector_i}{i \in I}$.
    Thus we assign the ray $\RealNumbers^{\ge 0} \cdot \InwardNormalVector_i$ to the facet $\Facet_i$.
    The \define{(inward) normal fan $\InwardNormalFan$} is the fan comprising these cones.
    The fan is \define{complete}: the union of the cones in $\InwardNormalFan$ is all of $\RealNumbers^\dimManifold$; and \define{smooth}: the generators $\SetCondition{\InwardNormalVector_i}{i \in I}$ of each cone extend to a $\Integers$-basis of $\Integers^\dimManifold$.
    
    Let $\Integers [\ToricGenerator_1, \ldots, \ToricGenerator_\numberFacets]$ be the polynomial ring with formal variables $\ToricGenerator_i$ in degree 2.
    The \define{linear relations} are the polynomials $\sum_i \InnerProduct{\InwardNormalVector_i}{\pointDualTorusLieAlgebra} \ToricGenerator_i$, for all $\pointDualTorusLieAlgebra \in \Integers^\dimManifold$ (or for all $\pointDualTorusLieAlgebra$ in a basis of $\Integers^\dimManifold$).
    Let $\LinearRelationsIdeal$ be the ideal generated by the linear relations.
    The \define{Stanley-Reisner ideal} is the ideal
        \begin{equation}
            \StanleyReisnerIdeal = \langle \ToricGenerator_{i_1} \cdots \ToricGenerator_{i_p}  \mid \text{$i_1, \ldots, i_p$ are distinct and do not generate a cone of $\InwardNormalFan$}\rangle.
        \end{equation}
    Set $\Divisor_i = \MomentMap{\TorusAction} \Inverse (\Facet_i)$, and denote by $\Divisor_i \Dual \in \Cohomology^2 (\Manifold)$ the Poincar\'{e} dual of $\Divisor_i$.
    
    \begin{theorem}
        [{Cohomology presentation \cite[Theorem~12.4.4]{cox_toric_2011}}]
    \label{thm:cohomology-presentation-toric-compact}
        The map $\ToricGenerator_i \mapsto \Divisor_i \Dual$ induces a ring isomorphism
            \begin{equation}
            \label{eqn:cohomology-presentation-compact-toric-manifold}
                {\Integers [\ToricGenerator_1, \ldots, \ToricGenerator_\numberFacets]} / {(\LinearRelationsIdeal + \StanleyReisnerIdeal)} \overset{\cong}{\to} \Cohomology^\ArbitraryIndex (\Manifold).
            \end{equation}
        The map satisfies $\sum_i \ToricGenerator_i \mapsto \FirstChernClass (\TangentSpace \Manifold, \SymplecticForm)$ and $- \sum \FacetBoundaryCondition_i \ToricGenerator_i \mapsto [\SymplecticForm]$.
    \end{theorem}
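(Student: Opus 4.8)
The statement is, up to the paper's notation, the Jurkiewicz--Danilov presentation of the cohomology ring of a smooth complete toric variety together with two standard geometric identifications, so the plan is to reduce to the algebro-geometric setting and then quote the relevant results. First I would invoke Delzant's theorem: the monotone closed toric manifold $(\Manifold, \SymplecticForm, \TorusAction)$ is equivariantly symplectomorphic to the smooth projective toric variety $X_{\InwardNormalFan}$ determined by the (complete, smooth) normal fan $\InwardNormalFan$, carrying a torus-invariant K\"{a}hler form; transporting structures along this symplectomorphism, I may assume $\Manifold = X_{\InwardNormalFan}$ and that the facet preimages $\Divisor_i = \MomentMap{\TorusAction}\Inverse(\Facet_i)$ are the torus-invariant prime divisors. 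The cohomology ring, the first Chern class $\FirstChernClass(\TangentSpace \Manifold, \SymplecticForm)$ and the class $[\SymplecticForm]$ are all respected by this identification, so it suffices to establish the three assertions for $X_{\InwardNormalFan}$.

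The ring isomorphism $\Integers[\ToricGenerator_1, \dots, \ToricGenerator_\numberFacets]/(\LinearRelationsIdeal + \StanleyReisnerIdeal) \Isomorphism \Cohomology^\ArbitraryIndex(\Manifold)$ sending $\ToricGenerator_i \mapsto \Divisor_i\Dual$ is exactly \cite[Theorem~12.4.4]{cox_toric_2011}; rather than reprove it I would recall its three ingredients. First, the classes $\Divisor_i\Dual$ generate $\Cohomology^\ArbitraryIndex(\Manifold)$, which follows from the affine paving of $X_{\InwardNormalFan}$ indexed by the cones of $\InwardNormalFan$ (equivalently, from Morse theory applied to a generic component of $\MomentMap{\TorusAction}$, whose critical set is the finite fixed-point set); this simultaneously shows $\Cohomology^\ArbitraryIndex(\Manifold)$ is free, concentrated in even degree, with Betti numbers given by the $h$-vector of $\InwardNormalFan$. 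Second, the Stanley--Reisner relations hold because $\Divisor_{i_1} \Intersection \dots \Intersection \Divisor_{i_p} = \emptyset$ whenever $i_1, \dots, i_p$ span no cone of $\InwardNormalFan$, so the cup product of the corresponding Poincar\'{e} duals vanishes. Third, the linear relations hold because for $\pointDualTorusLieAlgebra \in \Integers^\dimManifold$ the torus-invariant divisor $\sum_i \InnerProduct{\InwardNormalVector_i}{\pointDualTorusLieAlgebra}\,\Divisor_i$ is the divisor of the character $\Character^{\pointDualTorusLieAlgebra}$, hence principal and homologically trivial. A comparison of ranks then promotes the resulting surjection to an isomorphism.

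It remains to pin down the images of $\sum_i \ToricGenerator_i$ and $-\sum_i \FacetBoundaryCondition_i \ToricGenerator_i$. For the first I would use the generalised Euler sequence $0 \to \LineBundle_{\Manifold}^{\DirectSum \dimManifold} \to \bigoplus_{i=1}^{\numberFacets} \LineBundle_{\Manifold}(\Divisor_i) \to \TangentSpace \Manifold \to 0$ \cite[Theorem~8.1.6]{cox_toric_2011}, which gives $\FirstChernClass(\TangentSpace \Manifold, \SymplecticForm) = \sum_i \Divisor_i\Dual$, the image of $\sum_i \ToricGenerator_i$. For the second I would compare $[\SymplecticForm]$ with $-\sum_i \FacetBoundaryCondition_i \Divisor_i\Dual$ by pairing both against the embedded $2$-spheres $\MomentMap{\TorusAction}\Inverse(\edgeMomentPolytope)$ over the edges $\edgeMomentPolytope$ of $\MomentPolytope$: the pairing with $[\SymplecticForm]$ is the symplectic area of this sphere, which equals the lattice length of $\edgeMomentPolytope$ in our normalisation, and a direct computation using the primitive edge vector together with the normals $\InwardNormalVector_i$ of the two facets meeting along $\edgeMomentPolytope$ shows $-\sum_i \FacetBoundaryCondition_i \Divisor_i\Dual$ pairs to the same integer. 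Since these sphere classes generate $\Homology_2(\Manifold; \Integers)$ and $\Cohomology^\ArbitraryIndex(\Manifold)$ is torsion-free, the two classes agree; the $\ProjectiveSpace^1$ case with moment polytope $\IntervalClosed{0}{a}$ fixes the sign. (Alternatively one can quote Guillemin's formula for $[\SymplecticForm]$.) Finally I would observe that translating the moment map by a vector $c$ replaces each $\FacetBoundaryCondition_i$ by $\FacetBoundaryCondition_i + \InnerProduct{\InwardNormalVector_i}{c}$, so $-\sum_i \FacetBoundaryCondition_i \ToricGenerator_i$ changes by an element of $\LinearRelationsIdeal$ and is well defined in the quotient --- consistent with $[\SymplecticForm]$ being independent of that additive constant.

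The conceptual input --- the Jurkiewicz--Danilov presentation and the toric Euler sequence --- is quoted wholesale, so the main obstacle is entirely one of bookkeeping: matching the paper's conventions with those of \cite{cox_toric_2011}, namely the choice of inward-pointing primitive normals $\InwardNormalVector_i$, the inequality form $\InnerProduct{\InwardNormalVector_i}{\pointDualTorusLieAlgebra} \ge \FacetBoundaryCondition_i$, the identification $\LatticeCocharacters{\Torus} \Isomorphism \Integers^\dimManifold$, and the homological-versus-cohomological sign conventions; and in particular checking that the sign in $-\sum_i \FacetBoundaryCondition_i \ToricGenerator_i \mapsto [\SymplecticForm]$ is the one forced by the paper's orientation and monotonicity conventions.
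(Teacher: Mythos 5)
The paper gives no proof of this statement: it is quoted wholesale from \cite[Theorem~12.4.4]{cox_toric_2011}, with the identifications of $\FirstChernClass(\TangentSpace\Manifold, \SymplecticForm)$ and $[\SymplecticForm]$ being the standard toric facts (anticanonical divisor and Guillemin's formula). Your proposal correctly reconstructs exactly these ingredients --- Delzant, the Jurkiewicz--Danilov presentation, the Euler sequence, and the edge-sphere pairing with the sign check against $\Projective^1$ --- so it takes essentially the same approach as the paper, just with the citations unpacked.
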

    
    Poincar\'{e} duality yields $\Homology_2 (\Manifold) \cong \Cohomology^{2 \dimManifold - 2} (\Manifold)$ because the cohomology ring has no torsion \cite[Proposition~1.5]{franz_describing_2010}.
    We can use this description of $\Homology_2 (\Manifold)$ together with the expressions for $\FirstChernClass (\TangentSpace \Manifold, \SymplecticForm)$ and $[\SymplecticForm]$ from \autoref{thm:cohomology-presentation-toric-compact} to construct our Novikov ring $\NovikovRing$.
    
    Let $\Cocharacter_i \in \LatticeCocharacters{\Torus}$ be the cocharacter corresponding to the vector $\InwardNormalVector_i \in \Integers^\dimManifold \subset \TorusLieAlgebra$.
    The set $\Divisor_i$ is the (minimal) fixed locus of $\TorusAction \ComposedWith \Cocharacter_i$.
    Given a vertex $\vertexMomentPolytope$ of $\MomentPolytope$, let $\FixedPoint_v = \MomentMap{\TorusAction} \Inverse (\vertexMomentPolytope)$ be the corresponding fixed point of $\TorusAction$ in $\Manifold$.
    For any vertex $\vertexMomentPolytope$ of the facet $\Facet_i$, we have $\QuantumSeidel (\Cocharacter_i, \FixedPoint_\vertexMomentPolytope) (1) = \Divisor_i \Dual$ \cite[Example~5.3]{mcduff_topological_2006}.
    
    \begin{lemma}
    \label{lem:change-fixed-point-over-edge-of-polytope}
        Let $\edgeMomentPolytope$ be an edge of $\MomentPolytope$ between two vertices $\vertexMomentPolytope^\pm$, and let $\edgeVectorMomentPolytope \in \Integers^\dimManifold$ be the vector from $\vertexMomentPolytope^-$ to $\vertexMomentPolytope^+$.
        The Seidel map satisfies
            \begin{equation}
                \QuantumSeidel (\Cocharacter_i, \FixedPoint_{\vertexMomentPolytope^-}) (1) = \NovVariable^{ \InnerProduct{\InwardNormalVector_i}{\edgeVectorMomentPolytope} [\edgeMomentPolytope]} \QuantumSeidel (\Cocharacter_i, \FixedPoint_{\vertexMomentPolytope^+}) (1),
            \end{equation}
        where $[\edgeMomentPolytope] \in \Homology_2 (\Manifold)$ is the class $[\MomentMap{\TorusAction} \Inverse \edgeMomentPolytope]$.
    \end{lemma}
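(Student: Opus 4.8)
The approach is to observe that $\QuantumSeidel(\Cocharacter_i, \FixedPoint)$ depends on the fixed point $\FixedPoint$ only through the reference section $\ClutchingSection^\FixedPoint$, which enters the definition of the map solely by identifying the classes $\DegreeTwoClass \in \Homology_2(\Manifold)$ recorded by the Novikov ring with section classes $\DegreeTwoClass^\FixedPoint \in \Homology_2(\ClutchingBundle{\Cocharacter_i})$ via the splitting in \eqref{eqn:short-exact-sequence-on-homology-of-clutching-bundle}. Both $\FixedPoint_{\vertexMomentPolytope^-}$ and $\FixedPoint_{\vertexMomentPolytope^+}$ are fixed by the full torus action $\TorusAction$, so the constant sections $\ClutchingSection^{\FixedPoint_{\vertexMomentPolytope^\pm}}$ are genuine sections of $\ClutchingBundle{\Cocharacter_i}$, and each of $\ClutchingSection^{\FixedPoint_{\vertexMomentPolytope^\pm}}\PushForward\FundamentalClass{\Sphere}$ maps to the generator of $\Homology_2(\Sphere) \Isomorphism \Integers$. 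By exactness and injectivity of $\Pole^-\PushForward$, there is a unique class $\DegreeTwoClass_0 \in \Homology_2(\Manifold)$ with $\ClutchingSection^{\FixedPoint_{\vertexMomentPolytope^-}}\PushForward\FundamentalClass{\Sphere} - \ClutchingSection^{\FixedPoint_{\vertexMomentPolytope^+}}\PushForward\FundamentalClass{\Sphere} = \Pole^-\PushForward(\DegreeTwoClass_0)$. The plan is to show $\DegreeTwoClass_0 = - \InnerProduct{\InwardNormalVector_i}{\edgeVectorMomentPolytope}\FundamentalClass{\edgeMomentPolytope}$.

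First I would deduce the lemma from this identity. Directly from the definition $\DegreeTwoClass^\FixedPoint = \Pole^-\PushForward(\DegreeTwoClass) + \ClutchingSection^\FixedPoint\PushForward\FundamentalClass{\Sphere}$ we get $\DegreeTwoClass^{\FixedPoint_{\vertexMomentPolytope^-}} = (\DegreeTwoClass + \DegreeTwoClass_0)^{\FixedPoint_{\vertexMomentPolytope^+}}$ in $\Homology_2(\ClutchingBundle{\Cocharacter_i})$, so the moduli spaces of spiked sections satisfy $\ModuliSpace(\critManifold^-, \critManifold^+, \DegreeTwoClass^{\FixedPoint_{\vertexMomentPolytope^-}}) = \ModuliSpace(\critManifold^-, \critManifold^+, (\DegreeTwoClass + \DegreeTwoClass_0)^{\FixedPoint_{\vertexMomentPolytope^+}})$. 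Writing out $\QuantumSeidel(\Cocharacter_i, \FixedPoint_{\vertexMomentPolytope^\pm})(1)$ as the sum over $\critManifold^-$ and $\DegreeTwoClass$ of $\Count\ModuliSpace(\critManifold^-, 1, \DegreeTwoClass^{\FixedPoint_{\vertexMomentPolytope^\pm}})\,\NovVariable^\DegreeTwoClass\,\critManifold^-$ and re-indexing the Novikov exponents via $\DegreeTwoClass \mapsto \DegreeTwoClass + \DegreeTwoClass_0$ yields $\QuantumSeidel(\Cocharacter_i, \FixedPoint_{\vertexMomentPolytope^-})(1) = \NovVariable^{-\DegreeTwoClass_0}\QuantumSeidel(\Cocharacter_i, \FixedPoint_{\vertexMomentPolytope^+})(1)$; the gradings are compatible because $|\Cocharacter_i, \FixedPoint_{\vertexMomentPolytope^-}| - |\Cocharacter_i, \FixedPoint_{\vertexMomentPolytope^+}| = -2\FirstChernClass(\TangentSpace\Manifold, \SymplecticForm)(\DegreeTwoClass_0)$ equals the degree of $\NovVariable^{-\DegreeTwoClass_0}$. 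Thus it remains to compute $\DegreeTwoClass_0$.

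Second I would compute $\DegreeTwoClass_0$ by restricting to the invariant sphere over the edge. The preimage $C = \MomentMap{\TorusAction}\Inverse(\edgeMomentPolytope)$ is a $\Torus$-invariant embedded $\Sphere$ with $\FundamentalClass{C} = \FundamentalClass{\edgeMomentPolytope}$ whose two $\TorusAction$-fixed points are $\FixedPoint_{\vertexMomentPolytope^\pm}$. Since $C$ is invariant under $\TorusAction \ComposedWith \Cocharacter_i$, the subspace $\ClutchingBundle{\Cocharacter_i}\RestrictedTo{C} \subseteq \ClutchingBundle{\Cocharacter_i}$ is again a clutching bundle over $\Sphere$, now with fibre $C \Isomorphism \Projective^1$, clutched by the Hamiltonian circle action $\TorusAction \ComposedWith \Cocharacter_i\RestrictedTo{C}$; smoothness of the Delzant polytope makes $\edgeVectorMomentPolytope$ a primitive lattice vector, so by the standard weight--edge dictionary for toric moment maps this circle action has weights $\InnerProduct{\InwardNormalVector_i}{\edgeVectorMomentPolytope}$ and $-\InnerProduct{\InwardNormalVector_i}{\edgeVectorMomentPolytope}$ at its two fixed points. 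Hence $\ClutchingBundle{\Cocharacter_i}\RestrictedTo{C}$ is the Hirzebruch surface $\ProjectiveSpace(\LineBundle \DirectSum \LineBundle(\InnerProduct{\InwardNormalVector_i}{\edgeVectorMomentPolytope}))$, in which the two constant sections at the poles --- which are precisely $\ClutchingSection^{\FixedPoint_{\vertexMomentPolytope^\pm}}$ --- have classes differing by $\InnerProduct{\InwardNormalVector_i}{\edgeVectorMomentPolytope}$ times the fibre class, and this fibre class pushes forward in $\Homology_2(\ClutchingBundle{\Cocharacter_i})$ to $\Pole^-\PushForward\FundamentalClass{C} = \Pole^-\PushForward\FundamentalClass{\edgeMomentPolytope}$. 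Comparing with the definition of $\DegreeTwoClass_0$ yields $\DegreeTwoClass_0 = - \InnerProduct{\InwardNormalVector_i}{\edgeVectorMomentPolytope}\FundamentalClass{\edgeMomentPolytope}$, which combined with the previous paragraph proves the lemma.

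The main obstacle is fixing the sign in this last computation: one must track the orientation conventions in the clutching construction (the gluing maps $\eltCircle \mapsto \ExponentialNumber^{\pm 2\PiNumber\ImaginaryNumber\eltCircle}$ and the cylindrical coordinates \eqref{eqn:cylinder-coordinates-for-sphere}), which pole carries weight $+\InnerProduct{\InwardNormalVector_i}{\edgeVectorMomentPolytope}$ and which carries $-\InnerProduct{\InwardNormalVector_i}{\edgeVectorMomentPolytope}$, and the sign convention in the moment map / weight correspondence, and check they conspire to give $-\InnerProduct{\InwardNormalVector_i}{\edgeVectorMomentPolytope}$ rather than $+\InnerProduct{\InwardNormalVector_i}{\edgeVectorMomentPolytope}$. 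The case in which $\vertexMomentPolytope^\pm$ both lie on $\Facet_i$ (where $\InnerProduct{\InwardNormalVector_i}{\edgeVectorMomentPolytope} = 0$ and both Seidel elements equal $\Divisor_i\Dual$ by \cite[Example~5.3]{mcduff_topological_2006}) and the example $\Manifold = \Projective^1$ both serve as consistency checks on the signs.
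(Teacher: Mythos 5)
Your proof is correct and follows essentially the same route as the paper's: reduce to the identity $\ClutchingSection^{\FixedPoint_{\vertexMomentPolytope^+}}\PushForward[\Sphere] = \ClutchingSection^{\FixedPoint_{\vertexMomentPolytope^-}}\PushForward[\Sphere] + \Pole^-\PushForward(\InnerProduct{\InwardNormalVector_i}{\edgeVectorMomentPolytope}[\edgeMomentPolytope])$ between the two fixed-section classes, restrict to the invariant sphere $\MomentMap{\TorusAction}\Inverse(\edgeMomentPolytope) \cong \Projective^1$ over the edge, and conclude by the known $\Projective^1$ computation --- the paper simply cites this last step rather than carrying out the Hirzebruch-surface identification as you do. The one point to tighten is your claim that Delzant smoothness makes $\edgeVectorMomentPolytope$ itself primitive (smoothness only forces the primitive edge \emph{directions} at each vertex to form a lattice basis, not the edge vectors to have lattice length one), but this imprecision is already implicit in the lemma's statement and does not affect the argument in the normalisation the paper uses.
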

    
    \begin{proof}
        It is sufficient to show $\ClutchingSection^{\FixedPoint_{\vertexMomentPolytope_+}} \PushForward [\Sphere] = \ClutchingSection^{\FixedPoint_{\vertexMomentPolytope_-}} \PushForward [\Sphere] + \Pole^- \PushForward (\InnerProduct{\InwardNormalVector_i}{\edgeVectorMomentPolytope} [\edgeMomentPolytope])$ (this notation is from \autoref{sec:sections-of-clutching-bundle}), and moreover it is sufficient to show this on $\MomentMap{\TorusAction} \Inverse (\edgeMomentPolytope) \cong \Projective^1$.
        This can be either shown with a direct homotopy between the two fixed sections or deduced from previous calculations on $\Projective^1$ (for example \cite[Section~8.2]{liebenschutz-jones_intertwining_2020}).
    \end{proof}
    
    Any edge $\edgeMomentPolytope$ is the intersection of $n-1$ facets $\edgeMomentPolytope = \Facet_{i_1} \Intersection \cdots \Intersection \Facet_{i_{n-1}}$, so it is easy to describe the class $[\edgeMomentPolytope] = (\ToricGenerator_{i_1} \cdots \ToricGenerator_{i_{n-1}}) \Dual$ using our presentation of $\Homology_2 (\Manifold)$.
    Repeated application of \autoref{lem:change-fixed-point-over-edge-of-polytope} along a path of edges from a given vertex $\vertexMomentPolytope$ to a vertex lying on $\Facet_i$ yields
        \begin{equation}
        \label{eqn:quantum-seidel-maps-with-computed-novikov-exponents-at-1}
            \QuantumSeidel (\Cocharacter_i, \FixedPoint_\vertexMomentPolytope) (1) = \NovVariable^{\NovikovExponentForDivisor_i} \Divisor_i \Dual
        \end{equation} 
    for a class $\NovikovExponentForDivisor_i \in \Homology_2 (\Manifold)$.
    
    The set of distinct indices $I = \Set{i_1, \ldots, i_p}$ is \define{primitive} if the cone $\Cone_I$ is not in $\InwardNormalFan$, but the cone $\Cone_{I \Prime}$ is in $\InwardNormalFan$ for any proper subset $I \Prime \subsetneq I$.
    Let $I$ be primitive.
    The vector $v = \InwardNormalVector_{i_1} + \cdots + \InwardNormalVector_{i_p}$ lies in a cone $\Cone_J$ in $\InwardNormalFan$, for some subset of indices $J = \Set{j_1, \ldots, j_s}$.
    Therefore there exist positive integers $c_1, \ldots, c_s \in \Integers^{>0}$ which give
        \begin{equation}
            \label{eqn:vector-relating-primitive-index-set-with-counterparts-defining-cone}
            \InwardNormalVector_{i_1} + \cdots + \InwardNormalVector_{i_p} = c_1\InwardNormalVector_{j_1} + \cdots + c_s \InwardNormalVector_{j_s}.
        \end{equation}
    The indices $j_1, \ldots, j_s$ and integers $c_1, \ldots, c_s$ are unique up to permutation \cite[Definition~5.2]{batyrev_quantum_1993}, and moreover the sets $I$ and $J$ are disjoint.
    
    Fix a vertex $\vertexMomentPolytope$ of $\MomentPolytope$.
    Set $\ToricGeneratorWithNovikovWeighting_i = \NovVariable^{\NovikovExponentForDivisor_i} \ToricGenerator_i \in \NovikovRing [\ToricGenerator_1, \ldots, \ToricGenerator_\numberFacets]$, with $\NovikovExponentForDivisor_i$ as in \eqref{eqn:quantum-seidel-maps-with-computed-novikov-exponents-at-1}.
    The \define{quantum Stanley-Reisner ideal} is
        \begin{equation}
            \QuantumStanleyReisnerIdeal = \langle \ToricGeneratorWithNovikovWeighting_{i_1} \cdots \ToricGeneratorWithNovikovWeighting_{i_p} - \ToricGeneratorWithNovikovWeighting_{j_1}^{c_1} \cdots \ToricGeneratorWithNovikovWeighting_{j_s}^{c_s} \mid \text{$I = \Set{i_1, \ldots, i_p}$ is primitive}\rangle.
        \end{equation}
    
    \begin{theorem}
        [{Quantum cohomology presentation \cite[Proposition~5.2]{mcduff_topological_2006}}]
    \label{thm:quantum-cohomology-presentation-closed-toric-manifolds}
        The map $\ToricGenerator_i \mapsto \Divisor_i \Dual$ induces a ring isomorphism
            \begin{equation}
            \label{eqn:presentation-for-quantum-cohomology-of-closed-toric-manifold}
                {\NovikovRing [\ToricGenerator_1, \ldots, \ToricGenerator_\numberFacets]} / {(\LinearRelationsIdeal + \QuantumStanleyReisnerIdeal)} \overset{\cong}{\to} \QuantumCohomology^\ArbitraryIndex (\Manifold).
            \end{equation}
    \end{theorem}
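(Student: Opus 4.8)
The plan is to produce the ring map $\overline{\Phi}$ of \eqref{eqn:presentation-for-quantum-cohomology-of-closed-toric-manifold}, to verify it kills the ideal $\LinearRelationsIdeal + \QuantumStanleyReisnerIdeal$ using the quantum Seidel map, and then to deduce bijectivity from the classical presentation \autoref{thm:cohomology-presentation-toric-compact} together with the Seidel computation \eqref{eqn:quantum-seidel-maps-with-computed-novikov-exponents-at-1}. First, since the quantum product is commutative, associative and unital, the assignment $\ToricGenerator_{i_1} \cdots \ToricGenerator_{i_p} \mapsto \Divisor_{i_1}\Dual \QuantumProduct \cdots \QuantumProduct \Divisor_{i_p}\Dual$ on monomials extends $\NovikovRing$-linearly to a well-defined $\NovikovRing$-algebra homomorphism $\Phi \colon \NovikovRing[\ToricGenerator_1, \ldots, \ToricGenerator_\numberFacets] \to \QuantumCohomology^\ArbitraryIndex(\Manifold)$, where the source carries its ordinary product and the target the quantum product. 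Each generator $\sum_i \InnerProduct{\InwardNormalVector_i}{\pointDualTorusLieAlgebra} \ToricGenerator_i$ of $\LinearRelationsIdeal$ is linear in the $\ToricGenerator_i$, so $\Phi$ sends it to the literal class $\sum_i \InnerProduct{\InwardNormalVector_i}{\pointDualTorusLieAlgebra} \Divisor_i\Dual \in \Cohomology^2(\Manifold)$, which vanishes by \autoref{thm:cohomology-presentation-toric-compact}; as $\Phi$ is an algebra map this gives $\Phi(\LinearRelationsIdeal) = 0$.

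The main use of Seidel theory is that $\Phi$ annihilates $\QuantumStanleyReisnerIdeal$. For closed $\Manifold$ the quantum Seidel map $\QuantumSeidel(\Cocharacter, \FixedPoint)$ is defined for every cocharacter. Fix the vertex $\vertexMomentPolytope$ of $\MomentPolytope$ used to define the $\NovikovExponentForDivisor_i$; by \eqref{eqn:quantum-seidel-maps-with-computed-novikov-exponents-at-1}, $\QuantumSeidel(\Cocharacter_i, \FixedPoint_\vertexMomentPolytope)(1) = \NovVariable^{\NovikovExponentForDivisor_i} \Divisor_i\Dual = \Phi(\ToricGeneratorWithNovikovWeighting_i)$. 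Because the quantum Seidel map intertwines the quantum product, it equals quantum multiplication by its value at $1$; iterating and using the cocharacter additivity \eqref{eqn:non-equivariant-quantum-seidel-map-composition-of-actions} yields, for any indices $a_1, \dots, a_q$,
\[
    \Phi(\ToricGeneratorWithNovikovWeighting_{a_1} \cdots \ToricGeneratorWithNovikovWeighting_{a_q}) = \QuantumSeidel(\Cocharacter_{a_1}, \FixedPoint_\vertexMomentPolytope)(1) \QuantumProduct \cdots \QuantumProduct \QuantumSeidel(\Cocharacter_{a_q}, \FixedPoint_\vertexMomentPolytope)(1) = \QuantumSeidel(\Cocharacter_{a_1} + \cdots + \Cocharacter_{a_q}, \FixedPoint_\vertexMomentPolytope)(1).
\]
Now let $I = \{i_1, \dots, i_p\}$ be primitive and let $J = \{j_1, \dots, j_s\}$, $c_1, \dots, c_s$ be as in \eqref{eqn:vector-relating-primitive-index-set-with-counterparts-defining-cone}. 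Under the identification $\LatticeCocharacters{\Torus} \cong \Integers^\dimManifold$ that equation reads $\Cocharacter_{i_1} + \cdots + \Cocharacter_{i_p} = c_1 \Cocharacter_{j_1} + \cdots + c_s \Cocharacter_{j_s}$, so applying the displayed identity to the two index multisets (the right one listing $j_b$ with multiplicity $c_b$) shows $\Phi$ sends both $\ToricGeneratorWithNovikovWeighting_{i_1} \cdots \ToricGeneratorWithNovikovWeighting_{i_p}$ and $\ToricGeneratorWithNovikovWeighting_{j_1}^{c_1} \cdots \ToricGeneratorWithNovikovWeighting_{j_s}^{c_s}$ to $\QuantumSeidel(\Cocharacter_{i_1} + \cdots + \Cocharacter_{i_p}, \FixedPoint_\vertexMomentPolytope)(1)$ --- the Novikov prefactors match automatically because they are built into the $\ToricGeneratorWithNovikovWeighting_i$. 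Hence $\Phi$ kills every generator of $\QuantumStanleyReisnerIdeal$ and descends to the $\NovikovRing$-algebra map $\overline{\Phi}$ of \eqref{eqn:presentation-for-quantum-cohomology-of-closed-toric-manifold}.

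It then remains to prove $\overline{\Phi}$ is bijective. For surjectivity, the $\Divisor_i\Dual$ generate $\Cohomology^\ArbitraryIndex(\Manifold)$ as a ring by \autoref{thm:cohomology-presentation-toric-compact}, and $a \QuantumProduct b$ equals $a \CupProduct b$ plus a sum of terms $\NovVariable^A(\cdots)$ with $\SymplecticForm(A) > 0$; an induction on symplectic energy, which terminates degreewise by the Novikov finiteness condition, upgrades this to the statement that the $\Divisor_i\Dual$ generate $\QuantumCohomology^\ArbitraryIndex(\Manifold)$ as an $\NovikovRing$-algebra under $\QuantumProduct$, so $\overline{\Phi}$ is onto. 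For injectivity I would show that the source ring is spanned over $\NovikovRing$ by at most $r := \operatorname{rk}_\Integers \Cohomology^\ArbitraryIndex(\Manifold)$ elements; granting this, $\overline{\Phi}$ is a surjection onto the free module $\QuantumCohomology^\ArbitraryIndex(\Manifold) \cong \NovikovRing^{\oplus r}$ from a module generated by $\le r$ elements, so composing with a surjection $\NovikovRing^{\oplus r} \twoheadrightarrow (\text{source})$ produces a surjective endomorphism of $\NovikovRing^{\oplus r}$, which is an isomorphism over the commutative ring $\NovikovRing$; it follows that $\overline{\Phi}$ is an isomorphism.

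The hard part is the spanning claim just invoked: one must run Batyrev's straightening algorithm over $\NovikovRing$, rewriting an arbitrary monomial in the $\ToricGeneratorWithNovikovWeighting_i$ by the generators of $\QuantumStanleyReisnerIdeal$ until no sub-monomial indexed by a set of facets failing to span a cone of $\InwardNormalFan$ survives, and then by $\LinearRelationsIdeal$ into a fixed family of $r$ standard monomials, and one must check that this terminates. Concretely this means comparing the leading monomials of the quantum Stanley--Reisner relations with those of $\StanleyReisnerIdeal$ --- equivalently, identifying the associated graded of the source ring for the energy filtration with the classical presentation ring of \eqref{eqn:cohomology-presentation-compact-toric-manifold} --- which is a purely combinatorial matter about the normal fan. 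Everything else is formal once \autoref{thm:cohomology-presentation-toric-compact}, the Seidel values \eqref{eqn:quantum-seidel-maps-with-computed-novikov-exponents-at-1}, and the Seidel additivity \eqref{eqn:non-equivariant-quantum-seidel-map-composition-of-actions} are in hand.
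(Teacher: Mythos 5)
Your proposal is correct and follows essentially the same route as the paper's sketch: well-definedness on the quantum Stanley--Reisner relations via the module property and additivity of the quantum Seidel map applied to \eqref{eqn:quantum-seidel-maps-with-computed-novikov-exponents-at-1} and \eqref{eqn:vector-relating-primitive-index-set-with-counterparts-defining-cone}, and bijectivity by comparison with \autoref{thm:cohomology-presentation-toric-compact} after observing that the quantum relations are the classical ones plus higher-order corrections in $\NovVariable$. The only cosmetic difference is that you package injectivity through the surjective-endomorphism-of-a-finitely-generated-module trick rather than the direct recursive induction of \cite[Lemma~5.1]{mcduff_topological_2006}, but both reduce to the same combinatorial spanning claim (Batyrev straightening), which you correctly identify as the remaining nontrivial input.
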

    
    \begin{proof}
        [Sketch proof]
        The map $\ToricGenerator_i \mapsto \Divisor_i \Dual$ satisfies $\ToricGeneratorWithNovikovWeighting_i \mapsto \QuantumSeidel (\Cocharacter_i, \FixedPoint_\vertexMomentPolytope) (1)$.
        The equality of vectors \eqref{eqn:vector-relating-primitive-index-set-with-counterparts-defining-cone} means the corresponding cocharacters are equal, giving $\QuantumSeidel (\Cocharacter_{i_1} \cdots \Cocharacter_{i_p}, \FixedPoint_\vertexMomentPolytope) (1) = \QuantumSeidel (\Cocharacter_{j_1}^{c_1} \cdots \Cocharacter_{j_s}^{c_s}, \FixedPoint_\vertexMomentPolytope) (1)$.
        The left-hand side may be expanded as a composition of Seidel maps $\QuantumSeidel (\Cocharacter_{i_1}, \FixedPoint_\vertexMomentPolytope) \cdots \QuantumSeidel(\Cocharacter_{i_p}, \FixedPoint_\vertexMomentPolytope) (1)$ by \eqref{eqn:non-equivariant-quantum-seidel-map-composition-of-actions}, and then further rearranged to a product $\QuantumSeidel (\Cocharacter_{i_1}, \FixedPoint_\vertexMomentPolytope) (1) \cdots \QuantumSeidel(\Cocharacter_{i_p}, \FixedPoint_\vertexMomentPolytope) (1)$ because the quantum Seidel map is a module map with respect to quantum multiplication.
        The same procedure applies to the right-hand side.
        Therefore the generators of $\QuantumStanleyReisnerIdeal$ vanish under $\ToricGeneratorWithNovikovWeighting_i \mapsto \QuantumSeidel (\Cocharacter_i, \FixedPoint_\vertexMomentPolytope) (1)$, so the map \eqref{eqn:presentation-for-quantum-cohomology-of-closed-toric-manifold} is well-defined.
        
        The relation $\ToricGeneratorWithNovikovWeighting_{i_1} \cdots \ToricGeneratorWithNovikovWeighting_{i_p} - \ToricGeneratorWithNovikovWeighting_{j_1}^{c_1} \cdots \ToricGeneratorWithNovikovWeighting_{j_s}^{c_s}$ for the primitive set $I$ can be rearranged to 
            \begin{equation}
            \label{eqn:quantum-stanley-reisner-higher-order-description}
                \ToricGenerator_{i_1} \cdots \ToricGenerator_{i_p} - \NovVariable^{\NovikovExponentForDivisor_I} \ToricGenerator_{j_1}^{c_1} \cdots \ToricGenerator_{j_s}^{c_s}
            \end{equation} 
        with $\SymplecticForm (\NovikovExponentForDivisor_I) > 0$ \cite{batyrev_quantum_1993}.
        Therefore the relations in $\QuantumStanleyReisnerIdeal$ are exactly the relations in $\StanleyReisnerIdeal$, except they have higher-order corrections.
        Here, a monomial is \define{higher-order} if its $\NovikovRing$-coefficient $\NovVariable^\DegreeTwoClass$ satisfies $\SymplecticForm(\DegreeTwoClass) > 0$.
        We deduce the map \eqref{eqn:presentation-for-quantum-cohomology-of-closed-toric-manifold} is both surjective and injective from \autoref{thm:cohomology-presentation-toric-compact} using induction, recursively incorporating the higher-order corrections \cite[Lemma ~5.1]{mcduff_topological_2006}.
    \end{proof}
    
\subsection{Presentation of equivariant quantum cohomology}

    Recall the canonical isomorphism $\SymmetricAlgebra (\LatticeCharacters{\Torus}) \cong \Cohomology^\ArbitraryIndex (\ClassifyingSpace \Torus)$ from \autoref{sec:classifying-space-for-torus}.
    Therefore $\Cohomology^2 (\ClassifyingSpace \Torus)$ is isomorphic to the lattice $\Integers^\dimManifold \subset \DualTorusLieAlgebra$.
    Via this isomorphism, we define a map $\Cohomology^2 (\ClassifyingSpace \Torus) = \Integers^\dimManifold \to \Integers [\ToricGenerator_1, \ldots, \ToricGenerator_\numberFacets]$ by 
        \begin{equation}
        \label{eqn:character-maps-on-equivariant-cohomology-presentation}
            \formalTorus \mapsto - \sum_{i=1}^\numberFacets \InnerProduct{\InwardNormalVector_i}{\formalTorus} \ToricGenerator_i.
        \end{equation}
    
    Associated to the $\Torus$-invariant subset $\Divisor_i \subset \Manifold$, there is an \define{equivariant cohomology class} $[\Divisor_i]_\Torus \in \Cohomology^2 _\Torus (\Manifold, \TorusAction)$ \cite[Proposition~12.4.13]{cox_toric_2011}.
    These are the $\Torus$-equivariant analogues of the Poincar\'{e} duals $\Divisor_i \Dual$.
    
    \begin{theorem}
        [{Equivariant cohomology presentation \cite[Theorem~12.4.14]{cox_toric_2011}}]
    \label{thm:equivariant-cohomology-presentation-closed-toric-manifold}
        The map $\ToricGenerator_i \mapsto [\Divisor_i]_\Torus$ induces a ring isomorphism
            \begin{equation}
            \label{eqn:equivariant-cohomology-presentation-closed-toric-manifold}
                {\Integers [\ToricGenerator_1, \ldots, \ToricGenerator_\numberFacets]} / {\StanleyReisnerIdeal} \overset{\cong}{\to} \Cohomology^\ArbitraryIndex _\Torus (\Manifold, \TorusAction).
            \end{equation}
        This extends to a $\Cohomology^\ArbitraryIndex (\ClassifyingSpace \Torus)$-algebra isomorphism under \eqref{eqn:character-maps-on-equivariant-cohomology-presentation}.
    \end{theorem}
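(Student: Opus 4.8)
The plan is to derive the presentation from two structural facts about a closed toric manifold~$\Manifold$ together with the non-equivariant presentation already in hand: first, that $\Manifold$ is \emph{equivariantly formal}, so that $\Cohomology^\ArbitraryIndex_\Torus(\Manifold, \TorusAction)$ is a free $\Cohomology^\ArbitraryIndex(\ClassifyingSpace \Torus)$-module; and second, \autoref{thm:cohomology-presentation-toric-compact}. One then checks that $\ToricGenerator_i \mapsto [\Divisor_i]_\Torus$ descends to a ring map from ${\Integers[\ToricGenerator_1, \ldots, \ToricGenerator_\numberFacets]}/{\StanleyReisnerIdeal}$, shows it is surjective by a graded Nakayama argument, and shows it is injective by a rank count over $\Cohomology^\ArbitraryIndex(\ClassifyingSpace \Torus)$. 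The statement itself is \cite[Theorem~12.4.14]{cox_toric_2011}, and the argument below is essentially theirs, adapted to the symplectic conventions of this paper.

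For equivariant formality I would use that, for a generic $\xi \in \TorusLieAlgebra$, the component $\InnerProduct{\MomentMap{\TorusAction}}{\xi}$ of the moment map is a $\Torus$-invariant Morse--Bott function whose critical manifolds are the fixed loci $\MomentMap{\TorusAction}\Inverse(\vertexMomentPolytope)$ over the vertices $\vertexMomentPolytope$ of $\MomentPolytope$, all of even index; equivalently the associated stratification gives $\Manifold$ a $\Torus$-invariant cell decomposition with only even-dimensional cells. Hence the Leray--Hirsch spectral sequence of $\Manifold \Inclusion \UniversalSpace \Torus \Product_\Torus \Manifold \to \ClassifyingSpace \Torus$ degenerates and $\Cohomology^\ArbitraryIndex_\Torus(\Manifold, \TorusAction) \Isomorphism \Cohomology^\ArbitraryIndex(\ClassifyingSpace \Torus) \Tensor \Cohomology^\ArbitraryIndex(\Manifold)$ as graded $\Cohomology^\ArbitraryIndex(\ClassifyingSpace \Torus)$-modules; in particular it is free of rank equal to the number of vertices of $\MomentPolytope$, and the fibre-restriction map in \eqref{eqn:sequence-on-cohomology-associated-to-borel-quotient} is reduction modulo $\Cohomology^{>0}(\ClassifyingSpace \Torus)$. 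Next, $[\Divisor_i]_\Torus$ is represented by the $\Torus$-invariant submanifold $\Divisor_i = \MomentMap{\TorusAction}\Inverse(\Facet_i)$, so after an equivariant perturbation making them transverse, $[\Divisor_{i_1}]_\Torus \cdots [\Divisor_{i_p}]_\Torus$ is represented by $\Divisor_{i_1} \Intersection \cdots \Intersection \Divisor_{i_p}$; if $\{i_1, \ldots, i_p\}$ spans no cone of $\InwardNormalFan$ then $\Facet_{i_1} \Intersection \cdots \Intersection \Facet_{i_p} = \emptyset$, so this submanifold is empty and the product vanishes. Thus the generators of $\StanleyReisnerIdeal$ map to zero and the map is a well-defined ring homomorphism; it is a $\Cohomology^\ArbitraryIndex(\ClassifyingSpace \Torus)$-algebra map for \eqref{eqn:character-maps-on-equivariant-cohomology-presentation} because the composite $\Cohomology^2(\ClassifyingSpace \Torus) \to \Cohomology^2_\Torus(\Manifold, \TorusAction)$ sends $\formalTorus$ to $-\sum_i \InnerProduct{\InwardNormalVector_i}{\formalTorus} [\Divisor_i]_\Torus$ (the equivariant refinement of $-\sum \FacetBoundaryCondition_i \ToricGenerator_i \mapsto [\SymplecticForm]$ from \autoref{thm:cohomology-presentation-toric-compact}, with the sign fixed by the antidiagonal convention \eqref{eqn:relation-on-borel-space}).

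For surjectivity, reducing modulo $\Cohomology^{>0}(\ClassifyingSpace \Torus)$ turns our map into ${\Integers[\ToricGenerator_\bullet]}/{(\LinearRelationsIdeal + \StanleyReisnerIdeal)} \to \Cohomology^\ArbitraryIndex(\Manifold)$, which is the isomorphism of \autoref{thm:cohomology-presentation-toric-compact}; since the target of our map is a bounded-below finitely generated graded $\Cohomology^\ArbitraryIndex(\ClassifyingSpace \Torus)$-module, the graded Nakayama lemma upgrades this to surjectivity. For injectivity, observe that ${\Integers[\ToricGenerator_\bullet]}/{\StanleyReisnerIdeal}$ is the Stanley--Reisner ring of the smooth complete fan $\InwardNormalFan$: the $\dimManifold$ linear forms of \eqref{eqn:character-maps-on-equivariant-cohomology-presentation} coming from a lattice basis form a homogeneous system of parameters, and since $\InwardNormalFan$ is a complete simplicial fan this ring is Cohen--Macaulay of Krull dimension $\dimManifold$, hence a torsion-free $\Cohomology^\ArbitraryIndex(\ClassifyingSpace \Torus)$-module whose rank equals the number of maximal cones of $\InwardNormalFan$, i.e. the number of vertices of $\MomentPolytope$. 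So our map is a surjection of finitely generated $\Cohomology^\ArbitraryIndex(\ClassifyingSpace \Torus)$-modules of equal finite rank with torsion-free source; its kernel has rank $0$ and sits inside a torsion-free module, hence is zero, and the map is an isomorphism.

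The delicate step is the bookkeeping in the second paragraph — verifying that the product of the classes $[\Divisor_i]_\Torus$ over a non-face index set really is computed by the (empty) transverse intersection, with no lower-order correction involving the equivariant parameters, and that the sign in \eqref{eqn:character-maps-on-equivariant-cohomology-presentation} matches the Borel-quotient convention \eqref{eqn:relation-on-borel-space}. An alternative route that trades the rank count for an excision argument is to realise $\Manifold$ as the symplectic reduction of $\ComplexNumbers^\numberFacets$ by the subtorus with Lie algebra $\Kernel$ of the projection $\RealNumbers^\numberFacets \to \TorusLieAlgebra$ dual to $\InwardNormalVector_i \mapsto$ coordinate vectors, so that $\UniversalSpace \Torus \Product_\Torus \Manifold$ is the Borel quotient by $(\Circle)^\numberFacets$ of the semistable locus $U \subseteq \ComplexNumbers^\numberFacets$; excision for the pair $(\ComplexNumbers^\numberFacets, U)$ then gives a long exact sequence with $\Cohomology^\ArbitraryIndex_{(\Circle)^\numberFacets}(\ComplexNumbers^\numberFacets) = \Integers[\ToricGenerator_\bullet]$ whose relative term maps onto $\StanleyReisnerIdeal$, and the sequence splits because all groups are concentrated in even degrees. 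In either form I would present the result as \cite[Theorem~12.4.14]{cox_toric_2011}.
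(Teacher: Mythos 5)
You should first note that the paper contains no proof of this statement at all: it is imported verbatim from \cite[Theorem~12.4.14]{cox_toric_2011} (only the subsequent remark about the minus sign is the author's own), so there is no internal argument to compare yours against. Judged on its own terms, your reconstruction is correct and follows the standard line: equivariant formality from the even-index Morse--Bott structure of a generic moment-map component, well-definedness from the emptiness of $\Divisor_{i_1} \Intersection \cdots \Intersection \Divisor_{i_p}$ for non-faces, surjectivity by reduction to \autoref{thm:cohomology-presentation-toric-compact} plus graded Nakayama, and injectivity by a rank comparison of torsion-free $\Cohomology^\ArbitraryIndex(\ClassifyingSpace\Torus)$-modules. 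Two steps are asserted more briskly than they deserve. First, Cohen--Macaulayness of $\Integers[\ToricGenerator_1, \ldots, \ToricGenerator_\numberFacets]/\StanleyReisnerIdeal$ \emph{over the integers} does not follow merely from $\InwardNormalFan$ being complete and simplicial --- Reisner's criterion is a statement over a field; here one should appeal to shellability of the boundary complex of the dual simplicial polytope (Bruggesser--Mani), which gives freeness of the face ring over the parameter subring with $\Integers$-coefficients and thereby justifies both the torsion-freeness and the rank count. Second, the identification of $\formalTorus \mapsto -\sum_i \InnerProduct{\InwardNormalVector_i}{\formalTorus}[\Divisor_i]_\Torus$ with the pullback $\Cohomology^2(\ClassifyingSpace\Torus) \to \Cohomology^2_\Torus(\Manifold, \TorusAction)$ is exactly the sign the paper flags as surprising and ties to the antidiagonal convention \eqref{eqn:relation-on-borel-space}; you name the issue but do not verify it. Neither is a fatal gap, but both would need to be closed before your argument could genuinely replace the citation.
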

    
    \begin{remark}
        [Surprising minus sign]
        The minus sign in \eqref{eqn:character-maps-on-equivariant-cohomology-presentation} is perhaps surprising.
        Consider the case $\Torus = \Circle$.
        We use the universal bundle $\InfiniteSphere \to \InfiniteComplexProjectiveSpace$ for $\Circle$ and we have $\Cohomology^\ArbitraryIndex (\InfiniteComplexProjectiveSpace) \cong \Integers [\formalTorus]$ (see \autoref{sec:classifying-space-for-torus}).
        Our $\Circle$-bundle $\InfiniteSphere \to \InfiniteComplexProjectiveSpace$ is the unit sphere bundle of the complex line bundle $\LineBundle_{\InfiniteComplexProjectiveSpace}(+1) \to \InfiniteComplexProjectiveSpace$.
        The generator $\formalTorus$ is equal to the first Chern class $\FirstChernClass (\LineBundle_{\InfiniteComplexProjectiveSpace}(+1))$.
        
        Consider $\ComplexNumbers$ with the standard $\Circle$-action.
        The Borel homotopy quotient $\InfiniteSphere \times_{\Circle} \ComplexNumbers \to \InfiniteComplexProjectiveSpace$ is isomorphic to $\LineBundle_{\InfiniteComplexProjectiveSpace}(-1) \to \InfiniteComplexProjectiveSpace$, where the minus sign arises since the action on $\InfiniteSphere$ is reversed.
        The class $\formalTorus = \FirstChernClass (\LineBundle_{\InfiniteComplexProjectiveSpace}(+1))$ is equal to $- \FirstChernClass(\LineBundle_{\InfiniteComplexProjectiveSpace}(-1))$, where $\FirstChernClass(\LineBundle_{\InfiniteComplexProjectiveSpace}(-1)) = \FirstChernClass(\InfiniteSphere \times_{\Circle} \ComplexNumbers)$ plays the same role as the equivariant cohomology classes $[\Divisor_i]_\Torus$.
    \end{remark}
    
    \begin{theorem}
        [Equivariant quantum cohomology presentation]
    \label{thm:equivariant-quantum-cohomology-presentation-closed-toric-manifold}
        The map $\ToricGenerator_i \mapsto [\Divisor_i]_\Torus$ induces a ring isomorphism
            \begin{equation}
            \label{eqn:equivariant-quantum-cohomology-presentation-map-closed-toric-manifolds}
                {\NovikovRing \GradedCompletedTensorProduct \Integers [\ToricGenerator_1, \ldots, \ToricGenerator_\numberFacets]} / {\QuantumStanleyReisnerIdeal} \overset{\cong}{\to} \QuantumCohomology^\ArbitraryIndex _\Torus (\Manifold, \TorusAction).
            \end{equation}
        This extends to a $\Cohomology^\ArbitraryIndex (\ClassifyingSpace \Torus)$-algebra isomorphism under \eqref{eqn:character-maps-on-equivariant-cohomology-presentation}.
    \end{theorem}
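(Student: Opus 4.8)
The plan is to follow the sketch proof of \autoref{thm:quantum-cohomology-presentation-closed-toric-manifolds} one step up in equivariance, substituting \autoref{thm:equivariant-cohomology-presentation-closed-toric-manifold} for \autoref{thm:cohomology-presentation-toric-compact} and an equivariant quantum Seidel map for the non-equivariant one. Concretely, I would first record a $\Torus$-equivariant quantum Seidel map $\QuantumSeidel_\Torus (\Cocharacter, \FixedPoint) : \QuantumCohomology^\ArbitraryIndex_\Torus (\Manifold, \TorusAction) \to \QuantumCohomology^\ArbitraryIndex_\Torus (\Manifold, \TorusAction)$, built exactly like $\QuantumSeidel_{\ExtendedTorus}$ but using only the fibrewise $\Torus$-action on the clutching bundle $\ClutchingBundle{\Cocharacter}$ with no rotation of $\Projective^1$; because the $\Torus$-action is fibrewise, the fibres over both poles carry the action $\TorusAction$, so $\QuantumSeidel_\Torus (\Cocharacter, \FixedPoint)$ is an honest endomorphism of $\QuantumCohomology^\ArbitraryIndex_\Torus (\Manifold, \TorusAction)$. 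Since the $\AdditionalCircle$-rotation is precisely the source of the error term in the intertwining relation \autoref{thm:intertwining-relation-quantum-extended-torus}, this $\Torus$-equivariant Seidel map commutes with the $\Torus$-equivariant quantum product and satisfies the composition law \eqref{eqn:non-equivariant-quantum-seidel-map-composition-of-actions}. (The passage back to $\QuantumCohomology^\ArbitraryIndex_{\ExtendedTorus} (\Manifold, \Extended{\TorusAction})$ is harmless: as $\Extended{\TorusAction}\RestrictedTo{\AdditionalCircle} = \Identity_\Manifold$, the Borel construction only adjoins the free polynomial generator $\formalAdditionalCircle$.)

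Next I would prove the equivariant refinement of \eqref{eqn:quantum-seidel-maps-with-computed-novikov-exponents-at-1}, namely $\QuantumSeidel_\Torus (\Cocharacter_i, \FixedPoint_{\vertexMomentPolytope}) (1) = \NovVariable^{\NovikovExponentForDivisor_i} [\Divisor_i]_\Torus$ for every vertex $\vertexMomentPolytope$ of $\MomentPolytope$. For $\vertexMomentPolytope$ a vertex of $\Facet_i$ this is the equivariant version of \cite[Example~5.3]{mcduff_topological_2006}: through the fixed section $\ClutchingSection^{\FixedPoint_{\vertexMomentPolytope}}$ the only contributing section is the constant-fibre holomorphic one, and its equivariant count returns the equivariant divisor class $[\Divisor_i]_\Torus$ rather than merely its non-equivariant image $\Divisor_i \Dual$, as in \cite[Proposition~12.4.13]{cox_toric_2011}; the dependence on $\vertexMomentPolytope$ is controlled by the equivariant analogue of \autoref{lem:change-fixed-point-over-edge-of-polytope}, whose proof is the same $\Projective^1$-computation. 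Granting this, the verification that the generators of $\QuantumStanleyReisnerIdeal$ vanish under $\ToricGenerator_i \mapsto [\Divisor_i]_\Torus$ is verbatim the non-equivariant argument: for a primitive $I = \Set{i_1, \ldots, i_p}$, equation \eqref{eqn:vector-relating-primitive-index-set-with-counterparts-defining-cone} gives the equality of cocharacters $\Cocharacter_{i_1} + \cdots + \Cocharacter_{i_p} = c_1 \Cocharacter_{j_1} + \cdots + c_s \Cocharacter_{j_s}$, so the composition law identifies $\QuantumSeidel_\Torus(\Cocharacter_{i_1}, \FixedPoint_{\vertexMomentPolytope}) \cdots \QuantumSeidel_\Torus(\Cocharacter_{i_p}, \FixedPoint_{\vertexMomentPolytope})(1)$ with $\QuantumSeidel_\Torus(\Cocharacter_{j_1}, \FixedPoint_{\vertexMomentPolytope})^{c_1} \cdots \QuantumSeidel_\Torus(\Cocharacter_{j_s}, \FixedPoint_{\vertexMomentPolytope})^{c_s}(1)$, and the module property turns each side into the matching product of $\NovVariable^{\NovikovExponentForDivisor_\bullet} [\Divisor_\bullet]_\Torus$'s, i.e.\ the images of $\ToricGeneratorWithNovikovWeighting_{i_1} \cdots \ToricGeneratorWithNovikovWeighting_{i_p}$ and $\ToricGeneratorWithNovikovWeighting_{j_1}^{c_1} \cdots \ToricGeneratorWithNovikovWeighting_{j_s}^{c_s}$, which therefore coincide. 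Thus \eqref{eqn:equivariant-quantum-cohomology-presentation-map-closed-toric-manifolds} is a well-defined ring homomorphism; it is $\Cohomology^\ArbitraryIndex (\ClassifyingSpace \Torus)$-linear because the quantum Seidel maps are, the role of $\LinearRelationsIdeal$ being played by the module structure through \eqref{eqn:character-maps-on-equivariant-cohomology-presentation}, exactly as in \autoref{thm:equivariant-cohomology-presentation-closed-toric-manifold}.

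To obtain injectivity and surjectivity I would run the $\SymplecticForm$-energy induction of \cite[Lemma~5.1]{mcduff_topological_2006}. By \eqref{eqn:quantum-stanley-reisner-higher-order-description} each generator of $\QuantumStanleyReisnerIdeal$ equals the corresponding Stanley--Reisner generator plus monomials whose $\NovikovRing$-coefficients $\NovVariable^\DegreeTwoClass$ have $\SymplecticForm(\DegreeTwoClass) > 0$. Filtering both sides of \eqref{eqn:equivariant-quantum-cohomology-presentation-map-closed-toric-manifolds} by $\SymplecticForm$-energy, the associated graded of the domain is $\NovikovRing \GradedCompletedTensorProduct (\Integers[\ToricGenerator_1, \ldots, \ToricGenerator_\numberFacets]/\StanleyReisnerIdeal)$ and the associated graded of the target is $\NovikovRing \GradedCompletedTensorProduct \Cohomology^\ArbitraryIndex_\Torus(\Manifold, \TorusAction)$, so the induced map on associated gradeds is the classical presentation \eqref{eqn:equivariant-cohomology-presentation-closed-toric-manifold} base-changed to $\NovikovRing$ --- an isomorphism by \autoref{thm:equivariant-cohomology-presentation-closed-toric-manifold}. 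Both sides are complete and separated for this filtration (the domain by the construction of $\GradedCompletedTensorProduct$ and $\NovikovRing$, the target because $\QuantumCohomology^\ArbitraryIndex_\Torus(\Manifold, \TorusAction) \cong \NovikovRing \GradedCompletedTensorProduct \Cohomology^\ArbitraryIndex_\Torus(\Manifold, \TorusAction)$ as $\NovikovRing$-modules), so an isomorphism of associated gradeds lifts to an isomorphism of the filtered objects. This proves the $\Torus$-equivariant statement, and the $\ExtendedTorus$-extension follows by the free adjunction of $\formalAdditionalCircle$ noted above.

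The main obstacle is the equivariant Seidel-section computation $\QuantumSeidel_\Torus(\Cocharacter_i, \FixedPoint_{\vertexMomentPolytope})(1) = \NovVariable^{\NovikovExponentForDivisor_i}[\Divisor_i]_\Torus$ together with the compatibility of $\QuantumSeidel_\Torus$ with the $\Torus$-equivariant quantum product: one must set up the $\Torus$-equivariant quantum Seidel map so that the clutching-bundle geometry genuinely avoids the rotation-induced correction of \autoref{thm:intertwining-relation-quantum-extended-torus}, and then check that the \emph{equivariant} count of the distinguished section reproduces the equivariant class $[\Divisor_i]_\Torus$ and not merely $\Divisor_i \Dual$. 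A secondary technical point is that the induction is run with the graded completed tensor product $\GradedCompletedTensorProduct$, so one must invoke the finiteness conditions built into $\NovikovRing$ and into \eqref{eqn:equivariant-floer-cochain-complex-definition} to see that the $\SymplecticForm$-energy filtration is complete and separated on both sides, so that the associated-graded isomorphism really does lift.
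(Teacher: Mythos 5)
Your argument for well-definedness is the same as the paper's: one observes that the purely fibrewise $\Torus$-action on the clutching bundle (no rotation of $\Projective^1$) gives a $\Torus$-equivariant quantum Seidel map that genuinely intertwines the $\Torus$-equivariant quantum product --- the intertwining error in \autoref{thm:intertwining-relation-quantum-extended-torus} is caused only by the $\AdditionalCircle$-rotation --- and that $\QuantumSeidel_{\Torus}(\Cocharacter_i, \FixedPoint_\vertexMomentPolytope)(1) = \NovVariable^{\NovikovExponentForDivisor_i}[\Divisor_i]_\Torus$, after which the vanishing on $\QuantumStanleyReisnerIdeal$ is verbatim the non-equivariant computation. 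Where you diverge is the bijectivity step. The paper does not run a single global energy-filtration argument: it introduces the ideals $B_k$ generated by the image of $\SymmetricAlgebra^k(\LatticeCharacters{\Torus})$ (geometrically, restriction to the finite-dimensional approximations $(\HighDimensionalSphere{2k-1})^{\dimTorus}$ of $\UniversalSpace\Torus$), runs the McDuff--Tolman higher-order-correction induction in $\NovVariable$ at each fixed $k$, where everything is finitely supported, and then passes to the inverse limit over $k$. You instead filter both sides by $\SymplecticForm$-energy once and for all and lift the associated-graded isomorphism, asserting completeness and separatedness of both sides. The two routes buy different things: yours is a cleaner one-step argument, but the assertion you flag as a ``secondary technical point'' is exactly the crux --- because elements of $\NovikovRing \GradedCompletedTensorProduct \Integers[\ToricGenerator_1, \ldots, \ToricGenerator_\numberFacets]$ may have infinitely many terms of bounded energy with polynomial degree tending to infinity, one must check that $\QuantumStanleyReisnerIdeal$ is closed for the energy topology (equivalently that the quotient is complete and separated, and that $\mathrm{gr}$ of the quotient is the quotient by the Stanley--Reisner ideal); the paper's $B_k$ device is precisely engineered to avoid having to prove this, since modulo $B_k$ all the relevant sums are finite and the non-equivariant induction applies directly. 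If you supply that closedness argument your proof is complete; as written it is the one step that is asserted rather than proved. (Also note the theorem as stated concerns $\QuantumCohomology^\ArbitraryIndex_{\Torus}$; the passage to $\ExtendedTorus$ by freely adjoining $\formalAdditionalCircle$ is carried out separately in the paper, in \eqref{eqn:equivariant-quantum-cohomology-presentation-map-closed-toric-manifolds-extended}, so your final remark is extra but harmless.)
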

    
    \begin{proof}
    
        This proof is the $\Torus$-equivariant analogue of the proof of \autoref{thm:quantum-cohomology-presentation-closed-toric-manifolds}.
        
        Exactly like the non-equivariant case, $\QuantumSeidel_{\Torus} (\Cocharacter_i, \FixedPoint_\vertexMomentPolytope) (1)$ only counts the fixed sections of the clutching bundle $\ClutchingBundle{\Cocharacter_i}$ at points in $\Divisor_i$.
        Therefore the map \eqref{eqn:equivariant-quantum-cohomology-presentation-map-closed-toric-manifolds} satisfies $\ToricGeneratorWithNovikovWeighting_i \mapsto \NovVariable^{\NovikovExponentForDivisor_i} [\Divisor_i]_{\Torus} = \QuantumSeidel_{\Torus} (\Cocharacter_i, \FixedPoint_\vertexMomentPolytope) (1)$ just like \eqref{eqn:quantum-seidel-maps-with-computed-novikov-exponents-at-1}.
        The $\Torus$-equivariant quantum Seidel map intertwines the $\Torus$-equivariant quantum product (it is the $\ExtendedTorus$-equivariant quantum Seidel map which does not), hence the map \eqref{eqn:equivariant-quantum-cohomology-presentation-map-closed-toric-manifolds} vanishes on $\QuantumStanleyReisnerIdeal$.
        Thus \eqref{eqn:equivariant-quantum-cohomology-presentation-map-closed-toric-manifolds} is well-defined.
        
        Let $B_k$ be the ideal in $\Integers [\ToricGenerator_1, \ldots, \ToricGenerator_\numberFacets]$ generated by the degree-$k$ products of the polynomials in \eqref{eqn:character-maps-on-equivariant-cohomology-presentation}.
        Equivalently, $B_k$ is generated by the image of $\SymmetricAlgebra^k (\LatticeCharacters{\Torus})$.
        Notice $\Cohomology^\ArbitraryIndex _\Torus (\Manifold, \TorusAction) = \varprojlim {\Integers [\ToricGenerator_1, \ldots, \ToricGenerator_\numberFacets]} / {(\StanleyReisnerIdeal + B_k)}$ as ${k \to \infty}$ under the isomorphism \eqref{eqn:equivariant-cohomology-presentation-closed-toric-manifold}.
        Quotienting by $B_k$ acts like restricting $\InfiniteSphere \times_{\Circle} \Manifold$ to $\HighDimensionalSphere{2k - 1} \times_{\Circle} \Manifold$ in the $\Torus = \Circle$ case, and similarly in general.
        
        Elements of $\NovikovRing \GradedCompletedTensorProduct \Integers [\ToricGenerator_1, \ldots, \ToricGenerator_\numberFacets]$ are formal sums of monomials $\NovVariable^\DegreeTwoClass \Tensor p$ with a finiteness property: there are only finitely many such monomials with $p \notin B_k$ and the energy $\SymplecticForm (\DegreeTwoClass)$ bounded above.
        For any given $k$, we can use the methods of higher-order corrections in $\NovVariable$ from \autoref{thm:quantum-cohomology-presentation-closed-toric-manifolds} to show injectivity and surjectivity modulo $B_k$.
        Letting $k \to \infty$, we deduce the bijectivity of \eqref{eqn:equivariant-quantum-cohomology-presentation-map-closed-toric-manifolds}.
    \end{proof}
    
\subsection{Connection}
\label{sec:connection-computation-on-closed-toric-manifold}

    First, we modify the presentation \eqref{eqn:equivariant-quantum-cohomology-presentation-map-closed-toric-manifolds} to get a presentation for $\QuantumCohomology^\ArbitraryIndex _{\ExtendedTorus} (\Manifold, \Extended{\TorusAction})$: the map $\ToricGenerator_i \mapsto [\Divisor_i]_{\ExtendedTorus}$ and $\formalAdditionalCircle \mapsto \formalAdditionalCircle$ induces a $\Cohomology^\ArbitraryIndex (\ClassifyingSpace \ExtendedTorus)$-module isomorphism
        \begin{equation}
        \label{eqn:equivariant-quantum-cohomology-presentation-map-closed-toric-manifolds-extended}
            {\NovikovRing \GradedCompletedTensorProduct \Integers [\ToricGenerator_1, \ldots, \ToricGenerator_\numberFacets, \formalAdditionalCircle]} / {\QuantumStanleyReisnerIdeal} \overset{\cong}{\to} \QuantumCohomology^\ArbitraryIndex _{\ExtendedTorus} (\Manifold, \Extended{\TorusAction}).
        \end{equation}
    Since the copy of $\AdditionalCircle$ in $\ExtendedTorus = \AdditionalCircle \times \Torus$ acts trivially on $\Manifold$, we merely incorporate a new formal variable $\formalAdditionalCircle$ which generates $\Cohomology^\ArbitraryIndex (\ClassifyingSpace \AdditionalCircle)$.
    We write $\Cohomology^\ArbitraryIndex (\ClassifyingSpace \ExtendedTorus) = \SymmetricAlgebra (\LatticeCharacters{\Torus}) \Tensor \Integers [\formalAdditionalCircle]$.
    
    Let $\vertexMomentPolytope$ be a vertex of the polytope $\MomentPolytope$, and let $\NeighbouringFacets (\vertexMomentPolytope) = \SetCondition{i}{\vertexMomentPolytope \in \Facet_i}$ be the set of indices for the facets neighbouring $\vertexMomentPolytope$.
    The \define{star} of $\vertexMomentPolytope$ is the union of the interiors of all faces containing $\vertexMomentPolytope$, denoted $\Star(\vertexMomentPolytope)$.
    The facets $\Facet_i$ with $i \in \NeighbouringFacets(\vertexMomentPolytope)$ are exactly the facets which have nonempty intersection with $\Star(\vertexMomentPolytope)$.
    Therefore the restriction map $\Cohomology^2 _{\ExtendedTorus} (\Manifold, \Extended{\TorusAction}) \to \Cohomology^2 _{\ExtendedTorus} (\MomentMap{\TorusAction} \Inverse \Star(\vertexMomentPolytope), \Extended{\TorusAction})$ satisfies $[\Divisor_i]_{\ExtendedTorus} \mapsto 0$ if and only if $i \notin \NeighbouringFacets (\vertexMomentPolytope)$.
    The inclusion $\Set{\FixedPoint_\vertexMomentPolytope} \to \MomentMap{\TorusAction} \Inverse \Star(\vertexMomentPolytope)$ is an equivariant retraction, so $\Cohomology^2 _{\ExtendedTorus} (\MomentMap{\TorusAction} \Inverse \Star(\vertexMomentPolytope), \Extended{\TorusAction})$ is naturally isomorphic to $\Cohomology^2 _{\ExtendedTorus} (\Set{\FixedPoint_\vertexMomentPolytope})$.
    This determines the map $\FixedPoint_\vertexMomentPolytope \PullBack$ in \eqref{eqn:split-short-exact-sequence-borel-quotient-closed-toric-manifold}.
    
    Recall the set $\SetCondition{\InwardNormalVector_i}{i \in \NeighbouringFacets(\vertexMomentPolytope)}$ is a basis for $\Integers^\dimManifold \subset \TorusLieAlgebra$ since $\InwardNormalFan$ is smooth.
    Let $\SetCondition{\formalTorus^\vertexMomentPolytope _i}{i \in \NeighbouringFacets(\vertexMomentPolytope)}$ be the $\InnerProduct{\Argument}{\Argument}$-dual basis to $\SetCondition{\InwardNormalVector_i}{i \in \NeighbouringFacets(\vertexMomentPolytope)}$.
    The linear relation corresponding to $\formalTorus^\vertexMomentPolytope_i$ is of the form 
        \begin{equation}
        \label{eqn:linear-relation-for-facet-neighbouring-vertex}
            \ToricGenerator_i = \sum_{j \notin \NeighbouringFacets(\vertexMomentPolytope)} a_{j i} \ToricGenerator_i
        \end{equation}
    for each $i \in \NeighbouringFacets(\vertexMomentPolytope)$.
    Therefore the map
        \begin{equation}
        \label{eqn:generators-not-neighbouring-vertex-generate-degree-2}
            \Integers \langle \ToricGenerator_i : i \notin \NeighbouringFacets (\vertexMomentPolytope) \rangle \to {\Integers \langle \ToricGenerator_1, \ldots, \ToricGenerator_\numberFacets \rangle} / {(\text{linear relations})}
        \end{equation}
    is an isomorphism.
    The right-hand side of \eqref{eqn:generators-not-neighbouring-vertex-generate-degree-2} is the degree 2 summand of the presentation \eqref{eqn:cohomology-presentation-compact-toric-manifold} because the elements of $\StanleyReisnerIdeal$ are at least degree 4.
    Putting this together, the short exact sequence \eqref{eqn:split-short-exact-sequence-borel-quotient} is
        \begin{equation}
        \label{eqn:split-short-exact-sequence-borel-quotient-closed-toric-manifold}
            \begin{tikzcd}
                0
                \arrow[r]
                & \Cohomology^2 (\ClassifyingSpace \ExtendedTorus)
                \arrow[r]
                \arrow[d, equal]
                & \Cohomology^2_{\ExtendedTorus} (\Manifold, \Extended{\TorusAction})
                \arrow[l, bend right, "\FixedPoint_\vertexMomentPolytope \PullBack"']
                \arrow[r]
                & \Cohomology^2 (\Manifold)
                \arrow[l, bend right, dashed]
                \arrow[r]
                & 0
                \\
                0
                \arrow[r]
                & \Cohomology^2 (\ClassifyingSpace \ExtendedTorus)
                \arrow[r]
                & \Integers \langle \ToricGenerator_i, \formalAdditionalCircle \rangle
                \arrow[l, bend left, "\FixedPoint_\vertexMomentPolytope \PullBack (\ToricGenerator_i) = 0 \iff i \notin \NeighbouringFacets (\vertexMomentPolytope)"]
                \arrow[r]
                \arrow[u, "\cong"]
                & \Integers \langle \ToricGenerator_i : i \notin \NeighbouringFacets (\vertexMomentPolytope) \rangle
                \arrow[l, bend left, dashed, "\ToricGenerator_i \mapsto \ToricGenerator_i"]
                \arrow[r]
                \arrow[u, "\cong"]
                & 0.
            \end{tikzcd}
        \end{equation}
    It is easiest to define the differential connection for $\ToricGenerator_i$ with $i \notin \NeighbouringFacets(\vertexMomentPolytope)$, for then the lifting map $\DegreeTwoCoclass \mapsto \DegreeTwoCoclass^{\FixedPoint_\vertexMomentPolytope}$ is simply given by $\ToricGenerator_i \mapsto \ToricGenerator_i$.
    The differential connection is
        \begin{equation}
        \label{eqn:connection-for-toric-manifolds}
            \Connection^\vertexMomentPolytope_i = \formalAdditionalCircle \dbyd{\ToricGenerator_i} + \ToricGenerator_i \text{ for } i \notin \NeighbouringFacets(\vertexMomentPolytope).
        \end{equation}
    
    \begin{remark}
        [Differentiation]
    \label{remark:evaluating-differentiation-using-toric-presentation}
        The equivariant quantum cohomology $\QuantumCohomology^\ArbitraryIndex _{\ExtendedTorus} (\Manifold, \Extended{\TorusAction})$ is isomorphic as a $\NovikovRing \GradedCompletedTensorProduct \Cohomology^\ArbitraryIndex (\ClassifyingSpace \ExtendedTorus)$-module to $\NovikovRing \GradedCompletedTensorProduct  (\Cohomology^\ArbitraryIndex (\ClassifyingSpace \ExtendedTorus) \Tensor \Cohomology^\ArbitraryIndex (\Manifold))$.
        Therefore the elements of $\QuantumCohomology^\ArbitraryIndex _{\ExtendedTorus} (\Manifold, \Extended{\TorusAction})$ may be written as formal sums of monomials $\NovVariable^\DegreeTwoClass p(\formalTorus, \formalAdditionalCircle) r(\ToricGenerator)$ where $p(\formalTorus, \formalAdditionalCircle) \in \Cohomology^\ArbitraryIndex(\ClassifyingSpace \ExtendedTorus)$ is any monomial and $r(\ToricGenerator) \in \Integers [\ToricGenerator_1, \ldots, \ToricGenerator_\numberFacets]$ does not vanish under \eqref{eqn:cohomology-presentation-compact-toric-manifold}.
        The differentiation operation $\dbyd{\ToricGenerator_i}$ is defined term-wise when elements are \emph{written in this form}.
        We have $\dbyd{\ToricGenerator_i} (\NovVariable^\DegreeTwoClass p(\formalTorus, \formalAdditionalCircle) r(\ToricGenerator)) = \ToricGenerator_i (\DegreeTwoClass) \  \NovVariable^\DegreeTwoClass p(\formalTorus, \formalAdditionalCircle) r(\ToricGenerator)$ for such monomials.
    \end{remark}
    
    The shift operator $\ShiftOperator^\vertexMomentPolytope _{\Cocharacter_i} = \ShiftOperator^\vertexMomentPolytope _i$ satisfies 
        \begin{equation}
        \label{eqn:shift-operator-at-1-for-toric-manifolds}
            \ShiftOperator^\vertexMomentPolytope _i (1) = \ToricGeneratorWithNovikovWeighting_i = \NovVariable^{\NovikovExponentForDivisor_i} \ToricGenerator_i,
        \end{equation}
    where $\NovikovExponentForDivisor_i \in \Homology_2 (\Manifold)$ is defined as in \eqref{eqn:quantum-seidel-maps-with-computed-novikov-exponents-at-1}.
    We have 
        \begin{equation}
        \label{eqn:pullback-map-for-toric-manifolds}
            (\ClassifyingSpace \Extended{\Cocharacter_i}) \PullBack (\formalTorus) = {\formalTorus \ComposedWith \Extended{\Cocharacter_i}} = \formalTorus + \InnerProduct{\InwardNormalVector_i}{\formalTorus} \formalAdditionalCircle,
        \end{equation}
    where in the second equality we write out the character $\formalTorus \ComposedWith \Extended{\Cocharacter_i}$ in terms of the original character $\formalTorus$ and $\formalAdditionalCircle$.
    
    \begin{theorem}
        We can compute $\ShiftOperator^\vertexMomentPolytope_i$ on the whole ring $\QuantumCohomology^\ArbitraryIndex _{\ExtendedTorus} (\Manifold, \Extended{\TorusAction})$ with respect to the presentation \eqref{eqn:equivariant-quantum-cohomology-presentation-map-closed-toric-manifolds-extended} by combining \eqref{eqn:shift-operator-at-1-for-toric-manifolds} with the flatness equations $\ShiftOperator^\vertexMomentPolytope _i \Connection^\vertexMomentPolytope_j = \Connection^\vertexMomentPolytope_j \ShiftOperator^\vertexMomentPolytope _i$ for $j \notin \NeighbouringFacets(\vertexMomentPolytope)$, where $\Connection^\vertexMomentPolytope_j$ is given by \eqref{eqn:connection-for-toric-manifolds}, and the relations $\ShiftOperator^\vertexMomentPolytope _i (\formalTorus \ToricGenerator) = (\ClassifyingSpace \Extended{\Cocharacter_i}) \PullBack (\formalTorus) \  \ShiftOperator^\vertexMomentPolytope _i (\ToricGenerator)$ for $\formalTorus \in \Integers^\dimManifold \subset \DualTorusLieAlgebra$, where $(\ClassifyingSpace \Extended{\Cocharacter_i}) \PullBack (\formalTorus)$ is given by \eqref{eqn:pullback-map-for-toric-manifolds}.
        
        Moreover since the set of normal vectors $\Set{\InwardNormalVector_i}_{i = 1}^{\numberFacets}$ spans $\Integers^\dimManifold \cong \LatticeCocharacters{\Torus}$, the shift operators $\ShiftOperator^\vertexMomentPolytope_i$ may be combined to compute $\ShiftOperator^\vertexMomentPolytope _{\Cocharacter}$ for any cocharacter $\Cocharacter$.
    \end{theorem}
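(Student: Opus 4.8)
The plan is to reduce the theorem to the single claim that $\QuantumCohomology^\ArbitraryIndex_{\ExtendedTorus}(\Manifold, \Extended{\TorusAction})$ is generated, as a module over $\NovikovRing \GradedCompletedTensorProduct \Cohomology^\ArbitraryIndex(\ClassifyingSpace\ExtendedTorus)$, by the elements $\Connection^\vertexMomentPolytope_{j_1}\cdots\Connection^\vertexMomentPolytope_{j_m}(1)$ over finite sequences $j_1,\dots,j_m \notin \NeighbouringFacets(\vertexMomentPolytope)$. Granting this, the operator $\ShiftOperator^\vertexMomentPolytope_i$ is forced: it is $\NovikovRing$-linear (being a composite of the $\NovikovRing$-linear maps $\QuantumSeidel_{\ExtendedTorus}(\Cocharacter_i, \FixedPoint_\vertexMomentPolytope)$ and $(\ClassifyingSpace\Extended{\Cocharacter_i})\PullBack$); it satisfies $\ShiftOperator^\vertexMomentPolytope_i(\formalTorus\,\ToricGenerator) = (\ClassifyingSpace\Extended{\Cocharacter_i})\PullBack(\formalTorus)\,\ShiftOperator^\vertexMomentPolytope_i(\ToricGenerator)$ for characters $\formalTorus$, since $\QuantumSeidel_{\ExtendedTorus}(\Cocharacter_i, \FixedPoint_\vertexMomentPolytope)$ is $\Cohomology^\ArbitraryIndex(\ClassifyingSpace\ExtendedTorus)$-linear and $(\ClassifyingSpace\Extended{\Cocharacter_i})\PullBack$ obeys \eqref{eqn-intro:twisted-equation-for-equivariant-cohomology-pullback-map}; it commutes with every $\Connection^\vertexMomentPolytope_j$ by the flatness of the difference-differential connection (\autoref{thm:overall-flatness-of-difference-differential-connection-in-equivariant-quantum-cohomology}); and $\ShiftOperator^\vertexMomentPolytope_i(1) = \NovVariable^{\NovikovExponentForDivisor_i}\ToricGenerator_i$ by \eqref{eqn:shift-operator-at-1-for-toric-manifolds}. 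Hence applying $\ShiftOperator^\vertexMomentPolytope_i$ to $\sum_\alpha \NovVariable^{\DegreeTwoClass_\alpha}\, p_\alpha(\formalTorus,\formalAdditionalCircle)\,\Connection^\vertexMomentPolytope_{j^\alpha_1}\cdots\Connection^\vertexMomentPolytope_{j^\alpha_{m_\alpha}}(1)$ returns $\sum_\alpha \NovVariable^{\DegreeTwoClass_\alpha}\,(\ClassifyingSpace\Extended{\Cocharacter_i})\PullBack(p_\alpha(\formalTorus,\formalAdditionalCircle))\,\Connection^\vertexMomentPolytope_{j^\alpha_1}\cdots\Connection^\vertexMomentPolytope_{j^\alpha_{m_\alpha}}(\NovVariable^{\NovikovExponentForDivisor_i}\ToricGenerator_i)$, which is explicit inside the presentation \eqref{eqn:equivariant-quantum-cohomology-presentation-map-closed-toric-manifolds-extended} via \autoref{remark:evaluating-differentiation-using-toric-presentation} and \eqref{eqn:pullback-map-for-toric-manifolds}.

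To prove the generation claim I would first work modulo $\formalAdditionalCircle$. By \eqref{eqn:connection-for-toric-manifolds} and the splitting in \eqref{eqn:split-short-exact-sequence-borel-quotient-closed-toric-manifold}, which identifies $\DegreeTwoCoclass^{\FixedPoint_\vertexMomentPolytope}$ with $[\Divisor_j]_{\ExtendedTorus} = \ToricGenerator_j$ for $j \notin \NeighbouringFacets(\vertexMomentPolytope)$, the operator $\Connection^\vertexMomentPolytope_j$ reduces modulo $\formalAdditionalCircle$ to quantum multiplication by $\ToricGenerator_j$ on $\QuantumCohomology^\ArbitraryIndex_{\ExtendedTorus}(\Manifold,\Extended{\TorusAction})/(\formalAdditionalCircle)\cong \QuantumCohomology^\ArbitraryIndex_\Torus(\Manifold,\TorusAction)$, so $\Connection^\vertexMomentPolytope_{j_1}\cdots\Connection^\vertexMomentPolytope_{j_m}(1)\equiv \ToricGenerator_{j_1}\QuantumProduct\cdots\QuantumProduct\ToricGenerator_{j_m}\pmod{\formalAdditionalCircle}$. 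The classical monomials in the $\ToricGenerator_j$, $j\notin\NeighbouringFacets(\vertexMomentPolytope)$, span $\Cohomology^\ArbitraryIndex(\Manifold)$ over $\Integers$ — this is \eqref{eqn:generators-not-neighbouring-vertex-generate-degree-2} together with \autoref{thm:cohomology-presentation-toric-compact}, using that $\StanleyReisnerIdeal$ lies in degree $\ge 4$ while $\Cohomology^\ArbitraryIndex(\Manifold)$ is generated in degree $2$. Since a toric manifold is equivariantly formal, $\Cohomology^\ArbitraryIndex_\Torus(\Manifold,\TorusAction)$ is free over $\Cohomology^\ArbitraryIndex(\ClassifyingSpace\Torus)$ on a $\Integers$-basis of $\Cohomology^\ArbitraryIndex(\Manifold)$, so $\QuantumCohomology^\ArbitraryIndex_\Torus(\Manifold,\TorusAction)\cong\NovikovRing\GradedCompletedTensorProduct\Cohomology^\ArbitraryIndex_\Torus(\Manifold,\TorusAction)$ is free over $\NovikovRing\GradedCompletedTensorProduct\Cohomology^\ArbitraryIndex(\ClassifyingSpace\Torus)$ on the classical monomials in those $\ToricGenerator_j$; finally, since the quantum product agrees with the classical one modulo higher symplectic energy, the corresponding quantum monomials also span, by the same higher-order-correction argument used in the proofs of \autoref{thm:quantum-cohomology-presentation-closed-toric-manifolds} and \autoref{thm:equivariant-quantum-cohomology-presentation-closed-toric-manifold}.

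It then follows that the submodule $V$ generated over $\NovikovRing\GradedCompletedTensorProduct\Cohomology^\ArbitraryIndex(\ClassifyingSpace\ExtendedTorus)$ by the $\Connection^\vertexMomentPolytope_{j_1}\cdots\Connection^\vertexMomentPolytope_{j_m}(1)$ satisfies $V + \formalAdditionalCircle\,\QuantumCohomology^\ArbitraryIndex_{\ExtendedTorus}(\Manifold,\Extended{\TorusAction}) = \QuantumCohomology^\ArbitraryIndex_{\ExtendedTorus}(\Manifold,\Extended{\TorusAction})$, and since $\formalAdditionalCircle V\subseteq V$ this iterates to $V + \formalAdditionalCircle^N\,\QuantumCohomology^\ArbitraryIndex_{\ExtendedTorus}(\Manifold,\Extended{\TorusAction}) = \QuantumCohomology^\ArbitraryIndex_{\ExtendedTorus}(\Manifold,\Extended{\TorusAction})$ for all $N$. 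As $\formalAdditionalCircle$ acts as a non-zero-divisor and, in each fixed total degree, the Novikov grading together with monotonicity keeps the relevant sums convergent, $\QuantumCohomology^\ArbitraryIndex_{\ExtendedTorus}(\Manifold,\Extended{\TorusAction})$ is separated and complete for the $\formalAdditionalCircle$-adic filtration, so a graded Nakayama argument gives $V = \QuantumCohomology^\ArbitraryIndex_{\ExtendedTorus}(\Manifold,\Extended{\TorusAction})$ (the resulting infinite $\NovikovRing$-expansions being absorbed by the completed tensor product). For arbitrary cocharacters, note $\Manifold$ is closed, so every Hamiltonian $\Circle$-action on $\Manifold$ is linear of slope $0$, whence $\NonnegativeLatticeCocharacters{\TorusAction}{\Torus} = \LatticeCocharacters{\Torus}$ and each $\Cocharacter_i$ is $\TorusAction$-nonnegative; moreover, $\InwardNormalFan$ being complete and smooth, any $v\in\Integers^\dimManifold\cong\LatticeCocharacters{\Torus}$ lies in a cone $\Cone_I$ whose generators $\SetCondition{\InwardNormalVector_i}{i\in I}$ form a $\Integers$-basis, so $v = \sum_{i\in I} b_i\InwardNormalVector_i$ with $b_i\in\Integers_{\ge 0}$, giving $\Cocharacter = \sum_i b_i\Cocharacter_i$ and, by \autoref{thm:shift-operator-flat-on-quantum-individually} (equivalently \eqref{eqn:non-equivariant-quantum-seidel-map-composition-of-actions} with multiplicativity of $(\ClassifyingSpace\Extended{\Cocharacter})\PullBack$), $\ShiftOperator^\vertexMomentPolytope_\Cocharacter = \prod_i(\ShiftOperator^\vertexMomentPolytope_i)^{b_i}$.

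The main obstacle is the generation claim, and within it the passage from ``classical monomials span $\Cohomology^\ArbitraryIndex(\Manifold)$'' to ``the $\Connection$-generated elements span $\QuantumCohomology^\ArbitraryIndex_{\ExtendedTorus}(\Manifold,\Extended{\TorusAction})$'': this couples the equivariant-formality and quantum-deformation bookkeeping (for which the higher-order-correction machinery already in the paper suffices) with the $\formalAdditionalCircle$-adic completeness needed to lift from $\QuantumCohomology^\ArbitraryIndex_\Torus$ to the full $\ExtendedTorus$-equivariant ring. Everything after that is formal manipulation with the twisting rule, flatness, and the $\NovikovRing$-linearity of $\ShiftOperator^\vertexMomentPolytope_i$.
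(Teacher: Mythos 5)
Your proposal is correct, but it takes a genuinely different route from the paper. The paper's proof is a direct double induction on the polynomial degree of monomials $\ToricGenerator_{j}p$ in the presentation: for $j \notin \NeighbouringFacets(\vertexMomentPolytope)$ it rearranges the flatness equation into the recursion $\ShiftOperator^\vertexMomentPolytope_i (\ToricGenerator_j p) = \Connection^\vertexMomentPolytope_j \ShiftOperator^\vertexMomentPolytope_i (p) - \formalAdditionalCircle \ShiftOperator^\vertexMomentPolytope_i ( \dbyd{\ToricGenerator_j} p )$, and for $j \in \NeighbouringFacets(\vertexMomentPolytope)$ it uses the linear relations together with the character-twisting rule to rewrite $\ToricGenerator_j p$ in terms of non-neighbouring generators and characters; the only delicate point is that $\dbyd{\ToricGenerator_j}p$ does not increase the monomial degree. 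You instead reduce everything to the single module-generation claim that the elements $\Connection^\vertexMomentPolytope_{j_1}\cdots\Connection^\vertexMomentPolytope_{j_m}(1)$, $j_k \notin \NeighbouringFacets(\vertexMomentPolytope)$, generate $\QuantumCohomology^\ArbitraryIndex_{\ExtendedTorus}(\Manifold,\Extended{\TorusAction})$ over $\NovikovRing\GradedCompletedTensorProduct\Cohomology^\ArbitraryIndex(\ClassifyingSpace\ExtendedTorus)$, and you prove that by passing to $\QuantumCohomology^\ArbitraryIndex_{\Torus}$ modulo $\formalAdditionalCircle$, invoking equivariant formality and the higher-order-correction filtration, and then lifting by an $\formalAdditionalCircle$-adic Nakayama argument. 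Your route is conceptually cleaner (the operator is visibly \emph{forced} by the three listed relations) and sidesteps the neighbouring-facet case entirely, but it buys this at the cost of extra inputs the paper's induction does not need: equivariant formality, the completeness and separatedness of the $\formalAdditionalCircle$-adic filtration on the completed tensor product, and the closure of your submodule $V$ under the convergent infinite sums produced by Nakayama --- all true here, but each stated somewhat informally in your write-up. The paper's induction, by contrast, is immediately an algorithm (which is what the word ``compute'' in the statement is after, and what \autoref{sec:projective-plane-computations} executes), whereas yours is an existence/uniqueness argument that only becomes a recipe once the generation proof is unwound degree by degree --- at which point it essentially reproduces the paper's recursion. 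Your treatment of the final clause (writing any cocharacter as a nonnegative integer combination of the $\InwardNormalVector_i$ inside a single cone of the complete smooth fan, then composing via \autoref{thm:shift-operator-flat-on-quantum-individually}) supplies details the paper leaves implicit and is correct.
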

    
    \begin{proof}
        We prove that $\ShiftOperator^\vertexMomentPolytope_i$ is determined by these ingredients on all monomials.
        Since $\ShiftOperator^\vertexMomentPolytope_i$ satisfies $\ShiftOperator^\vertexMomentPolytope_i (\formalAdditionalCircle \ToricGenerator) = \formalAdditionalCircle \ShiftOperator^\vertexMomentPolytope_i (\ToricGenerator)$, we need only consider monomials in the variables $\ToricGenerator_j$.
        We use induction on the degree $k$ of the monomial.
        The base case $k=0$ is \eqref{eqn:shift-operator-at-1-for-toric-manifolds}.
        For the induction step, take a monomial $\ToricGenerator_j p$, where $p$ is a monomial of degree $k$.
        
        If $j \notin \NeighbouringFacets(\vertexMomentPolytope)$ holds, we rearrange the flatness equation for $\Connection^\vertexMomentPolytope_j$ to get
            \begin{equation}
            \label{eqn:shift-operator-equation-for-toric-generator-not-neighbouring}
                \ShiftOperator^\vertexMomentPolytope_i (\ToricGenerator_j p) = \Connection^\vertexMomentPolytope_j \ShiftOperator^\vertexMomentPolytope_i (p) - \formalAdditionalCircle \ShiftOperator^\vertexMomentPolytope_i \left( \dbyd{\ToricGenerator_j} p \right).
            \end{equation}
        By the induction hypothesis, the right-hand side of \eqref{eqn:shift-operator-equation-for-toric-generator-not-neighbouring} is known.
        Here, we are using the fact that $\dbyd{\ToricGenerator_j} p$ is a sum of monomials of degree $\le k$ with coefficients in the Novikov ring.
        This follows because, to put $p$ into the form required by \autoref{remark:evaluating-differentiation-using-toric-presentation}, we apply the relations \eqref{eqn:character-maps-on-equivariant-cohomology-presentation} and \eqref{eqn:quantum-stanley-reisner-higher-order-description}, both of which only decrease the degree of the monomials.
        
        Conversely, if $j \in \NeighbouringFacets(\vertexMomentPolytope)$ holds, then the linear relation \eqref{eqn:linear-relation-for-facet-neighbouring-vertex} combined with \eqref{eqn:character-maps-on-equivariant-cohomology-presentation} yields
            \begin{equation}
            \label{eqn:shift-operator-equation-for-toric-generator-neighbouring}
                \ShiftOperator^\vertexMomentPolytope_i (\ToricGenerator_j p) = \sum_{l \notin \NeighbouringFacets(\vertexMomentPolytope)} a_{l j} \  \ShiftOperator^\vertexMomentPolytope_i (\ToricGenerator_l p) - \ShiftOperator^\vertexMomentPolytope_i (\formalTorus^\vertexMomentPolytope _j p),
            \end{equation}
        where $\formalTorus^\vertexMomentPolytope _j$ is the dual to $\InwardNormalVector_j$
        The shift operator was computed above for the monomials $\ToricGenerator_l p$ with $l \notin \NeighbouringFacets(\vertexMomentPolytope)$, and we have
            \begin{equation}
                \ShiftOperator^\vertexMomentPolytope_i (\formalTorus^\vertexMomentPolytope_j p) = (\formalTorus^\vertexMomentPolytope_j + \InnerProduct{\InwardNormalVector_i}{\formalTorus^\vertexMomentPolytope_j} \formalAdditionalCircle) \  \ShiftOperator^\vertexMomentPolytope_i (p)
            \end{equation}
        using \eqref{eqn:pullback-map-for-toric-manifolds}.
        Therefore the right-hand side of \eqref{eqn:shift-operator-equation-for-toric-generator-neighbouring} is determined.
    \end{proof}
    
\subsection{Projective plane}
\label{sec:projective-plane-computations}

    An instructive example of a closed monotone toric manifold is the complex projective plane $\Projective^2$.
    The 2-dimensional torus $\Torus = (\Circle)^2$ acts on $\Projective^2$ via $(\eltTorus_1, \eltTorus_2) \cdot [\eltComplexProjectiveSpace_0, \eltComplexProjectiveSpace_1, \eltComplexProjectiveSpace_2] = [\eltComplexProjectiveSpace_0, \ExponentialNumber^{2 \PiNumber \ImaginaryNumber \eltTorus_1} \eltComplexProjectiveSpace_1, \ExponentialNumber^{2 \PiNumber \ImaginaryNumber \eltTorus_2} \eltComplexProjectiveSpace_2]$.
    The moment map $\MomentMap{} : \Projective^2 \to \RealNumbers^2$ given by
        \begin{equation}
            \MomentMap{} ([\eltComplexProjectiveSpace_0, \eltComplexProjectiveSpace_1, \eltComplexProjectiveSpace_2]) = \frac{1}{\sum_i | \eltComplexProjectiveSpace_i |^2} (| \eltComplexProjectiveSpace_1 |^2, | \eltComplexProjectiveSpace_2 |^2)
        \end{equation}
    yields the moment polytope and fan as in \autoref{fig:projective-plane-moment-polytope-and-fan}.
    The vertices $\vertexMomentPolytope_0 = (0, 0)$, $\vertexMomentPolytope_1 = (1, 0)$ and $\vertexMomentPolytope_2 = (0, 1)$ correspond to the fixed points $[1, 0, 0]$, $[0, 1, 0]$ and $[0, 0, 1]$ respectively.
    The inward-pointing normal vectors are $\InwardNormalVector_0 = (-1, -1)$, $\InwardNormalVector_1 = (1, 0)$ and $\InwardNormalVector_2 = (0, 1)$, and the corresponding real numbers for \eqref{eqn:moment-polytope-equation} are $\FacetBoundaryCondition_0 = -1$, $\FacetBoundaryCondition_1 = 0$ and $\FacetBoundaryCondition_2 = 0$ respectively.
    The facets $\Facet_i$ correspond to the invariant subsets $\Divisor_i = \Set{\eltComplexProjectiveSpace_i = 0}$.
    
\begin{figure}[ht]
	\begin{center}
		\begin{tikzpicture}
			\node[inner sep=0] at (0,0) {\includegraphics[width=10 cm]{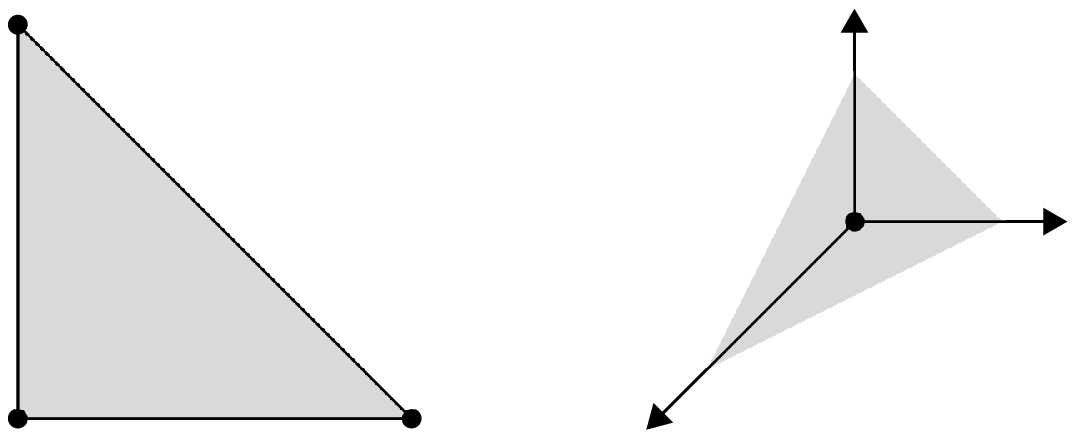}};
			\node at (-5.1,0){$\Facet_1$};
			\node at (-3.1,-2.1){$\Facet_2$};
			\node at (-2.8,0.2){$\Facet_0$};
			\node at (5.2,0){$\InwardNormalVector_1$};
			\node at (3,2.1){$\InwardNormalVector_2$};
			\node at (0.9,-2.1){$\InwardNormalVector_0$};
			\node at (-0.9,-2.1){$\vertexMomentPolytope_1$};
			\node at (-5.1,2.1){$\vertexMomentPolytope_2$};
			\node at (-5.1,-2.1){$\vertexMomentPolytope_0$};
		\end{tikzpicture}
	\end{center}
	\caption{The moment polytope and the inner normal fan for $\Projective^2$.}
	\label{fig:projective-plane-moment-polytope-and-fan}
\end{figure}
    
    The linear relations are $\ToricGenerator_1 - \ToricGenerator_0$ and $\ToricGenerator_2 - \ToricGenerator_0$, and the Stanley-Reisner ideal is $\StanleyReisnerIdeal = \langle \ToricGenerator_0 \ToricGenerator_1 \ToricGenerator_2 \rangle$.
    The presentation of the cohomology ring $\Cohomology^\ArbitraryIndex (\Projective^2)$ from \autoref{thm:cohomology-presentation-toric-compact} is $\Integers [\ToricGenerator_0, \ToricGenerator_1, \ToricGenerator_2] / \langle\ToricGenerator_1 - \ToricGenerator_0, \ToricGenerator_2 - \ToricGenerator_0, \ToricGenerator_0 \ToricGenerator_1 \ToricGenerator_2 \rangle$.
    The map $\ToricGenerator_i \mapsto \ToricGenerator$ induces an isomorphism to $\Integers [\ToricGenerator] / \ToricGenerator^3$, which is the standard presentation of $\Cohomology^\ArbitraryIndex (\Projective^2)$.
    Using this presentation, we have $\FirstChernClass (\TangentSpace \Projective^2, \SymplecticForm) = 3 \ToricGenerator$ and $[\SymplecticForm] = \ToricGenerator$.
    Therefore, the Novikov ring is $\NovikovRing = \Integers [\NovVariable]$, with $\NovVariable$ in degree 6, and $\dbyd{\ToricGenerator} \NovVariable^k = k \NovVariable^k$ acts like $\NovVariable \dbyd{\NovVariable}$.
    
    Set $\vertexMomentPolytope = \vertexMomentPolytope_0$.
    This yields $\ToricGeneratorWithNovikovWeighting_1 = \ToricGenerator_1$ and $\ToricGeneratorWithNovikovWeighting_2 = \ToricGenerator_2$.
    To find $\ToricGeneratorWithNovikovWeighting_0$, we use the edge $\edgeMomentPolytope = \Facet_2$ from $\vertexMomentPolytope_0$ to $\vertexMomentPolytope_1$ which has vector $\edgeVectorMomentPolytope = (1, 0)$.
    \autoref{lem:change-fixed-point-over-edge-of-polytope} yields
        \begin{equation}
            \QuantumSeidel (\Cocharacter_0, \FixedPoint_{\vertexMomentPolytope_0}) (1) = \NovVariable^{ \InnerProduct{\InwardNormalVector_0}{\edgeVectorMomentPolytope} [\edgeMomentPolytope]} \QuantumSeidel (\Cocharacter_0, \FixedPoint_{\vertexMomentPolytope_1}) (1) = \NovVariable^{-1} \QuantumSeidel (\Cocharacter_0, \FixedPoint_{\vertexMomentPolytope_1}) (1),
        \end{equation}
    giving $\ToricGeneratorWithNovikovWeighting_0 = \NovVariable^{-1} \ToricGenerator_0$.
    The only primitive set is $\Set{0, 1, 2}$, for which \eqref{eqn:vector-relating-primitive-index-set-with-counterparts-defining-cone} reads $\InwardNormalVector_0 + \InwardNormalVector_1 + \InwardNormalVector_2 = 0$.
    Thus the quantum Stanley-Reisner ideal is $\QuantumStanleyReisnerIdeal = \langle \ToricGeneratorWithNovikovWeighting_0 \ToricGeneratorWithNovikovWeighting_1 \ToricGeneratorWithNovikovWeighting_2 - 1 \rangle = \langle \ToricGenerator_0 \ToricGenerator_1 \ToricGenerator_2 - \NovVariable \rangle$.
    \autoref{thm:quantum-cohomology-presentation-closed-toric-manifolds} yields the presentation
        \begin{equation}
            \QuantumCohomology^\ArbitraryIndex (\Projective^2) \cong \frac{\NovikovRing [\ToricGenerator_0, \ToricGenerator_1, \ToricGenerator_2]}{\langle \ToricGenerator_1 - \ToricGenerator_0, \ToricGenerator_2 - \ToricGenerator_0, \ToricGenerator_0 \ToricGenerator_1 \ToricGenerator_2 - \NovVariable \rangle}.
        \end{equation}
    The map $\ToricGenerator_i \mapsto \ToricGenerator$ recovers the familiar presentation $\NovikovRing [\ToricGenerator] / (\ToricGenerator^3 - \NovVariable)$.
    
    The lattice $\Integers^2 \subset \DualTorusLieAlgebra$ is generated by the vectors $\formalTorus_1 = (1, 0)$ and $\formalTorus_2 = (0, 1)$, so we write $\Cohomology^\ArbitraryIndex (\ClassifyingSpace \Torus) = \Integers [\formalTorus_1, \formalTorus_2]$.
    \autoref{thm:equivariant-cohomology-presentation-closed-toric-manifold} gives the presentation
        \begin{equation}
            \Cohomology^\ArbitraryIndex _\Torus (\Projective^2) \cong \frac{\Integers [\ToricGenerator_0, \ToricGenerator_1, \ToricGenerator_2]}{\langle \ToricGenerator_0 \ToricGenerator_1 \ToricGenerator_2 \rangle}
        \end{equation}
    for equivariant cohomology, with the $\Integers[\formalTorus_1, \formalTorus_2]$-module structure arising from the map $\Integers[\formalTorus_1, \formalTorus_2] \to \Integers [\ToricGenerator_0, \ToricGenerator_1, \ToricGenerator_2]$ given by
        \begin{equation}
        \label{eqn:equivariant-variables-assignment-presentation-projective-2}
            \begin{cases}
                \formalTorus_1 \mapsto \ToricGenerator_0 - \ToricGenerator_1 \\
                \formalTorus_2 \mapsto \ToricGenerator_0 - \ToricGenerator_2,
            \end{cases}
        \end{equation}
    as in \eqref{eqn:character-maps-on-equivariant-cohomology-presentation}.
    Via \autoref{thm:equivariant-quantum-cohomology-presentation-closed-toric-manifold}, the map \eqref{eqn:equivariant-variables-assignment-presentation-projective-2} provides the module structure for the presentations of equivariant quantum cohomology
        \begin{equation}
            \begin{aligned}
                \QuantumCohomology^\ArbitraryIndex_\Torus (\Projective^2) &\cong \frac{\NovikovRing \GradedCompletedTensorProduct \Integers [\ToricGenerator_0, \ToricGenerator_1, \ToricGenerator_2]}{\langle \ToricGenerator_0 \ToricGenerator_1 \ToricGenerator_2 - \NovVariable \rangle},
                & \hspace{2em}
                \QuantumCohomology^\ArbitraryIndex_{\ExtendedTorus} (\Projective^2) &\cong \frac{\NovikovRing \GradedCompletedTensorProduct \Integers [\ToricGenerator_0, \ToricGenerator_1, \ToricGenerator_2, \formalAdditionalCircle]}{\langle \ToricGenerator_0 \ToricGenerator_1 \ToricGenerator_2 - \NovVariable \rangle}.
            \end{aligned}
        \end{equation}
    Only the facet $\Facet_0$ does not contain $\vertexMomentPolytope_0$, so the differential connection is 
        \begin{equation}
            \Connection^{\vertexMomentPolytope_0} _0 = \formalAdditionalCircle \NovVariable \dbyd{\NovVariable} + \ToricGenerator_0.
        \end{equation}
        
    We demonstrate how to compute the shift operator $\ShiftOperator^{\vertexMomentPolytope_0} _{1}$.
    We have $\ShiftOperator^{\vertexMomentPolytope_0} _{1} (1) = \ToricGenerator_1$.
    By applying the flatness equation to $1$, we find $\ShiftOperator^{\vertexMomentPolytope_0} _1 (\ToricGenerator_0) = \ToricGenerator_0 \ToricGenerator_1$:
        \begin{equation}
            \begin{aligned}
            0 &= \ShiftOperator^{\vertexMomentPolytope_0} _1 \Connection^{\vertexMomentPolytope_0}_0 (1) - \Connection^{\vertexMomentPolytope_0}_0 \ShiftOperator^{\vertexMomentPolytope_0} _1 (1) \\
            &= \ShiftOperator^{\vertexMomentPolytope_0} _1 (\ToricGenerator_0) - \Connection^{\vertexMomentPolytope_0}_0 (\ToricGenerator_1) \\
            &= \ShiftOperator^{\vertexMomentPolytope_0} _1 (\ToricGenerator_0) - \ToricGenerator_0 \ToricGenerator_1.
            \end{aligned}
        \end{equation}
    We use \eqref{eqn:equivariant-variables-assignment-presentation-projective-2} to deduce the value of $\ShiftOperator^{\vertexMomentPolytope_0} _{1}$ for the remaining monomials $\ToricGenerator_1$ and $\ToricGenerator_2$:
        \begin{equation}
            \begin{aligned}
                \ShiftOperator^{\vertexMomentPolytope_0} _1 (\ToricGenerator_1) &= \ShiftOperator^{\vertexMomentPolytope_0} _1 (\ToricGenerator_0 - \formalTorus_1)
                &
                \hspace{2em}
                \ShiftOperator^{\vertexMomentPolytope_0} _1 (\ToricGenerator_2) &= \ShiftOperator^{\vertexMomentPolytope_0} _1 (\ToricGenerator_0 - \formalTorus_2)
                \\
                &= \ToricGenerator_0 \ToricGenerator_1 - (\formalTorus_1 + \formalAdditionalCircle) \ToricGenerator_1
                &
                &= \ToricGenerator_0 \ToricGenerator_1 - \formalTorus_2 \ToricGenerator_1
                \\
                &= \ToricGenerator_1^2 - \formalAdditionalCircle \ToricGenerator_1
                &
                &= \ToricGenerator_1 \ToricGenerator_2 \\
                &= \ToricGenerator_1 (\ToricGenerator_1 - \formalAdditionalCircle)
            \end{aligned}
        \end{equation}
    We derive $\ShiftOperator^{\vertexMomentPolytope_0} _{1}$ for the monomials in degree 4 by repeating these steps: first apply the flatness equation to each of the monomials $\ToricGenerator_0$, $\ToricGenerator_1$ and $\ToricGenerator_2$; and second use \eqref{eqn:equivariant-variables-assignment-presentation-projective-2} to deduce $\ShiftOperator^{\vertexMomentPolytope_0} _{1}$ on the remaining monomials.
    This yields
        \begin{equation}
            \begin{aligned}
                \ShiftOperator^{\vertexMomentPolytope_0} _1 (\ToricGenerator_0 ^2) &= \ToricGenerator_0 ^2 \ToricGenerator_1
                &
                \hspace{3em}
                \ShiftOperator^{\vertexMomentPolytope_0} _1 (\ToricGenerator_1 ^2) &= \ToricGenerator_1 (\ToricGenerator_1 - \formalAdditionalCircle) ^2
                \\
                \ShiftOperator^{\vertexMomentPolytope_0} _1 (\ToricGenerator_0 \ToricGenerator_1) &= \ToricGenerator_0 \ToricGenerator_1 (\ToricGenerator_1 - \formalAdditionalCircle)
                &
                \ShiftOperator^{\vertexMomentPolytope_0} _1 (\ToricGenerator_1 \ToricGenerator_2) &= \ToricGenerator_1 (\ToricGenerator_1 - \formalAdditionalCircle) \ToricGenerator_2
                \\
                \ShiftOperator^{\vertexMomentPolytope_0} _1 (\ToricGenerator_0 \ToricGenerator_2) &= \ToricGenerator_0 \ToricGenerator_1 \ToricGenerator_2
                &
                \ShiftOperator^{\vertexMomentPolytope_0} _1 (\ToricGenerator_2^2) &= \ToricGenerator_1 \ToricGenerator_2^2.
            \end{aligned}
        \end{equation}
    The pattern $\ShiftOperator^{\vertexMomentPolytope_0} _1 (\ToricGenerator_0^{k_0} \ToricGenerator_1^{k_1} \ToricGenerator_2^{k_2}) = \ToricGenerator_0^{k_0} \ToricGenerator_1 (\ToricGenerator_1 - \formalAdditionalCircle)^{k_1} \ToricGenerator_2^{k_2}$ holds for $k_0 + k_1 + k_2 \le 2$ because the differentiation operation vanishes for such polynomials.
    It fails in general, however.
    For example, applying the flatness equation to $\ToricGenerator_1 \ToricGenerator_2$ yields
        \begin{align}
            0 &= \ShiftOperator^{\vertexMomentPolytope_0} _1 \Connection^{\vertexMomentPolytope_0}_0 (\ToricGenerator_1 \ToricGenerator_2) - \Connection^{\vertexMomentPolytope_0}_0 \ShiftOperator^{\vertexMomentPolytope_0} _1 (\ToricGenerator_1 \ToricGenerator_2) \\
            &= \ShiftOperator^{\vertexMomentPolytope_0} _1 (\ToricGenerator_0 \ToricGenerator_1 \ToricGenerator_2) - \Connection^{\vertexMomentPolytope_0}_0 (\ToricGenerator_1 (\ToricGenerator_1 - \formalAdditionalCircle) \ToricGenerator_2) \\
            &= \ShiftOperator^{\vertexMomentPolytope_0} _1 (\NovVariable) - \formalAdditionalCircle \NovVariable \dbyd{\NovVariable} (\ToricGenerator_1 (\ToricGenerator_1 - \formalAdditionalCircle) \ToricGenerator_2) - \ToricGenerator_0 (\ToricGenerator_1 (\ToricGenerator_1 - \formalAdditionalCircle) \ToricGenerator_2) \label{part:last-line-before-differentiation-executed}\\
            &= \NovVariable \ToricGenerator_1 - \formalAdditionalCircle \NovVariable - \NovVariable  (\ToricGenerator_1 - \formalAdditionalCircle) \\
            &= 0.
        \end{align}
    The input to the differentiation operation in \eqref{part:last-line-before-differentiation-executed} must be rearranged to the format described in \autoref{remark:evaluating-differentiation-using-toric-presentation},
        \begin{equation}
            \begin{aligned}
                \ToricGenerator_1 (\ToricGenerator_1 - \formalAdditionalCircle) \ToricGenerator_2 &= (\ToricGenerator_0 - \formalTorus_1) (\ToricGenerator_1 - \formalAdditionalCircle) \ToricGenerator_2 \\
                &= \NovVariable + \formalAdditionalCircle \ToricGenerator_0 \ToricGenerator_2 + \formalAdditionalCircle \formalTorus_1 \ToricGenerator_2 - \formalTorus_1 \ToricGenerator_1 \ToricGenerator_2,
            \end{aligned}
        \end{equation}
    before differentiating with respect to the variable $\NovVariable$.
    A naive differentiation `as is' would have missed the $\NovVariable$ term.
    The only monomials $r(\ToricGenerator) \in \Integers [\ToricGenerator_0, \ToricGenerator_1, \ToricGenerator_2]$ which survive under \eqref{eqn:cohomology-presentation-compact-toric-manifold} are the monomials $r(\ToricGenerator) = \ToricGenerator_0^{k_0} \ToricGenerator_1^{k_1} \ToricGenerator_2^{k_2}$ with $k_0 + k_1 + k_2 \le 2$.
    Therefore we have completely described $\ShiftOperator^{\vertexMomentPolytope_0} _1$ since the input can be put in the format of \autoref{remark:evaluating-differentiation-using-toric-presentation}.
    
\subsection{Toric negative line bundle}

    While we discuss toric negative line bundles in this section, the reader may wish to consult \autoref{sec:tautological-line-bundle} for a running example (the line bundle $\LineBundle_{\Projective^1} (-1)$).

    Let $(\BaseToricLineBundle, \TorusAction_\BaseToricLineBundle)$ be a closed monotone toric manifold with symplectic form $\SymplecticForm_\BaseToricLineBundle$.
    Let $\IndexOfMonotonicity_\BaseToricLineBundle$ denote the monotonicity constant\footnote{
        The \define{monotonicity constant} of a monotone symplectic manifold $(\Manifold, \SymplecticForm)$ is the positive number $\IndexOfMonotonicity > 0$ for which $\FirstChernClass(\TangentSpace \Manifold, \SymplecticForm) = \IndexOfMonotonicity [\SymplecticForm]$ holds.
    } of $\BaseToricLineBundle$.
    We use a moment polytope $\MomentPolytope_\BaseToricLineBundle = \Set{\InnerProduct{\InwardNormalVector^\BaseToricLineBundle_i}{\pointDualTorusLieAlgebra} \ge \FacetBoundaryCondition^\BaseToricLineBundle_i} \subset \RealNumbers^\dimManifold$ for $\BaseToricLineBundle$, and we denote the $\Torus^\BaseToricLineBundle$-invariant subsets corresponding to its facets $\Facet^\BaseToricLineBundle_i$ by $\Divisor^\BaseToricLineBundle_i$.

    The complex line bundle $\ProjectionToricLineBundle : \ToricLineBundle \to \BaseToricLineBundle$ is \define{negative} if $\FirstChernClass(\ToricLineBundle) = - \NegativeLineBundleNumber [\SymplecticForm_\BaseToricLineBundle]$ holds for some positive $\NegativeLineBundleNumber > 0$.
    There is a symplectic form $\SymplecticForm_\ToricLineBundle$ on such $\ToricLineBundle$ for which $(\ToricLineBundle, \SymplecticForm_\ToricLineBundle)$ has a convex structure $(\ConvexCoordMap, \SphereBundle \ToricLineBundle)$ \cite[Section~7]{oancea_fibered_2008, ritter_floer_2014}.
    Here, $\SphereBundle \ToricLineBundle$ is the sphere bundle.
    Moreover, $(\ToricLineBundle, \SymplecticForm_\ToricLineBundle)$ is monotone if and only if $\NegativeLineBundleNumber < \IndexOfMonotonicity_\BaseToricLineBundle$ holds, and in this case the monotonicity constant of $\ToricLineBundle$ is $\IndexOfMonotonicity_\ToricLineBundle = \IndexOfMonotonicity_\BaseToricLineBundle - \NegativeLineBundleNumber$ \cite[Section~4.3]{ritter_circle-actions_2016}.
    We assume $0 < \NegativeLineBundleNumber < \IndexOfMonotonicity_\BaseToricLineBundle$.
    
    We follow Ritter's procedure to construct a moment polytope for $\ToricLineBundle$ \cite[Section~7]{ritter_circle-actions_2016}.
    The line bundle $\ToricLineBundle$ is isomorphic to $\LineBundle (\sum_i \DivisorContributionToBundle_i \Divisor^\BaseToricLineBundle_i)$ for integers $\DivisorContributionToBundle_i \in \Integers$.
    Ritter gives a construction to find these integers when $\SymplecticForm_\BaseToricLineBundle$ is primitive\footnote{
        The symplectic form $\SymplecticForm$ on $\Manifold$ is \define{primitive} if $[\SymplecticForm] \in \Cohomology^2 (\Manifold; \Integers)$ is integral and is not a multiple of another class.
    } \cite[Sections~7.4-7.6]{ritter_circle-actions_2016}.
    The moment polytope $\MomentPolytope_\ToricLineBundle$ lies in $\RealNumbers^{\dimManifold + 1} = \RealNumbers^\dimManifold \DirectSum \RealNumbers$.
    With respect to this decomposition, set
        \begin{equation}
            \begin{aligned}
                &\InwardNormalVector^\ToricLineBundle_i = (\InwardNormalVector^\BaseToricLineBundle_i, -\DivisorContributionToBundle_i)
                \hspace{1em}
                &
                &\FacetBoundaryCondition^\ToricLineBundle_i = \FacetBoundaryCondition^\BaseToricLineBundle_i
                \hspace{1em}
                &
                &\text{for $i = 1, \ldots, \numberFacets$}
                \\
                &\InwardNormalVector^\ToricLineBundle_{\numberFacets + 1} = (0, 1)
                &
                &\FacetBoundaryCondition^\ToricLineBundle_{\numberFacets + 1} = 0,
            \end{aligned}
        \end{equation}
    and let $\MomentPolytope_\ToricLineBundle$ be the polytope defined by $\Set{\InnerProduct{\InwardNormalVector^\ToricLineBundle_i}{\pointDualTorusLieAlgebra} \ge \FacetBoundaryCondition^\ToricLineBundle_i} \subset \RealNumbers^{\dimManifold + 1}$.
    The inward normal fan of $\MomentPolytope_\ToricLineBundle$ is not complete because $\ToricLineBundle$ is not closed.
    The first $\numberFacets$ facets of $\MomentPolytope_\ToricLineBundle$ correspond to $\Divisor^\ToricLineBundle_i = \ProjectionToricLineBundle \Inverse \Divisor^\BaseToricLineBundle_i$, while the last corresponds to the zero section $\Divisor^\ToricLineBundle_{\numberFacets + 1} = \BaseToricLineBundle \subset \ToricLineBundle$.
    
    Denote by $\Cocharacter^\ToricLineBundle_i$ the cocharacter of $\Torus^\ToricLineBundle = (\Circle)^{\dimManifold + 1}$ corresponding to the vector $\InwardNormalVector^\ToricLineBundle_i$.
    The $\Circle$-action $\TorusAction^\ToricLineBundle \ComposedWith \Cocharacter^\ToricLineBundle_i$ is not linear in general.
    Instead, its Hamiltonian has the form $\Hamiltonian_i (\ConvexCoordMap (\eltContactManifold, \RadialCoord)) = f_i(\eltContactManifold) \RadialCoord + \text{constant}$ at infinity, where $f_i : \SphereBundle \ToricLineBundle \to \RealNumbers$ is a nonnegative Reeb flow-invariant function.
    Such Hamiltonians satisfy a maximum principle \cite[Appendix~C]{ritter_circle-actions_2016}, and hence our constructions for monotone closed toric manifolds extend to this setup.
    
    \begin{theorem}
    \label{thm:all-presentations-work-for-toric-line-bundles}
        The presentations of cohomology, quantum cohomology, equivariant cohomology and equivariant quantum cohomology from Theorems \ref{thm:cohomology-presentation-toric-compact}, \ref{thm:quantum-cohomology-presentation-closed-toric-manifolds}, \ref{thm:equivariant-cohomology-presentation-closed-toric-manifold} and \ref{thm:equivariant-quantum-cohomology-presentation-closed-toric-manifold} hold for $\ToricLineBundle$ and $\MomentPolytope_\ToricLineBundle$.
        Moreover, given a vertex $\vertexMomentPolytope$ of $\MomentPolytope_\ToricLineBundle$, the methods of \autoref{sec:connection-computation-on-closed-toric-manifold} apply to compute $\Connection^{\FixedPoint_\vertexMomentPolytope}_i$ for $i \notin \NeighbouringFacets(\vertexMomentPolytope)$ and $\ShiftOperator^{\FixedPoint_\vertexMomentPolytope}_i$ for all $i$.
    \end{theorem}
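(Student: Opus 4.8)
The plan is to verify that $\ToricLineBundle$ meets the standing hypotheses, to record that Ritter's maximum principle keeps all the relevant moduli spaces compact, and then to observe that each of the four presentations --- together with the recipe of \autoref{sec:connection-computation-on-closed-toric-manifold} --- either reduces to the compact toric case of \autoref{thm:cohomology-presentation-toric-compact}, \autoref{thm:quantum-cohomology-presentation-closed-toric-manifolds}, \autoref{thm:equivariant-cohomology-presentation-closed-toric-manifold} and \autoref{thm:equivariant-quantum-cohomology-presentation-closed-toric-manifold}, or is obtained by the same formal argument used there. First I would check that $\ToricLineBundle$ is simply connected (a fibre bundle over the simply connected $\BaseToricLineBundle$ with fibre $\ComplexNumbers$), nonnegatively monotone --- indeed monotone, since $0 < \NegativeLineBundleNumber < \IndexOfMonotonicity_\BaseToricLineBundle$ --- equipped with the convex structure $(\ConvexCoordMap, \SphereBundle \ToricLineBundle)$, and with fixed points of $\TorusAction^\ToricLineBundle$ at the vertices of $\MomentPolytope_\ToricLineBundle$. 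The essential geometric input, already noted above, is that the Hamiltonians $f_i(\eltContactManifold)\RadialCoord + \text{const}$ of $\TorusAction^\ToricLineBundle \ComposedWith \Cocharacter^\ToricLineBundle_i$, together with their analogues on the clutching bundles $\ClutchingBundle{\Cocharacter}$ and in the $\ExtendedTorus$-equivariant setting, all satisfy a maximum principle \cite[Appendix~C]{ritter_circle-actions_2016}; this makes the moduli spaces of pseudoholomorphic spheres, of spiked sections and of their equivariant analogues compact, so the quantum product, the ($\ExtendedTorus$-)equivariant quantum product and the ($\ExtendedTorus$-)equivariant quantum Seidel maps are defined on $\ToricLineBundle$ with all the properties used in Section~4.

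Next I would handle the four presentations. Adding the index $\numberFacets+1$ to any cone of the normal fan of $\BaseToricLineBundle$ yields a cone of the normal fan of $\MomentPolytope_\ToricLineBundle$, so the Stanley--Reisner ideal of $\MomentPolytope_\ToricLineBundle$ is generated by the Stanley--Reisner generators of $\BaseToricLineBundle$ inside $\Integers [\ToricGenerator_1, \ldots, \ToricGenerator_{\numberFacets+1}]$, while the linear relations are those of $\BaseToricLineBundle$ together with the single extra relation $\ToricGenerator_{\numberFacets+1} - \sum_i \DivisorContributionToBundle_i \ToricGenerator_i$ coming from the normal vector $(0,1)$; eliminating $\ToricGenerator_{\numberFacets+1}$ identifies $\Integers [\ToricGenerator_1, \ldots, \ToricGenerator_{\numberFacets+1}]/(\LinearRelationsIdeal + \StanleyReisnerIdeal)$ with the corresponding ring of \autoref{thm:cohomology-presentation-toric-compact} for $\BaseToricLineBundle$. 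Since $\ToricLineBundle$ deformation retracts onto its zero section $\Divisor^\ToricLineBundle_{\numberFacets+1} = \BaseToricLineBundle$, under the induced isomorphism $\Cohomology^\ArbitraryIndex(\BaseToricLineBundle) \cong \Cohomology^\ArbitraryIndex(\ToricLineBundle)$ the class $\ToricGenerator_i$ corresponds to $(\Divisor^\BaseToricLineBundle_i)\Dual$ for $i \le \numberFacets$ and $\ToricGenerator_{\numberFacets+1}$ to $\FirstChernClass(\ToricLineBundle) = \sum_i \DivisorContributionToBundle_i (\Divisor^\BaseToricLineBundle_i)\Dual$, which proves the analogue of \autoref{thm:cohomology-presentation-toric-compact}; in particular $\ToricLineBundle$ has no odd cohomology, so its Novikov ring is built exactly as in the compact case. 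For equivariant cohomology I would invoke the proof of \cite[Theorem~12.4.14]{cox_toric_2011}, which uses only smoothness of the toric variety, the splitting of the sequence \eqref{eqn:sequence-on-cohomology-associated-to-borel-quotient} and the fact that ordinary cohomology is generated in degree~2 --- all now available --- giving the analogue of \autoref{thm:equivariant-cohomology-presentation-closed-toric-manifold}. For the two quantum presentations, \autoref{lem:change-fixed-point-over-edge-of-polytope} is a local computation along the lines $\Projective^1$ over the edges of $\MomentPolytope_\ToricLineBundle$ and is unaffected by noncompactness, so $\QuantumSeidel(\Cocharacter^\ToricLineBundle_i, \FixedPoint_\vertexMomentPolytope)(1) = \NovVariable^{\NovikovExponentForDivisor_i}(\Divisor^\ToricLineBundle_i)\Dual$ as in \eqref{eqn:quantum-seidel-maps-with-computed-novikov-exponents-at-1}; then the identity of cocharacters \eqref{eqn:vector-relating-primitive-index-set-with-counterparts-defining-cone}, together with the module and composition properties of the (equivariant) quantum Seidel maps, makes the presentation maps well-defined, and the inductive higher-order-correction argument in the proofs of \autoref{thm:quantum-cohomology-presentation-closed-toric-manifolds} and \autoref{thm:equivariant-quantum-cohomology-presentation-closed-toric-manifold} --- including the filtration step in the latter --- applies verbatim, with the two (equivariant) cohomology presentations just established as the base case.

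Finally, the recipe of \autoref{sec:connection-computation-on-closed-toric-manifold} uses only these presentations together with formal inputs: the split short exact sequence \eqref{eqn:split-short-exact-sequence-borel-quotient-closed-toric-manifold} and the description of $\DegreeTwoCoclass^{\FixedPoint_\vertexMomentPolytope}$ at a vertex $\vertexMomentPolytope$ of $\MomentPolytope_\ToricLineBundle$; the value $\ShiftOperator^{\FixedPoint_\vertexMomentPolytope}_i(1) = \NovVariable^{\NovikovExponentForDivisor_i}\ToricGenerator_i$; the pullback formula \eqref{eqn:pullback-map-for-toric-manifolds}; and the flatness equations of \autoref{thm:flatness-of-difference-differential-connection-on-floer-cohomology}. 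Each of these carries over to $\ToricLineBundle$, so $\Connection^{\FixedPoint_\vertexMomentPolytope}_i$ for $i \notin \NeighbouringFacets(\vertexMomentPolytope)$ and $\ShiftOperator^{\FixedPoint_\vertexMomentPolytope}_i$ for all $i$ are computed just as there. I expect the main obstacle to be the first step --- the compactness of all of these moduli spaces, in particular those defining the $\ExtendedTorus$-equivariant quantum Seidel maps and the homotopies used in the flatness proofs --- which is precisely where the maximum principle for the non-linear Hamiltonians $f_i(\eltContactManifold)\RadialCoord$ is indispensable; once that is in place, everything else is either a deformation-retract argument or a formal repetition of Section~4.
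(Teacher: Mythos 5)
Your proposal is correct and follows the same route the paper takes: the paper's entire justification for this theorem is the preceding observation that the Hamiltonians $f_i(\eltContactManifold)\RadialCoord$ satisfy Ritter's maximum principle, "and hence our constructions for monotone closed toric manifolds extend to this setup," which is precisely the key step you isolate. Your additional bookkeeping --- the verification of the standing hypotheses for $\ToricLineBundle$, the identification of $\StanleyReisnerIdeal$ and $\LinearRelationsIdeal$ for $\MomentPolytope_\ToricLineBundle$ via the deformation retraction onto the zero section, and the observation that the higher-order-correction induction only needs the classical presentations as base case --- is consistent with the paper (cf.\ the $\LineBundle_{\Projective^1}(-1)$ example) and simply spells out details the paper leaves implicit.
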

    
    The only exception is that we can only compute $\ShiftOperator^{\FixedPoint_\vertexMomentPolytope}_\Cocharacter$ for those cocharacters $\Cocharacter \in \LatticeCocharacters{\Torus^\ToricLineBundle}$ which are nonnegative sums of the cocharacters $\Cocharacter_i$.
    Such $\Cocharacter$ lie in a submonoid of $\NonnegativeLatticeCocharacters{\TorusAction^\ToricLineBundle}{\Torus^\ToricLineBundle}$ in general.
    We cannot define the shift operator for all characters because the shift operator is not defined for characters with a negative slope.
    
    The cocharacter $\Cocharacter = \Cocharacter_{\numberFacets + 1}$ induces the fibre-wise rotation action about the zero section.
    The Hamiltonian of this action is linear of positive slope \cite[Section~7.6]{ritter_floer_2014}, so the direct limit of the sequence
        \begin{equation}
        \label{eqn:direct-limit-for-symplectic-cohomology-quantum-seidel-toric-line-bundle}
            \begin{tikzcd}[column sep=large]
                \QuantumCohomology^{\ArbitraryIndex} (\ToricLineBundle) \arrow[r, "{\QuantumSeidel(\Cocharacter, \FixedPoint_\vertexMomentPolytope)}"] 
                &
                \QuantumCohomology^{\ArbitraryIndex +  |\Cocharacter, \FixedPoint_\vertexMomentPolytope|} (\ToricLineBundle) \arrow[r, "{\QuantumSeidel(\Cocharacter, \FixedPoint_\vertexMomentPolytope)}"] 
                &
                \QuantumCohomology^{\ArbitraryIndex + 2 |\Cocharacter, \FixedPoint_\vertexMomentPolytope|} (\ToricLineBundle) \arrow[r, "{\QuantumSeidel(\Cocharacter, \FixedPoint_\vertexMomentPolytope)}"] 
                & 
                \cdots.
            \end{tikzcd}
        \end{equation}
    is naturally isomorphic to the symplectic cohomology  $\SymplecticCohomology^\ArbitraryIndex (\ToricLineBundle)$ \cite[Theorem~22]{ritter_floer_2014}.
    All the modules in this sequence are $\QuantumCohomology^{\ArbitraryIndex} (\ToricLineBundle)$ and the maps $\QuantumSeidel(\Cocharacter, \FixedPoint_\vertexMomentPolytope)$ are module maps given by multiplication.
    Therefore, the direct limit of the sequence may be characterised as
        \begin{equation}
        \label{eqn:symplectic-cohomology-as-quotient-of-quantum-cohomology}
            \SymplecticCohomology^\ArbitraryIndex (\ToricLineBundle) = \frac{\QuantumCohomology^{\ArbitraryIndex} (\ToricLineBundle)}{\ker (\QuantumSeidel(\Cocharacter, \FixedPoint_\vertexMomentPolytope) ^\dimManifold)}
        \end{equation}
    because the sequence $\QuantumSeidel(\Cocharacter, \FixedPoint_\vertexMomentPolytope)^p (\QuantumCohomology^{\ArbitraryIndex} (\ToricLineBundle))$ stabilizes by $p = \dimManifold$ \cite[Theorem~1]{ritter_floer_2014}.
    Combining \eqref{eqn:symplectic-cohomology-as-quotient-of-quantum-cohomology} with  \eqref{eqn:presentation-for-quantum-cohomology-of-closed-toric-manifold} yields the following powerful result.
    
    \begin{theorem}
        [{Symplectic cohomology presentation \cite[Theorem~1.5(2)]{ritter_circle-actions_2016}}]
    \label{thm:symplectic-cohomology-presentation-toric-manifolds}
        The map $\ToricGenerator_i \mapsto \CanonicalMap^\ArbitraryIndex (\Divisor_i \Dual)$ induces a ring isomorphism
        \begin{equation}
        \label{eqn:presentation-for-symplectic-cohomology-of-toric-manifold}
            {\NovikovRing [\ToricGenerator_1^{\pm 1}, \ldots, \ToricGenerator_{\numberFacets + 1}^{\pm 1}]} / {(\LinearRelationsIdeal + \QuantumStanleyReisnerIdeal)} \overset{\cong}{\to} \SymplecticCohomology^\ArbitraryIndex (\ToricLineBundle),
        \end{equation}
        where $\CanonicalMap^\ArbitraryIndex : \QuantumCohomology^{\ArbitraryIndex} (\ToricLineBundle) \to \SymplecticCohomology^\ArbitraryIndex (\ToricLineBundle)$ is the canonical map.
    \end{theorem}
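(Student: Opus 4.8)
The plan is to derive the presentation by combining the quantum cohomology presentation of $\ToricLineBundle$ with the identification of $\SymplecticCohomology^\ArbitraryIndex(\ToricLineBundle)$ as a localisation of $\QuantumCohomology^\ArbitraryIndex(\ToricLineBundle)$. First I would observe that, by \autoref{thm:all-presentations-work-for-toric-line-bundles}, \autoref{thm:quantum-cohomology-presentation-closed-toric-manifolds} holds for $(\ToricLineBundle, \MomentPolytope_\ToricLineBundle)$, so $\ToricGenerator_i \mapsto \Divisor_i^\Dual$ gives a ring isomorphism from $\NovikovRing[\ToricGenerator_1, \ldots, \ToricGenerator_{\numberFacets+1}]/(\LinearRelationsIdeal + \QuantumStanleyReisnerIdeal)$ onto $\QuantumCohomology^\ArbitraryIndex(\ToricLineBundle)$, and that under this isomorphism $\ToricGeneratorWithNovikovWeighting_{\numberFacets+1} = \NovVariable^{\NovikovExponentForDivisor_{\numberFacets+1}} \ToricGenerator_{\numberFacets+1}$ corresponds to $\QuantumSeidel(\Cocharacter_{\numberFacets+1}, \FixedPoint_\vertexMomentPolytope)(1)$ by \eqref{eqn:quantum-seidel-maps-with-computed-novikov-exponents-at-1}. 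Since $\QuantumSeidel(\Cocharacter_{\numberFacets+1}, \FixedPoint_\vertexMomentPolytope)$ is a module map over $\QuantumProduct$, the arrows in \eqref{eqn:direct-limit-for-symplectic-cohomology-quantum-seidel-toric-line-bundle} become multiplication by $\ToricGeneratorWithNovikovWeighting_{\numberFacets+1}$.

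Next I would pass to the direct limit. The colimit of the chain \eqref{eqn:direct-limit-for-symplectic-cohomology-quantum-seidel-toric-line-bundle} is by construction the localisation $\QuantumCohomology^\ArbitraryIndex(\ToricLineBundle)[\ToricGeneratorWithNovikovWeighting_{\numberFacets+1}^{-1}]$, and since $\NovVariable^{\NovikovExponentForDivisor_{\numberFacets+1}}$ is a unit of $\NovikovRing$ this equals the localisation at $\ToricGenerator_{\numberFacets+1}$. By \eqref{eqn:symplectic-cohomology-as-quotient-of-quantum-cohomology} --- which records Ritter's stabilisation of the powers $\QuantumSeidel(\Cocharacter_{\numberFacets+1}, \FixedPoint_\vertexMomentPolytope)^p$ --- this colimit is naturally $\SymplecticCohomology^\ArbitraryIndex(\ToricLineBundle)$, with the structure map out of the first term of the chain being the canonical map $\CanonicalMap^\ArbitraryIndex$; hence the composite $\ToricGenerator_i \mapsto \CanonicalMap^\ArbitraryIndex(\Divisor_i^\Dual)$ is precisely the image of $\ToricGenerator_i$ in the localised ring. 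At this point one has
\[
    \SymplecticCohomology^\ArbitraryIndex(\ToricLineBundle) \Isomorphism \bigl(\NovikovRing[\ToricGenerator_1, \ldots, \ToricGenerator_{\numberFacets+1}]/(\LinearRelationsIdeal + \QuantumStanleyReisnerIdeal)\bigr)[\ToricGenerator_{\numberFacets+1}^{-1}].
\]

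The final step, which is where the real content lies, is to show that inverting $\ToricGenerator_{\numberFacets+1}$ already makes every $\ToricGenerator_i$ invertible, so that the right-hand side above is the Laurent-polynomial ring of \eqref{eqn:presentation-for-symplectic-cohomology-of-toric-manifold}. For $i \le \numberFacets$ one has $\Divisor_i^\ToricLineBundle = \ProjectionToricLineBundle^{-1}(\Divisor_i^\BaseToricLineBundle)$, and completeness of the base fan $\InwardNormalFan_\BaseToricLineBundle$ provides a nonnegative relation $\InwardNormalVector_i^\BaseToricLineBundle + \sum_j b_j \InwardNormalVector_j^\BaseToricLineBundle = 0$ with the $\InwardNormalVector_j^\BaseToricLineBundle$ generating a cone of $\InwardNormalFan_\BaseToricLineBundle$ not containing $i$; lifting it to $\InwardNormalFan_\ToricLineBundle$ adds a multiple of $\InwardNormalVector_{\numberFacets+1}^\ToricLineBundle = (0,1)$, and applying the monoid homomorphism $\Cocharacter \mapsto \QuantumSeidel(\Cocharacter, \FixedPoint_\vertexMomentPolytope)(1)$ coming from \eqref{eqn:non-equivariant-quantum-seidel-map-composition-of-actions} turns it into an identity among the $\ToricGeneratorWithNovikovWeighting$'s which, once $\ToricGeneratorWithNovikovWeighting_{\numberFacets+1}$ is inverted, expresses $\ToricGeneratorWithNovikovWeighting_i$ up to a unit as a monomial in $\ToricGeneratorWithNovikovWeighting_{\numberFacets+1}^{\pm 1}$ and the $\ToricGeneratorWithNovikovWeighting_j$; an induction over the rays of the complete fan then makes each $\ToricGenerator_i$ a unit. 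This combinatorial argument is the one carried out in \cite{ritter_circle-actions_2016}, and I expect it to be the only nontrivial part of the proof --- the rest being the bookkeeping above. Because $\ToricGenerator_i$ and $\ToricGeneratorWithNovikovWeighting_i$ differ by Novikov units and the ideals $\LinearRelationsIdeal, \QuantumStanleyReisnerIdeal$ make sense over the Laurent ring, the resulting isomorphism has exactly the stated form, which establishes the presentation.
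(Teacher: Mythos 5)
Your proposal is correct and follows essentially the same route as the paper, which likewise combines the quantum presentation \eqref{eqn:presentation-for-quantum-cohomology-of-closed-toric-manifold} (valid for $\ToricLineBundle$ by \autoref{thm:all-presentations-work-for-toric-line-bundles}) with the identification of $\SymplecticCohomology^\ArbitraryIndex(\ToricLineBundle)$ as the colimit of multiplication by the Seidel element $\ToricGeneratorWithNovikovWeighting_{\numberFacets+1}$, deferring the combinatorial invertibility of the remaining generators to \cite{ritter_circle-actions_2016}. Your localisation description $\QuantumCohomology^\ArbitraryIndex(\ToricLineBundle)[\ToricGeneratorWithNovikovWeighting_{\numberFacets+1}^{-1}]$ and the paper's quotient description \eqref{eqn:symplectic-cohomology-as-quotient-of-quantum-cohomology} are the same colimit (stabilisation of the images makes the canonical map surjective), so the difference is only one of framing.
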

    
    Similarly, the direct limit of the sequence
        \begin{equation}
        \label{eqn:direct-limit-for-equivariant-symplectic-cohomology-quantum-seidel-toric-line-bundle}
            \begin{tikzcd}[column sep=huge]
                \QuantumCohomology^{\ArbitraryIndex} _{\ExtendedTorus} (\ToricLineBundle, \Extended{\TorusAction}) \arrow[r, "{\QuantumSeidel_{\ExtendedTorus} (\Cocharacter, \FixedPoint_\vertexMomentPolytope)}"] &
                \QuantumCohomology^{\ArbitraryIndex +  |\Cocharacter, \FixedPoint_\vertexMomentPolytope|} _{\ExtendedTorus} (\ToricLineBundle, \Extended{\Cocharacter} \PullBack \Extended{\TorusAction}) \arrow[r, "{\QuantumSeidel_{\ExtendedTorus} (\Cocharacter, \FixedPoint_\vertexMomentPolytope)}"]
                & \cdots,
            \end{tikzcd}
        \end{equation}
    is the equivariant symplectic cohomology $\SymplecticCohomology^\ArbitraryIndex_{\ExtendedTorus} (\ToricLineBundle, \Extended{\TorusAction})$ \cite[Equation~(7.4)]{liebenschutz-jones_intertwining_2020}.
    Since the map $(\ClassifyingSpace \Extended{\Cocharacter})\PullBack$ is an isomorphism, the sequence \eqref{eqn:direct-limit-for-equivariant-symplectic-cohomology-quantum-seidel-toric-line-bundle} is naturally isomorphic to
        \begin{equation}
        \label{eqn:direct-limit-for-equivariant-symplectic-cohomology-shift-operator-toric-line-bundle}
            \begin{tikzcd}
                \QuantumCohomology^{\ArbitraryIndex} _{\ExtendedTorus} (\ToricLineBundle, \Extended{\TorusAction}) \arrow[r, "\ShiftOperator_\Cocharacter ^{\FixedPoint_\vertexMomentPolytope}"] &
                \QuantumCohomology^{\ArbitraryIndex +  |\Cocharacter, \FixedPoint_\vertexMomentPolytope|} _{\ExtendedTorus} (\ToricLineBundle, \Extended{\TorusAction}) \arrow[r, "\ShiftOperator_\Cocharacter ^{\FixedPoint_\vertexMomentPolytope}"] &
                \QuantumCohomology^{\ArbitraryIndex + 2 |\Cocharacter, \FixedPoint_\vertexMomentPolytope|} _{\ExtendedTorus} (\ToricLineBundle,  \Extended{\TorusAction}) \arrow[r, "\ShiftOperator_\Cocharacter ^{\FixedPoint_\vertexMomentPolytope}"]
                & \cdots.
            \end{tikzcd}
        \end{equation}
    This second sequence preserves the difference-differential connection $(\ShiftOperator^{\FixedPoint_\vertexMomentPolytope}, \Connection^{\FixedPoint_\vertexMomentPolytope})$ by \autoref{prop:connection-commutes-with-floer-shift-operator}.
    Unfortunately, both of sequences \eqref{eqn:direct-limit-for-equivariant-symplectic-cohomology-quantum-seidel-toric-line-bundle} and \eqref{eqn:direct-limit-for-equivariant-symplectic-cohomology-shift-operator-toric-line-bundle} do not satisfy the key properties of \eqref{eqn:direct-limit-for-symplectic-cohomology-quantum-seidel-toric-line-bundle} which we used to derive \eqref{eqn:symplectic-cohomology-as-quotient-of-quantum-cohomology}.
    Specifically, the modules are different in \eqref{eqn:direct-limit-for-equivariant-symplectic-cohomology-quantum-seidel-toric-line-bundle}, while the maps in \eqref{eqn:direct-limit-for-equivariant-symplectic-cohomology-shift-operator-toric-line-bundle} are not module maps (they are twisted by $(\ClassifyingSpace \Extended{\Cocharacter})\PullBack$).
    These sequences may nonetheless be used to understand $\SymplecticCohomology^\ArbitraryIndex_{\ExtendedTorus} (\ToricLineBundle, \Extended{\TorusAction})$, as in \autoref{sec:tautological-line-bundle}.
        
\subsection{Tautological line bundle}
\label{sec:tautological-line-bundle}
    
    The tautological line bundle $\LineBundle_{\Projective^1}(-1) \to \Projective^1$ is a negative line bundle with $\NegativeLineBundleNumber = 1$.
    The base $\Projective^1$ together with the $\Circle$-action 
        \begin{equation}
        \label{eqn:circle-action-on-projective-line-for-line-bundle-example}
            \eltTorus \cdot [\eltComplexProjectiveSpace_0, \eltComplexProjectiveSpace_1] = [\eltComplexProjectiveSpace_0, \ExponentialNumber^{2 \PiNumber \ImaginaryNumber \eltTorus} \eltComplexProjectiveSpace_1]
        \end{equation}
    admits the moment polytope from \autoref{fig:tautological-bundle-base-moment-polytope-and-fan}, defined by $\InwardNormalVector_0 = -1$, $\InwardNormalVector_1 = 1$ and $\FacetBoundaryCondition_0 = -1$, $\FacetBoundaryCondition_1 = 0$.
    
\begin{figure}[ht]
	\begin{center}\begin{tikzpicture}
			\node[inner sep=0] at (0,0) {\includegraphics[width=10 cm]{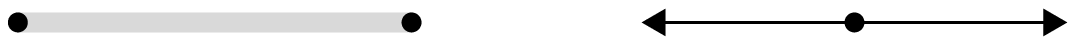}};
			\node at (5.2,0){$\InwardNormalVector_1$};
			\node at (0.7,0){$\InwardNormalVector_0$};
			\node at (-1.15,-0.3){$\vertexMomentPolytope_1 = \Facet_0$};
			\node at (-4.85,-0.3){$\vertexMomentPolytope_0 = \Facet_1$};
		\end{tikzpicture}
	\end{center}
	\caption{The moment polytope and the inner normal fan for $\Projective^1$.}
	\label{fig:tautological-bundle-base-moment-polytope-and-fan}
\end{figure}
    
    The bundle $\LineBundle_{\Projective^1}(-1) \to \Projective^1$ is isomorphic to $\LineBundle (- \Divisor_1 ) \to \Projective^1$ \cite[Example, page~2021]{ritter_circle-actions_2016}.
    Therefore the total space $\LineBundle_{\Projective^1}(-1)$ admits the moment polytope from \autoref{fig:tautological-bundle-moment-polytope-and-fan}, defined by its inward normal vectors $\InwardNormalVector_0 = (-1, 0)$, $\InwardNormalVector_1 = (1, 1)$ and $\InwardNormalVector_2 = (0, 1)$; and $\FacetBoundaryCondition_0 = -1$, $\FacetBoundaryCondition_1 = 0$ and $\FacetBoundaryCondition_2 = 0$.
    The unique primitive set is $\Set{0, 1}$, for which \eqref{eqn:vector-relating-primitive-index-set-with-counterparts-defining-cone} reads $\InwardNormalVector_0 + \InwardNormalVector_1 = 1 \InwardNormalVector_2$, so the quantum Stanley-Reisner ideal is $\QuantumStanleyReisnerIdeal = \langle \ToricGeneratorWithNovikovWeighting_0 \ToricGeneratorWithNovikovWeighting_1 - \ToricGeneratorWithNovikovWeighting_2 \rangle$.
    The Novikov ring is the polynomial ring $\NovikovRing = \Integers[\NovVariable]$ with $\NovVariable$ in degree 2.
    \autoref{thm:all-presentations-work-for-toric-line-bundles} yields the presentation
        \begin{equation}
            \QuantumCohomology^\ArbitraryIndex_{\ExtendedTorus} (\LineBundle_{\Projective^1}(-1)) \cong \frac{\NovikovRing \GradedCompletedTensorProduct \Integers [\ToricGenerator_0, \ToricGenerator_1, \ToricGenerator_2, \formalAdditionalCircle]}{\langle \ToricGenerator_0 \ToricGenerator_1 - \NovVariable \ToricGenerator_2 \rangle},
        \end{equation}
    where the $\NovikovRing \GradedCompletedTensorProduct \Integers [\formalAdditionalCircle, \formalTorus_1, \formalTorus_2]$-module structure is induced by the map
        \begin{equation}
        \label{eqn:equivariant-variables-assignment-presentation-tautological-line-bundle}
            \begin{cases}
                \formalTorus_1 \mapsto \ToricGenerator_0 - \ToricGenerator_1 \\
                \formalTorus_2 \mapsto - \ToricGenerator_1 - \ToricGenerator_2.
            \end{cases}
        \end{equation}
    
\begin{figure}[ht]
	\begin{center}
		\begin{tikzpicture}
			\node[inner sep=0] at (0,0) {\includegraphics[width=10 cm]{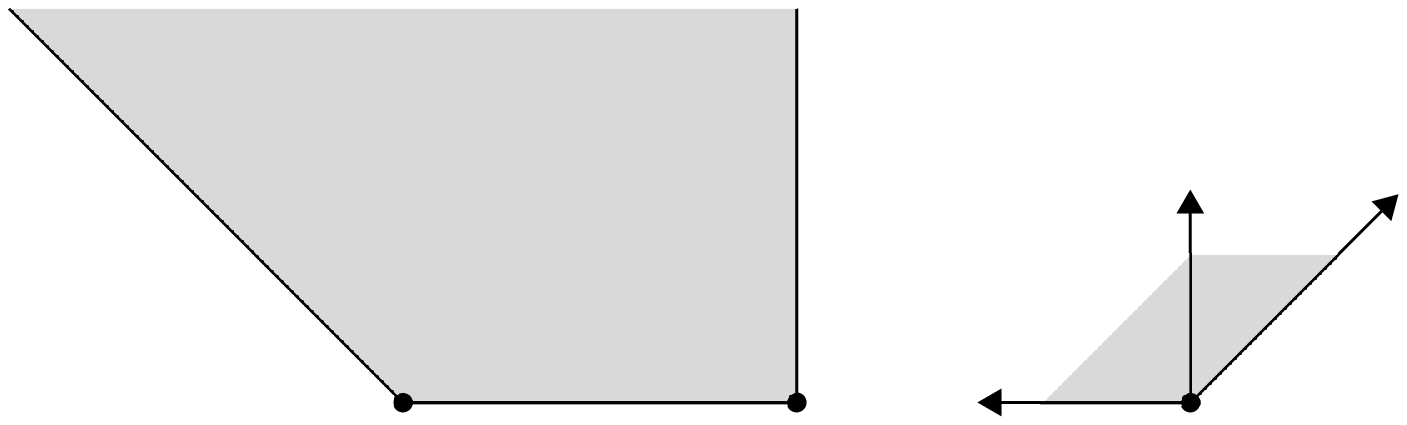}};
			\node at (-4.0,0){$\Facet_1$};
			\node at (-0.7,-1.6){$\Facet_2$};
			\node at (1,0){$\Facet_0$};
			\node at (5.0,0.3){$\InwardNormalVector_1$};
			\node at (3.5,0.3){$\InwardNormalVector_2$};
			\node at (1.8,-1.4){$\InwardNormalVector_0$};
			\node at (0.7,-1.6){$\vertexMomentPolytope_1$};
			\node at (-2.1,-1.6){$\vertexMomentPolytope_0$};
		\end{tikzpicture}
	\end{center}
	\caption{The moment polytope and the inner normal fan for $\LineBundle_{\Projective^1} (-1)$.}
	\label{fig:tautological-bundle-moment-polytope-and-fan}
\end{figure}
    
    Fix the vertex $\vertexMomentPolytope_0 = (0, 0)$ with $\FixedPoint_{\vertexMomentPolytope_0} = [1, 0]$.
    The differential connection is $\Connection^{\vertexMomentPolytope_0}_0 = \formalAdditionalCircle \NovVariable \dbyd{\NovVariable} + \ToricGenerator_0$.
    Using the same approach as \autoref{sec:projective-plane-computations}, we find that the shift operator $\ShiftOperator^{\vertexMomentPolytope_0} _2$ satisfies
        \begin{align}
            \ShiftOperator^{\vertexMomentPolytope_0} _2 (1) &= \ToricGenerator_2
            \\
            \ShiftOperator^{\vertexMomentPolytope_0} _2 (\ToricGenerator_0) &= \ToricGenerator_0 \ToricGenerator_2
            \\
            \ShiftOperator^{\vertexMomentPolytope_0} _2 (\ToricGenerator_1) &= \ToricGenerator_1 \ToricGenerator_2
            \\
            \label{eqn:shift-operator-of-divisor-2-in-tautological-line-bundle}
            \ShiftOperator^{\vertexMomentPolytope_0} _2 (\ToricGenerator_2) &= (\ToricGenerator_2 - \formalAdditionalCircle) \ToricGenerator_2.
        \end{align}
    
    Set $A = \NovikovRing \GradedCompletedTensorProduct \Integers [\formalAdditionalCircle, \formalTorus_1, \formalTorus_2]$.
    Consider the sequence \eqref{eqn:direct-limit-for-equivariant-symplectic-cohomology-quantum-seidel-toric-line-bundle}.
    The modules in this sequence are $\QuantumCohomology^\ArbitraryIndex_{\ExtendedTorus} (\LineBundle_{\Projective^1}(-1), (\Extended{\Cocharacter}_2^r) \PullBack \Extended{\TorusAction})$ for $r \ge 0$.
    They are all naturally isomorphic to $A \langle 1, \ToricGenerator_2 \rangle$, the free $A$-module with basis $\langle 1, \ToricGenerator_2 \rangle$.
    The $r$-th map in the sequence,
        \begin{equation}
        \label{eqn:rth-map-in-sequence-of-equivariant-quantum-seidel-maps}
            \QuantumSeidel_{\ExtendedTorus} (\Cocharacter_2, \FixedPoint_{\vertexMomentPolytope_0}) : \QuantumCohomology^\ArbitraryIndex_{\ExtendedTorus} (\LineBundle_{\Projective^1}(-1), (\Extended{\Cocharacter}_2^{r - 1}) \PullBack \Extended{\TorusAction}) \to \QuantumCohomology^\ArbitraryIndex_{\ExtendedTorus} (\LineBundle_{\Projective^1}(-1), (\Extended{\Cocharacter}_2^r) \PullBack \Extended{\TorusAction}),
        \end{equation}
    is equal to the composition $((\ClassifyingSpace \Extended{\Cocharacter}_2) \PullBack)^{-r} \ComposedWith \ShiftOperator^{\vertexMomentPolytope_0}_2 \ComposedWith ((\ClassifyingSpace \Extended{\Cocharacter}_2) \PullBack)^{r - 1}$.
    By rearranging \eqref{eqn:shift-operator-of-divisor-2-in-tautological-line-bundle}, we find that \eqref{eqn:rth-map-in-sequence-of-equivariant-quantum-seidel-maps} is the $A$-module map given by
        \begin{equation}
        \label{eqn:equivariant-quantum-seidel-maps-written-in-basis}
            \left\{
            \begin{aligned}
                1 &\mapsto \ToricGenerator_2 \\
                \ToricGenerator_2 &\mapsto (\formalTorus_2 - r \formalAdditionalCircle)(r \formalAdditionalCircle + \formalTorus_1 - \formalTorus_2) + (\NovVariable + \formalTorus_1 - 2 \formalTorus_2 + (2r - 1) \formalAdditionalCircle) \ToricGenerator_2.
            \end{aligned}
            \right.
        \end{equation}
    with respect to the basis $\langle 1, \ToricGenerator_2 \rangle$.
    Its determinant is
        \begin{equation}
            d_r = (\formalTorus_2 - r \formalAdditionalCircle)(\formalTorus_2 - \formalTorus_1 - r \formalAdditionalCircle).
        \end{equation}
        
    The maps \eqref{eqn:equivariant-quantum-seidel-maps-written-in-basis} are isomorphisms over the fraction field $\FractionField(A)$, and hence we have an isomorphism of $A$-modules
        \begin{equation}
           \FractionField(A) \Tensor_{A} \SymplecticCohomology^\ArbitraryIndex_{\ExtendedTorus} (\LineBundle_{\Projective^1}(-1)) \cong \FractionField(A) \langle 1, \ToricGenerator_2 \rangle.
        \end{equation}
    The induced inclusion $\SymplecticCohomology^\ArbitraryIndex_{\ExtendedTorus} (\LineBundle_{\Projective^1}(-1)) \Inclusion \FractionField(A) \langle 1, \ToricGenerator_2 \rangle$ is injective, and thus every element of $\SymplecticCohomology^\ArbitraryIndex_{\ExtendedTorus} (\LineBundle_{\Projective^1}(-1))$ is of the form\footnote{
        Consider by analogy the sequence $\Integers \overset{\cdot d_1}{\to} \Integers \overset{\cdot d_2}{\to} \Integers \overset{\cdot d_3}{\to} \cdots$ for nonzero integers $d_r \in \Integers \setminus 0$.
        This sequence is isomorphic to the sequence of inclusions $\Integers \Inclusion \frac{1}{d_1} \Integers \Inclusion \frac{1}{d_1 d_2} \Integers \Inclusion \cdots$.
        The direct limit is naturally a $\Integers$-submodule of $\RationalNumbers = \FractionField(\Integers)$ and every element of the direct limit is a quotient of an element $\alpha \in \Integers$ by a product $d_1 \cdot d_2 \cdots d_r$. 
    }
        \begin{equation}
        \label{eqn:fraction-form-for-elements-of-equivariant-symplectic-cohomology-tautological-line-bundle}
            \frac{\alpha}{d_1 \cdots d_r}
        \end{equation}
    for some $r \ge 0$ and some $\alpha \in A \langle 1, \ToricGenerator_2 \rangle$.
    Note that $\SymplecticCohomology^\ArbitraryIndex_{\ExtendedTorus} (\LineBundle_{\Projective^1}(-1))$ does not surject onto the set of elements of the form \eqref{eqn:fraction-form-for-elements-of-equivariant-symplectic-cohomology-tautological-line-bundle}.
    
    The non-equivariant quantum Seidel maps in the analogous sequence \eqref{eqn:direct-limit-for-symplectic-cohomology-quantum-seidel-toric-line-bundle} are all the multiplication-by-$\ToricGenerator_2$ map, giving
        \begin{equation}
        \label{eqn:symplectic-cohomology-presentation-tautological-line-bundle-direct-proof}
            \SymplecticCohomology^\ArbitraryIndex (\LineBundle_{\Projective^1}(-1)) \cong \NovikovRing [\ToricGenerator_0, \ToricGenerator_1, \ToricGenerator_2^{\pm 1}] / (\LinearRelationsIdeal + \StanleyReisnerIdeal).
        \end{equation}
    The linear relations are $\ToricGenerator_1 - \ToricGenerator_0$ and $\ToricGenerator_1 + \ToricGenerator_2$, so \eqref{eqn:symplectic-cohomology-presentation-tautological-line-bundle-direct-proof} is equivalent to \eqref{eqn:presentation-for-symplectic-cohomology-of-toric-manifold}.
    
    \begin{remark}
        [Reeb orbits]
        The contact manifold $\SphereBundle \LineBundle_{\Projective^1}(-1)$ is isomorphic to $\HighDimensionalSphere{3}$, which we write as the unit sphere in $\ComplexNumbers^2$.
        The induced $\Torus$-action on $\HighDimensionalSphere{3}$ is given by $(\eltTorus_1, \eltTorus_2) \cdot (\eltComplexProjectiveSpace_0, \eltComplexProjectiveSpace_1) = (\ExponentialNumber^{2 \PiNumber \ImaginaryNumber (\eltTorus_2 - \eltTorus_1)} \eltComplexProjectiveSpace_0, \ExponentialNumber^{2 \PiNumber \ImaginaryNumber \eltTorus_2} \eltComplexProjectiveSpace_1)$, where the $\eltTorus_1$ action comes from \eqref{eqn:circle-action-on-projective-line-for-line-bundle-example} and the $\eltTorus_2$ action rotates the fibres.
        The flow along the Reeb vector field is given by $\eltCircle \cdot (\eltComplexProjectiveSpace_0, \eltComplexProjectiveSpace_1) = (\ExponentialNumber^{2 \PiNumber \ImaginaryNumber \eltCircle} \eltComplexProjectiveSpace_0, \ExponentialNumber^{2 \PiNumber \ImaginaryNumber \eltCircle} \eltComplexProjectiveSpace_1)$.
        Let $\gamma_r : \Circle \to \HighDimensionalSphere{3}$ be the Reeb orbit $\eltCircle \mapsto (\ExponentialNumber^{2 \PiNumber \ImaginaryNumber r \eltCircle}, 0)$ for $r \in \Integers_{> 0}$, and consider the $\ExtendedTorus$-orbit $\ExtendedTorus \gamma_r$ in $\ContractibleLoopSpace{\LineBundle_{\Projective^1}(-1)}$.
        This $\ExtendedTorus$-orbit is isomorphic to $\Circle$ because the Reeb orbits are determined by their value at 0.
        Therefore the map $\ExtendedTorus \to \ExtendedTorus \gamma_r$ is a map $\ExtendedTorus \to \Circle$.
        As a character, this map $\ExtendedTorus \to \Circle$ corresponds to $\formalTorus_2 - \formalTorus_1 - r \formalAdditionalCircle$.
        Repeating this with the Reeb orbits $\eltCircle \mapsto (0, \ExponentialNumber^{2 \PiNumber \ImaginaryNumber r \eltCircle})$ yields $\formalTorus_2 - r \formalAdditionalCircle$.
    \end{remark}
    
    \begin{conjecture}
        [Significance of the denominators]
    \label{conjecture:significance-of-denominators}
        In general, the denominators in \eqref{eqn:fraction-form-for-elements-of-equivariant-symplectic-cohomology-tautological-line-bundle} will be the product of the characters corresponding to those Reeb orbits whose $\ExtendedTorus$-orbit is isomorphic to $\Circle$.
    \end{conjecture}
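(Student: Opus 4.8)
The plan is to compute, for a general toric negative line bundle $\ToricLineBundle \to \BaseToricLineBundle$, the successive maps in the direct system \eqref{eqn:direct-limit-for-equivariant-symplectic-cohomology-shift-operator-toric-line-bundle}, where $\Cocharacter = \Cocharacter_{\numberFacets+1}$ is the cocharacter giving fibre-wise rotation about the zero section $\Divisor_{\numberFacets+1} = \BaseToricLineBundle$. First I would untwist exactly as in the $\LineBundle_{\Projective^1}(-1)$ computation of \autoref{sec:tautological-line-bundle}: the $r$-th arrow becomes an honest $A$-module endomorphism $f_r = ((\ClassifyingSpace \Extended{\Cocharacter})\PullBack)^{-r} \ComposedWith \ShiftOperator^{\FixedPoint_\vertexMomentPolytope}_\Cocharacter \ComposedWith ((\ClassifyingSpace \Extended{\Cocharacter})\PullBack)^{r-1}$ of the finitely-generated free $A$-module $\QuantumCohomology^\ArbitraryIndex_{\ExtendedTorus}(\ToricLineBundle, \Extended{\TorusAction})$, where $A = \NovikovRing \GradedCompletedTensorProduct \Cohomology^\ArbitraryIndex(\ClassifyingSpace \ExtendedTorus)$ (freeness follows from the presentation \eqref{eqn:equivariant-quantum-cohomology-presentation-map-closed-toric-manifolds-extended} together with the standard toric/GKM facts). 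Each $f_r$ is injective since it is an isomorphism over $\FractionField(A)$, so $d_r := \Determinant(f_r) \in A$ is well-defined up to a unit, and the conjecture reduces to proving $d_r = \pm \prod_\gamma \Character_\gamma$, the product ranging over the period-$r$ Reeb orbits $\gamma$ of $\SphereBundle \ToricLineBundle$ whose $\ExtendedTorus$-orbit in $\ContractibleLoopSpace{\ToricLineBundle}$ is $\cong \Circle$, with $\Character_\gamma \in \Cohomology^2(\ClassifyingSpace \ExtendedTorus)$ the character classifying the orbit map $\ExtendedTorus \to \ExtendedTorus \cdot \gamma \cong \Circle$; the stated form of the denominators $d_1 \cdots d_r$ then follows by multiplying and invoking the embedding $\SymplecticCohomology^\ArbitraryIndex_{\ExtendedTorus}(\ToricLineBundle) \Inclusion \FractionField(A) \Tensor_A \QuantumCohomology^\ArbitraryIndex_{\ExtendedTorus}(\ToricLineBundle, \Extended{\TorusAction})$ as in the footnote to \eqref{eqn:fraction-form-for-elements-of-equivariant-symplectic-cohomology-tautological-line-bundle}.

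Second, I would pass from quantum to Floer cohomology using \autoref{prop:equivariant-gluing-for-seidel-map} and the identification of $\SymplecticCohomology^\ArbitraryIndex_{\ExtendedTorus}(\ToricLineBundle, \Extended{\TorusAction})$ with the direct limit of $\FloerCohomology^\ArbitraryIndex_{\ExtendedTorus}(\ToricLineBundle, \Extended{\TorusAction}, \Slope)$ as $\Slope \to \infty$. Under this dictionary the $r$-th stage of the shift-operator system corresponds (after the twist) to Floer cohomology at a slope $\Slope_r$ that has crossed exactly one more unit of the Reeb spectrum of $\SphereBundle \ToricLineBundle$ than $\Slope_{r-1}$, and the continuation-map exact triangle realises $f_r$ as a map whose mapping cone $C_r$ is generated by the $\ExtendedTorus$-equivariant Hamiltonian orbits of action in the window $(\Slope_{r-1}, \Slope_r)$; for convex $\ToricLineBundle$ these are, by the maximum-principle and action estimates, precisely the orbits clustered near the period-$r$ Reeb orbits. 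Hence $d_r$ is, up to a unit, the equivariant Euler class of the normal/orbit data of that window, and I would compute it via an $\ExtendedTorus$-equivariant Morse--Bott spectral sequence: a family of period-$r$ Reeb orbits diffeomorphic to $\Circle$ on which $\ExtendedTorus$ acts through a primitive character $\Character_\gamma$ contributes the cyclic module $A/\Character_\gamma A$ (possibly with a degree shift, irrelevant to the determinant), while families of dimension $\ge 2$ contribute free $A$-modules — equivalently units to $\Determinant$ — or cancel in pairs. This gives $d_r = \pm \prod_\gamma \Character_\gamma$ over level-$r$ circle-orbits, matching the $\Character_\gamma$ already computed for $\SphereBundle \LineBundle_{\Projective^1}(-1) \cong \HighDimensionalSphere{3}$ in \autoref{sec:tautological-line-bundle}.

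An alternative, and conceptually cleaner, route is to phrase the whole statement as an $\ExtendedTorus$-equivariant localisation theorem for $\SymplecticCohomology^\ArbitraryIndex_{\ExtendedTorus}$ of a convex toric manifold, in the spirit of Zhao's localisation theorem \cite{zhao_periodic_2019} and the McKay-correspondence argument of McLean--Ritter \cite{mclean_mckay_2018}: after inverting the characters of $\ExtendedTorus$, $\SymplecticCohomology^\ArbitraryIndex_{\ExtendedTorus}$ is detected by the components of the loop space of smallest orbit type, and the torsion that must be inverted along the way is exactly $\prod_\gamma \Character_\gamma$ over the circle-orbit-type Reeb orbits. I would combine this with the toric description of $\SphereBundle \ToricLineBundle$ as the link of the affine toric variety over the cone on $\MomentPolytope_\ToricLineBundle$, which turns the enumeration of circle-orbit-type Reeb orbits and their characters into a combinatorial exercise on the fan, dovetailing with the $\ToricGeneratorWithNovikovWeighting_i$ and $\NovikovExponentForDivisor_i$ data of \autoref{sec:connection-computation-on-closed-toric-manifold}.

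The main obstacle I anticipate lies in the geometric bookkeeping of the second step. For a general toric negative line bundle the Reeb flow on $\SphereBundle \ToricLineBundle$ is only Morse--Bott — the functions $f_i$ of \autoref{sec:toric-manifolds-general} are Reeb-invariant but not constant — so showing that the higher-dimensional Reeb families genuinely contribute only invertible factors to $\Determinant(f_r)$ (rather than extra character factors with the wrong multiplicity), and that the circle-orbit contributions do not cancel among themselves for degree or sign reasons, requires a careful $\ExtendedTorus$-equivariant Morse--Bott transversality and gluing package for symplectic cohomology of non-exact convex manifolds, plus the verification that one step of the direct system corresponds uniformly in $r$ to one unit of the Reeb period spectrum. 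Controlling these families is where most of the real work lies; the algebraic reduction of the first step and the localisation heuristic of the third are comparatively routine.
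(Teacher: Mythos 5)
First, a point of comparison: the paper does not prove this statement. It is explicitly labelled a conjecture, and the author states that verifying it is beyond the scope of the paper; the only evidence offered is the direct computation for $\LineBundle_{\Projective^1}(-1)$ in \autoref{sec:tautological-line-bundle}, where the determinants $d_r = (\formalTorus_2 - r\formalAdditionalCircle)(\formalTorus_2 - \formalTorus_1 - r\formalAdditionalCircle)$ are matched by hand against the characters of the two circle-families of period-$r$ Reeb orbits on $\HighDimensionalSphere{3}$. So there is no proof in the paper to measure your proposal against, and your first step (untwisting the direct system \eqref{eqn:direct-limit-for-equivariant-symplectic-cohomology-shift-operator-toric-line-bundle}, passing to $\FractionField(A)$, and reducing to $\Determinant(f_r) = \pm\prod_\gamma \Character_\gamma$ over period-$r$ circle-type orbits) is a faithful and correct generalisation of what the paper actually does in its one example, including a sensible sharpening of the conjecture (the paper does not say which Reeb orbits feed into which $d_r$; your "one unit of the Reeb spectrum per step of the direct system" makes that precise). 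The Fitting-ideal algebra implicit in your claim that a cokernel summand $A/\Character_\gamma A$ contributes the factor $\Character_\gamma$ to the determinant is also sound over an arbitrary commutative ring.

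That said, what you have written is a research programme, not a proof, and the gap sits exactly where you locate it. Three things are asserted without argument and none is routine: (i) that the mapping cone of $f_r$ is computed by the $\ExtendedTorus$-equivariant orbits in a single action window $(\Slope_{r-1}, \Slope_r)$, which requires identifying one step of the shift-operator system with one crossing of the Reeb spectrum uniformly in $r$ — for a general toric negative line bundle the Hamiltonians of the actions $\TorusAction \ComposedWith \Cocharacter_i$ are only of the form $f_i(\eltContactManifold)\RadialCoord$ at infinity with $f_i$ non-constant, so the "slope" crossed per step is not a single number and the window is not canonically defined; (ii) that an $\ExtendedTorus$-equivariant Morse--Bott package for the (non-exact, convex, only nonnegatively monotone) total space exists with the transversality and orientation control needed to read off the cokernel as a sum of cyclic modules; and (iii) that higher-dimensional Reeb families contribute only units to the determinant and that no cancellation among the circle families occurs. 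Point (iii) is not merely technical: a positive-dimensional Morse--Bott family contributes its equivariant cohomology as an $A$-module, and whether that module is free (hence invisible to $\Determinant$) or has torsion along further characters depends on the $\ExtendedTorus$-isotropy of the family, which for a general $\SphereBundle\ToricLineBundle$ can be larger than in the $\HighDimensionalSphere{3}$ case. Until (i)--(iii) are established the conjecture remains open, as the paper itself says; your proposal correctly identifies the target and the obstacles but does not close them.
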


\appendix
\section{Topological proofs}
\label{app:topological-proofs}

    Each of these lemmas is clear if $\Cocharacter = \Identity_\Manifold$, so without loss of generality we assume that $\Manifold$ is simply connected.

    \begin{lemma}
    \label{lem:short-exact-sequence-on-degree-2-homology-of-clutching-bundle}
        The sequence
        \begin{equation}
        \begin{tikzcd}
            0 \arrow[r] & \arrow[r, "\Pole^-"] \Homology_2 (\Manifold) & \arrow[r] \Homology_2 (\ClutchingBundle{\Cocharacter}) & \arrow[r] \Homology_2 (\Sphere) \cong \Integers & 0.
        \end{tikzcd}
        \end{equation}
        is a short exact sequence.
    \end{lemma}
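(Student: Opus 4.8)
The plan is to derive the exactness from the long exact homotopy sequence of the fibre bundle $\Manifold \Inclusion \ClutchingBundle{\Cocharacter} \xrightarrow{\ClutchingProjection} \Sphere$, using the fixed section $\ClutchingSection^\FixedPoint : \Sphere \to \ClutchingBundle{\Cocharacter}$ from \autoref{sec:sections-of-clutching-bundle}. As indicated at the start of the appendix, the case of the trivial cocharacter (where $\ClutchingBundle{\Cocharacter} = \Sphere \Product \Manifold$ and the claim is K\"unneth) is clear, so I may assume $\Manifold$ is simply connected. First I would observe that, since $\ClutchingProjection \ComposedWith \ClutchingSection^\FixedPoint = \Identity_\Sphere$, the maps $\ClutchingProjection \PushForward : \HomotopyGroup_n (\ClutchingBundle{\Cocharacter}) \to \HomotopyGroup_n (\Sphere)$ are split surjective for every $n \ge 1$; and because $\Manifold$ and $\Sphere$ are simply connected, the homotopy sequence shows $\ClutchingBundle{\Cocharacter}$ is simply connected as well.

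Next I would examine the relevant portion of the homotopy sequence,
\[
    \HomotopyGroup_3 (\ClutchingBundle{\Cocharacter}) \xrightarrow{\ClutchingProjection \PushForward} \HomotopyGroup_3 (\Sphere) \xrightarrow{\partial} \HomotopyGroup_2 (\Manifold) \to \HomotopyGroup_2 (\ClutchingBundle{\Cocharacter}) \xrightarrow{\ClutchingProjection \PushForward} \HomotopyGroup_2 (\Sphere) \to \HomotopyGroup_1 (\Manifold) = 0 .
\]
Exactness at $\HomotopyGroup_3 (\Sphere)$ together with the surjectivity of $\ClutchingProjection \PushForward$ on $\HomotopyGroup_3$ forces the connecting map $\partial$ to vanish, so $\HomotopyGroup_2 (\Manifold) \to \HomotopyGroup_2 (\ClutchingBundle{\Cocharacter})$ is injective; combined with the surjectivity of $\ClutchingProjection \PushForward$ on $\HomotopyGroup_2$, this yields a short exact sequence $0 \to \HomotopyGroup_2 (\Manifold) \to \HomotopyGroup_2 (\ClutchingBundle{\Cocharacter}) \to \HomotopyGroup_2 (\Sphere) \to 0$. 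Since all three spaces are simply connected, the Hurewicz theorem converts this into the desired sequence on $\Homology_2$, with $\Homology_2 (\Sphere) \cong \Integers$ being standard. This is parallel to the proof of \autoref{prop:sequence-relating-cohomology-and-equivariant-cohomology-is-short-exact-sequence}, except that there the term $\HomotopyGroup_3 (\ClassifyingSpace \Torus)$ vanished outright, whereas here $\HomotopyGroup_3 (\Sphere) \ne 0$ and it is the section that kills $\partial$. An equivalent route is the Leray--Serre spectral sequence of the fibration: as $\Homology_1 (\Manifold) = 0$, all differentials into and out of $E^2_{0,2} = \Homology_2 (\Manifold)$ and $E^2_{2,0} = \Homology_2 (\Sphere)$ vanish, and the two-step filtration on $\Homology_2 (\ClutchingBundle{\Cocharacter})$ with successive quotients $E^\infty_{0,2}$ and $E^\infty_{2,0}$ is precisely the asserted extension.

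The only point needing care is the final bookkeeping: one must check that the abstract injection produced above is induced by the fibre inclusion $\Pole^- : \Manifold \to \ClutchingBundle{\Cocharacter}$ over the north pole and that the abstract surjection is $\ClutchingProjection \PushForward$. For the spectral-sequence description this is automatic, since its edge homomorphisms are by definition the fibre inclusion and the projection. For the homotopy argument it follows because any two fibres of $\ClutchingBundle{\Cocharacter}$ over the connected base $\Sphere$ are homotopic inside the total space, so the inclusion of the fibre over $\Pole^-$ represents the canonical map $\HomotopyGroup_2 (\Manifold) \to \HomotopyGroup_2 (\ClutchingBundle{\Cocharacter})$ appearing in the sequence. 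I expect this identification to be the only mildly fiddly step; the exactness itself is a formal consequence of the existence of the fixed section.
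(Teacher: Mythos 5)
Your argument is correct, but it takes a genuinely different route from the paper. The paper proves \autoref{lem:short-exact-sequence-on-degree-2-homology-of-clutching-bundle} by Mayer--Vietoris for the decomposition of $\Sphere$ into $\ComplexNumbers^\pm = \Sphere \setminus \Set{\Pole^\mp}$: it identifies $\Homology_2(\ClutchingProjection\Inverse(\RealNumbers \times \Circle)) \cong \Homology_2(\Manifold)$ and $\Homology_1(\ClutchingProjection\Inverse(\RealNumbers \times \Circle)) \cong \Homology_1(\Circle)$ via K\"unneth and $\Homology_1(\Manifold)=0$, contracts $\ComplexNumbers^\pm$ onto the polar fibres, cancels one copy of $\Homology_2(\Manifold)$ against the intersection term, and matches the boundary map with $\ClutchingProjection\PushForward$ via the Mayer--Vietoris sequence of $\Sphere$ itself. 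Your proof instead runs the homotopy long exact sequence of the fibration and uses the fixed section $\ClutchingSection^\FixedPoint$ to kill the connecting map out of $\HomotopyGroup_3(\Sphere)$, then applies Hurewicz; this is shorter, mirrors the paper's own proof of \autoref{prop:sequence-relating-cohomology-and-equivariant-cohomology-is-short-exact-sequence}, and you correctly flag both the identification of the abstract maps with $\Pole^-\PushForward$ and $\ClutchingProjection\PushForward$ and the point that in the Leray--Serre variant the section is not even needed (simple connectivity of $\Manifold$ already kills the relevant differentials, since $E^2_{2,1}$ and $E^2_{0,1}$ vanish). What the paper's approach buys is that the same two-hemisphere Mayer--Vietoris scaffold is reused verbatim in \autoref{lem:class-existence-for-quantum-flatness-from-intertwining} for equivariant cohomology, where a section-plus-Hurewicz argument would not transfer as directly; what yours buys is brevity and a cleaner explanation of \emph{why} the sequence splits, namely the existence of $\ClutchingSection^\FixedPoint$, which is exactly the splitting exhibited in \eqref{eqn:short-exact-sequence-on-homology-of-clutching-bundle}.
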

    
    \begin{proof}
        The sphere $\Sphere$ is the union of $\ComplexNumbers^+ = \Sphere \setminus \Set{\Pole^-}$ and $\ComplexNumbers^- = \Sphere \setminus \Set{\Pole^+}$, and the intersection of these sets is the cylinder $\RealNumbers \times \Circle$.
        We use the Mayer-Vietoris sequence of the bundle $\ClutchingProjection : \ClutchingBundle{\Cocharacter} \to \Sphere$ associated to this decomposition of the sphere.
        The relevant part of the Mayer-Vietoris long exact sequence is
            \begin{equation}
            \label{eqn:mayer-vietoris}
                \begin{tikzcd}[column sep = tiny]
                    \Homology_2 (\ClutchingProjection \Inverse (\RealNumbers \times \Circle)) \arrow[r]
                    \arrow[d, phantom, ""{coordinate, name=top}]
                    & \Homology_2 (\ClutchingProjection \Inverse (\ComplexNumbers^-)) \DirectSum \Homology_2 (\ClutchingProjection \Inverse (\ComplexNumbers^+))  \arrow[r]
                    & \Homology_2 (\ClutchingProjection \Inverse (\Sphere))
                    \arrow[dll,
                        rounded corners,
                        to path={ -- ([xshift=2ex]\tikztostart.east)
                        |- (top) [near end]\tikztonodes
                        -| ([xshift=-2ex]\tikztotarget.west)
                        -- (\tikztotarget)}]
                        &
                    \\ \Homology_1 (\ClutchingProjection \Inverse (\RealNumbers \times \Circle)).
                    & 
                    &
                \end{tikzcd}
            \end{equation}
            
        We have a homeomorphism $\ClutchingProjection \Inverse (\RealNumbers \times \Circle) \cong \Manifold \times \Circle \times \RealNumbers$ from the trivialisation of $\ClutchingBundle{\Cocharacter}$ over the pole $\Pole^+$.
        This homeomorphism yields $\Homology_2 (\ClutchingProjection \Inverse (\RealNumbers \times \Circle)) \cong \Homology_2 (\Manifold) \DirectSum \Homology_1 (\Manifold) \Tensor \Homology_1 (\Circle)$ and $\Homology_1 (\ClutchingProjection \Inverse (\RealNumbers \times \Circle)) \cong \Homology_1 (\Manifold) \DirectSum \Homology_1 (\Circle)$ by the Kunneth theorem.
        Since $\Manifold$ is simply connected, we have $\Homology_1 (\Manifold) = 0$, which gives $\Homology_2 (\ClutchingProjection \Inverse (\RealNumbers \times \Circle)) \cong \Homology_2 (\Manifold)$ and $\Homology_1 (\ClutchingProjection \Inverse (\RealNumbers \times \Circle)) \cong \Homology_1 (\Circle)$.
        Moreover, the degree-2 isomorphism is independent of the choice of trivialising the bundle over the different poles $\Pole^\pm$ (a chain homotopy between the maps $\Homology_2 (\Manifold) \to \Homology_2 (\ClutchingProjection \Inverse (\RealNumbers \times \Circle))$ is found by considering the inclusions $\Manifold \to \ClutchingBundle{\Cocharacter}$ to the fibres at points $(s, 0) \in \RealNumbers \times \Circle \subset \Sphere$).
        
        Contracting each of $\ComplexNumbers^\pm$ to the pole $\Pole^\pm$ converts \eqref{eqn:mayer-vietoris} into
            \begin{equation}
            \label{eqn:simplified-mayer-vietoris-on-bundle}
                \begin{tikzcd}[row sep=0]
                    \Homology_2 (\Manifold) \arrow[r]
                    & \Homology_2 (\Manifold) \DirectSum \Homology_2 (\Manifold) \arrow[r]
                    & \Homology_2 (\ClutchingBundle{\Cocharacter}) \arrow[r]
                    & \Homology_1 (\Circle)
                    \\
                    x \arrow[r, mapsto]
                    & (x, x).
                \end{tikzcd}
            \end{equation}
        Therefore, one of the copies of $\Homology_2 (\Manifold)$ cancels with the first term of the sequence.
        
        Finally, we have a commutative diagram
            \begin{equation}
            \label{eqn:commutative-diagram-relating-mayer-vietoris-of-bundle-to-base}
                \begin{tikzcd}
                    \Homology_2 (\ClutchingBundle{\Cocharacter}) \arrow[r] \arrow[d, "\ClutchingProjection \PushForward"]
                    & \Homology_1 (\Circle) \arrow[d, equal]
                    \\
                    \Homology_2 (\Sphere) \arrow[r, "\cong"]
                    & \Homology_1 (\Circle)
                \end{tikzcd}
            \end{equation}
        where the isomorphism $\Homology_2 (\Sphere) \to \Homology_1 (\Circle)$ comes from the Mayer-Vietoris sequence for the sphere $\Sphere$.
        The lemma follows from combining \eqref{eqn:simplified-mayer-vietoris-on-bundle} and \eqref{eqn:commutative-diagram-relating-mayer-vietoris-of-bundle-to-base}.
    \end{proof}
    
    \begin{lemma}
    \label{lem:class-existence-for-quantum-flatness-from-intertwining}
        There is a class $\beta \in \Cohomology^2 _{\ExtendedTorus} (\ClutchingBundle{\Cocharacter})$ which satisfies $\beta^+ = \DegreeTwoCoclass^\FixedPoint$, $(\ClassifyingSpace \Extended{\Cocharacter}) \PullBack (\beta^-) = \DegreeTwoCoclass^\FixedPoint$ and $\beta (\DegreeTwoClass^\FixedPoint) = \DegreeTwoCoclass (\DegreeTwoClass)$ for $\DegreeTwoClass \in \Homology_2 (\Manifold)$.
    \end{lemma}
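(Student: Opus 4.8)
The plan is to build $\beta$ by first producing the underlying non-equivariant class and then lifting it to $\ExtendedTorus$-equivariant cohomology in such a way that it restricts to zero on the fixed section $\ClutchingSection^\FixedPoint$; the three required properties then follow from naturality of the exact sequence \eqref{eqn:sequence-on-cohomology-associated-to-borel-quotient} and from the characterisation of the lifts $(\Argument)^\FixedPoint$. As homological input I would first note, exactly as in \autoref{prop:sequence-relating-cohomology-and-equivariant-cohomology-is-short-exact-sequence} (using $\HomotopyGroup_3(\ClassifyingSpace\ExtendedTorus) = 0$, the Hurewicz theorem, the universal coefficient theorem with $\Homology_\ArbitraryIndex(\ClassifyingSpace\ExtendedTorus)$ free, and simple connectedness of $\ClutchingBundle{\Cocharacter}$ — immediate from the homotopy long exact sequence of $\Manifold \to \ClutchingBundle{\Cocharacter} \to \Sphere$), that the sequence \eqref{eqn:sequence-on-cohomology-associated-to-borel-quotient} is short exact in degree $2$ both for $(\ClutchingBundle{\Cocharacter}, \eqref{eqn:extended-torus-action-on-clutching-bundle})$ and for $\Sphere$ with its $\AdditionalCircle$-rotation action. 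I also want that the pullback $\Cohomology^2(\ClassifyingSpace\ExtendedTorus) \to \Cohomology^2_{\ExtendedTorus}(\Sphere)$ is injective, which holds because the poles $\Pole^\pm$ are $\ExtendedTorus$-fixed and restriction to such a point is the identity of $\Cohomology^\ArbitraryIndex(\ClassifyingSpace\ExtendedTorus)$.

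For the non-equivariant class: since $\ClutchingBundle{\Cocharacter}$ is simply connected, $\Cohomology^2(\ClutchingBundle{\Cocharacter}) = \Homomorphisms(\Homology_2(\ClutchingBundle{\Cocharacter}), \Integers)$, and by \autoref{lem:short-exact-sequence-on-degree-2-homology-of-clutching-bundle} we have the splitting $\Homology_2(\ClutchingBundle{\Cocharacter}) = \Pole^-\PushForward\Homology_2(\Manifold) \oplus \ClutchingSection^\FixedPoint\PushForward\Homology_2(\Sphere)$. I let $\bar\beta \in \Cohomology^2(\ClutchingBundle{\Cocharacter})$ be the homomorphism that is $\DegreeTwoCoclass \ComposedWith (\Pole^-\PushForward)\Inverse$ on the first summand and $0$ on the second. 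Three features matter: $\bar\beta(\DegreeTwoClass^\FixedPoint) = \DegreeTwoCoclass(\DegreeTwoClass)$, directly from $\DegreeTwoClass^\FixedPoint = \Pole^-\PushForward(\DegreeTwoClass) + \ClutchingSection^\FixedPoint\PushForward[\Sphere]$; the ordinary restriction $(\ClutchingSection^\FixedPoint)\PullBack\bar\beta \in \Cohomology^2(\Sphere)$ pairs to zero with $[\Sphere]$, hence vanishes; and, using the observation from the proof of \autoref{lem:short-exact-sequence-on-degree-2-homology-of-clutching-bundle} that $\Pole^+\PushForward = \Pole^-\PushForward$ on $\Homology_2(\Manifold)$, the restrictions of $\bar\beta$ to \emph{both} polar fibres equal $\DegreeTwoCoclass$ in $\Cohomology^2(\Manifold)$.

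Then I lift and verify. Pick any lift $\beta_0 \in \Cohomology^2_{\ExtendedTorus}(\ClutchingBundle{\Cocharacter})$ of $\bar\beta$; since $(\ClutchingSection^\FixedPoint)\PullBack\beta_0$ maps to $(\ClutchingSection^\FixedPoint)\PullBack\bar\beta = 0$ in $\Cohomology^2(\Sphere)$, exactness for $\Sphere$ writes it as the pullback of a unique $\delta \in \Cohomology^2(\ClassifyingSpace\ExtendedTorus)$, and I set $\beta$ to be $\beta_0$ minus the pullback of $\delta$ to the Borel quotient of $\ClutchingBundle{\Cocharacter}$. Then $\beta$ still lifts $\bar\beta$, and $(\ClutchingSection^\FixedPoint)\PullBack\beta = 0$ because the Borel construction of $\ClutchingSection^\FixedPoint$ intertwines the two projections to $\ClassifyingSpace\ExtendedTorus$. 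Consequently $\beta(\DegreeTwoClass^\FixedPoint) = \bar\beta(\DegreeTwoClass^\FixedPoint) = \DegreeTwoCoclass(\DegreeTwoClass)$. Since $\ClutchingSection^\FixedPoint(\Pole^\pm) = \ClutchingFibreInclusion^\pm_{\Pole^\pm}(\FixedPoint)$ is $\ExtendedTorus$-fixed, $(\ClutchingSection^\FixedPoint)\PullBack\beta = 0$ forces $\FixedPoint\PullBack\beta^\pm = 0$; as $\beta^+$ lifts $\DegreeTwoCoclass$ with $\FixedPoint\PullBack\beta^+ = 0$, the split sequence \eqref{eqn:split-short-exact-sequence-borel-quotient} gives $\beta^+ = \DegreeTwoCoclass^\FixedPoint$, and likewise $\beta^-$ is the $\FixedPoint$-normalised lift of $\DegreeTwoCoclass$ for the action $\Cocharacter\cdot\Extended{\TorusAction}$. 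Because $(\ClassifyingSpace\Extended{\Cocharacter})\PullBack$ is induced by the identity on Borel quotients (\autoref{sec:pullback-group-isomorphisms-on-cohomology}), it commutes both with the fibre-inclusion map to $\Cohomology^2(\Manifold)$ and with $\FixedPoint\PullBack$, so $(\ClassifyingSpace\Extended{\Cocharacter})\PullBack\beta^-$ is the $\FixedPoint$-normalised lift of $\DegreeTwoCoclass$ in $\Cohomology^2_{\ExtendedTorus}(\Manifold, \Extended{\TorusAction})$, namely $\DegreeTwoCoclass^\FixedPoint$.

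I expect the main obstacle to be engineering the equivariant lift with vanishing section-restriction: this relies crucially on $\bar\beta$ already restricting to zero on $\ClutchingSection^\FixedPoint$ and on the injectivity of the pullback from $\Cohomology^\ArbitraryIndex(\ClassifyingSpace\ExtendedTorus)$ to $\Cohomology^\ArbitraryIndex_{\ExtendedTorus}(\Sphere)$ (so the correction term $\delta$ exists and is unique), and then on carefully tracking in the $\beta^-$ computation that the twist by $(\ClassifyingSpace\Extended{\Cocharacter})\PullBack$ is compatible both with the fibre-inclusion map and with the fixed-point restriction. The remaining steps are routine once the exact sequences and the homology splitting are in place.
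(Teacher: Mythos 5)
Your argument is correct, and it takes a genuinely different route from the paper's. The paper proves this lemma with the $\ExtendedTorus$-equivariant Mayer--Vietoris sequence for the decomposition $\ClutchingBundle{\Cocharacter} = \ClutchingProjection\Inverse(\ComplexNumbers^-) \Union \ClutchingProjection\Inverse(\ComplexNumbers^+)$: it shows that $\Cohomology^1_{\ExtendedTorus}$ of the equatorial overlap vanishes, identifies $\Cohomology^2_{\ExtendedTorus}$ of the overlap with $\Cohomology^2_\Torus(\Manifold, \TorusAction)$, and then glues the two prescribed polar classes $((\ClassifyingSpace \Extended{\Cocharacter})\PullBack)\Inverse(\DegreeTwoCoclass^\FixedPoint)$ and $\DegreeTwoCoclass^\FixedPoint$, which both restrict to the common class $\DegreeTwoCoclass^\FixedPoint_\Torus$ over the equator; the pairing property $\beta(\DegreeTwoClass^\FixedPoint) = \DegreeTwoCoclass(\DegreeTwoClass)$ then requires a short diagram chase at the end. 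You instead build the non-equivariant class $\bar\beta$ first, from the homology splitting of \autoref{lem:short-exact-sequence-on-degree-2-homology-of-clutching-bundle} together with the universal coefficient theorem (so the pairing property and the vanishing of $(\ClutchingSection^\FixedPoint)\PullBack\bar\beta$ are built in by construction), lift it using the degree-2 Borel short exact sequence for the simply connected total space, and normalise the lift by the unique correction term from $\Cohomology^2(\ClassifyingSpace \ExtendedTorus)$ that kills $(\ClutchingSection^\FixedPoint)\PullBack\beta$; the two polar identities are then read off a posteriori from the $\FixedPoint\PullBack$-characterisation of the lifts in \eqref{eqn:split-short-exact-sequence-borel-quotient}, using that $\ClutchingSection^\FixedPoint(\Pole^\pm)$ is $\ExtendedTorus$-fixed and that $(\ClassifyingSpace \Extended{\Cocharacter})\PullBack$ commutes with both the fibre restriction and $\FixedPoint\PullBack$. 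Your route avoids the equivariant Mayer--Vietoris computation entirely and makes the pairing condition trivial, at the cost of having to identify $\beta^\pm$ after the fact rather than prescribing them; both proofs are of comparable length and rest on the same underlying inputs (simple connectedness of $\Manifold$, hence of $\ClutchingBundle{\Cocharacter}$, and the splittings induced by the fixed section and the fixed point). The details you flag as the main obstacles — the exactness and injectivity statements for $(\Sphere, \AdditionalCircle)$, the equality $\Pole^+\PushForward = \Pole^-\PushForward$ on $\Homology_2(\Manifold)$, and the compatibility of $(\ClassifyingSpace \Extended{\Cocharacter})\PullBack$ with the two restriction maps — all check out.
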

    
    \begin{proof}
    
        Write the sphere $\Sphere$ as the union of $\ComplexNumbers^+ = \Sphere \setminus \Set{\Pole^-}$ and $\ComplexNumbers^- = \Sphere \setminus \Set{\Pole^+}$, as in \autoref{lem:short-exact-sequence-on-degree-2-homology-of-clutching-bundle}.
        We will apply the ($\ExtendedTorus$-equivariant cohomology) Mayer-Vietoris sequence to the bundle $\ClutchingProjection : \ClutchingBundle{\Cocharacter} \to \Sphere$ using this decomposition of the sphere.
        The relevant part of this Mayer-Vietoris long exact sequence is
            \begin{equation}
            \label{eqn:mayer-vietoris-equivariant}
            \begin{tikzcd}[column sep = tiny]
                &
                \arrow[d, phantom, ""{coordinate, name=top}]
                & \Cohomology^1 _{\ExtendedTorus} (\ClutchingProjection \Inverse (\RealNumbers \times \Circle))
                \arrow[dll,
                    rounded corners,
                    to path={ -- ([xshift=2ex]\tikztostart.east)
                    |- (top) [near end]\tikztonodes
                    -| ([xshift=-2ex]\tikztotarget.west)
                    -- (\tikztotarget)}]
                    &
                \\ \Cohomology^2 _{\ExtendedTorus} (\ClutchingProjection \Inverse (\Sphere))
                \arrow[r]
                & \Cohomology^2 _{\ExtendedTorus} (\ClutchingProjection \Inverse (\ComplexNumbers^-)) \DirectSum \Cohomology^2 _{\ExtendedTorus} (\ClutchingProjection \Inverse (\ComplexNumbers^+))
                \arrow[r]
                & \Cohomology^2 _{\ExtendedTorus} (\ClutchingProjection \Inverse (\RealNumbers \times \Circle)).
            \end{tikzcd}
            \end{equation}
        The second group in the sequence is $\Cohomology^2_{\ExtendedTorus} (\ClutchingBundle{\Cocharacter})$.
        We will show $\Cohomology^1 _{\ExtendedTorus} (\ClutchingProjection \Inverse (\RealNumbers \times \Circle)) = 0$, and then use the rest of the sequence to construct the element $\beta \in \Cohomology^2_{\ExtendedTorus} (\ClutchingBundle{\Cocharacter})$.
            
        The inclusion $\Manifold \to \ClutchingProjection \Inverse (\RealNumbers \times \Circle)$ along the equator at $\eltCircle = 0$ is independent of the choice of trivialisation between $\Hemisphere^\pm \times \Manifold$.
        As per \eqref{eqn:functorial-map-on-equivariant-cohomology-general}, this inclusion yields a natural map $\Cohomology^\ArbitraryIndex _{\ExtendedTorus} (\ClutchingProjection \Inverse (\RealNumbers \times \Circle)) \to \Cohomology^\ArbitraryIndex _\Torus (\Manifold)$, with the corresponding group map $\Torus \to \ExtendedTorus$.
        The inclusion fits into a fibre bundle $\Manifold \to \ClutchingProjection \Inverse (\RealNumbers \times \Circle) \to \RealNumbers \times \Circle$ which pairs with the groups $\Torus \to \ExtendedTorus \to \AdditionalCircle$.
        That is, $\Torus$ acts on $\Manifold$, $\ExtendedTorus$ acts on $\ClutchingProjection \Inverse (\RealNumbers \times \Circle)$ and $\AdditionalCircle$ acts on $\RealNumbers \times \Circle$, and the maps satisfy \eqref{eqn:equivariant-function-definition} with respect to these actions.
        The $\AdditionalCircle$-action on $\RealNumbers \times \Circle$ is free, which gives $\Cohomology^\ArbitraryIndex _{\AdditionalCircle} (\RealNumbers \times \Circle) = \Integers$.
        Therefore the map $\Cohomology^\ArbitraryIndex _{\ExtendedTorus} (\ClutchingProjection \Inverse (\RealNumbers \times \Circle)) \to \Cohomology^\ArbitraryIndex _\Torus (\Manifold)$ is a canonical isomorphism (because the Leray-Serre spectral sequence degenerates on the first page).
        But $\Cohomology^1 _\Torus (\Manifold) = 0$ vanishes when $\Manifold$ is simply connected ($\HomotopyGroup_1 (\UniversalSpace \Torus \times_{\Torus} \Manifold) = 0$ follows from the long exact sequence of homotopy groups of the Borel quotient $\UniversalSpace \Torus \times_{\Torus} \Manifold$).
        Therefore $\Cohomology^1 _{\ExtendedTorus} (\ClutchingProjection \Inverse (\RealNumbers \times \Circle)) = 0$.
        
        From the above argument, we also get a canonical isomorphism $\Cohomology^2 _{\ExtendedTorus} (\ClutchingProjection \Inverse (\RealNumbers \times \Circle)) \to \Cohomology^2 _\Torus (\Manifold, \TorusAction)$.
        As per the definition of $\DegreeTwoCoclass^\FixedPoint$ in \autoref{sec:lifting-ordinary-to-equivariant-degree-two-coclasses}, define $\DegreeTwoCoclass^\FixedPoint_\Torus \in \Cohomology^2 _\Torus (\Manifold, \TorusAction)$ via the analogous split short exact sequence to \eqref{eqn:split-short-exact-sequence-borel-quotient}.
            
        Just as $\ComplexNumbers$ is equivariantly contractible to $0$, the manifold $\ClutchingProjection \Inverse (\ComplexNumbers^+)$ will  $\ExtendedTorus$-equiv\-ari\-ant\-ly retract onto the fibre $\ClutchingProjection \Inverse (\Pole^+) = \Manifold$ with action $\Extended{\TorusAction}$.
        We have the following commutative diagram (the splitting maps commute only in the obvious way).
            \begin{equation}
            \label{eqn:split-exact-sequences-for-torus-into-extended-torus}
            \begin{tikzcd}
                \Cohomology^2 (\ClassifyingSpace \AdditionalCircle)
                \arrow[r]
                \arrow[dd, equal]
                & \Cohomology^2 (\ClassifyingSpace \ExtendedTorus)
                \arrow[dd]
                \arrow[rrr]
                &[-25pt] & &[-25pt] \Cohomology^2 (\ClassifyingSpace \Torus)
                \arrow[dd]
                \\[-25pt]
                && 0
                \arrow[lu, "\ni" description]
                \arrow[dd, maps to]
                \arrow[r, equal]
                & 0
                \arrow[dd, maps to]
                \arrow[ru, "\in" description]
                \\
                \Cohomology^2 (\ClassifyingSpace \AdditionalCircle)
                \arrow[ddd]
                \arrow[r]
                & \Cohomology^2 _{\ExtendedTorus} (\Manifold, \Extended{\TorusAction})
                \arrow[ddd]
                \arrow[rrr]
                \arrow[uu, bend right, "\FixedPoint \PullBack"']
                &&& \Cohomology^2 _\Torus (\Manifold, \TorusAction)
                \arrow[ddd]
                \arrow[uu, bend left, "\FixedPoint \PullBack"]
                \\[-25pt]
                && \DegreeTwoCoclass^\FixedPoint
                \arrow[lu, "\ni" description]
                \arrow[r, maps to]
                \arrow[d, maps to]
                \arrow[uu, bend right, maps to]
                & \DegreeTwoCoclass^\FixedPoint _\Torus
                \arrow[d, maps to]
                \arrow[ur, "\in" description]
                \arrow[uu, bend left, maps to]
                \\
                && \DegreeTwoCoclass
                \arrow[r, equal]
                \arrow[u, bend right, dashed, maps to]
                \arrow[ld, "\ni" description]
                & \DegreeTwoCoclass
                \arrow[u, bend left, dashed, maps to]
                \arrow[dr, "\in" description]
                \\[-25pt]
                0
                \arrow[r]
                & \Cohomology^2 (\Manifold)
                \arrow[rrr, equal]
                \arrow[uuu, bend right, dashed]
                &&& \Cohomology^2 (\Manifold)
                \arrow[uuu, bend left, dashed]
            \end{tikzcd}
            \end{equation}
        The classes $\DegreeTwoCoclass$, $\DegreeTwoCoclass^\FixedPoint$ and $\DegreeTwoCoclass^\FixedPoint _\Torus$ are drawn in the diagram.
        The vertical short exact sequences are precisely the sequences which define $\DegreeTwoCoclass^\FixedPoint$ and $\DegreeTwoCoclass^\FixedPoint _\Torus$.
        From the diagram, we \emph{deduce} $\DegreeTwoCoclass^\FixedPoint \mapsto \DegreeTwoCoclass^\FixedPoint _{\Torus}$ on the second row.
        
        Similarly, $\ClutchingProjection \Inverse (\ComplexNumbers^-)$ contracts onto the fibre $\ClutchingProjection \Inverse (\Pole^-) = \Manifold$ with action $\Cocharacter \cdot \Extended{\TorusAction}$.
        The same diagram as \eqref{eqn:split-exact-sequences-for-torus-into-extended-torus} holds for $\Cohomology^2 _{\ExtendedTorus} (\Manifold, \Cocharacter \cdot \Extended{\TorusAction})$ giving $((\ClassifyingSpace \Extended{\Cocharacter}) \PullBack )\Inverse (\DegreeTwoCoclass^\FixedPoint) \mapsto \DegreeTwoCoclass^\FixedPoint _\Torus$.
        
        Thus the Mayer-Vietoris sequence \eqref{eqn:mayer-vietoris-equivariant} becomes
            \begin{equation}
            \label{eqn:mayer-vietoris-equivariant-manipulated}
            \begin{tikzcd}
                0
                \arrow[r]
                & \Cohomology^2 _{\ExtendedTorus} (\ClutchingBundle{\Cocharacter})
                \arrow[r]
                & \Cohomology^2 _{\ExtendedTorus} (\Manifold, \Cocharacter \cdot \Extended{\TorusAction}) \DirectSum \Cohomology^2 _{\ExtendedTorus} (\Manifold, \Extended{\TorusAction})
                \arrow[r]
                & \Cohomology^2 _\Torus (\Manifold, \TorusAction)
                \\[-25pt]
                && ((\ClassifyingSpace \Extended{\Cocharacter}) \PullBack) \Inverse (\DegreeTwoCoclass^\FixedPoint) \DirectSum 0
                \arrow[r, maps to]
                & \DegreeTwoCoclass^\FixedPoint _\Torus
                \\[-25pt]
                && 0 \DirectSum \DegreeTwoCoclass^\FixedPoint
                \arrow[r, maps to]
                & -\DegreeTwoCoclass^\FixedPoint _\Torus.
            \end{tikzcd}
            \end{equation}
        The minus sign in $-\DegreeTwoCoclass^\FixedPoint _\Torus$ simply comes from our Mayer-Vietoris conventions.
        The desired class $\beta$ is the preimage of $((\ClassifyingSpace \Extended{\Cocharacter}) \PullBack) \Inverse (\DegreeTwoCoclass^\FixedPoint) \DirectSum \DegreeTwoCoclass^\FixedPoint _\Torus$.
        The classes $\beta^\pm$ are, by definition, the restrictions of $\beta$ to the fibres above the poles $\Pole^\pm$, so $\beta^+ = \DegreeTwoCoclass^\FixedPoint$ and $(\ClassifyingSpace \Extended{\Cocharacter}) \PullBack (\beta^-) = \DegreeTwoCoclass^\FixedPoint$ are automatically satisfied.
        
        The fibre inclusion $\ClutchingBundle{\Cocharacter} \to \UniversalSpace \ExtendedTorus \times_{\ExtendedTorus} \ClutchingBundle{\Cocharacter}$ induces a map $\Homology_2 (\ClutchingBundle{\Cocharacter}) \to \Homology_2 ^{\ExtendedTorus} (\ClutchingBundle{\Cocharacter})$.
        Let $\DegreeTwoClass^\FixedPoint_{\eqnt}$ denote the image of $\DegreeTwoClass^\FixedPoint = \Pole^- \PushForward (\DegreeTwoClass) + \ClutchingSection^\FixedPoint \PushForward [\Sphere]$ under this map.
        The claim $\beta (\DegreeTwoClass^\FixedPoint) = \DegreeTwoCoclass (\DegreeTwoClass)$ precisely means $\beta (\DegreeTwoClass^\FixedPoint _{\eqnt}) = \DegreeTwoCoclass (\DegreeTwoClass)$.
        To prove this claim, we consider the two terms of $\DegreeTwoClass^\FixedPoint$ separately.
        Diagram chasing using the fixed section $\ClutchingSection^\FixedPoint : \Sphere \to \ClutchingBundle{\Cocharacter}$ with \eqref{eqn:mayer-vietoris-equivariant} yields $(\ClutchingSection^\FixedPoint) \PullBack \beta = 0 \in \Cohomology^2 _{\ExtendedTorus} (\Sphere)$, and hence $\beta ((\ClutchingSection^\FixedPoint \PushForward [\Sphere])_{\eqnt}) = 0$.
        Our definition of $\beta$ readily yields $\beta \mapsto \DegreeTwoCoclass$ under $\Cohomology^2 _{\ExtendedTorus} (\ClutchingBundle{\Cocharacter}) \to \Cohomology^2_{\ExtendedTorus} (\Manifold) \to \Cohomology^2 (\Manifold)$, and hence we have $\beta (\DegreeTwoClass^\FixedPoint _{\eqnt}) = \beta ((\Pole^- \PushForward (\DegreeTwoClass))_{\eqnt}) = \DegreeTwoCoclass (\DegreeTwoClass)$ as required.
        \end{proof}

\renewcommand{\doitext}{}
\bibliography{main}
\bibliographystyle{halpha}

\end{document}